\newtheorem{corollary}{Corollary}
\newtheorem{lemma}[corollary]{Lemma}
\newtheorem{definition}[corollary]{Definition}
\newtheorem{proposition}[corollary]{Proposition}
\newtheorem{theorem}[corollary]{Theorem}
\newtheorem{example}[corollary]{Example}
\newtheorem{remark}[corollary]{Remark}
\newtheorem{problem}[corollary]{Problem}
\newtheorem{condition}{Condition}[section]
\begin{document}

\author{Jiayang Yu\thanks{School of Mathematics, Sichuan University, Chengdu 610064, P. R. China. {\small\it E-mail:} {\small\tt
 jiayangyu@scu.edu.cn}.}\;\; and Xu Zhang\thanks{School of Mathematics, Sichuan University, Chengdu 610064, P. R. China. {\small\it E-mail:} {\small\tt
zhang$\_$xu@scu.edu.cn}.}}
\title{A convenient setting for infinite-dimensional analysis}
\date{}
\maketitle
\def\cc{\mathbb{C}}
\def\zz{\mathbb{Z}}
\def\nn{\mathbb{N}}
\def\rr{\mathbb{R}}
\def\qq{\mathbb{Q}}
\def\dd{\mathbb{D}}
\def\tt{\mathbb{T}}
\def\bb{\mathbb{B}}
\def\ff{\mathbb{F}}
\def\ll{\mathbb{L}}
\def\ds{\displaystyle}
\def\q{\quad}
\def\qq{\qquad}
\def\ns{\noalign{\smallskip} }
\def\nm{\noalign{\medskip} }
\def\ds{\displaystyle}
\def\a{\alpha}
\def\b{\beta}
\def\e{\varepsilon}
\def\lan{\mathop{\langle}}
\def\ran{\mathop{\rangle}}
\def\bel{\begin{equation}\label}
\def\ee{\end{equation}}
\def\cL{{\cal L}}
\def\deq{\mathop{\buildrel\D\over=}}
\def\resp{{\it resp. }}
\def\wt{\widetilde}
\def\span{\hbox{\rm span$\,$}}
\def\supp{\hbox{\rm supp$\,$}}
\def\dbA{{\mathbb{A}}}
\def\dbB{{\mathbb{B}}}
\def\dbC{{\mathbb{C}}}
\def\dbD{{\mathbb{D}}}
\def\dbE{{\mathbb{E}}}
\def\dbF{{\mathbb{F}}}
\def\dbG{{\mathbb{G}}}
\def\dbH{{\mathbb{H}}}
\def\dbI{{\mathbb{I}}}
\def\dbJ{{\mathbb{J}}}
\def\dbK{{\mathbb{K}}}
\def\dbL{{\mathbb{L}}}
\def\dbM{{\mathbb{M}}}
\def\dbN{{\mathbb{N}}}
\def\dbO{{\mathbb{O}}}
\def\dbP{{\mathbb{P}}}
\def\dbQ{{\mathbb{Q}}}
\def\dbR{{\mathbb{R}}}
\def\dbS{{\mathbb{S}}}
\def\dbT{{\mathbb{T}}}
\def\dbU{{\mathbb{U}}}
\def\dbV{{\mathbb{V}}}
\def\dbW{{\mathbb{W}}}
\def\dbX{{\mathbb{X}}}
\def\dbY{{\mathbb{Y}}}
\def\dbZ{{\mathbb{Z}}}

\def\A{\mathcal{A}}
\def\B{\mathcal{B}}
\def\D{\mathcal{D}}
\def\L{\mathcal{L}}
\def\M{\mathcal{M}}

\def\N{\mathcal{N}}
\def\K{\mathcal{K}}
\def\E{\mathcal{E}}
\def\F{\mathcal{F}}
\def\G{\mathcal{G}}
\def\R{\mathcal{R}}
\def\s{\mathcal{S}}
\def\p{\mathcal{P}}
\def\P{\mathcal{P}}
\def\T{\mathcal{T}}
\def\O{\mathcal{O}}
\def\Z{\mathcal{Z}}
\def\C{\mathcal{C}}
\def\SS{(S, \mathcal{S})}
\def\dbN{{\mathbb{N}}}
\def\dbR{{\mathbb{R}}}
\def\dbC{{\mathbb{C}}}
\def\={\buildrel \triangle \over =}

\def\al{\alpha}
\def\la{\lambda}
\def\ep{\epsilon}
\def\sig{\sigma}
\def\Sig{\Sigma}
\def\cd{\mathbb{C}^d}
\def\bm{\mathcal{M}}
\def\bn{\mathcal{N}}
\def\bc{\mathcal{C}}
\def\hN{H^2\otimes \mathbb{C}^N}
\def\ba{\mathcal{A}}

\def\bigpa#1{\biggl( #1 \biggr)}
\def\bigbracket#1{\biggl[ #1 \biggr]}
\def\bigbrace#1{\biggl\lbrace #1 \biggr\rbrace}

\def\papa#1#2{\frac{\partial #1}{\partial #2}}
\def\dbar{\bar{\partial}}

\def\oneover#1{\frac{1}{#1}}

\def\meihua{\bigskip \noindent $\clubsuit \ $}
\def\blue#1{\textcolor[rgb]{0.00,0.00,1.00}{#1}}
\def\red#1{\textcolor[rgb]{1.00,0.00,0.00}{#1}}
\def\xing{\heartsuit}
\def\tao{\spadesuit}
\def\lingxing{\blacklozenge}

\def\norm#1{||#1||}
\def\inner#1#2{\langle #1, \ #2 \rangle}

\def\divide{\bigskip \hrule \bigskip}

\def\bigno{\bigskip \noindent}
\def\medno{\medskip \noindent}
\def\smallno{\smallskip \noindent}
\def\bignobf#1{\bigskip \noindent \textbf{#1}}
\def\mednobf#1{\medskip \noindent \textbf{#1}}
\def\smallnobf#1{\smallskip \noindent \textbf{#1}}
\def\nobf#1{\noindent \textbf{#1}}
\def\nobfblue#1{\noindent \textbf{\textcolor[rgb]{0.00,0.00,1.00}{#1}}}
\def\purple#1{\textcolor[rgb]{1.00,0.00,0.50}{#1}}
\def\green#1{\textcolor[rgb]{0.00,1.00,0.00}{#1}}

\def\vector#1#2{\begin{pmatrix}  #1  \\  #2 \end{pmatrix}}

\def\be{\begin{eqnarray*}}
\def\bel{\begin{equation}\label}
\def\ee{\end{equation}}
\def\bea{\begin{eqnarray}}
\def\eea{\end{eqnarray}}
\def\bt{\begin{theorem}}
\def\et{\end{theorem}}
\def\bc{\begin{corollary}}
\def\ec{\end{corollary}}
\def\bl{\begin{lemma}}
\def\el{\end{lemma}}
\def\bp{\begin{proposition}}
\def\ep{\end{proposition}}
\def\br{\begin{remark}}
\def\er{\end{remark}}
\def\ba{\begin{array}}
\def\ea{\end{array}}
\def\bd{\begin{definition}}
\def\ed{\end{definition}}

\newtheorem{exa}{Example}[section]

\begin{abstract}
In this work, we propose a convenient framework for infinite-dimensional analysis (including both real and complex analysis in infinite dimensions), in which differentiation (in some weak sense) and integration operations can be easily performed, integration by parts can be conveniently established under rather weak conditions, and especially some nice properties and consequences obtained by convolution in Euclidean spaces can be extended to infinite-dimensional spaces in some sense by taking the limit. Compared to the existing tools in infinite-dimensional analysis, our setting enjoys more convenient and clearer links with that of finite dimensions, and hence it is more suitable for computation and studying some analysis problems in infinite-dimensional spaces.
\end{abstract}

\newpage
\tableofcontents

\newpage
{\Large\textbf{Acronyms}}
\begin{itemize}
\item[$\mathbb{N}$]{The collection of all positive integers}
\item[$\mathbb{N}_0$]{The collection of all non-negative integers}
\item[$\mathbb{N}_0^{(\mathbb{N})}$]{The set $ \{(\alpha_i)_{i\in\mathbb{N}}:\alpha_i\in\mathbb{N}_0,\,\forall\,i\in\mathbb{N},\,\text{and}\,\alpha_i\neq 0\,$only for finite many $i\in\mathbb{N} \}$}
\item[$\mathbb{N}_{2}$]{$\{(x_i,y_i)_{i\in\mathbb{N}}\in (\mathbb{R}\times\mathbb{R})^{\mathbb{N}}:$ there exists $i_0\in\mathbb{N}$such that $x_i=y_i=0,\,\forall\,i\in\mathbb{N}\setminus\{i_0\}$ and $(x_{i_0},y_{i_0})\in \{(i_0,0),(0,i_0)\} \}$ which can be viewed as $\mathbb{N} \sqcup \mathbb{N} $}
\item[$\mathbb{N}_{2,\widehat{n} }$]{$\{(x_i,y_i)_{i\in\mathbb{N}}\in (\mathbb{R}\times\mathbb{R})^{\mathbb{N}}:$ there exists $i_0\in\mathbb{N}\setminus\{1,\cdots,n\}$such that $x_i=y_i=0,\,\forall\,i\in\mathbb{N}\setminus\{i_0\}$ and $(x_{i_0},y_{i_0})\in \{(i_0,0),(0,i_0)\} \}$ where $n\in\mathbb{N}$ which can be viewed as $(\mathbb{N}\setminus\{1,\cdots,n\})\sqcup (\mathbb{N}\setminus\{1,\cdots,n\})$}
\item[$\mathbb{Z}$]{The collection of all integers}
\item[$\mathbb{R}$]{The collection of all real numbers}
\item[$\mathbb{C}$]{The collection of all complex numbers}
\item[$\mathbb{R}^{S}$]{The collection of all real-valued functions on non-empty set $S$}
\item[$\mathbb{R}^{\infty}$]{We abbreviate $\mathbb{R}^{ \mathbb{N}}$ to $\mathbb{R}^{\infty}$}
\item[$\mathbb{C}^{\infty}$]{We abbreviate $\mathbb{C}^{ \mathbb{N}}$ to $\mathbb{C}^{\infty}$}
\item[$\ell^p(S)$]{The set $
\left\{\textbf{x}=(x_i)_{i\in S}\in \mathbb{R}^{S}:\sum\limits_{i\in S}|x_i|^p<\infty\right\},
$ where $1\leqslant p<\infty$}
\item[$ ||\cdot||_{\ell^p(S)}$]{$||\textbf{x}||_{\ell^p(S)}\triangleq\left(\sum\limits_{i\in S}|x_i|^p\right)^{\frac{1}{p}} ,\quad\forall\,\textbf{x}=(x_i)_{i\in S}\in\ell^p(S)$, $1\leqslant p<\infty$}
\item[$\ell^{\infty}(S)$]{The set $\left\{(x_i)_{i\in S}\in \mathbb{R}^{S}:\sup\limits_{i\in S}|x_i|<\infty\right\}$}
\item[$\ell^{p}$]{We abbreviate $\ell^{p}(\mathbb{N})$ to $\ell^{p}$, where $1\leqslant p\leqslant\infty$}
\item[$B_r^p (\textbf{x})$]{$B_r (\textbf{x})\triangleq \left\{\textbf{y}\in\ell^p(S):||\textbf{y}-\textbf{x}||_{\ell^p(S)}<r \right\},\,\forall\,\textbf{x}\in\ell^2(S),\,r\in(0,+\infty)$, where $1\leqslant p\leqslant\infty$}
\item[$B_r^{p,S}$]{We abbreviate $B_r^p(\textbf{0})$ to $B_r^{p,S}$, where $\textbf{0}=(0)_{i\in S}\in \mathbb{R}^{S}$}
\item[$B_r$]{We abbreviate $B_r^{2,\mathbb{N}}$ to $B_r$ }
\item[$\mathcal{L}(X,Y)$]{The set of all continuous real linear maps from $X$ into $Y$ where $X$ and $Y$ are two real linear norm spaces}
\end{itemize}
\newpage
\section{Introduction}

Calculus was introduced by I. Newton and G. W. Leibniz in the second half of the 17th century and is one of the greatest scientific achievements in human history. The deepening and widespread application of calculus have led to the emergence of analytical mathematics, including complex analysis, real analysis, functional analysis, harmonic analysis, ordinary differential equations, partial differential equations, and so on. The intersection of analytical mathematics with other branches of mathematics has given rise to newer branches of mathematics such as geometric analysis, stochastic analysis, algebraic analysis, etc. At the beginning of this century, thanks to some tools including geometric analysis in particular, G. Y. Perelman solved the famous Poincar\'e's conjecture, a purely topological problem, which demonstrates the immense power of analytical mathematics.

Roughly speaking, calculus introduces another type of operation, called limit, in addition to the elementary arithmetic operations such as addition, subtraction, multiplication and division, essentially to deal with infinity. According to the mainstream theory of modern cosmology, the universe was formed by the expansion of a dense and hot singularity after a Big Bang 13.7 billion years ago, so time and space (and hence everything that arises from it) are finite. In this way, infinity is undoubtedly a creation of mathematicians, which, strictly speaking, only exists in the minds of mathematicians. The key in analytical mathematics is to handle various infinities, for which a basic idea is to quantify the object of study in order to calculate it precisely, achieving a ``clear understanding". After introducing the concept of infinity, mathematics underwent a revolutionary change, which provided mathematicians with a vast and boundless stage. Nowadays, the computing speed of supercomputers has already reached astronomical numbers, but no matter how fast they are, their speed is still finite and they can only perform limited tasks. In this sense, computers and artificial intelligence can never completely replace human's work, especially the work of mathematicians, because only mathematicians can truly handle infinity.

In the real world, time is $1$ dimensional and space is $3$ dimensional. Therefore, the domains of usually used functions in mathematics, especially that in analytical mathematics, are in finite dimensional spaces. The common feature of these functions is that their independent variables can be regarded as having only a finite number. The rich, profound, and beautiful results in modern mathematics are mostly limited to finite-dimensional spaces and their subspaces/submanifolds or function spaces on them. An essential reason for this is that there exist Lebesgue measures (with almost perfect properties) in finite-dimensional spaces (or more generally, Haar measures on locally compact Hausdorff topological groups), which seamlessly fit with operations such as differentiation, integration, convolution, and Fourier transform in finite dimensional spaces, and thus give rise to widely used Newton-Leibniz (as well as Gauss, Green, Stokes) formulas.

In mathematics, functions of infinitely many variables are also often used, for example, in both calculus of variations (which originated from Johann Bernoulli's research on the problem of brachistochrone shortly after the emergence of calculus) and functional analysis. In fact, functions of infinitely many variables have a wide range of application backgrounds, such as:
\begin{itemize}
\item In control theory, almost all continuous-time control problems are infinite-dimensional because any (continuous-time) control function space is actually an infinite-dimensional space. In this way, even continuous-time, deterministic finite-dimensional optimal control problems are essentially infinite-dimensional optimization problems. Typically, the value functions of optimal control problems for distributed parameter systems (including particularly controlled partial differential equations) are functions of infinitely many variables, and hence the corresponding Hamilton-Jacobi-Bellman equations are nonlinear partial differential equations with infinite number of variables (\cite[p. 225]{Li-Yong}). At present, people know very little about this type of equations, and the main difficulty is the lack of appropriate mathematical tools;

\item  In probability theory, many useful random variables and stochastic processes are actually defined on infinite-dimensional spaces (e.g., \cite{Gel64, Kol56, Wiener}), including typically the classical mathematical formulation of Brownian motion defined on the space of all continuous functions on
$[0, 1]$ which vanish at $0$ (by N. Wiener \cite{Wiener}). Also, it is well-known that one of the most fundamental concepts in probability theory is the so-called ``independence", for which a proper measure for constructing nontrivial sequences of independent random variables is, simply to use the (infinite) product measure of some given sequence of measures in finite dimensions (\cite{Kol56});

\item In physics, functions with infinite number of variables appear extensively in continuum mechanics, quantum mechanics, quantum field theory, statistical physics and so on (\cite{Glimm, GJ81, Popov, Neumann55}). The  most relevant example in this respect is the famous Feynman path integrals (\cite{Feynman}). As pointed by N.~Wiener (\cite[p. 131]{Wiener}), ``the physicist is equally concerned with systems the dimensionality of which, if not infinite, is so large that it invites the use of limit-process in which it is treated as infinite. These systems are the systems of statistical mechanics...".
\end{itemize}

It is worth noting that, the purpose of mathematicians to introduce infinity is NOT to complicate the problems, but rather to simplify them by focusing the ideal situations. Indeed, it is well-known that, the distribution of prime numbers in natural numbers, especially in small intervals, is very much irregular. But if we denote by $\pi (x) $ (for $x \geq 1 $) the number of prime numbers that do not exceed $x$, then the classical prime number theorem holds: $\pi (x) \sim x/\ln (x) $ as $x \to \infty $ (i.e., at a distance sufficiently far away, the number of prime numbers is quite regular). Interesting enough, natural numbers are discrete, but the first proof of this prime number theorem was given by means of a complex analysis method for dealing with continuous objects, which is also a manifestation of the enormous power of analytical mathematics.

Unlike the Pigeonhole Principle for finite sets, there exists the famous Hilbert's Grand Hotel for infinite sets. Therefore, there must exist delicate and wonderful mathematical structures in infinite-dimensional spaces that cannot be seen in finite-dimensional space. In \cite[p. 14]{Ati}, M. Atiyah remarked that ``{\it the 21st century might be the era of quantum mathematics or, if you like, of infinite-dimensional mathematics}". In the near future, the main object of mathematical research is, very likely, to be infinite-dimensional spaces, while the traditional setting of finite-dimensional spaces is just a special case, although very often this is the most important situation.

Similar to the finite-dimensional setting, infinite-dimensional analysis is the core and the basis of infinite-dimensional mathematics. Infinite-dimensional analysis has a very long history, dating back to early calculus of variations (with pioneering contributions from the Bernoulli family, L.~Euler, J.-L.~Lagrange and so on). In 1854, B.~Riemann (\cite{Rie}) proposed the concept of infinite-dimensional manifolds in his inaugural speech at G\"otttingen University. In 1887, V. Volterra (\cite{Vol1,Vol2,Vol3,Vol4,Vol5}) took a decisive step in creating infinite-dimensional analysis, despite being referred to as ``functions of functions" at that time. V. Volterra's work was further developed by H. von Koch (\cite{Koc}), D. Hilbert (\cite{Hil}), M. Fr\'echet (\cite{Fre1, Fre2, Fre3}), R. G\^ateaux (\cite{Gat1,Gat2,Gat3}) and so on.

One can find numerous works on infinite-dimensional analysis, including both real and complex analysis in infinite dimensions (See \cite{Aliprantis, AGS, AvSm, Bar, Bay, Becnel, BL00, Cha, Coe, Col, Din81, Din99, FHHSPZ, FrBu, Had, Her71, Her, Hille, IS85, KM, Lan, LT2021, Maz, Michal, Muj, Nac69, Nac, Nov, Schwartz, Uglanov, Yama} and the rich references therein).
Nevertheless, compared with the finite-dimensional case, the development of infinite-dimensional analysis is far from satisfactory. The in-depth study of infinite-dimensional analysis will encounter huge challenges:
\begin{itemize}
    \item Completely different from the finite-dimensional setting, unfortunately, there exist NO nontrivial translation invariant measures in infinite-dimensional spaces;

   \item Any useful topology in an infinite-dimensional space is usually much more complicated than its finite-dimensional counterpart, and in infinite dimensions the gap between local and
global is much wider than that in finite dimensions;

		\item As remarked in our paper \cite[p. 520]{YZ22}, another essential difficulty in infinite-dimensional spaces is how to treat the differential (especially the higher order differential) of (possibly vector-valued) functions. In this respect, the most popular notions are  Fr\'echet's derivative and G\^ateaux's derivative, which coincide whenever one of them is continuous. However, higher order (including the second order) Fr\'echet's derivatives are multilinear mappings, which are usually not convenient to handle analytically.
\end{itemize}
Because of the above difficulties, though after the works of great mathematicians such as V. Volterra (\cite{Vol6}),
D.~Hilbert (\cite{Hil}), A.~N.~Kolmogorov (\cite{Kol56}), N.~Wiener  (\cite{Wiener}), J.~von~Neumann (\cite{Neumann55}) and I.~M.~Gel'fand  (\cite{Gel64}), people have not yet found a suitable framework so that in-depth studies of infinite-dimensional analysis are always possible. In fact, although integrals can be defined in very general measure spaces and differentials can also be defined in very general topological linear spaces, differentials and integrals in infinite-dimensional spaces are studied separately in the vast majority of literature, unlike the Fundamental Theorem of Calculus in finite-dimensional cases where they can be organically connected:
\begin{itemize}
    \item The commonly used abstract space functions (or vector-valued functions) in functional analysis generally refer to functions whose domains are in finite dimensional spaces and whose values are in abstract spaces (of course, which can be infinite dimensional spaces). Regarding the differentiation and integration of such functions, their properties are quite similar to the classical calculus because their domains are still in finite dimensional spaces;

		\item Most nonlinear functional analysis literature (e.g., \cite{Zeidler1, Zeidler2, Zeidler3, Zeidler4, Zeidler5}) use differentials in infinite-dimensional spaces, but if integrals are also used, quite often they are not integrals in the same spaces, but integrals in other finite-dimensional spaces.
\end{itemize}
Roughly
speaking, (linear) functional analysis is in some sense the part of linear algebra in infinite-dimensional spaces; while
the part of calculus in infinite-dimensional spaces, i.e., infinite-dimensional analysis, to our opinion, very likely, requires the
efforts of mathematicians of several generations or even many generations to be satisfactory. As a reference, the long process of classical calculus (on finite dimensions) from its birth to maturity, from I. Newton and G.~W.~Leibniz to H.~L.~Lebesgue, has lasted more than 250 years.

For quite a long time, the development of infinite-dimensional analysis has been deeply influenced by research in probability theory and related fields (e.g., \cite{Berezanski, Ber-Kon, Bog10, Borodin, CDLL, Cerrai, Da-Za, Dalecky-Fomin, Fom68, FOT94, Gro65, Hairer, Hida, Huang-Yan, Kol56, Levy, Lyons, Ma-92, Mall, Obata, Skorohod, Ust, UZ, Wiener} and so on). Especially in recent decades, due to the works of outstanding probabilistic scientists such as L. Gross (\cite{Gro65}, for abstract Wiener spaces), T. Hida (\cite{Hida75}, about white noise analysis), and P.~Malliavin (\cite{Mall}, on Malliavin's analysis), significant progresses have been made in infinite-dimensional analysis. However, most of these probabilists' works consider infinite-dimensional analysis on quite general probability spaces (or even abstract probability spaces), whose results inevitably appear not powerful enough to solve some complicated problems in this respect that require particularly fine processing (such as the unsolved regularity problem for relaxation transposition solutions to the operator-valued backward stochastic evolution equations in \cite{Lu-Zhang}).

In our opinion, infinite-dimensional analysis has the significance of being independent of probability theory, and it should and can fully become an independent discipline and a mathematical branch. In particular, for infinite-dimensional analysis the ``randomness" should be removed to a certain extent (of course, probability theory, especially stochastic analysis,  will always be one of the most important application backgrounds of infinite-dimensional analysis), and the ``(mathematical) analysis nature" should be returned to. As a reference, functional analysis originated (to a large extent in history) from the study of integral equations, but as an independent branch of mathematics, the great development of functional analysis is possible only after getting rid of the shackles of integral equations.

In this work, based on our previous papers \cite{LZZ23, WYZ1, WYZ, Wang-Zhang1, Yu-Zhang0, YZ22, ZX}, we propose a new setting for infinite-dimensional analysis (including both real and complex analysis in infinite dimensions).  More precisely, as a reminiscence to the works \cite{Gat2, Levy} and so on, we choose the Hilbert space $\ell^2$ and/or its suitable (but quite general) subsets as basic spaces, for which typical Gaussian measures can be constructed as in \cite[pp. 8--9]{Da}. In some sense, $\ell^2$  is the simplest infinite-dimensional space, closest to Euclidean spaces (Nevertheless, infinite-dimensional analysis is far from well-understood even for $\ell^2$).  Note that the essential difficulties for analysis problems on $\ell^2$ are by no means reduced though sometimes working in this space does provide some convenience. Then, we refine our strategy adopted in the papers \cite{WYZ1, Wang-Zhang1, YZ22} to propose a convenient setting for infinite-dimensional analysis, in which differentiation (in some weak sense) and integration operations can be easily performed, integration by parts (including  Gauss, Green, or even Stokes type formulas in quite general forms) can be conveniently established under rather weak conditions,
 and especially some nice properties and consequences obtained by convolution (or even the Fourier transformation) in Euclidean spaces can be extended to infinite-dimensional spaces in some sense by taking the limit.

 Compared to the existing tools in infinite-dimensional analysis (including the above mentioned abstract Wiener space, white noise analysis and Malliavin's calculus, and so on), our setting enjoys more convenient and clearer links with that of finite dimensions, and hence it is more suitable for computation and studying some analysis problems in infinite-dimensional spaces.

 With the aid of the above mentioned new setting, we can solve the following longstanding problems in infinite-dimensional analysis:
 \begin{itemize}
     \item  Construction of naturally induced measures for general co-dimensional surfaces in infinite dimensions;
     
     \item   Differential forms and Stokes type theorems for general co-dimensional surfaces in infinite dimensions;
      
    \item  $L^2$ estimates for the $\overline{\partial}$ operators in infinite dimensions, particularly for general pseudo-convex domains therein;

   \item Holmgren type theorem with infinite number of variables;

		\item Equivalence of Sobolev spaces over general domains of infinite-dimensional spaces.
\end{itemize}
This work is intended to give a concise presentation of the above setting and results, as self-contained as possible. We hope that it will serve as a basis for further research in infinite-dimensional analysis.

\newpage

\section{Some Preliminaries}
In this section,  we shall collect some preliminary results that will be used in the rest.
Throughout this work, $\dbN$ is the set of positive integers, while
$\dbR$ and $\dbC$ stand for respectively
the fields of real numbers and complex numbers. Sometimes, $\dbR$ also stands for
one dimensional Euclidean space, while $\dbC$ is identified with $\dbR^2$.

\subsection{Topology}

We begin with some basic concepts and facts on topology without proofs (See \cite{Kel} for more materials).

In the rest of this subsection, we fix a nonempty set $X$.

\begin{definition}
A nonempty family $\mathscr{T}$ of subsets of $X$ is called a \index{topology}{\it topology} if $\emptyset, X\in \mathscr{T}$, the intersection of any two members of $\mathscr{T}$ is a member of
$\mathscr{T}$, and the union of the members of each subfamily of $\mathscr{T}$ is a
member of $\mathscr{T}$. The set $X$ is called the space of the topology $\mathscr{T}$ and
$\mathscr{T}$ is a topology for $X$. The pair $(X,\mathscr{T})$ is called a \index{topological space}{\it topological space}.
\end{definition}

In what follows, unless other stated, we fix  a topological space $(X,\mathscr{T})$ and a subset $U$ of $X$. When
no confusion seems possible we will not mention the
topology and simply write ``$X$ is a topological space".

One may
construct a topology $\mathscr{U}$ for the set $U$ which is called the \index{relative topology}{\it relative topology}, defined to be the family of all intersections of members of $\mathscr{T}$
with $U$, i.e., $\mathscr{U}\=\{V\cap U:\;V\in \mathscr{T}\}$.

\begin{definition}
The set $U$ is called \index{open set}{\it open} (\resp \index{closed set}{\it closed}) (relative to $\mathscr{T}$) if $U\in \mathscr{T}$ (\resp $X\setminus U\in \mathscr{T}$). $U$ is called a \index{neighborhood}{\it neighborhood} of a point $x\in X$ if $U$ contains an open set to which
$x$ belongs. If $(X,\mathscr{T}')$ is another topological space and each member of $\mathscr{T}$ is a member of $ \mathscr{T}'$, then $\mathscr{T}'$ is called {\it finer than} $\mathscr{T}$, and denoted by \index{$\mathscr{T}\subset \mathscr{T}'$}$\mathscr{T}\subset \mathscr{T}'$.
\end{definition}

\begin{definition}
The intersection of the members of the family of all closed
sets (in $X$) containing $U$ is called the \index{closure}{\it closure} (or more precisely, $\mathscr{T}$-closure) of $U$, denoted by \index{$\overline{U}$}$\overline{U}$. A point $x$ of $U$
is called an {\it interior point} of $U$ if $U$ is a neighborhood of $x$. The set
of all interior points of $U$ is called the \index{interior}{\it interior} of $U$, denoted by \index{$U^0$}$U^0$. The set of all points $x$ which are interior to
neither $U$ nor $X\setminus U$ is called the \index{boundary}{\it boundary} of $U$, denoted by \index{$\partial U$}$\partial U$.
\end{definition}

It is easy to see that  $\partial U=\partial (X\setminus U)$.

\begin{definition}
Two subsets $A$ and $B$ of $X$ are called \index{separated subsets}{\it separated} in the
topological space $(X,\mathscr{T})$ if $\overline{A}\cap B$ and $A \cap \overline{B}$ are both void.
\end{definition}

\begin{definition}
The
topological space $(X,\mathscr{T})$ is called \index{connected space}{\it connected} if $X$ is not the union
of two non-void separated subsets. A subset $Y$ of $X$ is called \index{connected subset}{\it connected}
if the topological space $Y$ with the relative topology is connected.
\end{definition}

\begin{definition}\label{241102def1}
The set $U$ is called \index{dense set}{\it dense} in $ X$ if $\overline{U}=X$. The topological space $X$ is called \index{separable space}{\it separable} if there is a countable
subset which is dense in $X$.
\end{definition}

\begin{definition}
A family $\mathscr{B}$ of sets of $X$ is called a \index{base}{\it base} for the topology $\mathscr{T}$ if $\mathscr{B}$ is a subfamily of
$\mathscr{T}$ and for each point $x$ of $X$, and each neighborhood $O$ of
$x$, there is a member $O'$ of $\mathscr{B}$ such that $x\in O' \subset O$.
\end{definition}

We shall use the following result.

\begin{theorem}\label{241026t1}
A family $\mathscr{B}$ of sets of $X$ with $X=\bigcup\{B:\; B\in \mathscr{B}\}$ is a base for some topology on $X$ if and only if for every two members $O_1$ and $O_2$
of $\mathscr{B}$ and each point $x \in O_1\cap O_2$ there is $O_0 \in \mathscr{B}$ such that
$O_0\subset O_1\cap O_2$.
\end{theorem}

A space whose topology has a countable base has many
pleasant properties.


\begin{definition}
A family $\mathscr{A}$ of sets of $X$ is called a \index{cover}{\it cover} of the set $U$ if $U$ is a subset of the union
$\bigcup\{A:\; A\in \mathscr{A}\}$. The family is called an \index{open cover}{\it open cover} of $U$ if each member
of $\mathscr{A}$ is an open set. A subfamily of $\mathscr{A}$ which is
also a cover (of $U$) is called a \index{subcover}{\it subcover} of $\mathscr{A}$.
\end{definition}

\begin{definition}
The set $U$ is called \index{compact set}{\it compact} if every cover of $U$ by sets
which are open in $X$ has a finite subcover. The topological space $X$ is called \index{compact space}{\it compact} if $X$ is a compact set; it is called \index{locally compact space}{\it locally compact} if each point in $X$ has at least one compact neighborhood.
.
\end{definition}

\begin{definition}
The topological space $X$ is called a \index{Lindel\"of space}{\it Lindel\"of space} if each open cover of
the space has a countable subcover.
\end{definition}

By \cite[Theorem 15, p. 49]{Kel}, it follows that

\begin{theorem}\label{Lindelof}
{\rm ({\bf Lindel\"of})} Any topological space whose topology has a countable
base is a Lindel\"of space.
\end{theorem}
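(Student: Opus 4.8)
The plan is to exploit the countable base to replace an arbitrary open cover by a countable family of basic open sets, and then lift this family back to a countable subcover drawn from the original cover. Fix a countable base $\mathscr{B}=\{B_n:n\in\mathbb{N}\}$ for $\mathscr{T}$, and let $\mathscr{A}$ be an arbitrary open cover of $X$. The goal is to produce a countable subfamily of $\mathscr{A}$ that still covers $X$, which is precisely the Lindel\"of property.

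First I would single out the basic open sets that are useful relative to $\mathscr{A}$, namely $\mathscr{B}'\=\{B\in\mathscr{B}:B\subset A\text{ for some }A\in\mathscr{A}\}$. Since $\mathscr{B}'$ is a subfamily of the countable family $\mathscr{B}$, it is itself countable. The key claim is that $\mathscr{B}'$ already covers $X$: given any $x\in X$, choose $A\in\mathscr{A}$ with $x\in A$ (possible since $\mathscr{A}$ is a cover); because $\mathscr{B}$ is a base and $A$ is an open neighborhood of $x$, there is some $B\in\mathscr{B}$ with $x\in B\subset A$, and this $B$ lies in $\mathscr{B}'$ by its very definition. Hence every point of $X$ lies in a member of $\mathscr{B}'$.

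Next I would pass from $\mathscr{B}'$ back to $\mathscr{A}$. For each $B\in\mathscr{B}'$, by definition there is at least one $A\in\mathscr{A}$ with $B\subset A$; I select one such set and call it $A_B$. Enumerating $\mathscr{B}'$ as a countable family, the collection $\{A_B:B\in\mathscr{B}'\}$ is a countable subfamily of $\mathscr{A}$, and it covers $X$: for any $x\in X$, the previous step gives $B\in\mathscr{B}'$ with $x\in B\subset A_B$, so $x\in A_B$. Thus $\mathscr{A}$ admits a countable subcover.

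The only point requiring any care is the selection of the sets $A_B$, which invokes a choice of one member of $\mathscr{A}$ for each of the countably many elements of $\mathscr{B}'$; since $\mathscr{B}'$ is countable this uses only countable choice, and no further topological input is needed. Everything else is a direct unwinding of the definitions of base, open cover, and subcover, so I expect no genuine obstacle beyond bookkeeping.
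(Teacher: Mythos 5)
Your proof is correct and follows exactly the standard route: the paper itself supplies no argument for this theorem, citing instead Kelley's \emph{General Topology} (Theorem 15, p.~49), where precisely this method is used --- pass to the countable subfamily $\{B\in\mathscr{B}: B\subset A \hbox{ for some } A\in\mathscr{A}\}$, check it covers $X$ using the definition of a base, and then select one member of $\mathscr{A}$ above each such basic set (only countable choice is needed). There is nothing to add.
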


There are many topological spaces in which the topology is
derived from a notion of distance. A \index{metric}{\it metric} for the set $X$ is a non-negative
function $d$ on the Cartesian product $X \times X$ such that for all points $x$, $y$, and $z$ of $X$,
\begin{itemize}
    \item[(a)] \q $d(x,y) = d(y,x)$;

    \item[(b)] \q (triangle inequality) $d(x,y) \leq d(x,z)+ d(y,z)$;

    \item[(c)] \q $d(x,y) = 0$ if and only $x = y$.
\end{itemize}
In this case, $(X,d)$ is called a \index{metric space}{\it metric space}. For any $r>0$, the open
sphere of radius $r$ about $x$ is denoted by
\bel{ballxr}
\index{$B(x;r)$}B(x;r)\=\{x'\in X:\;d(x,x')<r\}.
\ee
Clearly, the
family of all open spheres is a base for some topology for $X$ (see
Theorem \ref{241026t1}). This topology is the \index{metric topology}{\it metric topology} for $X$.
We call a sequence $\{y_k\}_{k=1}^\infty\subset X$ converges to some $y_0(\in X)$ in $X$ if $\ds\lim_{k\to\infty}y_k=y_0$ in $X$, i.e., $\ds\lim_{k\to\infty}d(y_k,y_0)=0$.
It is easy to see that, $(X,d)$ is separable if and only if there is a sequence $\{x_i\}_{i=1}^\infty$ in $X$ such that for each $x\in X$, one can find a subsequence $\{x_{i_k}\}_{k=1}^\infty$ satisfying $\ds\lim_{k\to\infty}x_{i_k}=x$ in $X$.

As a consequence of Theorem \ref{Lindelof}, one has the following result:
\begin{corollary}\label{Lindelof1}
Any separable metric space is a Lindel\"of space.
\end{corollary}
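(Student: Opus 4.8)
The plan is to exhibit an explicit countable base for the metric topology and then invoke Theorem~\ref{Lindelof}. First I would use separability to fix a countable dense sequence $\{x_i\}_{i=1}^\infty$ in $X$, and form the family
\[
\mathscr{B}\=\{B(x_i;q):\; i\in\mathbb{N},\ q\in\mathbb{Q},\ q>0\},
\]
which is countable, being indexed by the countable set $\mathbb{N}\times(\mathbb{Q}\cap(0,\infty))$. The whole content of the argument is then to check that $\mathscr{B}$ is a base for the metric topology on $X$ in the sense of the definition preceding Theorem~\ref{241026t1}; once this is done, Theorem~\ref{Lindelof} applies verbatim.

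Second, I would verify the base property directly. Each $B(x_i;q)$ is an open sphere, hence a member of the metric topology $\mathscr{T}$, so $\mathscr{B}$ is a subfamily of $\mathscr{T}$. Given a point $x\in X$ and a neighborhood $O$ of $x$, the fact that the open spheres form a base for the metric topology (as noted just after \eqref{ballxr}) yields some $r>0$ with $x\in B(x;r)\subset O$. I would then choose a rational $q$ with $0<q<r/2$, and use density of $\{x_i\}_{i=1}^\infty$ to find an index $i$ with $d(x,x_i)<q$, so that $x\in B(x_i;q)$.

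Third, the inclusion $B(x_i;q)\subset B(x;r)$ follows from the triangle inequality: for any $y\in B(x_i;q)$ we have, using property (b) with $z=x_i$ together with symmetry (a),
\[
d(y,x)\leq d(y,x_i)+d(x,x_i)<q+q=2q<r,
\]
so $y\in B(x;r)$. Hence $x\in B(x_i;q)\subset B(x;r)\subset O$, which is precisely the defining condition for $\mathscr{B}$ to be a base. With $\mathscr{B}$ a countable base in hand, Theorem~\ref{Lindelof} immediately gives that $X$ is a Lindel\"of space, as claimed.

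I do not expect any genuine obstacle, as the argument is elementary. The only point requiring a little care is the bookkeeping of radii: one must center the approximating ball at a dense point $x_i$ near $x$ rather than at $x$ itself, and the factor of $2$ produced by the triangle inequality then forces the choice $q<r/2$ so that the rational-radius ball stays inside $B(x;r)$. Everything else is a direct translation of separability into the existence of a countable base.
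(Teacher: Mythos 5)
Your proof is correct and follows exactly the route the paper intends: the corollary is stated there as an immediate consequence of Theorem~\ref{Lindelof}, with the countable base $\{B(x_i;q):i\in\mathbb{N},\,q\in\mathbb{Q},\,q>0\}$ left implicit, and your verification of the base property (including the $q<r/2$ bookkeeping) fills in precisely that omitted step.
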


\begin{definition}\label{241102def2}
Let $(X,d)$ be a metric space.

1) We call $\{x_k\}_{k=1}^\infty\subset X$  a \index{Cauchy sequence}{\it Cauchy
    sequence} (in $(X,d)$) if for any $\e>0$, there is $k_0\in \dbN$ such that
$d(x_k,x_j)<\e$ for all $k,j\ge k_0$;

2) We call
$(X,||\cd||_X)$ a \index{complete metric space}{\it complete metric space} if for any Cauchy sequence
$\{x_k\}_{k=1}^\infty\subset X$,
there exists $x_0\in X$ so that $\ds\lim_{k\to\infty}x_k=x_0$ in $X$.
\end{definition}

Suppose that $(X',\mathscr{T}')$ is another topological space

\begin{definition}
A mapping $f:\;U\to X'$ is called \index{continuous}{\it continuous} if $f^{-1}(O')\in \mathscr{U},\;\forall\;O'\in \mathscr{T}'$.
\end{definition}

Denote by \index{$C(U;X')$}$C(U;X')$ the set of all continuous mapping from $X$ to $X'$. When $Y=\dbR$ (or sometimes $\dbC$, which is clear from the context), we simply denote it by \index{$C(U)$}$C(U)$. When the underlying topology $\mathscr{T}$ should be emphasized, we write these sets as \index{$C_{\mathscr{T}}(U;X')$}$C_{\mathscr{T}}(U;X')$ and \index{$C_{\mathscr{T}}(U)$}$C_{\mathscr{T}}(U)$, respectively. For any $f\in C(U)$, the set
$$
\index{$\supp f$}\supp f\triangleq\overline{\{x\in U:\;f(x)\neq 0\}}
$$
is called the \index{support}{\it support} of $f$.

\begin{definition}
A bijective mapping  $f:\;X\to X'$ is called a \index{homeomorphism}{\it homeomorphism} if both $f$ and the inverse mapping $f^{-1}$
are continuous. In this case, the topological spaces $X$ and $X'$ are called \index{homeomorphic}{\it homeomorphic}.
\end{definition}

\subsection{Functional Analysis}

In this work, $\dbK$ denotes either the field $\dbR$ or the field $\dbC$. For any $c\in\dbC$, denote by $\bar c$ its complex conjugate.

\subsubsection{Vector Spaces}


\begin{definition}
A nonempty set $X$ is called a \index{vector space}{\it vector space} (or \index{linear space}{\it linear space}) over $\dbK$ if there exist two mappings:
$$
(x,y)\in X\times X\to x+y\in X \hbox{ and }(\alpha,x)\in \dbK\times X\to \a x\in X,
$$
called respectively \index{addition}{\it addition} and \index{scalar multiplication}{\it scalar multiplication}, that together satisfy the following
properties:
$$
x + y= y + x\; \hbox{ and }\;x + (y + z) = (x + y) + z,\q \forall\; x, y, z \in X;
$$
there exists an element of $X$, denoted by $0$, such that $x + 0 = x$ for all $x \in X$; given any $x \in X$,
there exists an element of X, denoted by $-x$, such that $x + (-x)$ = 0; and for all $\a,\b\in\dbK$ and $x, y \in X$,
$$
\a(x + y) = \a x + \a y, \q (\a +\b)x = \a x +\b x, \q \a(\b x) = (\a\b)x, \q 1x = x.
$$
\end{definition}

A \index{real vector space}{\it real vector space} (or \index{real linear space}{\it real linear space}) is a vector space over $\dbR$. A \index{complex vector space}{\it complex vector space} (or \index{complex linear space}{\it complex linear space}) is a vector
space over $\dbC$. A vector space is either a real vector space or a complex vector space.  Clearly, $\dbR$ and $\dbC$ themselves are vector spaces, any vector space
over the field $\dbC$ is also a vector space
over the field $\dbR$.

In the rest of this sub-subsection, we fix a vector space $X$ over $\dbK$.

The elements of $X$ and $\dbK$ are respectively called \index{vectors}{\it vectors} and \index{scalars}{\it scalars}. The element $0 \in X$
is called the \index{origin}{\it origin}, or the \index{zero vector}{\it zero vector}, of $X$; in this respect, note that the same symbol
$0$ denotes both the zero vector of $X$ and the zero of $\dbK$. If $X \not= \{0\}$, any vector $x \in X$ such
that $x \not= 0$ is called a \index{nonzero vector}{\it nonzero vector} of $X$.

A \index{linear subspace}{\it linear subspace} of $X$ is any subset of $X$ that is also a vector space
over $\dbK$. In particular, $\{0\}$ is a linear subspace of $X$.

The linear subspace spanned by a subset $A$ of $X$, denoted by
\index{$\span A$}$\span A$, consists
of all finite linear combinations of vectors of $A$, i.e., vectors $x \in X$ of the form $\ds x =\sum_{j\in J}
 a_jx_j$, where the set $J $ of indices is finite, and $a_j \in\dbK$ and $x_j \in A$ for all $j \in J$.

Let $I$ be a nonempty index set.

\begin{definition}
 A family $(e_i)_{i\in I}$ of vectors $e_i \in X$ (for each $i\in I$) is called \index{linear independence}{\it linearly independent}, if for any nonempty finite subset $J$ of $I$ and any scalars $\a_j \in \dbK$ ($j \in J$) with $\ds\sum_{j\in J}\a_je_j = 0$,
it holds that $\a_j = 0$ for each $j \in J$. Further, $(e_i)_{i\in I}$ is called a \index{Hamel
basis}{\it Hamel
basis} in $X$ if it is linearly independent and $\span(e_i)_{i\in I} = X$.
\end{definition}

\begin{theorem}\label{Hamel}
If $X\not=\{0\}$, then there exists a Hamel basis of $X$. Further, if $E$ and $F$ are two Hamel bases of $X$, then $\hbox{\rm card$\,$} E = \hbox{\rm card$\,$} F$.
\end{theorem}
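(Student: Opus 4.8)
The plan is to prove the two assertions of Theorem~\ref{Hamel} separately: first the existence of a Hamel basis, and then the invariance of cardinality. For existence, I would apply Zorn's Lemma to the collection $\mathscr{P}$ of all linearly independent subsets of $X$, partially ordered by inclusion. Since $X\neq\{0\}$, there is a nonzero vector $x_0$, so $\{x_0\}\in\mathscr{P}$ and $\mathscr{P}$ is nonempty. Given any chain (totally ordered subfamily) $\mathscr{C}\subset\mathscr{P}$, I would take its union $E_0\=\bigcup\{A:\;A\in\mathscr{C}\}$ and check that $E_0$ is again linearly independent: any would-be linear dependence involves only finitely many vectors, each lying in some member of the chain, and by total ordering all of them lie in a single member $A\in\mathscr{C}$, contradicting the independence of $A$. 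Thus every chain has an upper bound, and Zorn's Lemma yields a maximal linearly independent set $E$. The remaining point is to show $\span E=X$: if some $x\in X\setminus\span E$ existed, then $E\cup\{x\}$ would still be linearly independent (a nontrivial relation would force the coefficient of $x$ to be nonzero, expressing $x$ as a combination of vectors in $E$, i.e.\ $x\in\span E$, a contradiction), violating maximality of $E$.

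For the cardinality statement, I would split into the infinite and finite cases. When both bases $E$ and $F$ are infinite, the cleanest route is a counting-by-exchange argument: for each $f\in F$, its (unique, finite) expansion in terms of $E$ determines a finite nonempty subset $S(f)\subset E$ of basis vectors appearing with nonzero coefficient. I would first argue that $\bigcup_{f\in F}S(f)=E$; otherwise some $e\in E$ lies outside every $S(f)$, yet $e\in\span F$, and substituting the $F$-expansion of $e$ back through the $E$-expansions would exhibit a nontrivial dependence among elements of $E$. From $E=\bigcup_{f\in F}S(f)$ with each $S(f)$ finite, standard cardinal arithmetic gives $\hbox{\rm card}\,E\le\aleph_0\cdot\hbox{\rm card}\,F=\hbox{\rm card}\,F$ (using that $F$ is infinite). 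By symmetry $\hbox{\rm card}\,F\le\hbox{\rm card}\,E$, and the Schr\"oder--Bernstein theorem finishes this case.

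When one basis, say $E$, is finite, I would show $F$ is finite with the same number of elements via the Steinitz exchange lemma: given a linearly independent family and a spanning family, one can replace members of the spanning set by members of the independent set one at a time, keeping the spanning property, which forces $\hbox{\rm card}\,(\text{independent})\le\hbox{\rm card}\,(\text{spanning})$. Applying this with $F$ independent and $E$ spanning gives $\hbox{\rm card}\,F\le\hbox{\rm card}\,E<\infty$, and then symmetrically $\hbox{\rm card}\,E\le\hbox{\rm card}\,F$, so they are equal. (This also rules out the mixed case where one basis is finite and the other infinite.)

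The main obstacle is the infinite-cardinality case, since the naive finite-dimensional exchange argument does not terminate and one cannot simply count. The crucial idea there is to replace termination with the estimate $\hbox{\rm card}\,E\le\aleph_0\cdot\hbox{\rm card}\,F$ coming from the countable-union structure $E=\bigcup_{f\in F}S(f)$ of finite sets indexed by $F$; this is where infinite cardinal arithmetic (the absorption law $\aleph_0\cdot\kappa=\kappa$ for infinite $\kappa$) and Schr\"oder--Bernstein do the real work. One should also be careful to invoke the Axiom of Choice explicitly, both in the Zorn's Lemma step and implicitly in the cardinal arithmetic, which is unavoidable: the existence of Hamel bases for arbitrary vector spaces is in fact equivalent to the Axiom of Choice.
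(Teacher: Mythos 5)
Your proof is correct, and it is the standard argument: Zorn's Lemma on linearly independent sets for existence, the finite-support/union estimate with cardinal arithmetic and Schr\"oder--Bernstein for the infinite case, and Steinitz exchange for the finite (and mixed) case. Note that the paper itself states Theorem~\ref{Hamel} without proof, as one of the collected preliminaries, so there is no competing argument to compare against; your write-up fills that gap correctly, including the careful point that an element $e\in E$ omitted from every support set $S(f)$ would yield a nontrivial dependence in $E$, and the observation that the exchange lemma rules out one basis being finite while the other is infinite.
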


In view of Theorem \ref{Hamel}, the following notion makes sense.

\begin{definition}
We say the vector space $X$ to be \index{finite-dimensional vector space}{\it finite-dimensional} (\resp \index{infinite-dimensional vector space}{\it infinite-dimensional}), if there exists a
finite (\resp infinite) Hamel basis of $X$, and its dimension, denoted by \index{$\dim X$}$\dim X$, is the cardinal of anyone of its Hamel bases. A Hamel
basis of a finite-dimensional vector space $X$ is simply called a \index{basis}{\it basis}.
\end{definition}

\subsubsection{Banach Spaces and Hilbert Spaces}


\begin{definition}\label{2.1-def1} Let $X$ be a vector space.
A map $||\cd||_X:\,X\to\dbR$ is called a \index{norm}{\it norm}
on $X$ if it satisfies the following:
\begin{equation}\label{2.1-eq1}
\left\{
\begin{array}{ll}\ds
||x||_X \ge0,\q\forall\; x\in
X;\hbox{ and }||x||_X=0~\iff~x=0;\\
\ns\ds||\a x||_X=|\a|||x||_X,\q\forall\;\a\in \dbC,\,x\in
X;\\
\ns\ds||x+y||_X\le||x||_X+||y||_X,\q\forall\; x,y\in
X.
\end{array}
\right.
\end{equation}
With the above norm $||\cd||_X$, $X$ is called a \index{normed vector
space}{\it normed vector
space} and denoted by $(X, ||\cd||_X)$ (or simply by $X$ if the norm $||\cd||_X$ is clear from the context).
\end{definition}

In the rest of this sub-subsection, unless other stated, we fix a normed vector space $(X, ||\cd||_X)$. Clearly, $(X, ||\cd||_X)$ is a metric space with the norm given by
$$
d(x,y)=||x-y||_X,\q\forall\;x,y\in X.
$$
A subset $G$ of $X$ is called \index{bounded subset}{\it bounded}, if there is a constant $C$ such that
$||x||_X\le C$ for any $x\in G$. In view of Definition \ref{241102def1}, $G$ is said to be {\it dense}  in $X$ if for any $x\in X$, one can find a sequence $\{x_k\}_{k=1}^\infty\subset G$ such that $\ds\lim_{k\to\infty}x_k=x$ in $X$. Also, $X$ is called {\it separable} if there exists a countable dense subset of $X$. The set $B(0;1)$ (as defined in \eqref{ballxr}) is called the \index{unit ball}{\it unit ball} of $X$.

\begin{definition}\label{2.1-def3}
$(X,||\cd||_X)$ is called a \index{Banach space}{\it Banach space} if it
is {\it complete} in the sense of Definition \ref{241102def2}.
\end{definition}

\begin{definition}\label{2.1-def1.1} Let $H$ be a vector space.
A map $\lan\cd,\cd\ran_H:\,H\times H\to \dbC$ is called an
\index{inner product}{\it inner product} on $H$ if
$$
\left\{
\begin{array}{ll}\ds
{\lan x,x\ran}_H\ge0,\q\forall\; x\in H;\hbox{ and }{\lan x,x\ran}_H=0~\iff~x=0;
\\
\ns\ds{\lan x, y\ran}_H=\overline{{\lan y, x\ran}_H},\q\forall\; x,y\in
H; \\
\ns\ds {\lan \a x+\b
y,z\ran}_H=\a{\lan x,y\ran}_H+\b{\lan y,z\ran}_H, \q\forall\;\a,\b\in \dbC,\;x,y,z\in
H.
\end{array}
\right.
$$
A vector space $H$ with the above inner product ${\lan\cd,\cd\ran}_H$ is called an \index{inner product
space}{\it inner product
space} and denoted by $(H, {\lan\cd,\cd\ran}_H)$ (or simply by $H$ if the inner product ${\lan\cd,\cd\ran}_H$ is clear from the context).
\end{definition}

The following result gives a
relationship between norm and inner
products.

\begin{proposition}\label{2.1-prop4}
Let $(H, {\lan\cd,\cd\ran}_H)$  be an
inner product
space. Then, the map
$x\mapsto\sqrt{{\lan x,x\ran}_H}$, $\forall\; x\in H$, is a norm
on $H$.
\end{proposition}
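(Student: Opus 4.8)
The plan is to verify, for the map $\|x\| \triangleq \sqrt{\langle x,x\rangle_H}$, the three defining properties of a norm listed in Definition \ref{2.1-def1}. The first two properties follow almost immediately from the inner-product axioms, so the genuine content lies in the triangle inequality, which I would reduce to the Cauchy--Schwarz inequality.

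First, positivity and nondegeneracy: since $\langle x,x\rangle_H \geq 0$ for every $x\in H$, the square root is well defined and nonnegative, and $\|x\| = 0$ holds if and only if $\langle x,x\rangle_H = 0$, which by the first inner-product axiom is equivalent to $x = 0$. For homogeneity, I would compute $\|\alpha x\|^2 = \langle \alpha x,\alpha x\rangle_H$ and use linearity in the first slot together with conjugate symmetry to obtain $\langle \alpha x,\alpha x\rangle_H = \alpha\,\overline{\alpha}\,\langle x,x\rangle_H = |\alpha|^2\langle x,x\rangle_H$, where I use that $\langle x,x\rangle_H$ is real; taking square roots yields $\|\alpha x\| = |\alpha|\,\|x\|$.

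The key step is the Cauchy--Schwarz inequality $|\langle x,y\rangle_H| \leq \|x\|\,\|y\|$. I would prove it by expanding the nonnegative quantity $0 \leq \langle x-\lambda y,\, x-\lambda y\rangle_H$ for a scalar $\lambda\in\mathbb{C}$ and then choosing $\lambda = \langle x,y\rangle_H / \langle y,y\rangle_H$ (the case $y=0$ being trivial, since then both sides vanish). Using linearity and conjugate symmetry, the two cross terms and the quadratic term combine so that the inequality rearranges to $|\langle x,y\rangle_H|^2 \leq \langle x,x\rangle_H\,\langle y,y\rangle_H$. This is the main obstacle: one must track the complex conjugates carefully so that the mixed terms $\overline{\lambda}\langle x,y\rangle_H$ and $\lambda\langle y,x\rangle_H$ combine correctly, and it is precisely here that the full strength of the inner-product axioms (beyond mere positivity) is needed.

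Finally, for the triangle inequality I would expand $\|x+y\|^2 = \langle x+y,\,x+y\rangle_H = \|x\|^2 + 2\,\mathrm{Re}\,\langle x,y\rangle_H + \|y\|^2$, bound $\mathrm{Re}\,\langle x,y\rangle_H \leq |\langle x,y\rangle_H|$, and apply Cauchy--Schwarz to conclude $\|x+y\|^2 \leq \|x\|^2 + 2\|x\|\,\|y\| + \|y\|^2 = (\|x\|+\|y\|)^2$. Taking square roots then gives $\|x+y\| \leq \|x\| + \|y\|$, completing the verification that $\|\cdot\|$ is a norm.
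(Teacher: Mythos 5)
Your proof is correct: the verification of positivity, homogeneity, and the reduction of the triangle inequality to Cauchy--Schwarz (proved by expanding $\langle x-\lambda y, x-\lambda y\rangle_H \geq 0$ with $\lambda = \langle x,y\rangle_H/\langle y,y\rangle_H$) is the standard and complete argument. Note that the paper itself states this proposition without proof, as part of its preliminaries recalled from standard functional analysis, so yours is exactly the argument the authors implicitly rely on.
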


By Proposition \ref{2.1-prop4}, any inner product space $(H, {\lan\cd,\cd\ran}_H)$ can be regarded as a
normed vector space. We call
$|x|_H=\sqrt{{\lan x,x\ran}_H}$ the norm {\it induced}
by ${\lan\cd,\cd\ran}_H$. Also, for any $x,y\in H$, we say that $x$ is {\it orthogonal to} $y$, denoted by \index{$\bot$}$x\bot y$, if ${\lan x,y\ran}_H=0$. For any $E\subset H$, the notation $x\bot E$ means that $x\bot z$ whenever $z \in E$. Also, denote by $E^\bot$
the set of all $h\in H$ that are orthogonal to every $z \in E$.  For any $E_1,E_2\subset H$, the notation $E_1\bot E_2$ means that $x_1\bot x_2$ whenever $x_1 \in E_1$ and $x_2 \in E_2$.

\begin{definition}\label{2.1-def5}
An inner product space $H$  is called
a \index{Hilbert space}{\it Hilbert space} if it is complete under
the norm induced by its inner product.
\end{definition}

In Definitions \ref{2.1-def1}, \ref{2.1-def3}, \ref{2.1-def1.1} and \ref{2.1-def5}, we recalled the notions of normed vector space, Banach space, inner product space and Hilbert space over the field $\dbC$. Similarly, one can define these spaces over the field $\dbR$.

For each $n\in \dbN$, write
$$
\mathbb{R}^n\=\prod\limits_{i=1}^{n}\mathbb{R}\equiv\overbrace{\mathbb{R}\times\cdots\times \mathbb{R}}^{n\hbox{ \tiny times}},
$$
which is the $n$-times Cartesian product of $\mathbb{R}$. It is an $n$-dimensional Hilbert space with inner product given by
$$
{\lan (x_i)_{i=1}^n,(y_i)_{i=1}^n\ran}_{\mathbb{R}^n}=\sum_{i=1}^nx_iy_i,\q\forall\;(x_i)_{i=1}^n,(y_i)_{i=1}^n\in \mathbb{R}^n.
$$
With the above inner product, $\mathbb{R}^n$ is called the $n$-dimensional \index{Euclidean space}{\it Euclidean space}.

Write $\mathbb{R}^{\infty}=\prod\limits_{i=1}^{\infty}\mathbb{R}$, which is the countable Cartesian product of $\mathbb{R}$, i.e., the space of sequences of real numbers (Nevertheless, in the sequel we shall write an element in $\mathbb{R}^{\infty }$ explicitly as $( x_{i})^{\infty }_{i=1}$ or $( x_{i})_{i\in \mathbb{N}}$ rather than $\{ x_{i}\}^{\infty }_{i=1}$, where $x_i\in\mathbb{R}$ for each $i\in \mathbb{N}$). One may check that $\mathbb{R}^{\infty}$ is a complete metric space with the following metric:
\bel{241102e1}
d(( x_{i})^{\infty }_{i=1},( y_{i})^{\infty }_{i=1})=\sum_{i=1}^\infty\frac{|x_i-y_i|}{2^i(1+|x_i-y_i|)},\q\forall\;( x_{i})^{\infty }_{i=1},( y_{i})^{\infty }_{i=1}\in \mathbb{R}^{\infty}.
\ee
Similarly, we have the notation $\mathbb{C}^{\infty}$, the countable Cartesian product of $\mathbb{C}$. Clearly, $\mathbb{C}^{\infty}$ can be identified with $(\mathbb{R}^2)^{\infty}=\prod\limits_{i=1}^{\infty}\mathbb{R}^2$, which is also a complete metric space with metric given similarly as that in \eqref{241102e1}.

In the sequel, unless other stated, for any $p\in[1,\infty)$, we denote by $\ell^p$ the vector space of all $p$-absolutely summable sequences of real
numbers, i.e.
\bel{241023e1}
\ell^p\triangleq \left\{(x_n)_{n=1}^\infty\in \mathbb{R}^{\infty}:\;\sum_{n=1}^{\infty}|x_n|^p<\infty\right\}.
\ee
For any $(x_n)_{n=1}^\infty\in \ell^p$, define
\bel{241023e2}
||(x_n)_{n=1}^\infty||_{\ell^p}=\left(\sum_{n=1}^{\infty}|x_n|^p\right)^{1/p}.
\ee
Then $\ell^p$ is a separable Banach space; particularly $\ell^2$ is a separable Hilbert space.

Also, let
$$
\ell^{\infty}\triangleq \left\{(x_n)_{n=1}^\infty\in \mathbb{R}^{\infty}:\;\sup_{1\leq n<\infty}|x_n|<\infty\right\}.
$$
Then $\ell^\infty$ is a Banach space (with the norm $\ds ||(x_n)_{n=1}^\infty||_{\ell^\infty}=\sup_{1\leq n<\infty}|x_n|$), which is not separable.

Both $\ell^p$ and $\ell^\infty$ are typically infinite-dimensional Banach spaces. In some sense, $\ell^2$ is the simplest infinite-dimensional space, closest to Euclidean spaces.
Nevertheless, there exist significant differences between finite-dimensional spaces and
infinite-dimensional spaces. To see this, let us recall the following Riesz's lemma (e.g., \cite[Lemma 1, p. 93]{Xia2010}).

\begin{lemma}\label{241016lem1}{\rm \textbf{(Riesz)}}
Suppose that $M$ is a closed, proper vector subspace of $X$. Then for any $\varepsilon\in(0,1)$, there exists $x\in X$ such that $||x||_X=1$ and $\sup\limits_{y\in M}||x-y||_X>\varepsilon$.
\end{lemma}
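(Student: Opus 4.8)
The plan is to run the classical Riesz construction: to exhibit a unit vector whose distance to $M$ is almost maximal, relying only on the hypotheses that $M$ is a proper closed subspace. Since $M$ is proper, I would first fix some $x_0\in X\setminus M$ and consider its distance to $M$,
$$
d\triangleq\inf_{y\in M}||x_0-y||_X .
$$
The first key step is to verify $d>0$. If instead $d=0$, there would exist a sequence $\{y_k\}_{k=1}^\infty\subset M$ with $||x_0-y_k||_X\to 0$, that is, $y_k\to x_0$ in $X$; because $M$ is closed, this would force $x_0\in\overline{M}=M$, contradicting $x_0\notin M$. Hence $d>0$.

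Next, fix $\varepsilon\in(0,1)$, so that $d<d/\varepsilon$. By the very definition of the infimum $d$, the number $d/\varepsilon$ is not a lower bound of $\{||x_0-y||_X:\,y\in M\}$, so there exists $y_0\in M$ with
$$
d\le ||x_0-y_0||_X<\frac{d}{\varepsilon}.
$$
I would then normalize and set
$$
x\triangleq\frac{x_0-y_0}{||x_0-y_0||_X},
$$
which is legitimate since $||x_0-y_0||_X\ge d>0$, and for which $||x||_X=1$ by the homogeneity of the norm.

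It remains to estimate $||x-y||_X$ from below for an arbitrary $y\in M$. Writing
$$
||x-y||_X=\frac{1}{||x_0-y_0||_X}\,\big|\big|x_0-\big(y_0+||x_0-y_0||_X\,y\big)\big|\big|_X
$$
and noting that $y_0+||x_0-y_0||_X\,y\in M$ since $M$ is a linear subspace, the numerator is bounded below by $d$, whence
$$
||x-y||_X\ge\frac{d}{||x_0-y_0||_X}>\frac{d}{d/\varepsilon}=\varepsilon .
$$
As $y\in M$ was arbitrary, this yields $||x-y||_X>\varepsilon$ for every $y\in M$; in particular $\sup_{y\in M}||x-y||_X>\varepsilon$, which is the claimed conclusion.

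The argument is entirely elementary, and the only genuinely delicate point is the strict positivity $d>0$ in the first step; this is exactly where the closedness of $M$ is indispensable, and it is what keeps the conclusion from being vacuous. I would stress that in this infinite-dimensional setting one cannot instead appeal to compactness of the closed unit ball to realize the distance $d$ by an actual nearest point of $M$ — precisely the phenomenon that Lemma~\ref{241016lem1} is meant to illustrate — which is why the $\varepsilon$-slack together with the scaling manipulation in the final step are needed in place of a cleaner orthogonal-projection argument.
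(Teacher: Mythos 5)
Your proof is correct and complete; it is the classical Riesz construction, and since the paper gives no proof of this lemma (it simply cites \cite[Lemma 1, p. 93]{Xia2010}), there is no internal argument to compare against. One point worth flagging: as literally stated, with $\sup\limits_{y\in M}||x-y||_X>\varepsilon$, the lemma is trivial — since $0\in M$, any unit vector $x$ satisfies $\sup\limits_{y\in M}||x-y||_X\ge ||x||_X=1>\varepsilon$ — so the ``$\sup$'' is evidently a typo for ``$\inf$''. Your argument establishes precisely this stronger, intended version: you show $||x-y||_X\ge d/||x_0-y_0||_X>\varepsilon$ for \emph{every} $y\in M$, which is exactly what the paper needs later (in the proof of Lemma \ref{230309lem1}, where unit vectors at mutual distance greater than $\tfrac12$ are constructed by taking $M$ to be the span of the previously chosen vectors).
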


It is well-known that any bounded sequence in Euclidean spaces has a convergent subsequence. As a consequence of Lemma \ref{241016lem1}, it is easy to show the following characterization of finite-dimensional normed spaces.

\begin{theorem}\label{241103t1}
The normed vector space $(X, ||\cd||_X)$ is finite-dimensional if and only if any sequence in the unit ball of $X$ has a convergent subsequence.
\end{theorem}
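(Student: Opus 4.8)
The plan is to prove the two implications separately, with the forward direction carrying the genuine analytic content and the converse following cleanly from Riesz's lemma (Lemma \ref{241016lem1}). Throughout, $\|\cdot\|$ denotes the Euclidean norm on $\dbK^n$ and $\|\cdot\|_X$ the norm of $X$.

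For the forward direction, suppose $\dim X = n < \infty$ and fix a basis $e_1,\dots,e_n$. First I would introduce the linear bijection $T: \dbK^n \to X$, $T(a_1,\dots,a_n) = \sum_{i=1}^n a_i e_i$, and show it is a homeomorphism for the Euclidean norm on $\dbK^n$. Continuity of $T$ is immediate from the triangle inequality together with Cauchy--Schwarz, giving $\|Ta\|_X \le C\|a\|$ with $C = (\sum_{i=1}^n \|e_i\|_X^2)^{1/2}$. The crux is the continuity of $T^{-1}$: the Euclidean unit sphere $S = \{a \in \dbK^n : \|a\| = 1\}$ is compact (identifying $\dbC^n$ with $\dbR^{2n}$ when $\dbK = \dbC$, this is the fact, recalled just above the theorem, that bounded sequences in Euclidean space have convergent subsequences), the map $a \mapsto \|Ta\|_X$ is continuous and strictly positive on $S$ (positive because $T$ is injective and $0 \notin S$), hence it attains a minimum $m > 0$. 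This yields $\|Ta\|_X \ge m\|a\|$ for every $a$, so $T^{-1}$ is bounded. Once $T$ is a homeomorphism, any sequence $\{x_k\}$ in the unit ball of $X$ pulls back to the bounded sequence $\{T^{-1}x_k\}$ in $\dbK^n$, which has a convergent subsequence, and $T$ carries it to a convergent subsequence of $\{x_k\}$.

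For the converse I would argue by contraposition: assuming $X$ is infinite-dimensional, I would construct a sequence in the unit ball whose pairwise distances are bounded below, so that no subsequence is Cauchy, hence none converges. Build $\{x_k\}$ inductively with $\|x_k\|_X = 1$ as follows. Having chosen $x_1,\dots,x_n$, set $M_n = \span\{x_1,\dots,x_n\}$. Being finite-dimensional, $M_n$ is complete (by the homeomorphism with $\dbK^{\dim M_n}$ from the forward direction, transporting completeness of Euclidean space) and therefore closed in $X$; and it is proper since $X$ is infinite-dimensional. Riesz's lemma (Lemma \ref{241016lem1}) with $\varepsilon = \tfrac12$ then supplies a unit vector $x_{n+1}$ at distance greater than $\tfrac12$ from $M_n$, so in particular $\|x_{n+1} - x_j\|_X > \tfrac12$ for all $j \le n$. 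The resulting sequence lies in the unit ball and satisfies $\|x_k - x_j\|_X > \tfrac12$ whenever $k \neq j$, which forbids any Cauchy subsequence.

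The main obstacle is the forward direction, and precisely the continuity of $T^{-1}$: this is where finite-dimensionality is genuinely exploited, through compactness of the Euclidean sphere and the strict positivity of $a \mapsto \|Ta\|_X$ on it. Two supporting facts deserve to be recorded in passing and reused in the converse: every finite-dimensional normed space is complete, and consequently every finite-dimensional subspace of $X$ is closed, which is exactly what legitimizes the application of Riesz's lemma. The remaining verifications (the norm estimate for $T$, and the bookkeeping uniform in $\dbK = \dbR$ or $\dbC$) are routine.
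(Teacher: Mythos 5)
Your proof is correct and follows exactly the route the paper intends: the paper states this theorem without proof, citing only Riesz's lemma (Lemma \ref{241016lem1}) and the fact that bounded sequences in Euclidean spaces have convergent subsequences, and these are precisely the two ingredients you develop (Riesz for the contrapositive of the converse, compactness of the Euclidean sphere for the forward direction). The only microscopic adjustment is that the paper's ``unit ball'' $B(0;1)$ is open, so in the converse you should rescale your unit vectors by a fixed factor $r\in(0,1)$ (pairwise distances remain $>r/2$, so no subsequence is Cauchy) to place the sequence inside $B(0;1)$.
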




\subsubsection{Bounded Linear Operators}


In the rest of this sub-subsection, unless otherwise stated,  $X_1$ and
$Y_1$ are normed vector spaces, and $X$ and
$Y$ are Banach spaces.

\begin{definition}
A map $A:X_1\to
Y_1$ is called a {\it linear operator} if
$$
A(\a x+\b y)=\a Ax+\b Ay,\qq\forall\; x,y\in X_1,\;\a,\b\in \dbC.
$$
Furthermore, $A$ is called a {\it bounded linear operator} if it is linear and it maps each bounded subset of $X_1$ into a bounded
subset of $Y_1$.
\end{definition}

Denote by $\cL(X_1;Y_1)$  the set of all bounded linear
operators from $X_1$ to $Y_1$. We simply write $\cL(X_1)$ instead
of $\cL(X_1;X_1)$. For any
$\a,\b\in \dbR$ and $A,B\in\cL(X_1;Y_1)$, we define $\a
A+\b B$ as follows:
\begin{equation}\label{2.1-eq9}
(\a A+\b B)(x)=\a Ax+\b Bx,\qq\forall\; x\in X_1.
\end{equation}
Then, $\cL(X_1;Y_1)$ is also a vector space. Let
\begin{equation}\label{2.1-eq10}
||A||_{\cL(X_1;Y_1)}\deq\sup_{x\in X_1\setminus\{0\}}{\frac{||Ax||_{Y_1}}{||x||_{X_1}}}.
\end{equation}
One can show that, $||\cd||_{\cL(X_1;Y_1)}$ defined by
\eqref{2.1-eq10} is a norm on $\cL(X_1;Y_1)$, and  $\cL(X;Y)$ is a
Banach space under such a norm (In the sequel, unless other stated, we shall always endow $\cL(X;Y)$ with such an operator norm topology).

Let us consider the special case $Y=\dbC$. Any $f\in \cL(X;\dbC)$ is
called a {\it bounded linear functional} on $X$.
Hereafter, we write $X'=\cL(X;\dbC)$ and call it
the {\it dual} (space) of $X$. We also
denote
\begin{equation}\label{2.1-eq15}
f(x)={\langle
f,x\rangle}_{X',X},\qq\forall\;
x\in X.
\end{equation}
The symbol
${\langle\cd\,,\cd\rangle}_{X',X}$
is referred to as the {\it duality pairing}
between $X'$ and $X$. It follows from
\eqref{2.1-eq10} that
\begin{equation}\label{2.1-eq16}
||f||_{X'}=\sup_{x\in X,||x||_X\le1}|f(x)|,\qq\forall\; f\in X'.
\end{equation}
Clearly, $X'$ is a Banach space. Particularly , $X$ is called {\it reflexive} if $X''=X$.

%
%

\subsubsection{Unbounded Linear Operators}

In this sub-subsection, $H_1$ and $H_2$ are Hilbert spaces.

\begin{definition}\label{1def8}
For a linear operator $T : D_T(\subset H_1) \to
H_2$, where $D_T$ is a linear
subspace of $ H_1$, we call $D_T$ the {\it
    domain} of $T$. The {\it graph} of $T$ is the subset
of $H_1\times H_2$ consisting of all elements of
the form $(x,Tx)$ with $x \in D_T$.
The operator $T$
is called {\it closed} (\resp {\it densely defined}) from $H_1$ into $H_2$ if its graph is a closed
subspace of $H_1\times H_2$ (\resp $D_T$ is dense in $H_1$).
\end{definition}

In the rest of this sub-subsection, we assume that $T$ is a linear, closed and densely defined operator from $H_1$ into $H_2$. Denote by $D_T$ the domain of $T$, and write $$R_T\triangleq\{Tx:x\in D_T\},\quad N_T\triangleq\{x\in D_T:Tx=0\},
$$
which are the range and the kernel of $T$, respectively.

The domain $D_{T^*}$ of the {\it adjoint operator} $T^*$ of $T$
is
defined as the set of all $f \in H_2$
such that, for some $g \in
H_1$,
$$
{\lan Ax,f\ran}_{H_2}={\lan x,g\ran}_{H_1},\q\forall\;x
\in D_T.
$$
In this case, we define $T^*f \deq g$.
One can show, $T^*$ is also a linear, closed and densely defined operator from $H_2$ into $H_1$, and $T^{**}=T$ (e.g., \cite[Section 4.2]{Kra}).

By \cite[Lemma 4.1.1]{Hor90}  and the proof of \cite[Lemma 3.3]{SlU}, it is easy to show the following result (See \cite[Lemma 2.1]{YZ22}):
\begin{lemma}\label{lower bounded lemma}
	Let $F$ be a closed subspace of $H_2$ and $R_T\subset F$. Then $F=R_T$ if and only if for some constant $C>0$,
	\begin{eqnarray}\label{lower bounded}
		||g||_{H_2}\leqslant C||T^*g||_{H_1},\quad \forall\; g\in D_{T^*}\cap F.
	\end{eqnarray}
	In this case, for any $f\in F$, there is a unique $u\in D_T\cap N_T^\bot$ such that
	\begin{eqnarray}\label{abs equa}
		Tu=f,
	\end{eqnarray}
	and
	\begin{eqnarray}\label{lower bounded1}
		||u||_{H_1}\leqslant C||f||_{H_2},
	\end{eqnarray}
	where the constant $C$ is the same as that in \eqref{lower bounded}.
\end{lemma}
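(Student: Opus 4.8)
The plan is to prove the two implications separately, using throughout the elementary identity $N_{T^*}=R_T^\perp$, which holds for any densely defined $T$: indeed $g\in R_T^\perp$ means $\langle Tx,g\rangle_{H_2}=0$ for all $x\in D_T$, which by the definition of the adjoint is precisely $g\in D_{T^*}$ with $T^*g=0$. Since $R_T\subset F$, this yields $F^\perp\subset R_T^\perp=N_{T^*}$; in particular every element of $F^\perp$ lies in $D_{T^*}$ and is annihilated by $T^*$. Throughout I write $P_F$ for the orthogonal projection of $H_2$ onto the closed subspace $F$, and I note that $N_T$ is a closed subspace of $H_1$ (because $T$ is closed), so the projection onto $N_T^\perp$ is available.

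First I would prove that the estimate implies $F=R_T$ together with the solvability and the bound. Fix $f\in F$ and consider the linear functional $L$ on the subspace $V\triangleq\{T^*g:\,g\in D_{T^*}\cap F\}\subset H_1$ defined by $L(T^*g)=\langle g,f\rangle_{H_2}$. The estimate makes $L$ well defined, since $T^*g_1=T^*g_2$ with $g_i\in D_{T^*}\cap F$ forces $\|g_1-g_2\|_{H_2}\le C\|T^*(g_1-g_2)\|_{H_1}=0$, and it makes $L$ bounded with $\|L\|\le C\|f\|_{H_2}$, because $|L(T^*g)|=|\langle g,f\rangle_{H_2}|\le\|g\|_{H_2}\|f\|_{H_2}\le C\|T^*g\|_{H_1}\|f\|_{H_2}$. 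By the Hahn--Banach theorem and the Riesz representation theorem there is $u\in H_1$ with $\|u\|_{H_1}\le C\|f\|_{H_2}$ and $\langle T^*g,u\rangle_{H_1}=\langle g,f\rangle_{H_2}$ for all $g\in D_{T^*}\cap F$. To upgrade this to all $g\in D_{T^*}$, I decompose $g=P_Fg+(g-P_Fg)$: here $g-P_Fg\in F^\perp\subset N_{T^*}$, hence $P_Fg\in D_{T^*}\cap F$ with $T^*g=T^*P_Fg$, while $\langle g,f\rangle_{H_2}=\langle P_Fg,f\rangle_{H_2}$ because $f\in F$. Thus $\langle T^*g,u\rangle_{H_1}=\langle g,f\rangle_{H_2}$ for every $g\in D_{T^*}$, which by the definition of the adjoint together with $T^{**}=T$ means $u\in D_T$ and $Tu=f$. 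This shows $F\subset R_T$, and combined with $R_T\subset F$ gives $F=R_T$. Replacing $u$ by its orthogonal projection onto $N_T^\perp$ leaves $Tu=f$ unchanged and does not increase $\|u\|_{H_1}$, so we may take $u\in D_T\cap N_T^\perp$; uniqueness there is immediate, since two such solutions differ by an element of $N_T\cap N_T^\perp=\{0\}$.

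Conversely, suppose $F=R_T$, so that $R_T$ is closed. I restrict $T$ to $D\triangleq D_T\cap N_T^\perp$ and regard $\widehat T\triangleq T|_D$ as a map from $D$, equipped with the graph norm $\|u\|^2=\|u\|_{H_1}^2+\|Tu\|_{H_2}^2$, onto $F$. Since $T$ is closed and $N_T^\perp$ is closed, $(D,\|\cdot\|)$ is a Banach space, $\widehat T$ is bounded, and it is bijective onto $F$ (surjective because $R_T=F$ and any preimage $x$ may be replaced by $x-P_{N_T}x\in D_T\cap N_T^\perp$; injective because $N_T\cap N_T^\perp=\{0\}$). The bounded inverse theorem then gives a constant $C>0$ with $\|u\|_{H_1}\le C\|Tu\|_{H_2}$ for all $u\in D_T\cap N_T^\perp$. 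Finally, for $g\in D_{T^*}\cap F$, since $g\in F=R_T$ I write $g=Tu$ with $u\in D_T\cap N_T^\perp$ and $\|u\|_{H_1}\le C\|g\|_{H_2}$; then $\|g\|_{H_2}^2=\langle Tu,g\rangle_{H_2}=\langle u,T^*g\rangle_{H_1}\le\|u\|_{H_1}\|T^*g\|_{H_1}\le C\|g\|_{H_2}\|T^*g\|_{H_1}$, and dividing by $\|g\|_{H_2}$ (the case $g=0$ being trivial) yields the desired estimate.

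The main obstacle is the sufficiency direction. The Hahn--Banach extension only produces a $u$ satisfying the identity $\langle T^*g,u\rangle_{H_1}=\langle g,f\rangle_{H_2}$ tested against $g\in D_{T^*}\cap F$, whereas the adjoint characterization of membership $u\in D_T$ requires this identity for \emph{all} $g\in D_{T^*}$. The projection argument---which rests on the inclusion $F^\perp\subset N_{T^*}$, and hence on the hypothesis $R_T\subset F$---is exactly what bridges this gap, and the delicate point is the bookkeeping of domains, namely verifying that $P_Fg$ genuinely lies in $D_{T^*}$. By contrast, the necessity direction reduces to a routine application of the bounded inverse theorem followed by a one-line duality computation, the only care needed there being the completeness of $D_T\cap N_T^\perp$ in the graph norm, which follows from the closedness of $T$ and of $N_T^\perp$.
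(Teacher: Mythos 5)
Your proof is correct. The paper does not actually write out a proof of this lemma---it simply cites H\"ormander's Lemma 4.1.1 and the proof of Lemma 3.3 in Siu--Chen--Zhong---but your argument (for sufficiency, the Hahn--Banach/Riesz duality step made legitimate by the decomposition $g = P_F g + (g - P_F g)$ together with $F^\perp \subset R_T^\perp = N_{T^*}$ and $T^{**}=T$; for necessity, the bounded inverse theorem applied to $T$ restricted to $D_T \cap N_T^\perp$ with the graph norm, followed by the one-line duality estimate) is precisely the standard argument behind those citations, so it matches the paper's intended proof in all essentials, including the fact that the solution bound holds with the same constant $C$ as in the a priori estimate.
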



\begin{remark}\label{estimation rem}
	Generally speaking, the solutions of \eqref{abs equa} are NOT unique. Indeed, the set of solutions of \eqref{abs equa} is as follows:
	$$
	U\equiv\{u+v:\,v\in N_T\},
	$$
	where $u$ is the unique solution (in the space $D_T\cap N_T^\bot$) to this equation (found in Lemma \ref{lower bounded lemma}). It is obvious that $u\bot N_T$.
\end{remark}

Similar to \cite[Lemma 3.3]{SlU}, as a consequence of Lemma \ref{lower bounded lemma}, we have the following result (See \cite[Corollary 2.1]{YZ22}).

\begin{corollary}\label{estimation lemma}
	Let $S$ be a linear, closed and densely defined operator from $H_2$ to another Hilbert space $H_3$, and $R_T\subset N_S$. Then,
	
	{\rm 1)} $R_T=N_S$ if there exists a constant $C>0$ such that
	\begin{eqnarray}\label{estimation lower bounded}
		||g||_{H_2}\leqslant C\sqrt{||T^*g||_{H_1}^2+||Sg||_{H_3}^2},\quad \forall\; g\in D_{T^*}\cap D_S;
	\end{eqnarray}
	
	{\rm 2)} $R_T=N_S$ if and only if for some constant $C>0$,
	\begin{eqnarray}\label{weakestimation lower bounded}
		||g||_{H_2}\leqslant C||T^*g||_{H_1},\quad \forall\; g\in D_{T^*}\cap N_S.
	\end{eqnarray}
	
	In each of the above two cases, for any $f\in N_S$, there exists a unique $u\in D_T\cap N_T^\bot$  such that $Tu=f$ and
	\begin{eqnarray}\label{Solution estation}
		||u||_{H_1}\leqslant C||f||_{H_2},
	\end{eqnarray}
	where the constant $C$ is the same as that in \eqref{estimation lower bounded} (or \eqref{weakestimation lower bounded}).
\end{corollary}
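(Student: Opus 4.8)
The plan is to apply Lemma \ref{lower bounded lemma} with the choice $F=N_S$; the entire statement then falls out once we check that $N_S$ is an admissible candidate for $F$ and that the two hypotheses \eqref{estimation lower bounded} and \eqref{weakestimation lower bounded} can be funneled into the single condition \eqref{lower bounded} of that lemma.

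First I would verify that $N_S$ is a closed subspace of $H_2$. Linearity is immediate from the linearity of $S$, and closedness follows from the standing assumption that $S$ is a closed operator: if $x_k\in N_S$ and $x_k\to x$ in $H_2$, then the pairs $(x_k,Sx_k)=(x_k,0)$ converge to $(x,0)$ in the graph of $S$, which is closed by Definition \ref{1def8}; hence $x\in D_S$ and $Sx=0$, i.e.\ $x\in N_S$. Combined with the hypothesis $R_T\subset N_S$, this makes $N_S$ a legitimate choice for the subspace $F$ in Lemma \ref{lower bounded lemma}.

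For part 2) there is then essentially nothing more to do: taking $F=N_S$ in Lemma \ref{lower bounded lemma}, condition \eqref{lower bounded} reads exactly as \eqref{weakestimation lower bounded}, so the lemma delivers both the equivalence between $R_T=N_S$ and \eqref{weakestimation lower bounded} and, in that case, the unique solvability of $Tu=f$ in $D_T\cap N_T^\bot$ together with the estimate \eqref{Solution estation} (which is \eqref{lower bounded1} specialized to $F=N_S$), with the constant $C$ preserved.

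For part 1) I would derive \eqref{weakestimation lower bounded} from the stronger hypothesis \eqref{estimation lower bounded} and then invoke part 2). The key observation is the harmless but essential inclusion $N_S=\{x\in D_S:Sx=0\}\subset D_S$: any $g\in D_{T^*}\cap N_S$ automatically lies in $D_{T^*}\cap D_S$ and satisfies $Sg=0$, so $\|Sg\|_{H_3}=0$. Substituting into \eqref{estimation lower bounded} collapses the right-hand side to $C\|T^*g\|_{H_1}$, which is precisely \eqref{weakestimation lower bounded} with the same constant $C$, and part 2) then yields $R_T=N_S$ along with the solvability and the estimate \eqref{Solution estation}. I do not expect a genuine obstacle here — the analytic content is carried entirely by Lemma \ref{lower bounded lemma}; the only two points requiring care are the closedness of $N_S$ (needed for the lemma to apply) and the vanishing of the $\|Sg\|_{H_3}$ term on $N_S$, which is exactly what reduces the sufficient condition of part 1) to the necessary-and-sufficient condition of part 2).
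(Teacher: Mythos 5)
Your proposal is correct and follows exactly the route the paper intends: the paper states Corollary \ref{estimation lemma} as a direct consequence of Lemma \ref{lower bounded lemma}, and your argument supplies precisely the details of that reduction — checking that $N_S$ is closed (via the closedness of the graph of $S$) so that $F=N_S$ is admissible, reading off part 2) from the lemma, and collapsing \eqref{estimation lower bounded} to \eqref{weakestimation lower bounded} on $D_{T^*}\cap N_S$ since $Sg=0$ there. No gaps; the constant $C$ is preserved at every step, as required.
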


\begin{remark}\label{remark2-9-1}
	It seems that the estimate \eqref{estimation lower bounded} is equivalent to \eqref{weakestimation lower bounded}, i.e., \eqref{estimation lower bounded} is also a necessary condition for $R_T=N_S$ but, as far as we know, so far this is still unclear.
\end{remark}

\subsubsection{Differentiability of Vector-Valued Functions}\label{subsec0120}

In this sub-subsection, we recall the notion of differentiability for vector-valued functions. Let $X$ and $Y$ be Banach spaces over $\dbR$ (Recall that any Banach space over $\dbC$ is also a Banach space
over $\dbR$),  $X_0\subset X$, $X_0\not=\emptyset$ and let $F:\,X_0\to Y$ be a
    function (not necessarily linear).

\begin{definition}\label{def-con}
        {\rm 1)} We say
    that $F$ is {\it continuous at $x_0\in
        X_0$} if
    $$\lim_{x\in X_0,x\to
        x_0}||F(x)-F(x_0)||_Y=0.$$  If $F$ is
    continuous at any point of $X_0$, then we
    say that $F$ is {\it continuous on $X_0$}.

    \ss

    {\rm 2)}  We say that $F$ is  {\it Fr\'echet
        differentiable} at $x_0\in X_0$ if there
    exists $F_1\in\cL(X;Y)$ such that
    $$
    \lim_{x\in X_0,x\to
        x_0}\frac{||F(x)-F(x_0)-F_1(x-x_0)||_{Y}}{||x-x_0||_{X}}=0.
   $$
    We call $F_1$ the {\it Fr\'{e}chet derivative} of $F$ at
    $x_0$, and write $DF(x_0)=F_1$. If
    $F$ is Fr\'echet
    differentiable at each point of $
    X_0$, then we say that $F$ is {\it Fr\'echet differentiable on
    $X_0$}.  Moreover, when the map $DF:X_0\to \cL(X;Y)$ is continuous, we say that $F$ is {\it continuously Fr\'echet differentiable on
    $X_0$}.

    \ss

    {\rm 3)}  We say that $F$ is {\it  second order Fr\'echet
differentiable} at $x_0\in X_0$ if $F:\,X_0\to Y$ is continuous Fr\'echet differentiable and there
exists $F_2\in\cL(X;\cL(X;Y))$ such that
$$
\lim_{x\in X_0,x\to
    x_0}\frac{||DF(x)-DF(x_0)-F_2(x-x_0)||_{\cL(X;Y)}}{||x-x_0||_{X}}=0.
$$
We call $F_2$ the {\it second order Fr\'{e}chet derivative} of $F$ at
$x_0$, and write $D^2F(x_0)=F_2$. If
$F$ is  second order Fr\'echet
differentiable at each point of $
X_0$, then we say that $F$ is {\it  second order Fr\'echet differentiable on
$X_0$}. Moreover, when the map $D^2F:\;X_0\to \cL(X;\cL(X;Y))$ is continuous, we say that $F$ is {\it second order continuously Fr\'echet differentiable on
$X_0$}.
\end{definition}

Inductively, for each $k\geq 2$,  one can define the $k$-th order Fr\'echet derivative $D^kF(x_0)$ of $F$ at
$x_0$. The
set of all continuously (\resp continuously Fr\'echet
differentiable, $k$-th order continuously Fr\'echet differentiable) functions from $X_0$ to
$Y$ is denoted by $C(X_0;Y)$ (\resp $C^1(X_0; Y)$, $C^k(X_0; Y)$).
When $Y=\dbR$ (or sometimes $\dbC$, which is clear from the context), we simply denote it by
$C(X_0)$ (\resp $C^1(X_0)$, $C^k(X_0)$).

The above definition is essentially given by M.~Fr\'{e}chet in \cite{Fre1,Fre2}, which can be viewed as the strongest definition of derivatives on infinite dimensional spaces.
For each $p\in (1,\infty)$, one can show that the norm function $||\cdot||_{\ell^p}$ is Fr\'{e}chet differentiable at each point of $\ell^p\setminus \{0\}$.

As we shall see below, in terms of the Fr\'echet derivatives, people may establish an implicit function theorem in infinite dimensions.
More precisely, suppose that $Z$ is another Banach space over $\dbR$. Clearly, $X\times Y$ is also a Banach space with the following norm:
$$
||(x,y)||_{X\times Y}\triangleq ||x||_{X}+||y||_{Y},\quad \forall\; (x,y)\in X\times Y.
$$
Suppose that $O$ is a non-empty open subset of $X\times Y$, $(x_0,y_0)\in O$ and $f:\,O\to Z$ is a given function. We consider
the partial functions $x\to f(x,y_0)$ and $y\to f(x_0,y)$ of open subsets of
$X$ and $Y$ respectively into $Z$. We say that at $(x_0,y_0)$, $f$ is differentiable
with respect to the first (\resp second) variable if the partial function
$x\to f(x,y_0)$ (\resp $y\to f(x_0,y)$) is Fr\'echet differentiable at $x_0$ (\resp $y_0$); the
derivative of that function, which is an element of $\cL(X;Z)$ (\resp $\cL(Y;Z)$)
is called the {\it partial derivative} of $f$ at $(x_0,y_0)$ with respect to the first (\resp
second) variable, and written $D_1f(x_0,y_0)$ (\resp $D_2f(x_0,y_0)$). If $f\in C^1(O; Z)$, then one can show that $D_1f\in C(O; \cL(X;Z))$ and $D_2f\in C(O; \cL(Y;Z))$.

Now, we may state the following implicit function theorem (e.g., \cite[pp. 270--271]{Die}):

\begin{theorem}\label{230324the1}\index{The implicit function theorem}
{\rm (\textbf{Implicit Function Theorem})} Suppose that $f\in C^1(O;Z)$, $f(x_0,y_0)=0$, the partial derivative $D_2f(x_0,y_0)(\in \cL(Y;Z))$ is a bijection with a continuous inverse, i.e., $\left(D_2f(x_0,y_0)\right)^{-1}\in \cL(Z;Y)$. Then there exists an open
connected neighborhood $U$ of $x_0$ and a unique function $u\in C^1(U;Y)$ such that

{\rm 1)} $u(x_0)=y_0$, $(x,u(x))\in O$ and $f(x,u(x))=0$ for all $x\in U$;

{\rm 2)} For each $x\in U$, $D_2f(x,u(x))$ is a bijection with a continuous inverse and
$$
Du(x)=-(D_2f(x,u(x)))^{-1}\circ D_1f(x,u(x)).
$$
\end{theorem}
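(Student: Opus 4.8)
The plan is to recast the equation $f(x,y)=0$ as a fixed-point problem and solve it by the contraction mapping principle, exactly as in the finite-dimensional case but using Fr\'echet derivatives throughout. Write $T\triangleq D_2f(x_0,y_0)\in\cL(Y;Z)$, so that by hypothesis $T^{-1}\in\cL(Z;Y)$. For $x$ near $x_0$ define the map $\Phi_x:\,y\mapsto y-T^{-1}f(x,y)$, taking values in $Y$. Since $T^{-1}$ is injective, $y$ solves $f(x,y)=0$ if and only if $\Phi_x(y)=y$, so it suffices to produce a unique fixed point of $\Phi_x$ depending on $x$.

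First I would show $\Phi_x$ is a contraction near $y_0$. Differentiating in $y$ gives $D_2\Phi_x(y)=I-T^{-1}D_2f(x,y)=T^{-1}\big(D_2f(x_0,y_0)-D_2f(x,y)\big)$, which vanishes at $(x_0,y_0)$. Because $f\in C^1(O;Z)$, the map $D_2f$ is continuous, so one can choose $r,\rho>0$ with $B(x_0;r)\times\overline{B(y_0;\rho)}\subset O$ and $\|D_2\Phi_x(y)\|_{\cL(Y;Y)}\le\frac12$ there; the mean value inequality then yields $\|\Phi_x(y)-\Phi_x(y')\|_Y\le\frac12\|y-y'\|_Y$ for $y,y'\in\overline{B(y_0;\rho)}$. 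Shrinking $r$ if necessary, continuity of $f$ and $f(x_0,y_0)=0$ force $\|\Phi_x(y_0)-y_0\|_Y=\|T^{-1}f(x,y_0)\|_Y\le\rho/2$, whence $\Phi_x$ maps the complete metric space $\overline{B(y_0;\rho)}$ into itself. The contraction mapping principle then gives, for each $x\in U\triangleq B(x_0;r)$, a unique fixed point $u(x)\in\overline{B(y_0;\rho)}$, i.e. $f(x,u(x))=0$ with $(x,u(x))\in O$; taking $x=x_0$ shows $u(x_0)=y_0$, and $U$ is connected as it is a ball, so conclusion 1) follows once $u$ is shown continuous. Continuity of $u$ comes from the standard estimate $\frac12\|u(x)-u(x')\|_Y\le\|T^{-1}\|\,\|f(x,u(x'))-f(x',u(x'))\|_Z$ together with continuity of $f$; in fact, using that $f$ is locally Lipschitz in the first variable (a consequence of $f\in C^1$), the same estimate upgrades to the local Lipschitz bound $\|u(x)-u(x')\|_Y\le C\|x-x'\|_X$.

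Next I would establish differentiability. After shrinking $U$, continuity of $D_2f$ and the openness of the set of invertible operators in $\cL(Y;Z)$ (with continuous inversion, via the Neumann series) guarantee that $B(x)\triangleq D_2f(x,u(x))$ is a bijection with $B(x)^{-1}\in\cL(Z;Y)$ for every $x\in U$. Fix $x\in U$, put $A\triangleq D_1f(x,u(x))$, and for small $h\in X$ set $k\triangleq u(x+h)-u(x)$, so that $\|k\|_Y\le C\|h\|_X$. Subtracting $f(x,u(x))=0$ from $f(x+h,u(x)+k)=0$ and using Fr\'echet differentiability of $f$ at $(x,u(x))$ gives $0=Ah+B(x)k+o(\|h\|_X+\|k\|_Y)=Ah+B(x)k+o(\|h\|_X)$, hence $k=-B(x)^{-1}Ah+o(\|h\|_X)$. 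This proves $u$ is Fr\'echet differentiable at $x$ with $Du(x)=-\big(D_2f(x,u(x))\big)^{-1}\circ D_1f(x,u(x))$, which is conclusion 2). Finally, $Du$ is continuous because it is the composition of the continuous maps $x\mapsto(x,u(x))$, the partial derivatives $D_1f,D_2f$ (continuous since $f\in C^1$), and operator inversion; thus $u\in C^1(U;Y)$.

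The main obstacle I expect is the differentiability step rather than existence: the remainder $o(\|h\|_X+\|k\|_Y)$ must be controlled purely in terms of $\|h\|_X$, and this is exactly where the a priori Lipschitz bound $\|k\|_Y\le C\|h\|_X$ (harvested from the contraction estimate) is indispensable. A secondary point of care is the simultaneous choice of the radii $r,\rho$ so that the self-mapping property, the contraction constant, and the invertibility of $D_2f(x,u(x))$ all hold at once; each condition only shrinks the neighborhood, so they are compatible, but the bookkeeping is what makes the argument longer than its finite-dimensional analogue.
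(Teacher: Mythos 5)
Your argument is correct in its main body, and it is essentially the proof the paper points to: the paper itself gives no proof of Theorem \ref{230324the1} but cites Dieudonn\'e, whose argument is exactly this contraction-mapping scheme. The key steps all check out: the identity $D_2\Phi_x(y)=T^{-1}\bigl(D_2f(x_0,y_0)-D_2f(x,y)\bigr)$ together with continuity of $D_2f$ and the mean value inequality gives the contraction constant $\tfrac12$ on $\overline{B(y_0;\rho)}$; shrinking $r$ gives the self-mapping property; the estimate $\tfrac12\|u(x)-u(x')\|_Y\le\|T^{-1}\|\,\|f(x,u(x'))-f(x',u(x'))\|_Z$ yields continuity and, via local boundedness of $D_1f$, the Lipschitz bound $\|k\|_Y\le C\|h\|_X$; invertibility of $D_2f(x,u(x))$ and continuity of inversion follow from the Neumann series; and the Lipschitz bound is precisely what converts the remainder $o(\|h\|_X+\|k\|_Y)$ into $o(\|h\|_X)$, giving the derivative formula and then $u\in C^1(U;Y)$ by composition.

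The one genuine gap is the uniqueness assertion. The contraction principle gives uniqueness of the fixed point only within $\overline{B(y_0;\rho)}$, so you have shown that $u$ is the unique solution among functions taking values in that ball. The theorem, however, claims $u$ is the unique function in $C^1(U;Y)$ satisfying 1), where the only constraint on values is $(x,v(x))\in O$; a competing solution $v$ could a priori leave $\overline{B(y_0;\rho)}$. The standard repair uses exactly the connectedness of $U$ that you note but never exploit: if $v$ is continuous on $U$ with $v(x_0)=y_0$ and $f(x,v(x))=0$, the set $E=\{x\in U:\;v(x)=u(x)\}$ is nonempty and closed in $U$, and it is open because at any $x_1\in E$ the operator $D_2f(x_1,u(x_1))$ is invertible, so rerunning your fixed-point argument centered at $(x_1,u(x_1))$ gives local uniqueness of the zero of $f(x,\cdot)$ near $u(x_1)$, forcing $v=u$ near $x_1$. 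Connectedness then gives $E=U$. Without this (short) argument the stated uniqueness is not established.
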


\begin{remark}\label{R-IFT}
For the above $u$ and $U$ (in Theorem \ref{230324the1}), write
$$
O_0\triangleq\big\{(x, u(x)):\; x\in U\big\}.
$$
Then, $O_0$ is a non-empty open subset of $X\times Y$. Define a map $\Pi:\; O_0\to U$ by $\Pi(x,y)=x$ for any $(x,y)\in O_0$. Then, by Theorem \ref{230324the1}, it is easy to see that $\Pi$ is a homeomorphism from $O_0$ onto $U$.
\end{remark}

Next,  we recall the definition of bounded  bilinear operator.

\begin{definition}\label{def-con-2}
A mapping $M:X\times Z\to Y$ is called a
{\it bounded bilinear operator} if  $M$ is
linear in each argument and there is a constant
$C>0$ such that
$$
||M(x,z)||_Y\leq C||x||_X||z||_Z,\qq \forall\; (x,z)\in X\times Z.
$$

\end{definition}

Denote by $\cL(X,Z;Y)$ the set of all
bounded bilinear operators from $X\times Z$
to $Y$. The norm of $M\in \cL(X,Z;Y)$ is
defined by
$$
||M||_{\cL(X,Z;Y)}=\sup_{x\in X\setminus\{0\},z\in Z\setminus\{0\}}\frac{||M(x,z)||_Y}{||x||_X||z||_Z}.
$$
One can show that $\cL(X,Z;Y)$ is a Banach space with respect to this norm.

Any $L\in \cL(X;\cL(Z;Y))$ defines a
bounded bilinear operator $\wt L\in
\cL(X,Z;Y)$ as follows:
$$
\wt L(x,z)=\big(L(x)\big)(z),\q \forall\; (x,z)\in X\times Z.
$$
Conversely, any  $\wt L\in \cL(X,Z;Y)$ defines an $L\in \cL(X;\cL(Z;Y))$ in the following way:
$$
\big(L(x)\big)(z)=\wt L(x,z),\q \forall\; (x,z)\in X\times Z.
$$
Hence, we may identify $\cL(X;\cL(Z;Y))$ with $\cL(X, Z;Y)$. In this way, for each $F\in C^2(X_0; Y)$, we have $D^2F\in C(X_0;\cL(X,X;Y))$.

Generally, for any $k\geq 2$, denote by $\cL(\underbrace{X,\cdots,X}_{k\hbox{ \tiny times}};Y)$ the Banach space of all
bounded $k$-multilinear operators from $\underbrace{X\times\cdots\times X}_{k\hbox{ \tiny times}}$
to $Y$. Then, for each
$k$-th order continuous Fr\'echet differentiable function $F$ from $X_0$ to
$Y$, $D^kF$ can be identified as a continuous function from $X_0$ to
$\cL(\underbrace{X,\cdots,X}_{k\hbox{ \tiny times}};Y)$. Then, for any $F\in C^k(X_0; Y)$, one has $D^kF\in C(X_0;\cL(\underbrace{X,\cdots,X}_{k\hbox{ \tiny times}};Y))$.

Nevertheless, as pointed in \cite[p. 520]{YZ22}, higher order (including the second order) Fr\'echet's derivatives are usually not easy to handle analytically because in general they are multilinear mappings. Indeed, for any given
infinite dimensional Hilbert space $H$, although ${\cal L}(H)$ is still a Banach space, it is neither
reflexive (needless to say to be a Hilbert space) nor separable anymore even if $H$ itself is
separable, and hence it may be quite difficult to handle some analytic problems related to ${\cal L}(H)$ (e.g., L\"u and Zhang \cite{Lu-Zhang}). Because of this, also as in \cite{YZ22}, we shall introduce a weak version of derivatives in the sequel.

\subsection{Measures and Integrals}\label{20240107subsection1}

We now recall some basic concepts and facts on the theory of measure and integral without proofs (See \cite{Hal, Rud87} for more materials).

In the rest of this subsection, we fix a nonempty set $X$ and a nonempty family $\mathscr{F}$ of subsets of $X$. For any $E\subset X$,
denote by \index{$\chi_E(\cdot)$} $\chi_E(\cdot)$ the (usual) characteristic
function of $E$, defined on $X$.

\subsubsection{Measures}

We call $\mathscr{F}$ a ring (on $X$) if $A\cup B\in \mathscr{F}$ and $A\setminus B\in \mathscr{F}$ whenever $A, B\in \mathscr{F}$. Further we call $\mathscr{F}$ an algebra (on $X$) if it is a ring and $X\in\mathscr{F}$.


A ring $\mathscr{R}$ on $X$ is called a $\sigma$-ring if
$
\bigcup\limits_{k=1}^{\infty}A_k\in \mathscr{R}$ for all $\{A_n\}_{n=1}^{\infty}\subset\mathscr{R}$.
A $\sigma$-ring $\mathscr{R}$ with $X\in\mathscr{R}$ is called a $\sigma$-algebra (on $X$), and in this case the pair $(X,\mathscr{R})$ is called a measurable space.

For any family $\mathscr{F}$ of subsets of $X$, there is a unique $\sigma$-algebra, denoted by $\sigma(\mathscr{F})$, containing $\mathscr{F}$ and contained in every
$\sigma$-algebra containing $\mathscr{F}$. Particularly, when $(X,\mathscr{T})$ is a topological space (and its topology is clear from the context), denote by $\mathscr{B}(X)$ the $\sigma$-algebra generated by $\mathscr{T}$, and each member of $\mathscr{B}(X)$ is called a Borel subset of $X$.

A set function (on $\mathscr{F}$) is a mapping from $\mathscr{F}$ into $[0,+\infty]$. A set function $\mu$ defined on $\mathscr{F}$ is called
finitely additive, if for every $n\in\mathbb{N}$ and $A_1,\cdots,A_n\in \mathscr{F}$ such that $A_j\cap A_i=\emptyset$ for any $1\leqslant j<i\leqslant n$ and
$\bigcup\limits_{k=1}^{n}A_k\in \mathscr{F}$,
it holds that $\mu\left(\bigcup\limits_{k=1}^{n}A_k\right)=\sum\limits_{k=1}^{n}\mu(A_k)$. It is called
countably additive, if for every $\{A_k\}_{k=1}^{\infty}\subset \mathscr{F}$ with $A_j\cap A_i=\emptyset$ for any $j\neq i$ and $\bigcup\limits_{k=1}^{\infty}A_k\in \mathscr{F}$,
it holds that $\mu\left(\bigcup\limits_{k=1}^{\infty}A_k\right)=\sum\limits_{k=1}^{\infty}\mu(A_k).$

Any countably additive set function $\mu$ on a ring $\mathscr{R}$ (on $X$) with $\mu(\emptyset)=0$ is called a measure on $(X,\mathscr{R})$. Furthermore, if $\mathscr{R}$ is an algebra and $\mu (X)<\infty$ (\resp there exists $\{E_n\}_{n=1}^{\infty}\subset\mathscr{R}$ so that $\mu(E_n)<\infty$ for all $n\in\mathbb{N})$ and
$X\subset \bigcup_{n=1}^{\infty}E_n$, then $\mu$ is called a totally finite measure (\resp a $\sigma$-finite measure).

Let $(X_1,\mathscr{S}_1),\cdots,(X_n,\mathscr{S}_n)$
be measurable spaces, $n\in\dbN$. Write\footnote{Note that here and henceforth the
$\sigma$-algebra $\prod\limits_{i=1}^{n} \mathscr{S}_i$ does not stand for the Cartesian product of $\mathscr{S}_1$, $\cdots$, $\mathscr{S}_n$.}
$$
\prod\limits_{i=1}^{n} \mathscr{S}_i\triangleq \sigma \left(\left\{(A_1,\cdots,A_n):\;A_i\in \mathscr{S}_i\hbox{ for }i=1,\cdots,n\right\}\right).
$$
Further, let $\mu_i$ be a measure  on $(X_i,\mathscr{S}_i)$, $i=1,\cdots,n$.
We call $\mu$ a product
measure on $(\prod\limits_{i=1}^{n} X_i,\prod\limits_{i=1}^{n} \mathscr{S}_i)$ induced
by $\mu_1,\cdots,\mu_n$ if
$$
\mu(A_1\times\cdots\times A_n)=\prod_{i=1}^n
\mu_i(A_i),\qquad \forall\; A_i\in\mathscr{S}_i.
$$
One can show that, there is
one and only one product measure $\mu$ (denoted
by $\mu_1\times\cdots\times \mu_n$) on
$(\prod\limits_{i=1}^{n} X_i,\prod\limits_{i=1}^{n} \mathscr{S}_i)$ induced
by $\{\mu_i\}_{i=1}^n$.

\begin{theorem}{\rm \textbf{(Carath\'eodory's Extension Theorem)}}\label{230520thm4}
If $\mathscr{A}$ is an algebra on $X$ and $\mu$ is a totally finite (\resp $\sigma$-finite) measure on $\mathscr{A}$, then there is a unique totally finite (\resp $\sigma$-finite)  measure $ \bar{\mu}$ on $\sigma(\mathscr{A})$ such that $\mu(E)=\bar{\mu}(E)$ for all $E\in\mathscr{A}$.
\end{theorem}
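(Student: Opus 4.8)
The plan is to establish existence by the classical outer-measure construction and uniqueness by a monotone-class (Dynkin) argument, treating the totally finite case first and then reducing the $\sigma$-finite case to it. For existence, I would first define an outer measure $\mu^*$ on all subsets of $X$ by
$$
\mu^*(E)\triangleq\inf\Big\{\sum_{n=1}^\infty\mu(A_n):\;E\subset\bigcup_{n=1}^\infty A_n,\;A_n\in\mathscr{A}\;\forall\,n\in\dbN\Big\},\qquad\forall\;E\subset X.
$$
It is routine to check that $\mu^*(\emptyset)=0$, that $\mu^*$ is monotone, and that $\mu^*$ is countably subadditive (the last via a standard $2^{-n}\varepsilon$ argument distributing a tolerance across the covers of each piece).

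Next I would call a set $E\subset X$ \emph{$\mu^*$-measurable} if $\mu^*(A)=\mu^*(A\cap E)+\mu^*(A\setminus E)$ for every $A\subset X$, and denote by $\mathscr{M}$ the family of all such sets. The heart of the construction (Carath\'eodory's lemma on outer measures) is to show that $\mathscr{M}$ is a $\sigma$-algebra and that $\mu^*$ restricted to $\mathscr{M}$ is countably additive; here one verifies closure under complements (immediate from the symmetry of the defining identity) and under finite unions, and then upgrades finite additivity to countable additivity using subadditivity together with monotonicity along increasing unions. I would then prove the two crucial compatibility facts: (i) every $A\in\mathscr{A}$ is $\mu^*$-measurable, so that $\mathscr{A}\subset\mathscr{M}$ and hence $\sigma(\mathscr{A})\subset\mathscr{M}$; and (ii) $\mu^*(A)=\mu(A)$ for every $A\in\mathscr{A}$. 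Setting $\bar\mu\triangleq\mu^*|_{\sigma(\mathscr{A})}$ then yields a measure on $\sigma(\mathscr{A})$ extending $\mu$. The main obstacle lies precisely in (ii): the inequality $\mu^*(A)\le\mu(A)$ is trivial (take the cover $A,\emptyset,\emptyset,\dots$), but the reverse $\mu(A)\le\mu^*(A)$ is exactly where the \emph{countable} additivity of $\mu$ on the algebra $\mathscr{A}$ is indispensable: given any cover $A\subset\bigcup_n A_n$ with $A_n\in\mathscr{A}$, one disjointifies the $A_n$ within $\mathscr{A}$, intersects with $A$, and applies countable additivity and monotonicity of $\mu$ to obtain $\mu(A)\le\sum_n\mu(A_n)$, whence $\mu(A)\le\mu^*(A)$.

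For uniqueness in the totally finite case, suppose $\bar\mu$ and $\nu$ are two totally finite measures on $\sigma(\mathscr{A})$ that both agree with $\mu$ on $\mathscr{A}$. I would invoke the monotone-class (equivalently Dynkin $\pi$--$\lambda$) theorem: the algebra $\mathscr{A}$ is in particular closed under finite intersections, and the collection $\{E\in\sigma(\mathscr{A}):\bar\mu(E)=\nu(E)\}$ is a $\lambda$-system (closure under proper differences and increasing countable unions uses $\bar\mu(X)=\nu(X)<\infty$), so it contains the $\sigma$-algebra generated by $\mathscr{A}$, giving $\bar\mu=\nu$ on all of $\sigma(\mathscr{A})$. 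Finally, for the $\sigma$-finite case I would write $X=\bigcup_{n=1}^\infty E_n$ with $E_n\in\mathscr{A}$, $\mu(E_n)<\infty$, arranged to be increasing, apply the totally finite conclusion to each restricted measure $\mu(\cdot\cap E_n)$ (which is totally finite on $\mathscr{A}$), and pass to the limit by monotone convergence to deduce both existence and uniqueness of $\bar\mu$ on $\sigma(\mathscr{A})$.
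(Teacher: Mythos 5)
Your proof is correct. Note that the paper itself gives no proof of this theorem: it is quoted as a known preliminary, with a pointer to \cite{Hal, Rud87}, and your argument --- Carath\'eodory's outer-measure construction and the measurability of $\mathscr{A}$ for existence, the Dynkin $\pi$--$\lambda$ argument for uniqueness in the totally finite case, and exhaustion by an increasing sequence $\{E_n\}\subset\mathscr{A}$ of finite-measure sets for the $\sigma$-finite case --- is precisely the classical proof found in those references. The only remark worth adding is that your reduction to the totally finite case is needed only for uniqueness; the outer-measure existence argument goes through verbatim for any countably additive $\mu$ on $\mathscr{A}$, with no finiteness assumption at all.
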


\begin{theorem}\label{theorem18}
Let $\mathscr{A}$ be an algebra on $X$ and $\mu$ be a finitely additive set function on $\mathscr{A}$ with $\mu(X)<\infty$. Then $\mu$ is a (totally finite) measure on $\mathscr{A}$ if and only if
for any sequence $\{E_n\}_{n=1}^{\infty}\subset \mathscr{A}$ with $E_1\supset E_2\supset E_3\supset\cdots$ and $
 \bigcap\limits_{n=1}^{\infty}E_n=\emptyset$, it holds that $\lim\limits_{n\to\infty}\mu(E_n)=0$.
\end{theorem}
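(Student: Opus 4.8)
The plan is to prove the two implications separately, after first recording the normalization $\mu(\emptyset)=0$ and the global finiteness $\mu(E)\le\mu(X)<\infty$ for every $E\in\mathscr{A}$. Both follow at once from finite additivity: applying it to the disjoint decomposition $X=X\sqcup\emptyset$ and cancelling the finite quantity $\mu(X)$ gives $\mu(\emptyset)=0$, while $X=E\sqcup(X\setminus E)$ gives $\mu(E)\le\mu(X)<\infty$. Throughout I will use without further comment that an algebra is closed under finite unions and under set differences, so every auxiliary set I construct remains in $\mathscr{A}$.

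For the necessity direction, assume $\mu$ is countably additive and take a decreasing sequence $E_1\supset E_2\supset\cdots$ in $\mathscr{A}$ with $\bigcap_n E_n=\emptyset$. The key step is to telescope: I will write $E_1$ as the disjoint union $\bigsqcup_{n=1}^\infty(E_n\setminus E_{n+1})$, which holds precisely because every $x\in E_1$ fails to lie in some $E_n$ (as $\bigcap_n E_n=\emptyset$) and hence lands in exactly one ``annulus'' $E_{n_0-1}\setminus E_{n_0}$. Each annulus lies in $\mathscr{A}$ and $E_1\in\mathscr{A}$, so countable additivity yields $\mu(E_1)=\sum_{n=1}^\infty\mu(E_n\setminus E_{n+1})$. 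Since finite additivity gives $\mu(E_n\setminus E_{n+1})=\mu(E_n)-\mu(E_{n+1})$ (legitimate because $\mu(E_{n+1})<\infty$), the $N$-th partial sum telescopes to $\mu(E_1)-\mu(E_{N+1})$; convergence of the series to the finite value $\mu(E_1)$ then forces $\mu(E_{N+1})\to 0$, as required.

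For the sufficiency direction, assume the continuity-at-$\emptyset$ condition and let $\{A_k\}_{k=1}^\infty$ be pairwise disjoint members of $\mathscr{A}$ whose union $A$ lies in $\mathscr{A}$. I will introduce the tails $B_n\triangleq A\setminus\bigcup_{k=1}^n A_k=\bigcup_{k>n}A_k$, which belong to $\mathscr{A}$, decrease, and satisfy $\bigcap_n B_n=\emptyset$ by disjointness of the $A_k$ (any point of $A$ lies in a unique $A_{k_0}$ and so escapes $B_{k_0}$, while points outside $A$ lie in no $B_n$). The hypothesis then gives $\mu(B_n)\to 0$. Finite additivity applied to $A=\bigl(\bigcup_{k=1}^n A_k\bigr)\sqcup B_n$ yields $\sum_{k=1}^n\mu(A_k)=\mu(A)-\mu(B_n)$, and letting $n\to\infty$ produces $\sum_{k=1}^\infty\mu(A_k)=\mu(A)$, which is exactly countable additivity. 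Combined with $\mu(\emptyset)=0$, this makes $\mu$ a totally finite measure on $\mathscr{A}$.

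I expect no serious obstacle; the content is essentially bookkeeping once the two decompositions are set up. The one place demanding genuine care is the verification of the set-theoretic identities $E_1=\bigsqcup_{n}(E_n\setminus E_{n+1})$ and $\bigcap_n B_n=\emptyset$, together with the confirmation that every constructed set stays inside $\mathscr{A}$. I will also flag the role of the finiteness hypothesis $\mu(X)<\infty$: it is precisely what legitimizes the subtractions $\mu(E_n)-\mu(E_{n+1})$ and $\mu(A)-\mu(B_n)$, and without it the stated equivalence genuinely fails.
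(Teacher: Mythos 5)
Your proof is correct; both directions (the telescoping decomposition $E_1=\bigsqcup_n(E_n\setminus E_{n+1})$ for necessity, and the tail sets $B_n=A\setminus\bigcup_{k=1}^n A_k$ for sufficiency) are carried out accurately, with the finiteness hypothesis $\mu(X)<\infty$ invoked exactly where it is needed. The paper itself states this theorem without proof, recalling it as a classical fact from the cited references, and your argument is precisely the standard one given there, so there is nothing further to reconcile.
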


\subsubsection{Integrals}

In the rest of this sub-subsection, we fix a $\sigma$-algebra $\mathscr{S}$ on $X$ and a $\sigma$-finite measure $\mu$ on $\mathscr{S}$. The triple $(X, \mathscr{S},\mu)$ is called a measure space. Particularly, if $\mu(X)=1$, then we call the triple $(X, \mathscr{S},\mu)$ a probability space and $\mu$ a probability measure.

A real-valued function $f(\cdot)$ on $X$ is called $\mathscr{S}$-measurable if
$\{x\in X:f(x)>r\}\in \mathscr{S}$ for all $ r\in \mathbb{R}$. In this case, we also call $f$ a measurable function on $(X, \mathscr{S})$. A complex-valued function $g(\cdot)$ on $X$ is called $\mathscr{S}$-measurable if both the real and the imaginary parts of $g(\cdot)$ are measurable on $(X, \mathscr{S})$.

A real-valued function $s(\cdot)$ on $X$ is called a simple function, if there exists $n\in\mathbb{N}$,
$c_i\in \mathbb{R}$  and $E_i\in \mathscr{S}$ for $i=1,\cdots,n$ so that
\begin{equation}\label{20230519for1}
 s(x)=\sum_{i=1}^{n}c_i \chi_{E_i}(x),\quad \forall\,x\in X.
\end{equation}
Clearly $s(\cdot)$ is $\mathscr{S}$-measurable.

For a non-negative simple function $s(\cdot)$ as in \eqref{20230519for1} (i.e., $c_i\geq 0$ for $i=1,\cdots,n$), write
$$
 \int_{X}s(x)\,\mathrm{d}\mu(x)\triangleq\sum_{i=1}^{n}c_i \mu(E_i)\in[0,+\infty].
$$
For a general nonnegative, $\mathscr{S}$-measurable function $g$ on $(X, \mathscr{S},\mu)$, write
$$
\begin{array}{ll}
\displaystyle
 \int_{X}g(x)\,\mathrm{d}\mu(x)\\[2mm]
 \displaystyle
 \triangleq \sup\bigg\{\int_{X}s(x)\,\mathrm{d}\mu(x):g(\cdot)\geqslant s(\cdot)\geqslant0,\,s(\cdot)\,\text{is a simple function}\bigg\}.
 \end{array}
$$
A real-valued, $\mathscr{S}$-measurable function $f$ on $(X, \mathscr{S},\mu)$ is called integrable on $(X, \mathscr{S},\mu)$, if $\int_{X}|f(x)|\,\mathrm{d}\mu(x)<\infty$. In this case, write
$$
 \int_{X}f(x)\,\mathrm{d}\mu(x)\triangleq  \int_{X}f^+(x)\,\mathrm{d}\mu(x)- \int_{X}f^-(x)\,\mathrm{d}\mu(x),
$$
where $f^+(\cdot)\triangleq \max\{f(\cdot),0\}$ and $f^-(\cdot)\triangleq -\min\{f(\cdot),0\}$. For $p\in [1,\infty)$, we denote by $L^p(X,\mu)$ the Banach space of all
(equivalence classes of) real-valued (or complex-valued, which is clear from the context), $\mathscr{S}$-measurable functions $f$ on $X$ for which $\int_X|f|^pd\mu<\infty$ (Particularly, $L^2(X,\mu)$ is a Hilbert space).

To end this sub-subsection, we fix two measures $\nu_1$ and $\nu_2$ on
$(X, \mathscr{S})$. We say that $\nu_1$ is
{\it absolutely continuous} with respect to $\nu_2$, denoted by
$\nu_1 \ll\nu_2$, if $\nu_1(E)=0$ for every $E\in\mathscr{S}$
with $\nu_2(E)=0$. These two measures $\nu_1$ and $\nu_2$  for which both $\nu_1 \ll\nu_2$
and $\nu_2 \ll\nu_1$ are called {\it equivalent}, in symbols $\nu_1 \equiv\nu_2$.
On the other hand, we say that $\nu_1$ and $\nu_2$ are {\it mutually singular}, in symbols $\nu_1\bot\nu_1$,
if there exist two disjoint sets $A$ and $B$ whose union is $X$ such
that, for every $E\in\mathscr{S}$, both $A \cap E$ and $B \cap E$ belong to $\mathscr{S}$ and $\nu_1 (A \cap E) = \nu_2 (B \cap E) = 0$.

We recall the following result (\cite[Theorem 4.2.2, p. 123]{Coh}):

\begin{theorem}{\rm \textbf{(Radon-Nikodym Theorem)}}\label{241014t1}
Let $\nu_1$ and $\nu_2$ be $\sigma$-finite measures on $\mathscr{S}$. If $\nu_1\ll\nu_2$, then there is a measurable function $g: X \to
[0,+\infty)$ such that $\nu_1(A) = \int_A
 gd\nu_2$ holds for each $A \in \mathscr{S}$. The function $g$ is unique
up to $\nu_2$-almost everywhere equality.
\end{theorem}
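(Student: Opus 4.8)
The plan is to follow von Neumann's Hilbert space argument, which sits naturally within the functional-analytic framework already developed above, in particular the fact that $L^2(X,\mu)$ is a Hilbert space. The first move is to reduce the $\sigma$-finite case to the case of two finite measures. Since $\nu_1$ and $\nu_2$ are $\sigma$-finite, I would choose a countable measurable partition $\{X_n\}_{n=1}^\infty$ of $X$ on each piece of which both measures are finite; solving the problem on each $X_n$ produces a density $g_n$ vanishing off $X_n$, and gluing via $g=\sum_{n=1}^\infty g_n\chi_{X_n}$ yields a global density, with the monotone convergence theorem ensuring that $\nu_1(A)=\int_A g\,d\nu_2$ for every $A\in\mathscr{S}$.

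Assume now that $\nu_1$ and $\nu_2$ are finite, and set $\phi=\nu_1+\nu_2$, also a finite measure. Consider the functional $L(f)=\int_X f\,d\nu_1$ on $L^2(X,\phi)$. Because $\nu_1\le\phi$, the Cauchy--Schwarz inequality gives $|L(f)|\le\int_X|f|\,d\phi\le\sqrt{\phi(X)}\,\|f\|_{L^2(X,\phi)}$, so $L$ is a bounded linear functional on the Hilbert space $L^2(X,\phi)$. By the Riesz representation theorem for Hilbert spaces there is an $h\in L^2(X,\phi)$ with
$$
\int_X f\,d\nu_1=\int_X f h\,d\phi,\qquad \forall\,f\in L^2(X,\phi).
$$
Taking $f=\chi_A$ gives $\nu_1(A)=\int_A h\,d\phi$, and since $0\le\nu_1(A)\le\phi(A)$ for all $A\in\mathscr{S}$, testing on the sets $\{h<0\}$ and $\{h>1\}$ shows that $0\le h\le 1$ holds $\phi$-almost everywhere. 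Rewriting the representation identity as $\int_X f(1-h)\,d\nu_1=\int_X f h\,d\nu_2$ will be convenient for what follows.

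The decisive step, and the only place where the hypothesis $\nu_1\ll\nu_2$ is genuinely used, is to dispose of the set $N=\{x:h(x)=1\}$. Putting $f=\chi_N$ into $\int_X f(1-h)\,d\nu_1=\int_X f h\,d\nu_2$ yields $0=\nu_2(N)$, whence absolute continuity forces $\nu_1(N)=0$ as well; thus $0\le h<1$ almost everywhere with respect to both measures, and $g\triangleq h/(1-h)\ge 0$ is well defined. To extract the density I would substitute the bounded test functions $f=\chi_A\sum_{k=0}^{n}h^{k}$ into the same identity, obtaining $\int_A(1-h^{n+1})\,d\nu_1=\int_A\sum_{k=1}^{n+1}h^{k}\,d\nu_2$, and then let $n\to\infty$: the left side tends to $\nu_1(A)$ by dominated convergence (using $h<1$ a.e.\ and $\nu_1(X)<\infty$), while the right side tends to $\int_A \frac{h}{1-h}\,d\nu_2=\int_A g\,d\nu_2$ by monotone convergence. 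Uniqueness is then immediate: if $g_1,g_2$ both represent $\nu_1$, then $\int_A(g_1-g_2)\,d\nu_2=0$ for every $A\in\mathscr{S}$, which forces $g_1=g_2$ $\nu_2$-almost everywhere.

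I expect the two delicate points to be the gluing in the reduction to finite measures (one must check measurability of the assembled $g$ and that summing the piecewise identities is legitimate) and the treatment of the set $\{h=1\}$, which is exactly the junction where absolute continuity converts a statement about $\nu_2$-null sets into one about $\nu_1$; the remaining verifications are routine applications of the convergence theorems already available.
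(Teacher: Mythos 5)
Your argument is correct, but there is nothing in the paper to compare it against: Theorem \ref{241014t1} is not proved there at all, it is recalled as a known result with a citation to Cohn's book, and the paper only uses it as a black box (e.g.\ for Hellinger's integral and Theorem \ref{1c1t4}). What you have written is von Neumann's functional-analytic proof: reduce to finite measures, apply the Riesz representation theorem to the bounded functional $f\mapsto\int_X f\,d\nu_1$ on the Hilbert space $L^2(X,\nu_1+\nu_2)$, show the representing function $h$ satisfies $0\le h\le 1$, use the hypothesis $\nu_1\ll\nu_2$ exactly once to kill the set $\{h=1\}$, and extract the density $g=h/(1-h)$ via the geometric-series test functions. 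This is a legitimate and complete route, and it is arguably the one best suited to this paper, since its preliminaries already set up $L^2(X,\mu)$ as a Hilbert space, so the only external input is the Riesz representation theorem; the cited source proves the theorem by purely measure-theoretic means instead. Your identification of the two delicate points (the gluing, and the set $\{h=1\}$) is accurate.

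Two small repairs are worth making explicit. First, in the $\sigma$-finite reduction the partition must be finite for \emph{both} measures simultaneously; this is obtained by taking the pairwise intersections $A_n\cap B_m$ of a $\nu_1$-finite partition $\{A_n\}$ and a $\nu_2$-finite partition $\{B_m\}$. Second, your uniqueness step subtracts $\int_A g_1\,d\nu_2-\int_A g_2\,d\nu_2$, which is an $\infty-\infty$ expression when $\nu_1(A)=\infty$; the standard fix is to work on $A\cap X_n$ with $X_n$ from the finite partition (so both integrals equal $\nu_1(A\cap X_n)<\infty$), take $A=\{g_1>g_2\}$, conclude $\nu_2(\{g_1>g_2\}\cap X_n)=0$ for every $n$, and sum over $n$; then argue symmetrically. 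Both are routine and do not affect the soundness of the proof.
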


The function $g$ in Theorem \ref{241014t1} is
called the
Radon-Nikod\'ym derivative (of $\nu_1$ with respect to
$\nu_2$), and is denoted by
 $$
 g=\frac{d\nu_1}{d\nu_2}.
 $$
By Theorem \ref{241014t1}, it is easy to prove
the following result.

\begin{theorem}\label{1c1t4}
Let the assumptions in Theorem \ref{241014t1} hold
and $g:\ (X, \mathscr{S})\to \mathbb{C}$ be measurable. Then $g\in L^1(X,\nu_1)$ if and only if
$g\frac{d\nu_1}{d\nu_2}\in L^1(X,\nu_2)$. Furthermore,
 $$
 \int_A gd\nu_1=\int_A g\frac{d\nu_1}{d\nu_2}d\nu_2,\qq\forall \;A\in\mathscr{S}.
 $$
\end{theorem}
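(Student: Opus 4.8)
The plan is to use the standard approximation argument of measure theory (the so-called ``standard machine''), bootstrapping from indicator functions up to general complex-valued functions, with the Radon-Nikod\'ym derivative $h\triangleq\frac{d\nu_1}{d\nu_2}$ playing the role of a non-negative weight. By Theorem \ref{241014t1}, $h:X\to[0,+\infty)$ is $\mathscr{S}$-measurable and satisfies $\nu_1(A)=\int_A h\,d\nu_2$ for every $A\in\mathscr{S}$; note in particular that $h\geqslant 0$, a fact I will use repeatedly.

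First I would establish the identity $\int_A g\,d\nu_1=\int_A gh\,d\nu_2$ for non-negative $\mathscr{S}$-measurable $g$, in three stages. For $g=\chi_E$ with $E\in\mathscr{S}$, the identity reduces to $\nu_1(A\cap E)=\int_{A\cap E}h\,d\nu_2$, which is exactly the defining property of $h$. By linearity of both integrals, this extends to every non-negative simple function. For a general non-negative $\mathscr{S}$-measurable $g$, I would choose an increasing sequence of non-negative simple functions $s_n\uparrow g$; since $h\geqslant 0$, one also has $s_n h\uparrow gh$, and two applications of the Monotone Convergence Theorem (once against $\nu_1$, once against $\nu_2$) yield $\int_A g\,d\nu_1=\lim_n\int_A s_n\,d\nu_1=\lim_n\int_A s_n h\,d\nu_2=\int_A gh\,d\nu_2$.

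With the non-negative case in hand, the integrability equivalence is immediate: applying it to $|g|$ and using $|gh|=|g|\,h$ (again because $h\geqslant 0$) gives $\int_X|g|\,d\nu_1=\int_X|gh|\,d\nu_2$, so $g\in L^1(X,\nu_1)$ if and only if $gh\in L^1(X,\nu_2)$. For the displayed formula, I would first treat real-valued $g\in L^1(X,\nu_1)$ by splitting $g=g^+-g^-$, applying the non-negative identity to $g^+$ and $g^-$ separately (both integrals being finite) and subtracting; the complex-valued case then follows by applying the real case to $\mathrm{Re}\,g$ and $\mathrm{Im}\,g$.

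The argument is essentially routine, so there is no deep obstacle; the only points demanding care are the non-negativity of $h$, which legitimizes both the monotonicity $s_n h\uparrow gh$ and the identity $|gh|=|g|\,h$, and the bookkeeping needed to ensure all subtracted integrals are finite before forming differences, so that no $\infty-\infty$ arises. The $\sigma$-finiteness of $\nu_2$ is not needed beyond guaranteeing the existence of $h$ via Theorem \ref{241014t1}.
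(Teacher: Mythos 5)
Your proof is correct and is precisely the standard argument the paper alludes to when it says the result follows easily from Theorem \ref{241014t1}: the paper gives no written proof, and your bootstrapping (indicators, simple functions, monotone convergence, then $g^{+}-g^{-}$ and real/imaginary parts) is the intended route. The two points you flag as needing care --- $h\geqslant 0$ justifying $s_n h\uparrow gh$ and $|gh|=|g|h$, and finiteness before subtracting --- are exactly the right ones, so nothing is missing.
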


\subsubsection{Some Typical Measures}

\begin{exa}\label{20241013exa1}
Denote by $C_0$ the set of all real-valued, continuous functions $f(\cdot)$ on $[0,1]$ with $f(0)=0$. Then $C_0$ is a Banach space with the norm
$$
||f||\triangleq\sup\limits_{t\in[0,1]}|f(t)|,\quad \forall\;f\in C_0.
$$
Denote by $\mathcal {R}$ the collection of sets of the following form
$$
	I=\{f\in C_0:(f(t_1),\cdots,f(t_n))\in F\},
$$
where $n\in\mathbb{N},\,0<t_1<t_2<\cdots<t_n\leqslant 1$ and $F\in \mathscr{B}(\mathbb{R}^n)$. One can check that $\sigma (\mathcal {R})=\mathscr{B}(C_0)$. For the above $I\in \mathcal {R}$, write
$$
	W(I)\triangleq\int_F \frac{e^{-\frac{x_1^2}{2t_1}-\frac{(x_2-x_1)^2}{2(t_2-t_1)}-\cdots -\frac{(x_n-x_{n-1})^2}{2(t_n-t_{n-1})}}}{\sqrt{(2\pi)^nt_1(t_2-t_1)\cdots(t_n-t_{n-1})}}\,\mathrm{d}x_1\cdots\mathrm{d}x_n.
$$
Then $W$ is $\sigma$-additive on $\mathcal {R}$. By Theorem \ref{230520thm4}, $W$ can be uniquely extended to be a measure, called the Wiener measure, on $\mathscr{B}(C_0)$ (See \cite[pp. 86--91]{Kuo} for more details).
\end{exa}

\subsubsection{Hellinger's Integral}

An important tool to quantify the relationship between two measures on a measurable space $(X, \mathscr{S})$ is \index{Hellinger's integral} Hellinger's integral. In the rest of this sub-subsection, for any two probability measures $\mu$ and $\nu$ on $(X, \mathscr{S})$, we recall the definition of such an integral of $\mu$ and $\nu$ and apply it to the infinite product of probability measures (See \cite[Section 2.2]{Da}, \cite{Kak} and \cite[Section 1.4]{Xia} for more details).

\begin{definition}\label{230303def}
For any totally finite measure $\zeta$ on $(X, \mathscr{S})$ with $\mu\ll \zeta$ and $\nu\ll\zeta$, by Theorem \ref{241014t1}, $\frac{\mathrm{d}\mu}{\mathrm{d}\zeta},\frac{\mathrm{d}\nu}{\mathrm{d}\zeta}\in L^1(X, \zeta)$, both of which are nonnegative, and
\begin{eqnarray*}
\mu(A)=\int_A \frac{\mathrm{d}\mu}{\mathrm{d}\zeta}\,\mathrm{d}\zeta,\quad \nu(A)=\int_A\frac{\mathrm{d}\nu}{\mathrm{d}\zeta}\,\mathrm{d}\zeta,\q\forall\;A\in \mathscr{S}.
\end{eqnarray*}
\emph{Hellinger's integral} of $\mu$ and $\nu$ is defined by
\begin{eqnarray}\label{230306eq1}
H(\mu,\nu)\triangleq \int_{X}\sqrt{\frac{\mathrm{d}\mu}{\mathrm{d}\zeta}\cdot\frac{\mathrm{d}\nu}{\mathrm{d}\zeta}}\,\mathrm{d}\zeta.
\end{eqnarray}
\end{definition}

\begin{remark}\label{230307rem1}
By the Cauchy-Buniakowsky-Schwarz inequality, we see that
$$
0\leqslant H(\mu,\nu)=\int_{X}\sqrt{\frac{\mathrm{d}\mu}{\mathrm{d}\zeta}\cdot\frac{\mathrm{d}\nu}{\mathrm{d}\zeta}}\,\mathrm{d}\zeta
\leqslant  \left(\int_{X} \frac{\mathrm{d}\mu}{\mathrm{d}\zeta} \,\mathrm{d}\zeta\right)^{\frac{1}{2}}\cdot
\left(\int_{X} \frac{\mathrm{d}\nu}{\mathrm{d}\zeta} \,\mathrm{d}\zeta\right)^{\frac{1}{2}}=1.
$$
Suppose that $\zeta'$ is another totally finite measure on $(X, \mathscr{S})$ with $\mu\ll \zeta'$ and $\nu\ll\zeta'$. Let $\zeta^{''}\triangleq \zeta+\zeta'$. Then, $\zeta\ll \zeta''$ and $\zeta'\ll\zeta''$. It follows from Theorem \ref{241014t1} that
$$
\frac{\mathrm{d}\mu}{\mathrm{d}\zeta''}=\frac{\mathrm{d}\mu}{\mathrm{d}\zeta}\cdot \frac{\mathrm{d}\zeta}{\mathrm{d}\zeta''}=\frac{\mathrm{d}\mu}{\mathrm{d}\zeta'}\cdot \frac{\mathrm{d}\zeta'}{\mathrm{d}\zeta''},\,\quad\frac{\mathrm{d}\nu}{\mathrm{d}\zeta''}=\frac{\mathrm{d}\nu}{\mathrm{d}\zeta}\cdot \frac{\mathrm{d}\zeta}{\mathrm{d}\zeta''}=\frac{\mathrm{d}\nu}{\mathrm{d}\zeta'}\cdot \frac{\mathrm{d}\zeta'}{\mathrm{d}\zeta''}.
$$
Hence, by Theorem \ref{1c1t4},
$$
\int_{X}\sqrt{\frac{\mathrm{d}\mu}{\mathrm{d}\zeta''}\cdot\frac{\mathrm{d}\nu}{\mathrm{d}\zeta''}}\,\mathrm{d}\zeta''
=\int_{X}\sqrt{\frac{\mathrm{d}\mu}{\mathrm{d}\zeta}\cdot\frac{\mathrm{d}\nu}{\mathrm{d}\zeta}}\,\mathrm{d}\zeta
$$
and
$$
\int_{X}\sqrt{\frac{\mathrm{d}\mu}{\mathrm{d}\zeta'}\cdot\frac{\mathrm{d}\nu}{\mathrm{d}\zeta'}}\,\mathrm{d}\zeta'
=\int_{X}\sqrt{\frac{\mathrm{d}\mu}{\mathrm{d}\zeta}\cdot\frac{\mathrm{d}\nu}{\mathrm{d}\zeta}}\,\mathrm{d}\zeta.
$$
Therefore, the quantity in \eqref{230306eq1} is independent of $\zeta$.
\end{remark}

One can show the following two results:
\begin{lemma}\label{230303prop}
$H(\mu,\nu)=0$ if and only if $\mu\bot\nu$.
\end{lemma}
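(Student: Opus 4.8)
The plan is to pass to a common dominating measure, express $H(\mu,\nu)$ through the two Radon--Nikod\'ym densities, and then read off mutual singularity directly from the almost-everywhere vanishing of their product. Fix a totally finite measure $\zeta$ on $(X,\mathscr{S})$ with $\mu\ll\zeta$ and $\nu\ll\zeta$ (for instance $\zeta\triangleq\mu+\nu$), and set $f\triangleq\frac{d\mu}{d\zeta}$ and $g\triangleq\frac{d\nu}{d\zeta}$. By Theorem \ref{241014t1} these are nonnegative, $\mathscr{S}$-measurable and integrable, and $H(\mu,\nu)=\int_X\sqrt{fg}\,d\zeta$; by Remark \ref{230307rem1} this value is independent of the choice of $\zeta$. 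I would fix genuine everywhere-defined nonnegative representatives of $f$ and $g$, so that the level sets $\{f=0\}$, $\{f>0\}$, $\{g=0\}$, $\{g>0\}$ all lie in $\mathscr{S}$.

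For the implication $\mu\bot\nu\Rightarrow H(\mu,\nu)=0$, I invoke the definition of mutual singularity to obtain disjoint $A,B\in\mathscr{S}$ with $A\cup B=X$, $\mu(A)=0$ and $\nu(B)=0$. Then $\int_A f\,d\zeta=\mu(A)=0$ together with $f\geqslant0$ forces $f=0$ $\zeta$-a.e.\ on $A$, and likewise $g=0$ $\zeta$-a.e.\ on $B$. Since $A$ and $B$ cover $X$, for $\zeta$-a.e.\ $x$ at least one of $f(x)$, $g(x)$ vanishes, so $fg=0$ $\zeta$-a.e., whence $\sqrt{fg}=0$ $\zeta$-a.e.\ and $H(\mu,\nu)=0$.

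For the converse $H(\mu,\nu)=0\Rightarrow\mu\bot\nu$, from $\int_X\sqrt{fg}\,d\zeta=0$ with $\sqrt{fg}\geqslant0$ I conclude $\sqrt{fg}=0$ $\zeta$-a.e., so the set $N\triangleq\{f>0\}\cap\{g>0\}$ has $\zeta(N)=0$. I then take $A\triangleq\{f=0\}$ and $B\triangleq\{f>0\}=X\setminus A$, which are disjoint, cover $X$, and belong to $\mathscr{S}$. One has $\mu(A)=\int_{\{f=0\}}f\,d\zeta=0$, while $\nu(B)=\int_{\{f>0\}}g\,d\zeta=\int_{N}g\,d\zeta=0$, since $g=0$ on $\{f>0\}\setminus N$ and $\zeta(N)=0$. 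For any $E\in\mathscr{S}$, monotonicity of $\mu$ and $\nu$ yields $\mu(A\cap E)\leqslant\mu(A)=0$ and $\nu(B\cap E)\leqslant\nu(B)=0$, which is precisely the definition of $\mu\bot\nu$.

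The argument is essentially routine once the densities are available, and I expect no deep obstruction. The only point requiring care is the bookkeeping around the a.e.\ statements: selecting honest measurable representatives of $f$ and $g$ so that all the relevant level sets genuinely lie in $\mathscr{S}$, and checking that the constructed pair $(A,B)$ satisfies the full quantified definition of mutual singularity over all $E\in\mathscr{S}$, rather than merely $\mu(A)=\nu(B)=0$. Monotonicity of the measures disposes of this last point cleanly, so the verification reduces to the elementary fact that a nonnegative integrable function with vanishing integral is zero almost everywhere.
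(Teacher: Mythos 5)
Your proof is correct. The paper actually states Lemma \ref{230303prop} without proof (it is introduced only by ``One can show the following two results''), and your argument is precisely the standard one that fills this gap: reduce to densities $f=\frac{d\mu}{d\zeta}$, $g=\frac{d\nu}{d\zeta}$ via Theorem \ref{241014t1}, use that a nonnegative function with vanishing integral is zero $\zeta$-a.e., and in the converse direction split $X$ into $\{f=0\}$ and $\{f>0\}$. Both directions are carried out cleanly, including the point that the paper's quantified definition of mutual singularity (over all $E\in\mathscr{S}$) follows from $\mu(A)=\nu(B)=0$ by monotonicity, so nothing is missing.
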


\begin{lemma}\label{230306lem1}
Suppose that $n\in\mathbb{N}$ and $\mu_k$ and $\nu_k$ are probability measures on a measurable space $(X_k, \mathscr{S}_k)$ for $ k=1,\cdots, n$. Then,
$$
H(\mu_1\times\cdots\times\mu_n,\nu_1\times\cdots\times\nu_n)=\prod_{i=1}^{n}H(\mu_i,\nu_i).
$$
\end{lemma}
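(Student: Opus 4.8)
The plan is to compute both sides from the definition \eqref{230306eq1}, exploiting the freedom in the choice of the dominating measure guaranteed by Remark \ref{230307rem1}. For each $k$ set $\zeta_k\triangleq \mu_k+\nu_k$, a totally finite measure on $(X_k,\mathscr{S}_k)$ with $\mu_k\ll\zeta_k$ and $\nu_k\ll\zeta_k$, and let $\zeta\triangleq \zeta_1\times\cdots\times\zeta_n$ be the (unique) product measure on $\big(\prod_{k=1}^n X_k,\prod_{k=1}^n\mathscr{S}_k\big)$. Since $\zeta$ is totally finite and dominates both $\mu_1\times\cdots\times\mu_n$ and $\nu_1\times\cdots\times\nu_n$, it is an admissible choice in Definition \ref{230303def}, and by Remark \ref{230307rem1} the value of $H(\mu_1\times\cdots\times\mu_n,\nu_1\times\cdots\times\nu_n)$ does not depend on this choice. (One may alternatively reduce to the case $n=2$ by induction, using the associativity of product measures, $\mu_1\times\cdots\times\mu_n=(\mu_1\times\cdots\times\mu_{n-1})\times\mu_n$; but the direct computation below handles general $n$ at once.)

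The key step is the factorization of the Radon-Nikod\'ym derivative of a product measure, namely
\[
\frac{\mathrm{d}(\mu_1\times\cdots\times\mu_n)}{\mathrm{d}\zeta}(x_1,\cdots,x_n)=\prod_{k=1}^n\frac{\mathrm{d}\mu_k}{\mathrm{d}\zeta_k}(x_k),\qquad \zeta\text{-a.e.},
\]
and similarly for $\nu$. To establish this I would let $g_k\triangleq \frac{\mathrm{d}\mu_k}{\mathrm{d}\zeta_k}$ (nonnegative and in $L^1(X_k,\zeta_k)$ by Theorem \ref{241014t1}) and check that the nonnegative $\prod_{k=1}^n\mathscr{S}_k$-measurable function $(x_1,\cdots,x_n)\mapsto\prod_{k=1}^n g_k(x_k)$ reproduces $\mu_1\times\cdots\times\mu_n$ on measurable rectangles: by Tonelli's theorem, $\int_{A_1\times\cdots\times A_n}\prod_k g_k\,\mathrm{d}\zeta=\prod_k\int_{A_k}g_k\,\mathrm{d}\zeta_k=\prod_k\mu_k(A_k)=(\mu_1\times\cdots\times\mu_n)(A_1\times\cdots\times A_n)$. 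Since the rectangles form an algebra generating $\prod_k\mathscr{S}_k$ and both measures are totally finite, the uniqueness parts of Theorem \ref{230520thm4} (Carath\'eodory) and of the product-measure construction force the set function $E\mapsto\int_E\prod_k g_k\,\mathrm{d}\zeta$ to agree with $\mu_1\times\cdots\times\mu_n$ on all of $\prod_k\mathscr{S}_k$, whence the displayed identity follows from the $\zeta$-a.e.\ uniqueness of Radon-Nikod\'ym derivatives in Theorem \ref{241014t1}.

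Granting the factorization, the conclusion is a short computation. Writing $h_k\triangleq\frac{\mathrm{d}\nu_k}{\mathrm{d}\zeta_k}$, the integrand in \eqref{230306eq1} becomes $\sqrt{\prod_k g_k(x_k)\cdot\prod_k h_k(x_k)}=\prod_k\sqrt{g_k(x_k)\,h_k(x_k)}$, which is legitimate as every factor is nonnegative. Applying Tonelli's theorem once more to this nonnegative integrand gives
\[
H(\mu_1\times\cdots\times\mu_n,\nu_1\times\cdots\times\nu_n)=\int_{\prod_k X_k}\prod_{k=1}^n\sqrt{g_k\,h_k}\,\mathrm{d}\zeta=\prod_{k=1}^n\int_{X_k}\sqrt{g_k\,h_k}\,\mathrm{d}\zeta_k=\prod_{k=1}^n H(\mu_k,\nu_k),
\]
where the last equality is exactly the definition \eqref{230306eq1} of $H(\mu_k,\nu_k)$ computed with the dominating measure $\zeta_k$. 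I expect the main obstacle to be the rigorous justification of the Radon-Nikod\'ym factorization: it is intuitively clear but requires combining the uniqueness of the product measure, the extension theorem on the generating algebra of rectangles, and the a.e.\ uniqueness of Radon-Nikod\'ym derivatives, rather than any single cited result. Once that identity is in hand, the two invocations of Tonelli's theorem and the appeal to Remark \ref{230307rem1} for independence of $\zeta$ are routine.
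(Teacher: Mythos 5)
Your proof is correct. Note that the paper itself offers no argument for this lemma (it is stated after the phrase ``One can show the following two results''), so there is nothing to compare against; your write-up supplies the missing proof, and it is the natural one: dominate each pair by $\zeta_k=\mu_k+\nu_k$, factor the Radon--Nikod\'ym derivative of the product measure as $\frac{\mathrm{d}(\mu_1\times\cdots\times\mu_n)}{\mathrm{d}\zeta}=\prod_k\frac{\mathrm{d}\mu_k}{\mathrm{d}\zeta_k}$ via uniqueness of the extension from rectangles, and conclude with Tonelli and the choice-independence of Remark \ref{230307rem1}. One cosmetic point: you assert that $\zeta$ ``dominates both'' product measures before proving it, but this is harmless since your rectangle argument establishes exactly that domination (the product measure is exhibited as an integral against $\zeta$), so the logical order is easily repaired by stating the factorization first.
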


Further, let us recall the countable product of probability measures (See \cite[$\S$ 38]{Hal} for more details). Suppose that $\{(X_i, \mathscr{S}_i,\mu_i)\}_{i\in \mathbb{N}}$ is a sequence of probability spaces.
Denote by $\mathcal {R}$ the following family of cylinder subsets (in $\prod\limits_{i=1}^{\infty}X_i$):
\begin{equation}\label{1qq1}
I_{n,A}\triangleq\left\{(x_i)_{i\in \mathbb{N}}\in \prod_{i=1}^{\infty}X_i:\;(x_1,\cdots,x_n)\in A\right\},
\end{equation}
where $n\in\mathbb{N}$ and $A\in \prod\limits_{i=1}^{n} \mathscr{S}_i$. Then, $\mathcal {R}$ is an algebra on $\prod\limits_{i=1}^{\infty}X_i$. Put
$$
\prod\limits_{i=1}^{\infty} \mathscr{S}_i\triangleq\sigma(\mathcal {R}).
$$

We define a set function $\mu_0$ on $\mathcal {R}$ by
 $$
\mu_0(I_{n,A})\triangleq\left( \mu_1\times\cdots \times \mu_n\right)(A),
$$
where $I_{n,A}$ is given by \eqref{1qq1}. It is easy to see that $\mu_0$ is finitely additive on $\mathcal {R}$ and $\mu_0(\prod\limits_{i=1}^{\infty} X_i)=1$. By Theorem \ref{theorem18}, one can prove that
$\mu_0$ is $\sigma$-additive on $\mathcal {R}$.
Then by Theorem \ref{230520thm4},  $\mu_0$ can be uniquely extended as a probability measure, denoted by $\prod\limits_{i=1}^{\infty}\mu_i$, on $\prod\limits_{i=1}^{\infty} \mathscr{S}_i$.
We call the triple $\left(\prod\limits_{i=1}^{\infty} X_i,\prod\limits_{i=1}^{\infty} \mathscr{S}_i,\prod\limits_{i=1}^{\infty}\mu_i\right)$ the infinite product measure space of probability spaces $\{(X_i, \mathscr{S}_i,\mu_i)\}_{i\in \mathbb{N}}$.

Based on Lemmas \ref{230303prop} and \ref{230306lem1}, we have the following result.
\begin{theorem}\label{230309the1}{\rm \textbf{(Kakutani)}}
Suppose that for each $k\in\mathbb{N}$, $\nu_k$ is a probability measure on $(X_k, \mathscr{S}_k)$, and $\mu_k$ and $\nu_k$ are equivalent. Then, for $X\triangleq \prod\limits_{i=1}^{\infty}X_i$, $\mu\triangleq \prod\limits_{i=1}^{\infty}\mu_i$ and $\nu\triangleq \prod\limits_{i=1}^{\infty}\nu_i$, the following assertions hold true:

{\rm 1)}
\begin{eqnarray}\label{230306eq2}
H(\mu,\nu)=\prod_{i=1}^{\infty}H(\mu_i,\nu_i);
\end{eqnarray}

{\rm 2)} If $H(\mu,\nu)>0$, then $\mu$ and $\nu$ are equivalent and
\begin{eqnarray}\label{230306eq3}
\frac{\mathrm{d}\mu}{\mathrm{d}\nu}(\textbf{x})=\lim_{n\to\infty}\prod_{i=1}^{n}\frac{\mathrm{d}\mu_i}{\mathrm{d}\nu_i}(x_i),\quad \text{in}\;L^1(X,\nu),
\end{eqnarray}
where $\textbf{x}=(x_i)_{i\in\mathbb{N}}\in X$;

{\rm 3)} If $H(\mu,\nu)=0$, then $\mu$ and $\nu$ are mutually singular.
\end{theorem}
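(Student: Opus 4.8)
The plan is to reduce all three assertions to a single dichotomy governed by the partial products $P_n\triangleq\prod_{i=1}^{n}a_i$, where $a_i\triangleq H(\mu_i,\nu_i)$, exploiting throughout the independence of the coordinates under the product measures. First I would record the elementary facts: since $\mu_i\equiv\nu_i$, Lemma \ref{230303prop} forces $a_i>0$, while Remark \ref{230307rem1} gives $a_i\le 1$; hence $(P_n)_{n\ge1}$ is non-increasing and converges to $P_\infty\triangleq\prod_{i=1}^{\infty}a_i\in[0,1]$. Writing $g_i\triangleq\frac{\mathrm{d}\mu_i}{\mathrm{d}\nu_i}$ and $f_n(\textbf{x})\triangleq\prod_{i=1}^{n}g_i(x_i)$ on $(X,\nu)$, independence together with $\int g_i\,\mathrm{d}\nu_i=1$ and $\int\sqrt{g_i}\,\mathrm{d}\nu_i=a_i$ yields the two computations on which everything rests: $\int_X\sqrt{f_n}\,\mathrm{d}\nu=P_n$, and, for $m\ge n$,
\[
\int_X\big(\sqrt{f_m}-\sqrt{f_n}\big)^2\,\mathrm{d}\nu=2-2\,\frac{P_m}{P_n}.
\]

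The decisive case is $P_\infty>0$, which I expect to carry parts 1) and 2). Here $P_m/P_n\to1$, so the displayed identity shows $(\sqrt{f_n})$ is Cauchy in $L^2(X,\nu)$; let $\sqrt{f_n}\to\Theta\ge0$ in $L^2(\nu)$ and put $f_\infty\triangleq\Theta^2$, so that $f_n\to f_\infty$ in $L^1(\nu)$ and $\int_X f_\infty\,\mathrm{d}\nu=1$. I would then verify that the probability measure $\lambda(E)\triangleq\int_E f_\infty\,\mathrm{d}\nu$ coincides with $\mu$ on every cylinder $I_{n,A}$ with $A\in\prod_{i=1}^{n}\mathscr{S}_i$: independence gives $\int_{I_{n,A}}f_m\,\mathrm{d}\nu=\int_{I_{n,A}}f_n\,\mathrm{d}\nu$ for all $m\ge n$, and letting $m\to\infty$ (using the $L^1$ convergence) together with $\int_{I_{n,A}}f_n\,\mathrm{d}\nu=(\mu_1\times\cdots\times\mu_n)(A)=\mu(I_{n,A})$ (the last step because $\prod_{i=1}^{n}g_i$ is the Radon--Nikodym derivative of $\mu_1\times\cdots\times\mu_n$ with respect to $\nu_1\times\cdots\times\nu_n$, cf. Theorem \ref{1c1t4}) shows $\lambda(I_{n,A})=\mu(I_{n,A})$. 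Since $\mathcal{R}$ is an algebra with $\sigma(\mathcal{R})=\prod_{i=1}^{\infty}\mathscr{S}_i$, the uniqueness in Theorem \ref{230520thm4} upgrades this to $\lambda=\mu$; thus $\mu\ll\nu$ with $\frac{\mathrm{d}\mu}{\mathrm{d}\nu}=f_\infty=\lim_n\prod_{i=1}^{n}g_i$ in $L^1(\nu)$, which is exactly \eqref{230306eq3}. Interchanging the roles of $\mu$ and $\nu$ gives $\nu\ll\mu$, so $\mu\equiv\nu$; finally, taking $\zeta=\nu$ in Definition \ref{230303def} (legitimate by the reference-independence in Remark \ref{230307rem1}) gives
\[
H(\mu,\nu)=\int_X\sqrt{f_\infty}\,\mathrm{d}\nu=\int_X\Theta\,\mathrm{d}\nu=\lim_{n\to\infty}\int_X\sqrt{f_n}\,\mathrm{d}\nu=P_\infty,
\]
which is \eqref{230306eq2} in this case.

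It remains to treat $P_\infty=0$, which should yield parts 1) and 3) at once, and this is where I expect the main obstacle. The $L^2$--Cauchy trick breaks down here, so I cannot identify a limiting density; instead I would prove the one-sided bound $H(\mu,\nu)\le P_n$ for every $n$, i.e.\ the monotonicity of Hellinger's integral under refinement of the $\sigma$-algebra. Choosing the reference $\zeta=\frac{1}{2}(\mu+\nu)$ and conditioning on the cylinder $\sigma$-algebra $\mathscr{F}_n\triangleq\sigma(x_1,\dots,x_n)$, the concavity of $(s,t)\mapsto\sqrt{st}$ and the conditional Jensen inequality give $H(\mu,\nu)\le\int_X\sqrt{\Phi_n\Psi_n}\,\mathrm{d}\zeta$, where $\Phi_n,\Psi_n$ are the conditional expectations of $\frac{\mathrm{d}\mu}{\mathrm{d}\zeta},\frac{\mathrm{d}\nu}{\mathrm{d}\zeta}$; since the restrictions of $\mu$ and $\nu$ to $\mathscr{F}_n$ are precisely $\mu_1\times\cdots\times\mu_n$ and $\nu_1\times\cdots\times\nu_n$, the right-hand side equals $P_n$ by Lemma \ref{230306lem1}. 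Letting $n\to\infty$ gives $H(\mu,\nu)\le P_\infty=0$, so $H(\mu,\nu)=0$; then \eqref{230306eq2} holds trivially and Lemma \ref{230303prop} yields $\mu\bot\nu$, which is part 3). The crux is therefore isolated in this monotonicity estimate: its only ingredient not already assembled in the excerpt is the conditional Jensen inequality. Alternatively one could invoke the martingale convergence theorem for $(f_n)$ under $\nu$ and for $(1/f_n)$ under $\mu$ to build the explicit separating set $\{\textbf{x}:f_n(\textbf{x})\to0\}$, but the Hellinger bound is cleaner and stays closer to the tools at hand.
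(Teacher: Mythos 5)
Your proof is correct. Note that the paper itself writes out no argument for this theorem --- it records it as Kakutani's result, ``based on'' Lemmas \ref{230303prop} and \ref{230306lem1} --- so the comparison is with that implicit route. Your treatment of the case $P_\infty\triangleq\prod_i H(\mu_i,\nu_i)>0$ (the $L^2(\nu)$-Cauchy property of $\sqrt{f_n}$, identification of the limit density on cylinders, Carath\'eodory uniqueness via Theorem \ref{230520thm4}, and then $H(\mu,\nu)=\int_X\sqrt{f_\infty}\,\mathrm{d}\nu=P_\infty$) is the classical Kakutani argument and is complete as written. The only place you step outside the paper's toolkit is the ``crux'' bound $H(\mu,\nu)\le P_n$, for which you invoke conditional Jensen (or, alternatively, martingale convergence). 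That detour is avoidable with the paper's own lemmas: identify $\bigl(X,\prod_{i}\mathscr{S}_i,\mu\bigr)$ with the two-factor product of $\bigl(\prod_{i\le n}X_i,\;\mu_1\times\cdots\times\mu_n\bigr)$ and $\bigl(\prod_{i>n}X_i,\;\hat\mu_n\bigr)$, where $\hat\mu_n\triangleq\prod_{i>n}\mu_i$ (the two measures agree on the cylinder algebra, so Theorem \ref{230520thm4} identifies them), and likewise for $\nu$; then Lemma \ref{230306lem1} applied with these two factors, and again with $n$ factors, gives $H(\mu,\nu)=P_n\cdot H(\hat\mu_n,\hat\nu_n)\le P_n$, since $H\le 1$ by Remark \ref{230307rem1}. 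This needs no conditional expectation, is presumably what the paper means by ``based on'' Lemma \ref{230306lem1}, and it delivers the upper bound $H(\mu,\nu)\le P_\infty$ uniformly in both cases, after which parts 1)--3) follow exactly along the lines you laid out.
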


\subsubsection{Nonexistence of two Nontrivial Measures in Infinite Dimensions}

 We fix a normed vector space $(X,||\cdot||)$. A measure $\mu$ on $\mathscr{B}(X)$ is called translation invariant if $\mu(E+z)=\mu(E)$ for all $z\in X$ and $E\in \mathscr{B}(X)$.

A fundamental property of Lebesgue's measures in finite dimensions is its translation-invariance. Unfortunately, as we shall see below, there does not exist a similar measure on infinite dimensional spaces which enjoys the same invariance.

By Lemma \ref{241016lem1}, one has the following simple result.
\begin{lemma}\label{230309lem1}
If $\dim X=\infty$, then for each non-empty open ball $B$ of $X$, there exists a sequence of non-empty open balls $\{B_n\}_{n=1}^{\infty}$ so that: 1) $B_n\cap B_m=\emptyset$ for any $m\neq n$; 2) $B_n$ and $B_m$ have the same radius for all $m,n\in\mathbb{N}$; and 3) $B_n\subset B$ for each $n\in\mathbb{N}$.
\end{lemma}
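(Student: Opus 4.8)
The plan is to reduce the lemma to the standard fact that an infinite-dimensional normed space carries a sequence of uniformly separated unit vectors, and then to shrink and translate this configuration so that it sits inside the prescribed ball. Write the given ball as $B=B(x_0;r)$ for some $x_0\in X$ and $r>0$, using the open-sphere notation of \eqref{ballxr}.

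First I would construct unit vectors $\{e_n\}_{n=1}^{\infty}$ satisfying $\|e_n-e_m\|_X>\frac12$ for all $n\neq m$, by iterating Riesz's lemma. Choose any $e_1\in X$ with $\|e_1\|_X=1$. Assuming $e_1,\dots,e_n$ have been chosen, set $M_n\triangleq\span\{e_1,\dots,e_n\}$. Being finite-dimensional, $M_n$ is complete (by Theorem \ref{241103t1}, its closed unit ball is sequentially compact), hence closed in $X$; and since $\dim X=\infty$, the subspace $M_n$ is proper. Therefore Lemma \ref{241016lem1} applies with $\varepsilon=\frac12$ and produces $e_{n+1}\in X$ with $\|e_{n+1}\|_X=1$ and $\|e_{n+1}-y\|_X>\frac12$ for every $y\in M_n$; in particular $\|e_{n+1}-e_j\|_X>\frac12$ for $1\le j\le n$. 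This is the only place where infinite-dimensionality is used: it guarantees that each $M_n$ stays proper, so the induction never terminates and the sequence is infinite.

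Next I would place the balls by scaling and translating this configuration. Define $c_n\triangleq x_0+\frac{r}{2}\,e_n$ and $B_n\triangleq B\!\left(c_n;\frac{r}{8}\right)$. Property (2) (equal radii) is immediate. For (1), whenever $n\neq m$ one has $\|c_n-c_m\|_X=\frac{r}{2}\|e_n-e_m\|_X>\frac{r}{4}=2\cdot\frac{r}{8}$, so if some $z$ lay in both $B_n$ and $B_m$ the triangle inequality would force $\|c_n-c_m\|_X<\frac{r}{8}+\frac{r}{8}=\frac{r}{4}$, a contradiction; hence $B_n\cap B_m=\emptyset$. For (3), any $y\in B_n$ obeys $\|y-x_0\|_X\le\|y-c_n\|_X+\|c_n-x_0\|_X<\frac{r}{8}+\frac{r}{2}<r$, so $y\in B$.

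The only genuinely substantive point is the iterated Riesz construction, and within it the single step where $\dim X=\infty$ enters, namely that every finite-dimensional span is closed and proper so that Lemma \ref{241016lem1} is invocable at each stage. Everything after that is elementary bookkeeping with the triangle inequality, and the particular constants $\frac{r}{2}$ and $\frac{r}{8}$ are not special: any offset $\rho$ and radius $\delta$ with $\delta\le\frac{\rho}{4}$ and $\rho+\delta<r$ would serve equally well.
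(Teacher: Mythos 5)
Your proposal is correct and follows essentially the same route as the paper's own proof: iterate Riesz's lemma to produce unit vectors that are pairwise more than $\frac12$ apart, then take the balls $B\bigl(x_0+\frac{r}{2}x_n;\frac{r}{8}\bigr)$, which is exactly the paper's construction with the same constants. Your added remark that each finite-dimensional span is closed (so that Lemma \ref{241016lem1} applies) is a detail the paper leaves implicit, but it does not change the argument.
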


\begin{proof}
Choosing $x_1\in X$ such that $||x_1||=1$. Since $\dim X=\infty$, $\span\{x_1\}\varsubsetneqq X$ and by Lemma \ref{241016lem1}, one can find $x_2\in X$ such that $||x_2||=1$ and $||x_1-x_2||>\frac{1}{2}$. Note that $\span\{x_1,x_2\}$ is a proper subspace of $X$ and by Lemma \ref{241016lem1} again, there is $x_3\in X$ such that $||x_3||=1$, $||x_3-x_1||>\frac{1}{2}$ and $||x_3-x_2||>\frac{1}{2}$. Repeating the above process, we obtain a sequence $\{x_n\}_{n=1}^{\infty}\subset X$ such that $||x_n||=1$ for any $n\in\mathbb{N}$ and $||x_k-x_l||>\frac{1}{2}$ for any $k\neq l$.

Clearly, $B(x_n;\frac{1}{4})\cap B(x_m;\frac{1}{4})=\emptyset$ for $m\neq n$ and $B(x_k;\frac{1}{4})\subset B(0;2)$ for all $k\in\mathbb{N}$. Since $B$ is a non-empty open ball of $X$, there exists $x_0\in X$ and $r_0\in(0,+\infty)$ so that $B=B(x_0;r_0)$. Hence $B(x_0+\frac{r_0}{2}\cdot x_n;\frac{r_0}{8})\cap B(x_0+\frac{r_0}{2}\cdot x_m;\frac{r_0}{8})=\emptyset$ for all $m\neq n$ and $B(x_0+\frac{r_0}{2}\cdot x_k;\frac{r_0}{8})\subset B(x_0;r_0)$ for all $k\in\mathbb{N}$. Write $B_n\triangleq B(x_0+\frac{r_0}{2}\cdot x_n;\frac{r_0}{8})$ for each $n\in\mathbb{N}$. Then $\{B_n\}_{n=1}^{\infty}$ is the desired sequence. This completes the proof of Lemma \ref{230309lem1}.
\end{proof}

\begin{proposition}\label{230310prop1}
Suppose that $(X,||\cdot||)$ is separable and $\dim X=\infty$. If $\mu$ is a measure on $\mathscr{B}(X)$ such that $\mu(B(x,r))=\mu(B(x',r))$ for all $x,x'\in X$ and $r\in(0,+\infty)$, then $\mu\equiv 0$ or $\mu(O)=\infty$ for any non-empty open subset $O$ of $X$.
\end{proposition}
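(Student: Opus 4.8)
The plan is to prove the dichotomy by assuming $\mu\not\equiv 0$ and then deducing that every non-empty open set has infinite measure. Since the hypothesis says that $\mu(B(x,r))$ depends only on $r$ and not on $x$, I would first set $m(r)\triangleq \mu(B(x,r))$, a well-defined element of $[0,+\infty]$ for each $r\in(0,+\infty)$. Everything then reduces to analysing $m(r)$, because any non-empty open set $O$ contains some ball $B(x,r)$, and hence $\mu(O)\geq m(r)$ by monotonicity; so it suffices to show $m(r)=+\infty$ for every $r>0$.

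The first key step is to show that $\mu\not\equiv 0$ forces $m(r)>0$ for every $r>0$. Here I would use separability: since $X$ is a separable metric space, it is a Lindel\"of space by Corollary \ref{Lindelof1}, so for a fixed $r$ the open cover $\{B(x,r):x\in X\}$ admits a countable subcover $\{B(x_k,r)\}_{k=1}^\infty$ with $X=\bigcup_{k} B(x_k,r)$. By countable subadditivity, $\mu(X)\leq\sum_{k} \mu(B(x_k,r))=\sum_{k} m(r)$. If $m(r)=0$, this yields $\mu(X)=0$, and hence $\mu\equiv 0$ by monotonicity, contradicting our assumption. Therefore $m(r)>0$ for all $r>0$.

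The second key step uses infinite-dimensionality to upgrade positivity to infinitude. Fixing any ball $B=B(x,r)$, I would invoke Lemma \ref{230309lem1} (which needs precisely $\dim X=\infty$) to obtain a sequence of pairwise disjoint open balls $\{B_n\}_{n=1}^\infty$, all of a common radius $\rho>0$, with $B_n\subset B$ for every $n$. By the radius-invariance of $\mu$ each $\mu(B_n)=m(\rho)$, so by monotonicity and countable additivity,
$$
\mu(B)\geq\mu\Big(\bigcup_{n=1}^\infty B_n\Big)=\sum_{n=1}^\infty \mu(B_n)=\sum_{n=1}^\infty m(\rho)=+\infty,
$$
where the last equality holds because $m(\rho)>0$ by the previous step. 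Thus $m(r)=+\infty$ for every $r>0$, and consequently $\mu(O)\geq\mu(B(x,r))=+\infty$ for every non-empty open set $O$, which contains some ball $B(x,r)$.

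I expect the argument to be short, since its only content is combining two structural inputs correctly. The main obstacle is conceptual rather than computational: one must recognise that excluding the intermediate scenario, in which some ball has finite positive measure, is exactly what the infinite-dimensional packing in Lemma \ref{230309lem1} accomplishes, while separability is needed only to exclude the degenerate possibility that all balls are null yet $\mu\not\equiv 0$. Minor care is required to confirm that $m(r)$ is genuinely well-defined and to apply countable additivity only to the disjoint Borel family $\{B_n\}$.
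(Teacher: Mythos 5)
Your proposal is correct and follows essentially the same route as the paper's proof: both use Corollary \ref{Lindelof1} (separability $\Rightarrow$ Lindel\"of) to rule out the case where all balls are null, and Lemma \ref{230309lem1} to pack infinitely many disjoint equal-radius balls into any given ball, forcing infinite measure. Your reorganization into two clean steps (all balls have positive measure, then all balls have infinite measure) is slightly tidier than the paper's nested contradiction, but the ideas are identical.
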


\begin{proof}
If $\mu(B(x,r))=0$ for all $x\in X$ and $r\in(0,+\infty)$, then by Corollary \ref{Lindelof1}, $X$ is a Lindel\"{o}f space, and hence there exists $\{x^0_m\}_{m=1}^{\infty}\subset X$ such that $X =\bigcup\limits_{m=1}^{\infty}B(x_m^0,1)$, which leads to $\mu(X)\leqslant \sum\limits_{m=1}^{\infty}\mu(B(x_m^0,1))=0$. Then we have $\mu\equiv 0$.

If there exits $x\in X$ and $r\in(0,+\infty)$ such that $\mu(B(x,r))>0$, then by Lemma \ref{230309lem1}, there exists $\{x_n\}_{n=1}^{\infty}\subset X$ and $r'\in(0,+\infty)$ such that $B(x_n,r')\cap B(x_m,r')=\emptyset$ for any $m\neq n$ and $B(x_k,r')\subset B(x,r)$ for any $k\in\mathbb{N}$. If $\mu(B(x_1,r'))=0$, then we have $\mu(B(y,r'))=0$ for all $y\in X$. Since $X$ is a Lindel\"{o}f space and $\{B(y,r'):y\in X\}$ is an open cover of $X$, there exists $\{y_n\}_{n=1}^{\infty}$ such that $X=\bigcup\limits_{n=1}^{\infty}B(y_n,r')$. Hence $\mu(X)\leqslant \sum\limits_{n=1}^{\infty}\mu(B(y_n,r')=0$ which contradicts to the fact that $\mu(X)\geqslant\mu(B(x,r))>0$. Therefore, $\mu(B(y,r'))>0$ for all $y\in X$ and hence $\mu(B(x,r))\geqslant \sum\limits_{n=1}^{\infty}\mu(B(x_n,r'))=\sum\limits_{n=1}^{\infty}\mu(B(x_1,r'))=\infty$. By the same argument, we see that $\mu(B(z,r''))=\infty$ for all $z\in X$ and $r''\in(0,+\infty)$. Then $\mu(O)=\infty$ for any non-empty open subset of $X$. This completes the proof of Proposition \ref{230310prop1}.
\end{proof}

Next, we will show that there does NOT exist an isotropic measure on any infinite-dimensional, real separable Hilbert space (e.g., \cite[pp. 54--55]{Kuo}).

Suppose that $H$ is a real separable Hilbert space with inner product ${\langle\cdot,\cdot\rangle}_H$, $\dim H=\infty$ and $\{e_n\}_{n=1}^{\infty}$ is an orthonormal basis of $H$. Denote by $\mathcal {R}$ the following family of subsets in $H$:
\begin{equation}\label{241016e1}
E=\{x\in H:\;({\langle x,e_1\rangle}_H,\cdots,{\langle x,e_n\rangle}_H)\in F\},
\end{equation}
where $n\in\mathbb{N}$ and $F\in\mathscr{B}(\mathbb{R}^n)$. Obviously, $\mathcal {R}$ is an algebra, but it is not a $\sigma$-algebra.
Define a set function $\mu$ from $\mathcal {R}$ into $[0,1]$ as follows:
$$
\mu(E)\triangleq \left(\frac{1}{\sqrt{2\pi}}\right)^n\cdot\int_{F}e^{-\frac{x_1^2+\cdots+x_n^2}{2}}\,\mathrm{d}x_1\cdots\mathrm{d}x_n,
$$
where $E$ is given by \eqref{241016e1}. It is easy to see that $\mu$ is finite-additive, but we have the following result:

\begin{proposition}\label{cha1prop2}
The above $\mu$ is not $\sigma$-additive.
\end{proposition}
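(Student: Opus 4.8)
The plan is to argue by contradiction. Suppose that $\mu$ is countably additive on the algebra $\mathcal{R}$. Since $\mu(H)=1<\infty$ (take $n=1$ and $F=\dbR$ in \eqref{241016e1}), $\mu$ is then a totally finite measure on $\mathcal{R}$, so by Carath\'eodory's Extension Theorem (Theorem \ref{230520thm4}) it admits a unique extension to a probability measure $\bar\mu$ on $\sigma(\mathcal{R})$. I would then derive a contradiction by showing that every closed ball of $H$ is $\bar\mu$-null, which is impossible because $H$ is a countable union of such balls while $\bar\mu(H)=1$.

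The key objects are suitable cylinder sets. For $r>0$ and $n\in\dbN$, set
\[
A_n^{(r)}\triangleq\Big\{x\in H:\;\sum_{i=1}^{n}\langle x,e_i\rangle_H^2\le r^2\Big\}\in\mathcal{R},
\]
the cylinder over the Euclidean ball $\{(x_1,\dots,x_n):x_1^2+\cdots+x_n^2\le r^2\}$. Two elementary observations drive the argument. First, since adjoining a nonnegative term to a sum already bounded by $r^2$ only tightens the constraint, the sequence $\{A_n^{(r)}\}_{n}$ is decreasing in $n$, and by Parseval's identity (for the orthonormal basis $\{e_n\}_{n=1}^{\infty}$),
\[
\bigcap_{n=1}^{\infty}A_n^{(r)}=\Big\{x\in H:\;\sum_{i=1}^{\infty}\langle x,e_i\rangle_H^2\le r^2\Big\}=\{x\in H:\;\|x\|_H\le r\}.
\]
Second, $\mu(A_n^{(r)})\to0$ as $n\to\infty$. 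Indeed, bounding the Gaussian density by $(2\pi)^{-n/2}$ and using that the $n$-dimensional Euclidean ball of radius $r$ has volume $\pi^{n/2}r^n/\Gamma(\frac n2+1)$,
\[
\mu\big(A_n^{(r)}\big)=(2\pi)^{-\frac n2}\int_{x_1^2+\cdots+x_n^2\le r^2} e^{-\frac{x_1^2+\cdots+x_n^2}{2}}\,\mathrm{d}x_1\cdots\mathrm{d}x_n\le \frac{r^{n}}{2^{n/2}\,\Gamma(\frac n2+1)},
\]
and the right-hand side tends to $0$ because $\Gamma(\frac n2+1)$ grows faster than any geometric sequence in $n$.

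With these in hand the contradiction is immediate. Fix $m\in\dbN$. Since $\{x:\|x\|_H\le m\}\subset A_n^{(m)}$ for every $n$, monotonicity of $\bar\mu$ gives $\bar\mu(\{x:\|x\|_H\le m\})\le\mu(A_n^{(m)})$ for all $n$, whence $\bar\mu(\{x:\|x\|_H\le m\})=0$. As $H=\bigcup_{m=1}^{\infty}\{x:\|x\|_H\le m\}$, countable subadditivity of $\bar\mu$ yields $\bar\mu(H)\le\sum_{m=1}^{\infty}\bar\mu(\{x:\|x\|_H\le m\})=0$, contradicting $\bar\mu(H)=1$. Hence $\mu$ cannot be countably additive.

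The main obstacle to anticipate is that the sets actually forcing the contradiction, namely the balls $\{x:\|x\|_H\le m\}$, do not belong to the algebra $\mathcal{R}$; one cannot run the argument on $\mathcal{R}$ alone, and passing to $\sigma(\mathcal{R})$ through the extension in Theorem \ref{230520thm4} is essential (alternatively one might phrase matters via the sequential-continuity criterion of Theorem \ref{theorem18}, but the balls still require the generated $\sigma$-algebra). The quantitative heart of the matter is the estimate $\mu(A_n^{(r)})\to0$, which encodes precisely the infinite-dimensional phenomenon that the standard Gaussian mass escapes to infinity as the number of coordinates grows, so that no ball can carry positive mass.
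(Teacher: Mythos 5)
Your proof is correct, and it shares the paper's overall skeleton---argue by contradiction, extend $\mu$ to a probability measure $\bar\mu$ on $\sigma(\mathcal{R})$ via Theorem \ref{230520thm4}, then exhibit a countable cover of $H$ forcing $\bar\mu(H)<1$---but the test sets and the quantitative estimate are genuinely different. The paper works entirely with hypercube cylinders $E_k=\{x\in H:\ |\langle x,e_i\rangle_H|\le k,\ i=1,\dots,n_k\}$, where $n_k$ is chosen so large that $\mu(E_k)=\bigl(\frac{1}{\sqrt{2\pi}}\int_{-k}^k e^{-x^2/2}\,\mathrm{d}x\bigr)^{n_k}<2^{-(k+1)}$; these sets lie in $\mathcal{R}$ itself, cover $H$, and yield the contradiction $1=\mu(H)\le\sum_k\mu(E_k)<\frac{1}{2}$ using nothing beyond the fact that the one-dimensional Gaussian measure of $[-k,k]$ is strictly less than $1$. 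You instead use ball cylinders $A_n^{(r)}$ decreasing to the closed balls of $H$, show every such ball is $\bar\mu$-null via the estimate $\mu(A_n^{(r)})\le r^n/\bigl(2^{n/2}\,\Gamma(\tfrac{n}{2}+1)\bigr)\to0$, and then cover $H$ by countably many balls. Your route needs two things the paper's does not: the super-geometric growth of $\Gamma$, and genuine use of the extension, since the balls lie in $\sigma(\mathcal{R})\setminus\mathcal{R}$; they \emph{are} $\bar\mu$-measurable, being the countable intersections $\bigcap_n A_n^{(m)}$ of sets of $\mathcal{R}$ (this is exactly what your first observation provides, and it is the fact licensing your appeal to monotonicity, so it would be worth stating explicitly). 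In exchange you obtain the sharper intermediate statement that any countably additive extension would annihilate every ball of $H$, which isolates the mass-escape-to-infinity phenomenon cleanly; the paper's cover argument is more elementary, and could in principle even dispense with Carath\'eodory, since countable subadditivity already holds on the algebra $\mathcal{R}$ for covers whose members and union all belong to $\mathcal{R}$, whereas in your argument the passage to $\sigma(\mathcal{R})$ is essential.
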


\begin{proof}
Note that for each $k\in\mathbb{N}$, we have $0<\frac{1}{\sqrt{2\pi}}\cdot \int_{-k}^ke^{-\frac{x^2}{2}}\,\mathrm{d}x<1$. Hence there exists $n_k\in\mathbb{N}$ such that $n_k\geqslant k$ and $\left(\frac{1}{\sqrt{2\pi}}\cdot \int_{-k}^ke^{-\frac{x^2}{2}}\,\mathrm{d}x\right)^{n_k}<\frac{1}{2^{k+1}}$. Let
\begin{eqnarray*}
E_k\triangleq \{x\in H:\;|{\langle x,e_i\rangle}_H|\leqslant k,\,i=1,\cdots,n_k\}.
\end{eqnarray*}
Then, $H=\bigcup\limits_{k=1}^{\infty}E_k$ and
\begin{eqnarray*}
\mu(E_k)&= & \left(\frac{1}{\sqrt{2\pi}}\right)^{n_k}\cdot\int_{|x_1|\leqslant k}\cdots\int_{|x_{n_k}|\leqslant k}e^{-\frac{x_1^2+\cdots+x_{n_k}^2}{2}}\,\mathrm{d}x_1\cdots\mathrm{d}x_{n_k}\\
&=&\left(\frac{1}{\sqrt{2\pi}}\cdot \int_{-k}^ke^{-\frac{x^2}{2}}\,\mathrm{d}x\right)^{n_k}<\frac{1}{2^{k+1}}.
\end{eqnarray*}
 Note that $H=\{x\in H:\;{\langle x,e_1\rangle}_H\in\mathbb{R}\}$ and $\mu(H)=\frac{1}{\sqrt{2\pi}}\cdot \int_{\mathbb{R}}e^{-\frac{x^2}{2}}\,\mathrm{d}x=1$. If $\mu$ were  $\sigma$-additive, by Theorem \ref{230520thm4}, then $\mu$ could be extended to the $\sigma$-algebra generated by $\mathcal {R}$ as a probability measure. Thus we have $\mu(H)\leqslant \sum\limits_{k=1}^{\infty}\mu(E_k)<\frac{1}{2}$, which is a contradiction. Therefore, $\mu$ is not $\sigma$-additive. The proof of Proposition \ref{cha1prop2} is completed.
\end{proof}


\newpage

\section{A Basic Infinite-Dimensional Space: $\ell^2$}\label{20231108sect1}

In this section, we are concerned with how to introduce differentiation, various measures including particularly that for quite general surfaces (and hence integration) in a basic infinite-dimensional space, i.e., $\ell^2$.

\subsection{Partial Derivatives}

Our basic space in the sequel is the separable Hilbert space $\ell^2$, defined by \eqref{241023e1}--\eqref{241023e2}. In what follows, $O$ is a non-empty open subset of $\ell^2$ and $f$ is a real-valued function on $O$. Denote by $\mathscr{B}(O)$ the family of Borel subsets (of $\ell^2$) contained in $O$, which is a $\sigma$-algebra on $O$.

For each $n\in\mathbb{N}$, set $\textbf{e}_n\triangleq(\delta_{n,i})_{i\in\mathbb{N}}$ where $\delta_{n,i}\triangleq0$ if $n\neq i$ and $\delta_{n,i}\triangleq 1$ if $n=i$. As in \cite[p. 528]{YZ22}, for each $\textbf{x}=(x_i)_{i\in\mathbb{N}} \in O$, we define the partial derivative $D_n f$ of $f(\cdot)$ (at $\textbf{x}$) as follows:
\begin{eqnarray}\label{20240127def1}
(D_n f)(\textbf{x}) \triangleq\lim_{\mathbb{R}\ni t\to 0}\frac{f(\textbf{x}+t\textbf{e}_n)-f(\textbf{x})}{t},
\end{eqnarray}
provided that the above limits exist. As in calculus, sometimes we also use the notations $(D_{x_n}  f)(\textbf{x})$, $\frac{\partial f(\textbf{x})}{\partial x_n}$ or $\frac{\partial f}{\partial x_n}(\textbf{x})$ to denote the above limit.

Write
$$
O_{\textbf{x},n}\triangleq\{(s_1,\cdots,s_n)\in\mathbb{R}^n:\textbf{x}+s_1\textbf{e}_1+\cdots+s_n\textbf{e}_n\in O\}.
$$
Then $O_{\textbf{x},n}$ is a nonempty open subset of $\mathbb{R}^n$. We shall need the following class of functions on $O$:

\begin{definition}\label{230407def1}
We say that $f$ is an \index{$F$-continuous function}$F$-continuous function on $O$, if for each $\textbf{x}\in O$ and $n\in\mathbb{N}$ the following function
$$
O_{\textbf{x},n}\ni(s_1,\cdots,s_n)\mapsto f(\textbf{x}+s_1\textbf{e}_1+\cdots+s_n\textbf{e}_n)
$$
is continuous on $O_{\textbf{x},n}$.
\end{definition}

Denote by \index{$C_F(O)$}$C_F(O)$ and \index{$C_{F,b}(O)$}$C_{F,b}(O)$ the collection of all $F$-continuous functions on $O$ and uniformly bounded and $F$-continuous functions on $O$, respectively. Recall that $C(O)$ stands for the collection of all and continuous real-valued functions on $O$, endowed the usual $\ell^2$ norm topology.

We denote by \index{$C^{1 }_{F,b}(O)$}$C^{1 }_{F,b}(O)$ the following set
\bel{20231110for6}
 \left\{f\in C_{F,b}(O):\;\frac{\partial f}{\partial x_i}\in C_F(O),\,\forall\,i\in\mathbb{N},\,\sup_{\textbf{x}\in O}\sum_{i=1}^{\infty}\left|\frac{\partial f}{\partial x_i}(\textbf{x})\right|^2<\infty\right\}.
\ee

For each $n\in\mathbb{N}$, denote by $\mathscr{S}_n$ the collection of the following subsets (in $\ell^2$):
\begin{eqnarray*}
 \{\textbf{x}+s_1\textbf{e}_1+\cdots+s_n\textbf{e}_n:\;(s_1,\cdots,s_n)\in U\},
\end{eqnarray*}
where $U$ is an open subset of $\mathbb{R}^n$ and $\textbf{x}\in\ell^2$. Then, by Theorem \ref{241026t1}, $\mathscr{S}_n$ is a base for some topology space on $\ell^2$. Denote by $\mathscr{F}_n$ the topology generated by $\mathscr{S}_n$. Denote by $\mathscr{T}_{\ell^2}$ the usual topology on $\ell^2$ generated by its $\ell^2$ norm. Let $ \ds\mathscr{F}\triangleq \bigcap_{n=1}^{\infty}\mathscr{F}_{n}$. Then, it is easy to verify the following facts.

\begin{proposition}
$\mathscr{T}_{\ell^2}\subsetneqq \mathscr{F}\subsetneqq \mathscr{F}_{n+1}\subsetneqq \mathscr{F}_n$, for all $n\in\mathbb{N}$. Moreover, a function $f$ on $\ell^2$ is $F$-continuous if and only if it is continuous respect to $\mathscr{F}$.
\end{proposition}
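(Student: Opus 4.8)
The plan is to reduce every assertion to the description of the canonical base of each $\mathscr{F}_n$ together with the standard fact that, in a topology generated by a base $\mathscr{B}$, a set is open precisely when it contains, around each of its points, a member of $\mathscr{B}$. For $\textbf{x}\in\ell^2$ and $U\subseteq\mathbb{R}^n$ open, write $S_n(\textbf{x},U)\triangleq\{\textbf{x}+s_1\textbf{e}_1+\cdots+s_n\textbf{e}_n:(s_1,\cdots,s_n)\in U\}$ for the typical member of $\mathscr{S}_n$. Every point of $S_n(\textbf{x},U)$ has all coordinates of index $>n$ equal to those of $\textbf{x}$, so $S_n(\textbf{x},U)$ lies in the affine plane $\textbf{x}+\span\{\textbf{e}_1,\cdots,\textbf{e}_n\}$. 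The single recurring lemma I would isolate first is: a nonempty $S_n(\textbf{x},U)$ is never $\mathscr{F}_{n+1}$-open, because any nonempty member $S_{n+1}(\textbf{y},V)$ of $\mathscr{S}_{n+1}$ has its $(n+1)$-th coordinate ranging over a nonempty open subset of $\mathbb{R}$, hence cannot be contained in a set on which that coordinate is frozen at $x_{n+1}$.

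With this in hand the chain of inclusions is mechanical. For $\mathscr{F}_{n+1}\subseteq\mathscr{F}_n$ I would check that each $S_{n+1}(\textbf{x},V)$ is $\mathscr{F}_n$-open: around any of its points one inscribes an $n$-slice by shrinking only the first $n$ coordinate windows and freezing the $(n+1)$-th at its current value, which stays inside $V$ by openness. Strictness of $\mathscr{F}_{n+1}\subsetneqq\mathscr{F}_n$ is exactly the recurring lemma applied to any nonempty $S_n(\textbf{x},U)$. Since $\mathscr{F}=\bigcap_m\mathscr{F}_m$, the inclusion $\mathscr{F}\subseteq\mathscr{F}_{n+1}$ is immediate, and strictness follows because a nonempty $S_{n+1}(\textbf{x},V)$ lies in $\mathscr{F}_{n+1}$ but, by the recurring lemma (with $n$ replaced by $n+1$) together with $\mathscr{F}\subseteq\mathscr{F}_{n+2}$, fails to be $\mathscr{F}_{n+2}$-open and so cannot lie in $\mathscr{F}$. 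Finally $\mathscr{T}_{\ell^2}\subseteq\mathscr{F}$ holds because inside any $\ell^2$-ball $B_r^2(\textbf{p})$ one finds $S_n(\textbf{p},U)$ with $U$ the Euclidean $r$-ball of $\mathbb{R}^n$, using $\|\sum_{i\le n}s_i\textbf{e}_i\|_{\ell^2}=|(s_1,\cdots,s_n)|$; thus every $\ell^2$-open set is $\mathscr{F}_n$-open for all $n$.

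The one genuinely non-formal step, and the place I expect the main difficulty, is the strictness $\mathscr{T}_{\ell^2}\subsetneqq\mathscr{F}$: here one must exhibit a set that is $\mathscr{F}_n$-open for every $n$ yet not $\ell^2$-open, which is precisely where the gap between controlling finitely many coordinates and controlling the whole $\ell^2$-norm appears. I would take $A\triangleq\{\textbf{x}\in\ell^2:\,|x_i|<\frac{1}{2i}\ \text{for all}\ i\in\mathbb{N}\}$. For fixed $n$ and $\textbf{p}\in A$, the box $U=\prod_{i=1}^n(-\delta_i,\delta_i)$ with $\delta_i=\frac{1}{2i}-|p_i|>0$ gives $S_n(\textbf{p},U)\subseteq A$, so $A\in\mathscr{F}_n$ for every $n$ and hence $A\in\mathscr{F}$; on the other hand, for any $r>0$ the point $\frac{r}{2}\textbf{e}_k$ lies in $B_r^2(\textbf{0})$ but leaves $A$ as soon as $k\ge 1/r$, so no $\ell^2$-ball about $\textbf{0}$ sits inside $A$ and $A\notin\mathscr{T}_{\ell^2}$.

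For the last assertion I would argue through the base directly. If $f$ is $\mathscr{F}$-continuous, fix $\textbf{x}$, $n$ and $s^0\in\mathbb{R}^n$, set $\textbf{p}=\textbf{x}+\sum_{i\le n}s_i^0\textbf{e}_i$, and apply $\mathscr{F}$-continuity (hence $\mathscr{F}_n$-openness, since $\mathscr{F}\subseteq\mathscr{F}_n$, of the preimage of an $\varepsilon$-interval about $f(\textbf{p})$) to obtain a member of $\mathscr{S}_n$ through $\textbf{p}$ inside that preimage; as any member of $\mathscr{S}_n$ containing $\textbf{p}$ lies in the plane $\textbf{p}+\span\{\textbf{e}_1,\cdots,\textbf{e}_n\}$, it reparametrizes with base point $\textbf{p}$ as $S_n(\textbf{p},V')$ with $0\in V'$, and reading off the parametrization shows the slice map $(s_1,\cdots,s_n)\mapsto f(\textbf{x}+\sum_{i\le n}s_i\textbf{e}_i)$ is continuous at $s^0$, i.e. $f$ is $F$-continuous. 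Conversely, if $f$ is $F$-continuous, then for open $O'\subseteq\mathbb{R}$, $n\in\mathbb{N}$ and $\textbf{p}\in f^{-1}(O')$, continuity of $s\mapsto f(\textbf{p}+\sum_{i\le n}s_i\textbf{e}_i)$ at $0$ yields an open $V\ni 0$ with $f(S_n(\textbf{p},V))\subseteq O'$, so $f^{-1}(O')$ is $\mathscr{F}_n$-open for every $n$ and hence $\mathscr{F}$-open. Both implications are routine once the affine-plane bookkeeping in the first direction is handled carefully, which is the only point requiring attention here.
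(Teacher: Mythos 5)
Your proof is correct. The paper in fact gives no argument for this proposition (it is introduced with ``it is easy to verify the following facts''), and what you wrote is precisely the natural verification: the frozen-coordinate lemma yields both strict inclusions $\mathscr{F}\subsetneqq\mathscr{F}_{n+1}\subsetneqq\mathscr{F}_n$, inscribing slices in norm balls gives $\mathscr{T}_{\ell^2}\subset\mathscr{F}$, your set $A=\{\textbf{x}\in\ell^2:\,|x_i|<\tfrac{1}{2i}\ \forall i\}$ correctly witnesses the strictness $\mathscr{T}_{\ell^2}\subsetneqq\mathscr{F}$ (the only genuinely non-routine point), and the base-point reparametrization $S_n(\textbf{y},V)=S_n(\textbf{p},V-t^0)$ handles the equivalence of $F$-continuity with $\mathscr{F}$-continuity.
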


Let us recall that, for each $n\in\mathbb{N}$, usually $C_c^{\infty}(\mathbb{R}^n)$ stands for the set of all real-valued $C^{\infty}$-functions on $\mathbb{R}^n$ with compact supports. Note that each function in $C_c^{\infty}(\mathbb{R}^n)$ can be viewed as a cylinder function on $\ell^2$ which depends only on the first $n$ variables. Set
$$
\mathscr {C}_c^{\infty}\triangleq \bigcup_{n=1}^{\infty}C_c^{\infty}(\mathbb{R}^n).
$$

The following notion (of uniform inclusion) will play a fundamental role in the sequel.
\begin{definition}\label{def of bounded contained}
A set $E\subset \ell^2$ is said to be \index{uniform inclusion}{\it uniformly included} in $O$, denoted by \index{$E\stackrel{\circ}{\subset} O$}$E\stackrel{\circ}{\subset} O$, if there exist $r,R\in(0,+\infty)$ such that $\ds\bigcup_{\textbf{x}\in E}B(\textbf{x};r)\subset O$ and $E\subset B(\textbf{0};R)$.
\end{definition}

Write $\mathbb{N}_0\triangleq \mathbb{N}\cup \{0\}$. Denote by $\mathbb{N}_0^{(\mathbb{N})}$ the set of all finitely supported sequences of nonnegative integers, i.e., for each $\alpha =(\alpha_{j} )_{j=1}^\infty\in \mathbb{N}^{\left( \mathbb{N} \right)  }_{0} $ with $\alpha_{m}\in\mathbb{N}_{0}$ for any $ m\in\mathbb{N}$, there exists $n\in\mathbb{N}$ such that $\alpha_j=0$ for all $j\geqslant n$. Suppose that $\alpha=(\alpha_1,\cdots,\alpha_k,0,\cdots)\in \mathbb{N}_0^{(\mathbb{N})}$, the corresponding higher partial derivative of $f$ is defined by $D^{\alpha}f\triangleq(D_1)^{\alpha_1}\cdots (D_k)^{\alpha_k}f.$
Denote by \index{$C_F^{\infty}(O)$}$C_F^{\infty}(O)$ the set of all real-valued $F$-continuous and $\mathscr{B}(O)$-measurable functions $f$ on $O$ such that all partial derivatives of $f$ exist and these partial derivatives are uniformly bounded $F$-continuous and $\mathscr{B}(O)$-measurable functions on $O$. Put
$$
C_F^{\infty}(O)\triangleq\left\{f\in C^{\infty}_F(O):\sup_{O}\left(|f|+\sum_{i=1}^{\infty}|D_if|^2 \right)<\infty,\,\text{supp}f \stackrel{\circ}{\subset}O \right\},
$$
and
$$
C_{F,b}^{\infty}(O)\triangleq\left\{f\in C^{\infty}_F(O):\sup_{O}\left( |f|+|D^{\alpha}f|\right)<\infty,\,\forall\,\alpha\in \mathbb{N}_0^{(\mathbb{N})}\right\}.
$$

\begin{remark}
Note that, since $O$ is assumed to be a nonempty open set of the infinite-dimensional Hilbert space $\ell^2$, any non-zero function $g$ in $C_F^{\infty}(O)$ does NOT need to have a compact support in $O$. Nevertheless, such a function $g$ is bounded and $F$-continuous on $O$ and $\supp g\stackrel{\circ}{\subset} O$, and therefore, in a suitable sense and to some extend, functions in $C_F^{\infty}(O)$ can be regarded as infinite-dimensional counterparts of compactly supported, infinitely differentiable functions on nonempty open sets of Euclidean spaces.
\end{remark}

\begin{lemma}
$C_F^{\infty}(O)$ is dense in $L^2(O,P)$.
\end{lemma}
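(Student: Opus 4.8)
The plan is to approximate an arbitrary $f\in L^2(O,P)$ in three stages: reduce to a dependence on finitely many coordinates, mollify in those coordinates, and finally localize the support so as to meet the uniform-inclusion requirement $\supp f\stackrel{\circ}{\subset}O$ built into the definition of $C_F^\infty(O)$. Since $P$ is a finite measure, finite linear combinations of indicators $\chi_E$ with $E\in\mathscr{B}(O)$ are dense in $L^2(O,P)$, so it suffices to approximate each such $\chi_E$. As $\ell^2$ is separable, its Borel $\sigma$-algebra is generated by the coordinate maps $\textbf{x}\mapsto x_i$; hence the algebra $\mathscr{R}$ of cylinder sets $\{\textbf{x}\in O:(x_1,\dots,x_n)\in F\}$ (with $n\in\mathbb{N}$, $F\in\mathscr{B}(\mathbb{R}^n)$) generates $\mathscr{B}(O)$, and by the approximation underlying Carath\'eodory's extension (Theorem~\ref{230520thm4}) indicators of $\sigma(\mathscr{R})=\mathscr{B}(O)$ sets are $L^2(O,P)$-approximable by indicators of cylinder sets (alternatively, the conditional expectations $\mathbb{E}[f\mid\sigma(x_1,\dots,x_n)]$ depend only on $(x_1,\dots,x_n)$ and converge to $f$ in $L^2$). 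Thus it remains to approximate a cylinder function $h(\textbf{x})=\phi(x_1,\dots,x_n)$, where $\phi\in L^2(\mathbb{R}^n,\mu^{(n)})$ and $\mu^{(n)}$ is the pushforward of $P$ under $(x_1,\dots,x_n)$. Because $\mu^{(n)}$ is a finite Borel measure on $\mathbb{R}^n$, the space $C_c^\infty(\mathbb{R}^n)$ is dense in $L^2(\mathbb{R}^n,\mu^{(n)})$, so I choose $\psi\in C_c^\infty(\mathbb{R}^n)\subset\mathscr{C}_c^\infty$ with $\|\phi-\psi\|_{L^2(\mu^{(n)})}$ small; viewed as a cylinder function, $\psi$ approximates $h$, and hence $f$, in $L^2(O,P)$.

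The second stage addresses the support. The cylinder function $\psi$ is smooth and bounded, but its support is unbounded in $\ell^2$ (it is a cylinder in the directions $\textbf{e}_{n+1},\textbf{e}_{n+2},\dots$), so $\supp\psi\stackrel{\circ}{\subset}O$ fails and I must multiply by a cutoff $\theta\in C_F^\infty(O)$ with uniformly included support. For the radial part of the localization I exploit that, for $p=2$, the norm $\|\cdot\|_{\ell^2}$ is Fr\'echet differentiable on $\ell^2\setminus\{\textbf{0}\}$ with $D_i\|\textbf{x}\|_{\ell^2}=x_i/\|\textbf{x}\|_{\ell^2}$, so that $\sum_i|D_i\|\textbf{x}\|_{\ell^2}|^2\equiv1$; consequently $\theta(\textbf{x})=\chi(\|\textbf{x}\|_{\ell^2})$, with $\chi\in C^\infty(\mathbb{R})$ equal to $1$ near $0$ and vanishing outside $[0,R+1]$, satisfies $\sum_i|D_i\theta|^2\le\|\chi'\|_\infty^2<\infty$ and has all higher partials bounded. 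To control the remaining error I exhaust $O$ from inside by the uniformly-included sets $O_m\triangleq\{\textbf{x}\in O:B(\textbf{x};1/m)\subset O,\ \|\textbf{x}\|_{\ell^2}\le m\}$, which increase to $O$; by continuity of the finite measure $P$ from below, $P(O\setminus O_m)\to0$, so cutting $\psi$ off outside a suitable $O_m$ changes it by an arbitrarily small amount in $L^2(O,P)$ (namely at most $\|\psi\|_\infty^2\,P(\{\theta<1\})$).

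Finally I set the approximant $g\triangleq\theta\cdot\psi$. Since $\psi$ and $\theta$ are bounded with all partial derivatives bounded, $F$-continuous and $\mathscr{B}(O)$-measurable, the Leibniz rule gives $g\in C^\infty_F(O)$ with $|g|+\sum_i|D_ig|^2$ bounded (the gradient of $g$ involves only the finitely many coordinate directions of $\psi$ together with the square-summable contribution of $\theta$), while $\supp g\subset O_m$ forces $\supp g\stackrel{\circ}{\subset}O$ (with $r=1/m$, $R=m$); hence $g\in C_F^\infty(O)$, and the triangle inequality over the three approximation steps yields the claim. I expect the genuine obstacle to be the construction of the cutoff $\theta$ adapted to a \emph{general} open set $O$: the norm supplies a clean bump with square-summable gradient that localizes in the ball direction, but producing a smooth ($C^\infty_F$) transition that keeps the support a uniform distance inside a \emph{proper} $O$ (i.e.\ lands in some $O_m$) is delicate, because the distance-to-$\partial O$ function is only Lipschitz, not $C^\infty_F$, and infinite-dimensional mollification is not available a priori. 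I would resolve this either by first establishing a $C_F^\infty$ Urysohn/partition-of-unity lemma subordinate to the exhaustion $\{O_m\}$ (which, for cylinder-type or smooth sublevel-set domains $O$, is obtained by composing the norm-cutoff with the finite-dimensional defining data), or by approximating $f$ directly by functions already supported in such an $O_m$ and smoothing there, with the continuity of $P$ making the tail error controllable throughout.
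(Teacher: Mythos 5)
Your first two stages (reduction to cylinder functions via Dynkin/martingale convergence, then finite-dimensional mollification) are sound, and indeed they are all that the paper's one-line appeal to Dynkin's theorem can actually deliver. The genuine gap is exactly where you flagged it, in the third stage, and it is the crux of the lemma rather than a technicality, because of the support constraint $\supp f\stackrel{\circ}{\subset}O$ in the definition of $C_F^{\infty}(O)$ (Definition \ref{def of bounded contained}). Concretely: (a) your radial cutoff $\theta=\chi(\|\cdot\|_{\ell^2})$ only yields $\supp(\theta\psi)\subset\overline{B(\textbf{0};R+1)}$, which verifies the boundedness half of uniform inclusion but not the half $\bigcup_{\textbf{x}\in E}B(\textbf{x};r)\subset O$; for a proper open set $O$ the set $\overline{B(\textbf{0};R+1)}\cap\supp\psi$ need not even lie in $O$, so your assertion ``$\supp g\subset O_m$'' does not follow from anything you constructed. (b) The step cannot be bypassed by a $\pi$--$\lambda$ argument either: any $\lambda$-system of ``approximable'' sets must contain $O$ itself, i.e.\ one must approximate the constant function $1$ in $L^2(O,P)$ by elements of $C_F^{\infty}(O)$, which is precisely the missing Urysohn-type cutoff. (c) Your second fallback (``approximate by functions supported in $O_m$ and smooth there'') is circular as stated: the only smoothing device compatible with this framework is Gaussian convolution $P_t$, and since the Gaussian measures are strictly positive (Proposition \ref{230410prop1}), $P_t$ of a function supported in $O_m$ has support spreading over all of $\ell^2$, so one again needs a smooth cutoff supported uniformly inside $O$ --- the very object being sought.

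The gap can be closed with tools already in the paper, but one must mollify a \emph{Lipschitz} cutoff rather than an indicator, and recapture the support by concentration of $P_t$. Fix $m$, set $\eta(\textbf{x})\triangleq\max\{0,\,1-4m\,d(\textbf{x},O_m)\}$ where $d(\textbf{x},O_m)=\inf_{\textbf{y}\in O_m}\|\textbf{x}-\textbf{y}\|_{\ell^2}$, so $\eta$ is $4m$-Lipschitz, equal to $1$ on $O_m$ and to $0$ outside the $\tfrac{1}{4m}$-neighborhood of $O_m$. Put $u_t\triangleq P_t\eta$. By Proposition \ref{partial derivative of Ptf}, all partial derivatives of $u_t$ exist and are bounded, $F$-continuous and measurable; moreover, since $u_t$ is again $4m$-Lipschitz, its restriction to every finite-dimensional affine subspace is $4m$-Lipschitz, whence $\sum_{i=1}^{\infty}|D_iu_t|^2\le(4m)^2$ --- this is how one gets the \emph{unweighted} gradient bound required by $C_F^{\infty}(O)$; note that mollifying indicators, as in Theorem \ref{230215Th1}, only gives the weighted bound $\sum_i a_i^2|D_iX_k|^2\le C^2$, so the paper's own cutoffs $X_k$ do not obviously qualify. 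Next, since $\int_{\ell^2}\|\textbf{y}\|_{\ell^2}^2\,\mathrm{d}P_t(\textbf{y})=t^2\sum_i a_i^2\to0$, Chebyshev's inequality shows that for $t$ small the superlevel set $\{u_t\ge\tfrac14\}$ is contained in the $\tfrac{1}{2m}$-neighborhood of $O_m$, which is uniformly included in $O$; also $u_t\ge\tfrac38$ on $O_m$ for $t$ small. Composing with a smooth $h$ that vanishes on $(-\infty,\tfrac14]$ and equals $1$ on $[\tfrac38,\infty)$ yields $g\triangleq h(u_t)\in C_F^{\infty}(O)$ with $g=1$ on $O_m$, $0\le g\le1$, and $\supp g\stackrel{\circ}{\subset}O$, so $\|1-g\|_{L^2(O,P)}^2\le P(O\setminus O_m)\to0$. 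Finally $g\psi\in C_F^{\infty}(O)$ (Leibniz: $\psi$ contributes finitely many directions, $g$ contributes a square-summable gradient, and the support is that of $g$), and the triangle inequality over your three stages completes the proof.
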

\begin{proof}
The proof is a consequence of Dynkin's theorem.
\end{proof}

\subsection{A Borel Probability Measure}

For any nonempty set $I\subset \mathbb{N}$ and $p\in[1,+\infty)$, let
$$
\mathbb{R}^{I}=\{(x_i)_{i\in I}:\;x_i\in\mathbb{R}\hbox{ for each }i\in I\}
$$
and
$$
\ell^p(I)\triangleq \Big\{(x_i)_{i\in I}\in \mathbb{R}^{I}:\sum_{i\in I}|x_i|^p<\infty\Big\}.
$$
Then $\ell^p(I)$ is a Banach space with the following norm
$$||(x_i)_{i\in I}||_{\ell^p(I)}\triangleq  \Big(\sum_{i\in I}|x_i|^p\Big)^{\frac{1}{p}} ,\q\forall\;(x_i)_{i\in I}\in\ell^p(I).
$$
In particular, $\ell^2(I)$ is a Hilbert space.
For any $\textbf{x}\in\ell^2(I)$ and positive number $r$, put
$$B^I(\textbf{x};r)\triangleq \left\{\textbf{y}\in\ell^2(I):\;||\textbf{y}-\textbf{x}||_{\ell^2(I)}<r \right\}.
$$
As usual, we denote $\mathbb{R}^{\mathbb{N}}$, $\ell^2(\mathbb{N})$ and $B^{\mathbb{N}}(\textbf{x};r)$ by $\mathbb{R}^{\infty}$, $\ell^2$ and $B(\textbf{x};r)$, respectively. Also, we define a mapping $\pi_{I}:\; \ell^2\to \ell^2(I)$ by setting
$$\pi_{I}\textbf{x}\triangleq \textbf{x}^I,\q\forall\;\textbf{x}\in\ell^2,$$
where $\textbf{x}=(x_i)_{i\in\mathbb{N}}$ and $\textbf{x}^I=(x_i)_{i\in I}\in \ell^2(I)$.

For any given $a>0$, we define a probability measure $\bn_a$ on $(\mathbb{R},\mathscr{B}(\mathbb{R}))$ by
$$\bn_a(B)\triangleq \frac{1}{\sqrt{2\pi a^2}}\int_{B}e^{-\frac{x^2}{2a^2}}\,\mathrm{d}x,\quad\,\forall\;B\in \mathscr{B}(\mathbb{R}),$$
where $\mathrm{d}x$ is the usual Lebesgue measure on $(\mathbb{R},\mathscr{B}(\mathbb{R}))$.

Now we define a product measure on the space $\mathbb{R}^{\infty}$, endowed with the usual product topology $\mathscr{T}$. We will use the notation $\mathscr{B}(\mathbb{R}^{\infty})$ to denote the $\sigma$-algebra
generated by $\mathscr{T}$. Let $\prod\limits_{n=1}^{\infty}\mathscr{B}(\mathbb{R})$ denote the countable product $\sigma$-algebra of $\mathscr{B}(\mathbb{R})$ which is the $\sigma$-algebra generated by the following family of subsets in $\mathbb{R}^{\infty}$:
$$
B_1\times B_2\times \cdots \times B_k\times \left(\prod_{i=k+1}^{\infty}\mathbb{R}\right),
$$
where $k\in \mathbb{N}$ and $B_i\in \mathscr{B}(\mathbb{R})$ for $i=1,\cdots,k$.
\begin{lemma}\label{230326lem2}
$\mathscr{B}(\mathbb{R}^{\infty})=\prod\limits_{n=1}^{\infty}\mathscr{B}(\mathbb{R})$.
\end{lemma}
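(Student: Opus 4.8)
The plan is to establish the two inclusions $\prod_{n=1}^{\infty}\mathscr{B}(\mathbb{R}) \subset \mathscr{B}(\mathbb{R}^{\infty})$ and $\mathscr{B}(\mathbb{R}^{\infty}) \subset \prod_{n=1}^{\infty}\mathscr{B}(\mathbb{R})$ separately. Throughout I write $\mathcal{A} \triangleq \prod_{n=1}^{\infty}\mathscr{B}(\mathbb{R})$ for the product $\sigma$-algebra, which by definition is generated by the cylinder sets displayed just before the statement. I would first record the equivalent description that $\mathcal{A}$ is the smallest $\sigma$-algebra making every coordinate projection $\pi_n:\mathbb{R}^{\infty}\to\mathbb{R}$, $\pi_n((x_i)_{i\in\mathbb{N}})\triangleq x_n$, measurable, since each generating cylinder $B_1\times\cdots\times B_k\times\prod_{i>k}\mathbb{R}$ is exactly the finite intersection $\bigcap_{j=1}^{k}\pi_j^{-1}(B_j)$.

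For the inclusion $\mathcal{A}\subset\mathscr{B}(\mathbb{R}^{\infty})$, the point is that each $\pi_n$ is continuous for the product topology $\mathscr{T}$, hence $\mathscr{T}$-Borel measurable, so $\pi_n^{-1}(B)\in\mathscr{B}(\mathbb{R}^{\infty})$ for every $B\in\mathscr{B}(\mathbb{R})$. Consequently every generating cylinder, being a finite intersection of such preimages, lies in $\mathscr{B}(\mathbb{R}^{\infty})$; since $\mathscr{B}(\mathbb{R}^{\infty})$ is a $\sigma$-algebra containing all these generators, it contains the $\sigma$-algebra they generate, namely $\mathcal{A}$.

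For the reverse inclusion $\mathscr{B}(\mathbb{R}^{\infty})\subset\mathcal{A}$, it suffices to show that every $\mathscr{T}$-open set lies in $\mathcal{A}$, because the open sets generate $\mathscr{B}(\mathbb{R}^{\infty})$. The basic open sets $U_1\times\cdots\times U_k\times\prod_{i>k}\mathbb{R}$ with each $U_j\subset\mathbb{R}$ open are themselves cylinders with Borel (indeed open) base and hence belong to $\mathcal{A}$, so the whole task reduces to writing an arbitrary open set as a \emph{countable} union of such basic sets. Here I would use that $\mathscr{T}$ admits a countable base: the cylinders $U_1\times\cdots\times U_k\times\prod_{i>k}\mathbb{R}$ with $k\in\mathbb{N}$ and each $U_j$ an open interval with rational endpoints form a countable base for $\mathscr{T}$ (refining the finitely many nontrivial factors of any basic neighbourhood to rational intervals). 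Any open set $O$ is then the union of the members of this countable base that it contains, i.e.\ a countable union of cylinders lying in $\mathcal{A}$, whence $O\in\mathcal{A}$. Equivalently, one may invoke that $\mathbb{R}^{\infty}$ is separable (the eventually-zero rational sequences are dense in the sense of Definition \ref{241102def1}, for the metric $d$ of \eqref{241102e1}) and hence Lindel\"of by Corollary \ref{Lindelof1}, so the cover of $O$ by the basic open sets it contains admits a countable subcover.

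The step carrying the real content is this reverse inclusion, and within it the passage from a \emph{union} of basic open sets to a \emph{countable} union. This is precisely where the countability of the index set $\mathbb{N}$ is essential: for an uncountable product the analogous identity fails, and the obstruction is exactly the breakdown of the second-countability (equivalently Lindel\"of) property exploited above. If I route the argument through the metric instead of through the explicit countable base, the only additional point needing care is the identification of the product topology $\mathscr{T}$ with the metric topology of \eqref{241102e1} so that Corollary \ref{Lindelof1} applies; once either of these is in hand, the remaining bookkeeping is routine.
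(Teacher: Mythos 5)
Your proposal is correct and takes essentially the same route as the paper: the substantive step in both is that every $\mathscr{T}$-open set is a countable union of open cylinders, which you obtain either from the explicit countable base of rational-interval cylinders or — exactly as the paper does — from separability of the metric space $\mathbb{R}^{\infty}$ together with the Lindel\"of property (Corollary \ref{Lindelof1}). The remaining differences are cosmetic: for the easy inclusion you use continuity of the coordinate projections, while the paper notes that Borel cylinders lie in the $\sigma$-algebra generated by the open cylinders; both are standard and equivalent.
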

\begin{proof}
It is well known that $\mathbb{R}^{\infty}$ is a separable, complete metric space and the topology corresponding to its metric is precisely $\mathscr{T}$. Therefore, by Corollary \ref{Lindelof1}, $(\mathbb{R}^{\infty},\mathscr{T})$ is a Lindel\"{o}f space and hence each $O\in \mathscr{T}$ is a countable union of sets in the following family $\mathscr{T}_0$:
$$
O_1\times\cdots \times O_k\times \Bigg(\prod_{i=k+1}^{\infty}\mathbb{R}\Bigg),
$$
where $k\in \mathbb{N}$ and $O_i$ is an open subset of $\mathbb{R}$ for $i=1,\cdots,k$.
This implies that $\mathscr{B}(\mathbb{R}^{\infty})\subset\prod\limits_{n=1}^{\infty}\mathscr{B}(\mathbb{R})$.

Conversely, for each $k\in\mathbb{N}$ and $B_1,\cdots,B_k\in \mathscr{B}(\mathbb{R})$, it is easy to see that $B_1\times B_2\times \cdots \times B_k\times \Bigg(\prod\limits_{i=k+1}^{\infty}\mathbb{R}\Bigg)$ lies in the $\sigma$-algebra generated by the family $\mathscr{T}_0$ (which is a subfamily of $\mathscr{T}$).
Thus $\prod\limits_{n=1}^{\infty}\mathscr{B}(\mathbb{R})\subset\mathscr{B}(\mathbb{R}^{\infty})$. This completes the proof of Lemma \ref{230326lem2}.
\end{proof}

\begin{lemma}\label{230330lem1}
It holds that $\mathscr{B}(\ell^2)=\{B\cap\ell^2:\; B\in \mathscr{B}(\mathbb{R}^{\infty})\}$.
\end{lemma}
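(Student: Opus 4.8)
The plan is to prove the two inclusions separately. Write $\mathscr{A}\triangleq\{B\cap\ell^2:\;B\in\mathscr{B}(\mathbb{R}^{\infty})\}$ for the trace $\sigma$-algebra on the right; concretely, $\mathscr{A}$ is precisely the Borel $\sigma$-algebra of $\ell^2$ for the topology it inherits as a subspace of $(\mathbb{R}^{\infty},\mathscr{T})$, and the whole statement amounts to saying that this subspace Borel structure coincides with the norm Borel structure $\mathscr{B}(\ell^2)$.

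For the inclusion $\mathscr{A}\subset\mathscr{B}(\ell^2)$ I would use the good-sets principle. Consider $\mathscr{D}\triangleq\{B\in\mathscr{B}(\mathbb{R}^{\infty}):\;B\cap\ell^2\in\mathscr{B}(\ell^2)\}$. Since intersection with $\ell^2$ commutes with countable unions and complements and $\mathscr{B}(\ell^2)$ is a $\sigma$-algebra, $\mathscr{D}$ is a $\sigma$-algebra on $\mathbb{R}^{\infty}$. By Lemma \ref{230326lem2}, $\mathscr{B}(\mathbb{R}^{\infty})$ is generated by the cylinder sets $B_1\times\cdots\times B_k\times\prod_{i>k}\mathbb{R}$ with $B_i\in\mathscr{B}(\mathbb{R})$, whose traces on $\ell^2$ are $\{\textbf{x}\in\ell^2:\;x_i\in B_i,\,i=1,\cdots,k\}$. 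Each coordinate functional $\textbf{x}\mapsto x_i$ is norm-continuous on $\ell^2$ because $|x_i|\le\|\textbf{x}\|_{\ell^2}$, so these traces are norm-Borel; hence $\mathscr{D}$ contains all the generators and therefore $\mathscr{D}=\mathscr{B}(\mathbb{R}^{\infty})$, which is exactly $\mathscr{A}\subset\mathscr{B}(\ell^2)$.

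The substantial direction is $\mathscr{B}(\ell^2)\subset\mathscr{A}$, and its key step is to show that each norm ball lies in $\mathscr{A}$. Fix $\textbf{x}_0=(x_{0,i})_{i\in\mathbb{N}}\in\ell^2$ and $r>0$, and for $n\in\mathbb{N}$ put $g_n(\textbf{x})\triangleq\sum_{i=1}^n(x_i-x_{0,i})^2$, a polynomial in the first $n$ coordinates and hence a $\mathscr{B}(\mathbb{R}^{\infty})$-measurable function on $\mathbb{R}^{\infty}$. Since the partial sums are nondecreasing, for $\textbf{x}\in\ell^2$ one has $\sup_n g_n(\textbf{x})=\|\textbf{x}-\textbf{x}_0\|_{\ell^2}^2$, so $\|\textbf{x}-\textbf{x}_0\|_{\ell^2}<r$ holds if and only if $g_n(\textbf{x})\le r^2-\tfrac1m$ for all $n$ and some $m\in\mathbb{N}$. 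Setting
$$G\triangleq\bigcup_{m=1}^{\infty}\bigcap_{n=1}^{\infty}\Big\{\textbf{x}\in\mathbb{R}^{\infty}:\;g_n(\textbf{x})\le r^2-\tfrac1m\Big\}\in\mathscr{B}(\mathbb{R}^{\infty}),$$
one checks directly that $G\cap\ell^2=B(\textbf{x}_0;r)$, so $B(\textbf{x}_0;r)\in\mathscr{A}$.

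To finish, I would invoke the separability of $\ell^2$: choosing a countable dense set $D\subset\ell^2$, the balls $\{B(\textbf{x}_0;r):\;\textbf{x}_0\in D,\,r\in\mathbb{Q}\cap(0,\infty)\}$ form a countable base for the norm topology, so every norm-open $U$ is a countable union of such balls and hence lies in $\mathscr{A}$ by the previous step. As $\mathscr{A}$ is a $\sigma$-algebra containing all norm-open sets, $\mathscr{B}(\ell^2)\subset\mathscr{A}$, which completes the proof. The only genuine obstacle is this reverse inclusion, and within it the measurability of the ball: although $\|\cdot\|_{\ell^2}$ is \emph{not} continuous in the product topology, it is a countable supremum of finite-coordinate (hence product-Borel) functions, and this is exactly what realizes $B(\textbf{x}_0;r)$ as the trace of an $\mathbb{R}^{\infty}$-Borel set.
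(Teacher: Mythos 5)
Your proof is correct and follows essentially the same route as the paper's: you write a norm ball as the trace of $\bigcup_m\bigcap_n\{g_n\le r^2-\tfrac1m\}$ (the paper uses the identical union-of-intersections trick with $r-\tfrac1n$), you pass from balls to all open sets via separability (the paper invokes the Lindel\"of property, which is the same point), and for the reverse inclusion you check that cylinder-set traces are norm-Borel and generate the trace $\sigma$-algebra (the paper does this with open-factor cylinders, you do it with a slightly more explicit good-sets argument). The differences are purely organizational, so no further comparison is needed.
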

\begin{proof}
For any $\textbf{x}=(x_i)_{i\in\mathbb{N}}\in \ell^2$ and $r\in(0,+\infty)$, one has
\begin{eqnarray*}
&&\{\textbf{x}'=(x_i')_{i\in\mathbb{N}}\in \ell^2:||\textbf{x}'-\textbf{x}||_{\ell^2}<r\}\\
&&=\bigcup_{n=1}^{\infty}\left\{\textbf{x}'=(x_i')_{i\in\mathbb{N}}\in \ell^2:||\textbf{x}'-\textbf{x}||_{\ell^2}\leqslant r-\frac{1}{n}\right\}\\
&&=\bigcup_{n=1}^{\infty}\left(\bigcap_{m=1}^{\infty}\left\{\textbf{x}'=(x_i')_{i\in\mathbb{N}}\in \ell^2:\left(\sum_{i=1}^{m}|x_i'-x_i|^2\right)^{\frac{1}{2}}\leqslant r-\frac{1}{n}\right\}\right)\\
&&\in\{B\cap\ell^2:\; B\in \mathscr{B}(\mathbb{R}^{\infty})\}.
\end{eqnarray*}
Since $\ell^2$ is a Lindel\"{o}f space (by Corollary \ref{Lindelof1}), each open subset $O$ of $\ell^2$ is a countable union of above open balls and hence $O\in\{B\cap\ell^2:\; B\in \mathscr{B}(\mathbb{R}^{\infty})\}$. Therefore, $\mathscr{B}(\ell^2)\subset\{B\cap\ell^2:\; B\in \mathscr{B}(\mathbb{R}^{\infty})\}$.

Conversely, it is easy to see that the collection of all sets of the following form:
\begin{eqnarray*}
\left(O_1\times \cdots \times O_k\times \Bigg(\prod_{i=k+1}^{\infty}\mathbb{R}\Bigg)\right)\cap \ell^2,
\end{eqnarray*}
where $k\in\mathbb{N}$ and $O_i$ is an open subset of $\mathbb{R}$ for each $i=1,\cdots,k$, is a family of open subsets of $\ell^2$ and the $\sigma$-algebra generated by this family of sets is precisely $\{B\cap\ell^2:\; B\in \mathscr{B}(\mathbb{R}^{\infty})\}$. This implies that $\{B\cap\ell^2:\; B\in \mathscr{B}(\mathbb{R}^{\infty})\}\subset\mathscr{B}(\ell^2)$. Therefore, $\mathscr{B}(\ell^2)=\{B\cap\ell^2:\; B\in \mathscr{B}(\mathbb{R}^{\infty})\}$ and the proof of Lemma \ref{230330lem1} is completed.
\end{proof}

In what follows, we fix a sequence $\{a_i\}_{i=1}^{\infty}$ of positive numbers such that
\bel{20241023e3}
\sum\limits_{i=1}^{\infty}a_i<\infty.
\ee
For any $r\in(0,+\infty)$, let
 $$
 \bn^r\triangleq\prod\limits_{i=1}^{\infty}\bn_{ra_i}
 $$
be the product measure on $(\mathbb{R}^{\infty},\mathscr{B}(\mathbb{R}^{\infty}))$.
We shall need the following known simple but useful result (e.g, \cite[Proposition 1.11]{Da}):
\begin{proposition}\label{20240108prop1}
$\bn^r(\ell^2)=1$.
\end{proposition}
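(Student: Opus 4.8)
The plan is to realize $\ell^2$ as the set on which the coordinate sum of squares is finite, and then to show that this sum of squares is integrable against $\bn^r$, so that it is finite almost everywhere. First I would check that $\ell^2$ is actually a member of $\mathscr{B}(\mathbb{R}^{\infty})$, so that $\bn^r(\ell^2)$ is well-defined. This follows from the representation
$$
\ell^2=\bigcup_{M=1}^{\infty}\bigcap_{n=1}^{\infty}\Big\{\textbf{x}=(x_i)_{i\in\mathbb{N}}\in\mathbb{R}^{\infty}:\;\sum_{i=1}^{n}x_i^2\le M\Big\},
$$
in which each set on the right is a cylinder set depending on finitely many coordinates and hence belongs to $\prod_{n=1}^{\infty}\mathscr{B}(\mathbb{R})=\mathscr{B}(\mathbb{R}^{\infty})$ by Lemma \ref{230326lem2}.

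The core step is to compute $\int_{\mathbb{R}^{\infty}}\sum_{i=1}^{\infty}x_i^2\,\mathrm{d}\bn^r(\textbf{x})$. Since the integrand is the increasing limit of the nonnegative partial sums $\sum_{i=1}^{n}x_i^2$, and $\bn^r=\prod_i\bn_{ra_i}$ is a product measure, I would combine the monotone convergence theorem with Tonelli's theorem to obtain
$$
\int_{\mathbb{R}^{\infty}}\sum_{i=1}^{\infty}x_i^2\,\mathrm{d}\bn^r(\textbf{x})=\sum_{i=1}^{\infty}\int_{\mathbb{R}}x_i^2\,\mathrm{d}\bn_{ra_i}(x_i)=\sum_{i=1}^{\infty}(ra_i)^2=r^2\sum_{i=1}^{\infty}a_i^2,
$$
where the middle equality uses that the one-dimensional factor $\bn_{ra_i}$ is the centered Gaussian of variance $(ra_i)^2$, whose second moment is exactly $(ra_i)^2$.

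Next I would note that the standing hypothesis $\sum_i a_i<\infty$ from \eqref{20241023e3} forces $a_i\to 0$, so $a_i\le 1$ for all but finitely many $i$; hence $a_i^2\le a_i$ eventually and $\sum_i a_i^2<\infty$. Therefore the integral above is finite, so the nonnegative $\mathscr{B}(\mathbb{R}^{\infty})$-measurable function $\textbf{x}\mapsto\sum_i x_i^2$ is finite $\bn^r$-almost everywhere; equivalently,
$$
\bn^r\Big(\Big\{\textbf{x}\in\mathbb{R}^{\infty}:\;\sum_{i=1}^{\infty}x_i^2<\infty\Big\}\Big)=\bn^r(\ell^2)=1.
$$

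I do not expect a genuine obstacle in this argument, since the integrand is nonnegative throughout. The only points that require care are the measurability of $\ell^2$ in $\mathbb{R}^{\infty}$ and the rigorous interchange of the infinite sum with the integral, both of which are handled cleanly by the cylinder representation and Tonelli's theorem, together with the elementary implication $\sum_i a_i<\infty\Rightarrow\sum_i a_i^2<\infty$.
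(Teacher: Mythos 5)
Your proof is correct, and it is essentially the standard argument: the paper does not prove this proposition itself but cites \cite[Proposition 1.11]{Da}, whose proof is exactly your second-moment computation (integrate $\sum_i x_i^2$ against the product measure via monotone convergence/Tonelli, get $r^2\sum_i a_i^2<\infty$ from \eqref{20241023e3}, and conclude finiteness almost everywhere). Your additional care about the measurability of $\ell^2$ in $\mathscr{B}(\mathbb{R}^{\infty})$ via cylinder sets and Lemma \ref{230326lem2} is a welcome, correct detail.
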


Thanks to Lemma \ref{230330lem1} and Proposition \ref{20240108prop1}, we obtain a Borel probability measure $P_r$ on $(\ell^2,\mathscr{B}(\ell^2))$ by setting
\begin{equation}\label{220817e1}
P_r(B)\triangleq \bn^r(B),\qquad\forall\,B\in \mathscr{B}(\ell^2).
\end{equation}
For abbreviation, we shall write $P$ instead of $P_1$ in Sections \ref{20240111chapter2} and \ref{20240111chapter1}.

For each $n\in\mathbb{N}$, we define a probability measure $\bn^{n,r}$ on $(\mathbb{R}^n,\mathscr{B}(\mathbb{R}^n))$ by setting
\begin{eqnarray*}\label{def of N^n}
\bn^{n,r}\triangleq \prod_{i=1}^{n}\bn_{ra_i}
\end{eqnarray*}
and a probability measure ${\widehat{ \bn}}^{n,r}$ on $(\mathbb{R}^{\mathbb{N}\setminus\{1,\cdots,n\}},\mathscr{B}(\mathbb{R}^{\mathbb{N}\setminus\{1,\cdots,n\}}))$ by setting
\begin{eqnarray*}\label{def of widetildeN^n}
\widehat{\bn}^{n,r}\triangleq \prod_{i=n+1}^{\infty}\bn_{ra_i}.
\end{eqnarray*}
Then, we obtain the following probability measure $P_{n,r}$ on $\ell^2(\mathbb{N}\setminus\{1,\cdots,n\})$:
$$
P_{n,r}(E)\triangleq \widehat{\bn}^{n,r}(E),\q\forall\;E\in \mathscr{B}(\ell^2(\mathbb{N}\setminus\{1,\cdots,n\})).
$$
Obviously, $P_r=\bn^{n,r}\times P_{n,r}.$
\begin{lemma}\label{dilation of Gaussian measure}
For any $s,r\in(0,+\infty)$ and $E\in \mathscr{B}(\ell^2)$, it holds that
\begin{eqnarray*}
P_{sr}(E)=P_r(s^{-1}E).
\end{eqnarray*}
\end{lemma}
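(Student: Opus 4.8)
The plan is to reduce the claimed dilation identity to a pushforward identity for the product Gaussian measure $\bn^r$ on the full sequence space $\mathbb{R}^{\infty}$, and only then to restrict to $\ell^2$. Write $s^{-1}E\triangleq\{s^{-1}\textbf{x}:\textbf{x}\in E\}$ and let $T_s:\mathbb{R}^{\infty}\to\mathbb{R}^{\infty}$ be the dilation $T_s\textbf{x}=s\textbf{x}$. Since $s>0$, the map $T_s$ is a homeomorphism of $(\mathbb{R}^{\infty},\mathscr{T})$ with inverse $T_{s^{-1}}$; hence it carries $\mathscr{B}(\mathbb{R}^{\infty})$ onto itself and satisfies $T_s^{-1}(A)=s^{-1}A$ for every $A\subset\mathbb{R}^{\infty}$. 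By the definition \eqref{220817e1}, the measures $P_r$ and $P_{sr}$ are the restrictions of $\bn^r$ and $\bn^{sr}$ to $\mathscr{B}(\ell^2)$, so it suffices to establish $\bn^{sr}(A)=\bn^r(s^{-1}A)$ for all $A\in\mathscr{B}(\mathbb{R}^{\infty})$ and then to read this off on $E$.

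The only genuine computation is a finite-dimensional change of variables. For a cylinder set $A=\{\textbf{x}\in\mathbb{R}^{\infty}:(x_1,\dots,x_k)\in A_0\}$ with $k\in\mathbb{N}$ and $A_0\in\mathscr{B}(\mathbb{R}^k)$, one has $s^{-1}A=\{\textbf{x}:(x_1,\dots,x_k)\in s^{-1}A_0\}$, so that $\bn^r(s^{-1}A)=\bn^{k,r}(s^{-1}A_0)$, where $\bn^{k,r}=\prod_{i=1}^k\bn_{ra_i}$. Substituting $\textbf{y}=s\textbf{x}$ (with Jacobian $s^k$) in the Gaussian integral over $s^{-1}A_0$ converts the density of $\bn^{k,r}$, which has standard deviations $ra_1,\dots,ra_k$, into that of $\bn^{k,sr}$, with standard deviations $sra_1,\dots,sra_k$. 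Thus $\bn^{k,r}(s^{-1}A_0)=\bn^{k,sr}(A_0)=\bn^{sr}(A)$, and the desired identity holds on the algebra of all finite-dimensional cylinder sets.

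Since this cylinder algebra generates $\mathscr{B}(\mathbb{R}^{\infty})=\prod_{n=1}^{\infty}\mathscr{B}(\mathbb{R})$ (Lemma \ref{230326lem2}), and both $A\mapsto\bn^{sr}(A)$ and $A\mapsto\bn^r(s^{-1}A)=(T_s)_*\bn^r(A)$ are probability measures on $\mathscr{B}(\mathbb{R}^{\infty})$ agreeing on that algebra, the uniqueness part of Carath\'eodory's extension theorem (Theorem \ref{230520thm4}) forces $\bn^{sr}(A)=\bn^r(s^{-1}A)$ for every $A\in\mathscr{B}(\mathbb{R}^{\infty})$. To descend to $\ell^2$, observe that $\ell^2=\bigcup_{M\in\mathbb{N}}\bigcap_{n\in\mathbb{N}}\{\textbf{x}:\sum_{i=1}^n x_i^2\le M\}$ is a Borel subset of $\mathbb{R}^{\infty}$ with $\bn^r(\ell^2)=1$ (Proposition \ref{20240108prop1}). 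By Lemma \ref{230330lem1}, any $E\in\mathscr{B}(\ell^2)$ has the form $E=B\cap\ell^2$ with $B\in\mathscr{B}(\mathbb{R}^{\infty})$, whence $E\in\mathscr{B}(\mathbb{R}^{\infty})$; and since $T_s$ preserves $\ell^2$ (i.e. $s^{-1}\ell^2=\ell^2$), we get $s^{-1}E=(s^{-1}B)\cap\ell^2\in\mathscr{B}(\ell^2)$ as well. Applying the identity with $A=E$ then yields $P_{sr}(E)=\bn^{sr}(E)=\bn^r(s^{-1}E)=P_r(s^{-1}E)$, which is the assertion.

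There is no deep obstacle here: the substance is the single change of variables for the product Gaussian density, which is routine, and everything else is a uniqueness-of-measure argument. The only point demanding care is the measurability bookkeeping, namely verifying that the dilation $T_s$ maps $\mathscr{B}(\mathbb{R}^{\infty})$ to itself and preserves the full-measure Borel set $\ell^2$, so that the identity proved on $\mathbb{R}^{\infty}$ can be legitimately restricted to $\mathscr{B}(\ell^2)$.
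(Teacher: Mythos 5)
Your proof is correct, and while the central computation coincides with the paper's, the surrounding organization is genuinely different. The paper stays entirely inside $\ell^2$: it introduces the agreement class $\mathscr{A}=\{E\in \mathscr{B}(\ell^2):\;P_{sr}(E)=P_r(s^{-1}E)\}$, verifies the identity on open boxes $\{(x_i)_{i\in\mathbb{N}}\in\ell^2:\;b_i<x_i<d_i,\ i=1,\dots,n\}$ by the same one-dimensional Gaussian change of variables you perform, and then concludes $\mathscr{A}=\mathscr{B}(\ell^2)$ via Lemma \ref{230330lem1}. You instead lift the statement to the ambient space $\mathbb{R}^{\infty}$, prove $\bn^{sr}=(T_s)_*\bn^r$ on the cylinder algebra, invoke the uniqueness half of Carath\'eodory's theorem (Theorem \ref{230520thm4}), and only then restrict to $\ell^2$, using that $\ell^2$ is a Borel subset of $\mathbb{R}^{\infty}$ of full $\bn^r$-measure which is invariant under dilation. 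Your route buys a technically cleaner uniqueness step: your cylinder sets form an honest algebra, so Theorem \ref{230520thm4} applies verbatim, whereas the paper's class $\mathscr{A}$ --- the set on which two probability measures agree --- is a priori only a Dynkin system, so the paper's assertion that ``$\mathscr{A}$ is a $\sigma$-algebra'' silently relies on the boxes forming a $\pi$-system (a $\pi$--$\lambda$ argument it does not spell out). The price you pay is the extra, routine but genuinely necessary, measurability bookkeeping: checking that $T_s$ preserves $\mathscr{B}(\mathbb{R}^{\infty})$, that $\ell^2\in\mathscr{B}(\mathbb{R}^{\infty})$, and that $s^{-1}\ell^2=\ell^2$, none of which the intrinsic argument ever needs. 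Both proofs rest on the identical substantive fact: rescaling by $s$ converts the product Gaussian with deviations $ra_i$ into the one with deviations $sra_i$ on finite-dimensional sets.
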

\begin{proof}
Let $\mathscr{A}\triangleq \{E\in \mathscr{B}(\ell^2):\;P_{rs}(E)=P_r(s^{-1}E)\}$. Then $\mathscr{A}$ is a $\sigma$-algebra. Denote by $\mathscr{C}$ the following family of subsets of $\ell^2$:
$$
\{(x_i)_{i\in\mathbb{N}}\in\ell^2:\;b_i<x_i<d_i\hbox{ for }i=1,\cdots,n\}.
$$
where $n\in \mathbb{N}$, $b_i,d_i\in\mathbb{R}$ with $b_i<d_i$ for $i=1,\cdots,n$.
For each $E=\{(x_i)_{i\in\mathbb{N}}\in\ell^2:b_i<x_i<d_i\hbox{ for }i=1,\cdots,n\}\in \mathscr{C}$, it follows that
\begin{eqnarray*}
P_r(s^{-1}E)&=&\prod_{i=1}^{n}\left(\frac{1}{\sqrt{2\pi r^2a_i^2}}\int_{s^{-1}b_i}^{s^{-1}d_i}e^{-\frac{x_i^2}{2r^2a_i^2}}\,\mathrm{d}x_i\right)\\
&=&\prod_{i=1}^{n}\left(\frac{1}{\sqrt{2\pi s^2r^2a_i^2}}\int_{ b_i}^{ d_i}e^{-\frac{y_i^2}{2s^2r^2a_i^2}}\,\mathrm{d}y_i\right)
=P_{sr}(E).
\end{eqnarray*}
Thus $\mathscr{C}\subset \mathscr{A}$ and hence $\{\ell^2\cap \left(B_1\times B_2\times \cdots \times B_k\times  (\prod_{i=k+1}^{\infty}\mathbb{R} )\right):\;B_i\in \mathscr{B}(\mathbb{R}),\,1\leq i\leq k\}\in \mathscr{A}$ for each $k\in\mathbb{N}$. By Lemma \ref{230330lem1}, we have $\mathscr{B}(\ell^2)\subset \mathscr{A}$. Therefore, $\mathscr{B}(\ell^2)= \mathscr{A}$ and this completes the proof of Lemma \ref{dilation of Gaussian measure}.
\end{proof}

For each $\textbf{x}=(x_i)_{i\in\mathbb{N}}\in\ell^2$ and $r\in(0,+\infty)$, we define
$$
P_r(E,\textbf{x})\triangleq P_r(E+\textbf{x}),\q \bn^r(E,\textbf{x})\triangleq \bn^r(E+\textbf{x}),\q\forall\; E\in \mathscr{B}(\ell^2).
$$
Then, the following result holds true:

\begin{lemma}\label{cha2prop1}
Suppose that $r,s\in(0,+\infty)$ and $\textbf{x}_1=(x_{i,1})_{i\in\mathbb{N}},\textbf{x}_2=(x_{i,2})_{i\in\mathbb{N}}\in\ell^2$.
If $r=s$ and $\sum_{i=1}^{\infty}\frac{(x_{i,1}-x_{i,2})^2}{ a_i^2} <\infty$, then $P_r(\cdot,\textbf{x}_1)$ and $P_s(\cdot,\textbf{x}_2)$ are equivalent. Otherwise, $P_r(\cdot,\textbf{x}_1)$ and $P_s(\cdot,\textbf{x}_2)$ are singular. Furthermore, it holds that
\begin{eqnarray}\label{230702e1}
\frac{\mathrm{d} P_r(\cdot,s_1\textbf{e}_i)}{\mathrm{d}P_r(\cdot)}(\textbf{y})=e^{-\frac{2s_1 y_i +s_1^2}{2r^2a_i^2}},\quad\forall\;i\in\mathbb{N},\, s_1\in\mathbb{R},
\end{eqnarray}
i.e., $\mathrm{d}P_r(\textbf{y}+s_1\textbf{e}_i)=e^{-\frac{2s_1 y_i +s_1^2}{2r^2a_i^2}}\,\mathrm{d}P_r(\textbf{y})$ and for any $n\in\mathbb{N},\, s_1,\cdots,s_n\in\mathbb{R},$
\begin{eqnarray}\label{230702e2}
\frac{\mathrm{d} P_r(\cdot,s_1\textbf{e}_1+\cdots+s_n\textbf{e}_n)}{\mathrm{d}P_r(\cdot)}(\textbf{y})=\prod_{k=1}^{n}e^{-\frac{2s_k y_k +s_k^2}{2t^2a_k^2}},
\end{eqnarray}
i.e., $\mathrm{d}P_r(\textbf{y}+s_1\textbf{e}_1+\cdots+s_n\textbf{e}_n)=e^{-\frac{2s_1 y_1 +s_1^2}{2r^2a_1^2}}\cdots e^{-\frac{2s_n y_n +s_n^2}{2r^2a_n^2}}\,\mathrm{d}P_r(\textbf{y})$, where $\textbf{y}=(y_i)_{i\in\mathbb{N}}\in \ell^2$.
\end{lemma}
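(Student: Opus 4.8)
The plan is to realize both measures as countable products of one-dimensional Gaussians and then to invoke Kakutani's theorem (Theorem \ref{230309the1}). First I would observe that $P_r(\cdot,\textbf{x}_1)$ is the restriction to $\mathscr{B}(\ell^2)$ of the product measure $\wt\mu\triangleq\prod_{i=1}^{\infty}\mu_i$ on $(\mathbb{R}^{\infty},\mathscr{B}(\mathbb{R}^{\infty}))$, where $\mu_i$ is the Gaussian on $\mathbb{R}$ with mean $-x_{i,1}$ and variance $(ra_i)^2$; indeed, on the generating algebra of cylinder sets one has $\wt\mu(E)=\bn^r(E+\textbf{x}_1)=P_r(E,\textbf{x}_1)$, so the two agree by the uniqueness part of Theorem \ref{230520thm4}. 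Likewise $P_s(\cdot,\textbf{x}_2)$ restricts from $\wt\nu\triangleq\prod_{i=1}^{\infty}\nu_i$ with $\nu_i$ Gaussian of mean $-x_{i,2}$ and variance $(sa_i)^2$. Since any two nondegenerate Gaussians on $\mathbb{R}$ have everywhere positive densities and are therefore equivalent, the hypotheses of Theorem \ref{230309the1} are met; moreover both $\wt\mu$ and $\wt\nu$ charge $\ell^2$ fully, their coordinatewise second moments summing to $r^2\sum_i a_i^2+||\textbf{x}_1||_{\ell^2}^2<\infty$ by \eqref{20241023e3}. Consequently equivalence or singularity of $\wt\mu,\wt\nu$ on $\mathbb{R}^{\infty}$ transfers verbatim to $P_r(\cdot,\textbf{x}_1),P_s(\cdot,\textbf{x}_2)$ on $\ell^2$ through the trace description $\mathscr{B}(\ell^2)=\{B\cap\ell^2:B\in\mathscr{B}(\mathbb{R}^{\infty})\}$ of Lemma \ref{230330lem1}.

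Next I would compute the one-dimensional Hellinger integrals. A direct Gaussian integral gives, for $\mu_i,\nu_i$ as above,
\begin{equation*}
H(\mu_i,\nu_i)=\sqrt{\frac{2rs}{r^2+s^2}}\,\exp\left(-\frac{(x_{i,1}-x_{i,2})^2}{4a_i^2(r^2+s^2)}\right).
\end{equation*}
Then the finite product rule (Lemma \ref{230306lem1}) together with its infinite-product extension in Theorem \ref{230309the1} 1) yields $H(\wt\mu,\wt\nu)=\prod_{i=1}^{\infty}H(\mu_i,\nu_i)$, so the whole dichotomy reduces to deciding when this infinite product is positive.

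Now I would carry out the case analysis. If $r\neq s$ then $r^2+s^2>2rs$, so every factor is bounded by the constant $c\triangleq\sqrt{2rs/(r^2+s^2)}\in(0,1)$, whence $H(\wt\mu,\wt\nu)\le\lim_{N\to\infty}c^N=0$; by Theorem \ref{230309the1} 3) (equivalently Lemma \ref{230303prop}) the measures are singular. If $r=s$ the prefactor is $1$ and
\begin{equation*}
H(\wt\mu,\wt\nu)=\exp\left(-\frac{1}{8r^2}\sum_{i=1}^{\infty}\frac{(x_{i,1}-x_{i,2})^2}{a_i^2}\right),
\end{equation*}
which is positive exactly when $\sum_i (x_{i,1}-x_{i,2})^2/a_i^2<\infty$ and vanishes otherwise; Theorem \ref{230309the1} 2) then gives equivalence in the convergent case and Theorem \ref{230309the1} 3) gives singularity in the divergent case. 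This is precisely the asserted dichotomy, the ``otherwise'' clause covering both $r\neq s$ and the divergent subcase of $r=s$.

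Finally, for the Radon--Nikodym formulas \eqref{230702e1}--\eqref{230702e2} I would specialize to $\textbf{x}_2=\textbf{0}$ and $\textbf{x}_1=s_1\textbf{e}_1+\cdots+s_n\textbf{e}_n$, a finitely supported shift for which the above sum is finite and the measures are equivalent. Here $\mu_j=\nu_j$ for $j>n$, so $\mathrm{d}\mu_j/\mathrm{d}\nu_j\equiv1$ there, while for $k\le n$ the ratio of Gaussian densities is
\begin{equation*}
\frac{\mathrm{d}\mu_k}{\mathrm{d}\nu_k}(y_k)=\exp\left(-\frac{2s_ky_k+s_k^2}{2r^2a_k^2}\right);
\end{equation*}
multiplying these finitely many nontrivial factors (equivalently, passing to the limit in the product formula \eqref{230306eq3}) produces \eqref{230702e2}, and \eqref{230702e1} is the case $n=1$. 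I expect the only genuine work to be the one-dimensional Hellinger computation and the convergence/divergence bookkeeping in the infinite product, which is exactly where the $r=s$ versus $r\neq s$ split is encoded; the passage between $\mathbb{R}^{\infty}$ and $\ell^2$ and the density-ratio computation are routine once the product structure is in place.
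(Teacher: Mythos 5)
Your proof is correct. For the equivalence/singularity dichotomy it follows the paper's route exactly: identify the shifted measures with countable products of one-dimensional Gaussians, compute the one-dimensional Hellinger integral $\sqrt{2rs/(r^2+s^2)}\,e^{-(x_{i,1}-x_{i,2})^2/(4(r^2+s^2)a_i^2)}$, and feed the resulting infinite product into Kakutani's theorem (Theorem \ref{230309the1}); your extra care in justifying the passage between $\mathbb{R}^{\infty}$ and $\ell^2$ via Carath\'eodory uniqueness, Lemma \ref{230330lem1}, and the full-measure property of $\ell^2$ is sound and only makes explicit what the paper treats implicitly.

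Where you genuinely diverge is in deriving the Radon--Nikodym formulas \eqref{230702e1}--\eqref{230702e2}. You obtain them by specializing Kakutani's limit formula \eqref{230306eq3} to a finitely supported shift, where all but finitely many one-dimensional density ratios are identically $1$, so the $L^1$-limit collapses to the finite product $\prod_{k=1}^{n}e^{-(2s_ky_k+s_k^2)/(2r^2a_k^2)}$. The paper instead computes the density directly: it writes $P_r(E,s_1\textbf{e}_i)=\int_{\ell^2}\chi_E(\textbf{y})\,\mathrm{d}P_r(\textbf{y}+s_1\textbf{e}_i)$, factors $P_r=\bn_{ra_i}\times P_r^{\widehat{i}}$, and applies Fubini together with the one-dimensional translation identity for the Gaussian density (display \eqref{230702e3} in the paper's proof). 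Your route is shorter and reuses machinery already needed for the dichotomy, at the cost of leaning on the $L^1$-convergence statement of Theorem \ref{230309the1} 2); the paper's route is more elementary and self-contained, producing the change-of-variables identity by hand without invoking any limit theorem. Both yield the same formulas, and nothing later in the paper depends on which derivation is used.
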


\begin{proof}
By Theorem \ref{230309the1},
\begin{eqnarray*}
H(\bn^r(\cdot,\textbf{x}_1),\bn^s(\cdot,\textbf{x}_2))=\prod_{i=1}^{\infty}H(\bn_{ra_i}(\cdot,x_{i,1}),\bn_{sa_i}(\cdot,x_{i,2})),
\end{eqnarray*}
where $\bn_a(E,x)\triangleq \bn_a(E+x),\quad\forall\, E\in \mathscr{B}(\mathbb{R}),\,a\in(0,+\infty),\,x\in\mathbb{R}$. Note that for each $i\in\mathbb{N}$, we have
\begin{eqnarray*}
&&H(\bn_{ra_i}(\cdot,x_{i,1}),\bn_{sa_i}(\cdot,x_{i,2}))\\
&&=\int_{\mathbb{R}}\frac{1}{\sqrt{2\pi sra_i^2}}e^{-\frac{(x+x_{i,1})^2}{4r^2a_i^2}-\frac{(x+x_{i,2})^2}{4s^2a_i^2}}\,\mathrm{d}x\\
&&=\int_{\mathbb{R}}\frac{1}{\sqrt{2\pi sra_i^2}}e^{-\frac{x^2}{4r^2a_i^2}-\frac{(x-x_{i,1}+x_{i,2})^2}{4s^2a_i^2}}\,\mathrm{d}x\\
&&=\int_{\mathbb{R}}\frac{1}{\sqrt{2\pi sra_i^2}}e^{-\frac{(r^2+s^2)x^2-2r^2(x_{i,1}-x_{i,2})x+r^2(x_{i,1}-x_{i,2})^2}{4r^2s^2a_i^2}}\,\mathrm{d}x\\
&&=\int_{\mathbb{R}}\frac{1}{\sqrt{2\pi sra_i^2}}e^{-\frac{ x^2-\frac{2t^2(x_{i,1}-x_{i,2})x}{r^2+s^2}+\frac{r^2(x_{i,1}-x_{i,2})^2}{r^2+s^2}}{\frac{4r^2s^2a_i^2}{r^2+s^2}}}\,\mathrm{d}x\\
&&=\frac{\sqrt{2\pi \frac{2r^2s^2a_i^2}{r^2+s^2}}}{\sqrt{2\pi sra_i^2}}\cdot e^{-\frac{ -\left(\frac{r^2(x_{i,1}-x_{i,2})}{r^2+s^2}\right)^2+\frac{t^2(x_{i,1}-x_{i,2})^2}{r^2+s^2}}{\frac{4r^2s^2a_i^2}{r^2+s^2}}}\\
&&=\sqrt{\frac{2rs}{r^2+s^2}}\cdot e^{-\frac{(x_{i,1}-x_{i,2})^2}{4(r^2+s^2)a_i^2} }.
\end{eqnarray*}
If $r\neq s$, then $H(\bn_{ra_i}(\cdot,x_{i,1}),\bn_{sa_i}(\cdot,x_{i,2}))=\sqrt{\frac{2rs}{r^2+s^2}}<1$ for each $i\in\mathbb{N}$ and hence $H(\bn^r(\cdot,\textbf{x}_1),\bn^s(\cdot,\textbf{x}_2))=0$. By Theorem \ref{230309the1}, $\bn^r(\cdot,\textbf{x}_1)$ and $\bn^s(\cdot,\textbf{x}_2)$ are singular.

If $r= s$, then $H(\bn_{ra_i}(\cdot,x_{i,1}),\bn_{sa_i}(\cdot,x_{i,2}))=e^{-\frac{(x_{i,1}-x_{i,2})^2}{8r^2a_i^2} }$ for each $i\in\mathbb{N}$ and hence $H(\bn^r(\cdot,\textbf{x}_1),\bn^s(\cdot,\textbf{x}_2))>0$ if and only if
\begin{eqnarray*}
\sum_{i=1}^{\infty}\frac{(x_{i,1}-x_{i,2})^2}{ a_i^2} <\infty.
\end{eqnarray*}
In this case, by Theorem \ref{230309the1} again, $\bn^r(\cdot,\textbf{x}_1)$ and $\bn^s(\cdot,\textbf{x}_2)$ are equivalent. For simplicity, we only give the proof of \eqref{230702e1}. Note that for any $E\in \mathscr{B}(\ell^2)$, we have
\bel{230702e3}
\begin{array}{ll}
\ds P_r(E,s_1\textbf{e}_i)
=P_r(E+s_1\textbf{e}_i)=\int_{\ell^2(\mathbb{N})}\chi_{E+s_1\textbf{e}_i}(\textbf{y})\,\mathrm{d}P_r(\textbf{y})\\[3mm]
\ds=\int_{\ell^2(\mathbb{N})}\chi_{E}(\textbf{y}-s_1\textbf{e}_i)\,\mathrm{d}P_r(\textbf{y})=\int_{\ell^2(\mathbb{N})}\chi_{E}(\textbf{y})\,\mathrm{d}P_r(\textbf{y}+s_1\textbf{e}_i)
\\[3mm]
\ds=\int_{\mathbb{R}}\int_{ \ell^{2}(\mathbb{N}\setminus\{i\})}\chi_{E}(\textbf{y})\,\mathrm{d}P_r^{\widehat{i}}(\textbf{y}^i)\mathrm{d}\bn_{ra_i}(y_i+s_1)\\[3mm]
\ds
=\int_{\mathbb{R}}\int_{\ell^{2}(\mathbb{N}\setminus\{i\})}\chi_{E}(\textbf{y})\,\mathrm{d}P_r^{\widehat{i}}(\textbf{y}^i)\frac{1}{\sqrt{2\pi r^2 a_i^2}}e^{-\frac{(y_i+s_1)^2}{2r^2a_i^2}}\mathrm{d}y_i\\[3mm]
\ds
=\int_{\mathbb{R}}\int_{ \ell^{2}(\mathbb{N}\setminus\{i\})}\chi_{E}(\textbf{y})\cdot e^{-\frac{2s_1 y_i +s_1^2}{2r^2a_i^2}}\,\mathrm{d}P_r^{\widehat{i}}(\textbf{y}^i)\mathrm{d}\bn_{ra_i}(y_i)\\[3mm]
\ds
=\int_{E} e^{-\frac{2s_1 y_i +s_1^2}{2r^2a_i^2}}\,\mathrm{d}P_r(\textbf{y}),
\end{array}
\ee
where $\textbf{y}^i=(y_j)_{j\in \mathbb{N}\setminus \{i\}}\in \ell^{2}(\mathbb{N}\setminus\{i\})$ and $P^{\widehat{i}}_r$ is the product measure without the $i$-th component, i.e., it is the restriction of the product measure $\Pi_{j\in\mathbb{N}\setminus\{i\}}\bn_{ra_j}$ on $\left(\ell^{2}(\mathbb{N}\setminus\{i\}),\mathscr{B}\big(\ell^{2}(\mathbb{N}\setminus\{i\})\big)\right)$ (as that in (\ref{220817e1})). Since we can view $\ell^2(\mathbb{N})$ as $\mathbb{R}\times \ell^{2}(\mathbb{N}\setminus\{i\})$, we have $P_r =\bn_{ra_i}\times P_r^{\widehat{i}}$ which implies the fifth equality in \eqref{230702e3}. The sixth equality follows from the one dimensional translation formula $\mathrm{d}\bn_{ra_i}(y_i+s_1)=\frac{1}{\sqrt{2\pi r^2 a_i^2}}e^{-\frac{(y_i+s_1)^2}{2r^2a_i^2}}\mathrm{d}y_i$.

The conclusion \eqref{230702e1} follows from \eqref{230702e3}. This completes the proof of Lemma \ref{cha2prop1}.
\end{proof}

\begin{proposition}\label{230410prop1}
For each non-empty open subset $O$ of $\ell^2$ and $r\in(0,+\infty)$, it holds that $P_r(O)>0$.
\end{proposition}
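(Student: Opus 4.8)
The plan is to reduce the statement to a lower bound for the $P_r$-measure of a single small ball, and then to estimate that measure by splitting $\ell^2$ into its first $n$ coordinates and the remaining tail.

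First I would use monotonicity of the measure. Since $O$ is open and non-empty, pick $\textbf{x}_0\in O$ and $\rho>0$ with $B(\textbf{x}_0;\rho)\subset O$; then $P_r(O)\ge P_r(B(\textbf{x}_0;\rho))$, so it suffices to bound from below the measure of a ball. Next, because finitely supported vectors are dense in $\ell^2$, I would choose $\textbf{z}=(z_1,\dots,z_{n_0},0,0,\dots)$ with $\|\textbf{x}_0-\textbf{z}\|_{\ell^2}<\rho/2$, so that $B(\textbf{z};\rho/2)\subset B(\textbf{x}_0;\rho)$. Thus the whole proposition follows once I show that $P_r(B(\textbf{z};\sigma))>0$ for every $\sigma>0$ and every finitely supported $\textbf{z}$.

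For this I would use the product decomposition $P_r=\bn^{n,r}\times P_{n,r}$, identifying $\ell^2$ with $\mathbb{R}^n\times\ell^2(\mathbb{N}\setminus\{1,\dots,n\})$, where $n\ge n_0$ is to be fixed. Note first that \eqref{20241023e3} forces $\sum_i a_i^2<\infty$ (since $a_i\to 0$ gives $a_i^2\le a_i$ for large $i$), so I may choose $n\ge n_0$ with $r^2\sum_{i>n}a_i^2<\sigma^2/4$. Put $D\triangleq\{(s_1,\dots,s_n)\in\mathbb{R}^n:\sum_{i=1}^n(s_i-z_i)^2<\sigma^2/2\}$ and $G\triangleq\{\textbf{w}\in\ell^2(\mathbb{N}\setminus\{1,\dots,n\}):\|\textbf{w}\|^2<\sigma^2/2\}$. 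Since $\bn^{n,r}$ is a finite-dimensional Gaussian with strictly positive Lebesgue density and $D$ is a non-empty open ball, $\bn^{n,r}(D)>0$. Because $\int\|\textbf{w}\|^2\,\mathrm{d}P_{n,r}=r^2\sum_{i>n}a_i^2<\sigma^2/4$, Markov's inequality gives $P_{n,r}(G)>1/2$. Finally, as $\textbf{z}$ vanishes beyond the $n$-th coordinate, any $\textbf{y}$ with first block in $D$ and tail in $G$ satisfies $\|\textbf{y}-\textbf{z}\|^2<\sigma^2$, i.e. $D\times G\subset B(\textbf{z};\sigma)$; hence $P_r(B(\textbf{z};\sigma))\ge\bn^{n,r}(D)\,P_{n,r}(G)>0$.

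The easy finite-dimensional factor contributes positivity because non-degenerate Gaussians have full support, so the real work is the tail factor: I expect the main obstacle to be controlling the infinitely many remaining coordinates, which is exactly where the summability hypothesis $\sum_i a_i<\infty$ (through $\sum_i a_i^2<\infty$) is essential, since it is what lets a single Markov estimate make the tail ball overwhelmingly likely. As an alternative to the truncation step, one could instead prove $P_r(B(\textbf{0};\sigma))>0$ directly by the same splitting and then invoke the translation equivalence of Lemma \ref{cha2prop1} (valid since finitely supported shifts lie in the Cameron--Martin space $\{\,\textbf{x}:\sum_i x_i^2/a_i^2<\infty\,\}$) to transport positivity from the centered ball to $B(\textbf{z};\sigma)$.
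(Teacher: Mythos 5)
Your proof is correct, but it follows a genuinely different route from the paper's. The paper argues by contradiction: it takes the countable dense set $\mathscr{Q}$ of finitely supported rational vectors, invokes Lemma \ref{cha2prop1} (quasi-invariance of $P_r$ under such translations, which rests on the Kakutani/Hellinger machinery of Theorem \ref{230309the1}) to conclude that if $P_r(O)=0$ then every translate $\textbf{r}+O$ with $\textbf{r}\in\mathscr{Q}$ is also null, and then observes that these countably many translates cover $\ell^2$, forcing $1=P_r(\ell^2)\le\sum_{\textbf{r}\in\mathscr{Q}}P_r(\textbf{r}+O)=0$, a contradiction. You instead prove positivity directly: reduce to a ball $B(\textbf{z};\sigma)$ with finitely supported center, split $P_r=\bn^{n,r}\times P_{n,r}$, use full support of the nondegenerate finite-dimensional Gaussian for the head, and a Markov estimate (valid because $\sum_i a_i<\infty$ forces $\sum_i a_i^2<\infty$) for the tail, so that $P_r(B(\textbf{z};\sigma))\ge \bn^{n,r}(D)\,P_{n,r}(G)>0$. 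The trade-off: the paper's argument is shorter and needs nothing beyond quasi-invariance plus separability, but it is purely qualitative; your argument avoids Lemma \ref{cha2prop1} and the Kakutani theorem entirely, is quantitative (it yields an explicit positive lower bound for the measure of any ball), and isolates exactly where the hypothesis \eqref{20241023e3} enters --- the same summability that makes $\bn^r(\ell^2)=1$ in Proposition \ref{20240108prop1}. Your closing alternative (proving positivity for the centered ball and transporting it by translation equivalence, legitimate since finitely supported shifts satisfy $\sum_i z_i^2/a_i^2<\infty$) is the hybrid closest in spirit to the paper's proof.
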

\begin{proof}
Let
\begin{eqnarray*}
\mathscr{Q}\triangleq \bigcup\limits_{n=1}^{\infty}\{(r_1,\cdots,r_{n},0,\cdots,0,\cdots): r_1,\cdots,r_n\text{ are rational numbers}\}.
\end{eqnarray*}
Then $\mathscr{Q}$ is dense in $\ell^2$. By Lemma \ref{cha2prop1}, for each $\textbf{r}\in \mathscr{Q}$, it holds that $P_r(O)=0$ if and only if $P_r(\textbf{r}+O)=0$, where $\textbf{r}+O=\{\textbf{r}+\textbf{x}:\textbf{x}\in O\}$. Suppose that $P_r(O)=0$. Then for each $\textbf{r}\in \mathscr{Q}$, we have $P_r(\textbf{r}+O)=0$. Since $O$ is a non-empty open subset, there exists $r\in(0,+\infty)$ and $\textbf{x}\in\ell^2$ such that
\begin{eqnarray*}
\{\textbf{x}'\in\ell^2:||\textbf{x}'-\textbf{x}||_{\ell^2}<r\}\subset O.
\end{eqnarray*}
Since $\mathscr{Q}$ is dense in $\ell^2$, for each $\textbf{x}''\in\ell^2$, there exists $\textbf{r}'\in \mathscr{Q}$ such that
\begin{eqnarray*}
 ||\textbf{r}'+\textbf{x}-\textbf{x}''||_{\ell^2}<r,
\end{eqnarray*}
which implies that
\begin{eqnarray*}
\textbf{x}''\in  \textbf{r}'+O.
\end{eqnarray*}
Therefore, $\ell^2=\bigcup\limits_{\textbf{r}\in \mathscr{Q}}(\textbf{r}+O)$ and $\mathscr{Q}$ is a countable set. Thus $1=P_r(\ell^2)\leqslant \sum\limits_{\textbf{r}\in \mathscr{Q}}P_r(\textbf{r}+O)=0$ which is a contradiction. Therefore, $P_r(O)>0$ and this completes the proof of Proposition \ref{230410prop1}.
\end{proof}

\begin{remark}
As in \cite[p. 125]{Com Neg}, a probability measure with the property stated in Proposition \ref{230410prop1} is called a strictly positive measure.
\end{remark}

As pointed in \cite[Proposition 9, p. 152]{Gro67}, the convolution of bounded $\mathscr{B}(\ell^2)$-measurable function on $\ell^2$ with the measure $P_r$ maybe not continuous with respect to the $\ell^2$ norm topology on $\ell^2$.
For any bounded, real-valued $\mathscr{B}(\ell^2)$-measurable $f$ on $\ell^2$ and $t\in(0,+\infty)$, put
$$
(P_tf)(\textbf{x})\triangleq \int_{\ell^2}f(\textbf{x}-\textbf{y})\,\mathrm{d}P_t(\textbf{y}),\quad \forall\,\textbf{x}\in\ell^2.
$$
As in \cite[Proposition 9, pp. 152--154]{Gro67}, we have the following properties for $P_tf$.

\begin{proposition}\label{partial derivative of Ptf}
All order of partial derivatives of $P_tf$ exist at every point of $\ell^2$. Moreover, for each $\textbf{x}\in\ell^2$ and $i\in\mathbb{N}$, it holds that

{\rm (1)} $\frac{\partial (P_tf)(\textbf{x})}{\partial x_i}=-\int_{\ell^2}f(\textbf{x}-\textbf{y})\cdot\frac{y_i}{t^2a_i^2}\,\mathrm{d}P_t(\textbf{y})$;

{\rm (2)} $\left|\frac{\partial (P_tf)(\textbf{x})}{\partial x_i}\right|\leqslant \frac{1}{ta_i} \sup\limits_{\textbf{y}\in\ell^2}|f(\textbf{y})|$;

{\rm (3)} All order of partial derivatives of $P_tf$ are $F$-continuous functions on $\ell^2$;

{\rm (4)} All order of partial derivatives of $P_tf$ are $\mathscr{B}(\ell^2)$-measurable functions on $\ell^2$.
\end{proposition}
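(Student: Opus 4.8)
The plan is to reduce every difference quotient of $P_tf$ to an integral against the Gaussian measure by transferring the translation off the argument of $f$ and onto the measure via the quasi-invariance formulas \eqref{230702e1}--\eqref{230702e2} of Lemma \ref{cha2prop1}, and then to pass to the limit by dominated convergence. Concretely, for fixed $\textbf{x}\in\ell^2$, $i\in\mathbb{N}$ and $s\in\mathbb{R}\setminus\{0\}$, the substitution $\textbf{y}\mapsto\textbf{y}+s\textbf{e}_i$ together with \eqref{230702e1} (with $r=t$) gives
\begin{equation*}
(P_tf)(\textbf{x}+s\textbf{e}_i)=\int_{\ell^2}f(\textbf{x}-\textbf{y})\,e^{-\frac{2sy_i+s^2}{2t^2a_i^2}}\,\mathrm{d}P_t(\textbf{y}),
\end{equation*}
so the difference quotient equals $\int_{\ell^2}f(\textbf{x}-\textbf{y})\,\frac{1}{s}\big(e^{-\frac{2sy_i+s^2}{2t^2a_i^2}}-1\big)\,\mathrm{d}P_t(\textbf{y})$. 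As $s\to0$ the bracketed factor tends pointwise to $-\frac{y_i}{t^2a_i^2}$, which is the weight appearing in formula (1).

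The crucial step is to justify interchanging the limit and the integral, and this is where I expect the only real work. By the mean value theorem the factor $\frac1s\big(e^{-\frac{2sy_i+s^2}{2t^2a_i^2}}-1\big)$ equals $-\frac{y_i+\xi}{t^2a_i^2}\,e^{-\frac{2\xi y_i+\xi^2}{2t^2a_i^2}}$ for some $\xi$ between $0$ and $s$; for $|s|\leqslant1$ its absolute value is bounded by $\frac{|y_i|+1}{t^2a_i^2}\,e^{\frac{|y_i|}{t^2a_i^2}}$, a function of $y_i$ alone. Since the $i$-th marginal of $P_t$ is the Gaussian $\bn_{ta_i}$, this dominating function is $P_t$-integrable because its Gaussian tail overcomes the exponential factor, and $f$ is bounded; hence the dominated convergence theorem applies and yields (1). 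The same computation gives (2): writing $M\triangleq\sup_{\textbf{y}\in\ell^2}|f(\textbf{y})|$, one obtains $\big|\frac{\partial (P_tf)(\textbf{x})}{\partial x_i}\big|\leqslant\frac{M}{t^2a_i^2}\int_{\mathbb{R}}|y_i|\,\mathrm{d}\bn_{ta_i}(y_i)=\frac{M}{t^2a_i^2}\cdot ta_i\sqrt{2/\pi}\leqslant\frac{M}{ta_i}$.

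For the higher-order derivatives I would argue by induction, observing that every partial derivative produced this way again has the form $\int_{\ell^2}f(\textbf{x}-\textbf{y})\,q(\textbf{y})\,\mathrm{d}P_t(\textbf{y})$ for a polynomial $q$ in finitely many coordinates. Differentiating such an expression in the direction $\textbf{e}_j$ by the same translation-and-substitution device replaces the weight $q$ by $(D_jq)(\textbf{y})-q(\textbf{y})\frac{y_j}{t^2a_j^2}$, which is once more a finitely supported polynomial, and the domination is of the identical Gaussian-times-polynomial type. Hence all partial derivatives of every order exist and each is a bounded integral of this form.

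Finally, for (3) I fix $\textbf{x}$ and $n$ and apply the multidimensional formula \eqref{230702e2} to express a derivative $\int_{\ell^2}f(\textbf{x}-\textbf{y})q(\textbf{y})\,\mathrm{d}P_t(\textbf{y})$ evaluated at $\textbf{x}+s_1\textbf{e}_1+\cdots+s_n\textbf{e}_n$ as $\int_{\ell^2}f(\textbf{x}-\textbf{y})\,q\big(\textbf{y}+\sum_{k=1}^n s_k\textbf{e}_k\big)\prod_{k=1}^n e^{-\frac{2s_ky_k+s_k^2}{2t^2a_k^2}}\,\mathrm{d}P_t(\textbf{y})$; the integrand is continuous in $(s_1,\dots,s_n)$ for each $\textbf{y}$ and, for $(s_1,\dots,s_n)$ ranging over a compact set, is dominated by a fixed $P_t$-integrable function through the same Gaussian estimate, so dominated convergence gives the desired $F$-continuity. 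For (4), since $\ell^2$ is separable one has $\mathscr{B}(\ell^2\times\ell^2)=\mathscr{B}(\ell^2)\otimes\mathscr{B}(\ell^2)$ and $(\textbf{x},\textbf{y})\mapsto\textbf{x}-\textbf{y}$ is continuous, so $(\textbf{x},\textbf{y})\mapsto f(\textbf{x}-\textbf{y})q(\textbf{y})$ is jointly measurable; measurability of $\textbf{x}\mapsto\int_{\ell^2}f(\textbf{x}-\textbf{y})q(\textbf{y})\,\mathrm{d}P_t(\textbf{y})$ then follows from the Fubini--Tonelli measurability theorem. The recurring theme, and the main technical obstacle, is the uniform domination of the difference quotients, resting on the observation that the exponential weights coming from quasi-invariance are always absorbed by the Gaussian tails of $P_t$.
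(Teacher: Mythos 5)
Your proof is correct, and for parts (1)--(3) it takes essentially the paper's own route: both arguments hinge on the quasi-invariance formulas \eqref{230702e1}--\eqref{230702e2} of Lemma \ref{cha2prop1} to shift the increment from the argument of $f$ onto the measure, and both then interchange limit and integral. The difference there is purely technical: where you invoke the mean value theorem plus dominated convergence (with the Gaussian-beats-exponential domination), the paper writes the remainder as an explicit integral over $r\in[0,1]$ and bounds it quantitatively via Cauchy--Schwarz together with the exact Gaussian identities $\int_{\ell^2}(J_i(rs,y_i)-1)^2\,\mathrm{d}P_t=e^{r^2s^2/(t^2a_i^2)}-1$ and $\int_{\ell^2}J_i(rs,y_i)\,\mathrm{d}P_t=1$, so as to exhibit an $o(|s|)$ bound directly; likewise your bound in (2) uses the exact Gaussian first absolute moment where the paper uses Cauchy--Schwarz, with the same conclusion. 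Your induction showing that every higher-order derivative again has the form $\int_{\ell^2} f(\textbf{x}-\textbf{y})q(\textbf{y})\,\mathrm{d}P_t(\textbf{y})$ with $q$ a cylinder polynomial is only implicit in the paper, and spelling it out is a genuine improvement in completeness. Where you truly diverge is part (4): the paper approximates $f$ by uniformly bounded simple functions, reduces to $f=\chi_E$ by bounded convergence, and then runs a monotone-class argument over $\mathscr{M}=\{E\in\mathscr{B}(\ell^2):\,P_t\chi_E\hbox{ is measurable}\}$, using that $P_t\chi_E$ is continuous when $E$ is a cylinder set; you instead note that separability of $\ell^2$ gives $\mathscr{B}(\ell^2\times\ell^2)=\mathscr{B}(\ell^2)\otimes\mathscr{B}(\ell^2)$, so $(\textbf{x},\textbf{y})\mapsto f(\textbf{x}-\textbf{y})q(\textbf{y})$ is jointly measurable, and the measurability part of Fubini--Tonelli then applies (the integrand being dominated by $\sup|f|\cdot|q(\textbf{y})|$, which is $P_t$-integrable). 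Your route is shorter and avoids the class argument entirely, at the cost of invoking the identification of the product Borel $\sigma$-algebra; the paper's argument is more elementary in its ingredients. Both are valid.
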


\begin{proof}
(1) For any $\textbf{x}\in\ell^2$, $i\in\mathbb{N}$ and $s\in\mathbb{R}$, by \eqref{230702e1} in Lemma \ref{cha2prop1}, it holds that
\begin{eqnarray*}
(P_tf)(\textbf{x}+s\textbf{e}_i)
&=& \int_{\ell^2}f(\textbf{x}+s\textbf{e}_i-\textbf{y})\,\mathrm{d}P_t(\textbf{y})\\
&=& \int_{\ell^2}f(\textbf{x}-\textbf{y})\,\mathrm{d}P_t(\textbf{y}+s\textbf{e}_i)= \int_{\ell^2}f(\textbf{x}-\textbf{y})e^{-\frac{2s y_i +s^2}{2t^2a_i^2}}\,\mathrm{d}P_t(\textbf{y}).
\end{eqnarray*}
Put $J(s,y_i)\triangleq e^{-\frac{2s y_i +s^2}{2t^2a_i^2}},\,\forall \, s,y_i\in\mathbb{R}$. Then,
\begin{eqnarray*}
\frac{\partial J_i(rs,y_i)}{\partial r}= -\frac{s y_i +rs^2}{t^2a_i^2}\cdot J_i(rs,y_i),\quad\forall\,r\in\mathbb{R},
\end{eqnarray*}
and
\begin{eqnarray*}
(P_tf)(\textbf{x}+s\textbf{e}_i)-(P_tf)(\textbf{x})=-\int_{\ell^2}\int_0^1f(\textbf{x}-\textbf{y})\cdot\frac{s y_i +rs^2}{t^2a_i^2}\cdot J_i(rs,y_i)\,\mathrm{d}r \mathrm{d}P_t(\textbf{y}).
\end{eqnarray*}
Hence
\begin{eqnarray*}
&&\left|(P_tf)(\textbf{x}+s\textbf{e}_i)-(P_tf)(\textbf{x})+s\cdot\int_{\ell^2}f(\textbf{x}-\textbf{y})\cdot\frac{y_i}{t^2a_i^2}\,\mathrm{d}P_t(\textbf{y})\right|\\
&&\leqslant\left|\int_{\ell^2}\int_0^1f(\textbf{x}-\textbf{y})\cdot\frac{s y_i }{t^2a_i^2}\cdot (J_i(rs,y_i)-1)\,\mathrm{d}r \mathrm{d}P_t(\textbf{y})\right|\\
&&\q+\left|\int_{\ell^2}\int_0^1f(\textbf{x}-\textbf{y})\cdot\frac{-rs^2}{t^2a_i^2}\cdot J_i(rs,y_i)\,\mathrm{d}r \mathrm{d}P_t(\textbf{y})\right|\\
&&\leqslant \frac{ \sup\limits_{\textbf{y}\in\ell^2}|f(\textbf{y})|}{t^2a_i^2}\cdot\left|\int_{\ell^2}\int_0^1|y_i\cdot s\cdot (J_i(rs,y_i)-1)|\,\mathrm{d}r \mathrm{d}P_t(\textbf{y})\right|\\
&&\q+s^2\cdot\frac{\sup\limits_{\textbf{y}\in\ell^2}|f(\textbf{y})|}{t^2a_i^2}\left|\int_{\ell^2}\int_0^1  r\cdot J_i(rs,y_i)\,\mathrm{d}r \mathrm{d}P_t(\textbf{y})\right|\\
&&= \frac{ \sup\limits_{\textbf{y}\in\ell^2}|f(\textbf{y})|}{t^2a_i^2}\cdot\left|\int_0^1\int_{\ell^2}|y_i\cdot s\cdot (J_i(rs,y_i)-1)|\,\mathrm{d}P_t(\textbf{y})\mathrm{d}r \right|\\
&&\q+s^2\cdot\frac{\sup\limits_{\textbf{y}\in\ell^2}|f(\textbf{y})|}{t^2a_i^2}\left|\int_0^1\int_{\ell^2}  r\cdot J_i(rs,y_i)\,\mathrm{d}P_t(\textbf{y})\mathrm{d}r \right|\\
&&\leqslant |s|\cdot \frac{ \sup\limits_{\textbf{y}\in\ell^2}|f(\textbf{y})|}{t^2a_i^2}\cdot \int_0^1\left(\int_{\ell^2}|y_i|^2\,\mathrm{d}P_t(\textbf{y})\right)^{\frac{1}{2}}\\
&&\q\cdot \left(\int_{\ell^2}(J_i(rs,y_i)-1)^2\,\mathrm{d}P_t(\textbf{y})\right)^{\frac{1}{2}}\mathrm{d}r \\
&&\q+s^2\cdot\frac{\sup\limits_{\textbf{y}\in\ell^2}|f(\textbf{y})|}{t^2a_i^2} \int_0^1 r\cdot\int_{\ell^2}  J_i(rs,y_i)\,\mathrm{d}P_t(\textbf{y})\mathrm{d}r.
\end{eqnarray*}
Note that for each $r\in[0,1]$, we have
\begin{eqnarray*}
\int_{\ell^2}(J_i(rs,y_i)-1)^2\,\mathrm{d}P_t(\textbf{y})=e^{\frac{r^2s^2}{t^2a_i^2}}-1,\quad\int_{\ell^2}  J_i(rs,y_i)\,\mathrm{d}P_t(\textbf{y})=1.
\end{eqnarray*}
Thus it holds that
\begin{eqnarray*}
&&\left|(P_tf)(\textbf{x}+s\textbf{e}_i)-(P_tf)(\textbf{x})+s\cdot\int_{\ell^2}f(\textbf{x}-\textbf{y})\cdot\frac{y_i}{t^2a_i^2}\,\mathrm{d}P_t(\textbf{y})\right|\\
&&\leqslant |s|\cdot \frac{ \sup\limits_{\textbf{y}\in\ell^2}|f(\textbf{y})|}{t^2a_i^2}\cdot \left(\int_{\ell^2}|y_i|^2\,\mathrm{d}P_t(\textbf{y})\right)^{\frac{1}{2}}\cdot \int_0^1 \left(e^{\frac{r^2s^2}{t^2a_i^2}}-1\right)^{\frac{1}{2}}\mathrm{d}r \\
&&\q+s^2\cdot\frac{\sup\limits_{\textbf{y}\in\ell^2}|f(\textbf{y})|}{t^2a_i^2}\cdot \int_0^1 r\mathrm{d}r\\
&&\leqslant |s|\cdot \frac{ \sup\limits_{\textbf{y}\in\ell^2}|f(\textbf{y})|}{ta_i}\cdot\left(e^{\frac{s^2}{t^2a_i^2}}-1\right)^{\frac{1}{2}} +s^2\cdot\frac{\sup\limits_{\textbf{y}\in\ell^2}|f(\textbf{y})|}{t^2a_i^2}\cdot \frac{1}{2},
\end{eqnarray*}
which implies that
\begin{eqnarray*}
\left|(P_tf)(\textbf{x}+s\textbf{e}_i)-(P_tf)(\textbf{x})+s\cdot\int_{\ell^2}f(\textbf{x}-\textbf{y})\cdot\frac{y_i}{t^2a_i^2}\,\mathrm{d}P_t(\textbf{y})\right|=o(|s|),
\end{eqnarray*}
as $s\to 0$. Therefore, $\frac{\partial (P_tf)(\textbf{x})}{\partial x_i}=-\int_{\ell^2}f(\textbf{x}-\textbf{y})\cdot\frac{y_i}{t^2a_i^2}\,\mathrm{d}P_t(\textbf{y})$.

\medskip

(2) By the above conclusion (1), it holds that
\begin{eqnarray*}
 \left|\frac{\partial (P_tf)(\textbf{x})}{\partial x_i}\right|&\leqslant & \left|\int_{\ell^2}f(\textbf{x}-\textbf{y})\cdot\frac{y_i}{t^2a_i^2}\,\mathrm{d}P_t(\textbf{y})\right|\\
 &\leqslant&\frac{ \sup\limits_{\textbf{y}\in\ell^2}|f(\textbf{y})|}{t^2a_i^2}\cdot\left(\int_{\ell^2}y_i^2\,\mathrm{d}P_t(\textbf{y})\right)^{\frac{1}{2}}=\frac{ \sup\limits_{\textbf{y}\in\ell^2}|f(\textbf{y})|}{ta_i}.
 \end{eqnarray*}

\medskip

(3) For simplicity, we only prove that $P_tf$ is $F$-continuous on $\ell^2$. Note that for each $\textbf{x}\in \ell^2$, $n\in\mathbb{N}$ and $(s_1,\cdots,s_n)\in\mathbb{R}^n$, by \eqref{230702e2} in Lemma \ref{cha2prop1}, we have
\begin{eqnarray*}
&&(P_tf)(\textbf{x}+s_1\textbf{e}_1+\cdots+s_n\textbf{e}_n)
= \int_{\ell^2}f(\textbf{x}+s_1\textbf{e}_1+\cdots+s_n\textbf{e}_n-\textbf{y})\,\mathrm{d}P_t(\textbf{y})\\
&&= \int_{\ell^2}f(\textbf{x}-\textbf{y})\,\mathrm{d}P_t(\textbf{y}+s_1\textbf{e}_1+\cdots+s_n\textbf{e}_n)\\
&&= \int_{\ell^2}f(\textbf{x}-\textbf{y})e^{-\frac{2s_1 y_1 +s_1^2}{2t^2a_1^2}}\cdots e^{-\frac{2s_n y_n +s_n^2}{2t^2a_n^2}}\,\mathrm{d}P_t(\textbf{y})\\
&&=e^{\frac{-s_1^2}{2t^2a_1^2}}\cdots e^{\frac{-s_n^2}{2t^2a_n^2}} \int_{\ell^2}f(\textbf{x}-\textbf{y})e^{-\frac{2s_1 y_1}{2t^2a_1^2}}\cdots e^{-\frac{2s_n y_n }{2t^2a_n^2}}\,\mathrm{d}P_t(\textbf{y}),
\end{eqnarray*}
from which we deduce that the mapping
\begin{eqnarray*}
(s_1,\cdots,s_n)\mapsto (P_tf)(\textbf{x}+s_1\textbf{e}_1+\cdots+s_n\textbf{e}_n),\quad\forall\,(s_1,\cdots,s_n)\in\mathbb{R}^n,
\end{eqnarray*}
is continuous from $\mathbb{R}^n$ into $\mathbb{R}$. This implies that $P_tf$ is $F$-continuous.

\medskip

(4) For simplicity, we only prove that $P_tf$ is $\mathscr{B}(\ell^2)$-measurable. By the classic procedure in real analysis, there exits a sequence of uniformly bounded simple functions $\{f_n\}_{n=1}^{\infty}$ on $\ell^2$ such that $\lim\limits_{n\to\infty}f_n(\textbf{x})=f(\textbf{x})$ for any $\textbf{x}\in\ell^2.$ By bounded convergence theorem, we have $\lim\limits_{n\to\infty}(P_tf_n)(\textbf{x})=(P_tf)(\textbf{x})$ for any $\textbf{x}\in\ell^2.$ Therefore, we only need to prove that if $E$ is a Borel subset of $\ell^2$, then $P_t\chi_E$ is $\mathscr{B}(\ell^2)$-measurable. Let
\begin{eqnarray*}
\mathscr{M}\triangleq \left\{E\in\mathscr{B}(\ell^2):\; P_t\chi_E\text{ is $\mathscr{B}(\ell^2)$-measurable}\right\}.
\end{eqnarray*}
Then $\mathscr{M}$ is closed under monotone limits and complements. Denote by $\mathscr{C}$ the following family of subsets of $\ell^2$:
\begin{eqnarray*}
\left\{(x_i)_{i\in\mathbb{N}}\in\ell^2:(x_1,\cdots,x_n)\in B\right\},
\end{eqnarray*}
where $n\in \mathbb{N}$ and $B\in \mathscr{B}(\mathbb{R}^n)$. It is easy to see that $P_t\chi_E$ is continuous for any $E\in \mathscr{C}$ and hence $\mathscr{C}\subset\mathscr{M}$. Therefore, $\mathscr{M}=\mathscr{B}(\ell^2)$ which completes the proof of Proposition \ref{partial derivative of Ptf}.
\end{proof}

We shall construct below a function on $\ell^2$, for which all of its partial derivatives exist but it is not continuous respect to the $\ell^2$-norm topology. Let $A\triangleq \{\textbf{x}=(x_i)_{i\in\mathbb{N}}\in\ell^2:x_i\in (0,+\infty)\text{ for each }i\in\mathbb{N}\}$, $\textbf{x}_0\triangleq(\sqrt{a_i})_{i\in\mathbb{N}}\in \ell^2$ and
\bel{241118e1}
f(\textbf{x})\triangleq (P_1\chi_A)(\textbf{x}),\quad \forall\;\textbf{x}\in\ell^2.
\ee
\begin{proposition}\label{not continuous}
The function $f$ given by \eqref{241118e1} is $F$-continuous, and all of its partial derivatives exist at each $\textbf{x}=(x_i)_{i\in\mathbb{N}}\in\ell^2$. However, $f$ is not continuous at $\textbf{x}_0$ respect to the $\ell^2$-norm topology.
\end{proposition}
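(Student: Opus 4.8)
The plan is to reduce $f$ to an explicit infinite product of one-dimensional Gaussian distribution functions, after which all three claims become transparent. Since each coordinate functional $\textbf{x}\mapsto x_i$ is continuous on $\ell^2$, the set $A=\bigcap_{i=1}^{\infty}\{\textbf{x}\in\ell^2:x_i>0\}$ is a countable intersection of open sets, so $A\in\mathscr{B}(\ell^2)$ and $\chi_A$ is a bounded, real-valued, $\mathscr{B}(\ell^2)$-measurable function. Hence Proposition \ref{partial derivative of Ptf}, applied with $t=1$ to $\chi_A$, immediately gives that $f=P_1\chi_A$ is $F$-continuous and that all of its partial derivatives exist at every point of $\ell^2$; this settles the first two assertions.

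For the quantitative part I would first establish a product formula. For any $\textbf{x}=(x_i)_{i\in\mathbb{N}}\in\ell^2$ one has $\chi_A(\textbf{x}-\textbf{y})=1$ precisely when $y_i<x_i$ for every $i$, so
$$
f(\textbf{x})=\int_{\ell^2}\chi_A(\textbf{x}-\textbf{y})\,\mathrm{d}P_1(\textbf{y})=P_1\big(\{\textbf{y}:y_i<x_i\text{ for all }i\}\big).
$$
Writing this set as the decreasing intersection of the cylinders $R_n=\{\textbf{y}:y_i<x_i,\,1\le i\le n\}$ and invoking continuity from above of the probability measure $P_1$ together with its product structure $P_1=\prod_{i=1}^{\infty}\bn_{a_i}$, I obtain, with $\Phi$ the standard normal distribution function,
$$
f(\textbf{x})=\lim_{n\to\infty}\prod_{i=1}^{n}\bn_{a_i}\big((-\infty,x_i)\big)=\prod_{i=1}^{\infty}\Phi\Big(\frac{x_i}{a_i}\Big).
$$

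With this formula the discontinuity at $\textbf{x}_0=(\sqrt{a_i})_{i\in\mathbb{N}}$ follows from perturbing a single high-index coordinate. Note $\textbf{x}_0\in\ell^2$ by \eqref{20241023e3}, and $f(\textbf{x}_0)=\prod_{i=1}^{\infty}\Phi(1/\sqrt{a_i})>0$: using a standard tail bound $1-\Phi(u)\le e^{-u^2/2}$ and the fact that $e^{-1/(2a_i)}=o(a_i)$ as $a_i\to0$, the series $\sum_i\big(1-\Phi(1/\sqrt{a_i})\big)$ converges, so the infinite product is strictly positive. I then set $\textbf{x}^{(n)}\triangleq\textbf{x}_0-\sqrt{a_n}\,\textbf{e}_n$ (that is, $\textbf{x}_0$ with its $n$-th entry reset to $0$), so that $||\textbf{x}^{(n)}-\textbf{x}_0||_{\ell^2}=\sqrt{a_n}\to0$ while
$$
f(\textbf{x}^{(n)})=f(\textbf{x}_0)\cdot\frac{\Phi(0)}{\Phi(1/\sqrt{a_n})}=f(\textbf{x}_0)\cdot\frac{1/2}{\Phi(1/\sqrt{a_n})}\longrightarrow\tfrac12 f(\textbf{x}_0)
$$
as $n\to\infty$, because $\Phi(1/\sqrt{a_n})\to1$. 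Since $f(\textbf{x}_0)>0$, the limit $\tfrac12 f(\textbf{x}_0)$ is strictly smaller than $f(\textbf{x}_0)$, so $f$ is not continuous at $\textbf{x}_0$ in the $\ell^2$-norm topology.

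The argument is short, and its conceptual heart is that the product representation decouples the coordinates: a perturbation confined to one high-index coordinate costs only $O(\sqrt{a_n})\to0$ in $\ell^2$-distance yet produces a fixed multiplicative jump of $\tfrac12$ in $f$. The one point that must not be skipped is the strict positivity $f(\textbf{x}_0)>0$ — this is what makes the jump detectable — and it is precisely here that the summability $\sum_i a_i<\infty$ from \eqref{20241023e3} is used.
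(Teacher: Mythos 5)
Your proof is correct, and the part that does the real work differs from the paper's in an interesting way. Both arguments share the same skeleton: write $f$ as the infinite product $f(\textbf{x})=\prod_{i=1}^{\infty}\Phi(x_i/a_i)$ (the paper uses this representation implicitly, you derive it carefully via continuity from above and the product structure of $P_1$), and then show $f(\textbf{x}_0)>0$ by summing the Gaussian tails against $\sum_i a_i<\infty$ from \eqref{20241023e3} — your tail estimate $1-\Phi(1/\sqrt{a_i})\leqslant e^{-1/(2a_i)}=o(a_i)$ is the same estimate the paper performs by hand before invoking \cite[Theorem 15.4]{Rud87}. Where you genuinely diverge is in the choice of witnessing sequence. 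The paper takes $\textbf{x}_n$ to be $\textbf{x}_0$ with the signs of \emph{infinitely many} high-index coordinates flipped (all even indices beyond $n$), so that infinitely many factors are at most $\tfrac12$ and $f(\textbf{x}_n)=0$ exactly; your sequence $\textbf{x}^{(n)}=\textbf{x}_0-\sqrt{a_n}\,\textbf{e}_n$ perturbs a \emph{single} coordinate, with $\|\textbf{x}^{(n)}-\textbf{x}_0\|_{\ell^2}=\sqrt{a_n}\to 0$, and produces the fixed multiplicative defect $f(\textbf{x}^{(n)})=f(\textbf{x}_0)\cdot\Phi(0)/\Phi(1/\sqrt{a_n})\to\tfrac12 f(\textbf{x}_0)$. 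Your construction is more economical and arguably more striking conceptually — it shows that discontinuity is already caused by an arbitrarily small perturbation supported on one coordinate, which also makes transparent why no sequential argument along finite-dimensional directions (i.e., $F$-continuity) can detect it, since the offending coordinate index must escape to infinity. The paper's construction buys a stronger quantitative conclusion: along its sequence $f$ collapses all the way to $0$ rather than to $\tfrac12 f(\textbf{x}_0)$, exhibiting the worst possible failure of continuity. Both proofs correctly dispose of the first two assertions by citing Proposition \ref{partial derivative of Ptf}; you add the (worth stating) observation that $A$ is Borel as a countable intersection of open sets, which the paper leaves tacit.
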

\begin{proof}
For each $n\in\mathbb{N}$, let
$$
x_{n,m}\triangleq\left\{\begin{array}{ll}
\ds\sqrt{a_m},\quad&\ds 1\leqslant m\leqslant n,\\[2mm]
\ds (-1)^{m+1}\sqrt{a_m},\q&\ds m>n.
\end{array}\right.
$$
For $n\in\mathbb{N}$, set $\textbf{x}_n\triangleq (x_{n,m})_{m\in\mathbb{N}}$. Then, $\{\textbf{x}_n\}_{n=1}^{\infty}\subset \ell^2$ and $\lim\limits_{n\to\infty}||\textbf{x}_n-\textbf{x}_0||_{\ell^2}=0$. Note that for each $n\in\mathbb{N}$, we have
\begin{eqnarray*}
f(\textbf{x}_n)=\int_{\ell^2} \chi_A(\textbf{x}_n-\textbf{y})\,\mathrm{d}P_1(\textbf{y})=\prod_{i=1}^{\infty}\frac{1}{\sqrt{2\pi a_i^2}}\int_{-\infty}^{x_{n,m}}e^{-\frac{y_i^2}{2a_i^2}}\,\mathrm{d}y_i=0,
\end{eqnarray*}
where the last equality follows from the fact that
\begin{eqnarray*}
\frac{1}{\sqrt{2\pi a_i^2}}\int_{-\infty}^{x_{n,i}}e^{-\frac{y_i^2}{2a_i^2}}\,\mathrm{d}y_i\leqslant \frac{1}{\sqrt{2\pi a_i^2}}\int_{-\infty}^{0}e^{-\frac{y_i^2}{2a_i^2}}\,\mathrm{d}y_i=\frac{1}{2}
\end{eqnarray*}
for all $i=2(n+k)$ and $k\in\mathbb{N}$. We also note that
\begin{eqnarray*}
f(\textbf{x}_0)&=&\int_{\ell^2} \chi_A(\textbf{x}_0-\textbf{y})\,\mathrm{d}P_1(\textbf{y})=\prod_{i=1}^{\infty}\frac{1}{\sqrt{2\pi a_i^2}}\int_{-\infty}^{\sqrt{a_i}}e^{-\frac{y_i^2}{2a_i^2}}\,\mathrm{d}y_i\\
&=& \prod_{i=1}^{\infty}\left(1-\frac{1}{\sqrt{2\pi a_i^2}}\int_{\sqrt{a_i}}^{+\infty} e^{-\frac{y_i^2}{2a_i^2}}\,\mathrm{d}y_i\right),
\end{eqnarray*}
and
\begin{eqnarray*}
&&\sum_{i=1}^{\infty} \frac{1}{\sqrt{2\pi a_i^2}}\int_{\sqrt{a_i}}^{+\infty} e^{-\frac{y_i^2}{2a_i^2}}\,\mathrm{d}y_i
=\sum_{i=1}^{\infty} \frac{1}{\sqrt{\pi }}\int_{\frac{1}{\sqrt{2a_i}}}^{+\infty} e^{- y_i^2 }\,\mathrm{d}y_i\\
&&\leqslant \sum_{i=1}^{\infty} \frac{1}{\sqrt{\pi }}\int_{\frac{1}{\sqrt{2a_i}}}^{+\infty} e^{- \frac{y_i}{\sqrt{2a_i}}  }\,\mathrm{d}y_i
=\frac{1}{\sqrt{\pi }}\sum_{i=1}^{\infty}\sqrt{2a_i}e^{-\frac{1}{2a_i}}
\leqslant\frac{1}{\sqrt{\pi }}\sum_{i=1}^{\infty}2\sqrt{2a_i} a_i <\infty.
\end{eqnarray*}
By \cite[Theorem 15.4]{Rud87}, we have $f(\textbf{x}_0)>0$.  Therefore, $f$ is not continuous at $\textbf{x}_0$ respect to the $\ell^2$-norm topology. This completes the proof of Proposition \ref{not continuous}.
\end{proof}

The following result can be viewed as a special version of Fernique's theorem for abstract Wiener spaces (\cite{Fer}).
\begin{proposition}\label{230522prop1}
For $r\in(0,+\infty)$, it holds that
\begin{eqnarray}\label{230522e1}
\int_{\ell^2}e^{c\cdot||\textbf{x}||_{\ell^2}^2}\,\mathrm{d}P_r(\textbf{x})<\infty,\quad \forall\,c\in \left(0,\frac{1}{2r^2\cdot \sup\limits_{k\in\mathbb{N}} a_k^2}\right),
\end{eqnarray}
\begin{eqnarray}\label{230522e2}
\int_{\ell^2}e^{c\cdot||\textbf{x}||_{\ell^2}^{2+\delta}}\,\mathrm{d}P_r(\textbf{x})=\infty,\quad \forall\,c,\delta\in(0,+\infty),
\end{eqnarray}
and
\begin{eqnarray}\label{230522e3}
\int_{\ell^2}e^{c\cdot||\textbf{x}||_{\ell^2}^2}\,\mathrm{d}P_r(\textbf{x})=\infty,\quad \forall\,c\in \left[\frac{1}{2r^2\cdot \sup\limits_{k\in\mathbb{N}} a_k^2},+\infty\right).
\end{eqnarray}
\end{proposition}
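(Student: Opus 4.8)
The plan is to reduce every statement to one–dimensional Gaussian integrals by exploiting that, under $P_r$, the coordinate functions $\textbf{x}=(x_i)_{i\in\mathbb{N}}\mapsto x_i$ are independent, with $x_i$ distributed according to $\bn_{ra_i}$ (a centered Gaussian of variance $r^2a_i^2$). Since $\sum_i a_i<\infty$ forces $a_i\to 0$, the positive sequence $(a_i)$ attains its supremum; I would write $a^*\triangleq\max_k a_k$, so that $\sup_k a_k^2=(a^*)^2$ and the threshold in \eqref{230522e1} and \eqref{230522e3} is $\frac{1}{2r^2(a^*)^2}$. The single computational input is the elementary fact
$$
\int_{\mathbb{R}}e^{ct^2}\,\mathrm{d}\bn_{\sigma}(t)=\frac{1}{\sqrt{1-2c\sigma^2}}\ \text{ when }2c\sigma^2<1,\qquad \int_{\mathbb{R}}e^{ct^2}\,\mathrm{d}\bn_{\sigma}(t)=+\infty\ \text{ when }2c\sigma^2\geqslant1,
$$
applied with $\sigma^2=r^2a_i^2$.

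For \eqref{230522e1}, I would first note that $c<\frac{1}{2r^2(a^*)^2}$ gives $2cr^2a_i^2<1$ for every $i$. Writing $e^{c||\textbf{x}||_{\ell^2}^2}=\lim_n e^{c\sum_{i=1}^n x_i^2}$ as an increasing limit of cylinder functions and applying the monotone convergence theorem together with the finite–dimensional Fubini theorem (using $P_r=\bn^{n,r}\times P_{n,r}$), one obtains
$$
\int_{\ell^2}e^{c||\textbf{x}||_{\ell^2}^2}\,\mathrm{d}P_r(\textbf{x})=\prod_{i=1}^{\infty}\frac{1}{\sqrt{1-2cr^2a_i^2}}.
$$
This infinite product converges to a finite limit if and only if $\sum_i\bigl(-\frac{1}{2}\log(1-2cr^2a_i^2)\bigr)<\infty$; since $-\log(1-u)\sim u$ as $u\to0$ and $2cr^2a_i^2\to0$, this series is comparable to $cr^2\sum_i a_i^2$, which is finite because $\sum_i a_i^2\leqslant(\sum_i a_i)^2<\infty$. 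This yields \eqref{230522e1}.

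For \eqref{230522e3} and \eqref{230522e2} I would use a single–coordinate lower bound. Since $||\textbf{x}||_{\ell^2}\geqslant|x_k|$ for any fixed $k$ and $c>0$, monotonicity gives $\int_{\ell^2}e^{c||\textbf{x}||_{\ell^2}^2}\,\mathrm{d}P_r\geqslant\int_{\mathbb{R}}e^{ct^2}\,\mathrm{d}\bn_{ra_k}(t)$; choosing $k$ to be an index realizing $a_k=a^*$ and invoking the displayed one–dimensional fact shows the right–hand side is $+\infty$ precisely when $2cr^2(a^*)^2\geqslant1$, i.e. when $c\geqslant\frac{1}{2r^2(a^*)^2}$, which is \eqref{230522e3}. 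For \eqref{230522e2} the same comparison gives, for any $k$,
$$
\int_{\ell^2}e^{c||\textbf{x}||_{\ell^2}^{2+\delta}}\,\mathrm{d}P_r\geqslant\int_{\mathbb{R}}e^{c|t|^{2+\delta}}\,\mathrm{d}\bn_{ra_k}(t),
$$
and since $2+\delta>2$, the integrand $e^{c|t|^{2+\delta}-t^2/(2r^2a_k^2)}$ tends to $+\infty$ as $|t|\to\infty$, so the integral is $+\infty$ for every $c,\delta>0$; this is \eqref{230522e2}.

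The only genuinely delicate point is the justification of the product formula in \eqref{230522e1}: one must ensure that the exchange of the (infinite-dimensional) integral with the infinite product is legitimate, which is exactly why I would route the argument through monotone convergence on the increasing cylinder truncations $\sum_{i=1}^n x_i^2$ rather than through a direct infinite-dimensional Tonelli statement. The remaining ingredients—attainment of the supremum $a^*$, the convergence criterion $\sum_i a_i^2<\infty$, and the one–dimensional divergence facts—are routine.
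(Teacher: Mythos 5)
Your proposal is correct and follows essentially the same route as the paper: equality \eqref{230522e1} is obtained by the monotone convergence theorem on cylinder truncations, reducing to the infinite product $\prod_k(1-2cr^2a_k^2)^{-1/2}$, whose convergence follows from $\sum_k a_k^2<\infty$ (the paper cites Rudin's theorem on infinite products where you do the logarithmic comparison), while \eqref{230522e2} and \eqref{230522e3} are reduced to the same one-dimensional Gaussian divergence facts the paper lists, via the single-coordinate lower bound. Your explicit remark that $a_k\to 0$ forces the supremum $\sup_k a_k^2$ to be attained—which is what makes the boundary case $c=\frac{1}{2r^2\sup_k a_k^2}$ in \eqref{230522e3} work—is a detail the paper leaves implicit, but it is the same argument.
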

\begin{proof}
For $c\in \left(0,\frac{1}{2r^2\cdot \sup\limits_{k\in\mathbb{N}} a_k^2}\right)$, we have
\begin{eqnarray*}
&&\int_{\ell^2}e^{c\cdot||\textbf{x}||_{\ell^2}^2}\,\mathrm{d}P_r(\textbf{x})
=
\int_{\ell^2}e^{c\cdot\sum\limits_{k=1}^{\infty}x_k^2}\,\mathrm{d}P_r(\textbf{x})
=\prod_{k=1}^{\infty}\int_{\mathbb{R}}\frac{1}{\sqrt{2\pi r^2a_k^2}}e^{cx_k^2-\frac{x_k^2}{2r^2a_k^2}}\,\mathrm{d}x_k\\
&&=\prod_{k=1}^{\infty}\int_{\mathbb{R}}\frac{1}{\sqrt{2\pi r^2a_k^2}}e^{ -\frac{x_k^2}{2\frac{r^2a_k^2}{1-c2r^2a_k^2}}}\,\mathrm{d}x_k
=\prod_{k=1}^{\infty}\sqrt{\frac{1}{1-2r^2a_k^2c}}<\infty,
\end{eqnarray*}
where the second equality follows from the monotone convergence theorem and the last inequality follows from \cite[Theorem 15.4, p. 299]{Rud87} and the fact that $\sum\limits_{k=1}^{\infty}2r^2a_k^2c<\infty$. We have proved \eqref{230522e1}. Finally, \eqref{230522e2} and \eqref{230522e3} follows from the following two simple facts:
\begin{eqnarray*}
\int_{\mathbb{R}}e^{ax^2}\,\mathrm{d}x&=&\infty,\,\quad\forall\, a\in[0,+\infty),\\
\int_{\mathbb{R}}e^{c|x|^{2+\delta}-ax^2}\,\mathrm{d}x&=&\infty,\,\quad\forall\, a,c,\delta\in(0,+\infty).
\end{eqnarray*}
This completes the proof of Proposition \ref{230522prop1}.
\end{proof}

\subsection{General Co-dimensional Surfaces}

The rest of this section is based on \cite{WYZ}.

Suppose that $S$ is a nonempty subset of $\ell^2$ and $\textbf{x}\in S$. Furthermore, suppose that $\ell^2=M_1\oplus M_2$ and $\textbf{x}=\textbf{x}_1+\textbf{x}_2\in S$, where $M_1$ and $M_2$ are two closed linear subspaces of $\ell^2$, $M_1 \bot M_2$, $\textbf{x}_1\in M_1$ and $\textbf{x}_2\in M_2$. If there exists an open neighborhood $U_0$ of $\textbf{x}$ in $\ell^2$, an open neighborhood $U$ of $\textbf{x}_2$ in $M_2$ and $f\in C^1(U;M_1)$ such that $f(\textbf{x}_2)=\textbf{x}_1$ and
$$
S\cap U_0=\{f(\textbf{x}_2')+\textbf{x}_2':\textbf{x}_2'\in U\},
$$
then for $\textbf{x}'=f(\textbf{x}_2')+\textbf{x}_2'\in S\cap U_0$,
\begin{equation}\label{240323for1}
\begin{array}{ll}
\ds\textbf{x}'-\textbf{x}= f(\textbf{x}_2')-f(\textbf{x}_2)+\textbf{x}_2'-\textbf{x}_2\\[2mm]
\ds=(Df(\textbf{x}_2))(\textbf{x}_2'-\textbf{x}_2)+\textbf{x}_2'-\textbf{x}_2+o(||\textbf{x}_2'-\textbf{x}_2||_{\ell^2}),\hbox{ as }\textbf{x}_2'\to\textbf{x}_2\hbox{ in }U.
\end{array}
\end{equation}
We will use the notations $P$ and $M$ to denote respectively the orthogonal normal projections in $\ell^2$ whose ranges are
\begin{eqnarray}\label{20240705for5}
\{\textbf{x}\in\ell^2:\;{\lan \textbf{x},(Df(\textbf{x}_2))\textbf{x}_2'+\textbf{x}_2'\ran}_{\ell^2}=0,\,\forall\;\textbf{x}_2'\in M_2\}
\end{eqnarray}
and
\begin{eqnarray}\label{20240705for4}
\overline{\{(Df(\textbf{x}_2))\textbf{x}_2'+\textbf{x}_2':\;\textbf{x}_2'\in M_2\}}.
\end{eqnarray}

The following simple result will play a key role in the sequel.
\begin{proposition}\label{240618prop1}
For any orthogonal normal projection $P'$ in $\ell^2$, if
\begin{eqnarray}\label{240326for1}
P'(\textbf{x}'-\textbf{x})=o(||\textbf{x}'-\textbf{x}||_{\ell^2}),\quad\,\text{ as }\textbf{x}'\to\textbf{x}\hbox{ in }S,
\end{eqnarray}
then $P'\subset P$.
\begin{proof}
We use the contradiction argument. Suppose that $P'\nsubseteq P$, then there would exist $\textbf{x}''\in P'\setminus P$ and $\textbf{x}''=\textbf{x}''_1+\textbf{x}''_2$, where $\textbf{x}''_1\in P$ and $\textbf{x}''_2\in M\setminus\{0\}$. Note that there exists $\textbf{x}'=(Df(\textbf{x}_2))\textbf{x}_2'+\textbf{x}_2'\in \{(Df(\textbf{x}_2))\textbf{x}_2'+\textbf{x}_2':\textbf{x}_2'\in M_2\}(\subset M)$ such that ${\lan\textbf{x}''_2,\textbf{x}'\ran}_{\ell^2}\neq 0$. Thus for sufficiently small $t\in\mathbb{R}$, we have $\textbf{x}_2+t\cdot \textbf{x}_2'\in U$ and as $t\to 0$, it holds that
\begin{eqnarray*}
&&||P'(f(\textbf{x}_2+t\cdot \textbf{x}_2')+\textbf{x}_2+t\cdot \textbf{x}_2'-f(\textbf{x}_2)-\textbf{x}_2)||_{\ell^2}\\
&&=||P'(f(\textbf{x}_2+t\cdot \textbf{x}_2')-f(\textbf{x}_2)+ t\cdot \textbf{x}_2')||_{\ell^2}\\
&&=||P'(Df(\textbf{x}_2)(t\cdot \textbf{x}_2')+o(t)+ t\cdot \textbf{x}_2') ||_{\ell^2}\\
&&=||P'(Df(\textbf{x}_2)(t\cdot \textbf{x}_2')+ t\cdot \textbf{x}_2')+o(t) ||_{\ell^2}
=||P'(t\cdot \textbf{x}')+o(t) ||_{\ell^2}\\
&&\geq||P'(t\cdot \textbf{x}')||_{\ell^2}-o(t)
\geq\left|\frac{1}{||\textbf{x}''||_{\ell^2}}{\lan\textbf{x}'',t\cdot \textbf{x}'\ran}_{\ell^2}\right|-o(t)\\
&&
=\frac{|t|}{||\textbf{x}''||_{\ell^2}}\cdot|{\lan\textbf{x}''_2,\textbf{x}'\ran}_{\ell^2}|-o(t),
\end{eqnarray*}
which contradicts to \eqref{240326for1}. This completes the proof of Proposition \ref{240618prop1}.
\end{proof}
\end{proposition}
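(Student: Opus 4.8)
The plan is to argue by contradiction, using crucially that $P$ is the orthogonal projection onto the normal space, i.e. onto the set \eqref{20240705for5}, which is exactly the orthogonal complement of the closed tangent space \eqref{20240705for4} that is the range of $M$. Consequently $\ell^2$ splits orthogonally as the range of $P$ plus the range of $M$, and the inclusion $P'\subset P$ can fail only if the range of $P'$ contains a vector with a nonzero tangent component. So I would suppose $P'\nsubseteq P$, choose $\textbf{x}''$ in the range of $P'$ but not in that of $P$, and decompose $\textbf{x}''=\textbf{x}_1''+\textbf{x}_2''$ along this orthogonal splitting, where $\textbf{x}_1''$ is normal (in the range of $P$) and $\textbf{x}_2''$ is tangent (in the range of $M$); since $\textbf{x}''$ is not normal, $\textbf{x}_2''\neq0$.

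The next step is to convert the nonzero tangent vector $\textbf{x}_2''$ into a genuine direction of motion along $S$. Because $\textbf{x}_2''$ lies in the closure \eqref{20240705for4} and is nonzero, it cannot be orthogonal to every member of the generating family $\{(Df(\textbf{x}_2))\textbf{x}_2'+\textbf{x}_2':\textbf{x}_2'\in M_2\}$ (otherwise it would be orthogonal to its closed linear span, hence to itself). Hence there is $\textbf{x}_2'\in M_2$, necessarily nonzero, such that the tangent vector $\textbf{v}\triangleq(Df(\textbf{x}_2))\textbf{x}_2'+\textbf{x}_2'$ satisfies ${\lan\textbf{x}_2'',\textbf{v}\ran}_{\ell^2}\neq0$.

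I would then probe the hypothesis \eqref{240326for1} along the curve $t\mapsto\textbf{x}'\triangleq f(\textbf{x}_2+t\textbf{x}_2')+(\textbf{x}_2+t\textbf{x}_2')$, which for small $t$ lies in $S\cap U_0$ and tends to $\textbf{x}$. Using that $f\in C^1(U;M_1)$ is Fr\'echet differentiable, the chord linearizes as $\textbf{x}'-\textbf{x}=t\textbf{v}+o(t)$, exactly as in \eqref{240323for1}. Since $P'$ is an orthogonal (self-adjoint, idempotent) projection and $\textbf{x}''$ lies in its range, Cauchy-Schwarz gives
$$
||P'(\textbf{x}'-\textbf{x})||_{\ell^2}\geq\frac{|{\lan\textbf{x}'',P'(\textbf{x}'-\textbf{x})\ran}_{\ell^2}|}{||\textbf{x}''||_{\ell^2}}=\frac{|{\lan\textbf{x}'',\textbf{x}'-\textbf{x}\ran}_{\ell^2}|}{||\textbf{x}''||_{\ell^2}}.
$$
Because $\textbf{x}_1''$ is normal it annihilates the tangent vector $\textbf{v}$, so ${\lan\textbf{x}'',\textbf{v}\ran}_{\ell^2}={\lan\textbf{x}_2'',\textbf{v}\ran}_{\ell^2}\neq0$, and the right-hand side equals $\frac{|t|}{||\textbf{x}''||_{\ell^2}}|{\lan\textbf{x}_2'',\textbf{v}\ran}_{\ell^2}|-o(t)$, of exact order $|t|$. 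On the other hand $||\textbf{x}'-\textbf{x}||_{\ell^2}=|t|\,||\textbf{v}||_{\ell^2}+o(t)$ is also comparable to $|t|$ (note $\textbf{v}\neq0$), so $P'(\textbf{x}'-\textbf{x})$ is not $o(||\textbf{x}'-\textbf{x}||_{\ell^2})$ along $S$, contradicting \eqref{240326for1}.

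The $C^1$ linearization and the tracking of the $o(t)$ remainders are routine. The step I expect to be the real crux is the passage from $\textbf{x}_2''$ lying merely in the closure of the tangent vectors to an explicit tangent direction $\textbf{v}$ pairing nontrivially with it: this is the genuinely infinite-dimensional point, since the tangent vectors need not form a closed subspace, and the argument relies precisely on $M$ being defined as the closure in \eqref{20240705for4}.
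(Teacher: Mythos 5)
Your proof is correct and follows essentially the same route as the paper's: the same contradiction setup with the decomposition $\textbf{x}''=\textbf{x}_1''+\textbf{x}_2''$, the same choice of a generating tangent vector pairing nontrivially with $\textbf{x}_2''$, the same curve $t\mapsto f(\textbf{x}_2+t\textbf{x}_2')+\textbf{x}_2+t\textbf{x}_2'$, and the same Cauchy--Schwarz lower bound via $P'\textbf{x}''=\textbf{x}''$. If anything, you are slightly more careful than the paper in justifying why $\textbf{x}_2''$ cannot be orthogonal to the whole generating family and in explicitly comparing $||\textbf{x}'-\textbf{x}||_{\ell^2}$ with $|t|$ so that the $o(t)$ lower bound genuinely contradicts the $o(||\textbf{x}'-\textbf{x}||_{\ell^2})$ hypothesis.
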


As a consequence of Proposition \ref{240618prop1}, one can deduce the following result.
\begin{corollary}\label{20240705cor1}
The spaces in \eqref{20240705for4} and \eqref{20240705for5} are independent of $M_1,M_2$ and $f$.
\end{corollary}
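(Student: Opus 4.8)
The plan is to exploit Proposition \ref{240618prop1} as an \emph{intrinsic characterization} of the range of $P$: the space \eqref{20240705for5} is the largest subspace of $\ell^2$ onto which the orthogonal projection annihilates $\textbf{x}'-\textbf{x}$ to higher order along $S$, and moreover $P$ itself realizes this higher-order vanishing. Since the defining condition \eqref{240326for1} refers only to $S$ and $\textbf{x}$ (through convergence $\textbf{x}'\to\textbf{x}$ in $S$) and not to the chosen decomposition $\ell^2=M_1\oplus M_2$ nor to the graphing map $f$, a two-sided application of Proposition \ref{240618prop1} will force any two such projections to coincide.

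First I would verify that $P$ satisfies the hypothesis \eqref{240326for1}, i.e. that $P(\textbf{x}'-\textbf{x})=o(||\textbf{x}'-\textbf{x}||_{\ell^2})$ as $\textbf{x}'\to\textbf{x}$ in $S$. Writing $\textbf{v}=\textbf{x}_2'-\textbf{x}_2\in M_2$, the expansion \eqref{240323for1} gives $\textbf{x}'-\textbf{x}=\big((Df(\textbf{x}_2))\textbf{v}+\textbf{v}\big)+o(||\textbf{v}||_{\ell^2})$. The leading term $(Df(\textbf{x}_2))\textbf{v}+\textbf{v}$ lies in the space \eqref{20240705for4}, which is exactly the range of $M$ and hence orthogonal to the range of $P$; thus $P$ kills it and $P(\textbf{x}'-\textbf{x})=P\big(o(||\textbf{v}||_{\ell^2})\big)$, whose norm is $o(||\textbf{v}||_{\ell^2})$ because $P$ has norm one. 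Finally, since $\textbf{v}$ is precisely the $M_2$-component of $\textbf{x}'-\textbf{x}$, one has $||\textbf{v}||_{\ell^2}\leqslant||\textbf{x}'-\textbf{x}||_{\ell^2}$, and continuity of $f$ makes $\textbf{x}'\to\textbf{x}$ in $S$ equivalent to $\textbf{v}\to\textbf{0}$; hence $o(||\textbf{v}||_{\ell^2})=o(||\textbf{x}'-\textbf{x}||_{\ell^2})$ and the hypothesis holds.

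Next I would run the symmetry argument. Suppose two admissible data $(M_1,M_2,f)$ and $(\widetilde{M}_1,\widetilde{M}_2,\widetilde{f})$ describe the same surface $S$ near $\textbf{x}$, producing normal projections $P$ and $\widetilde{P}$ onto the corresponding spaces \eqref{20240705for5}. By the previous paragraph (applied to the tilded data) $\widetilde{P}$ satisfies \eqref{240326for1}, so Proposition \ref{240618prop1} applied to the untilded data yields $\widetilde{P}\subset P$; exchanging the roles of the two representations gives $P\subset\widetilde{P}$. Therefore the ranges coincide, so the space \eqref{20240705for5} is independent of $M_1,M_2,f$. Taking orthogonal complements in $\ell^2$, and noting that \eqref{20240705for4} is by construction the orthogonal complement of \eqref{20240705for5}, the tangent space \eqref{20240705for4} is independent as well.

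The main obstacle is the first step, namely converting the error term in \eqref{240323for1}, which is a priori only $o(||\textbf{v}||_{\ell^2})$ in the $M_2$-variable, into a genuine $o(||\textbf{x}'-\textbf{x}||_{\ell^2})$ along $S$; this hinges on the elementary but essential observation that $\textbf{v}$ is the orthogonal $M_2$-projection of $\textbf{x}'-\textbf{x}$, so that $||\textbf{v}||_{\ell^2}\leqslant||\textbf{x}'-\textbf{x}||_{\ell^2}$ and the two notions of ``small'' agree. Once this is in place, the independence is a formal consequence of the antisymmetry $P'\subset P$ encoded in Proposition \ref{240618prop1}.
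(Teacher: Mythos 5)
Your proposal is correct and is essentially the argument the paper intends: the paper states the corollary as a direct consequence of Proposition \ref{240618prop1}, and your two steps --- verifying via \eqref{240323for1} and the orthogonality $M_1\bot M_2$ that $P$ itself satisfies \eqref{240326for1}, then applying the proposition symmetrically to two representations and passing to orthogonal complements --- are exactly the intended deduction, with the details (such as $||\textbf{x}_2'-\textbf{x}_2||_{\ell^2}\leqslant||\textbf{x}'-\textbf{x}||_{\ell^2}$ and the equivalence of the two limit notions) filled in correctly.
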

\begin{remark}
Roughly speaking, we can view \eqref{20240705for4} and \eqref{20240705for5} as the tangent space and the normal space of $S$ at $\textbf{x}\in S$, respectively. Corollary \ref{20240705cor1} says that these two spaces are independent of the coordinates and the coordinate functions of $S$, and hence they are geometric invariants.
\end{remark}

For each $i\in\mathbb{N}$, write $\textbf{e}_i\triangleq (\delta_{i,k})_{k\in\mathbb{N}}$ where $\delta_{i,k}\triangleq 0$ if $i\neq k$ and $\delta_{i,k}\triangleq 1$ if $i=k$. Then $\{\textbf{e}_i:i\in\mathbb{N}\}$ is an orthonormal basis of $\ell^2$. For a subset $I$ of $\mathbb{N}$, if $I=\emptyset$, denote the linear spaces spanned by the zero vector of $\ell^2$ by $P_I$, if $I\neq\emptyset$, denote the closed linear space spanned by $\{\textbf{e}_i:i\in I\}$ by $P_I$.

Suppose that $I_1$ and $I_2$ are two non-empty subsets of $\mathbb{N}$, if there exists $k\in\mathbb{N}_0$, $K_1 \subset I_1$ and $K_2\subset \mathbb{N}\setminus I_1$ such that $|K_1|=|K_2|=k$ and $I_2=(I_1\setminus K_1)\cup K_2$, then we write $I_1\sim I_2$.
\begin{lemma}\label{20241011lem2}
$\sim$ is an equivalent relation.
\end{lemma}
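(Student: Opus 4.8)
The plan is to reduce the verification to a clean numerical characterization of $\sim$, after which all three axioms follow quickly. The key first observation is that in any witnessing decomposition $I_2=(I_1\setminus K_1)\cup K_2$ with $K_1\subset I_1$ and $K_2\subset\mathbb{N}\setminus I_1$, the two sets $K_1,K_2$ are in fact \emph{forced}: using $K_1\subset I_1$, $K_2\cap I_1=\emptyset$ (hence $K_1\cap K_2=\emptyset$), a short membership check shows $I_1\setminus I_2=K_1$ and $I_2\setminus I_1=K_2$. Consequently $I_1\sim I_2$ holds if and only if the two ``halves'' of the difference are finite and of equal size, i.e. $|I_1\setminus I_2|=|I_2\setminus I_1|<\infty$; for the converse one simply takes $K_1=I_1\setminus I_2$, $K_2=I_2\setminus I_1$ and checks $(I_1\setminus K_1)\cup K_2=(I_1\cap I_2)\cup(I_2\setminus I_1)=I_2$ with $|K_1|=|K_2|$. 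This characterization is the workhorse for everything below.

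With it in hand, reflexivity is immediate (take $k=0$, equivalently $I_1\setminus I_1=\emptyset$), and symmetry is immediate because the condition $|I_1\setminus I_2|=|I_2\setminus I_1|$ is manifestly symmetric in $I_1$ and $I_2$ (at the level of witnesses, one just swaps $K_1\leftrightarrow K_2$, as in the computation $I_1=(I_2\setminus K_2)\cup K_1$).

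The only real content is transitivity. Assume $I_1\sim I_2$ and $I_2\sim I_3$. First I would note that $(I_1\setminus I_3)\cup(I_3\setminus I_1)\subset(I_1\setminus I_2)\cup(I_2\setminus I_1)\cup(I_2\setminus I_3)\cup(I_3\setminus I_2)$, so both $I_1\setminus I_3$ and $I_3\setminus I_1$ are finite. The decisive device is to confine the whole argument to the finite set $F\triangleq(I_1\setminus I_2)\cup(I_2\setminus I_1)\cup(I_2\setminus I_3)\cup(I_3\setminus I_2)$: outside $F$ the three sets $I_1,I_2,I_3$ coincide, so setting $A_j\triangleq I_j\cap F$ one has $I_i\setminus I_j=A_i\setminus A_j$ for all relevant indices $i,j$. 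Now everything lives in the finite set $F$, and the elementary identity $|A\setminus B|=|A|-|A\cap B|$ converts the hypotheses $|A_1\setminus A_2|=|A_2\setminus A_1|$ and $|A_2\setminus A_3|=|A_3\setminus A_2|$ into $|A_1|=|A_2|$ and $|A_2|=|A_3|$, whence $|A_1|=|A_3|$ and therefore $|A_1\setminus A_3|=|A_3\setminus A_1|$. Translating back gives $|I_1\setminus I_3|=|I_3\setminus I_1|$, i.e. $I_1\sim I_3$.

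I expect transitivity to be the main obstacle, precisely because the $I_j$ may be infinite: one cannot directly compare cardinalities of the full sets, and the naive attempt to ``compose'' the two witnessing pairs $(K_1,K_2)$ and $(L_1,L_2)$ is awkward since $K_2$ and $L_1$ may overlap in uncontrolled ways. Passing to the finite window $F$ in which all discrepancies occur is the clean maneuver that sidesteps this, reducing the infinite-dimensional bookkeeping to a one-line finite counting argument about $|A_1|=|A_2|=|A_3|$.
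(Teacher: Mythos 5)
Your proposal is correct, and it is a bit more complete than the paper's own argument. Both proofs pivot on the same characterization — $I_1\sim I_2$ if and only if $|I_1\setminus I_2|=|I_2\setminus I_1|\in\mathbb{N}_0$ — but the paper merely asserts this ("Note that $I_1\sim I_3$ is equivalent to $|I_1\setminus I_3|=|I_3\setminus I_1|\in\mathbb{N}_0$"), whereas you actually prove it by showing the witnesses are forced to be $K_1=I_1\setminus I_2$ and $K_2=I_2\setminus I_1$; that is a genuine (if small) gap in the paper that your write-up fills. Where the two proofs diverge is in the counting for transitivity: the paper partitions all discrepancies into the six disjoint Venn regions $S_1,\dots,S_6$ (e.g.\ $I_1\setminus I_2=S_1\sqcup S_5$, $I_2\setminus I_1=S_2\sqcup S_6$, etc.) and manipulates sums of their cardinalities with a cancellation of $S_5$, while you intersect everything with the single finite window $F\triangleq(I_1\setminus I_2)\cup(I_2\setminus I_1)\cup(I_2\setminus I_3)\cup(I_3\setminus I_2)$, set $A_j\triangleq I_j\cap F$, and observe that the hypotheses become $|A_1|=|A_2|=|A_3|$ via $|A\setminus B|=|A|-|A\cap B|$. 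The two devices do the same job, but yours is cleaner: it replaces the six-set bookkeeping by transitivity of equality of three numbers, and it makes explicit the one point your argument rightly flags (that $I_1\setminus I_3$ and $I_3\setminus I_1$ lie inside $F$, hence are finite), which in the paper is buried in the unexplained claim that $S_1,\dots,S_6$ are finite.
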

\begin{proof}
We only need to prove that if $I_1\sim I_2$ and $I_2\sim I_3$, then $I_1\sim I_3$. Note that $I_1\sim I_3$ is equivalent to $|I_1\setminus I_3|=|I_3\setminus I_1|\in \mathbb{N}_0$. Let
$$
S_1\triangleq I_1\setminus(I_2\cup I_3),\,S_2\triangleq I_2\setminus(I_1\cup I_3),\,S_3\triangleq I_3\setminus(I_1\cup I_2).
$$
and
$$
S_4\triangleq(I_1\cap I_2)\setminus I_3,\,S_5\triangleq(I_1\cap I_3)\setminus I_2,\,S_6\triangleq (I_2\cap I_3)\setminus I_1.
$$
Since $I_3\setminus I_1=S_3\sqcup S_6$, $S_1,S_2,S_3,S_4,S_5,S_6$ are disjoint finite subsets of $\mathbb{N}$, $I_1\setminus I_3=S_1\sqcup S_4$, we only to prove that $|S_1\sqcup S_4\sqcup S_5|=|S_3\sqcup S_6\sqcup S_5|$. By the facts that $|I_1\setminus I_2|=|I_2\setminus I_1|\in \mathbb{N}_0$, $I_1\setminus I_2=S_1\sqcup S_5$ and $I_2\setminus I_1=S_2\sqcup S_6$, it holds that
$|S_1\sqcup S_4\sqcup S_5|=|S_2\sqcup S_4\sqcup S_6|$. Similarly, $I_2\sim I_3$ implies that $|S_2\sqcup S_4\sqcup S_6|=|S_3\sqcup S_6\sqcup S_5|$ which completes the proof of Lemma \ref{20241011lem2}.
\end{proof}

For any non-empty subset $I$ of $\mathbb{N}$, write $J=\mathbb{N}\setminus I$ and
$$
\Gamma_I\triangleq \{I':I'\subset \mathbb{N}, \, I'\sim I\}.
$$

We now introduce the following notion.
\begin{definition}
We say that $S$ is a surface of $\ell^2$ with codimension $\Gamma_J$, if for each $\textbf{x}\in S$ there exists an open neighborhood $U$ of $\textbf{x}$ and $I_1\in\Gamma_I$ such that $P_{I_1}|_{S\cap U}$ is a homeomorphism from $S\cap U$ onto an open subset $U_{I_1}$ of $P_{I_1}$. In this case there exists a continuous function $f$ from $U_{I_1}$ into $P_{\mathbb{N}\setminus I_1}$ such that
$$
S\cap U=\left\{f(\textbf{x}_{I_1})+\textbf{x}_{I_1}:\;\textbf{x}_{I_1}=\sum_{i\in I_1}x_i\textbf{e}_i\in U_{I_1}\right\},
$$
and we call $(S\cap U, P_{I_1})$ a coordinate pairing of $S$.
\end{definition}

\subsection{Construction of Surface Measures}
Before proceeding, we need the determinant tools for some special operators.
\begin{definition}\label{20241113def1}
Suppose that $I_1,I_2\subset \mathbb{N}$ and $|I_1\setminus I_2|=|I_2\setminus I_1|=s\in\mathbb{N}_0$. If $T$ is bounded linear operator from $P_{I_1}$ into $P_{I_2}$ and there exists $I_0\subset I_1\cap I_2$ such that $|I_0|=t\in\mathbb{N}_0$ and
\begin{eqnarray}\label{20241011for3}
T|_{P_{(I_1\cap I_2)\setminus I_0}}=id_{P_{(I_1\cap I_2)\setminus I_0}},
\end{eqnarray}
where the right side means the identity operator on $P_{(I_1\cap I_2)\setminus I_0}$. Then there exists $i_1,\cdots,i_{s+t},j_1,\cdots,j_{s+t}\in\mathbb{N}$ such that $i_1<\cdots<i_{s+t},j_1<\cdots<j_{s+t}\in\mathbb{N}$, $I_0\cup (I_1\setminus I_2)=\{i_1,\cdots,i_{s+t}\}$ and $I_0\cup (I_2\setminus I_1)=\{j_1,\cdots,j_{s+t}\}$. If $s+t=0$, we define det$T=1$, otherwise we define
\begin{eqnarray}
\text{det}\,T\triangleq  \left|\begin{array}{cccc}
(T\textbf{e}_{i_1},\textbf{e}_{j_1})&(T\textbf{e}_{i_2},\textbf{e}_{j_1})&\cdots&(T\textbf{e}_{i_{s+t}},\textbf{e}_{j_1})\\
(T\textbf{e}_{i_1},\textbf{e}_{j_2})&(T\textbf{e}_{i_2},\textbf{e}_{j_2})&\cdots&(T\textbf{e}_{i_{s+t}},\textbf{e}_{j_2})\\
\vdots&\vdots&\ddots&\vdots\\
(T\textbf{e}_{i_1},\textbf{e}_{j_{s+t}})&(T\textbf{e}_{i_2},\textbf{e}_{j_{s+t}}) &\cdots& (T\textbf{e}_{i_{s+t}},\textbf{e}_{j_{s+t}})
\end{array}\right|.
\end{eqnarray}
By \eqref{20241011for3}, one can see that the above definition is well defined. We denote all bounded linear operators from $P_{I_1}$ into $P_{I_2}$ which satisfy the assumptions in this definition by $F(P_{I_1},P_{I_2})$.
\end{definition}
\begin{proposition}\label{20241011prop1}
Suppose that $I_1,I_2,I_3\subset \mathbb{N}$ such that $|I_1\setminus I_2|=|I_2\setminus I_1|\in\mathbb{N}_0$ and $|I_3\setminus I_2|=|I_2\setminus I_3|\in\mathbb{N}_0$. If $T_1\in F(P_{I_1},P_{I_2})$ and $T_2\in F(P_{I_2},P_{I_3})$, then $T_2T_1\in F(P_{I_1},P_{I_3})$ and det$(T_2T_1)=$det$(T_2)\cdot$det$(T_1)$.
\end{proposition}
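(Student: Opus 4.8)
The plan is to verify first that $T_2T_1$ meets the hypotheses defining $F(P_{I_1},P_{I_3})$, and then to deduce the determinant identity from the classical multiplicativity of determinants of square matrices by passing to a common finite block of coordinates.

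For membership, observe that $T_2T_1$ is a bounded linear operator from $P_{I_1}$ into $P_{I_3}$, being a composition of such operators. By Lemma \ref{20241011lem2}, $\sim$ is an equivalence relation, so $I_1\sim I_2$ and $I_2\sim I_3$ force $I_1\sim I_3$, i.e. $|I_1\setminus I_3|=|I_3\setminus I_1|\in\mathbb{N}_0$. Let $I_0^{(1)}\subset I_1\cap I_2$ and $I_0^{(2)}\subset I_2\cap I_3$ be the finite sets attached to $T_1$ and $T_2$. For $k\in I_1\cap I_3$ with $k\notin I_0^{(1)}\cup(I_1\setminus I_2)\cup I_0^{(2)}$ one has $k\in(I_1\cap I_2)\setminus I_0^{(1)}$, whence $T_1\textbf{e}_k=\textbf{e}_k$, and then $k\in(I_2\cap I_3)\setminus I_0^{(2)}$, whence $T_2\textbf{e}_k=\textbf{e}_k$; thus $T_2T_1\textbf{e}_k=\textbf{e}_k$. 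Hence $I_0'\triangleq(I_1\cap I_3)\cap\big(I_0^{(1)}\cup(I_1\setminus I_2)\cup I_0^{(2)}\big)$ is a finite subset of $I_1\cap I_3$ on which $T_2T_1$ restricts to the identity, so $T_2T_1\in F(P_{I_1},P_{I_3})$.

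For the identity $\det(T_2T_1)=\det T_2\cdot\det T_1$, I would fix a finite set $A\subset\mathbb{N}$ containing $(I_1\setminus I_2)\cup(I_2\setminus I_1)\cup(I_2\setminus I_3)\cup(I_3\setminus I_2)$ together with $I_0^{(1)}\cup I_0^{(2)}$. On the tail $\tau\triangleq\mathbb{N}\setminus A$ the sets $I_1,I_2,I_3$ coincide and all three operators act as the identity. Granting the well-definedness recorded in Definition \ref{20241113def1}, I would recompute each determinant after enlarging its core: replacing $I_0^{(1)}$ by the (valid, since larger) core $(I_1\cap I_2)\cap A$ turns the domain and codomain cores of $T_1$ into $I_1\cap A$ and $I_2\cap A$, replacing $I_0^{(2)}$ by $(I_2\cap I_3)\cap A$ turns those of $T_2$ into $I_2\cap A$ and $I_3\cap A$, and the core $(I_1\cap I_3)\cap A$ turns those of $T_2T_1$ into $I_1\cap A$ and $I_3\cap A$. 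Since $I_1\triangle I_2$ and $I_2\triangle I_3$ lie in $A$, a direct count gives $|I_1\cap A|=|I_2\cap A|=|I_3\cap A|=:m$, so all three determinants are now genuine $m\times m$ determinants over these index sets in increasing order. The key computation is that the tail drops out: for $i\in I_1\cap A$ and $j\in I_3\cap A$ one has $(T_2T_1\textbf{e}_i,\textbf{e}_j)=\sum_{l\in I_2}(T_1\textbf{e}_i,\textbf{e}_l)(T_2\textbf{e}_l,\textbf{e}_j)=\sum_{l\in I_2\cap A}(T_1\textbf{e}_i,\textbf{e}_l)(T_2\textbf{e}_l,\textbf{e}_j)$, because for $l\in I_2\cap\tau$ one has $T_2\textbf{e}_l=\textbf{e}_l$, which is orthogonal to $\textbf{e}_j$ as $l\neq j$. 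Thus the matrix representing $T_2T_1$ on the block is exactly the product of the matrices representing $T_2$ and $T_1$, and the classical rule $\det(BC)=\det B\det C$ closes the argument.

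The main obstacle is the reduction step itself, namely the well-definedness underlying the passage to the common core: one must show each determinant is unchanged (in particular, does not change sign) when its core $I_0$ is enlarged to its trace on $A$. Enlarging $I_0$ by one index $m\in(I_1\cap I_2)\setminus I_0$ inserts a row and a column whose only nontrivial interaction with the rest is the entry $(T\textbf{e}_m,\textbf{e}_m)=1$, so the new determinant equals the old one up to the cofactor sign determined by the positions of $m$ in the increasing orderings of the two generally distinct sets $I_0\cup(I_1\setminus I_2)$ and $I_0\cup(I_2\setminus I_1)$. The delicate point is therefore the careful bookkeeping of these orderings, and the verification that the signs accumulated in carrying $T_1$, $T_2$ and $T_2T_1$ to the common block $A$ cancel consistently; this is precisely the content ensuring that the determinant of Definition \ref{20241113def1} composes multiplicatively, after which the finite-dimensional computation above yields the conclusion.
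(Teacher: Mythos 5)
Your proposal is correct and follows essentially the same route as the paper's proof: both arguments pass to a common finite block of coordinates, truncate the middle sum $\sum_{l\in I_2}(T_1\textbf{e}_i,\textbf{e}_l)(T_2\textbf{e}_l,\textbf{e}_j)$ to finitely many terms by using that $T_2$ fixes the basis vectors outside the block (so those terms die by orthogonality), and then invoke the classical multiplicativity of finite determinants. The core-enlargement invariance you single out as the main obstacle is precisely the well-definedness asserted, without a detailed proof, in Definition \ref{20241113def1}, and the paper's own argument uses it just as silently (it evaluates $\det T_1$ and $\det T_2$ with the enlarged cores $((I_1\cap I_2)\setminus I_3)\cup I_0'''$ and $((I_2\cap I_3)\setminus I_1)\cup I_0'''$ rather than the original ones), so your proof is at the same level of rigor as the paper's.
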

\begin{proof}
Since $I_1,I_3\in\Gamma_{I_2}$, Lemma \ref{20241011lem2} implies that $|I_1\setminus I_3|=|I_3\setminus I_1|\in\mathbb{N}_0$.
By the assumptions $T_1\in F(P_{I_1},P_{I_2})$ and $T_2\in F(P_{I_2},P_{I_3})$, there exists $I_0\subset I_1\cap I_2$ and $I_0'\subset I_2\cap I_3$ such that $|I_0|=t_1\in\mathbb{N}_0$, $|I_0'|=t_2\in\mathbb{N}_0$, $T_1|_{P_{(I_1\cap I_2)\setminus I_0}}=id_{P_{(I_1\cap I_2)\setminus I_0}}$ and $T_2|_{P_{(I_2\cap I_3)\setminus I_0'}}=id_{P_{(I_2\cap I_3)\setminus I_0'}}$.
Let $I_0''\triangleq ((I_1\cap I_2\cap I_3)\cap (I_0\cup I_0'))\cup ((I_1\cap I_3)\setminus I_2)$, since $I_1\cap I_2\cap I_3=(I_1\cap I_3)\setminus (\mathbb{N}\setminus I_2)=(I_1\cap I_3)\setminus ((I_1\cap I_3)\setminus I_2)$ and $(I_1\cap I_2\cap I_3)\setminus   (I_0\cup I_0')=(I_1\cap I_3)\setminus   I_0''$, then $I_0''$ is a finite subset of $I_1\cap I_3$ and $T_2T_1|_{P_{(I_1\cap I_3)\setminus   I_0''}}=id_{P_{(I_1\cap I_3)\setminus   I_0''}}$. Note that $T_1|_{P_{(I_1\cap I_2\cap I_3)\setminus   I_0''}}=T_2|_{P_{(I_1\cap I_2\cap I_3)\setminus   I_0''}}=T_2T_1|_{P_{(I_1\cap I_2\cap I_3)\setminus   I_0''}}=id_{P_{(I_1\cap I_2\cap I_3)\setminus   I_0''}}$ and write $I_0'''= (I_1\cap I_2\cap I_3)\cap I_0''$. Then it is easy to see that $|(I_1\setminus (I_1\cap I_2\cap I_3))\sqcup I_0'''|=|(I_2\setminus (I_1\cap I_2\cap I_3))\sqcup I_0'''|=|(I_3\setminus (I_1\cap I_2\cap I_3))\sqcup I_0'''|=r\in\mathbb{N}_0$. Hence there exists $i_1,\cdots,i_r,i_1',\cdots,i_r',i_1'',\cdots,i_r''\in\mathbb{N}$ such that
\begin{eqnarray*}
&&i_1<\cdots<i_r,\quad i_1'<\cdots <i_r',\quad i_1''<\cdots <i_r'',\\
&&(I_1\setminus (I_1\cap I_2\cap I_3))\sqcup I_0''' =\{i_1,\cdots,i_r\}, \\
&&(I_2\setminus (I_1\cap I_2\cap I_3))\sqcup I_0''' = \{i_1',\cdots,i_r'\},\\
&&(I_3\setminus (I_1\cap I_2\cap I_3))\sqcup I_0'''=\{i_1'',\cdots,i_r''\}.
\end{eqnarray*}
Then
\begin{eqnarray*}
&&\text{det}(T_2T_1)\\
&&= \left |\begin{array}{cccc}
(T_2T_1\textbf{e}_{i_1},\textbf{e}_{i_1''})&\cdots&(T_2T_1\textbf{e}_{i_{r}},\textbf{e}_{i_1''})\\
\vdots&\ddots&\vdots\\
(T_2T_1\textbf{e}_{i_1},\textbf{e}_{i_{r}''})&\cdots& (T_2T_1\textbf{e}_{i_{r}},\textbf{e}_{i_{r}''})
\end{array}\right |\\
&&=  \left|\begin{array}{cccc}
\sum\limits_{j\in\mathbb{N}} (T_2 \textbf{e}_{i_j'} ,\textbf{e}_{i_1''} )\cdot(T_1\textbf{e}_{i_1},\textbf{e}_{i_j'})&\cdots&\sum\limits_{j\in\mathbb{N}} (T_2 \textbf{e}_{i_j'} ,\textbf{e}_{i_1''} )\cdot(T_1\textbf{e}_{i_r},\textbf{e}_{i_j'})\\
\vdots&\ddots&\vdots\\
\sum\limits_{j\in\mathbb{N}} (T_2 \textbf{e}_{i_j'} ,\textbf{e}_{i_r''} )\cdot(T_1\textbf{e}_{i_1},\textbf{e}_{i_j'})&\cdots& \sum\limits_{j\in\mathbb{N}} (T_2 \textbf{e}_{i_j'} ,\textbf{e}_{i_r''} )\cdot(T_1\textbf{e}_{i_r},\textbf{e}_{i_j'})
\end{array}\right |\\
&&=  \left|\begin{array}{cccc}
\sum\limits_{j=1}^{r} (T_2 \textbf{e}_{i_j'} ,\textbf{e}_{i_1''} )\cdot(T_1\textbf{e}_{i_1},\textbf{e}_{i_j'})&\cdots&\sum\limits_{j=1}^{r}  (T_2 \textbf{e}_{i_j'} ,\textbf{e}_{i_1''} )\cdot(T_1\textbf{e}_{i_r},\textbf{e}_{i_j'})\\
\vdots&\ddots&\vdots\\
\sum\limits_{j=1}^{r}  (T_2 \textbf{e}_{i_j'} ,\textbf{e}_{i_r''} )\cdot(T_1\textbf{e}_{i_1},\textbf{e}_{i_j'})&\cdots& \sum\limits_{j=1}^{r}  (T_2 \textbf{e}_{i_j'} ,\textbf{e}_{i_r''} )\cdot(T_1\textbf{e}_{i_r},\textbf{e}_{i_j'})
\end{array}\right |\\
&&= \left|\begin{array}{cccc}
  (T_2 \textbf{e}_{i_1'} ,\textbf{e}_{i_1''} )&\cdots&  (T_2 \textbf{e}_{i_r'} ,\textbf{e}_{i_1''} )\\
\vdots&\ddots&\vdots\\
  (T_2 \textbf{e}_{i_1'} ,\textbf{e}_{i_r''} )&\cdots&   (T_2 \textbf{e}_{i_r'} ,\textbf{e}_{i_r''} )
\end{array}\right |\cdot
\left|\begin{array}{cccc}
  (T_1 \textbf{e}_{i_1} ,\textbf{e}_{i_1'} )&\cdots&  (T_1 \textbf{e}_{i_r} ,\textbf{e}_{i_1'} )\\
\vdots&\ddots&\vdots\\
  (T_1 \textbf{e}_{i_1} ,\textbf{e}_{i_r'} )&\cdots&   (T_1 \textbf{e}_{i_r} ,\textbf{e}_{i_r'} )
\end{array}\right |\\
&&=\text{det}(T_2)\cdot \text{det}(T_1).
\end{eqnarray*}
This completes the proof of Proposition \ref{20241011prop1}.
\end{proof}

\begin{corollary}\label{20241015cor1}
Suppose that $S$ is a surface of $\ell^2$ with codimension $\Gamma_{\mathbb{N}\setminus I}$, for $\textbf{x}\in S$, if there exists three coordinate pairings $(S\cap U_1, P_{I_1})$, $(S\cap U_2, P_{I_2})$ and $(S\cap U_3, P_{I_3})$ of $S$ such that $\textbf{x}\in S\cap U_1\cap U_2\cap U_3$. Then,
$$
\text{det}\left(D(P_{I_3}P_{I_1}^{-1})(\textbf{x}_{I_1})\right)
=\text{det}\left(D(P_{I_3}P_{I_2}^{-1})(\textbf{x}_{I_2})\right)\cdot \text{det}\left(D(P_{I_2}P_{I_1}^{-1})(\textbf{x}_{I_1})\right),
$$
where $\textbf{x}_{I_1}=P_{I_1}\textbf{x}$ and $\textbf{x}_{I_2}=P_{I_2}\textbf{x}$.
\end{corollary}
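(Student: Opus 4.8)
The plan is to recognize the three derivative operators in the statement as a composite governed by the chain rule, to check that each of them belongs to the relevant class $F(\cdot,\cdot)$ of Definition \ref{20241113def1}, and then to read off the determinant factorization directly from the multiplicativity established in Proposition \ref{20241011prop1}. Thus the corollary is, in essence, a cocycle identity for transition maps between coordinate charts on $S$, and Proposition \ref{20241011prop1} is exactly the tool tailored to it.

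First I would set up the transition maps. On the common overlap $V\triangleq S\cap U_1\cap U_2\cap U_3$, which is an open neighborhood of $\textbf{x}$ in $S$ containing $\textbf{x}$, all three projections $P_{I_1}|_V$, $P_{I_2}|_V$, $P_{I_3}|_V$ restrict to homeomorphisms onto open subsets of the coordinate spaces $P_{I_1}$, $P_{I_2}$, $P_{I_3}$. Hence the maps $\Phi_{21}\triangleq P_{I_2}P_{I_1}^{-1}$, $\Phi_{32}\triangleq P_{I_3}P_{I_2}^{-1}$ and $\Phi_{31}\triangleq P_{I_3}P_{I_1}^{-1}$ are well defined near $\textbf{x}_{I_1}$, $\textbf{x}_{I_2}$, $\textbf{x}_{I_1}$ respectively, with $\Phi_{21}(\textbf{x}_{I_1})=\textbf{x}_{I_2}$. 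Since $P_{I_2}^{-1}P_{I_2}$ is the identity on $V$, one has $\Phi_{31}=\Phi_{32}\circ\Phi_{21}$ on a neighborhood of $\textbf{x}_{I_1}$, so the Fr\'echet chain rule yields
\[
D\Phi_{31}(\textbf{x}_{I_1})=D\Phi_{32}(\textbf{x}_{I_2})\circ D\Phi_{21}(\textbf{x}_{I_1}).
\]
(The differentiability of these transition maps is implicit in the statement, the determinants presupposing it, so I take the $C^1$ regularity of the local graph representation as given.)

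The substantive step is to verify that each derivative lies in the appropriate $F$-class. Since $I_1,I_2,I_3\in\Gamma_I$, Lemma \ref{20241011lem2} gives $|I_i\setminus I_j|=|I_j\setminus I_i|=s_{ij}\in\mathbb{N}_0$ for every pair, which supplies the cardinality condition in Definition \ref{20241113def1}. For the identity condition I would use the graph structure: writing $S\cap U_i=\{f_i(\textbf{z})+\textbf{z}:\textbf{z}\in U_{I_i}\}$ with $f_i$ valued in $P_{\mathbb{N}\setminus I_i}$, and noting that $P_{I_i\cap I_j}$ annihilates the image of $f_i$, I observe that for $k\in I_i\cap I_j$ the $k$-th coordinate of a surface point coincides with the $k$-th coordinate of its $P_{I_i}$-projection. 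Therefore $\Phi_{ji}$ acts as the identity on the $I_i\cap I_j$ coordinates, so $D\Phi_{ji}$ restricts to the identity on $P_{I_i\cap I_j}$; this places $D\Phi_{ji}$ in $F(P_{I_i},P_{I_j})$ (indeed with the exceptional set $I_0=\emptyset$).

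Finally I would apply Proposition \ref{20241011prop1} with $T_1=D\Phi_{21}(\textbf{x}_{I_1})\in F(P_{I_1},P_{I_2})$ and $T_2=D\Phi_{32}(\textbf{x}_{I_2})\in F(P_{I_2},P_{I_3})$, obtaining $\det(T_2T_1)=\det(T_2)\det(T_1)$. Combining this with the chain-rule identity above gives
\[
\det\!\big(D(P_{I_3}P_{I_1}^{-1})(\textbf{x}_{I_1})\big)=\det\!\big(D(P_{I_3}P_{I_2}^{-1})(\textbf{x}_{I_2})\big)\cdot\det\!\big(D(P_{I_2}P_{I_1}^{-1})(\textbf{x}_{I_1})\big),
\]
which is the asserted factorization. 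I expect the only real obstacle to be the $F$-class membership argument; once the identity-on-shared-coordinates observation is pinned down, everything else is the chain rule plus the bookkeeping already handled by Proposition \ref{20241011prop1}.
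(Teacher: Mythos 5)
Your overall route --- compose the transition maps, differentiate by the chain rule, and invoke the multiplicativity of $\det$ from Proposition \ref{20241011prop1} --- is exactly the paper's proof: the paper's argument is nothing but the identity $P_{I_3}P_{I_1}^{-1}=(P_{I_3}P_{I_2}^{-1})(P_{I_2}P_{I_1}^{-1})$ plus the chain rule, with Proposition \ref{20241011prop1} applied tacitly. The problem is the step you yourself single out as substantive: the verification that $D\Phi_{ji}\in F(P_{I_i},P_{I_j})$ with exceptional set $I_0=\emptyset$ is wrong. What you actually establish is that $\Phi_{ji}$ preserves the shared coordinates, i.e. $P_{I_i\cap I_j}\Phi_{ji}(\textbf{z})=P_{I_i\cap I_j}\textbf{z}$. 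Definition \ref{20241113def1} demands something stronger: $D\Phi_{ji}(\textbf{z})\textbf{y}=\textbf{y}$ for every $\textbf{y}$ supported on $(I_i\cap I_j)\setminus I_0$. Writing the chart over $P_{I_i}$ as $\textbf{z}\mapsto\textbf{z}+f_i(\textbf{z})$ with $f_i$ valued in $P_{\mathbb{N}\setminus I_i}$ and $(f_i)_k\triangleq(f_i(\cdot),\textbf{e}_k)$, one gets, for $\textbf{y}$ supported on $I_i\cap I_j$,
\[
D\Phi_{ji}(\textbf{z})\textbf{y}=\textbf{y}+\sum_{k\in I_j\setminus I_i}\Big(\sum_{l\in I_i\cap I_j}D_{x_l}(f_i)_k(\textbf{z})\,y_l\Big)\textbf{e}_k .
\]
Coordinate preservation controls only the $P_{I_i\cap I_j}$-component of the output; it says nothing about the cross terms into the new directions $I_j\setminus I_i$, and these need not vanish. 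With $I_0=\emptyset$ your claim forces $D_{x_l}(f_i)_k(\textbf{z})=0$ for \emph{all} $l\in I_i\cap I_j$ and $k\in I_j\setminus I_i$; a graph component as simple as the bounded linear functional $(f_i)_k(\textbf{z})=\sum_{l\in I_i}2^{-l}z_l$ violates this everywhere, and in fact its transition derivative lies in no $F$-class at all, since membership requires $D_{x_l}(f_i)_k(\textbf{z})$ to vanish for all but finitely many $l\in I_i\cap I_j$ (those exceptional $l$ forming $I_0$). That finiteness is a genuine restriction on the surface, not a consequence of the chart definitions, so no argument of the kind you give can produce it.

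The corollary itself is not sunk by this, because its statement already writes down the three determinants and hence presupposes that the three derivatives lie in the corresponding $F$-classes --- precisely the unproved assertion the paper records afterwards in Remark \ref{20241016rem1} --- just as you chose to presuppose the $C^1$ regularity of the charts. Granting that, your chain-rule identity together with Proposition \ref{20241011prop1} completes the proof and coincides with the paper's argument. The repair is simply to delete the faulty verification and put $F$-class membership on the same footing as differentiability: data implicit in the statement, not something derivable from preservation of shared coordinates.
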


\begin{proof}
By $P_{I_3}P_{I_1}^{-1}=(P_{I_3}P_{I_2}^{-1})(P_{I_2}P_{I_1}^{-1})$, it follows that
$$
 D(P_{I_3}P_{I_1}^{-1})(\textbf{x}_{I_1})
= D(P_{I_3}P_{I_2}^{-1})(\textbf{x}_{I_2}) \circ  D(P_{I_2}P_{I_1}^{-1})(\textbf{x}_{I_1}).
$$
This completes the proof of Corollary \ref{20241015cor1}.
\end{proof}

\begin{definition}\label{20241012def1}
Suppose that $S$ is a surface of $\ell^2$ with codimension $\Gamma_{\mathbb{N}\setminus I}$, for each $\textbf{x}\in S$, there exists a coordinate pairing $(S\cap U, P_{I})$ of $S$ such that $\textbf{x}\in S\cap U$. Then there exists a smooth function $f$ from $U_{I}$ into $P_{\mathbb{N}\setminus I}$ such that $\textbf{x}=\textbf{x}_{I}+f(\textbf{x}_{I})$ and
$$
S\cap U=\left\{\textbf{y}_{I}+f(\textbf{y}_{I}):\textbf{y}_{I}=\sum_{i\in I}y_i\textbf{e}_i\in U_{I}\right\}.
$$
Then we define
\begin{eqnarray*}
n_{I}(\textbf{x})\triangleq \sqrt{ 1+\sum_{J_0 \subset \mathbb{N}\setminus I, I_0\subset I, |I_0|=|J_0|\in\mathbb{N}} \left|\text{det}\left( D_{x_j} f_i(\textbf{x}_{I})\right)_{i\in I_0,j\in J_0}\right|^2},
\end{eqnarray*}
where $f_i(\textbf{x}_{I})\triangleq (f(\textbf{x}_{I}),\textbf{e}_i)$ for each $i\in \mathbb{N}\setminus I$.
\end{definition}
\begin{remark}\label{20241016rem1}
Note that in the Definition \ref{20241012def1}, $D (P_{I'}P_{I}^{-1})(\textbf{x}_{I})\in F(P_I,P_{I'})$ for each $I'\in \Gamma_I$ and hence
\begin{eqnarray*}
n_{I}(\textbf{x})\triangleq \sqrt{ \sum_{I'\in \Gamma_I} \left|\text{det}(D (P_{I'}P_{I}^{-1})(\textbf{x}_{I}))\right|^2}.
\end{eqnarray*}
\end{remark}
\begin{lemma}\label{20241012lem3}
In the Definition \ref{20241012def1}, suppose that there exists another coordinate pairing $(S\cap U_1, P_{I_1})$ of $S$ such that $\textbf{x}\in S\cap U_1$. Then it holds that
$$
\begin{array}{ll}\ds
n_{I}(\textbf{x})=\left|\text{det}(D(P_{I_1}P_{I}^{-1})(\textbf{x}_{I}))\right|\cdot n_{I_1}(\textbf{x}),\\[2mm]
\ds
n_{I_1}(\textbf{x})=\left|\text{det}(D(P_{I}P_{I_1}^{-1})(\textbf{x}_{I_1}))\right|\cdot n_{I}(\textbf{x}).
\end{array}
$$
\end{lemma}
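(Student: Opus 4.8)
The plan is to reduce the whole statement to the determinant cocycle already recorded in Remark \ref{20241016rem1} and Proposition \ref{20241011prop1}. By Remark \ref{20241016rem1}, the normalizing factors can be written as
$$
n_{I}(\textbf{x})=\sqrt{\sum_{I'\in\Gamma_I}\big|\text{det}\big(D(P_{I'}P_I^{-1})(\textbf{x}_I)\big)\big|^2},\qquad n_{I_1}(\textbf{x})=\sqrt{\sum_{I'\in\Gamma_{I_1}}\big|\text{det}\big(D(P_{I'}P_{I_1}^{-1})(\textbf{x}_{I_1})\big)\big|^2},
$$
so the assertion is a comparison of two finite sums of squared determinants. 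First I would observe that the two index sets coincide: since $(S\cap U,P_I)$ and $(S\cap U_1,P_{I_1})$ are coordinate pairings of the same surface, one has $I_1\in\Gamma_I$, i.e. $I\sim I_1$, and Lemma \ref{20241011lem2} then gives $\Gamma_I=\Gamma_{I_1}$. Hence both sums may be taken over the same set $\Gamma_I$.

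The heart of the proof is the cocycle identity
$$
\text{det}\big(D(P_{I'}P_I^{-1})(\textbf{x}_I)\big)=\text{det}\big(D(P_{I'}P_{I_1}^{-1})(\textbf{x}_{I_1})\big)\cdot\text{det}\big(D(P_{I_1}P_I^{-1})(\textbf{x}_I)\big),\qquad\forall\;I'\in\Gamma_I.
$$
To obtain it I would note that on the overlap $S\cap U\cap U_1$ the inverse coordinate maps satisfy $P_{I_1}^{-1}\circ P_{I_1}=\text{id}_S$, so near $\textbf{x}_I$ one has the factorization $P_{I'}P_I^{-1}=(P_{I'}P_{I_1}^{-1})\circ(P_{I_1}P_I^{-1})$ with $\textbf{x}_{I_1}=(P_{I_1}P_I^{-1})(\textbf{x}_I)=P_{I_1}\textbf{x}$. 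The chain rule then yields
$$
D(P_{I'}P_I^{-1})(\textbf{x}_I)=D(P_{I'}P_{I_1}^{-1})(\textbf{x}_{I_1})\circ D(P_{I_1}P_I^{-1})(\textbf{x}_I),
$$
and since $I,I_1,I'$ all lie in a single $\sim$-class, the two factors belong to $F(P_I,P_{I_1})$ and $F(P_{I_1},P_{I'})$ respectively; applying the multiplicativity of the determinant from Proposition \ref{20241011prop1} gives the cocycle identity. This is essentially the content of Corollary \ref{20241015cor1}, except that I cannot invoke it verbatim, because for a general $I'\in\Gamma_I$ there need not exist a coordinate pairing with projection $P_{I'}$. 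The point is that $P_{I'}P_I^{-1}$ and $P_{I'}P_{I_1}^{-1}$ are nonetheless well-defined smooth maps (the inverses $P_I^{-1}$, $P_{I_1}^{-1}$ land in $S$ and are then projected) and their derivatives lie in the relevant $F(\cdot,\cdot)$, so Proposition \ref{20241011prop1} applies directly.

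With the cocycle in hand I would square it, factor the $I'$-independent term $|\text{det}(D(P_{I_1}P_I^{-1})(\textbf{x}_I))|^2$ out of the sum, and recognize the remaining sum $\sum_{I'\in\Gamma_I}|\text{det}(D(P_{I'}P_{I_1}^{-1})(\textbf{x}_{I_1}))|^2$ as $n_{I_1}(\textbf{x})^2$ (using $\Gamma_I=\Gamma_{I_1}$). Taking square roots gives the first identity $n_I(\textbf{x})=|\text{det}(D(P_{I_1}P_I^{-1})(\textbf{x}_I))|\cdot n_{I_1}(\textbf{x})$, and interchanging the roles of $I$ and $I_1$ yields the second. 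I expect the only genuine obstacle to be the careful justification that the factorization $P_{I'}P_I^{-1}=(P_{I'}P_{I_1}^{-1})\circ(P_{I_1}P_I^{-1})$ holds on a neighborhood of $\textbf{x}_I$ and that Proposition \ref{20241011prop1} is legitimately applicable for \emph{every} $I'\in\Gamma_I$; once this bookkeeping on the equivalence classes is settled, the remainder is the elementary factorization of a finite sum.
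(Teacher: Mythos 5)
Your proposal is correct and follows essentially the same route as the paper's proof: both start from the representation of $n_I$ and $n_{I_1}$ in Remark \ref{20241016rem1}, establish the cocycle identity $\det\bigl(D(P_{I'}P_I^{-1})(\textbf{x}_I)\bigr)=\det\bigl(D(P_{I'}P_{I_1}^{-1})(\textbf{x}_{I_1})\bigr)\cdot\det\bigl(D(P_{I_1}P_I^{-1})(\textbf{x}_I)\bigr)$ via the chain rule and the multiplicativity of the determinant from Proposition \ref{20241011prop1} (applied directly, exactly as the paper does, rather than through Corollary \ref{20241015cor1}), and then factor the $I'$-independent term out of the sum of squares. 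The only cosmetic differences are that you make explicit the identification $\Gamma_I=\Gamma_{I_1}$ (which the paper uses silently), and you deduce the second identity by symmetry in $I$ and $I_1$, whereas the paper derives it from the relation $\det\bigl(D(P_{I_1}P_I^{-1})(\textbf{x}_I)\bigr)\cdot\det\bigl(D(P_IP_{I_1}^{-1})(\textbf{x}_{I_1})\bigr)=1$ obtained by differentiating the identity map.
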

\begin{proof}
Note that
\begin{eqnarray*}
n_{I}(\textbf{x})=\sqrt{ \sum_{I'\in \Gamma_I} \left|\text{det}(D (P_{I'}P_{I}^{-1})(\textbf{x}_{I}))\right|^2},
\end{eqnarray*}
and for each $I'\in \Gamma_I$, it holds that
\begin{eqnarray*}
(D (P_{I'}P_{I}^{-1})(\textbf{x}_{I}))&=&(D (P_{I'}P_{I_1}^{-1}P_{I_1}P_{I}^{-1})(\textbf{x}_{I}))\\
&=&(D (P_{I'}P_{I_1}^{-1})(\textbf{x}_{I_1}))(D(P_{I_1}P_{I}^{-1})(\textbf{x}_{I})).
\end{eqnarray*}
Hence Proposition \ref{20241011prop1} implies that
\begin{eqnarray*}
\text{det}(D (P_{I'}P_{I}^{-1})(\textbf{x}_{I}))
&=&\text{det}(D (P_{I'}P_{I_1}^{-1})(\textbf{x}_{I_1}))\cdot\text{det}(D(P_{I_1}P_{I}^{-1})(\textbf{x}_{I})).
\end{eqnarray*}
Therefore,
\begin{equation}
\begin{array}{ll}
\ds n_{I}(\textbf{x})&\ds=\left|\text{det}(D(P_{I_1}P_{I}^{-1})(\textbf{x}_{I}))\right|\cdot\sqrt{ \sum_{I'\in \Gamma_I} \left|\text{det}(D (P_{I'}P_{I_1}^{-1})(\textbf{x}_{I_1}))\right|^2}\label{20241012for5}\\[2mm]
&\ds=\left|\text{det}(D(P_{I_1}P_{I}^{-1})(\textbf{x}_{I}))\right|\cdot n_{I_1}(\textbf{x}).
\end{array}
\end{equation}
Note that $(P_{I_1}P_{I}^{-1})(P_{I}P_{I_1}^{-1})=\text{id}_{P_{I_1}(S\cap U_1\cap U_2)}$ and hence
$$(D (P_{I_1}P_{I}^{-1})(\textbf{x}_{I}))(D(P_{I}P_{I_1}^{-1})(\textbf{x}_{I_1}))=\text{id}_{P_{I_1}}.
$$
Combining Proposition \ref{20241011prop1}, we have
\begin{eqnarray}\label{20241013for1}
 \text{det}(D(P_{I_1}P_{I}^{-1})(\textbf{x}_{I})) \cdot  \text{det}(D(P_{I}P_{I_1}^{-1})(\textbf{x}_{I_1}))=1.
\end{eqnarray}
By \eqref{20241012for5}, we have
\begin{eqnarray*}
n_{I_1}(\textbf{x})=\left|\text{det}(D(P_{I}P_{I_1}^{-1})(\textbf{x}_{I_1}))\right|\cdot n_{I}(\textbf{x}).
\end{eqnarray*}
This completes the proof of Lemma \ref{20241012lem3}.
\end{proof}

Suppose that $S$ is a surface of $\ell^2$ with codimension $\Gamma_{\mathbb{N}\setminus I}$, $(S\cap U_1, P_{I_1})$ and $(S\cap U_2, P_{I_2})$ are two coordinate pairings of $S$ such that $W\triangleq(S\cap U_1)\cap (S\cap U_2)\neq \emptyset$. Then there exists two non-empty sets $I_0\subset I_1$ and $J_0\subset \mathbb{N}\setminus I_1$ such that $|I_0|=|J_0|\in\mathbb{N}$ and $I_2=(I_1\setminus I_0)\cup J_0$.
Then there exists open sets $U_{I_1}$ of $P_{I_1}$ and $U_{I_2}$ of $P_{I_2}$, continuous maps $f'$ from $U_{I_1}$ into $P_{\mathbb{N}\setminus I_1}$ and $f''$ from $U_{I_2}$ into $P_{\mathbb{N}\setminus I_2}$ such that
$$
S\cap U_1=\left\{\textbf{x}_{I_1}+f'(\textbf{x}_{I_1}): \textbf{x}_{I_1}=\sum_{i\in I_1}x_i\textbf{e}_i\in U_{I_1}\right\}
$$
and
$$
S\cap U_2=\left\{\textbf{x}_{I_2}+f''(\textbf{x}_{I_2}): \textbf{x}_{I_2}=\sum_{i\in I_2}x_i\textbf{e}_i\in U_{I_2}\right\}.
$$
Clearly, $T\triangleq P_{I_1}P_{I_2}^{-1}$ is a homeomorphism from $P_{I_2}W$ into $P_{I_1}W$. Note that  for any Borel subset $F$ of $P_{I_2}W$, Borel measure $\mu$ on $P_{I_2}W$ and non-negative Borel measurable function $h$ on $P_{I_2}W$, it holds that
\begin{eqnarray*}
\int_F h \,\mathrm{d}\mu =\int_{T^{-1}F} h(T\omega)\,\mathrm{d}\mu(T\omega).
\end{eqnarray*}
Then for any $\textbf{x}_{I_2}\in P_{I_2}W$, it holds that
\begin{equation}\label{220916e1}
\begin{array}{ll}
T\textbf{x}_{I_2}
&\ds=P_{I_1}P_{I_2}^{-1}\textbf{x}_{I_2}=P_{I_1}(\textbf{x}_{I_2}+f''(\textbf{x}_{I_2}))\\[2mm]
&\ds=\textbf{x}_{I_1\setminus I_0}+\sum_{i\in I_0}f''_i(\textbf{x}_{I_2}) \textbf{e}_i
=\textbf{x}_{I_1\setminus I_0}+\sum_{i\in I_0}f''_i(\textbf{x}_{I_1\setminus I_0}+\textbf{x}_{J_0}) \textbf{e}_i,
\end{array}
\end{equation}
where $f''_i(\textbf{x}_{I_2})=(f''(\textbf{x}_{I_2}),\textbf{e}_i)$ for each $i\in I_0$. Then for any $\textbf{x}_{I_1}\in P_{I_1}W$, it holds that
\begin{equation}\label{220916e2}
\begin{array}{ll}
T^{-1}\textbf{x}_{I_1}
=\ds P_{I_2}P_{I_1}^{-1}\textbf{x}_{I_1}=P_{I_2}(\textbf{x}_{I_1}+f'(\textbf{x}_{I_1}))\\[2mm]
=\ds\textbf{x}_{I_1\setminus I_0}+\sum_{j\in J_0}f'_j(\textbf{x}_{I_1}) \textbf{e}_j=\textbf{x}_{I_1\setminus I_0}+\sum_{j\in J_0}f'_j(\textbf{x}_{I_1\setminus I_0}+\textbf{x}_{I_0}) \textbf{e}_j,
\end{array}
\end{equation}
where $f'_j(\textbf{x}_{I_1})=(f'(\textbf{x}_{I_1}),\textbf{e}_j)$ for each $j\in J_0$.

We assume that $f'$ and $f''$ are Fr\'{e}chet differentiable.  Then it holds that
\begin{eqnarray*}
(DT(\textbf{x}_{I_2}))\textbf{y}_{I_2}&=&\textbf{y}_{I_1\setminus I_0}+\sum_{i\in I_0}\left(\sum_{j\in I_2}(D_{x_j}f''_i(\textbf{x}_{I_2} ))\cdot y_j \right)\textbf{e}_i\in P_{I_1},\\&&\forall \;\textbf{y}_{I_2}=\sum_{i\in I_2}y_i \textbf{e}_i\in P_{I_2},\\
(DT^{-1}(\textbf{x}_{I_1}))\textbf{y}_{I_1}&=&\textbf{y}_{I_1\setminus I_0}+\sum_{i\in J_0}\left(\sum_{j\in I_1}(D_{x_j}f'_i(\textbf{x}_{I_1} ))\cdot y_j \right)\textbf{e}_i\in P_{I_2},\\&&\forall \;\textbf{y}_{I_1}=\sum_{i\in I_1}y_i \textbf{e}_i\in P_{I_1},
\end{eqnarray*}
where $\textbf{y}_{I_1\setminus I_0}=\sum\limits_{i\in I_1\setminus I_0}y_i \textbf{e}_i$.

Since $TT^{-1}\textbf{y}_{I_1}=\textbf{y}_{I_1},\,\forall\, \textbf{y}_{I_1}\in P_{I_1}$ and $T^{-1}T\textbf{y}_{I_2}=\textbf{y}_{I_2},\,\forall\, \textbf{y}_{I_2}\in P_{I_2}$. By the chain rule, we have
\begin{eqnarray*}
(DT(\textbf{x}_{I_2}))(DT^{-1}(\textbf{x}_{I_1}))\textbf{y}_{I_1}&=&\textbf{y}_{I_1},\quad\forall\, \textbf{y}_{I_1}\in P_{I_1}\\
 (DT^{-1}(\textbf{x}_{I_1}))(DT(\textbf{x}_{I_2}))\textbf{y}_{I_2}&=&\textbf{y}_{I_2},\quad\forall\, \textbf{y}_{I_2}\in P_{I_2},
\end{eqnarray*}
where $T\textbf{x}_{I_2}=\textbf{x}_{I_1}\in P_{I_1}W$.

Note that for $\textbf{y}_{I_0}=\sum\limits_{i\in I_0}y_i \textbf{e}_i\in P_{I_0}$, it holds that
\begin{equation}
\begin{array}{ll}
\ds (DT(\textbf{x}_{I_2}))(DT^{-1}(\textbf{x}_{I_1}))\textbf{y}_{I_0}
=(DT(\textbf{x}_{I_2}))\sum_{i\in J_0}\sum_{j\in I_0}(D_{x_j}f'_i(\textbf{x}_{I_1} ))\cdot y_j \textbf{e}_i\label{20241011for1}\\[2mm]\ds
=\sum_{k\in I_0}\sum_{i\in J_0} \sum_{j\in I_0}(D_{x_i}f''_k(\textbf{x}_{I_2} ))\cdot(D_{x_j}f'_i(\textbf{x}_{I_1} ))\cdot y_j    \textbf{e}_k
=\sum_{k\in I_0}  y_k\textbf{e}_k.
\end{array}
\end{equation}
Write $|I_0|=|J_0|\triangleq r\in\mathbb{N}$ and there exists $i_1,\cdots,i_r,j_1,\cdots,j_r\in \mathbb{N}$ such that $i_1<i_2<\cdots<i_r$, $j_1<j_2<\cdots<j_r$ and $I_0=\{i_1,i_2,\cdots,i_r\},\,J_0=\{j_1,j_2,\cdots,j_r\}$. Set
\begin{eqnarray*}
(D_{I_1\bigtriangleup I_2} P_{I_1}P_{I_2}^{-1})(\textbf{x}_{I_2})\triangleq\left(\begin{array}{cccc}
D_{x_{j_1}}f_{i_1}''(\textbf{x}_{I_2} )&D_{x_{j_2}}f_{i_1}''(\textbf{x}_{I_2} )&\cdots&D_{x_{j_r}}f_{i_1}''(\textbf{x}_{I_2} )\\
D_{x_{j_1}}f_{i_2}''(\textbf{x}_{I_2} )&D_{x_{j_2}} f_{i_2}''(\textbf{x}_{I_2} )&\cdots&D_{x_{j_{r}}} f_{i_2}''(\textbf{x}_{I_2} )\\
\vdots&\vdots&\ddots&\vdots\\
D_{x_{i_1}}f_{i_r}''(\textbf{x}_{I_2} )&D_{x_{j_2}} f_{j_r}''(\textbf{x}_{I_2} ) &\cdots& D_{x_{j_r}} f_{i_r}''(\textbf{x}_{I_2} )
\end{array}\right)_{r\times r}
\end{eqnarray*}
and
\begin{eqnarray*}
(D_{I_1\bigtriangleup I_2} P_{I_2}P_{I_1}^{-1})(\textbf{x}_{I_1})\triangleq
 \left(\begin{array}{cccc}
D_{x_{i_1}}f_{j_1}'(\textbf{x}_{I_1} )&D_{x_{i_2}}f_{j_1}'(\textbf{x}_{I_1} )&\cdots&D_{x_{i_r}}f_{j_1}'(\textbf{x}_{I_1} )\\
D_{x_{i_1}}f_{j_2}'(\textbf{x}_{I_1} )&D_{x_{i_2}} f_{j_2}'(\textbf{x}_{I_1} )&\cdots&D_{x_{i_{r}}} f_{j_2}'(\textbf{x}_{I_1} )\\
\vdots&\vdots&\ddots&\vdots\\
D_{x_{i_1}}f_{j_r}'(\textbf{x}_{I_1} )&D_{x_{i_2}} f_{j_r}'(\textbf{x}_{I_1} ) &\cdots& D_{x_{i_r}} f_{j_r}'(\textbf{x}_{I_1} )
\end{array}\right)_{r\times r}
\end{eqnarray*}
Then \eqref{20241011for1} implies that
\begin{equation}\label{20241011for2}
(D_{I_1\bigtriangleup I_2} P_{I_1}P_{I_2}^{-1})(\textbf{x}_{I_2})(D_{I_1\bigtriangleup I_2} P_{I_2}P_{I_1}^{-1})(\textbf{x}_{I_1})=
 \left(\begin{array}{cccc}
1&0&\cdots&0\\
0&1&\cdots&0\\
\vdots&\vdots&\ddots&\vdots\\
0&0&\cdots& 1
\end{array}\right)_{r\times r}.
\end{equation}
For each non-empty set $I\subset\mathbb{N}$, we denote the restriction of the product measure$\prod\limits_{i\in I}\frac{e^{-\frac{x_i^2}{2a_i^2}}}{\sqrt{2\pi a_i^2}}$ on $P_{I}$ by $\mu_I$.
\begin{lemma}\label{20241011lem1}
It holds that
$$
\begin{array}{ll}
\ds\mathrm{d}\mu_{I_1}(T\textbf{x}_{I_2})= &\ds\left(\prod\limits_{i\in I_0}\frac{e^{-\frac{\left(f''_i(\textbf{x}_{I_2})\right)^2}{2a_i^2}}}{\sqrt{2\pi a_i^2}}\right)\cdot\left|\text{det}(D_{I_1\bigtriangleup I_2} P_{I_1}P_{I_2}^{-1})(\textbf{x}_{I_2})\right|\\[2mm]
&\ds \cdot\left(\prod_{i\in J_0}\,\mathrm{d} x_{i}\right)\times\mathrm{d}\mu_{I_1\setminus I_0}(\textbf{x}_{I_1\setminus I_0}),
\end{array}
$$
on $\mathscr{B}\big(P_{I_2} W \big)$.
\end{lemma}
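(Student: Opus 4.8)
The plan is to collapse this apparently infinite-dimensional change of variables to a classical finite-dimensional one, by exploiting two facts: the Gaussian reference measure $\mu_{I_1}$ factorizes over disjoint index sets, and the transition map $T=P_{I_1}P_{I_2}^{-1}$ leaves the shared coordinates $I_1\cap I_2=I_1\setminus I_0=I_2\setminus J_0$ fixed, mixing only the finitely many coordinates indexed by $I_0$ and $J_0$ (here $r\triangleq|I_0|=|J_0|$). So the substitution really lives in $\mathbb{R}^r$, and everything else is carried along inertly.

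First I would record the orthogonal splittings $P_{I_1}=P_{I_1\setminus I_0}\oplus P_{I_0}$ and $P_{I_2}=P_{I_1\setminus I_0}\oplus P_{J_0}$ (the latter is legitimate since $J_0\subset\mathbb{N}\setminus I_1$ forces $I_2=(I_1\setminus I_0)\sqcup J_0$), and correspondingly factor $\mu_{I_1}=\mu_{I_1\setminus I_0}\otimes\mu_{I_0}$, using that the product Gaussian density splits over disjoint indices. Writing $\textbf{x}_{I_2}=\textbf{u}+\textbf{v}$ with $\textbf{u}=\textbf{x}_{I_1\setminus I_0}\in P_{I_1\setminus I_0}$ and $\textbf{v}=\textbf{x}_{J_0}\in P_{J_0}$, formula \eqref{220916e1} gives $T(\textbf{u}+\textbf{v})=\textbf{u}+\Phi_{\textbf{u}}(\textbf{v})$, where $\Phi_{\textbf{u}}(\textbf{v})\triangleq\sum_{i\in I_0}f''_i(\textbf{u}+\textbf{v})\textbf{e}_i\in P_{I_0}$. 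Thus $T$ is the identity on $P_{I_1\setminus I_0}$ and, for each fixed $\textbf{u}$, is the map $\Phi_{\textbf{u}}$ between the finite-dimensional slices $P_{J_0}\cong\mathbb{R}^r$ and $P_{I_0}\cong\mathbb{R}^r$.

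Next I would analyze $\Phi_{\textbf{u}}$. Its Jacobian matrix at $\textbf{v}$ is exactly $(D_{x_j}f''_i(\textbf{x}_{I_2}))_{i\in I_0,\,j\in J_0}=(D_{I_1\bigtriangleup I_2}P_{I_1}P_{I_2}^{-1})(\textbf{x}_{I_2})$, and identity \eqref{20241011for2} exhibits the two-sided inverse $(D_{I_1\bigtriangleup I_2}P_{I_2}P_{I_1}^{-1})(\textbf{x}_{I_1})$, so this matrix is nonsingular and its determinant never vanishes on $P_{I_2}W$. Moreover $\Phi_{\textbf{u}}$ is injective on its (open) domain because it is a slice of the homeomorphism $T$: if $\Phi_{\textbf{u}}(\textbf{v}_1)=\Phi_{\textbf{u}}(\textbf{v}_2)$ then $T(\textbf{u}+\textbf{v}_1)=T(\textbf{u}+\textbf{v}_2)$, whence $\textbf{v}_1=\textbf{v}_2$. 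Hence $\Phi_{\textbf{u}}$ is a $C^1$-diffeomorphism onto its image. I would then assemble the formula by testing both sides against an arbitrary nonnegative Borel function on $P_{I_1}W$ (equivalently, on indicator functions): integrating first in the $I_0$-variables, applying the classical $r$-dimensional change-of-variables theorem to $\textbf{w}=\Phi_{\textbf{u}}(\textbf{v})$ converts $\mathrm{d}\mu_{I_0}(\textbf{w})=\prod_{i\in I_0}\frac{e^{-w_i^2/(2a_i^2)}}{\sqrt{2\pi a_i^2}}\,\mathrm{d}w_i$ into $\prod_{i\in I_0}\frac{e^{-(f''_i(\textbf{x}_{I_2}))^2/(2a_i^2)}}{\sqrt{2\pi a_i^2}}\,\big|\det(D_{I_1\bigtriangleup I_2}P_{I_1}P_{I_2}^{-1})(\textbf{x}_{I_2})\big|\prod_{j\in J_0}\mathrm{d}x_j$; integrating the $\textbf{u}$-variables against $\mu_{I_1\setminus I_0}$, which $T$ leaves untouched, and invoking Tonelli's theorem to recombine then gives the claimed identity on $\mathscr{B}(P_{I_2}W)$.

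The main obstacle I expect is the bookkeeping required to make the separation of the possibly infinitely many inert coordinates in $I_1\setminus I_0$ from the finitely many active ones fully rigorous: one must justify that $\mu_{I_1}$ genuinely disintegrates as the product $\mu_{I_1\setminus I_0}\otimes\mu_{I_0}$ over these subspaces and that Fubini/Tonelli applies, so that the infinite-dimensional transformation reduces to the single finite-dimensional Jacobian factor. The secondary point needing care is verifying that $\Phi_{\textbf{u}}$ is a genuine diffeomorphism on the relevant slices, which combines injectivity inherited from $T$ with the nonvanishing of its Jacobian supplied by \eqref{20241011for2}; once these are in place, the remaining computation is the standard Euclidean change-of-variables formula applied fiberwise.
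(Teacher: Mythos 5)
Your proposal is correct and follows essentially the same route as the paper's proof: factor $\mu_{I_1}=\mu_{I_1\setminus I_0}\times\mu_{I_0}$, observe that $T$ fixes the coordinates in $I_1\setminus I_0$ and acts only through the finite-dimensional map $\textbf{x}_{J_0}\mapsto\sum_{i\in I_0}f''_i(\textbf{x}_{I_1\setminus I_0}+\textbf{x}_{J_0})\textbf{e}_i$, then apply the classical $r$-dimensional change-of-variables formula slicewise and recombine via Fubini/Tonelli. The only cosmetic difference is that the paper verifies the identity on generating rectangle sets $U_{I_1\setminus I_0}\times U_{J_0}$ rather than testing against arbitrary nonnegative Borel functions, and your explicit verification that each slice map is a diffeomorphism (injectivity from $T$, nonsingularity from \eqref{20241011for2}) is a point the paper leaves implicit.
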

\begin{proof}
It suffices to prove the above two measures coincide on connect subsets of $P_{I_2} W $ in the following form:
\begin{eqnarray*}
A=\{  \textbf{x}_{I_1\setminus I_0}+\textbf{x}_{J_0}:\;\textbf{x}_{I_1\setminus I_0}\in U_{I_1\setminus I_0},\,\textbf{x}_{J_0}\in U_{J_0}\}\subset  P_{I_2} W,
\end{eqnarray*}
where $U_{I_1\setminus I_0}$ is an open subset of $P_{I_1\setminus I_0}$ and $U_{I_0}$ is an open subset of $P_{I_0}$. By (\ref{220916e1}), it is easy to see that
$$TA=\left\{\textbf{x}_{I_1\setminus I_0}+\sum_{i\in I_0}f''_i(\textbf{x}_{I_1\setminus I_0}+\textbf{x}_{J_0}) \textbf{e}_i:\;\textbf{x}_{I_1\setminus I_0}\in U_{I_1\setminus I_0},\,\textbf{x}_{J_0}\in U_{J_0}\right\}.
$$
Then we have
\begin{eqnarray*}
&&\int_A \mathrm{d}\mu_{I_1}(T\textbf{x}_{I_2})= \mu_{I_1}(TA)=\int_{TA} \mathrm{d}\mu_{I_1}(\textbf{x}_{I_1})
=\int_{TA}\mathrm{d}\mu_{I_0}(\textbf{x}_{I_0})\mathrm{d}\mu_{I_1\setminus I_0}(\textbf{x}_{I_1\setminus I_0}) \\
&&=\int_{TA}\left(\prod\limits_{i\in I_0}\frac{e^{-\frac{x_i^2}{2a_i^2}}}{\sqrt{2\pi a_i^2}}\right)\cdot\left(\prod_{i\in I_0}\,\mathrm{d} x_{i}\right)\mathrm{d}\mu_{I_1\setminus I_0}(\textbf{x}_{I_1\setminus I_0}) \\
&&=\int_{U_{I_1\setminus I_0}}\int_{(TA)_{\textbf{x}_{I_1\setminus I_0}}}\left(\prod\limits_{i\in I_0}\frac{e^{-\frac{x_i^2}{2a_i^2}}}{\sqrt{2\pi a_i^2}}\right)\cdot\left(\prod_{i\in I_0}\,\mathrm{d} x_{i}\right)\mathrm{d}\mu_{I_1\setminus I_0}(\textbf{x}_{I_1\setminus I_0}) \\
&&=\int_{U_{I_1\setminus I_0}}\int_{ U_{J_0}}\left(\prod\limits_{i\in I_0}\frac{e^{-\frac{\left(f''_i(\textbf{x}_{I_2})\right)^2}{2a_i^2}}}{\sqrt{2\pi a_i^2}}\right)\cdot \left|\text{det}(D_{I_1\bigtriangleup I_2} P_{I_1}P_{I_2}^{-1})(\textbf{x}_{I_2})\right|\\
&&\qq\qq\qq\qq\qq\cdot \left(\prod_{j\in J_0}\,\mathrm{d} x_{i}\right)\mathrm{d}\mu_{I_1\setminus I_0}(\textbf{x}_{I_1\setminus I_0}) \\
&&=\int_{A}\left(\prod\limits_{i\in I_0}\frac{e^{-\frac{\left(f''_i(\textbf{x}_{I_2})\right)^2}{2a_i^2}}}{\sqrt{2\pi a_i^2}}\right)\cdot \left|\text{det}(D_{I_1\bigtriangleup I_2} P_{I_1}P_{I_2}^{-1})(\textbf{x}_{I_2})\right|\\
&&\qq\qq\qq\qq\qq\cdot \left(\prod_{j\in J_0}\,\mathrm{d} x_{i}\right)\times\mathrm{d}\mu_{I_1\setminus I_0}(\textbf{x}_{I_1\setminus I_0}),
\end{eqnarray*}
where
$$
\begin{array}{ll}\ds
(TA)_{\textbf{x}_{I_1\setminus I_0}}&\ds\triangleq \{\textbf{x}_{ I_0}\in P_{I_0}:\textbf{x}_{I_1\setminus I_0}+\textbf{x}_{I_0}\in TA\}\\[2mm]
&\ds
=\left\{\sum\limits_{i\in I_0}f''_i(\textbf{x}_{I_1\setminus I_0}+\textbf{x}_{J_0}) \textbf{e}_i:\;\textbf{x}_{J_0}\in U_{J_0}\right\},
\end{array}
 $$
the third equality follows from the finite dimensional change of variable formula for the following transformation
\begin{eqnarray*}
\textbf{x}_{J_0}\mapsto\sum_{i\in I_0}f''_i(\textbf{x}_{I_1\setminus I_0}+\textbf{x}_{J_0}) \textbf{e}_i,\quad \forall\,\textbf{x}_{J_0}=\sum_{j\in J_0}x_j\textbf{e}_j\in U_{J_0}.
\end{eqnarray*}
This completes the proof of Lemma \ref{20241011lem1}.
\end{proof}

Suppose $I$ is a non-empty subset of $\mathbb{N}$, let
\begin{eqnarray*}
C_I\triangleq\left\{\sum_{i\in I}x_i\textbf{e}_i\in P_I:\prod_{i\in I}\frac{1}{\sqrt{2\pi a_i^2}}e^{-\frac{x_i^2}{2a_i^2}}\in(0,+\infty)\right\}.
\end{eqnarray*}
Then for $\sum\limits_{i\in I}x_i\textbf{e}_i\in P_I$, $\sum\limits_{i\in I}x_i\textbf{e}_i\in C_I$ if and only if
\begin{eqnarray*}
\sum_{i\in I}\left|\ln \frac{1}{\sqrt{2\pi} a_i}-\frac{x_i^2}{2a_i^2}\right|<\infty.
\end{eqnarray*}
For any
$\textbf{x}^I=(x_i)_{i\in I}\in C_I$, let
\begin{equation}\label{20241012for4}
G_I(\textbf{x}^I)\triangleq \sum\limits_{i\in I}\left(\ln \frac{1}{\sqrt{2\pi} a_i}-\frac{x_i^2}{2a_i^2}\right),\quad F_I(\textbf{x}^I)\triangleq e^{G_I(\textbf{x}^I)}
\end{equation}
and
\begin{eqnarray*}
C_I(\textbf{x}^I)\triangleq\left\{\Delta \textbf{x}^I=\sum_{i\in I}\Delta x_i\textbf{e}_i\in P_I:\sum_{i\in I}\left|\ln \frac{1}{\sqrt{2\pi} a_i}-\frac{(x_i+\Delta x_i)^2}{2a_i^2}\right|<\infty\right\}.
\end{eqnarray*}
It is easy to see that if $\Delta \textbf{x}^I=\sum\limits_{i\in I}\Delta x_i\textbf{e}_i\in P_I$ satisfies that
\begin{eqnarray*}
\sum_{i\in I}\frac{|\Delta x_i|^2}{a_i^4}<\infty,
\end{eqnarray*}
then $\textbf{x}^I+\Delta \textbf{x}^I\in C_I(\textbf{x}^I)$ and
\begin{eqnarray}
G_I(\textbf{x}_I+\Delta \textbf{x}_I)-G_I(\textbf{x}_I)=\sum_{i\in I}x_i\cdot \frac{\Delta x_i}{a_i^2}+\sum_{i\in I}a_i^2\cdot \frac{(\Delta x_i)^2}{a_i^4}.\label{20241007for1}
\end{eqnarray}
Let
\begin{eqnarray}
H^1_I&\triangleq&\left\{\sum_{i\in I}y_i\textbf{e}_i\in P_I:\sum_{i\in I}\frac{y_i^2}{a_i^2}<\infty\right\},\nonumber\\
H^2_I&\triangleq&\left\{\sum_{i\in I}y_i\textbf{e}_i\in P_I:\sum_{i\in I}\frac{y_i^2}{a_i^4}<\infty\right\},\label{20241020for1}\\
T\textbf{y}_I&\triangleq&\sum_{i\in I}x_i\cdot \frac{y_i}{a_i^2},\qquad \forall\, \textbf{y}_I=\sum_{i\in I}y_i\textbf{e}_i\in  H^2_I.\nonumber
\end{eqnarray}
Note that for $\textbf{y}_I=\sum\limits_{i\in I}y_i\textbf{e}_i\in  H^2_I$, we have
\begin{eqnarray*}
\sum_{i\in I}a_i^2\cdot \frac{y_i^2}{a_i^4} \leq \left(\sup_{i\in I}a_i^2\right)\cdot ||\textbf{y}_I||_{H^2_I}^2.
\end{eqnarray*}
Then it holds that $T\in (H^2_I)^*$, $\textbf{x}+H^2_I\subset C_I(\textbf{x}_I)$ for any $\textbf{x}_I=\sum\limits_{i\in I}x_i\textbf{e}_i\in C_I$ and
\begin{eqnarray*}
G_I(\textbf{x}_I+\Delta \textbf{x}_I)-G_I(\textbf{x}_I)=T(\Delta \textbf{x}_I)+o(||\Delta \textbf{x}_I||_{H^2_I}),
\end{eqnarray*}
as $\Delta \textbf{x}_I\in H^2_I$ and $||\Delta \textbf{x}_I||_{H^2_I}$ tends to zero.
Motivated by above observations, we have the following definition.
\begin{definition}\label{20241115def1}
Suppose that $X,\, Y$ and $Z$ are three norm spaces, if $X_0\subset X$ and $f$ is a mapping from $X_0$ into $Z$ such that for any $\textbf{x}\in X_0$, there exists an open neighborhood $U_{\textbf{x}}$ of 0 in $Y$ such that $\textbf{x}+U_{\textbf{x}}\subset X_0$ and the mapping $f_{\textbf{x}}$ defined by
$$
f_{\textbf{x}}(\textbf{x}')\triangleq f(\textbf{x}+\textbf{x}'),\quad \forall\, \textbf{x}'\in U_{\textbf{x}},
$$
is Fr\'{e}chet differentiable from $U_{\textbf{x}}$ into $Z$, then we say that $f$ is Fr\'{e}chet differentiable respect to the $Y$ direction.
\end{definition}
Therefore, $G_I$ is Fr\'{e}chet differentiable respect to the $H^2_I$ direction. Then we have the following result.
\begin{proposition}\label{20241020prop1}
$F_I$ is Fr\'{e}chet differentiable respect to the $H_2^I$ direction.
\end{proposition}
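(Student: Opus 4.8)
The plan is to deduce the result from the chain rule, by writing $F_I=\exp\circ G_I$ as the composition of the scalar exponential with the map $G_I$, which has just been shown (immediately above) to be Fr\'{e}chet differentiable with respect to the $H^2_I$ direction, with bounded linear derivative $T\in (H^2_I)^*$. Since differentiability ``respect to the $Y$ direction'' is, by Definition \ref{20241115def1}, a local and pointwise notion, it suffices to fix a base point $\textbf{x}_I\in C_I$ and to verify the required first-order expansion of $F_I$ on the translate $\textbf{x}_I+H^2_I$ (which is contained in $C_I$, since $H^2_I\subset C_I(\textbf{x}_I)$, so that $F_I$ is genuinely defined there).

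First I would record the two ingredients at the fixed point $\textbf{x}_I$. Writing $g\triangleq G_I(\textbf{x}_I)\in\mathbb{R}$ (finite because $\textbf{x}_I\in C_I$) and $h\triangleq G_I(\textbf{x}_I+\Delta\textbf{x}_I)-G_I(\textbf{x}_I)$ for an increment $\Delta\textbf{x}_I\in H^2_I$, the established differentiability of $G_I$ gives
\[
h=T(\Delta\textbf{x}_I)+o(||\Delta\textbf{x}_I||_{H^2_I}),\qquad\text{as }||\Delta\textbf{x}_I||_{H^2_I}\to 0.
\]
Because $T$ is bounded on $H^2_I$, this in particular yields $h=O(||\Delta\textbf{x}_I||_{H^2_I})$, and hence $h\to 0$ as $\Delta\textbf{x}_I\to 0$ in $H^2_I$. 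Next I would expand the exponential at the scalar $g$. Using $e^{g+h}-e^{g}=e^{g}h+o(h)$ (valid since $h\to 0$) and substituting the expansion of $h$, I would obtain
\[
F_I(\textbf{x}_I+\Delta\textbf{x}_I)-F_I(\textbf{x}_I)=e^{g}(e^{h}-1)=e^{G_I(\textbf{x}_I)}\,T(\Delta\textbf{x}_I)+o(||\Delta\textbf{x}_I||_{H^2_I}).
\]
Here the two error contributions, namely $e^{g}$ times the remainder in the expansion of $h$ and the term $e^{g}\,o(h)$, are both $o(||\Delta\textbf{x}_I||_{H^2_I})$. Since $e^{G_I(\textbf{x}_I)}$ is a fixed positive scalar and $T\in(H^2_I)^*$, the map $\Delta\textbf{x}_I\mapsto e^{G_I(\textbf{x}_I)}\,T(\Delta\textbf{x}_I)$ is a bounded linear functional on $H^2_I$, which is therefore the Fr\'{e}chet derivative of $F_I$ at $\textbf{x}_I$ in the $H^2_I$ direction.

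The computation is otherwise routine, and the only step demanding genuine care is the bookkeeping of the two nested error terms. The main obstacle is to check that the $o(h)$ produced by the exponential is in fact $o(||\Delta\textbf{x}_I||_{H^2_I})$ and not merely $o(h)$: this is exactly where the boundedness of $T$ is used, since $h=O(||\Delta\textbf{x}_I||_{H^2_I})$ turns any $o(h)$ into $o(||\Delta\textbf{x}_I||_{H^2_I})$. Once this is settled, no further infinite-dimensional difficulty arises, because the exponential is only ever applied to the scalar-valued quantity $G_I$.
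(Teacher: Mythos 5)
Your proof is correct and follows exactly the route the paper intends: the paper states Proposition \ref{20241020prop1} without proof, immediately after establishing that $G_I$ is Fr\'{e}chet differentiable in the $H^2_I$ direction with derivative $T\in (H^2_I)^*$, precisely because the result is meant to follow by composing with the scalar exponential as you do. Your careful bookkeeping (using $h=O(||\Delta\textbf{x}_I||_{H^2_I})$ to upgrade $o(h)$ to $o(||\Delta\textbf{x}_I||_{H^2_I})$) supplies the detail the paper leaves implicit, so nothing is missing.
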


Suppose that $f$ is a mapping from $(-1,1)$ into $\ell^2$ such that $f((-1,1))\subset C$, $f(t)-f(0)\in H^2_{I}$ for any $t\in (-1,1)$ and let $g(t)\triangleq f(t)-f(0),\,\forall \,t\in (-1,1)$. We also assume that $g$ is Fr\'{e}chet differentiable from $\mathbb{R}$ into $H^2_I$. Then by the chain rule, $F_I(f)$ is Fr\'{e}chet differentiable from $\mathbb{R}$ into $\mathbb{R}$.

Meanwhile, we note that if a real-valued function is Fr\'{e}chet differentiable from $H_I^1$ into $\mathbb{R}$, then it is Fr\'{e}chet differentiable from $H_I^2$ into $\mathbb{R}$.
\begin{definition}\label{20241012def2}
Suppose that $S$ is a surface of $\ell^2$ with codimension $\Gamma_{\mathbb{N}\setminus I}$, $(S\cap U, P_{I})$ is a coordinate pairing of $S$. Then there exists a smooth function $f$ from $U_{I}$ into $P_{\mathbb{N}\setminus I}$ such that
$$
S\cap U=\left\{\textbf{x}_{I}+f(\textbf{x}_{I}):\textbf{x}_{I}=\sum_{i\in I}x_i\textbf{e}_i\in U_{I}\right\},
$$
where $U_I$ is an open subset of $P_I$. If
\begin{eqnarray}\label{20241013cond1}
\sup_{\textbf{x}_I\in U_{I}} n_{I}(\textbf{x}_I+f(\textbf{x}_I))\cdot F_{\mathbb{N}\setminus I}(f(\textbf{x}_I))<\infty,
\end{eqnarray}
Then we call $\Gamma_{\mathbb{N}\setminus I}$, $(S\cap U, P_{I})$ is a locally finite coordinate pairing of $S$ and define
\begin{eqnarray*}
\mu(E,S\cap U,I,f)\triangleq&&\int_{P_IE}n_{I}(\textbf{x}_I+f(\textbf{x}_I))\cdot F_{\mathbb{N}\setminus I}(f(\textbf{x}_I))\,\mathrm{d}\mu_I(\textbf{x}_I),\\
&&\qq\qq\qq\forall\;E\in \mathscr{B}(S\cap U).
\end{eqnarray*}
\end{definition}
\begin{lemma}\label{20241013lem1}
As in Definition \ref{20241012def2}, suppose that $(S\cap U_1, P_{I_1})$ is another locally finite coordinate pairing of $S$ such that $(S\cap U)\cap (S\cap U_1)\neq \emptyset$. Then there exists a smooth function $f'$ from $U_{I_1}$ into $P_{\mathbb{N}\setminus I_1}$ such that
$$
S\cap U_1=\left\{\textbf{x}_{I_1}+f'(\textbf{x}_{I_1}):\textbf{x}_{I_1}=\sum_{i\in I_1}x_i\textbf{e}_i\in U_{I_1}\right\},
$$
where  $U_{I_1}$ is an open subset of $P_{I_1}$. Then
\begin{eqnarray*}
\mu(E,S\cap U_1,I_1,f')=\int_{P_{I_1}E}n_{I_1}(\textbf{x}_{I_1}+f'(\textbf{x}_{I_1}))\cdot&& F_{\mathbb{N}\setminus I_1}(f'(\textbf{x}_{I_1}))\,\mathrm{d}\mu_{I_1}(\textbf{x}_{I_1}),\\
&&\forall\;E\in \mathscr{B}(S\cap U_1).
\end{eqnarray*}
Furthermore,
\begin{eqnarray*}
\mu(E,S\cap U_1,I_1,f')=\mu(E,S\cap U,I,f),\quad\forall\,E\in \mathscr{B}(S\cap U\cap U_1).
\end{eqnarray*}
\end{lemma}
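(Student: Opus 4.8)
The first displayed identity is essentially Definition~\ref{20241012def2} applied to the chart $(S\cap U_1,P_{I_1})$: since this pairing is assumed locally finite, there is a smooth coordinate function $f'$ on an open set $U_{I_1}\subset P_{I_1}$ with $S\cap U_1=\{\textbf{x}_{I_1}+f'(\textbf{x}_{I_1}):\textbf{x}_{I_1}\in U_{I_1}\}$, condition \eqref{20241013cond1} holds for it, and $\mu(\cdot,S\cap U_1,I_1,f')$ is given by the stated integral. Thus the real content is the \emph{furthermore} part, i.e.\ coordinate invariance on the overlap. The plan is as follows. Write $W\triangleq (S\cap U)\cap(S\cap U_1)$ and fix $E\in\mathscr{B}(W)$. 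Since $P_I|_{S\cap U}$ and $P_{I_1}|_{S\cap U_1}$ are homeomorphisms onto $P_IW$ and $P_{I_1}W$, the map $T\triangleq P_IP_{I_1}^{-1}$ is a homeomorphism from $P_{I_1}W$ onto $P_IW$ carrying $P_{I_1}E$ onto $P_IE$, and for $\textbf{x}_{I_1}\in P_{I_1}W$ the point $\textbf{y}\triangleq T\textbf{x}_{I_1}+f(T\textbf{x}_{I_1})=\textbf{x}_{I_1}+f'(\textbf{x}_{I_1})$ is the common point of $S$ described by the two charts.

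I would transport the integral defining $\mu(E,S\cap U,I,f)$ through $T$ and then match integrands pointwise. Using the change-of-variables identity quoted just before Lemma~\ref{20241011lem1} with $F=P_IE$ and $\mu=\mu_I$, one obtains $\mu(E,S\cap U,I,f)=\int_{P_{I_1}E}n_I(\textbf{y})\,F_{\mathbb{N}\setminus I}(f(T\textbf{x}_{I_1}))\,\mathrm{d}\mu_I(T\textbf{x}_{I_1})$. The pulled-back measure $\mathrm{d}\mu_I(T\textbf{x}_{I_1})$ is exactly the object computed in Lemma~\ref{20241011lem1}, read with the two charts in the roles $A=I$, $B=I_1$ (so that, in that lemma's notation, $I_0=I\setminus I_1$, $J_0=I_1\setminus I$ and $I_1\setminus I_0=I\cap I_1$). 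It produces the Gaussian weights $\prod_{i\in I\setminus I_1}(2\pi a_i^2)^{-1/2}e^{-f'_i(\textbf{x}_{I_1})^2/(2a_i^2)}$, the Jacobian factor $|\det(D_{I\bigtriangleup I_1}P_IP_{I_1}^{-1})(\textbf{x}_{I_1})|$, and the product split $(\prod_{i\in I_1\setminus I}\mathrm{d}x_i)\times\mathrm{d}\mu_{I\cap I_1}$.

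Three reductions then close the argument. First, the finite block determinant of Lemma~\ref{20241011lem1} agrees in absolute value with the operator determinant of Definition~\ref{20241113def1}, because $D(P_IP_{I_1}^{-1})(\textbf{x}_{I_1})$ acts as the identity on the common coordinates $P_{I\cap I_1}$, so its determinant collapses onto the $r\times r$ symmetric-difference block; hence Lemma~\ref{20241012lem3} gives $n_I(\textbf{y})\,|\det(D_{I\bigtriangleup I_1}P_IP_{I_1}^{-1})(\textbf{x}_{I_1})|=n_{I_1}(\textbf{y})$. Second, I split $\mathrm{d}\mu_{I_1}(\textbf{x}_{I_1})=(\prod_{i\in I_1\setminus I}(2\pi a_i^2)^{-1/2}e^{-x_i^2/(2a_i^2)}\mathrm{d}x_i)\times\mathrm{d}\mu_{I\cap I_1}$, which already matches the measure split above. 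Third, writing $y_i$ for the $i$-th coordinate of $\textbf{y}$ and using $F_J(\cdot)=\prod_{i\in J}(2\pi a_i^2)^{-1/2}e^{-y_i^2/(2a_i^2)}$ together with $f_i(\textbf{x}_I)=f'_i(\textbf{x}_{I_1})=x_i=y_i$ on the relevant index sets, both the transported weight $F_{\mathbb{N}\setminus I}(f(\textbf{x}_I))\prod_{i\in I\setminus I_1}(2\pi a_i^2)^{-1/2}e^{-y_i^2/(2a_i^2)}$ and the target weight $F_{\mathbb{N}\setminus I_1}(f'(\textbf{x}_{I_1}))\prod_{i\in I_1\setminus I}(2\pi a_i^2)^{-1/2}e^{-y_i^2/(2a_i^2)}$ collapse to $\prod_{i\in\mathbb{N}\setminus(I\cap I_1)}(2\pi a_i^2)^{-1/2}e^{-y_i^2/(2a_i^2)}$, since $(\mathbb{N}\setminus I)\cup(I\setminus I_1)=(\mathbb{N}\setminus I_1)\cup(I_1\setminus I)=\mathbb{N}\setminus(I\cap I_1)$. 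Combining the three, the transported integrand equals $n_{I_1}(\textbf{y})\,F_{\mathbb{N}\setminus I_1}(f'(\textbf{x}_{I_1}))\,\mathrm{d}\mu_{I_1}(\textbf{x}_{I_1})$, so the two measures coincide on $\mathscr{B}(W)$.

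The main obstacle is bookkeeping rather than analysis. One must carefully verify that the absolute value of the finite-dimensional determinant in Lemma~\ref{20241011lem1} equals that of the operator determinant in Lemma~\ref{20241012lem3} (this rests on the identity action of $T$ on $P_{I\cap I_1}$, via Definition~\ref{20241113def1}), and then track the three blocks $I\cap I_1$, $I\setminus I_1$, $I_1\setminus I$ so that the Gaussian densities in the symmetric-difference directions recombine correctly. I would also note that the infinite products defining $F_{\mathbb{N}\setminus I}$ and $F_{\mathbb{N}\setminus I_1}$ converge at the relevant surface points precisely because \eqref{20241013cond1} holds for both locally finite charts, so every manipulation takes place among finite quantities.
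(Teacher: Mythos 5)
Your proposal is correct and follows essentially the same route as the paper's proof: both arguments transport one chart's integral onto the other chart through the transition homeomorphism, invoke Lemma \ref{20241011lem1} for the pulled-back Gaussian product measure and Lemma \ref{20241012lem3} for the normal factors, and recombine the Gaussian weights into a single product over $\mathbb{N}\setminus(I\cap I_1)$. The only difference is cosmetic: you pull the $I$-chart integral back to the $I_1$-chart and absorb the single Jacobian factor directly into the relation $n_I\cdot|\det|=n_{I_1}$, whereas the paper pushes the $I_1$-chart integral onto the $I$-chart and then cancels the two resulting determinant factors via \eqref{20241013for1}.
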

\begin{proof}
Let $W\triangleq S\cap U\cap U_1$. There exists $I_0\subset I_1$ and $J_0\subset \mathbb{N}\setminus I_1$ such that $|I_0|=|J_0|\in\mathbb{N}_0$ and $I=(I_1\setminus I_0)\cup J_0$.
Clearly, $T\triangleq P_{I_1}P_{I}^{-1}$ is a homeomorphism from $P_{I}W$ into $P_{I_1}W$. Note that  for any Borel subset $F$ of $P_{I_1}W$, Borel measure $\mu$ on $P_{I_1}W$ and non-negative Borel measurable function $h$ on $P_{I_1}W$, it holds that
\begin{eqnarray*}
\int_F h \,\mathrm{d}\mu =\int_{T^{-1}F} h(T\textbf{x}_{I})\,\mathrm{d}\mu(T\textbf{x}_{I}).
\end{eqnarray*}
Thus
\begin{eqnarray*}
&&\int_{P_{I_1}E}n_{I_1}(\textbf{x}_{I_1}+f'(\textbf{x}_{I_1}))\cdot F_{\mathbb{N}\setminus I_1}(f'(\textbf{x}_{I_1}))\,\mathrm{d}\mu_{I_1}(\textbf{x}_{I_1})\\
&&=\int_{P_{I}E}n_{I_1}(P_{I_1}^{-1}(T\textbf{x}_{I}))\cdot F_{\mathbb{N}\setminus I_1}(f'(T\textbf{x}_{I}))\,\mathrm{d}\mu_{I_1}(T\textbf{x}_{I})\\
&&=\int_{P_{I}E}n_{I_1}( \textbf{x}_{I}+ f(\textbf{x}_{I}))\cdot F_{\mathbb{N}\setminus I_1}(f'(T\textbf{x}_{I}))\,\mathrm{d}\mu_{I_1}(T\textbf{x}_{I}).
\end{eqnarray*}
By Lemma \ref{20241012lem3},
$
n_{I_1}( \textbf{x}_{I}+ f(\textbf{x}_{I}))=\left|\text{det}(D(P_{I}P_{I_1}^{-1})(T\textbf{x}_{I}))\right|\cdot n_{I}(\textbf{x}_{I}+ f(\textbf{x}_{I})).
$
Then for any $\textbf{x}_{I}\in P_{I}W$, it holds that
\begin{equation}\label{220916e1zzz}
\begin{array}{ll}
\ds T\textbf{x}_{I}=P_{I_1}P_{I}^{-1}\textbf{x}_{I}
=P_{I_1}(\textbf{x}_{I}+f(\textbf{x}_{I}))\\[2mm]
=\ds\textbf{x}_{I_1\setminus I_0}+\sum_{i\in I_0}f_i(\textbf{x}_{I_2}) \textbf{e}_i=\textbf{x}_{I_1\setminus I_0}+\sum_{i\in I_0}f_i(\textbf{x}_{I_1\setminus I_0}+\textbf{x}_{J_0}) \textbf{e}_i,
\end{array}
\end{equation}
where $f_i(\textbf{x}_{I_2})=(f(\textbf{x}_{I_2}),\textbf{e}_i)$ for each $i\in I_0$. Then for any $\textbf{x}_{I_1}\in P_{I_1}W$, it holds that
\begin{equation}\label{220916e2zzz}
\begin{array}{ll}
\ds T^{-1}\textbf{x}_{I_1}
= P_{I}P_{I_1}^{-1}\textbf{x}_{I_1}=P_{I}(\textbf{x}_{I_1}+f'(\textbf{x}_{I_1}))\\[2mm]
\ds=\textbf{x}_{I_1\setminus I_0}+\sum_{j\in J_0}f'_j(\textbf{x}_{I_1}) \textbf{e}_j=\textbf{x}_{I_1\setminus I_0}+\sum_{j\in J_0}f'_j(\textbf{x}_{I_1\setminus I_0}+\textbf{x}_{I_0}) \textbf{e}_j,
\end{array}
\end{equation}
where $f'_j(\textbf{x}_{I_1})=(f'(\textbf{x}_{I_1}),\textbf{e}_j)$ for each $j\in J_0$. Thus
\begin{eqnarray*}
F_{\mathbb{N}\setminus I_1}(f'(T\textbf{x}_{I}))&=&F_{\mathbb{N}\setminus I_1}\left(f'\left(P_{I_1}(\textbf{x}_{I}+f(\textbf{x}_{I}))\right) \right)\\
&=&\left(\prod_{i\in J_0}\frac{e^{-\frac{x_i^2}{2a_i^2}}}{\sqrt{2\pi a_i^2}}\right)\cdot \left( \prod_{i\in (\mathbb{N}\setminus I_1)\setminus J_0}\frac{e^{-\frac{(f_i(\textbf{x}_{I}))^2}{2a_i^2}}}{\sqrt{2\pi a_i^2}}\right).
\end{eqnarray*}
By Lemma \ref{20241011lem1},
\begin{eqnarray*}
\mathrm{d}\mu_{I_1}(T\textbf{x}_{I})= &&\left(\prod\limits_{i\in I_0}\frac{e^{-\frac{\left(f_i(\textbf{x}_{I})\right)^2}{2a_i^2}}}{\sqrt{2\pi a_i^2}}\right)\cdot\left|\text{det}(D_{I_1\bigtriangleup I} P_{I_1}P_{I}^{-1})(\textbf{x}_{I})\right|\\
&&\cdot\left(\prod_{i\in J_0}\,\mathrm{d} x_{i}\right)\times\mathrm{d}\mu_{I_1\setminus I_0}(\textbf{x}_{I_1\setminus I_0}).
\end{eqnarray*}
Therefore,
\begin{eqnarray*}
&&\int_{P_{I_1}E}n_{I_1}(\textbf{x}_{I_1}+f'(\textbf{x}_{I_1}))\cdot F_{\mathbb{N}\setminus I_1}(f'(\textbf{x}_{I_1}))\,\mathrm{d}\mu_{I_1}(\textbf{x}_{I_1})\\
&&=\int_{P_{I}E}n_{I_1}( \textbf{x}_{I}+ f(\textbf{x}_{I}))\cdot F_{\mathbb{N}\setminus I_1}(f'(T\textbf{x}_{I}))\,\mathrm{d}\mu_{I_1}(T\textbf{x}_{I})\\
&&=\int_{P_{I}E} \left|\text{det}(D(P_{I}P_{I_1}^{-1})(T\textbf{x}_{I}))\right|\cdot n_{I}(\textbf{x}_{I}+ f(\textbf{x}_{I}))\cdot \left(\prod_{i\in J_0}\frac{e^{-\frac{x_i^2}{2a_i^2}}}{\sqrt{2\pi a_i^2}}\right)\\
&&\q\cdot \left( \prod_{i\in (\mathbb{N}\setminus I_1)\setminus J_0}\frac{e^{-\frac{(f_i(\textbf{x}_{I}))^2}{2a_i^2}}}{\sqrt{2\pi a_i^2}}\right)\left(\prod\limits_{i\in I_0}\frac{e^{-\frac{\left(f_i(\textbf{x}_{I})\right)^2}{2a_i^2}}}{\sqrt{2\pi a_i^2}}\right)\\
&&\q\cdot\left|\text{det}(D_{I_1\bigtriangleup I} P_{I_1}P_{I}^{-1})(\textbf{x}_{I})\right|\left(\prod_{i\in J_0}\,\mathrm{d} x_{i}\right)\times\mathrm{d}\mu_{I_1\setminus I_0}(\textbf{x}_{I_1\setminus I_0})\\
&&=\int_{P_{I}E}  n_{I}(\textbf{x}_{I}+ f(\textbf{x}_{I}))\cdot  F_{\mathbb{N}\setminus I}(f(\textbf{x}_{I}))\mathrm{d}\mu_{I}(\textbf{x}_{I}),
\end{eqnarray*}
where the last equality follows from \eqref{20241013for1}, which completes the proof of Lemma \ref{20241013lem1}.
\end{proof}
\begin{definition}\label{20241013def1}
The quantity $\mu(E,S\cap U,I,f)$ defined in Definition \ref{20241012def2} are independent of $I$ and $f$, hence we will use the notation $\mu(E,S\cap U)$ to denote it. Furthermore, if $(S\cap U_1)\cap (S\cap U)\neq \emptyset$, then
$$
\mu(E,S\cap U)=\mu(E,S\cap U_1),\quad \forall\, E\in \mathscr{B}(S\cap U\cap U_1).
$$
Then we call $\mu(\cdot,S\cap U)$ a local version of surface measure of $S$. If there exists a family of locally finite coordinate pairings of $S$ that covers $S$, then we call $S$ a measurable surface of $\ell^2$ with codimension $\Gamma_{\mathbb{N}\setminus I}$.
\end{definition}
\begin{proposition}\label{20241013prop1}
Suppose that $S$ is a measurable surface of $\ell^2$ with codimension $\Gamma_{\mathbb{N}\setminus I}$, $\{(S\cap U_i, P_{I_i}):i\in \Lambda\}$ is a family of locally finite coordinate pairings of $S$ that covers $S$. Then there exists a Borel measure $\mu_S$ on $S$ such that
$$
\mu_S(E)=\mu(E,S\cap U_i),\quad\forall E\in \mathscr{B}(S\cap U_i),\,\,\forall i\in \Lambda.
$$
\end{proposition}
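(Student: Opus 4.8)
The plan is to glue the local surface measures $\mu(\cdot, S\cap U_i)$ into a single global Borel measure by passing to a countable, measurable partition of $S$ subordinate to the given cover. The compatibility on overlaps recorded in Lemma \ref{20241013lem1} and promoted to coordinate-independence in Definition \ref{20241013def1} is what will make this gluing both well defined and insensitive to the choice of partition.

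First I would reduce to a countable cover. Since $\ell^2$ is separable and metric, the subspace $S$ with its relative topology is again separable metric, hence a Lindel\"of space by Corollary \ref{Lindelof1}. Applying the Lindel\"of property to the open cover $\{S\cap U_i\}_{i\in\Lambda}$ of $S$, I extract a countable subcover, relabelled $\{S\cap U_{i_k}\}_{k=1}^\infty$, and disjointify it in the standard way: set $V_1\triangleq S\cap U_{i_1}$ and $V_k\triangleq (S\cap U_{i_k})\setminus\bigcup_{j<k}(S\cap U_{i_j})$ for $k\geq 2$. Each $V_k$ is then a Borel subset of $S\cap U_{i_k}$, the $V_k$ are pairwise disjoint, and $\bigsqcup_{k=1}^\infty V_k=S$.

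With this partition I define, for each $E\in \mathscr{B}(S)$,
\begin{eqnarray*}
\mu_S(E)\triangleq \sum_{k=1}^\infty \mu(E\cap V_k, S\cap U_{i_k}),
\end{eqnarray*}
which is meaningful since $E\cap V_k\in \mathscr{B}(S\cap U_{i_k})$. That $\mu_S$ is a Borel measure is then routine: each local $\mu(\cdot, S\cap U_{i_k})$ is a non-negative Borel measure on $S\cap U_{i_k}$, being the push-forward of the measure $n_{I_{i_k}}\cdot F_{\mathbb{N}\setminus I_{i_k}}\,\mathrm{d}\mu_{I_{i_k}}$ under the coordinate homeomorphism $P_{I_{i_k}}|_{S\cap U_{i_k}}$ of Definition \ref{20241012def2}; countable additivity of $\mu_S$ follows from countable additivity of each summand together with a Tonelli-type interchange of the two non-negative sums (over the disjoint pieces $E_n$ and over $k$), and clearly $\mu_S(\emptyset)=0$.

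The crux is the consistency identity $\mu_S(E)=\mu(E, S\cap U_i)$ for an \emph{arbitrary} index $i\in\Lambda$ and $E\in \mathscr{B}(S\cap U_i)$. For such $E$ each set $E\cap V_k$ is a Borel subset of the overlap $(S\cap U_i)\cap (S\cap U_{i_k})$, so Definition \ref{20241013def1} (the content of Lemma \ref{20241013lem1}) yields $\mu(E\cap V_k, S\cap U_{i_k})=\mu(E\cap V_k, S\cap U_i)$; summing over $k$ and using countable additivity of the single measure $\mu(\cdot, S\cap U_i)$ with $E=\bigsqcup_k (E\cap V_k)$ gives the claim. The hard part will be exactly this step, namely ensuring the patchwise identity extends to indices $i$ lying \emph{outside} the chosen countable subcover; this is where the coordinate-independence of the local measures is indispensable, which in turn rests on the determinant cocycle relation of Proposition \ref{20241011prop1} and the change-of-variables formula of Lemma \ref{20241011lem1} feeding into Lemma \ref{20241013lem1}.
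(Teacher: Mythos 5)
Your proposal is correct and follows essentially the same route as the paper's own proof: invoke the Lindel\"of property of $S$ (the paper cites the countable base of $\ell^2$ and \cite[Theorem 15, p. 49]{Kel}) to extract a countable subcover, disjointify it into Borel pieces, define $\mu_S(E)\triangleq\sum_k \mu(E\cap V_k, S\cap U_{i_k})$, and verify consistency with an arbitrary coordinate pairing via the overlap compatibility of Lemma \ref{20241013lem1}. Your write-up is in fact slightly more explicit than the paper's on two points the paper leaves implicit, namely the Tonelli interchange establishing countable additivity of $\mu_S$ and the observation that Lemma \ref{20241013lem1} applies equally to indices $i\in\Lambda$ outside the chosen countable subcover.
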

\begin{proof}
Since $\ell^2$ has a countable base, by \cite[Theorem 15, p. 49]{Kel}, we can find a sequence of sets $\{U_{i_k}\}_{k=1}^{
\infty}$ such that $S\subset\bigcup_{k=1}^{\infty}U_{i_k}$. Set $W_n\triangleq S\cap U_{i_n}$ for $n\in\mathbb{N}$, then $W_n$ is a coordinate neighborhood and $\mu(W_n,W_n)<\infty$. Write $Y_1=W_1$ and $Y_n=W_n\setminus  \left(\bigcup _{k=1}^{n-1}W_k\right)$ for $n>1$. Clearly, $\{Y_n\}_{n=1}^{\infty}$  is a sequence of pairwise disjoint Borel subsets of $S$ and $Y_n\subset W_n$ for each $n\in\mathbb{N}$. Now we define a measure $\mu_S$ on $(S,\mathscr{B}(S))$ by
\begin{eqnarray*}
\mu_S(E)\triangleq\sum_{n=1}^{\infty}\mu(E\cap Y_n, W_n),\quad \forall\,E\in\mathscr{B}(S).
\end{eqnarray*}
Suppose that $\mu(\cdot, S\cap U)$ is a local version of surface measure defined in Definition \ref{20241013def1}. Then for any Borel subset $E$ of $S\cap U$, by Lemma \ref{20241013lem1}, we have
\begin{eqnarray*}
\mu_S(E)=\sum_{n=1}^{\infty}\mu(E\cap Y_n, W_n)
=\sum_{n=1}^{\infty}\rho_t(E\cap Y_n, S\cap U)=\mu(E,S\cap U).
\end{eqnarray*}
Note that
\begin{eqnarray*}
\mu_S(S)=\mu_S\left(\bigsqcup\limits_{n=1}^{\infty}Y_n\right)=\sum_{n=1}^{\infty}\mu_S(Y_n),
\end{eqnarray*}
and $\mu_S(Y_n)=\mu_S(Y_n, W_n)\leqslant \mu_S(W_n,W_n)<\infty$ for each $n\in\mathbb{N}$, from which we see that $\mu_S$ is a $\sigma$-finite measure. This completes the proof of Proposition \ref{20241013prop1}.
\end{proof}

\begin{definition}\label{20241009for1}
Suppose that $\overline{S}=S\sqcup (\partial S)$ is a non-empty subset of $\ell^2$, $I$ is a non-empty subset of $\mathbb{N}$ and $J=\mathbb{N}\setminus I$. If $S$ is a surface of $\ell^2$ with codimension $\Gamma_J$ and for each for each $\textbf{x}\in \partial S$, there exists $I_1\in \Gamma_I$, $I_1'\subset I_1$ and $i_1\in I_1$ satisfying $ I_1 =I_1'\cup\{i_1\}$.
There exists an open neighborhood $U_0$ of $\textbf{x}$ in $\ell^2$, an open neighborhood $U_{I_1'}$ of $P_{I_1'}\textbf{x}$ in $P_{I_1'}$ and two mappings $f'\in C^1_{P_{I_1'}}(U_{I_1'};P_{\mathbb{N}\setminus I_1})$ and $f''\in C^1_{P_{I_1'}}(U_{I_1'};\mathbb{R})$ such that
\begin{equation}\label{20241015for1}
(	\partial S)\cap U_0=\left\{\textbf{y}_{I_1'}+f''(\textbf{y}_{I_1'})\textbf{e}_{i_1}+f'(\textbf{y}_{I_1'}):
\textbf{y}_{I_1'}=\sum_{i\in I_1'}y_i\textbf{e}_i\in U_{I_1'}\right\},
\end{equation}
$\textbf{x}=\textbf{x}_{I_1'}+f''(\textbf{x}_{I_1'})\textbf{e}_{i_1}+f'(\textbf{x}_{I_1'})$ and
\begin{eqnarray*}
S\cap U_0=\Big\{\textbf{y}_{I_1'}+y_{i_1}\textbf{e}_{i_1}+f'(\textbf{y}_{I_1'}):\;
&&\textbf{y}_{I_1'}=\sum_{i\in I_1'}y_i\textbf{e}_i\in U_{I_1'},\\
&&y_{i_1}\in(f''(\textbf{y}_{I_1'}),f''(\textbf{y}_{I_1'})+\delta_0)\Big\},
\end{eqnarray*}
or
\begin{eqnarray*}
S\cap U_0=\Big\{\textbf{y}_{I_1'}+y_{i_1}\textbf{e}_{i_1}+f'(\textbf{y}_{I_1'}):\;
&&\textbf{y}_{I_1'}=\sum_{i\in I_1'}y_i\textbf{e}_i\in U_{I_1'},\\
&&y_{i_1}\in(f''(\textbf{y}_{I_1'})-\delta_0,f''(\textbf{y}_{I_1'}))\Big\},
\end{eqnarray*}
for some positive number $\delta_0$. Therefore, there exists $k\in\mathbb{N}$ such that  if we define
\begin{eqnarray*}
g(\textbf{y}_{I_1'}+y_{i_1}\textbf{e}_{i_1})\triangleq (-1)^k(y_{i_1}-f''(\textbf{y}_{I_1'})),\qquad
\end{eqnarray*}
for $\textbf{y}_{I_1'}+y_{i_1}\textbf{e}_{i_1}$ lies in an open neighborhood of $P_{I_1}\textbf{x}$ in $P_{I_1}$, then
\begin{eqnarray*}
S\cap U_0=\Big\{\textbf{y}_{I_1'}+y_{i_1}\textbf{e}_{i_1}+f'(\textbf{y}_{I_1'}):\;
&&\textbf{y}_{I_1'}=\sum_{i\in I_1'}y_i\textbf{e}_i\in U_{I_1'},\\
&&g(\textbf{y}_{I_1'}+y_{i_1}\textbf{e}_{i_1})<0\Big\}.
\end{eqnarray*}
If for each for each $\textbf{x}\in  S$, there exists $I_2\in \Gamma_I$, an open neighborhood $U_1$ of $\textbf{x}$ in $\ell^2$, an open neighborhood $U_{I_2}$ of $\textbf{x}_{I_2}=P_{I_2}\textbf{x}$ in $P_{I_2}$ and $f\in C^1_{P_{I_2}}(U_{I_2};P_{\mathbb{N}\setminus I_2})$ such that $\textbf{x}=\textbf{x}_{I_2}+f(\textbf{x}_{I_2})$ and
\begin{eqnarray}\nonumber
\overline{S}\cap U_1=S\cap U_1=\left\{\textbf{y}_{I_2}+f(\textbf{y}_{I_2}):
\textbf{y}_{I_2}=\sum_{i\in I_2}y_i\textbf{e}_i\in U_{I_2}\right\}.
\end{eqnarray}
Then we call $S$ a surface of $\ell^2$ with codimension $\Gamma_J$ whose boundary is a surface of $\ell^2$ with codimension $\Gamma_{J\cup\{i_1\}}$.
\end{definition}
\begin{remark}
Roughly speaking, the Definition \ref{20241009for1} says that for each $\textbf{x}\in \partial S$, there exists $I_1\in \Gamma_I$ and an open neighborhood $U_0$ of $\textbf{x}$ such that $P_{I_1}(S\cap U_0)$ is locally a surface of $P_{I_1}$ with codimension 1 whose boundary is $P_{I_1}((\partial S)\cap U_0)$.
\end{remark}


\subsection{Sequences of Compactly Supported Functions}

Sequences of compactly supported functions play very important roles in finite-dimensional analysis. In this subsection we shall construct such sort of function sequences on $\ell^2$, which will be of crucial importance in the sequel.

By \eqref{20241023e3}, it follows that
\begin{eqnarray*}
\sum_{i=1}^{\infty}a_i^2\cdot \left|\ln \frac{1}{\sqrt{2\pi} a_i}\right|<\infty.
\end{eqnarray*}
Choosing a sequence of positive numbers $\{c_i\}_{i=1}^{\infty}$ such that $\lim\limits_{i\to\infty}c_i=0$,
 and
\begin{eqnarray}\label{20240920for1}
\sum_{i=1}^{\infty}\frac{a_i^2}{c_i}\cdot \left|\ln \frac{1}{\sqrt{2\pi} a_i}\right|<\infty,\qquad \sum_{i=1}^{\infty}\frac{a_i^2}{c_i}<\infty.
\end{eqnarray}
For each non-empty subset $I$ of $\mathbb{N}$, we also note that for $(x_i)_{i\in I}\in\ell^2(I)$ satisfying
$$
\sum_{i\in I}\left|\ln \frac{1}{\sqrt{2\pi} a_i}-\frac{x_i^2}{2a_i^2}\right|<\infty,
$$
write $d_i=\ln \frac{1}{\sqrt{2\pi} a_i}-\frac{x_i^2}{2a_i^2}$ for $i\in I$, then we have
\begin{equation}\label{20241013for2}
\sum_{i\in I}\frac{x_i^2}{c_i}=\sum_{i\in I}\frac{2a_i^2}{c_i}\left(\ln \frac{1}{\sqrt{2\pi} a_i}-d_i\right)
\leq \sum_{i\in I}\frac{2a_i^2}{c_i}\left|\ln \frac{1}{\sqrt{2\pi} a_i} \right|+\sum_{i\in I}\frac{2a_i^2}{c_i}\cdot |d_i|<\infty.
\end{equation}
Setting $c\triangleq \sup\limits_{i\in\mathbb{N}}c_i$.
Write
\begin{eqnarray}\label{230708e1}
K_n\triangleq \left\{(x_i)_{i\in\mathbb{N}}\in \ell^2:\sum\limits_{i=1}^{\infty}\frac{x_i^2}{c_i}\leqslant n^2\right\},\quad \forall\,n\in\mathbb{N},\quad K\triangleq \bigcup\limits_{n=1}^{\infty}K_n.
\end{eqnarray}

\begin{lemma}\label{basic propeties of Knm}
For any $n\in\mathbb{N}$, $K_n$ is a compact subset of $\ell^2$. If $\sum\limits_{k=1}^{\infty}\frac{a_k^2}{c_k}<\infty,$ then $\lim\limits_{n\to\infty}P_t(K_n)=1$ for any $t\in(0,+\infty)$.
\end{lemma}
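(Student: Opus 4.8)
The plan is to prove the two assertions separately: compactness of each $K_n$ by a diagonal extraction followed by a uniform tail estimate, and the convergence $P_t(K_n)\to 1$ by a first–moment computation together with continuity of measure from below. For the first assertion, observe that since $\ell^2$ is a complete metric space it suffices to show that $K_n$ is closed and sequentially compact. Closedness is immediate: writing $\varphi_N(\textbf{x})\triangleq\sum_{i=1}^N x_i^2/c_i$, each $\varphi_N$ is continuous on $\ell^2$, so $K_n=\bigcap_{N=1}^\infty\{\textbf{x}\in\ell^2:\varphi_N(\textbf{x})\leqslant n^2\}$ is an intersection of closed sets. For sequential compactness, take any $\{\textbf{x}^{(m)}\}_{m=1}^\infty\subset K_n$ with $\textbf{x}^{(m)}=(x_i^{(m)})_{i\in\mathbb{N}}$. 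For each fixed $i$ one has $|x_i^{(m)}|\leqslant n\sqrt{c_i}$, so a standard diagonal extraction yields a subsequence (not relabelled) and real numbers $(x_i)_{i\in\mathbb{N}}$ with $x_i^{(m)}\to x_i$ for every $i$. Letting $m\to\infty$ in each finite partial sum $\sum_{i=1}^N(x_i^{(m)})^2/c_i\leqslant n^2$ and then letting $N\to\infty$ gives $\sum_{i=1}^\infty x_i^2/c_i\leqslant n^2$, so the limit $\textbf{x}\triangleq(x_i)_{i\in\mathbb{N}}$ lies in $K_n$ (and in $\ell^2$, since $\sum_i x_i^2\leqslant c\sum_i x_i^2/c_i\leqslant cn^2$).

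The decisive step is to upgrade this coordinatewise convergence to $\ell^2$-norm convergence, and this is precisely where the hypothesis $c_i\to 0$ enters. The key is the uniform tail bound
$$
\sum_{i>N}(x_i^{(m)})^2=\sum_{i>N}\frac{(x_i^{(m)})^2}{c_i}\,c_i\leqslant\Big(\sup_{i>N}c_i\Big)\sum_{i>N}\frac{(x_i^{(m)})^2}{c_i}\leqslant n^2\sup_{i>N}c_i,
$$
valid for every $m$, together with the analogous bound for $\textbf{x}$; since $c_i\to 0$, one has $\sup_{i>N}c_i\to 0$ as $N\to\infty$. Thus, given $\varepsilon>0$, I would first choose $N$ so that $4n^2\sup_{i>N}c_i<\varepsilon/2$, which controls both tails of $||\textbf{x}^{(m)}-\textbf{x}||_{\ell^2}^2$ simultaneously, and then use $x_i^{(m)}\to x_i$ on the finitely many indices $i\leqslant N$ to make $\sum_{i\leqslant N}(x_i^{(m)}-x_i)^2<\varepsilon/2$ for all large $m$. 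Combining the two estimates gives $||\textbf{x}^{(m)}-\textbf{x}||_{\ell^2}^2<\varepsilon$ for large $m$, so $\textbf{x}^{(m)}\to\textbf{x}$ in $\ell^2$. Hence $K_n$ is sequentially compact, and therefore compact.

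For the second assertion, note that $K_1\subset K_2\subset\cdots$ with $K=\bigcup_{n=1}^\infty K_n$, so by continuity of $P_t$ from below it suffices to show $P_t(K)=1$; here each $K_n$ is Borel (being closed), hence so is $K$. Under $P_t$ the coordinate $x_i$ has law $\bn_{ta_i}$, whence $\int_{\ell^2}x_i^2\,\mathrm{d}P_t(\textbf{x})=t^2a_i^2$. By the monotone convergence theorem and the assumption $\sum_i a_i^2/c_i<\infty$,
$$
\int_{\ell^2}\Big(\sum_{i=1}^\infty\frac{x_i^2}{c_i}\Big)\,\mathrm{d}P_t(\textbf{x})=\sum_{i=1}^\infty\frac{1}{c_i}\int_{\ell^2}x_i^2\,\mathrm{d}P_t(\textbf{x})=t^2\sum_{i=1}^\infty\frac{a_i^2}{c_i}<\infty.
$$
Consequently the nonnegative measurable function $\textbf{x}\mapsto\sum_i x_i^2/c_i$ is $P_t$-almost everywhere finite, i.e.\ $P_t(K)=1$, and therefore $\lim_{n\to\infty}P_t(K_n)=P_t(K)=1$. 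The only genuinely delicate point in the whole argument is the uniform control of the tails in the norm-convergence step, which is what converts the boundedness-plus-$c_i\to0$ condition into compactness; the remaining ingredients are routine uses of monotone convergence and continuity of measure.
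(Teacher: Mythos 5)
Your proof is correct and follows essentially the same route as the paper's: closedness via partial sums, compactness via diagonal extraction combined with the uniform tail bound $\sum_{i>N}(x_i^{(m)})^2\leqslant n^2\sup_{i>N}c_i$ coming from $c_i\to 0$, and the measure statement via the first-moment computation $\int_{\ell^2}\sum_i x_i^2/c_i\,\mathrm{d}P_t=t^2\sum_i a_i^2/c_i<\infty$ together with continuity of $P_t$ from below. The only cosmetic difference is that you phrase closedness as an intersection of sublevel sets of the continuous functions $\varphi_N$, whereas the paper runs a sequence argument; the substance is identical.
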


\begin{proof}
Firstly, we shall prove that $K_n$ is a closed subset of $\ell^2$.

For any sequence $\{\textbf{x}_m=(x_{i,m})_{i\in\mathbb{N}}\}_{m=1}^{\infty}\subset K_n$ and $\textbf{x}=(x_i)_{i\in\mathbb{N}}\in \ell^2$ such that $\lim\limits_{m\to\infty}\textbf{x}_m=\textbf{x}$ in $\ell^2$. Then for each $i\in\mathbb{N}$, it holds that $\lim\limits_{m\to\infty}x_{i,m}=x_i$. As a consequence, for each $N\in\mathbb{N}$, we have
\begin{eqnarray*}
\sum_{i=1}^{N}\frac{x_i^2}{c_i}=\lim_{m\to\infty}\left(\sum_{i=1}^{N}\frac{x_{i,m}^2}{c_i}\right)\leqslant n^2.
\end{eqnarray*}
Letting $N\to\infty$, we have $\sum\limits_{i=1}^{\infty}\frac{x_i^2}{c_i}\leqslant n^2$ which implies that $\textbf{x}=(x_i)_{i\in\mathbb{N}}\in K_n$.

Secondly, we shall prove that $K_n$ is a compact subset of $\ell^2$.

For any sequence $\{\textbf{y}_m=(y_{i,m})_{i\in\mathbb{N}}\}_{m=1}^{\infty}\subset K_n$. Note that for any $i,m\in\mathbb{N}$, it holds that $y_{i,m}^2\leqslant c_i \left(\sum\limits_{j=1}^{\infty}\frac{y_{j,m}^2}{c_j}\right)\leqslant c_i n^2\leqslant cn^2.$ By the diagonal process, there exists a subsequence $\{\textbf{y}_{m_k}\}_{k=1}^{\infty}$ of $\{\textbf{y}_m\}_{m=1}^{\infty}$ such that $\lim\limits_{k\to\infty}y_{i,m_k}$ exists for each $i\in\mathbb{N}$. Denote $y_i\triangleq\lim\limits_{k\to\infty}y_{i,m_k}$ for each $i\in\mathbb{N}$ and $\textbf{y}\triangleq (y_i)_{i\in\mathbb{N}}$. For each $N_1\in\mathbb{N}$, we have
\begin{eqnarray*}
\sum_{i=1}^{N_1}\frac{y_i^2}{c_i}=\lim_{k\to\infty}\left(\sum_{i=1}^{N_1}\frac{y_{i,m_k}^2}{c_i}\right)\leqslant n^2.
\end{eqnarray*}
Letting $N_1\to\infty$, we have $\sum\limits_{i=1}^{\infty}\frac{y_i^2}{c_i}\leqslant n^2$. Since $\sum\limits_{i=1}^{\infty} y_i^2\leqslant c\cdot\left(\sum\limits_{i=1}^{\infty}\frac{y_i^2}{c_i}\right)\leqslant cn^2<\infty,$ we have $\textbf{y}\in\ell^2$.  These imply that $\textbf{y}\in K_n$. Since $\lim\limits_{i\to\infty}c_i=0$, for any $\varepsilon>0$, there exists $N_2\in\mathbb{N}$ such that $4c_i n^2<\frac{\varepsilon}{2}$ for all $i\geqslant N_2.$ Note that $\lim\limits_{k\to\infty}\sum\limits_{i=1}^{N_2}|y_{i,m_k}-y_i|^2=0$. Thus there exists $N_3\in\mathbb{N}$ such that $\sum\limits_{i=1}^{N_2}|y_{i,m_k}-y_i|^2<\frac{\varepsilon}{2}$ for all $k\geqslant N_3.$ Therefore, for any $k\geqslant N_3$, it holds that
\begin{eqnarray*}
\sum_{i=1}^{\infty}|y_{i,m_k}-y_i|^2&=&\sum_{i=1}^{N_2}|y_{i,m_k}-y_i|^2+\sum_{i=N_2 +1}^{\infty}|y_{i,m_k}-y_i|^2\\
&<&\frac{\varepsilon}{2}+\sum_{i=N_2 +1}^{\infty}2(|y_{i,m_k}|^2+|y_i|^2)\\
&=&\frac{\varepsilon}{2}+\sum_{i=N_2 +1}^{\infty}c_i\cdot\frac{2(|y_{i,m_k}|^2+|y_i|^2)}{c_i}\\
&\leqslant&\frac{\varepsilon}{2}+4\left(\sup_{i>N_2}c_{i}\right)\cdot n^2\\
&\leqslant&\varepsilon,
\end{eqnarray*}
which implies that $\lim\limits_{k\to\infty}\textbf{y}_{m_k}=\textbf{y}$ in $\ell^2$. Thus $K_n$ is a compact subset of $\ell^2$.

Finally, we come to prove that if $\sum\limits_{k=1}^{\infty}\frac{a_k^2}{c_k}<\infty,$ then $\lim\limits_{n\to\infty}P_t(K_n)=1$. Note that
 \begin{eqnarray*}
\int_{\ell^2}\sum_{i=1}^{\infty}\frac{x_i^2}{c_i}\,\mathrm{d}P_t(\textbf{x})
=\sum_{i=1}^{\infty}\int_{\ell^2}\frac{x_i^2}{c_i}\,\mathrm{d}P_t(\textbf{x})
=t^2\sum_{i=1}^{\infty}\frac{a_i^2}{c_i}<\infty,
\end{eqnarray*}
which implies that
\begin{eqnarray*}
P_t\left(\left\{\textbf{x}=(x_i)_{i\in\mathbb{N}}\in\ell^2:\sum_{i=1}^{\infty}\frac{x_i^2}{c_i}<\infty\right\}\right)=1.
\end{eqnarray*}
Since
\begin{eqnarray*}
\bigcup_{n=1}^{\infty}K_n=\left\{\textbf{x}=(x_i)_{i\in\mathbb{N}}\in\ell^2:\sum_{i=1}^{\infty}\frac{x_i^2}{c_i}<\infty\right\},
\end{eqnarray*}
we have $\lim\limits_{n\to\infty}P_t(K_n)=1$. This completes the proof of Lemma \ref{basic propeties of Knm}.
\end{proof}

The proof of the following result is motivated by \cite[pp. 419--421]{Goo}, but our approach is simpler and more readable.
\begin{theorem}\label{230215Th1}
There is a sequence of real-valued, $F$-continuous and $\mathscr{B}(\ell^2)$-measurable functions $\{f_n\}_{n=1}^{\infty}$ on $\ell^2$ with the following properties:

{\rm 1)} For each $n\in\mathbb{N}$, $f_n\notin C(\ell^2)$ and $\supp f_n$ is a compact subset of $\ell^2$;

{\rm 2)}   There exists a positive number $C$ such that $\sum\limits_{i=1}^{\infty}a_i^2\left|\frac{\partial f_n(\textbf{x})}{\partial x_i}\right|^2\leqslant C^2$ for any $\textbf{x}\in\ell^2$ and $n\in\mathbb{N}$.
\end{theorem}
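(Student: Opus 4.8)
The plan is to avoid convolutions entirely (which would spread the support over all of $\ell^2$) and instead build the $f_n$ as cut-offs of the weighted quadratic form $\rho(\textbf{x})\triangleq\sum_{i=1}^\infty x_i^2/c_i$, whose sub-level sets are exactly the compact sets $K_n$ of \eqref{230708e1}. First I would fix once and for all a smooth profile $\psi:\mathbb{R}\to[0,1]$ with $\psi\equiv 1$ on $(-\infty,0]$, $\psi\equiv 0$ on $[1,+\infty)$, $\psi$ strictly decreasing on $(0,1)$, and $\sup|\psi'|\le C_0$, and set $\eta_n(r)\triangleq\psi\big((r-n^2)/(2n+1)\big)$, so that $\eta_n\equiv 1$ on $[0,n^2]$, $\eta_n\equiv 0$ on $[(n+1)^2,+\infty)$ and $|\eta_n'|\le C_0/(2n+1)$. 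I then define $f_n(\textbf{x})\triangleq\eta_n(\rho(\textbf{x}))$. Since $f_n(\textbf{x})=0$ whenever $\rho(\textbf{x})>(n+1)^2$, one has $\supp f_n\subset K_{n+1}$, which is compact by Lemma \ref{basic propeties of Knm}; this settles the support claim in 1).

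Next I would check the remaining qualitative claims. Measurability is immediate because $\rho$ is a pointwise increasing limit of the Borel partial sums $\sum_{i\le k}x_i^2/c_i$ and $\eta_n$ is continuous; $F$-continuity follows from Definition \ref{230407def1}, since along any finite set of coordinate directions the argument $\rho(\textbf{x}+s_1\textbf{e}_1+\cdots+s_k\textbf{e}_k)$ depends continuously on $(s_1,\dots,s_k)$. The essential feature of 1) is that $f_n\notin C(\ell^2)$, and here I would exploit that $\rho$ is only lower semicontinuous on $\ell^2$. Choosing $\textbf{x}_0$ with finitely many nonzero coordinates and $\rho(\textbf{x}_0)=r_0\in(n^2,(n+1)^2)$ (say $\textbf{x}_0=\sqrt{r_0 c_1}\,\textbf{e}_1$), and setting $\textbf{x}_m\triangleq\textbf{x}_0+\lambda\sqrt{c_{k_m}}\,\textbf{e}_{k_m}$ with $k_m\to\infty$ and $\lambda^2$ small, one gets $||\textbf{x}_m-\textbf{x}_0||_{\ell^2}=\lambda\sqrt{c_{k_m}}\to 0$ (using $c_i\to0$) while $\rho(\textbf{x}_m)=r_0+\lambda^2$; hence $f_n(\textbf{x}_m)=\eta_n(r_0+\lambda^2)<\eta_n(r_0)=f_n(\textbf{x}_0)$, so $f_n$ is discontinuous at $\textbf{x}_0$ in the $\ell^2$-norm topology.

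The core of the argument is the uniform bound 2). Partial derivatives exist everywhere (where $\rho(\textbf{x})=+\infty$ the function is locally constant $=0$ in each coordinate direction), and the chain rule gives $\partial f_n/\partial x_i=\eta_n'(\rho(\textbf{x}))\,2x_i/c_i$, so that
\[
\sum_{i=1}^\infty a_i^2\Big|\frac{\partial f_n}{\partial x_i}(\textbf{x})\Big|^2 = 4\,|\eta_n'(\rho(\textbf{x}))|^2\sum_{i=1}^\infty \frac{a_i^2}{c_i}\cdot\frac{x_i^2}{c_i}.
\]
This sum vanishes unless $\rho(\textbf{x})\in[n^2,(n+1)^2]$, and there each summand obeys $x_i^2/c_i\le\rho(\textbf{x})\le(n+1)^2$, whence $\sum_i (a_i^2/c_i)(x_i^2/c_i)\le (n+1)^2\sum_i a_i^2/c_i$, the last series being finite by \eqref{20240920for1}. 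Combining this with $|\eta_n'|\le C_0/(2n+1)$ gives the bound $4C_0^2\,\tfrac{(n+1)^2}{(2n+1)^2}\sum_i a_i^2/c_i\le 4C_0^2\sum_i a_i^2/c_i$, independent of $n$ and $\textbf{x}$; taking $C\triangleq 2C_0\big(\sum_i a_i^2/c_i\big)^{1/2}$ then finishes 2).

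The single genuinely delicate point — and the reason the auxiliary weights $c_i$ were tuned in \eqref{20240920for1} — is the cancellation in the final estimate: the factor $(n+1)^2$ coming from the diameter of the transition shell $\{n^2\le\rho\le(n+1)^2\}$ is exactly absorbed by the factor $(2n+1)^{-2}$ from the slope of $\eta_n$, which keeps the bound uniform as $n\to\infty$. Everything else (compactness of $K_n$, $F$-continuity, measurability) is either quoted from Lemma \ref{basic propeties of Knm} or routine, so the real work concentrates on matching the width of the cut-off profiles to the convergence rate of $\sum_i a_i^2/c_i$.
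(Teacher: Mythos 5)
Your proof is correct, and it takes a genuinely different route from the paper's. The paper's own argument is convolution-based: it smooths the indicator of the compact set $K_n$ by setting $g_n(\textbf{x})=P_1(\textbf{x}-K_n)$, invokes Proposition \ref{partial derivative of Ptf} to obtain existence of partial derivatives, $F$-continuity and Borel measurability of $g_n$, composes with a smooth scalar function $H$ that is constant near $0$ and near $1$, and deduces $f_n\notin C(\ell^2)$ from the abstract fact that a continuous function which is not identically zero cannot have compact support in $\ell^2$ (compact sets have empty interior). You instead compose a slope-controlled profile $\eta_n$ directly with the weighted quadratic form $\rho(\textbf{x})=\sum_i x_i^2/c_i$, which makes every step explicit: $\supp f_n\subset K_{n+1}$ by construction; the chain rule gives $\partial f_n/\partial x_i=\eta_n'(\rho)\cdot 2x_i/c_i$ (and $0$ on $\{\rho=\infty\}$, where the finite-dimensional slices of $f_n$ vanish identically, which also settles $F$-continuity); discontinuity is exhibited concretely by perturbing along late coordinates, using $c_i\to 0$, so that the $\ell^2$-distance tends to $0$ while $\rho$ jumps by a fixed $\lambda^2$; and the bound in 2) follows from $x_i^2/c_i\le\rho\le(n+1)^2$ on the transition shell, $|\eta_n'|\le C_0/(2n+1)$, and $\sum_i a_i^2/c_i<\infty$ from \eqref{20240920for1}. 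As for what each approach buys: the paper's route illustrates the Gaussian convolution technique used throughout Section 3 and its discontinuity argument is more general, but your route is more elementary and self-contained (no translation formulas, no Proposition \ref{partial derivative of Ptf}), and it yields an airtight estimate in 2). Indeed, in the paper's proof the per-coordinate bound $|\partial f_n/\partial x_i|\le C/a_i$ only gives $a_i^2|\partial f_n/\partial x_i|^2\le C^2$ for each $i$, which is not summable, so the displayed inequality $\sum_i a_i^2|\partial f_n/\partial x_i|^2\le\sum_i a_i\cdot C^2<C^2$ does not follow as written; your cancellation of the shell width $(n+1)^2$ against the slope factor $(2n+1)^{-2}$ avoids this difficulty entirely. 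A further bonus: your $f_n$ satisfy $f_n\equiv 1$ and $\partial f_n/\partial x_i\equiv 0$ on $K_n$ with compact support in $K_{n+1}$, which is precisely the structure \eqref{230409eq1} that the later applications (Theorem \ref{20241013thm1}, Proposition \ref{230924prop1}) actually use, so your construction could be substituted for the paper's without disturbing the rest of the development.
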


\begin{proof}
Choosing a sequence of positives $\{c_i\}_{i=1}^{\infty}$ as in \eqref{230708e1} such that $\lim\limits_{i\to \infty}c_i=0$ and $\sum\limits_{k=1}^{\infty}\frac{a_k^2}{c_k}<\infty$. Let
\begin{eqnarray*}
g_n(\textbf{x})\triangleq \int_{\ell^2}\chi_{K_n}(\textbf{x}-\textbf{y})\,\mathrm{d}P_1(\textbf{y})=P_1(\textbf{x}-K_n),\quad \forall\,n\in\mathbb{N},\quad\textbf{x}\in\ell^2.
\end{eqnarray*}
(1). By Lemma \ref{basic propeties of Knm}, we have $\lim\limits_{n\to\infty}P_1(K_n)=1$. Choosing $N_1\in\mathbb{N}$ such that $P_1(K_{N_1})>\frac{4}{5}$. For any $n>N_1$. If $\textbf{x}\in K_{n-N_1}$, then we have $K_{N_1}\subset K_n-\textbf{x}$ and hence
\begin{eqnarray*}
g_n(\textbf{x})=P_1(\textbf{x}-K_n)=P_1(K_n-\textbf{x})\geqslant P_1(K_{N_1})>\frac{4}{5}.
\end{eqnarray*}
Similarly, if $\textbf{x}\notin K_{n+N_1}$, then we have $K_n-\textbf{x}\subset \ell^2\setminus K_{N_1}$ and hence
\begin{eqnarray*}
g_n(\textbf{x})=P_1(\textbf{x}-K_n)=P_1(K_n-\textbf{x})\leqslant 1-P_1(K_{N_1})<\frac{1}{5}.
\end{eqnarray*}
Let
$$h(t)\triangleq\left\{\begin{array}{ll}
 e^{\frac{1}{(t-\frac{1}{16})(t-\frac{9}{16})}},\quad&\frac{1}{16}<t<\frac{9}{16},\\
0,&t\geqslant\frac{9}{16}\text{ or }t\leqslant\frac{1}{16}.\end{array}\right.$$
Set $H(x)\triangleq \frac{\int_{-\infty}^{x^2}h(t)\,\mathrm{d}t}{\int_{-\infty}^{+\infty}h(t)\,\mathrm{d}t},\,\forall\,x\in\mathbb{R}$. Then $H\in C^{\infty}(\mathbb{R};[0,1])$, $H(x)=0$ for $|x|<\frac{1}{4}$ and $H(x)=1$ for $|x|>\frac{3}{4}$. Let $f_n\triangleq H\circ g_{n+N_1}$ for each $n\in\mathbb{N}$. By Proposition \ref{partial derivative of Ptf}, $\{f_n\}_{n=1}^{\infty}$ is a sequence of real-valued F continuous and Borel measurable functions. Then for any $n,i\in\mathbb{N}$, we have
\begin{equation}\label{230409eq1}
f_n(\textbf{x})=1,\quad\frac{\partial f_n(\textbf{x})}{\partial x_i}=0,\quad\textbf{x}\in K_{n},\quad\supp f_n\subset K_{n+2N_1},
\end{equation}
and hence $\supp f_n$ is a compact subset of $\ell^2$.

Suppose that $f_n\in C_{\ell^2}(\ell^2)$, then $f_n^{-1}(\mathbb{R}\setminus \{0\})$ is an open subset of $\ell^2$ and $f_n^{-1}(\mathbb{R}\setminus \{0\})\subset$ supp$f_n\subset K_{n+2 N_1}$. Since every non-empty open subset of $\ell^2$ contains countable pairwise disjoint non-empty open balls with the same radius, the interior of any compact subset of $\ell^2$ is empty, we have $f_n^{-1}(\mathbb{R}\setminus \{0\})=\emptyset$ which is a contradiction. Therefore, $f_n\notin C_{\ell^2}(\ell^2)$.

(2). Let $C\triangleq \sup\limits_{x\in\mathbb{R}}|H'(x)|<\infty$. For any $i,n\in\mathbb{N}$, by (2) of Proposition \ref{partial derivative of Ptf}, we have
\begin{eqnarray*}
\left|\frac{\partial f_n(\textbf{x})}{\partial x_i}\right|=|H'(g_{n+N_1}(\textbf{x}))|\cdot\left|\frac{\partial g_{n+N_1}(\textbf{x})}{\partial x_i}\right|\leqslant \frac{C}{a_i},\quad \forall\,\textbf{x}\in\ell^2.
\end{eqnarray*}
Therefore, we have
\begin{eqnarray*}
\sum\limits_{i=1}^{\infty}a_i^2\cdot\left|\frac{\partial f_n(\textbf{x})}{\partial x_i}\right|^2\leqslant \sum\limits_{i=1}^{\infty}a_i\cdot C^2<C^2,\quad \forall\,\textbf{x}\in\ell^2,\,n\in\mathbb{N}.
\end{eqnarray*}
This completes the proof of Theorem \ref{230215Th1}.
\end{proof}

To end this subsection, for the surface $S$, the measure $\mu_S$ on $S$ and the measure $\mu_{\partial S}$ on $\partial S$ constructed in the last subsection, we have the following result.

\begin{theorem}\label{20241013thm1}
Suppose that $\{f_n\}_{n=1}^{\infty}$ is the sequence of functions on $\ell^2$ defined at \eqref{230409eq1}. Then it holds that
\begin{itemize}
\item[$(1)$]$\lim\limits_{n\to\infty}f_n=1$ and $\lim\limits_{n\to\infty}\sum\limits_{i=1}^{\infty}a_i^2\left|\frac{\partial f_n}{\partial x_i}\right|^2=0$ almost everywhere on $S$ with respect to $\mu_S$;
\item[$(2)$]$\lim\limits_{n\to\infty}f_n=1$ and $\lim\limits_{n\to\infty}\sum\limits_{i=1}^{\infty}a_i^2\left|\frac{\partial f_n}{\partial x_i}\right|^2=0$ almost everywhere on $\partial S$ with respect to $\mu_{\partial S}$.
\end{itemize}
\end{theorem}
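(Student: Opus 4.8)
The plan is to reduce both assertions to a single measure-theoretic fact: that the set
$$
K=\bigcup_{n=1}^{\infty}K_n=\Big\{\textbf{x}=(x_i)_{i\in\mathbb{N}}\in\ell^2:\;\sum_{i=1}^{\infty}\frac{x_i^2}{c_i}<\infty\Big\}
$$
carries full measure, both for $\mu_S$ on $S$ and for $\mu_{\partial S}$ on $\partial S$. The reduction itself is immediate from \eqref{230409eq1}: if $\textbf{x}\in K$, then $\textbf{x}\in K_m$ for some $m$, and since $K_m\subset K_n$ whenever $m\le n$, we get $\textbf{x}\in K_n$ for all $n\ge m$; hence $f_n(\textbf{x})=1$ and $\frac{\partial f_n}{\partial x_i}(\textbf{x})=0$ for every $i$ and every $n\ge m$. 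Thus at each point of $K$ the two sequences are \emph{eventually constant}, so $\lim_{n\to\infty}f_n=1$ and $\lim_{n\to\infty}\sum_{i=1}^{\infty}a_i^2\big|\frac{\partial f_n}{\partial x_i}\big|^2=0$ hold pointwise on $K$. Once $\mu_S(S\setminus K)=0$ and $\mu_{\partial S}(\partial S\setminus K)=0$ are established, both (1) and (2) follow at once.

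To prove $\mu_S(S\setminus K)=0$ I would argue inside a fixed locally finite coordinate pairing $(S\cap U,P_I)$, where $S\cap U=\{\textbf{x}_I+f(\textbf{x}_I):\textbf{x}_I\in U_I\}$ and
$$
\mu(E,S\cap U)=\int_{P_IE}n_I(\textbf{x}_I+f(\textbf{x}_I))\cdot F_{\mathbb{N}\setminus I}(f(\textbf{x}_I))\,\mathrm{d}\mu_I(\textbf{x}_I),
$$
so that $\mu_S$ restricted to $S\cap U$ is absolutely continuous with respect to the push-forward of $\mu_I$ under $P_I^{-1}$; in particular every $\mu_I$-null subset of $U_I$ pulls back to a $\mu_S$-null subset of $S\cap U$. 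For a point $\textbf{x}=\textbf{x}_I+f(\textbf{x}_I)$ the membership $\textbf{x}\in K$ splits into the two finiteness conditions $\sum_{i\in I}\frac{x_i^2}{c_i}<\infty$ and $\sum_{i\in\mathbb{N}\setminus I}\frac{f_i(\textbf{x}_I)^2}{c_i}<\infty$. The first is handled by the Gaussian structure of $\mu_I$: since $\int_{P_I}x_i^2\,\mathrm{d}\mu_I=a_i^2$, Tonelli's theorem gives $\int_{P_I}\sum_{i\in I}\frac{x_i^2}{c_i}\,\mathrm{d}\mu_I=\sum_{i\in I}\frac{a_i^2}{c_i}\le\sum_{i\in\mathbb{N}}\frac{a_i^2}{c_i}<\infty$ by \eqref{20240920for1}, so $\sum_{i\in I}\frac{x_i^2}{c_i}<\infty$ for $\mu_I$-a.e. $\textbf{x}_I$, hence $\mu_S$-a.e. point of $S\cap U$.

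The second finiteness is where the real content lies, and it is \emph{not} a Gaussian a.e. statement but a deterministic consequence of the positivity of the surface density. Condition \eqref{20241013cond1} forces $F_{\mathbb{N}\setminus I}(f(\textbf{x}_I))<\infty$ on $U_I$, so the set where this density vanishes carries no $\mu_S$-mass and is $\mu_S$-null; on its complement the density is strictly positive, which means $f(\textbf{x}_I)\in C_{\mathbb{N}\setminus I}$, i.e. $\sum_{i\in\mathbb{N}\setminus I}\big|\ln\frac{1}{\sqrt{2\pi}a_i}-\frac{f_i(\textbf{x}_I)^2}{2a_i^2}\big|<\infty$. Applying the computation \eqref{20241013for2} with $\mathbb{N}\setminus I$ in place of $I$ and $f_i(\textbf{x}_I)$ in place of $x_i$—and using \eqref{20240920for1} together with the boundedness of $a_i^2/c_i$ to control the term $\sum 2\frac{a_i^2}{c_i}|d_i|$—yields exactly $\sum_{i\in\mathbb{N}\setminus I}\frac{f_i(\textbf{x}_I)^2}{c_i}<\infty$. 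Combining the two parts, $\mu_S$-a.e. point of $S\cap U$ lies in $K$. Since $\mu_S$ is $\sigma$-finite and, by the proof of Proposition \ref{20241013prop1}, $S$ is covered by countably many such coordinate neighborhoods, summing over them gives $\mu_S(S\setminus K)=0$. Part (2) then follows verbatim, because by Definition \ref{20241009for1} the boundary $\partial S$ is itself a measurable surface of codimension $\Gamma_{J\cup\{i_1\}}$, parametrized locally over the index set $I_1'=I_1\setminus\{i_1\}$ by a graph map whose surface measure is built by the identical recipe and again contains a factor $F_{\mathbb{N}\setminus I_1'}$; the same Gaussian-plus-\eqref{20241013for2} argument gives $\mu_{\partial S}(\partial S\setminus K)=0$. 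The main obstacle to watch is precisely this second finiteness: one must recognize that $\{F_{\mathbb{N}\setminus I}(f)=0\}$ is $\mu_S$-null and invoke \eqref{20241013for2} with the correct summability estimates, rather than trying to prove $\sum_{i\in\mathbb{N}\setminus I}f_i(\textbf{x}_I)^2/c_i<\infty$ by an averaging argument, which is unavailable since $f$ is an arbitrary coordinate map.
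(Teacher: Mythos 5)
Your proposal is correct and follows essentially the same route as the paper's proof: reduce to showing that $K=\bigcup_n K_n$ has full measure, prove the $I$-coordinate finiteness $\sum_{i\in I}x_i^2/c_i<\infty$ $\mu_S$-a.e.\ via the Gaussian second moments and \eqref{20240920for1}, obtain the $\mathbb{N}\setminus I$-part from the finiteness/positivity of $F_{\mathbb{N}\setminus I}$ together with \eqref{20241013for2}, and conclude by a countable cover of coordinate pairings. Your only departures are cosmetic — you invoke absolute continuity of $\mu_S$ with respect to the pushforward of $\mu_I$ where the paper integrates against $\mu_S$ using the bound $C_1$, and you spell out explicitly (via the null set $\{F_{\mathbb{N}\setminus I}(f)=0\}$) the step the paper compresses into ``combining assumption \eqref{20241013cond1} and the arguments around \eqref{20240920for1} and \eqref{20241013for2}.''
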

\begin{proof}
Choosing a sequence of positives $\{c_i\}_{i=1}^{\infty}$ as in \eqref{230708e1} such that $\lim\limits_{i\to \infty}c_i=0$ and $\sum\limits_{k=1}^{\infty}\frac{a_k^2}{c_k}<\infty$. Let
\begin{eqnarray*}
g_n(\textbf{x})\triangleq \int_{\ell^2}\chi_{K_n}(\textbf{x}-\textbf{y})\,\mathrm{d}P_1(\textbf{y})=P_1(\textbf{x}-K_n),\quad \forall\,n\in\mathbb{N},\quad\textbf{x}\in\ell^2.
\end{eqnarray*}
As the assumption \eqref{20241013cond1} in the Definition \ref{20241012def2}, let
\begin{eqnarray*}
C_1\triangleq \sup_{\textbf{x}_I\in U_{I}} n_{I}(\textbf{x}_I+f(\textbf{x}_I))\cdot F_{\mathbb{N}\setminus I}(f(\textbf{x}_I))<\infty.
\end{eqnarray*}
Note that
\begin{equation}
\begin{array}{ll}\ds
\int_{S\cap U}\left(\sum_{i\in I}\frac{x_i^2}{c_i}\right)\,\mathrm{d}\mu_S(\textbf{x},S\cap U)\\[2mm]
\ds= \int_{P_{I}(S\cap U)}  \left(\sum_{i\in I}\frac{x_i^2}{c_i}\right)\cdot n_{I}(\textbf{x}_I+f(\textbf{x}_I))\cdot F_{\mathbb{N}\setminus I}(f(\textbf{x}_I))\,\mathrm{d}\mu_I(\textbf{x}_I)\\[2mm]
\ds\leqslant C_1\cdot \int_{P_{I}(S\cap U)} \left(\sum_{i\in I}\frac{x_i^2}{c_i}\right)\,\mathrm{d}\mu_I(\textbf{x}_I)
\leqslant C_1\cdot \int_{P_{I}} \left(\sum_{i\in I}\frac{x_i^2}{c_i}\right)\,\mathrm{d}\mu_I(\textbf{x}_I)\label{230413eq2}\\[2mm]\ds
=C_1\cdot  \left(\sum_{i\in I}\frac{a_i^2}{c_i}\right)
<C_1\cdot   \left(\sum_{i=1}^{\infty}\frac{a_i^2}{c_i}\right)<\infty.
\end{array}
\end{equation}
From \eqref{230413eq2}, we see that the series $\sum\limits_{i\in I}\frac{x_i^2}{c_i}<\infty$ almost everywhere on $S\cap U$ respect to $\mu_{S}(\cdot,S\cap U)$, i.e., there exists a Borel subset $E$ of $S\cap U$ such that $\mu_{S}((S\cap U)\setminus E,S\cap U)=0$ and $\sum\limits_{i\in I}\frac{x_i^2}{c_i}<\infty$ for each $\textbf{x}=(x_i)_{i\in\mathbb{N}}\in E$. Combing assumption \eqref{20241013cond1} and the arguments around \eqref{20240920for1} and \eqref{20241013for2}, we have $\sum\limits_{i=1}^{\infty}\frac{x_i^2}{c_i}<\infty$ for each $\textbf{x}=(x_i)_{i\in\mathbb{N}}\in E$, which implies that $E\subset K$ which is defined at \eqref{230708e1}. Suppose that $\{S\cap U_n, P_{I_n}\}_{n=1}^{\infty}$ is a family of locally coordinate pairings that covers $S$. For each $n\in\mathbb{N}$, choosing a Borel subset $E_n$ of $S\cap U_n$ such that $\mu_{S}((S\cap U_n)\setminus E_n,S\cap U_n)=0$ and $E_n\subset K$. Set $E\triangleq \bigcup\limits_{n=1}^{\infty}E_n$. Then it holds that $E$ is a Borel subset of $S$,
\begin{eqnarray*}
\mu_{S}(S\setminus E)&=&\mu_{S}\left(\bigcup\limits_{n=1}^{\infty}((S\cap U_n)\setminus E)\right)
\leqslant \sum_{n=1}^{\infty}\mu_{S}((S\cap U_n)\setminus E)\\
&\leqslant  &\sum_{n=1}^{\infty}\mu_{S}((S\cap U_n)\setminus E_n)= \sum_{n=1}^{\infty}\mu_{S}((S\cap U_n)\setminus E_n,S\cap U_n)=0,
\end{eqnarray*}
and for each $\textbf{x}\in E\subset K$, by \eqref{230409eq1}, we have  $\lim\limits_{n\to\infty}f_n(\textbf{x})=1$ and $\lim\limits_{n\to\infty}\sum\limits_{i=1}^{\infty}a_i^2\left|\frac{\partial f_n(\textbf{x})}{\partial x_i}\right|^2=0$. Hence,
$\lim\limits_{n\to\infty}f_n=1$ and $\lim\limits_{n\to\infty}\sum\limits_{i=1}^{\infty}a_i^2\left|\frac{\partial f_n}{\partial x_i}\right|^2=0$ almost everywhere on $S$ with respect to $\mu_S$. This completes the proof of conclusion (1). The proof of conclusion  (2) is similar and hence we omit the details. The proof of Theorem \ref{20241013thm1} is completed.
\end{proof}

\newpage

\section{Infinite Dimensional Stokes
Type Theorems}
\label{20240126chapter1}
In calculus, the integration by parts formula is of fundamental importance. Many important results in finite-dimensional spaces are based on this formula and its generalizations including the Gauss-Green formula, the divergence theorem and the Stokes formula. We will establish its infinite dimensional counterpart in this section which will be frequently used in the rest sections of this work. This section is mainly based on \cite{WYZ}.

\subsection{Local Version of Gauss-Green Type Theorem}

Throughout this section, we fix a non-empty subset $I$ of $\mathbb{N}$ and a surface $S$ of $\ell^2$ with codimension $\Gamma_{\mathbb{N}\setminus I}$. We shall borrow some idea in \cite{Goo}. Nevertheless, we consider $C^1$ surfaces instead of $H$-$C^1$ surfaces in \cite{Goo} and we use the measure $P_r$ (defined by (\ref{220817e1})) instead of the abstract Wiener space there.

For $\textbf{x}\in S$, if $(S\cap U_1, P_{I_1})$ and $(S\cap U_2, P_{I_2})$ are two coordinate pairings of $S$ such that $\textbf{x}\in S\cap U_1\cap U_2$. By Corollary \ref{20241015cor1}, we see that
$$
\text{det}\left(D(P_{I_1}P_{I_2}^{-1})(\textbf{x}_{I_2})\right)\cdot \text{det}\left(D(P_{I_2}P_{I_1}^{-1})(\textbf{x}_{I_1})\right)=1,
$$
where $\textbf{x}_{I_k}=P_{I_k}\textbf{x}$ for $k=1,2$. Therefore, both $\text{det}\left(D(P_{I_1}P_{I_2}^{-1})(\textbf{x}_{I_2})\right)$ and $\text{det}\left(D(P_{I_2}P_{I_1}^{-1})(\textbf{x}_{I_1})\right)$ are positive numbers or negative numbers. Motivated by this fact, we have the following definition.
\begin{definition}\label{20241015def1}
We call the surface $S$ is orientable, if there exists an index set $\Lambda$ and a family of coordinate pairings $\{(S\cap U_i,P_{I_i}):i\in \Lambda\}$ of $S$ such that $S\subset\bigcup\limits_{i\in \Lambda}U_i$ and for any $i_1,i_2\in \Lambda$ with $S\cap U_1\cap U_2\neq \emptyset$, it holds that
\begin{eqnarray}\label{20241015for3}
\text{det}\left(D(P_{I_{i_1}}P_{I_{i_2}}^{-1})(\textbf{x}_{I_{i_2}})\right)>0,\quad\forall\, \textbf{x}_{I_{i_2}}\in P_{I_{i_2}}(S\cap U_1\cap U_2).
\end{eqnarray}
\end{definition}

We will use the notation $S_{\mathbb{N}}$ to stand for the family of all ordered non-empty subsets of $\mathbb{N}$, i.e., each element of $S_{\mathbb{N}}$ can be written as
$I=(i_1,\cdots,i_n)$ for some $n\in \mathbb{N}$ and distinct positive integers $i_1,\cdots,i_n$, or $I=(i_k)_{k=1}^{\infty}$ for some sequence $\{i_k\}_{k=1}^{\infty}$ of distinct positive integers.
Clearly, for each $I\in S_{\mathbb{N}}$, there exists a unique $\overline{I}\in S_{\mathbb{N}}$ such that if $I=(i_1,\cdots,i_n)$ for some $n\in\mathbb{N}$, then $\overline{I}=(i_1',\cdots,i_n')$, $\{i_1,\cdots,i_n\}=\{i_1',\cdots,i_n'\}$ and $i_1'<\cdots<i_n'$, if $I=(i_k)_{k=1}^{\infty}$, then $\overline{I}=(i_k')_{k=1}^{\infty}$, $\{i_k:k\in\mathbb{N}\}=\{i_k':k\in\mathbb{N}\}$ and $i_1'<\cdots<i_k'<\cdots$.

For each $I=(i_k)_{k\in \Delta}\in S_{\mathbb{N}}$ where $\Delta=\{1,2,\cdots,n\}$ for some $n\in\mathbb{N}$ or $\Delta=\mathbb{N}$, if $\sigma$ is a bijection from $\{i_k:k\in \Delta\}$ onto itself, then we call $\sigma$ a permutation on $I$ which is the same name as its finite dimensional counterpart in \cite[p. 8]{Lan1}. Write $\sigma(I)\triangleq(\sigma(i_k))_{k\in \Delta}$. If $\sigma$ is a permutation and there exists two distinct $i_{k_1},i_{k_2}\in\Delta$ such that $\sigma(i_{k_1})=i_{k_2},\,\sigma(i_{k_2})=i_{k_1}$ and $\sigma(i_k)=i_k$ for $i_k\in \Delta\setminus\{i_{k_1},i_{k_2}\}$, then we call $\sigma$ a transposition which is the same name as its finite dimensional counterpart in \cite[p. 13]{Lan1}. We will use the notation $S_{\mathbb{N}}^F$ to denote the collection of all $I\in S_{\mathbb{N}}$ such that there exists finite transpositions $\sigma_1,\cdots,\sigma_{n}$ such that $(\sigma_1\circ\cdots\circ\sigma_n)(I)=\overline{I}$. For $I\in S_{\mathbb{N}}^F$, if there are even transpositions $\sigma_1,\cdots,\sigma_{2n}$ such that $(\sigma_1\circ\cdots\circ\sigma_{2n})(I)=\overline{I}$, then let $s(I)\triangleq 1$, otherwise, let $s(I)\triangleq -1$ and we call it the sign of $I$.

\begin{remark}
The subsets of $\mathbb{N}$ we used before can be viewed as ordered non-empty subset of $\mathbb{N}$ with strictly increasing order, we will not distinguish them if there is no confusion arise.
\end{remark}
Now, we can easily construct some non-trivial coordinate independent functions on orientable surfaces.
\begin{exa}
We adapt all the notations and assumptions as in the Definition \ref{20241015def1}. For each $I_0\in S_{\mathbb{N}}^F$ such that $\overline{I_0}\in \Gamma_{I}$ and $\textbf{x}\in S$, there exists a coordinate pairing $(S\cap U_1,P_{I_1})$ such that $\textbf{x}\in S\cap U_1$, define
\begin{eqnarray*}
\textbf{n}_{I_0}^S(\textbf{x})\triangleq \frac{s(I_0)\cdot\text{det}\left(D(P_{\overline{I_{0}}}P_{I_{1}}^{-1})(P_{I_{1}}\textbf{x})\right)}{n_{I_1}(\textbf{x})}.
\end{eqnarray*}
To see it is well-defined, we only need to see that if $(S\cap U_2,P_{I_2})$is another coordinate pairing such that $\textbf{x}\in S\cap U_2$. Combing Corollary \ref{20241015cor1} and Lemma \ref{20241012lem3}, we have
\begin{eqnarray*}
\frac{\text{det}\left(D(P_{\overline{I_{0}}}P_{I_{1}}^{-1})(P_{I_{1}}\textbf{x})\right)}{n_{I_1}(\textbf{x})}
&=&\frac{\text{det}\left(D(P_{\overline{I_{0}}}P_{I_{2}}^{-1})(P_{I_{2}}\textbf{x})\right)\cdot \text{det}\left(D(P_{I_{2}}P_{I_{1}}^{-1})(P_{I_{1}}\textbf{x})\right)}{n_{I_2}(\textbf{x})\cdot |\text{det}\left(D(P_{I_{2}}P_{I_{1}}^{-1})(P_{I_{1}}\textbf{x})\right)|}\\
&=&\frac{\text{det}\left(D(P_{\overline{I_{0}}}P_{I_{2}}^{-1})(P_{I_{2}}\textbf{x})\right)  }{n_{I_2}(\textbf{x}) },
\end{eqnarray*}
where the second equality follows from \eqref{20241015for3}. By Remark \ref{20241016rem1}, one may see that $0\leq |\textbf{n}_{I_0}^S(\textbf{x})|\leq 1$.
\end{exa}
\begin{definition}\label{20241009for1wb}
Suppose that $\overline{S}=S\sqcup (\partial S)$ is a non-empty closed subset of $\ell^2$, $I$ is a non-empty subset of $\mathbb{N}$ and $J=\mathbb{N}\setminus I$. We call $S$ a surface of $\ell^2$ with co-dimension $\Gamma_J$ and boundary $\partial S$, if $S$ is a surface of $\ell^2$ with co-dimension $\Gamma_J$ and
\begin{itemize}
\item[$(1)$] For each $\textbf{x}^0\in \partial S$, there exists $K=\{k_{1},k_2,\cdots\}\in S_{\mathbb{N}}^F$, $k_{i_0}\in \overline{K}$, an open neighborhood $U$ of $\textbf{x}^0$ in $\ell^2$, an open neighborhood $U_{K }$ of $P_{K }\textbf{x}^0$ in $P_{K }$, an open neighborhood $U_{K\setminus\{k_{i_0}\}}$ of $P_{K\setminus\{k_{i_0}\}}\textbf{x}^0$ in $P_{K\setminus\{k_{i_0}\}}$ and two mappings $f'\in C^1_{P_{K\setminus\{k_{i_0}\}}}(U_{K\setminus\{k_{i_0}\}};P_{\mathbb{N}\setminus \overline{K}})$ and $f''\in C^1_{P_{K\setminus\{k_{i_0}\}}}(U_{K\setminus\{k_{i_0}\}};\mathbb{R})$,
$\overline{K}\in \Gamma_I$, $\textbf{x}^0=\textbf{x}^0_{K\setminus\{k_{i_0}\}}+f''(\textbf{x}^0_{K\setminus\{k_{i_0}\}})\textbf{e}_{k_{i_0}}+f'(\textbf{x}^0_{K\setminus\{k_{i_0}\}})$,
\begin{equation}\label{20241015for1zx}
\begin{array}{ll}
\displaystyle (	\partial S)\cap U=\big\{\textbf{x}_{K\setminus\{k_{i_0}\}}+f''(\textbf{x}_{K\setminus\{k_{i_0}\}})\textbf{e}_{k_{i_0}}+f'(\textbf{x}_{K\setminus\{k_{i_0}\}}):\\[2mm]
\displaystyle\qq\qq\qq\qq\qq\qq\qq\qq\qq
\textbf{x}_{K\setminus\{k_{i_0}\}}\in U_{K\setminus\{k_{i_0}\}}\big\},
\end{array}
\end{equation}
and
\begin{eqnarray*}
S\cap U=\big\{\textbf{x}_{K\setminus\{k_{i_0}\}}+x_{k_{i_0}}\textbf{e}_{k_{i_0}}+f'(\textbf{x}_{K\setminus\{k_{i_0}\}}):
\textbf{x}_{K\setminus\{k_{i_0}\}}+x_{k_{i_0}}\textbf{e}_{k_{i_0}}\in U_{K},\\
g(\textbf{x}_{K\setminus\{k_{i_0}\}}+x_{k_{i_0}}\textbf{e}_{k_{i_0}})<0\big\},
\end{eqnarray*}
where for any $\textbf{x}_{K\setminus\{k_{i_0}\}}+x_{k_{i_0}}\textbf{e}_{k_{i_0}}\in
U_{K }$,
\begin{eqnarray*}
g(\textbf{x}_{K\setminus\{k_{i_0}\}}+x_{k_{i_0}}\textbf{e}_{k_{i_0}})\triangleq (-1)^{i_0-1}(x_{k_{i_0}}-f''(\textbf{x}_{K\setminus\{k_{i_0}\}}));
\end{eqnarray*}
\item[$(2)$] For each $\textbf{x}^0\in  S$, there exists $I_2\in \Gamma_I$, an open neighborhood $U_1$ of $\textbf{x}^0$ in $\ell^2$, an open neighborhood $U_{I_2}$ of $\textbf{x}_{I_2}^0=P_{I_2}\textbf{x}^0$ in $P_{I_2}$ and $f\in C^1_{P_{I_2}}(U_{I_2};P_{\mathbb{N}\setminus I_2})$ such that $\textbf{x}^0=\textbf{x}_{I_2}^0+f(\textbf{x}_{I_2}^0)$ and
\begin{eqnarray}\nonumber
\overline{S}\cap U_1=S\cap U_1=\left\{\textbf{x}_{I_2}+f(\textbf{x}_{I_2}):
\textbf{x}_{I_2}\in U_{I_2}\right\}.
\end{eqnarray}
\end{itemize}
\end{definition}

\begin{definition}\label{20241013def2}
We adapt the same notations and assumptions as in the Definition \ref{20241009for1wb}. If $S$ a locally finite surface of $\ell^2$ with codimension $\Gamma_J$ whose boundary $\partial S$ is a locally finite surface of $\ell^2$ with codimension $\Gamma_{(\mathbb{N}\setminus\overline{K})\cup\{k_{i_0}\}}$. Then by the arguments from Definition \ref{20241113def1} to Proposition \ref{20241013prop1}, we can define two $\sigma$-finite Borel measures $\mu_S$ and $\mu_{\partial S}$ on $S$ and $\partial S$ respectively.
\end{definition}

In order to obtain a global version of Gauss-Green type theorem, we need a version of local Gauss-Green type theorem with coordinate pairing independent integral functions.

\begin{proposition}\label{20241102prop1}
(\textbf{Local Version of Gauss-Green Type Theorem}).We adapt the same assumptions and notations as in Definitions \ref{20241009for1wb} and \ref{20241013def2}, let $f$ be a real-valued function defined on $U$ and $I_0\in S_{\mathbb{N}}^F$ such that $\overline{I_0}\in \Gamma_{\overline{K}\setminus\{k_{i_0}\}}$. If $f$ is Borel measurable on $U$, $f\equiv 0$ on $ U\setminus B_0$ where $B_0$ is a non-empty closed ball contained in $U$, $f\in C_F^1(U)$, $f$ is Fr\'{e}chet differentiable respect to the $H_{ \mathbb{N}}^2$-direction (Recall the definitions at \eqref{20241020for1} and Definition \ref{20241115def1}),
$$
 \sum\limits_{i=1}^{\infty} \left( |D_{x_i}f  |+\left|\frac{x_i}{a_i^2}\cdot f \right| \right) \cdot |\textbf{n}_{\left( i,I_0 \right)}^{S} |
$$
is integrable on $S\cap U$ with respect to the measure $\mu_S$, and $f \cdot \textbf{n}_{I_0}^{\partial S}$ is integrable on $\partial S$ with respect to the surface measure $\mu_{\partial S}$,
 then
$$
\begin{aligned}
-\sum\limits_{i=1}^{\infty} \int_{U\cap S}  \delta_{i} f\left( \textbf{x} \right) \cdot \textbf{n}_{\left( i,I_0 \right)}^{S}\left( \textbf{x} \right) \mathrm{d} \mu_{S} \left( \textbf{x} \right)=\int_{ (\partial S)\cap U  }  f \cdot    \textbf{n}_{I_0}^{\partial S}
 \mathrm{d} \mu_{\partial S}.
\end{aligned}
$$
If there exists $0<r<R<\infty$ and a real-valued Borel measurable function $g$ on $U_1$ such that $B_R(\textbf{x}^0)\subset U_1$, supp$g\subset B_r(\textbf{x}^0)$, $g$ is Fr\'{e}chet differentiable respect to the $H_{ \mathbb{N}}^2$ direction, and
$$
 \sum\limits_{i=1}^{\infty} \left( |D_{x_i}g  |+\left|\frac{x_i}{a_i^2}\cdot g \right| \right) \cdot |\textbf{n}_{\left( i,I_0 \right)}^{S} |
$$
is integrable on $S\cap U_1$ with respect to the measure $\mu_S$, then
$$
\begin{aligned}
-\sum\limits_{i=1}^{\infty} \int_{U_1\cap S}  \delta_{i} g\left( \textbf{x} \right) \cdot \textbf{n}_{\left( i,I_0 \right)}^{S}\left( \textbf{x} \right) \mathrm{d} \mu_{S} \left( \textbf{x} \right)=0.
\end{aligned}
$$
\end{proposition}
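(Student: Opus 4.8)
The plan is to read the identity as a Gaussian-weighted Stokes formula $\int_{\partial S}\omega=\int_{S}d\omega$ for the form $\omega=f\,dx_{I_0}$, and to prove it by pulling everything back to a single coordinate chart and applying a one-dimensional Gaussian integration-by-parts, direction by direction. First I would fix the local coordinate pairing of Definition \ref{20241009for1wb}: write the base coordinate $w=\textbf{x}_{K\setminus\{k_{i_0}\}}\in P_{K\setminus\{k_{i_0}\}}$ and the transverse coordinate $t=x_{k_{i_0}}$, so that $S\cap U$ is the graph of $f'$ over the region $\{(w,t):g(w,t)<0\}$ and $(\partial S)\cap U$ is its face $\{t=f''(w)\}$. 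Using the surface-measure formula of Definition \ref{20241012def2}, $\mathrm{d}\mu_{S}=n_{\overline{K}}(\textbf{x})\,F_{\mathbb{N}\setminus\overline{K}}(f'(w))\,\mathrm{d}\mu_{\overline{K}}(\textbf{x}_{\overline{K}})$, and the definition of $\textbf{n}^{S}_{(i,I_0)}$ cancels the normalizing factor $n_{\overline{K}}(\textbf{x})$, leaving each bulk term as an integral over $P_{\overline{K}}(S\cap U)$ of $\delta_i f\cdot s((i,I_0))\,\mathrm{det}\big(D(P_{\overline{(i,I_0)}}P_{\overline{K}}^{-1})\big)\cdot F_{\mathbb{N}\setminus\overline{K}}(f'(w))$ against the product Gaussian measure $\mu_{\overline{K}}$; the boundary term reduces the same way through $\mu_{\partial S}$.

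The engine of the computation is the pointwise identity $\big(\delta_i h\big)(\textbf{x})\,e^{-x_i^2/2a_i^2}=-D_{x_i}\big(h(\textbf{x})\,e^{-x_i^2/2a_i^2}\big)$, which expresses that $\delta_i$ couples $D_{x_i}$ with multiplication by $x_i/a_i^2$ (the $L^2(\ell^2,P)$-adjoint $-D_{x_i}+\tfrac{x_i}{a_i^2}\mathrm{id}$ of $D_{x_i}$) so as to become $-D_{x_i}$ acting on the Gaussian-weighted function. I would use it so that the full weighted integrand, after absorbing every Gaussian factor (including those hidden in $F_{\mathbb{N}\setminus\overline{K}}\circ f'$, whose differentiability in the relevant directions is guaranteed by Proposition \ref{20241020prop1}), becomes a genuine Lebesgue divergence $-\sum_i D_{x_i}(V_i)$ of a vector field $V=(V_i)$ on the base. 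The determinant and cofactor bookkeeping, governed by the multiplicativity of $\mathrm{det}$ under composition (Proposition \ref{20241011prop1}) and the reparametrization Lemma \ref{20241012lem3}, is what guarantees that $V$ is assembled antisymmetrically from $f\,dx_{I_0}$, so that the tangential contributions form an exact divergence. Integrating this divergence over $\{g<0\}$ and invoking the one-dimensional Newton--Leibniz formula in $t$ produces exactly the surface integral of $f\cdot\textbf{n}^{\partial S}_{I_0}$ over $\{t=f''(w)\}$, while the compact support of $f$ (it vanishes off $B_0\subset U$) kills every boundary term coming from the tangential directions.

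The hard part will be making this termwise integration by parts rigorous in infinitely many variables at once. Three points need care: (i) interchanging $\sum_i$ with the integral, which I would justify by dominated convergence using precisely the hypothesis that $\sum_i\big(|D_{x_i}f|+|\tfrac{x_i}{a_i^2}f|\big)\,|\textbf{n}^{S}_{(i,I_0)}|$ is $\mu_{S}$-integrable; (ii) the fact that the weight $F_{\mathbb{N}\setminus\overline{K}}(f'(w))$ depends on all base coordinates through $f'$, so integrating by parts in any tangential direction necessarily differentiates this weight — it is exactly to absorb these cross terms that $\delta_i$ carries the multiplier $x_i/a_i^2$ and that $\textbf{n}^{S}_{(i,I_0)}$ is built from the derivatives $D_{x_j}f'_i$ of the graph, the required cancellation being a Jacobi/cofactor identity for the infinite determinants of Definition \ref{20241113def1}; and (iii) the convergence of the attendant determinant and cofactor series, which rests on the summability conditions defining the sets $C_I$ together with the finiteness condition \eqref{20241013cond1}. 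I expect (ii) — reconciling the variation of the Gaussian graph-weight with the divergence structure — to be the genuine obstacle, and the cleanest route is probably to establish the formula first for functions depending on finitely many coordinates (where it is classical finite-dimensional Gauss--Green on $\{g<0\}\subset\mathbb{R}^{N}$ combined with Gaussian integration by parts) and then pass to the limit using the stated integrability hypotheses together with Proposition \ref{20241020prop1}.

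The second assertion then follows from the same computation specialized to an interior chart: by condition (2) of Definition \ref{20241009for1wb} one has $\overline{S}\cap U_1=S\cap U_1$, a full graph with no boundary face, so every direction is tangential; since $g$ is supported in $B_r(\textbf{x}^0)\subset B_R(\textbf{x}^0)\subset U_1$, each one-dimensional Newton--Leibniz evaluation vanishes at both ends, whence $-\sum_i\int_{U_1\cap S}\delta_i g\cdot\textbf{n}^{S}_{(i,I_0)}\,\mathrm{d}\mu_{S}=0$.
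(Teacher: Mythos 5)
Your plan follows the paper's own proof in all essentials: reduction to the coordinate chart $K$ via the surface-measure and $\textbf{n}_{(i,I_0)}^{S}$ definitions, the chain-rule/multilinearity collapse of the infinite sum (including the derivatives of the Gaussian graph-weight $F_{\mathbb{N}\setminus K}$) into a single determinant whose first row consists of $\delta_{k_j}\bigl(f(P_K^{-1})\cdot F_{\mathbb{N}\setminus K}(P_K^{-1})\bigr)$, the Piola-type cofactor cancellation of the tangential divergence terms, classical finite-dimensional Gauss--Green on $\{g<0\}$ with Fubini over the remaining coordinates, and the compact-support argument for the interior (second) assertion. The only cosmetic difference is your fallback suggestion of proving the formula first for cylindrical functions and passing to the limit, which the paper avoids by performing the chain-rule collapse directly.
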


\begin{proof}
By assumptions, there exists $n\in\mathbb{N}$ such that
$$
\ba{ll}\ds
I_0 =\left\{ i_{1},i_{2},\cdots ,i_{n},i_{n+1},i_{n+2},\cdots \right\},\\[2mm]
\ds K=\left\{k_0, k_{1},k_{2},\cdots ,k_{n},k_{n+1},k_{n+2},\cdots \right\} ,
\ea
$$
and $i_{s}=k_{s},s\geqslant n+1.$ Let $\textbf{n}_{\left( i, I_0 \right)}^{S}\triangleq 0,i\in \overline{I_0}$. Note that
$$
\begin{aligned}
&\sum\limits_{i=1}^{\infty} \int_{U\cap S}  \delta_{i} f\left( \textbf{x} \right) \cdot \textbf{n}_{\left( i,I_0 \right)}^{S}\left( \textbf{x} \right) \mathrm{d} \mu_{S} \left( \textbf{x} \right)\\
&=\int_{U\cap S} \sum\limits_{i=1}^{\infty} \left( D_{x_i}f\left( \textbf{x} \right) -\frac{x_i}{a_i^2}\cdot f\left( \textbf{x} \right) \right) \cdot \textbf{n}_{\left( i,I_0 \right)}^{S}\left( \textbf{x} \right) \mathrm{d} \mu_{S} \left( \textbf{x} \right)\\
& =\int_{P_{K}\left( U \right)} \sum\limits_{i=1}^{\infty} D_{x_i}f\left( P_{K}^{-1}\textbf{x}_K \right)\cdot \left( -1 \right)^{s\left( \left( i,I_0 \right) \right)}\cdot \det \left( D \left( P_{\overline{\left( i,I_0 \right)}} P_{K}^{-1} \right) \right) \left( \textbf{x}_{K}  \right)\\
&\qq\qq\qq\qq\qq\qq\qq\qq\qq\qq\cdot F_{\mathbb{N} \setminus K} \left( P_{K}^{-1}\textbf{x}_{K}  \right) \mathrm{d} \mu_{K} \left( \textbf{x}_{K}  \right)\\
&\q-\int_{P_{K}\left( U \right)} \sum\limits_{i=1}^{\infty} \frac{x_i}{a_i^2}\cdot f\left( P_{K}^{-1}\textbf{x}_{K}  \right)\cdot \left( -1 \right)^{s \left( \left( i,I_0 \right) \right)}\cdot \det \left( D\left( P_{\overline{\left( i,I_0 \right)}}P_{K}^{-1} \right) \right) \left( \textbf{x}_{K}  \right)\\
&\qq\qq\qq\qq\qq\qq\qq\qq\qq\qq\cdot F_{\mathbb{N} \setminus K} \left( P_{K}^{-1}\textbf{x}_{K}  \right) \mathrm{d} \mu_{K} \left( \textbf{x}_{K}  \right)\\
&=\int_{P_{K}\left( U \right)} \sum\limits_{i=1}^{\infty} D_{x_i}f(P_K^{-1}\cdot)\cdot
\begin{vmatrix}D_{x_{k_{0}}}\left( P_iP_{K}^{-1} \right) &\cdots&D_{x_{k_{n}}}\left( P_{i}P_{K}^{-1} \right)
\\ D_{x_{k_{0}}}\left( P_{i_{1}}P_{K}^{-1} \right) &\cdots&D_{x_{k_{n}}}\left( P_{i_{1}}P_{K}^{-1} \right)
\\ \vdots&\ddots&\vdots
\\ D_{x_{k_{0}}}\left( P_{i_{n}}P_{K}^{-1} \right)&\cdots&D_{x_{k_{n}}}\left( P_{i_{n }}P_{K}^{-1} \right)\end{vmatrix}
\cdot F_{\mathbb{N} \setminus K} \mathrm{d} \mu_{K}\\
&\q-\int_{P_{K}\left( U \right)} \sum\limits_{i=1}^{\infty} \frac{x_i}{a_i^2}\cdot f\left( P_{K}^{-1}\cdot  \right) \cdot\begin{vmatrix}D_{x_{k_{0}}}\left( P_iP_{K}^{-1} \right) &\cdots&D_{x_{k_{n}}}\left( P_{i}P_{K}^{-1} \right)\\ D_{x_{k_{0}}}\left( P_{i_{1}}P_{K}^{-1} \right) &\cdots&D_{x_{k_{n}}}\left( P_{i_{1}}P_{K}^{-1} \right)\\ \vdots&\ddots&\vdots\\ D_{x_{k_{0}}}\left( P_{i_{n}}P_{K}^{-1} \right)&\cdots&D_{x_{k_{n}}}\left( P_{i_{n }}P_{K}^{-1} \right)\end{vmatrix}\\
&\qq\qq\qq\qq\qq\qq\qq\qq\qq\qq\qq\qq\qq\qq\cdot F_{\mathbb{N} \setminus K} \mathrm{d} \mu_{K}.
\end{aligned}
$$
Since
$$
\begin{aligned}
&\sum\limits_{i=1}^{\infty} D_{x_i}f(P_K^{-1}\cdot)\cdot
\begin{vmatrix}D_{x_{k_{0}}}\left( P_iP_{K}^{-1} \right) &\cdots&D_{x_{k_{n}}}\left( P_{i}P_{K}^{-1} \right)\\ D_{x_{k_{0}}}\left( P_{i_{1}}P_{K}^{-1} \right) &\cdots&D_{x_{k_{n}}}\left( P_{i_{1}}P_{K}^{-1} \right)\\ \vdots&\ddots&\vdots\\ D_{x_{k_{0}}}\left( P_{i_{n}}P_{K}^{-1} \right)&\cdots&D_{x_{k_{n}}}\left( P_{i_{n }}P_{K}^{-1} \right)
\end{vmatrix}
\\
&=
\begin{vmatrix}
 \sum\limits_{i=1}^{\infty} D_{x_i}f(P_K^{-1}\cdot)\cdot D_{x_{k_{0}}}\left( P_iP_{K}^{-1} \right) &\cdots& \sum\limits_{i=1}^{\infty} D_{x_i}f(P_K^{-1}\cdot)\cdot D_{x_{k_{n}}}\left( P_{i}P_{K}^{-1} \right)\\ D_{x_{k_{0}}}\left( P_{i_{1}}P_{K}^{-1} \right)&\cdots&D_{x_{k_{n}}}\left( P_{i_{1}}P_{K}^{-1} \right)\\ \vdots&\ddots&\vdots\\ D_{x_{k_{0}}}\left( P_{i_{n}}P_{K}^{-1} \right)&\cdots&D_{x_{k_{n}}}\left( P_{i_{n }}P_{K}^{-1} \right)
\end{vmatrix}
\\
&=
\begin{vmatrix}
 D_{x_{k_{0}}}f(P_K^{-1}\cdot) &\cdots& D_{x_{k_{n}}}f(P_K^{-1}\cdot)\\ D_{x_{k_{0}}}\left( P_{i_{1}}P_{K}^{-1} \right)&\cdots&D_{x_{k_{n}}}\left( P_{i_{1}}P_{K}^{-1} \right)\\ \vdots&\ddots&\vdots\\ D_{x_{k_{0}}}\left( P_{i_{n}}P_{K}^{-1} \right)&\cdots&D_{x_{k_{n}}}\left( P_{i_{n }}P_{K}^{-1} \right)
\end{vmatrix},
\end{aligned}
$$
and
$$
\begin{aligned}
&-\sum\limits_{i=1}^{\infty} \frac{x_i}{a_i^2}\cdot f\left( P_{K}^{-1}\cdot  \right) \cdot\begin{vmatrix}D_{x_{k_{0}}}\left( P_iP_{K}^{-1} \right) &D_{x_{k_{1}}}\left( P_{i}P_{K}^{-1} \right)&\cdots&D_{x_{k_{n}}}\left( P_{i}P_{K}^{-1} \right)\\ D_{x_{k_{0}}}\left( P_{i_{1}}P_{K}^{-1} \right)&D_{x_{k_{1}}}\left( P_{i_{1}}P_{K}^{-1} \right)&\cdots&D_{x_{k_{n}}}\left( P_{i_{1}}P_{K}^{-1} \right)\\ \vdots&\vdots&\ddots&\vdots\\ D_{x_{k_{0}}}\left( P_{i_{n}}P_{K}^{-1} \right)&D_{x_{k_{1}}}\left( P_{i_{n }}P_{K}^{-1} \right)&\cdots&D_{x_{k_{n}}}\left( P_{i_{n }}P_{K}^{-1} \right)\end{vmatrix}\\
&=-\sum\limits_{j=0}^{n} \frac{x_{k_j}}{a_{k_j}^2}\cdot f\left( P_{K}^{-1}\cdot  \right) \cdot\begin{vmatrix}D_{x_{k_{0}}}\left( P_{k_j}P_{K}^{-1} \right) &D_{x_{k_{1}}}\left( P_{k_j}P_{K}^{-1} \right)&\cdots&D_{x_{k_{n}}}\left( P_{k_j}P_{K}^{-1} \right)\\ D_{x_{k_{0}}}\left( P_{i_{1}}P_{K}^{-1} \right)&D_{x_{k_{1}}}\left( P_{i_{1}}P_{K}^{-1} \right)&\cdots&D_{x_{k_{n}}}\left( P_{i_{1}}P_{K}^{-1} \right)\\ \vdots&\vdots&\ddots&\vdots\\ D_{x_{k_{0}}}\left( P_{i_{n}}P_{K}^{-1} \right)&D_{x_{k_{1}}}\left( P_{i_{n }}P_{K}^{-1} \right)&\cdots&D_{x_{k_{n}}}\left( P_{i_{n }}P_{K}^{-1} \right)\end{vmatrix}\\
&\quad -\sum\limits_{i\in \mathbb{N}\setminus K} \frac{x_i}{a_i^2}\cdot f\left( P_{K}^{-1}\cdot  \right) \cdot\begin{vmatrix}D_{x_{k_{0}}}\left( P_iP_{K}^{-1} \right) &D_{x_{k_{1}}}\left( P_{i}P_{K}^{-1} \right)&\cdots&D_{x_{k_{n}}}\left( P_{i}P_{K}^{-1} \right)\\ D_{x_{k_{0}}}\left( P_{i_{1}}P_{K}^{-1} \right)&D_{x_{k_{1}}}\left( P_{i_{1}}P_{K}^{-1} \right)&\cdots&D_{x_{k_{n}}}\left( P_{i_{1}}P_{K}^{-1} \right)\\ \vdots&\vdots&\ddots&\vdots\\ D_{x_{k_{0}}}\left( P_{i_{n}}P_{K}^{-1} \right)&D_{x_{k_{1}}}\left( P_{i_{n }}P_{K}^{-1} \right)&\cdots&D_{x_{k_{n}}}\left( P_{i_{n }}P_{K}^{-1} \right)\\
\end{vmatrix}\\
&=-\sum\limits_{j=0}^{n} \frac{x_{k_j}}{a_{k_j}^2}\cdot f\left( P_{K}^{-1}\cdot  \right) \cdot\begin{vmatrix}D_{x_{k_{0}}}\left( P_{k_j}P_{K}^{-1} \right) &D_{x_{k_{1}}}\left( P_{k_j}P_{K}^{-1} \right)&\cdots&D_{x_{k_{n}}}\left( P_{k_j}P_{K}^{-1} \right)\\ D_{x_{k_{0}}}\left( P_{i_{1}}P_{K}^{-1} \right)&D_{x_{k_{1}}}\left( P_{i_{1}}P_{K}^{-1} \right)&\cdots&D_{x_{k_{n}}}\left( P_{i_{1}}P_{K}^{-1} \right)\\ \vdots&\vdots&\ddots&\vdots\\ D_{x_{k_{0}}}\left( P_{i_{n}}P_{K}^{-1} \right)&D_{x_{k_{1}}}\left( P_{i_{n }}P_{K}^{-1} \right)&\cdots&D_{x_{k_{n}}}\left( P_{i_{n }}P_{K}^{-1} \right)\end{vmatrix}\\
&\quad +\begin{vmatrix}\frac{D_{x_{k_{0}}}\left( F_{\mathbb{N}\setminus K}(P_K^{-1}) \right)}{F_{\mathbb{N}\setminus K}(P_K^{-1})} &\frac{D_{x_{k_{1}}}\left( F_{\mathbb{N}\setminus K}(P_K^{-1}) \right)}{F_{\mathbb{N}\setminus K}(P_K^{-1})}&\cdots&\frac{D_{x_{k_{n}}}\left( F_{\mathbb{N}\setminus K}(P_K^{-1}) \right)}{F_{\mathbb{N}\setminus K}(P_K^{-1})}\\ D_{x_{k_{0}}}\left( P_{i_{1}}P_{K}^{-1} \right)&D_{x_{k_{1}}}\left( P_{i_{1}}P_{K}^{-1} \right)&\cdots&D_{x_{k_{n}}}\left( P_{i_{1}}P_{K}^{-1} \right)\\ \vdots&\vdots&\ddots&\vdots\\ D_{x_{k_{0}}}\left( P_{i_{n}}P_{K}^{-1} \right)&D_{x_{k_{1}}}\left( P_{i_{n }}P_{K}^{-1} \right)&\cdots&D_{x_{k_{n}}}\left( P_{i_{n }}P_{K}^{-1} \right)\\
\end{vmatrix},
\end{aligned}
$$
we have
$$
\begin{aligned}
&\int_{P_{K}\left( U \right)} \sum\limits_{i=1}^{\infty} D_{x_i}f(P_K^{-1}\cdot)\cdot
\begin{vmatrix}D_{x_{k_{0}}}\left( P_iP_{K}^{-1} \right) &\cdots&D_{x_{k_{n}}}\left( P_{i}P_{K}^{-1} \right)\\ D_{x_{k_{0}}}\left( P_{i_{1}}P_{K}^{-1} \right)&\cdots&D_{x_{k_{n}}}\left( P_{i_{1}}P_{K}^{-1} \right)\\ \vdots&\ddots&\vdots\\ D_{x_{k_{0}}}\left( P_{i_{n}}P_{K}^{-1} \right)&\cdots&D_{x_{k_{n}}}\left( P_{i_{n }}P_{K}^{-1} \right)\end{vmatrix}
\cdot F_{\mathbb{N} \setminus K} \mathrm{d} \mu_{K}\\
&\quad -\int_{P_{K}\left( U \right)} \sum\limits_{i=1}^{\infty} \frac{x_i}{a_i^2}\cdot f\left( P_{K}^{-1}\cdot  \right) \cdot
\begin{vmatrix}D_{x_{k_{0}}}\left( P_iP_{K}^{-1} \right) &\cdots&D_{x_{k_{n}}}\left( P_{i}P_{K}^{-1} \right)\\ D_{x_{k_{0}}}\left( P_{i_{1}}P_{K}^{-1} \right)&\cdots&D_{x_{k_{n}}}\left( P_{i_{1}}P_{K}^{-1} \right)\\ \vdots&\ddots&\vdots\\
D_{x_{k_{0}}}\left( P_{i_{n}}P_{K}^{-1} \right)&\cdots&D_{x_{k_{n}}}\left( P_{i_{n }}P_{K}^{-1} \right)\end{vmatrix}\\
&\q\qq\qq\qq\qq\qq\qq\qq\qq\qq\qq\qq\qq\qq\cdot F_{\mathbb{N} \setminus K} \mathrm{d} \mu_{K}\\
&=\int_{P_{K}\left( U \right)}
\begin{vmatrix}
\delta_{k_{0}}\left( f(P_{K}^{-1})\cdot  F_{\mathbb{N} \setminus K}(P_{K}^{-1}) \right) &\cdots&\delta_{k_{n}}\left( f(P_{K}^{-1})\cdot  F_{\mathbb{N} \setminus K}(P_{K}^{-1}) \right)\\ D_{x_{k_{0}}}\left( P_{i_{1}}P_{K}^{-1} \right)&\cdots&D_{x_{k_{n}}}\left( P_{i_{1}}P_{K}^{-1} \right)\\ \vdots&\ddots&\vdots\\ D_{x_{k_{0}}}\left( P_{i_{n}}P_{K}^{-1} \right)&\cdots&D_{x_{k_{n}}}\left( P_{i_{n }}P_{K}^{-1} \right)\end{vmatrix}
 \mathrm{d} \mu_{K}\\
&=\sum_{i=0}^{n}(-1)^i\cdot \int_{P_{K}\left( U \right)}\delta_{k_{i}}\left( f(P_{K}^{-1})\cdot  F_{\mathbb{N} \setminus K}(P_{K}^{-1}) \right)
\\
&\quad\times\begin{vmatrix}
D_{x_{k_{0}}}\left( P_{i_{1}}P_{K}^{-1} \right)&\cdots&\widehat{D_{x_{k_{i}}}\left( P_{i_{1}}P_{K}^{-1} \right)}&\cdots&D_{x_{k_{n}}}\left( P_{i_{1}}P_{K}^{-1} \right)\\ \vdots&\ddots&\vdots&\ddots&\vdots\\ D_{x_{k_{0}}}\left( P_{i_{n}}P_{K}^{-1} \right)&\cdots&\widehat{D_{x_{k_{i}}}\left( P_{i_{n }}P_{K}^{-1} \right)}&\cdots&D_{x_{k_{n}}}\left( P_{i_{n }}P_{K}^{-1} \right)\end{vmatrix}
 \mathrm{d} \mu_{K}.
\end{aligned}
$$
Let $K_n'=\{k_{i_0},k_{i_1},\cdots,k_{i_n}\}, K_n=K\setminus K_n'$ and
$$
\ba{ll}
\ds F\triangleq f(P_{K}^{-1})\cdot  F_{\mathbb{N} \setminus K_n}(P_{K}^{-1})\\[2mm]
\ds\qq
\cdot\sum_{i=0}^{n}(-1)^{i}\cdot \begin{vmatrix}
D_{x_{k_{0}}}\left( P_{i_{1}}P_{K}^{-1} \right)&\cdots&\widehat{D_{x_{k_{i}}}\left( P_{i_{1}}P_{K}^{-1} \right)}&\cdots&D_{x_{k_{n}}}\left( P_{i_{1}}P_{K}^{-1} \right)\\ \vdots&\ddots&\vdots&\ddots&\vdots\\ D_{x_{k_{0}}}\left( P_{i_{n}}P_{K}^{-1} \right)&\cdots&\widehat{D_{x_{k_{i}}}\left( P_{i_{n }}P_{K}^{-1} \right)}&\cdots&D_{x_{k_{n}}}\left( P_{i_{n }}P_{K}^{-1} \right)\end{vmatrix}.
\ea
$$
For $\textbf{x}_{K_n}\in P_{K_n}U$, let $U_{\textbf{x}_{K_n}}\triangleq\{\textbf{x}_{K_n'}:\textbf{x}_{K_n'}\in P_{K_n'}U,\,\textbf{x}_{K_n'}+\textbf{x}_{K_n}\in U\}$,
$$
F_{\textbf{x}_{K_n}}(\textbf{x}_{K_n'})\triangleq F(\textbf{x}_{K_n'}+\textbf{x}_{K_n}),\qquad g_{\textbf{x}_{K_n}}(\textbf{x}_{K_n'})\triangleq g(\textbf{x}_{K_n'}+\textbf{x}_{K_n}),\quad\forall\, \textbf{x}_{K_n'}\in U_{\textbf{x}_{K_n}}.
$$
Then for $i=0,1,\cdots,n$, it holds that
\begin{eqnarray*}
&&\int_{P_KU}D_{x_{k_i}}F(\textbf{x}_{K_n'}+\textbf{x}_{K_n})\,\mathrm{d}\textbf{x}_{K_n'}\mathrm{d}\mu_{K_n}(\textbf{x}_{K_n})\\
&&=\int_{P_{K_n}U}\left(\int_{U_{\textbf{x}_{K_n}}}D_{x_{k_i}}F(\textbf{x}_{K_n'}+\textbf{x}_{K_n})\,\mathrm{d}\textbf{x}_{K_n'}\right)\mathrm{d}\mu_{K_n}(\textbf{x}_{K_n})
\end{eqnarray*}
and by the classic Gauss-Green formula, for $i=0,1,\cdots,n,$ we have
\begin{eqnarray*}
&&\int_{U_{\textbf{x}_{K_n}}}D_{x_{k_i}}F(\textbf{x}_{K_n'}+\textbf{x}_{K_n})\,\mathrm{d}\textbf{x}_{K_n'}\\
&&=\int_{U_{\textbf{x}_{K_n}}\cap \{g_{\textbf{x}_{K_n}}=0\}}\frac{D_{x_{k_i}}g_{\textbf{x}_{K_n}}}{\sqrt{\sum_{j=0}^{n}|D_{x_{k_j}}g_{\textbf{x}_{K_n}}|^2}} \\
&&\q\qq\qq\cdot F(\textbf{x}_{K_n'\setminus\{k_{i_0}\}}+(P_{k_{i_0}}P^{-1}_{K\setminus\{k_{i_0}\}}(\textbf{x}_{K_n'\setminus\{k_{i_0}\}}+\textbf{x}_{K_n}))\textbf{e}_{k_{i_0}}+\textbf{x}_{K_n})
\\
&&\q\qq\qq\cdot\frac{\sqrt{\sum_{j=0}^{n}|D_{x_{k_j}}g_{\textbf{x}_{K_n}}|^2}}{|D_{x_{k_{i_0}}}g_{\textbf{x}_{K_n}}|}\,\mathrm{d}\textbf{x}_{K_n'\setminus\{k_{i_0}\}}\\
&&=\int_{U_{\textbf{x}_{K_n}}\cap \{g_{\textbf{x}_{K_n}}=0\}}\frac{D_{x_{k_i}}g_{\textbf{x}_{K_n}}}{\sqrt{\sum_{j=0}^{n}|D_{x_{k_j}}g_{\textbf{x}_{K_n}}|^2}} \\
&&\q\qq\qq \cdot F(\textbf{x}_{K_n'\setminus\{k_{i_0}\}}+(P_{k_{i_0}}P^{-1}_{K\setminus\{k_{i_0}\}}(\textbf{x}_{K_n'\setminus\{k_{i_0}\}}+\textbf{x}_{K_n}))\textbf{e}_{k_{i_0}}+\textbf{x}_{K_n})\\
&&\q\qq\qq\cdot\frac{\sqrt{\sum_{j=0}^{n}|D_{x_{k_j}}g_{\textbf{x}_{K_n}}|^2}}{1}\,\mathrm{d}\textbf{x}_{K_n'\setminus\{k_{i_0}\}}\\
&&=\int_{U_{\textbf{x}_{K_n}}\cap \{g_{\textbf{x}_{K_n}}=0\}} (D_{x_{k_i}}g_{\textbf{x}_{K_n}} )\\
&&\q\cdot F(\textbf{x}_{K_n'\setminus\{k_{i_0}\}}+(P_{k_{i_0}}P^{-1}_{K\setminus\{k_{i_0}\}}(\textbf{x}_{K_n'\setminus\{k_{i_0}\}}+\textbf{x}_{K_n}))\textbf{e}_{k_{i_0}}+\textbf{x}_{K_n})
\,\mathrm{d}\textbf{x}_{K_n'\setminus\{k_{i_0}\}}.
\end{eqnarray*}
Thus
$$
\begin{aligned}
&\sum_{i=0}^{n}(-1)^i\cdot \int_{P_{K}\left( U \right)}\delta_{k_{i}}\left( f(P_{K}^{-1})\cdot  F_{\mathbb{N} \setminus K}(P_{K}^{-1}) \right)
\\
&\quad\times\begin{vmatrix}
D_{x_{k_{0}}}\left( P_{i_{1}}P_{K}^{-1} \right)&\cdots&\widehat{D_{x_{k_{i}}}\left( P_{i_{1}}P_{K}^{-1} \right)}&\cdots&D_{x_{k_{n}}}\left( P_{i_{1}}P_{K}^{-1} \right)\\ \vdots&\ddots&\vdots&\ddots&\vdots\\ D_{x_{k_{0}}}\left( P_{i_{n}}P_{K}^{-1} \right)&\cdots&\widehat{D_{x_{k_{i}}}\left( P_{i_{n }}P_{K}^{-1} \right)}&\cdots&D_{x_{k_{n}}}\left( P_{i_{n }}P_{K}^{-1} \right)\end{vmatrix}
 \mathrm{d} \mu_{K}\\
&=\int_{P_{K}\left( U \right)} f(P_{K}^{-1})\cdot  F_{\mathbb{N} \setminus K}(P_{K}^{-1})\cdot\sum_{i=0}^{n}(-1)^{i+1}
\\
&\quad\cdot D_{x_{k_{i}}}\begin{vmatrix}
D_{x_{k_{0}}}\left( P_{i_{1}}P_{K}^{-1} \right)&\cdots&\widehat{D_{x_{k_{i}}}\left( P_{i_{1}}P_{K}^{-1} \right)}&\cdots&D_{x_{k_{n}}}\left( P_{i_{1}}P_{K}^{-1} \right)\\ \vdots&\ddots&\vdots&\ddots&\vdots\\ D_{x_{k_{0}}}\left( P_{i_{n}}P_{K}^{-1} \right)&\cdots&\widehat{D_{x_{k_{i}}}\left( P_{i_{n }}P_{K}^{-1} \right)}&\cdots&D_{x_{k_{n}}}\left( P_{i_{n }}P_{K}^{-1} \right)\end{vmatrix}
 \mathrm{d} \mu_{K}\\
&\q+\sum_{i=0}^{n}(-1)^i\cdot \int_{P_{K\setminus\{k_{i_0}\}}\left( \partial S\cap U \right)}  f(P_{K\setminus\{k_{i_0}\}}^{-1})\cdot  F_{\mathbb{N} \setminus (K\setminus\{k_{i_0}\})}(P_{K\setminus\{k_{i_0}\}}^{-1})\cdot D_{x_{k_i}}g
\\
&\quad\times\begin{vmatrix}
D_{x_{k_{0}}}\left( P_{i_{1}}P_{K}^{-1} \right)&\cdots&\widehat{D_{x_{k_{i}}}\left( P_{i_{1}}P_{K}^{-1} \right)}&\cdots&D_{x_{k_{n}}}\left( P_{i_{1}}P_{K}^{-1} \right)\\ \vdots&\ddots&\vdots&\ddots&\vdots\\ D_{x_{k_{0}}}\left( P_{i_{n}}P_{K}^{-1} \right)&\cdots&\widehat{D_{x_{k_{i}}}\left( P_{i_{n }}P_{K}^{-1} \right)}&\cdots&D_{x_{k_{n}}}\left( P_{i_{n }}P_{K}^{-1} \right)\end{vmatrix}
 \mathrm{d} \mu_{K\setminus\{k_{i_0}\}}.
\end{aligned}
$$
Firstly, it is easy to see that
$$
\begin{aligned}
&\sum_{i=0}^{n}(-1)^{i+1}\cdot D_{x_{k_{i}}}\begin{vmatrix}
D_{x_{k_{0}}}\left( P_{i_{1}}P_{K}^{-1} \right)&\cdots&\widehat{D_{x_{k_{i}}}\left( P_{i_{1}}P_{K}^{-1} \right)}&\cdots&D_{x_{k_{n}}}\left( P_{i_{1}}P_{K}^{-1} \right)\\ \vdots&\ddots&\vdots&\ddots&\vdots\\ D_{x_{k_{0}}}\left( P_{i_{n}}P_{K}^{-1} \right)&\cdots&\widehat{D_{x_{k_{i}}}\left( P_{i_{n }}P_{K}^{-1} \right)}&\cdots&D_{x_{k_{n}}}\left( P_{i_{n }}P_{K}^{-1} \right)\end{vmatrix}\\
&=\sum_{i=0}^{n}(-1)^{i+1}\cdot\sum_{j=1}^{n}\begin{vmatrix}
D_{x_{k_{0}}}\left( P_{i_{1}}P_{K}^{-1} \right)&\cdots&\widehat{D_{x_{k_{i}}}\left( P_{i_{1}}P_{K}^{-1} \right)}&\cdots&D_{x_{k_{n}}}\left( P_{i_{1}}P_{K}^{-1} \right)\\
\vdots&\ddots&\vdots&\ddots&\vdots\\
 D_{x_{k_{i}}}D_{x_{k_{0}}}\left( P_{i_{j}}P_{K}^{-1} \right)&\cdots&\widehat{ D_{x_{k_{i}}}D_{x_{k_{i}}}\left( P_{i_{j}}P_{K}^{-1} \right)}&\cdots& D_{x_{k_{i}}}D_{x_{k_{n}}}\left( P_{i_{j}}P_{K}^{-1} \right)\\
 \vdots&\ddots&\vdots&\ddots&\vdots\\
D_{x_{k_{0}}}\left( P_{i_{n}}P_{K}^{-1} \right)&\cdots&\widehat{D_{x_{k_{i}}}\left( P_{i_{n }}P_{K}^{-1} \right)}&\cdots&D_{x_{k_{n}}}\left( P_{i_{n }}P_{K}^{-1} \right)\end{vmatrix}\\
&=\sum_{j=1}^{n}\sum_{i=0}^{n}\sum_{r=0}^{i-1}(-1)^{i+1}\cdot(-1)^{j+r+1}\cdot  D_{x_{k_{i}}}D_{x_{k_{r}}}\left( P_{i_{j}}P_{K}^{-1} \right)\\
&\q\times\begin{vmatrix}
D_{x_{k_{0}}}\left( P_{i_{1}}P_{K}^{-1} \right)&\cdots&\widehat{D_{x_{k_{r}}}\left( P_{i_{1}}P_{K}^{-1} \right)}&\cdots&\widehat{D_{x_{k_{i}}}\left( P_{i_{1}}P_{K}^{-1} \right)}&\cdots&D_{x_{k_{n}}}\left( P_{i_{1}}P_{K}^{-1} \right)\\
\vdots&\ddots&\vdots&\ddots&\vdots\\
\widehat{ D_{x_{k_{0}}}\left( P_{i_{j}}P_{K}^{-1} \right)}&\cdots&\widehat{D_{x_{k_{r}}}\left( P_{i_{j}}P_{K}^{-1} \right)}&\cdots&\widehat{ D_{x_{k_{i}}}\left( P_{i_{j}}P_{K}^{-1} \right)}&\cdots& \widehat{D_{x_{k_{n}}}\left( P_{i_{j}}P_{K}^{-1} \right)}\\
 \vdots&\ddots&\vdots&\ddots&\vdots\\
D_{x_{k_{0}}}\left( P_{i_{n}}P_{K}^{-1} \right)&\cdots&\widehat{D_{x_{k_{r}}}\left( P_{i_{n}}P_{K}^{-1} \right)}&\cdots&\widehat{D_{x_{k_{i}}}\left( P_{i_{n }}P_{K}^{-1} \right)}&\cdots&D_{x_{k_{n}}}\left( P_{i_{n }}P_{K}^{-1} \right)\end{vmatrix}\\
&\q-\sum_{j=1}^{n}\sum_{i=0}^{n}\sum_{r=i+1}^{n}(-1)^{i+1}\cdot(-1)^{j+r+1}\cdot  D_{x_{k_{i}}}D_{x_{k_{r}}}\left( P_{i_{j}}P_{K}^{-1} \right)\\
&\q\times\begin{vmatrix}
D_{x_{k_{0}}}\left( P_{i_{1}}P_{K}^{-1} \right)&\cdots&\widehat{D_{x_{k_{i}}}\left( P_{i_{1}}P_{K}^{-1} \right)}&\cdots&\widehat{D_{x_{k_{r}}}\left( P_{i_{1}}P_{K}^{-1} \right)}&\cdots&D_{x_{k_{n}}}\left( P_{i_{1}}P_{K}^{-1} \right)\\
\vdots&\ddots&\vdots&\ddots&\vdots\\
\widehat{ D_{x_{k_{0}}}\left( P_{i_{j}}P_{K}^{-1} \right)}&\cdots&\widehat{D_{x_{k_{i}}}\left( P_{i_{j}}P_{K}^{-1} \right)}&\cdots&\widehat{ D_{x_{k_{r}}}\left( P_{i_{j}}P_{K}^{-1} \right)}&\cdots& \widehat{D_{x_{k_{n}}}\left( P_{i_{j}}P_{K}^{-1} \right)}\\
 \vdots&\ddots&\vdots&\ddots&\vdots\\
D_{x_{k_{0}}}\left( P_{i_{n}}P_{K}^{-1} \right)&\cdots&\widehat{D_{x_{k_{i}}}\left( P_{i_{n}}P_{K}^{-1} \right)}&\cdots&\widehat{D_{x_{k_{r}}}\left( P_{i_{n }}P_{K}^{-1} \right)}&\cdots&D_{x_{k_{n}}}\left( P_{i_{n }}P_{K}^{-1} \right)\end{vmatrix}\\
&=0.
\end{aligned}
$$

Note that
$$
\begin{aligned}
&\sum_{i=0}^{n}(-1)^i\cdot \int_{P_{K\setminus\{k_{i_0}\}}\left( \partial S\cap U \right)}  f(P_{K\setminus\{k_{i_0}\}}^{-1})\cdot  F_{\mathbb{N} \setminus (K\setminus\{k_{i_0}\})}(P_{K\setminus\{k_{i_0}\}}^{-1})\cdot D_{x_{k_i}}g
\\
&\quad\times\begin{vmatrix}
D_{x_{k_{0}}}\left( P_{i_{1}}P_{K}^{-1} \right)&\cdots&\widehat{D_{x_{k_{i}}}\left( P_{i_{1}}P_{K}^{-1} \right)}&\cdots&D_{x_{k_{n}}}\left( P_{i_{1}}P_{K}^{-1} \right)\\ \vdots&\ddots&\vdots&\ddots&\vdots\\ D_{x_{k_{0}}}\left( P_{i_{n}}P_{K}^{-1} \right)&\cdots&\widehat{D_{x_{k_{i}}}\left( P_{i_{n }}P_{K}^{-1} \right)}&\cdots&D_{x_{k_{n}}}\left( P_{i_{n }}P_{K}^{-1} \right)\end{vmatrix}
 \mathrm{d} \mu_{K\setminus\{k_{i_0}\}}\\
&= (-1)^{i_0}\cdot \int_{P_{K\setminus\{k_{i_0}\}}\left( \partial S\cap U \right)}  f(P_{K\setminus\{k_{i_0}\}}^{-1})\cdot  F_{\mathbb{N} \setminus (K\setminus\{k_{i_0}\})}(P_{K\setminus\{k_{i_0}\}}^{-1})\cdot D_{x_{k_{i_0}}}g\\
&\quad\times\begin{vmatrix}
D_{x_{k_{0}}}\left( P_{i_{1}}P_{K}^{-1} \right)&\cdots&\widehat{D_{x_{k_{i_0}}}\left( P_{i_{1}}P_{K}^{-1} \right)}&\cdots&D_{x_{k_{n}}}\left( P_{i_{1}}P_{K}^{-1} \right)\\ \vdots&\ddots&\vdots&\ddots&\vdots\\ D_{x_{k_{0}}}\left( P_{i_{n}}P_{K}^{-1} \right)&\cdots&\widehat{D_{x_{k_{i_0}}}\left( P_{i_{n }}P_{K}^{-1} \right)}&\cdots&D_{x_{k_{n}}}\left( P_{i_{n }}P_{K}^{-1} \right)\end{vmatrix}
 \mathrm{d} \mu_{K\setminus\{k_{i_0}\}}\\
 &\quad+\sum_{0 \leq i\leq n,\,i\neq i_0}(-1)^i\cdot \int_{P_{K\setminus\{k_{i_0}\}}\left( \partial S\cap U \right)}  f(P_{K\setminus\{k_{i_0}\}}^{-1})\cdot  F_{\mathbb{N} \setminus (K\setminus\{k_{i_0}\})}(P_{K\setminus\{k_{i_0}\}}^{-1})
\\
&\quad\cdot D_{x_{k_i}}g\cdot\begin{vmatrix}
D_{x_{k_{0}}}\left( P_{i_{1}}P_{K}^{-1} \right)&\cdots&\widehat{D_{x_{k_{i}}}\left( P_{i_{1}}P_{K}^{-1} \right)}&\cdots&D_{x_{k_{n}}}\left( P_{i_{1}}P_{K}^{-1} \right)\\ \vdots&\ddots&\vdots&\ddots&\vdots\\ D_{x_{k_{0}}}\left( P_{i_{n}}P_{K}^{-1} \right)&\cdots&\widehat{D_{x_{k_{i}}}\left( P_{i_{n }}P_{K}^{-1} \right)}&\cdots&D_{x_{k_{n}}}\left( P_{i_{n }}P_{K}^{-1} \right)\end{vmatrix}
 \mathrm{d} \mu_{K\setminus\{k_{i_0}\}}\\
&= (-1)^{2i_0-1}\cdot \int_{P_{K\setminus\{k_{i_0}\}}\left( \partial S\cap U \right)}  f(P_{K\setminus\{k_{i_0}\}}^{-1})\cdot  F_{\mathbb{N} \setminus (K\setminus\{k_{i_0}\})}(P_{K\setminus\{k_{i_0}\}}^{-1}) \\
&\quad\times\begin{vmatrix}
D_{x_{k_{0}}}\left( P_{i_{1}}P_{K}^{-1} \right)&\cdots&\widehat{D_{x_{k_{i_0}}}\left( P_{i_{1}}P_{K}^{-1} \right)}&\cdots&D_{x_{k_{n}}}\left( P_{i_{1}}P_{K}^{-1} \right)\\ \vdots&\ddots&\vdots&\ddots&\vdots\\ D_{x_{k_{0}}}\left( P_{i_{n}}P_{K}^{-1} \right)&\cdots&\widehat{D_{x_{k_{i_0}}}\left( P_{i_{n }}P_{K}^{-1} \right)}&\cdots&D_{x_{k_{n}}}\left( P_{i_{n }}P_{K}^{-1} \right)\end{vmatrix}
 \mathrm{d} \mu_{K\setminus\{k_{i_0}\}}\\
 &\quad+\sum_{0 \leq i\leq n,\,i\neq i_0}(-1)^{i+i_0}\cdot \int_{P_{K\setminus\{k_{i_0}\}}\left( \partial S\cap U \right)}  f(P_{K\setminus\{k_{i_0}\}}^{-1})\cdot  F_{\mathbb{N} \setminus (K\setminus\{k_{i_0}\})}(P_{K\setminus\{k_{i_0}\}}^{-1})\cdot D_{x_{k_i}}(P_{k_{i_0}}P^{-1}_{K\setminus\{k_{i_0}\}})
\\
&\quad\times\begin{vmatrix}
D_{x_{k_{0}}}\left( P_{i_{1}}P_{K}^{-1} \right)&\cdots&\widehat{D_{x_{k_{i}}}\left( P_{i_{1}}P_{K}^{-1} \right)}&\cdots&D_{x_{k_{n}}}\left( P_{i_{1}}P_{K}^{-1} \right)\\ \vdots&\ddots&\vdots&\ddots&\vdots\\ D_{x_{k_{0}}}\left( P_{i_{n}}P_{K}^{-1} \right)&\cdots&\widehat{D_{x_{k_{i}}}\left( P_{i_{n }}P_{K}^{-1} \right)}&\cdots&D_{x_{k_{n}}}\left( P_{i_{n }}P_{K}^{-1} \right)\end{vmatrix}
 \mathrm{d} \mu_{K\setminus\{k_{i_0}\}}\\
& =(-1)^{i_0-1}\int_{P_{K\setminus\{k_{i_0}\}}\left( \partial S\cap U \right)}  f(P_{K\setminus\{k_{i_0}\}}^{-1})\cdot  F_{\mathbb{N} \setminus (K\setminus\{k_{i_0}\})}(P_{K\setminus\{k_{i_0}\}}^{-1}) \\
&\quad\times\begin{vmatrix}
-D_{x_{k_{0}}}(P_{k_{i_0}}P^{-1}_{K\setminus\{k_{i_0}\}})&\cdots& 1 &\cdots&-D_{x_{k_{n}}}(P_{k_{i_0}}P^{-1}_{K\setminus\{k_{i_0}\}})\\
D_{x_{k_{0}}}\left( P_{i_{1}}P_{K}^{-1} \right)&\cdots& D_{x_{k_{i_0}}}\left( P_{i_{1}}P_{K}^{-1} \right) &\cdots&D_{x_{k_{n}}}\left( P_{i_{1}}P_{K}^{-1} \right)\\ \vdots&\ddots&\vdots&\ddots&\vdots\\ D_{x_{k_{0}}}\left( P_{i_{n}}P_{K}^{-1} \right)&\cdots& D_{x_{k_{i_0}}}\left( P_{i_{n }}P_{K}^{-1} \right)&\cdots&D_{x_{k_{n}}}\left( P_{i_{n }}P_{K}^{-1} \right)\end{vmatrix}
 \mathrm{d} \mu_{K\setminus\{k_{i_0}\}}.
\end{aligned}
$$
If $k_{i_0}\notin\{i_1,\cdots,i_n\}$, then
$$
\begin{aligned}
&\begin{vmatrix}
-D_{x_{k_{0}}}(P_{k_{i_0}}P^{-1}_{K\setminus\{k_{i_0}\}})&\cdots& 1 &\cdots&-D_{x_{k_{n}}}(P_{k_{i_0}}P^{-1}_{K\setminus\{k_{i_0}\}})\\
D_{x_{k_{0}}}\left( P_{i_{1}}P_{K}^{-1} \right)&\cdots& D_{x_{k_{i_0}}}\left( P_{i_{1}}P_{K}^{-1} \right) &\cdots&D_{x_{k_{n}}}\left( P_{i_{1}}P_{K}^{-1} \right)\\ \vdots&\ddots&\vdots&\ddots&\vdots\\ D_{x_{k_{0}}}\left( P_{i_{n}}P_{K}^{-1} \right)&\cdots& D_{x_{k_{i_0}}}\left( P_{i_{n }}P_{K}^{-1} \right)&\cdots&D_{x_{k_{n}}}\left( P_{i_{n }}P_{K}^{-1} \right)
\end{vmatrix}\\
&=\begin{vmatrix}
-D_{x_{k_{0}}}(P_{k_{i_0}}P^{-1}_{K\setminus\{k_{i_0}\}})&\cdots& 1 &\cdots&-D_{x_{k_{n}}}(P_{k_{i_0}}P^{-1}_{K\setminus\{k_{i_0}\}})\\
D_{x_{k_{0}}}\left( P_{i_{1}}P_{K}^{-1} \right)&\cdots& 0 &\cdots&D_{x_{k_{n}}}\left( P_{i_{1}}P_{K}^{-1} \right)\\ \vdots&\ddots&\vdots&\ddots&\vdots\\ D_{x_{k_{0}}}\left( P_{i_{n}}P_{K}^{-1} \right)&\cdots& 0&\cdots&D_{x_{k_{n}}}\left( P_{i_{n }}P_{K}^{-1} \right)
\end{vmatrix}\\
&=\begin{vmatrix}
0&\cdots& 1 &\cdots&0\\
D_{x_{k_{0}}}\left( P_{i_{1}}P_{K}^{-1} \right)&\cdots& 0 &\cdots&D_{x_{k_{n}}}\left( P_{i_{1}}P_{K}^{-1} \right)\\ \vdots&\ddots&\vdots&\ddots&\vdots\\ D_{x_{k_{0}}}\left( P_{i_{n}}P_{K}^{-1} \right)&\cdots& 0&\cdots&D_{x_{k_{n}}}\left( P_{i_{n }}P_{K}^{-1} \right)
\end{vmatrix}\\
&=\begin{vmatrix}
0&\cdots& 1 &\cdots&0\\
D_{x_{k_{0}}}\left( P_{i_{1}}P_{K\setminus\{k_{i_0}\}}^{-1} \right)&\cdots& 0 &\cdots&D_{x_{k_{n}}}\left( P_{i_{1}}P_{K\setminus\{k_{i_0}\}}^{-1} \right)\\ \vdots&\ddots&\vdots&\ddots&\vdots\\ D_{x_{k_{0}}}\left( P_{i_{n}}P_{K\setminus\{k_{i_0}\}}^{-1} \right)&\cdots& 0&\cdots&D_{x_{k_{n}}}\left( P_{i_{n }}P_{K\setminus\{k_{i_0}\}}^{-1} \right)
\end{vmatrix}\\
&=(-1)^{i_0}\begin{vmatrix}
D_{x_{k_{0}}}\left( P_{i_{1}}P_{K\setminus\{k_{i_0}\}}^{-1} \right)&\cdots& \widehat{D_{x_{k_{i_0}}}\left( P_{i_{1 }}P_{K\setminus\{k_{i_0}\}}^{-1} \right)} &\cdots&D_{x_{k_{n}}}\left( P_{i_{1}}P_{K\setminus\{k_{i_0}\}}^{-1} \right)\\ \vdots&\ddots&\vdots&\ddots&\vdots\\ D_{x_{k_{0}}}\left( P_{i_{n}}P_{K\setminus\{k_{i_0}\}}^{-1} \right)&\cdots& \widehat{D_{x_{k_{i_0}}}\left( P_{i_{n }}P_{K\setminus\{k_{i_0}\}}^{-1} \right)} &\cdots&D_{x_{k_{n}}}\left( P_{i_{n }}P_{K\setminus\{k_{i_0}\}}^{-1} \right)
\end{vmatrix}\\
&= (-1)^{i_0}\cdot (-1)^{s(I_0)}\cdot \det(D(P_{\overline{I_0}}P_{K\setminus\{k_{i_0}\}}^{-1})).
\end{aligned}
$$
If $k_{i_0}\in\{i_1,\cdots,i_n\}$, then $k_{i_0}=i_k$ for some $k\in\{1,\cdots,n\}$ and
$$
\begin{aligned}
&\begin{vmatrix}
-D_{x_{k_{0}}}(P_{k_{i_0}}P^{-1}_{K\setminus\{k_{i_0}\}})&\cdots& 1 &\cdots&-D_{x_{k_{n}}}(P_{k_{i_0}}P^{-1}_{K\setminus\{k_{i_0}\}})\\
D_{x_{k_{0}}}\left( P_{i_{1}}P_{K}^{-1} \right)&\cdots& D_{x_{k_{i_0}}}\left( P_{i_{1}}P_{K}^{-1} \right) &\cdots&D_{x_{k_{n}}}\left( P_{i_{1}}P_{K}^{-1} \right)\\ \vdots&\ddots&\vdots&\ddots&\vdots\\ D_{x_{k_{0}}}\left( P_{i_{n}}P_{K}^{-1} \right)&\cdots& D_{x_{k_{i_0}}}\left( P_{i_{n }}P_{K}^{-1} \right)&\cdots&D_{x_{k_{n}}}\left( P_{i_{n }}P_{K}^{-1} \right)
\end{vmatrix}\\
&=\begin{vmatrix}
-D_{x_{k_{0}}}(P_{k_{i_0}}P^{-1}_{K\setminus\{k_{i_0}\}})&\cdots& 1 &\cdots&-D_{x_{k_{n}}}(P_{k_{i_0}}P^{-1}_{K\setminus\{k_{i_0}\}})\\
\vdots&\ddots&\vdots&\ddots&\vdots\\
D_{x_{k_{0}}}\left( P_{i_{k}}P_{K}^{-1} \right)&\cdots& D_{x_{k_{i_0}}}\left( P_{i_{k}}P_{K}^{-1} \right) &\cdots&D_{x_{k_{n}}}\left( P_{i_{k}}P_{K}^{-1} \right)\\ \vdots&\ddots&\vdots&\ddots&\vdots\\ D_{x_{k_{0}}}\left( P_{i_{n}}P_{K}^{-1} \right)&\cdots& D_{x_{k_{i_0}}}\left( P_{i_{n }}P_{K}^{-1} \right)&\cdots&D_{x_{k_{n}}}\left( P_{i_{n }}P_{K}^{-1} \right)
\end{vmatrix}\\
&=\begin{vmatrix}
-D_{x_{k_{0}}}(P_{k_{i_0}}P^{-1}_{K\setminus\{k_{i_0}\}})&\cdots& 1 &\cdots&-D_{x_{k_{n}}}(P_{k_{i_0}}P^{-1}_{K\setminus\{k_{i_0}\}})\\
\vdots&\ddots&\vdots&\ddots&\vdots\\
0&\cdots& 1 &\cdots&0\\ \vdots&\ddots&\vdots&\ddots&\vdots\\ D_{x_{k_{0}}}\left( P_{i_{n}}P_{K}^{-1} \right)&\cdots& D_{x_{k_{i_0}}}\left( P_{i_{n }}P_{K}^{-1} \right)&\cdots&D_{x_{k_{n}}}\left( P_{i_{n }}P_{K}^{-1} \right)
\end{vmatrix}\\
&=-\begin{vmatrix}
0&\cdots& 1 &\cdots&0\\
\vdots&\ddots&\vdots&\ddots&\vdots\\
-D_{x_{k_{0}}}(P_{k_{i_0}}P^{-1}_{K\setminus\{k_{i_0}\}})&\cdots& 1 &\cdots&-D_{x_{k_{n}}}(P_{k_{i_0}}P^{-1}_{K\setminus\{k_{i_0}\}})\\ \vdots&\ddots&\vdots&\ddots&\vdots\\ D_{x_{k_{0}}}\left( P_{i_{n}}P_{K}^{-1} \right)&\cdots& D_{x_{k_{i_0}}}\left( P_{i_{n }}P_{K}^{-1} \right)&\cdots&D_{x_{k_{n}}}\left( P_{i_{n }}P_{K}^{-1} \right)
\end{vmatrix}\\
&=\begin{vmatrix}
0&\cdots& 1 &\cdots&0\\
\vdots&\ddots&\vdots&\ddots&\vdots\\
D_{x_{k_{0}}}(P_{k_{i_0}}P^{-1}_{K\setminus\{k_{i_0}\}})&\cdots& -1 &\cdots&D_{x_{k_{n}}}(P_{k_{i_0}}P^{-1}_{K\setminus\{k_{i_0}\}})\\ \vdots&\ddots&\vdots&\ddots&\vdots\\ D_{x_{k_{0}}}\left( P_{i_{n}}P_{K}^{-1} \right)&\cdots& D_{x_{k_{i_0}}}\left( P_{i_{n }}P_{K}^{-1} \right)&\cdots&D_{x_{k_{n}}}\left( P_{i_{n }}P_{K}^{-1} \right)
\end{vmatrix}\\
&=(-1)^{i_0}\begin{vmatrix}
D_{x_{k_{0}}}\left( P_{i_{1}}P_{K\setminus\{k_{i_0}\}}^{-1} \right)&\cdots& \widehat{D_{x_{k_{i_0}}}\left( P_{i_{1 }}P_{K\setminus\{k_{i_0}\}}^{-1} \right)} &\cdots&D_{x_{k_{n}}}\left( P_{i_{1}}P_{K\setminus\{k_{i_0}\}}^{-1} \right)\\ \vdots&\ddots&\vdots&\ddots&\vdots\\ D_{x_{k_{0}}}\left( P_{i_{n}}P_{K\setminus\{k_{i_0}\}}^{-1} \right)&\cdots& \widehat{D_{x_{k_{i_0}}}\left( P_{i_{n }}P_{K\setminus\{k_{i_0}\}}^{-1} \right)} &\cdots&D_{x_{k_{n}}}\left( P_{i_{n }}P_{K\setminus\{k_{i_0}\}}^{-1} \right)
\end{vmatrix}\\
&=(-1)^{i_0}\cdot (-1)^{s(I_0)}\cdot \det(D(P_{\overline{I_0}}P_{K\setminus\{k_{i_0}\}}^{-1})).
\end{aligned}
$$
Thus
$$
\begin{aligned}
&\sum_{i=0}^{n}(-1)^i\cdot \int_{P_{K\setminus\{k_{i_0}\}}\left( \partial S\cap U \right)}  f(P_{K\setminus\{k_{i_0}\}}^{-1})\cdot  F_{\mathbb{N} \setminus (K\setminus\{k_{i_0}\})}(P_{K\setminus\{k_{i_0}\}}^{-1})\cdot D_{x_{k_i}}g
\\
&\quad\times\begin{vmatrix}
D_{x_{k_{0}}}\left( P_{i_{1}}P_{K}^{-1} \right)&\cdots&\widehat{D_{x_{k_{i}}}\left( P_{i_{1}}P_{K}^{-1} \right)}&\cdots&D_{x_{k_{n}}}\left( P_{i_{1}}P_{K}^{-1} \right)\\ \vdots&\ddots&\vdots&\ddots&\vdots\\ D_{x_{k_{0}}}\left( P_{i_{n}}P_{K}^{-1} \right)&\cdots&\widehat{D_{x_{k_{i}}}\left( P_{i_{n }}P_{K}^{-1} \right)}&\cdots&D_{x_{k_{n}}}\left( P_{i_{n }}P_{K}^{-1} \right)\end{vmatrix}
 \mathrm{d} \mu_{K\setminus\{k_{i_0}\}}\\
& =(-1)^{i_0-1}\int_{P_{K\setminus\{k_{i_0}\}}\left( \partial S\cap U \right)}  f(P_{K\setminus\{k_{i_0}\}}^{-1})\cdot  F_{\mathbb{N} \setminus (K\setminus\{k_{i_0}\})}(P_{K\setminus\{k_{i_0}\}}^{-1})\cdot (-1)^{i_0}\\
 &\q\qq\qq\qq\cdot (-1)^{s(I_0)}\cdot \det(D(P_{\overline{I_0}}P_{K\setminus\{k_{i_0}\}}^{-1}))
 \mathrm{d} \mu_{K\setminus\{k_{i_0}\}}\\
& =-\int_{P_{K\setminus\{k_{i_0}\}}\left( \partial S\cap U \right)}  f(P_{K\setminus\{k_{i_0}\}}^{-1})\cdot  \frac{(-1)^{s(I_0)}\cdot \det(D(P_{\overline{I_0}}P_{K\setminus\{k_{i_0}\}}^{-1}))}{n_{K\setminus\{k_{i_0}\}}}\\
 &\q\qq\qq\qq\cdot  F_{\mathbb{N} \setminus (K\setminus\{k_{i_0}\})}(P_{K\setminus\{k_{i_0}\}}^{-1})\cdot n_{K\setminus\{k_{i_0}\}}
 \mathrm{d} \mu_{K\setminus\{k_{i_0}\}}\\
& =-\int_{ \partial S\cap U  }  f \cdot    \textbf{n}_{I_0}^{\partial S}
 \mathrm{d} \mu_{\partial S}.
\end{aligned}
$$
Therefore,
$$
\begin{aligned}
-\sum\limits_{i=1}^{\infty} \int_{U\cap S}  \delta_{i} f\left( \textbf{x} \right) \cdot \textbf{n}_{\left( i,I_0 \right)}^{S}\left( \textbf{x} \right) \mathrm{d} \mu_{S} \left( \textbf{x} \right)=\int_{ \partial S\cap U  }  f \cdot    \textbf{n}_{I_0}^{\partial S}
 \mathrm{d} \mu_{\partial S}.
\end{aligned}
$$
The proof of the second half of Proposition \ref{20241102prop1} is similar and simpler, and therefore we omit the details. This completes the proof of Proposition \ref{20241102prop1}.
\end{proof}

\subsection{Global Version of Gauss-Green Type Theorem}
We begin with the following notion of partition of unity on $\ell^2$.
\begin{definition}\label{230415def1}
A sequence of real-valued functions $\{\gamma_n\}_{n=1}^{\infty}\subset C^1(\ell^2; [0,1])$ is called a $C^1$-partition of unity on $\ell^2$, if the following conditions hold:
\begin{itemize}
\item[$(1)$]  For each $\textbf{x}\in \ell^2$, there exists a non-empty open neighborhood $U$ of $\textbf{x}$ such that only finite many functions in $\{\gamma_n\}_{n=1}^{\infty}$  are non-zero on $U;$
\item[$(2)$] $\sum\limits_{n=1}^{\infty}\gamma_n\equiv1$ on $\ell^2$.
\end{itemize}
\end{definition}

The existence of the $C^1$-partition of unity on $\ell^2$ is based on the following simple separating property.
\begin{lemma}\label{230708lem1}
For any $\textbf{x}\in \ell^2$ and positive numbers $r_1$ and $r_2$ with $r_1<r_2$, there exists $f\in C^1(\ell^2; [0,1])$ such that
$f\equiv 1$ on $B_{r_1}(\textbf{x})$ and $f\equiv 0$ outside $B_{r_2}(\textbf{x})$.
\end{lemma}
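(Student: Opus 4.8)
The plan is to build the desired bump function by composing a smooth one-variable cutoff with the $\ell^2$-norm distance to $\textbf{x}$, exactly as one does in finite dimensions, since the norm function is available and the construction only needs a single real parameter. First I would fix $\textbf{x}\in\ell^2$ and the radii $0<r_1<r_2$, and introduce a smooth, nonincreasing function $\varphi\in C^\infty(\mathbb{R};[0,1])$ with $\varphi(t)=1$ for $t\le r_1$ and $\varphi(t)=0$ for $t\ge r_2$; such a $\varphi$ is standard (for instance built from the mollifier-type function $h$ and its integral $H$ appearing in the proof of Theorem \ref{230215Th1}). Then I would set
\begin{equation}
f(\textbf{y})\triangleq \varphi\big(||\textbf{y}-\textbf{x}||_{\ell^2}\big),\qquad \forall\;\textbf{y}\in\ell^2,
\end{equation}
and check immediately that $f\equiv1$ on $B_{r_1}(\textbf{x})$ and $f\equiv0$ outside $B_{r_2}(\textbf{x})$, which are just the defining properties of $\varphi$ composed with the norm.

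The substantive point is to verify that $f\in C^1(\ell^2;[0,1])$ in the Fr\'echet sense of Definition \ref{def-con}. Away from $\textbf{x}$, the map $\textbf{y}\mapsto||\textbf{y}-\textbf{x}||_{\ell^2}$ is Fr\'echet differentiable (the norm on a Hilbert space is smooth off the origin, with derivative $\textbf{h}\mapsto \langle \textbf{y}-\textbf{x},\textbf{h}\rangle_{\ell^2}/||\textbf{y}-\textbf{x}||_{\ell^2}$), so by the chain rule $f$ is $C^1$ there with
\begin{equation}
Df(\textbf{y})\textbf{h}=\varphi'\big(||\textbf{y}-\textbf{x}||_{\ell^2}\big)\,\frac{\langle \textbf{y}-\textbf{x},\textbf{h}\rangle_{\ell^2}}{||\textbf{y}-\textbf{x}||_{\ell^2}},\qquad \forall\;\textbf{h}\in\ell^2.
\end{equation}
The only place needing care is the set where $||\textbf{y}-\textbf{x}||_{\ell^2}$ is small, i.e.\ inside $B_{r_1}(\textbf{x})$: there $f\equiv1$, hence $f$ is trivially $C^1$ with $Df\equiv0$ on the open ball $B_{r_1}(\textbf{x})$. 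Since $r_1>0$, the two open regions $B_{r_1}(\textbf{x})$ and $\ell^2\setminus\overline{B_{r_1}(\textbf{x})}$ together cover all points except the sphere $||\textbf{y}-\textbf{x}||_{\ell^2}=r_1$, where both formulas agree because $\varphi'$ vanishes in a neighborhood of $r_1$ (as $\varphi\equiv1$ there). Thus the potential singularity of the norm at $\textbf{x}$ is harmless, being buried inside the region where $\varphi$ is constant.

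The remaining task is to confirm that $Df:\ell^2\to\cL(\ell^2;\mathbb{R})$ is continuous in the operator-norm topology, so that $f$ is genuinely $C^1$ and not merely Fr\'echet differentiable pointwise. This follows from the continuity of $\textbf{y}\mapsto(\textbf{y}-\textbf{x})/||\textbf{y}-\textbf{x}||_{\ell^2}$ as an $\ell^2$-valued map on $\ell^2\setminus\overline{B_{r_1}(\textbf{x})}$, the continuity of $\varphi'$, and the fact that $Df\equiv0$ on the overlap region where $\varphi'=0$; a short estimate shows the operator norm of the difference $Df(\textbf{y})-Df(\textbf{y}')$ is controlled by $|\varphi'(||\textbf{y}-\textbf{x}||)-\varphi'(||\textbf{y}'-\textbf{x}||)|$ plus a term measuring the change in the unit vector $(\textbf{y}-\textbf{x})/||\textbf{y}-\textbf{x}||$. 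I expect the main obstacle, such as it is, to be purely bookkeeping: making sure the gluing across the sphere $||\textbf{y}-\textbf{x}||=r_1$ is seamless and that no hidden lack of smoothness of the Hilbert norm near the origin is invoked. Because $r_1>0$ keeps us strictly away from $\textbf{x}$ wherever $\varphi'\ne0$, this obstacle dissolves, and the verification reduces to the elementary chain-rule computation above.
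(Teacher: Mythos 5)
Your construction is sound, but it takes a slightly more laborious route than the paper, and one step of your gluing argument is misstated. The paper's proof is a one-liner built on a small trick: it composes a smooth cutoff $h\in C^{\infty}(\mathbb{R};[0,1])$, with $h(t)=1$ for $t\leqslant r_1^2$ and $h(t)=0$ for $t\geqslant r_2^2$, with the \emph{squared} norm $\textbf{y}\mapsto||\textbf{y}-\textbf{x}||_{\ell^2}^2$, setting $f(\textbf{y})=h(||\textbf{y}-\textbf{x}||_{\ell^2}^2)$. The squared norm is Fr\'echet smooth on all of $\ell^2$ (its derivative at $\textbf{y}$ is $\textbf{h}\mapsto 2{\langle \textbf{y}-\textbf{x},\textbf{h}\rangle}_{\ell^2}$, which depends continuously on $\textbf{y}$), so the chain rule gives $f\in C^1(\ell^2;[0,1])$ everywhere with no case analysis: the non-differentiability of the norm at the center, which is the only delicate point in your argument, never arises. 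What squaring buys is precisely the elimination of your gluing step.

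Concerning your version: the gluing does go through, but not for the reason you give. With $\varphi\equiv 1$ on $(-\infty,r_1]$ you only know $\varphi'=0$ \emph{on} $(-\infty,r_1]$; it is not true in general that ``$\varphi'$ vanishes in a neighborhood of $r_1$'', since $\varphi'$ may be nonzero immediately to the right of $r_1$. The clean repair is to glue over the open cover consisting of $B_{r_1}(\textbf{x})$ and $\ell^2\setminus\{\textbf{x}\}$, rather than $B_{r_1}(\textbf{x})$ and $\ell^2\setminus\overline{B_{r_1}(\textbf{x})}$. On $\ell^2\setminus\{\textbf{x}\}$ the norm is $C^1$ and your chain-rule formula for $Df$ is valid --- in particular across the sphere $||\textbf{y}-\textbf{x}||_{\ell^2}=r_1$, which stays away from the singular point $\textbf{x}$ --- while on $B_{r_1}(\textbf{x})$ one has $f\equiv 1$ and $Df\equiv 0$. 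The two formulas agree on the overlap $B_{r_1}(\textbf{x})\setminus\{\textbf{x}\}$ because $\varphi'(t)=0$ for all $t\leqslant r_1$ (one-sided constancy plus smoothness of $\varphi$), and continuity of $Df$ in the operator norm then follows exactly by the estimate you sketch. With that adjustment your proof is complete; alternatively, simply compose with the squared norm as the paper does and the whole issue disappears.
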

\begin{proof}
Choosing $h\in C^{\infty}(\mathbb{R};[0,1])$ such that $h(t)=1$ for all $t\leqslant r_1^2$ and $h(t)=0$ for all $t\geqslant r_2^2$. Note that $||\cdot-\textbf{x}||_{\ell^2}^2\in  C^1(\ell^2; \mathbb{R})$. Let $f(\textbf{y})\triangleq h(|| \textbf{y}-\textbf{x}||_{\ell^2}^2),\,\forall\,\textbf{y}\in \ell^2.$  Then it holds that $f\in C^1(\ell^2; [0,1])$, $f\equiv 1$ on $B_{r_1}(\textbf{x})$ and $f\equiv 0$ outside $B_{r_2}(\textbf{x})$. This completes the proof of Lemma \ref{230708lem1}.
\end{proof}

The following lemma must be a known result but we do not find an exact reference.
\begin{lemma}\label{second lemma for PU l^2}
 For any open covering $\{U_{\alpha}\}_{\alpha\in A}$ of $\ell^2$, there exists four countable locally finite open coverings $\{V_i^1\}_{i=1}^{\infty},\{V_i^2\}_{i=1}^{\infty},\{V_i^3\}_{i=1}^{\infty}$ and $\{V_i^4\}_{i=1}^{\infty}$ of $\ell^2$, and a sequence of functions $\{g_i\}_{i=1}^{\infty}\subset C^1(\ell^2;[0,1])$ such that
\begin{itemize}
\item[$(1)$] For each $i\in \mathbb{N}$, $\overline{V_i^1}\subset V_i^2\subset\overline{V_i^2}\subset V_i^3\subset\overline{V_i^3}\subset V_i^4$, $g_i\equiv1$ on $\overline{V_i^1}$ and supp$g_i\subset V_i^3$;
\item[$(2)$]  $\{V_i^4\}_{i=1}^{\infty}$ is a refinement of $\{U_{\alpha}\}_{\alpha\in A}$.
\end{itemize}
\end{lemma}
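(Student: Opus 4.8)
The plan is to reduce everything to a countable, locally finite \emph{ball} cover and then to read off the four nested families and the bump functions by taking concentric balls. First I would fix, for each $\mathbf{x}\in\ell^2$, an index $\alpha(\mathbf{x})$ with $\mathbf{x}\in U_{\alpha(\mathbf{x})}$ and a radius $r_{\mathbf{x}}>0$ so small that $B(\mathbf{x};3r_{\mathbf{x}})\subset U_{\alpha(\mathbf{x})}$ (possible since $U_{\alpha(\mathbf{x})}$ is open). The balls $\{B(\mathbf{x};r_{\mathbf{x}})\}_{\mathbf{x}\in\ell^2}$ form an open cover of $\ell^2$, and since $\ell^2$ is a separable metric space it is Lindel\"of by Corollary \ref{Lindelof1}; hence there is a countable subcover $\{B(\mathbf{x}_i;\rho_i)\}_{i=1}^\infty$ with $B(\mathbf{x}_i;3\rho_i)\subset U_{\alpha_i}$. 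If this countable ball cover could be chosen so that even the threefold enlargements $\{B(\mathbf{x}_i;3\rho_i)\}_i$ form a \emph{locally finite} family, the lemma would follow at once: put $V_i^1=B(\mathbf{x}_i;\rho_i)$, $V_i^2=B(\mathbf{x}_i;\tfrac32\rho_i)$, $V_i^3=B(\mathbf{x}_i;2\rho_i)$ and $V_i^4=B(\mathbf{x}_i;3\rho_i)$; the chain $\overline{V_i^1}\subset V_i^2\subset\overline{V_i^2}\subset V_i^3\subset\overline{V_i^3}\subset V_i^4$ is immediate from the nesting of concentric balls, $V_i^4\subset U_{\alpha_i}$ gives the refinement property $(2)$, and Lemma \ref{230708lem1} (applied with radii $\tfrac54\rho_i<\tfrac74\rho_i$) furnishes $g_i\in C^1(\ell^2;[0,1])$ with $g_i\equiv1$ on $B(\mathbf{x}_i;\tfrac54\rho_i)\supset\overline{V_i^1}$ and $g_i\equiv0$ outside $B(\mathbf{x}_i;\tfrac74\rho_i)\subset V_i^3$, so that $\mathrm{supp}\,g_i\subset V_i^3$.

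Thus the entire content of the lemma is concentrated in one step: \textbf{producing the countable ball cover whose enlargements are locally finite.} This is the main obstacle, and it is exactly where the infinite dimension bites, since closed balls of $\ell^2$ are \emph{not} compact and the usual finite-dimensional argument (each point has a precompact neighbourhood meeting only finitely many cover elements) is unavailable. I would resolve it through paracompactness: every metric space is paracompact, so the cover $\{B(\mathbf{x};r_{\mathbf{x}})\}$ has a locally finite open refinement, and since $\ell^2$ is Lindel\"of (Corollary \ref{Lindelof1}) this refinement may be taken countable, say $\{W_i\}_{i=1}^\infty$ with $W_i\subset B(\mathbf{x}_i;r_{\mathbf{x}_i})\subset U_{\alpha_i}$. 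The delicate point is to carry this out while keeping control of the radii (equivalently, to pass to a $\sigma$-discrete ball refinement and then shrink the radii so they decay along the enumeration); this is what guarantees that the fattened balls stay locally finite rather than merely the $W_i$ themselves.

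If instead I keep the refinement as general open sets $W_i$ rather than balls, the four nested families are obtained by iterating the shrinking lemma: since $\ell^2$ is normal and $\{W_i\}$ is locally finite (hence point-finite), there is an open cover $\{V_i^3\}$ with $\overline{V_i^3}\subset W_i=:V_i^4$, and repeating twice more yields $\{V_i^2\}$ and $\{V_i^1\}$ with the full chain $\overline{V_i^1}\subset V_i^2\subset\overline{V_i^2}\subset V_i^3\subset\overline{V_i^3}\subset V_i^4$. Each shrinking is again an open cover and, being contained member by member in a locally finite family, is itself locally finite, so all four families satisfy the requirements of $(1)$ and $(2)$. In this formulation the remaining work is the $C^1$ separation: to manufacture $g_i\equiv1$ on $\overline{V_i^1}$ with $\mathrm{supp}\,g_i\subset V_i^3$ I would cover the closed set $\overline{V_i^1}$ by balls whose doubles lie in $V_i^3$ (using $\overline{V_i^1}\subset V_i^3$), extract by Corollary \ref{Lindelof1} a countable, locally finite such family $\{B(\mathbf{y}_{k};t_{k})\}_{k}$, take the ball bumps $h_{k}$ from Lemma \ref{230708lem1}, and set $g_i=1-\prod_{k}(1-h_{k})$; this is a locally finite product, hence $C^1$, it equals $1$ wherever some $h_k=1$ (in particular on $\overline{V_i^1}$), and it is supported in $\bigcup_k\overline{B(\mathbf{y}_k;2t_k)}\subset V_i^3$.

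The whole construction is designed to be subordinate to $\{U_\alpha\}$ and so that the $g_i$ can afterward be normalized into a genuine $C^1$-partition of unity in the sense of Definition \ref{230415def1}. I expect the single genuinely hard ingredient to be the local finiteness claim of the second paragraph; once metric paracompactness with radius control is in hand, the nesting, the refinement property, and the $C^1$ functions are routine consequences of the shrinking lemma and the ball bumps of Lemma \ref{230708lem1}.
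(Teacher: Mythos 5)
Your proposal correctly isolates the heart of the lemma---producing a countable cover whose members, together with suitable fattenings, form a locally finite family---but it never actually proves this. You invoke Stone's theorem (paracompactness of metric spaces) plus Corollary \ref{Lindelof1}, and then flag ``radius control'' as the delicate point, offering only the heuristic that radii decaying along the enumeration should keep the fattened balls locally finite. That heuristic is not an argument, and it is false as stated: in $\ell^2$ one can place infinitely many centers at mutual distance $1$ inside a bounded set, so balls of comparable (even slowly decaying) radii can all meet a fixed neighborhood; local finiteness simply does not follow from radius decay. Since by your own account this is ``the single genuinely hard ingredient,'' deferring it leaves the proof incomplete. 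The paper's proof resolves exactly this point constructively and needs no paracompactness: it takes ball bumps $\phi^{\mathbf{y}}$ from Lemma \ref{230708lem1}, extracts via Corollary \ref{Lindelof1} a countable subcover from the sets $\{\phi^{\mathbf{y}}>\frac12\}$, and then composes with auxiliary functions $f_j\in C^{\infty}(\mathbb{R}^j;[0,1])$ engineered so that $\psi_j=f_j(\phi^{\mathbf{y}_1},\ldots,\phi^{\mathbf{y}_j})$ vanishes wherever some earlier $\phi^{\mathbf{y}_i}\geq \frac12+\frac{2}{j}$. Consequently, near any point, once some $\phi^{\mathbf{y}_{n(\mathbf{x})}}$ exceeds $\frac12+\frac{2}{k}$ on a neighborhood, every $\psi_j$ with $j\geq k+n(\mathbf{x})$ vanishes there; local finiteness of $V_j^4=\{\psi_j>0\}$ comes for free, and all four nested families are just the level sets $V_i^k=\{\psi_i>\frac{4-k}{4}\}$, with $g_i=h(\psi_i)$.

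There is a second, independent gap in your fallback construction of the bump functions $g_i$ for general open sets: you assert that Corollary \ref{Lindelof1} provides a countable \emph{locally finite} family of balls covering $\overline{V_i^1}$ with doubles inside $V_i^3$. Lindel\"of gives countability only, not local finiteness. Without local finiteness of the supports of the $h_k$, the infinite product $g_i=1-\prod_{k}(1-h_k)$ need not be $C^1$ (it need not even be continuous, and one certainly cannot differentiate it term by term). Worse, obtaining such a locally finite ball family covering $\overline{V_i^1}$ is precisely the problem you left unresolved in the previous step, so this part of the argument is circular: the smooth separation you need presupposes the locally finite refinement whose existence was the whole difficulty.
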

\begin{proof}
Since $\{U_{\alpha}\}_{\alpha\in A}$ is an open covering of $\ell^2$, for each $\textbf{y}\in \ell^2$, we may find some $\alpha\in A$ so that $\textbf{y}\in U_{\alpha}$. Hence, $\textbf{y}\in B_{r}(\textbf{y})\subset B_{r'}(\textbf{y})\subset U_{\alpha}$ for some real numbers $r$ and $r'$ with $0<r<r'<\infty$.
By Lemma \ref{230708lem1}, there exists $\phi^{\textbf{y}}\in C^1_{\ell^2}(\ell^2; [0,1])$ such that
$\phi^{\textbf{y}}\equiv 1$ on $B_{r_1}(\textbf{y})$ and $\phi^{\textbf{y}}\equiv 0$ outside $B_{r_2}(\textbf{y})$.
Hence,  $\phi^{\textbf{y}}(\textbf{y})=1$ and supp$\phi^{\textbf{y}}\subset U_{\alpha}$. Write
$$A_{\textbf{y}}\triangleq \left\{{\textbf{x}}\in \ell^2:\;\phi^{\textbf{y}}({\textbf{x}})>\frac{1}{2}\right\}.$$
Then $\{A_{\textbf{y}}:\;{\textbf{y}}\in \ell^2\}$ is an open covering of $\ell^2$. Since $\ell^2$ is a Lindel\"{o}ff space, there is a countable subset $\{A_{{\textbf{y}}_i}\}_{i=1}^{\infty}$ (of $\{A_{\textbf{y}}:\;\textbf{y}\in \ell^2\}$) which covers $\ell^2$.

Choosing $f_1\in C^{\infty}(\mathbb{R};[0,1])$ such that $f_1(t)=1$ for all $ t\geqslant\frac{1}{2}$, and $f_1(t)=0$ for all $ t\leqslant 0$. For $j\geqslant 2$, we can find $f_j\in C^{\infty}(\mathbb{R}^j;[0,1])$ such that:
\begin{itemize}
\item[$(a)$]  $f_j(t_1,\cdots,t_j)=1$, if $t_j\geqslant \frac{1}{2}$ and $t_i\leqslant \frac{1}{2}+\frac{1}{j}$ for all $i\in\{1,\cdots,j-1\}$;
\item[$(b)$]  $f_j(t_1,\cdots,t_j)=0$, if $t_j\leqslant \frac{1}{2}-\frac{1}{j}$ or $t_i\geqslant \frac{1}{2}+\frac{2}{j}$ for some $i\in\{1,\cdots,j-1\}$.
\end{itemize}
Now let $\psi_1({\textbf{x}})\triangleq f_1(\phi^{\textbf{y}_1}({\textbf{x}}))$ and $\psi_j({\textbf{x}})\triangleq f_j(\phi^{\textbf{y}_1}({\textbf{x}}),\cdots,\phi^{\textbf{y}_j}({\textbf{x}}))$ if $j\geqslant 2$, $\forall\, \textbf{x}\in\ell^2$. For $i\in \mathbb{N}$, define
\begin{eqnarray*}
V_i^k&\triangleq&\bigg\{{\textbf{x}}\in \ell^2:\;\psi_i({\textbf{x}})>\frac{4-k}{4}\bigg\},\qquad
k=1,2,3,4.
\end{eqnarray*}
Clearly, $\overline{V_i^1}\subset V_i^2\subset\overline{V_i^2}\subset V_i^3\subset\overline{V_i^3}\subset V_i^4$. Since for each $i\in \mathbb{N}$, $V_i^4\subset \text{supp}\,\phi^{{\textbf{y}}_i}$, $\{V_i^4\}_{i=1}^{\infty}$ is a refinement of $\{U_{\alpha}\}_{\alpha\in A}$.

Since $\{A_{{\textbf{y}}_i}\}_{i=1}^{\infty}$ covers $\ell^2$, for any ${\textbf{x}}\in \ell^2$ we denote by $i({\textbf{x}})$ the smallest positive integer $i$ such that $\phi^{{\textbf{y}}_i}({\textbf{x}})\geqslant \frac{1}{2}$. Then, $\psi_{i({\textbf{x}})}({\textbf{x}})=1$ and ${\textbf{x}}\in V_{i({\textbf{x}})}^1$. Hence, $\{V_i^1\}_{i=1}^{\infty}$ is an open covering of $\ell^2$.

For each ${\textbf{x}}\in \ell^2$, one can find a positive integer $n({\textbf{x}})$ such that $\phi^{y_{n({\textbf{x}})}}({\textbf{x}})>\frac{1}{2}$. By the continuity of $\phi^{y_{n({\textbf{x}})}}(\cdot)$, there exists a neighborhood ${\cal N}_{\textbf{x}}$ of ${\textbf{x}}$ and a real number $a_{\textbf{x}}>\frac{1}{2}$ such that
$\inf_{{\textbf{x}}'\in {\cal N}_{\textbf{x}}}\phi^{y_{n({\textbf{x}})}}({\textbf{x}}')\geqslant a_{\textbf{x}}.$
Choosing a sufficiently large $k\in \mathbb{N}$ such that $a_{\textbf{x}}>\frac{1}{2}+\frac{2}{k}$. Hence $\phi^{y_{n({\textbf{x}})}}({\textbf{x}}')\geqslant \frac{1}{2}+\frac{2}{k},\,$ for all $ {\textbf{x}}'\in {\cal N}_{\textbf{x}}.$
Therefore, for any ${\textbf{x}}'\in {\cal N}_{\textbf{x}}$ and $j\geqslant k+n({\textbf{x}})$, we have $\psi_j({\textbf{x}}')=0$. Thus for $j\geqslant k+n({\textbf{x}})$, it holds that ${\cal N}_{\textbf{x}}\cap V_j^4=\emptyset$ and hence $\{V_i^4\}_{i=1}^{\infty}$ is locally finite.

Choosing $h\in  C^{\infty}(\mathbb{R};[0,1])$ satisfying $h(t)=0$ for all $ t\leqslant \frac{1}{2}$, and $h(t)=1$ for all $t\geqslant \frac{3}{4}$.  Now we define $g_i({\textbf{x}})\triangleq h(\psi_i({\textbf{x}}))$ for all $\,i\in\mathbb{N}$ and ${\textbf{x}}\in \ell^2.$
Then $\{g_i\}_{i=1}^{\infty}$ is the desired sequence of functions so that $\{g_i\}_{i=1}^{\infty}\subset C^1_{\ell^2}(\ell^2;[0,1])$, $g_i\equiv1$ on $\overline{V_i^1}$ and supp$g_i\subset V_i^3$. This completes the proof of Lemma \ref{second lemma for PU l^2}.
\end{proof}

Now we are ready to prove the existence of $C^1$-partition of unity on $\ell^2$.
\begin{proposition}\label{H-C1 parition of unity l^2}
Suppose that $\{U_{\alpha}\}_{\alpha\in \mathscr{A}}$ is an open covering of $\ell^2$. Then there exists a $C^1$-partition of unity $\{\gamma_n\}_{n=1}^{\infty}$ subordinated to $\{U_{\alpha}\}_{\alpha\in A}$, i.e., for each $n\in\mathbb{N}$, supp $\gamma_n\subset U_{\alpha}$ for some $\alpha\in \mathscr{A}$.
\end{proposition}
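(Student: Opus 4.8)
The plan is to construct the partition of unity directly from the data produced by Lemma \ref{second lemma for PU l^2}. First I would invoke that lemma with the given open covering $\{U_\alpha\}_{\alpha\in\mathscr{A}}$ to obtain the four locally finite open coverings $\{V_i^1\}_{i=1}^\infty$, $\{V_i^2\}_{i=1}^\infty$, $\{V_i^3\}_{i=1}^\infty$, $\{V_i^4\}_{i=1}^\infty$ together with the functions $\{g_i\}_{i=1}^\infty\subset C^1(\ell^2;[0,1])$ satisfying $g_i\equiv 1$ on $\overline{V_i^1}$, $\supp g_i\subset V_i^3$, and the refinement property that each $V_i^4$ is contained in some $U_{\alpha}$. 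Since $\supp g_i\subset V_i^3\subset V_i^4$, this already guarantees the subordination requirement once the $g_i$ are normalized, so the only remaining work is to turn the $g_i$ into functions summing to $1$ while staying in $C^1$.

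The key step is to define, for each $n\in\mathbb{N}$,
\begin{equation}\label{PU-def}
G(\textbf{x})\triangleq\sum_{i=1}^{\infty}g_i(\textbf{x}),\qquad \gamma_n(\textbf{x})\triangleq\frac{g_n(\textbf{x})}{G(\textbf{x})},\quad\forall\;\textbf{x}\in\ell^2,
\end{equation}
and to verify that this is well defined and has the required properties. Here I would argue pointwise: fix $\textbf{x}\in\ell^2$. Because $\{V_i^4\}_{i=1}^\infty$ is locally finite, there is an open neighborhood $U$ of $\textbf{x}$ meeting only finitely many $V_i^4$, hence only finitely many $\supp g_i$ (as $\supp g_i\subset V_i^3\subset V_i^4$); thus on $U$ the sum $G$ is a finite sum of $C^1$ functions and so $G\in C^1(U)$, which gives condition $(1)$ of Definition \ref{230415def1}. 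Moreover $\{V_i^1\}_{i=1}^\infty$ covers $\ell^2$, so $\textbf{x}\in V_{i_0}^1\subset\overline{V_{i_0}^1}$ for some $i_0$, whence $g_{i_0}(\textbf{x})=1$ and therefore $G(\textbf{x})\ge g_{i_0}(\textbf{x})=1>0$. This strict positivity, combined with the local finiteness making $G$ locally $C^1$, shows $\gamma_n=g_n/G\in C^1(\ell^2;[0,1])$, and the identity $\sum_{n=1}^\infty\gamma_n=G/G\equiv 1$ on $\ell^2$ gives condition $(2)$. Finally $\supp\gamma_n=\supp g_n\subset V_n^3\subset V_n^4\subset U_\alpha$ for an appropriate $\alpha$, yielding subordination.

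I expect the only genuinely delicate point to be the regularity of the quotient $\gamma_n$, and in particular making the local finiteness do its work cleanly. The subtlety is that $G$ is defined by an infinite sum, so differentiability is not automatic; the resolution is that local finiteness of $\{V_i^4\}_{i=1}^\infty$ collapses $G$ to a \emph{finite} sum on a neighborhood of every point, after which $G$ is $C^1$ there and $1/G$ is $C^1$ because $G$ is bounded below by $1$ (so one divides by a nonvanishing $C^1$ function, and $\gamma_n$ inherits $C^1$ regularity from the chain and quotient rules in the Fr\'echet sense on $\ell^2$). Everything else—boundedness of $\gamma_n$ in $[0,1]$, the summation identity, and the support containment—is a direct consequence of \eqref{PU-def} and the properties already listed in Lemma \ref{second lemma for PU l^2}. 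I would close by noting that this completes the proof of Proposition \ref{H-C1 parition of unity l^2}.
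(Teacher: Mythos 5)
Your proof is correct, but it combines the functions from Lemma \ref{second lemma for PU l^2} in a different way than the paper does. You use the classical normalization: set $G=\sum_i g_i$ and $\gamma_n=g_n/G$, which forces you to verify (a) that local finiteness of $\{V_i^4\}_{i=1}^\infty$ makes $G$ locally a \emph{finite} sum and hence locally $C^1$, (b) that $G\geq 1$ everywhere because $\{V_i^1\}_{i=1}^\infty$ covers $\ell^2$ and $g_{i_0}\equiv 1$ on $\overline{V_{i_0}^1}$, and (c) that the quotient of $C^1$ Fr\'echet-differentiable functions by a nonvanishing one is again $C^1$; in exchange, the identity $\sum_n\gamma_n\equiv 1$ becomes a triviality. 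The paper instead avoids division altogether by the telescoping device $\gamma_1\triangleq g_1$ and $\gamma_i\triangleq g_i\prod_{j=1}^{i-1}(1-g_j)$ for $i\geq 2$: each $\gamma_i$ is then a finite product of $C^1$ functions, so its regularity is immediate without invoking local finiteness, and the summation identity follows from $\sum_{i=1}^n\gamma_i=1-\prod_{i=1}^n(1-g_i)$ together with the fact that every point lies in some $V_{i_0}^1$, which annihilates the product for $n\geq i_0$. Both routes use the same support containments $\supp\gamma_n\subset\supp g_n\subset V_n^3\subset V_n^4\subset U_\alpha$ for subordination and the same local finiteness for condition $(1)$ of Definition \ref{230415def1}; the paper's construction is slightly more economical in the infinite-dimensional $C^1$ setting (no quotient rule needed), while yours is the more standard textbook construction and generalizes verbatim to any smoothness class closed under products and quotients by nonvanishing functions.
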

\begin{proof}
We find four open coverings $\{V_i^j\}_{i=1}^{\infty}\; (j=1,2,3,4)$ (which are refinements of $\{U_{\alpha}\}_{\alpha\in A}$) and a sequence of functions $\{g_i\}_{i=1}^{\infty}\subset C^1(\ell^2;[0,1])$ as that in Lemma \ref{second lemma for PU l^2}. Let $\gamma_1\triangleq g_1$ and $\gamma_i\triangleq g_i\Pi_{j=1}^{i-1}(1-g_j)$ for $i\geqslant 2.$
Then for each $n\in\mathbb{N}$, $\gamma_n\in C^1_{\ell^2}(\ell^2;[0,1])$ and supp $\gamma_n\subset $supp $g_n\subset V_n^3$. Recall that $\{V_i^3\}_{i=1}^{\infty}$ is a locally finite open covering of $\ell^2$. Thus for each point in $\ell^2$, there exists an open neighborhood of that point so that only finite many $\gamma_i$'s are non-zero on this neighborhood.  On the other hand, since
$V_i^1\subset \{\textbf{y}\in \ell^2:\;g_i(\textbf{y})=1\}$ for each $i\in\mathbb{N}$,
and $\{V_i^1\}_{i=1}^{\infty}$ covers $\ell^2$, for each $\textbf{x}\in \ell^2$, there exists $n\in \mathbb{N}$ such that
$\prod_{i=1}^{n}(1-g_i(\textbf{x}))=0.$
Hence
\begin{eqnarray*}
\sum_{i=1}^{\infty}\gamma_i(\textbf{x})=\lim_{n\to\infty}\sum_{i=1}^n\gamma_i(\textbf{x})=1-\lim_{n\to\infty}\prod_{i=1}^{n}(1-g_i(\textbf{x}))=1,
\end{eqnarray*}
which implies that $\{\gamma_n\}_{n=1}^{\infty}$ is a $C^1$-partition of unity subordinated to $\{U_{\alpha}\}_{\alpha\in A}$. The proof of Proposition \ref{H-C1 parition of unity l^2} is completed.
\end{proof}
\begin{remark}
From the above proof of the existence of $C^1$-partition of unity, it is easy to see that the smooth condition ``$\{\gamma_n\}_{n=1}^{\infty} \subset C^1(\ell^2; [0,1]) $" in Definition \ref{230415def1} can be changed into other smooth conditions for which the separating property in Lemma \ref{230708lem1} holds and the corresponding smooth functions are closed under composition with finite dimensional smooth functions. Then the corresponding partition of unity also exists.
\end{remark}
\begin{theorem}\label{20241015thm1}
(\textbf{Global Version of Gauss-Green Type Theorem}). We adapt the same assumptions and notations as in Definition \ref{20241009for1wb}, suppose that $I_0\in S_{\mathbb{N}}^F$ satisfies the conditions in Proposition \ref{20241102prop1}, the function
$$
\sqrt{ \sum\limits_{i=1}^{\infty} \frac{|\textbf{n}_{\left( i,I_0 \right)}^{S} |^2}{a_i^2}}
$$
is a real-valued continuous function on $S$, $O$ is an open subset of $\ell^2$ so that $S\cup \partial S\subset O$, and $f$ is a real-valued Borel measurable function on $O$. If $f$ is Fr\'{e}chet differentiable respect to the $H_{ \mathbb{N}}^2$-direction, $f\in C_F^1(O)$, the function
$$
 \sum\limits_{i=1}^{\infty} \left( |D_{x_i}f  |+\left|\frac{x_i}{a_i^2}\cdot f \right| \right) \cdot |\textbf{n}_{\left( i,I_0 \right)}^{S} |
$$
is integrable on $S$ with respect to the measure $\mu_S$ and $f \cdot    \textbf{n}_{I_0}^{\partial S}$ is integrable on $\partial S$ with respect to the surface measure $\mu_{\partial S}$, then
$$
\begin{aligned}
-\sum\limits_{i=1}^{\infty} \int_{S}  \delta_{i} f\left( \textbf{x} \right) \cdot \textbf{n}_{\left( i,I_0 \right)}^{S}\left( \textbf{x} \right) \mathrm{d} \mu_{S} \left( \textbf{x} \right)= \int_{ \partial S }  f \cdot    \textbf{n}_{I_0}^{\partial S}
 \mathrm{d} \mu_{\partial S}.
\end{aligned}
$$
\end{theorem}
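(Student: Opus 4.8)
The plan is to deduce the global identity from the Local Version of Gauss-Green Type Theorem (Proposition \ref{20241102prop1}) by means of a $C^1$-partition of unity, exactly as one passes from the local to the global Stokes formula in finite dimensions.

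First I would build a suitable open cover of $\ell^2$. For each $\textbf{x}^0\in\partial S$ Definition \ref{20241009for1wb} supplies a boundary neighborhood $U$, and for each $\textbf{x}^0\in S$ it supplies an interior coordinate neighborhood $U_1$ with $\overline{S}\cap U_1=S\cap U_1$; after shrinking I may assume each such neighborhood contains a closed ball outside which the relevant cutoff vanishes, as demanded by Proposition \ref{20241102prop1}. Together with the open set $\ell^2\setminus\overline{S}$ (recall $\overline{S}$ is closed), these neighborhoods cover $\ell^2$. By Proposition \ref{H-C1 parition of unity l^2} there is a $C^1$-partition of unity $\{\gamma_n\}_{n=1}^{\infty}$ subordinate to this cover; by construction each $\supp\gamma_n$ lies in a closed ball contained in one member $U^{(n)}$ of the cover, and $\{\supp\gamma_n\}_{n=1}^{\infty}$ is locally finite. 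Moreover, since each $\gamma_n$ is a finite-dimensional smooth function of finitely many maps of the form $\textbf{x}\mapsto h(\|\textbf{x}-\textbf{y}\|_{\ell^2}^2)$ and $\supp\gamma_n$ is bounded, a bound $\sum_{i=1}^{\infty}a_i^2|D_{x_i}\gamma_n|^2\le C_n^2$ holds on $\ell^2$ (using that $\{a_i\}$ is bounded).

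Next I would apply Proposition \ref{20241102prop1} to each product $\gamma_n f$. If $U^{(n)}=\ell^2\setminus\overline{S}$, then $\gamma_n$ vanishes on $\overline{S}$ and the term is absent. If $U^{(n)}$ is an interior patch, the second half of Proposition \ref{20241102prop1} gives $-\sum_{i}\int_{S\cap U^{(n)}}\delta_i(\gamma_n f)\,\textbf{n}_{(i,I_0)}^{S}\,\mathrm{d}\mu_S=0$. If $U^{(n)}$ is a boundary patch, the first half gives $-\sum_{i}\int_{S\cap U^{(n)}}\delta_i(\gamma_n f)\,\textbf{n}_{(i,I_0)}^{S}\,\mathrm{d}\mu_S=\int_{(\partial S)\cap U^{(n)}}\gamma_n f\cdot\textbf{n}_{I_0}^{\partial S}\,\mathrm{d}\mu_{\partial S}$. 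To invoke the proposition I must check that $\gamma_n f$ meets its integrability hypotheses; this follows from the Leibniz rule $\delta_i(\gamma_n f)=\gamma_n\,\delta_i f+f\,D_{x_i}\gamma_n$, the global integrability of $\sum_i(|D_{x_i}f|+|\tfrac{x_i}{a_i^2}f|)|\textbf{n}_{(i,I_0)}^{S}|$, and the Cauchy--Schwarz estimate $\big|\sum_i D_{x_i}\gamma_n\,\textbf{n}_{(i,I_0)}^{S}\big|\le\big(\sum_i a_i^2|D_{x_i}\gamma_n|^2\big)^{1/2}\big(\sum_i a_i^{-2}|\textbf{n}_{(i,I_0)}^{S}|^2\big)^{1/2}$, whose second factor is continuous, hence bounded on the compact set $S\cap\supp\gamma_n$, by hypothesis.

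Finally I would sum over $n$. Because $\sum_n\gamma_n\equiv 1$ we have $\sum_n D_{x_i}\gamma_n\equiv 0$, so the Leibniz identity gives, pointwise and with only finitely many nonzero terms near each point, $\sum_n\delta_i(\gamma_n f)=\delta_i f$. Adding the local identities and interchanging $\sum_n$, $\sum_i$ and the integrals then yields the desired formula, the boundary contributions assembling into $\int_{\partial S}(\sum_n\gamma_n)\,f\cdot\textbf{n}_{I_0}^{\partial S}\,\mathrm{d}\mu_{\partial S}=\int_{\partial S}f\cdot\textbf{n}_{I_0}^{\partial S}\,\mathrm{d}\mu_{\partial S}$. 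The main obstacle is precisely the justification of these interchanges: the cross terms $f\,D_{x_i}\gamma_n\,\textbf{n}_{(i,I_0)}^{S}$ are individually nonzero and must cancel only after summation, so I would control them through the local finiteness of $\{\supp\gamma_n\}$ together with a dominating function built from the continuity of $\sqrt{\sum_i a_i^{-2}|\textbf{n}_{(i,I_0)}^{S}|^2}$ and the gradient bounds $C_n$, applying dominated convergence on each $\sigma$-finite piece and Tonelli's theorem to reorganize the double series. This is exactly where the continuity assumption on the normal-direction weight and the $H^2_{\mathbb{N}}$-differentiability of $f$ enter in an essential way.
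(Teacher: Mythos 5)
Your first step reproduces the paper's own first step: when $\supp f$ is compact, local finiteness makes $f=\sum_n\gamma_n f$ a \emph{finite} sum, and adding the local identities of Proposition \ref{20241102prop1} is unproblematic. The genuine gap is your final step, where $\supp f$ is not compact and the partition-of-unity sum is infinite. There the pointwise cancellation $\sum_n D_{x_i}\gamma_n=0$ must be pushed under $\sum_i\int_S$, and none of your proposed tools justify this. Tonelli is inapplicable because the cross terms $f\,D_{x_i}\gamma_n\,\textbf{n}_{(i,I_0)}^{S}$ are signed and cancel only after summation; Fubini would require $\sum_n\sum_i\int_S|f|\,|D_{x_i}\gamma_n|\,|\textbf{n}_{(i,I_0)}^{S}|\,\mathrm{d}\mu_S<\infty$, and dominated convergence would require a single $\mu_S$-integrable majorant. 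But the hypotheses of the theorem give integrability only of $\sum_i\bigl(|D_{x_i}f|+\bigl|\frac{x_i}{a_i^2}f\bigr|\bigr)|\textbf{n}_{(i,I_0)}^{S}|$ on $S$ and of $f\,\textbf{n}_{I_0}^{\partial S}$ on $\partial S$; they do \emph{not} give integrability of $|f|\sqrt{\sum_i a_i^{-2}|\textbf{n}_{(i,I_0)}^{S}|^2}$ on $S$, which is what your Cauchy--Schwarz estimate produces, and the gradient constants $C_n$ of a generic partition of unity are not uniform in $n$. A further error compounds this: you claim the weight is bounded on ``the compact set $S\cap\supp\gamma_n$'', but $\supp\gamma_n$ is merely closed and bounded in $\ell^2$, hence \emph{not} compact (cf.\ Theorem \ref{241103t1}), so continuity of $\sqrt{\sum_i a_i^{-2}|\textbf{n}_{(i,I_0)}^{S}|^2}$ on $S$ does not even secure the per-$n$ integrability hypotheses needed to apply Proposition \ref{20241102prop1} to $\gamma_n f$.

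The paper closes exactly this gap by a different second step. After the compact-support case (your argument, essentially verbatim), it approximates a general $f$ by $X_k\cdot f$, where $\{X_k\}$ is the special cutoff sequence of Theorem \ref{230215Th1}. These cutoffs have three properties a generic partition of unity lacks: their supports are genuinely compact subsets of $\ell^2$ (so the continuous weight is bounded on $\supp X_k$), they satisfy the $k$-uniform bound $\sum_i a_i^2|D_{x_i}X_k|^2\le C^2$, and, by Theorem \ref{20241013thm1}, $X_k\to 1$ and $\sum_i a_i^2|D_{x_i}X_k|^2\to 0$ almost everywhere with respect to both $\mu_S$ and $\mu_{\partial S}$. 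These are precisely the ingredients the paper feeds into the Dominated Convergence Theorem to show that the cross term $\sum_i\int_S f\,(D_{x_i}X_k)\,\textbf{n}_{(i,I_0)}^{S}\,\mathrm{d}\mu_S$ vanishes as $k\to\infty$ and that the remaining terms converge to the two sides of the desired identity. Your proposal would become a correct proof if you kept the partition-of-unity argument only for compactly supported $f$ and replaced the infinite summation over $\{\gamma_n\}$ by this single-parameter cutoff approximation.
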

\begin{proof}
Firstly, we consider the case that $\supp f$ is a compact subset of $\ell^2$. By
Proposition \ref{20241102prop1}, for each $\textbf{x}^0\in\ell^2$, there exists an open neighborhood $U_{\textbf{x}^0}$ of $\textbf{x}^0$ such that if supp$f\subset U_{\textbf{x}^0}$, then it holds that
$$
\begin{aligned}
-\sum\limits_{i=1}^{\infty} \int_{S\cap U_{\textbf{x}^0}}  \delta_{i} f\left( \textbf{x} \right) \cdot \textbf{n}_{\left( i,I_0 \right)}^{S}\left( \textbf{x} \right) \mathrm{d} \mu_{S} \left( \textbf{x} \right)= \int_{ (\partial S)\cap U_{\textbf{x}^0} }  f \cdot    \textbf{n}_{I_0}^{\partial S}
 \mathrm{d} \mu_{\partial S}.
\end{aligned}
$$
Since $\{U_{\textbf{x}^0}   : \textbf{x}^0\in\ell^2  \}$ is a family of open covering of $\ell^2$, by Proposition \ref{H-C1 parition of unity l^2}, there exists a partition of unity $\{\gamma_n\}_{n=1}^{\infty}$ subordinated to $\{U_{\textbf{x}^0}   : \textbf{x}^0\in\ell^2 \}$. Therefore, for each $k\in\mathbb{N}$, it holds that
$$
\begin{aligned}
-\sum\limits_{i=1}^{\infty} \int_{S }  \delta_{i} (f\cdot \gamma_k)\left( \textbf{x} \right) \cdot \textbf{n}_{\left( i,I_0 \right)}^{S}\left( \textbf{x} \right) \mathrm{d} \mu_{S} \left( \textbf{x} \right)= \int_{  \partial S }  f\cdot \gamma_k \cdot    \textbf{n}_{I_0}^{\partial S}
 \mathrm{d} \mu_{\partial S}.
\end{aligned}
$$
Since supp$f$ is a compact subset of $\ell^2,$ the formal infinite sum $f=\sum\limits_{k=1}^{\infty}\gamma_k\cdot f$ is a finite non-zero sum, and hence
$$
\begin{aligned}
&- \sum\limits_{i=1}^{\infty} \int_{S }  \delta_{i} f\left( \textbf{x} \right) \cdot \textbf{n}_{\left( i,I_0 \right)}^{S}\left( \textbf{x} \right) \mathrm{d} \mu_{S} \left( \textbf{x} \right)\\
&=-\sum\limits_{i=1}^{\infty} \int_{S }  \delta_{i} \left(\sum_{k=1}^{\infty}f\cdot \gamma_k\right)\left( \textbf{x} \right) \cdot \textbf{n}_{\left( i,I_0 \right)}^{S}\left( \textbf{x} \right) \mathrm{d} \mu_{S} \left( \textbf{x} \right)\\
&=-\sum_{k=1}^{\infty}\sum\limits_{i=1}^{\infty} \int_{S }  \delta_{i} (f\cdot \gamma_k)\left( \textbf{x} \right) \cdot \textbf{n}_{\left( i,I_0 \right)}^{S}\left( \textbf{x} \right) \mathrm{d} \mu_{S} \left( \textbf{x} \right)\\
&= \sum_{k=1}^{\infty}\int_{  \partial S }  f\cdot \gamma_k \cdot    \textbf{n}_{I_0}^{\partial S}
 \mathrm{d} \mu_{\partial S}=  \int_{  \partial S }  f\cdot  \textbf{n}_{I_0}^{\partial S}
 \mathrm{d} \mu_{\partial S}.
\end{aligned}
$$

Secondly, if supp$f$ is not a compact subset of $\ell^2$, then note that for each $k\in\mathbb{N}$, $f\cdot X_k$ satisfies the above conditions and it holds that
\begin{eqnarray*}
&&-\sum\limits_{i=1}^{\infty} \int_{S}  \delta_{i} (f\cdot X_k)  \cdot \textbf{n}_{\left( i,I_0 \right)}^{S} \mathrm{d} \mu_{S}\\
&&=-\sum\limits_{i=1}^{\infty} \int_{S}  X_k\cdot \delta_{i} f  \cdot \textbf{n}_{\left( i,I_0 \right)}^{S} \mathrm{d} \mu_{S}
-\sum\limits_{i=1}^{\infty} \int_{S} f  \cdot (D_{x_i}X_k)\cdot \textbf{n}_{\left( i,I_0 \right)}^{S} \mathrm{d} \mu_{S}\\
&&=\int_{  \partial S }  f\cdot X_k \cdot \textbf{n}_{I_0}^{\partial S}
 \mathrm{d} \mu_{\partial S}.
\end{eqnarray*}
Noting that
$$
\sum\limits_{i=1}^{\infty}  | f|  \cdot |(D_{x_i}X_k)|\cdot |\textbf{n}_{\left( i,I_0 \right)}^{S}|
\leq |f|\cdot \sqrt{\sum\limits_{i=1}^{\infty} a_i^2 |(D_{x_i}X_k)|^2} \cdot \sqrt{\sum\limits_{i=1}^{\infty} \sup_{\supp X_k}\frac{|\textbf{n}_{\left( i,I_0 \right)}^{S}|}{a_i^2}},
$$
combing Theorem \ref{20241013thm1} and the Lebesgue Dominated Convergence Theorem, we have
\begin{eqnarray*}
\lim_{k\to\infty}\sum\limits_{i=1}^{\infty} \int_{S} f  \cdot (D_{x_i}X_k)\cdot \textbf{n}_{\left( i,I_0 \right)}^{S} \mathrm{d} \mu_{S}=0.
\end{eqnarray*}
By the Lebesgue Dominated Convergence Theorem again,  we have
\begin{eqnarray*}
\lim_{k\to\infty}\sum\limits_{i=1}^{\infty} \int_{S}  X_k\cdot \delta_{i} f  \cdot \textbf{n}_{\left( i,I_0 \right)}^{S} \mathrm{d} \mu_{S}&=&\sum\limits_{i=1}^{\infty} \int_{S}  \delta_{i} f  \cdot \textbf{n}_{\left( i,I_0 \right)}^{S} \mathrm{d} \mu_{S},\\
\lim_{k\to\infty}\int_{  \partial S }  f\cdot X_k \cdot \textbf{n}_{I_0}^{\partial S}
 \mathrm{d} \mu_{\partial S}&=&\int_{  \partial S }  f \cdot \textbf{n}_{I_0}^{\partial S}
 \mathrm{d} \mu_{\partial S}.
\end{eqnarray*}
Thus we arrives at
$$
\begin{aligned}
-\sum\limits_{i=1}^{\infty} \int_{S}  \delta_{i} f\left( \textbf{x} \right) \cdot \textbf{n}_{\left( i,I_0 \right)}^{S}\left( \textbf{x} \right) \mathrm{d} \mu_{S} \left( \textbf{x} \right)= \int_{ \partial S }  f \cdot    \textbf{n}_{I_0}^{\partial S}
 \mathrm{d} \mu_{\partial S},
\end{aligned}
$$
which completes the proof of Theorem \ref{20241015thm1}.
\end{proof}

\begin{remark}\label{20241126rem1}
From the above proof, one may see that for the case of surface with finite co-dimension , the smooth assumption that ``$f\in C_F^1(U)$" in  Theorem \ref{20241015thm1} for the function is enough and the assumption that ``$f$ is Fr\'{e}chet differentiable respect to the $H_{ \mathbb{N}}^2$ direction" is redundant. Especially for the case of surface with co-dimension 1 which was considered by Goodman in [Theorem 2, p. 421]\cite{Goo}, the smooth conditions here are weaker than its counterpart in \cite[Theorem 2, p. 421]{Goo}.
\end{remark}

The following is a non-trivial example that satisfies the key assumptions in this paper.
\begin{exa}
Let $I=\{2n-1:n\in\mathbb{N}\}$ and $J=\mathbb{N}\setminus I$. Choosing
$\textbf{x}^0_J=\sum\limits_{j\in J}x_j^0\textbf{e}_j,\,\Delta\textbf{x}^0_J=\sum\limits_{j\in J}(\Delta x_j^0)\textbf{e}_j\in P_J$ such that
\begin{eqnarray*}
\sum_{j\in J}\left|\ln \frac{1}{\sqrt{2\pi} a_j}-\frac{x_j^2}{2a_j^2}\right|<\infty,\qquad\sum_{j\in J}\frac{|\Delta x_j|^2}{a_j^4}<\infty,\qquad \Delta x_j\neq 0,\,\forall\, j\in J.
\end{eqnarray*}
Let
\begin{eqnarray*}
S=\left\{\textbf{x}_I+\textbf{x}^0_J+x_1\cdot\Delta\textbf{x}^0_J:\textbf{x}_I=\sum\limits_{i\in I}x_i\textbf{e}_i\in P_I\right\}.
\end{eqnarray*}
Then $S$ is a surface of $\ell^2$ with co-dimension $\Gamma_J$ and $\{(S,P_I)\}\cup\{(S,P_{(I\setminus\{1\})\cup\{j\}}):j\in J\}$ is a family of coordinate pairing of $S$. It is easy to see that
\begin{eqnarray*}
n_{I}(\textbf{x})=\sqrt{ 1+\sum_{j\in J} \left|\Delta x_j\right|^2}=\sqrt{ 1+ ||\Delta\textbf{x}^0_J||^2},
\quad\forall \, \textbf{x}=\textbf{x}_I+\textbf{x}^0_J+x_1\cdot\Delta\textbf{x}^0_J\in S,
\end{eqnarray*}
and
\begin{eqnarray*}
F_{J}(\textbf{x}^0_J+x_1\cdot\Delta\textbf{x}^0_J)
&=&\prod_{j\in J}\frac{1}{\sqrt{2\pi a_j^2}}e^{-\frac{(x_j^0+x_1\cdot (\Delta x_j^0))^2}{2a_j^2}}\\
&=&e^{\sum\limits_{j\in J}\left(\ln \frac{1}{\sqrt{2\pi} a_j}-\frac{(x_j^0)^2}{2a_j^2}\right) -x_1\cdot \left(\sum\limits_{j\in J}\frac{x_j^0\cdot (\Delta x_j^0)}{a_j^2}\right)-x_1^2\cdot \left(\sum\limits_{j\in J}\frac{(\Delta x_j^0)^2}{a_j^2}\right)}.
\end{eqnarray*}
Thus
$$
\sup_{\textbf{x}_I\in U_{I}} n_{I}(\textbf{x}_I+f(\textbf{x}_I))\cdot F_{\mathbb{N}\setminus I}(f(\textbf{x}_I))<\infty,
$$
for any bounded subset $U_{I}$ of $P_I$.  One can also see that
$$
\sqrt{ \sum\limits_{i=1}^{\infty} \frac{|\textbf{n}_{\left( i,I_0 \right)}^{S} |^2}{a_i^2}}
$$
is a constant function which satisfies the assumption in Theorem \ref{20241015thm1}.
\end{exa}

For the special case of surface with co-dimension 1, as a consequence of Theorem \ref{20241015thm1}, we have the following result.
\begin{corollary}\label{20241127cor1}
Let $V$ be an open subset (of $\ell^2$) with the boundary $\partial V$ being a $C^1$-surface of $\ell^2$ with co-dimension 1, $\widetilde{V}$ be an open subset of $\ell^2$ so that $V\cup \partial V\subset \widetilde{V}$, and $f$ be a function defined on $\widetilde{V}$. For each $i\in\mathbb{N}$, if $D_{x_i}f$ exists in $\widetilde{V}$, $f\in C_F^1(\widetilde{V})\cap B_{\ell^2}(\widetilde{V})$, both $D_{x_i}f$ and $x_i\cdot f$ are integrable on $V$ with respect to the measure $P $, and $f\cdot \textbf{n}_{\mathbb{N}\setminus\{i\}}^{\partial V}$ is integrable on $\partial V$ with respect to the surface measure $\mu_{\partial V}$, then
\begin{eqnarray*}
\int_{V}D_{x_i}f\,\mathrm{d}P
=\int_{V}\frac{x_i\cdot f}{a_i^2} \,\mathrm{d}P +\int_{\partial V}f\cdot \textbf{n}_{\mathbb{N}\setminus\{i\}}^{\partial V}\,\mathrm{d}\mu_{\partial V}.
\end{eqnarray*}
\end{corollary}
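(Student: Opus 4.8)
The plan is to obtain the corollary as a direct specialization of Theorem \ref{20241015thm1} to the degenerate case in which the ``surface'' is the full-dimensional open set $V$. Concretely, I would regard $V$ as a surface of $\ell^2$ with codimension $\Gamma_{\mathbb{N}\setminus I}$ for $I=\mathbb{N}$: here $\mathbb{N}\setminus\mathbb{N}=\emptyset$, each coordinate pairing is the identity map $P_{\mathbb{N}}|_V=\mathrm{id}$, and the defining graph function is the zero map. Consequently $n_{\mathbb{N}}\equiv 1$ (by Definition \ref{20241012def1}, the indexing set of complementary directions is empty) and $F_{\mathbb{N}\setminus\mathbb{N}}=F_{\emptyset}\equiv 1$, so the associated surface measure $\mu_V$ is nothing but the Gaussian measure $P=P_1$ restricted to $V$. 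The boundary $\partial V$, being a $C^1$-surface of codimension $1$, fits Definition \ref{20241009for1wb} with $\overline{K}=\mathbb{N}$ and a distinguished index $k_{i_0}=i$, so that $\partial V$ carries codimension $\Gamma_{(\mathbb{N}\setminus\mathbb{N})\cup\{i\}}=\Gamma_{\{i\}}$ and the boundary measure $\mu_{\partial V}$ is the one produced by Definition \ref{20241013def2}.

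Next I would fix the ordered set $I_0$ appearing in Theorem \ref{20241015thm1} to be $\mathbb{N}\setminus\{i\}$ in increasing order, so that $\overline{I_0}=\mathbb{N}\setminus\{i\}\in\Gamma_{\overline{K}\setminus\{k_{i_0}\}}$ as required in Proposition \ref{20241102prop1}, and $(i,I_0)$ is a reordering of all of $\mathbb{N}$ with $\overline{(i,I_0)}=\mathbb{N}$, whence $(i,I_0)\in S_{\mathbb{N}}^F$. The key simplification is the computation of the normal functions on the codimension-zero surface $V$: since $n_{\mathbb{N}}\equiv 1$ and $P_{\overline{(i,I_0)}}P_{\mathbb{N}}^{-1}=\mathrm{id}$ has determinant $1$, the definition of $\textbf{n}_{(\,\cdot\,,I_0)}^S$ forces $\textbf{n}_{(j,I_0)}^V\equiv 0$ for every $j\in\overline{I_0}=\mathbb{N}\setminus\{i\}$ and $\textbf{n}_{(i,I_0)}^V\equiv s\big((i,I_0)\big)=(-1)^{i-1}$. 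Thus the infinite sum on the left-hand side of Theorem \ref{20241015thm1} collapses to the single term $j=i$, and recalling $\delta_i f=D_{x_i}f-\tfrac{x_i}{a_i^2}f$, the left-hand side becomes $(-1)^{i}\int_V\big(D_{x_i}f-\tfrac{x_i}{a_i^2}f\big)\,\mathrm{d}P$.

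I would then verify that the hypotheses of Theorem \ref{20241015thm1} are all available from those of the corollary. The continuity requirement on $\sqrt{\sum_{j}|\textbf{n}_{(j,I_0)}^S|^2/a_j^2}$ is automatic, since this function equals the constant $1/a_i$ on $V$. The integrability of $\sum_{j}\big(|D_{x_j}f|+|\tfrac{x_j}{a_j^2}f|\big)\,|\textbf{n}_{(j,I_0)}^S|$ over $V$ reduces, after the collapse above, to the integrability of $|D_{x_i}f|$ and $|x_i f|$ with respect to $P$, which is assumed; integrability of $f\cdot\textbf{n}_{I_0}^{\partial S}$ on $\partial V$ matches the assumed integrability of $f\cdot\textbf{n}_{\mathbb{N}\setminus\{i\}}^{\partial V}$. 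The assumption $f\in C_F^1(\widetilde V)\cap B_{\ell^2}(\widetilde V)$ supplies the smoothness and boundedness used in the dominated-convergence step of the theorem, and by Remark \ref{20241126rem1} the Fr\'echet differentiability of $f$ in the $H_{\mathbb{N}}^2$-direction is redundant here because $V$ and $\partial V$ both have finite codimension.

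The main obstacle, and the only genuinely delicate point, is the sign-and-determinant bookkeeping that must land exactly on the clean, sign-free identity claimed in the corollary. One must reconcile three sources of sign: the overall minus sign in front of the sum in Theorem \ref{20241015thm1}, the factor $s\big((i,I_0)\big)=(-1)^{i-1}$ coming from moving the index $i$ from the front of $(i,I_0)$ to its position in $\overline{(i,I_0)}=\mathbb{N}$, and the sign convention implicit in $\textbf{n}_{\mathbb{N}\setminus\{i\}}^{\partial V}$ (the increasing-order normal, for which $s(\mathbb{N}\setminus\{i\})=1$). The plan is to track these carefully so that the boundary term $\int_{\partial V} f\cdot\textbf{n}_{I_0}^{\partial V}\,\mathrm{d}\mu_{\partial V}$ is identified with $(-1)^{i}\int_{\partial V} f\cdot\textbf{n}_{\mathbb{N}\setminus\{i\}}^{\partial V}\,\mathrm{d}\mu_{\partial V}$; cancelling the common factor $(-1)^{i}$ against the one produced on the volume side then yields $\int_V D_{x_i}f\,\mathrm{d}P=\int_V\tfrac{x_i f}{a_i^2}\,\mathrm{d}P+\int_{\partial V} f\cdot\textbf{n}_{\mathbb{N}\setminus\{i\}}^{\partial V}\,\mathrm{d}\mu_{\partial V}$, as desired. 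Apart from this sign audit, the argument is pure substitution into the already-established global theorem.
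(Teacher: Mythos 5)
Your route is exactly the paper's own: Corollary \ref{20241127cor1} is presented there as a direct specialization of Theorem \ref{20241015thm1}, and your structural reductions are the correct ones. Viewing $V$ as the degenerate codimension-zero surface ($I=\mathbb{N}$, identity coordinate pairing) indeed gives $n_{\mathbb{N}}\equiv 1$ and $F_{\emptyset}\equiv 1$, hence $\mu_V=P|_V$; the convention $\textbf{n}_{(j,I_0)}^{V}=0$ for $j\in\overline{I_0}$ collapses the series to the single term $j=i$, with $\textbf{n}_{(i,I_0)}^{V}\equiv s((i,I_0))=(-1)^{i-1}$; the continuity hypothesis on $\sqrt{\sum_{j}|\textbf{n}_{(j,I_0)}^{V}|^{2}/a_j^{2}}$ is trivially satisfied (it is the constant $1/a_i$); the integrability hypotheses match; and Remark \ref{20241126rem1} disposes of the $H^{2}_{\mathbb{N}}$-Fr\'echet differentiability. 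All of that is sound.

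The genuine gap is in the sign reconciliation, which you yourself flag as the crux. Having chosen $I_0=\mathbb{N}\setminus\{i\}$ in increasing order, you have $s(I_0)=1$, so $\textbf{n}_{I_0}^{\partial V}$ and $\textbf{n}_{\mathbb{N}\setminus\{i\}}^{\partial V}$ are the \emph{same} function; your proposed identification $\int_{\partial V}f\cdot\textbf{n}_{I_0}^{\partial V}\,\mathrm{d}\mu_{\partial V}=(-1)^{i}\int_{\partial V}f\cdot\textbf{n}_{\mathbb{N}\setminus\{i\}}^{\partial V}\,\mathrm{d}\mu_{\partial V}$ therefore asserts $X=-X$ whenever $i$ is odd, which is impossible unless the integral vanishes. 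Re-ordering $I_0$ cannot repair this: for any ordering of $\mathbb{N}\setminus\{i\}$ one has $s((i,I_0))=s(I_0)\cdot(-1)^{i-1}$ and $\textbf{n}_{I_0}^{\partial V}=s(I_0)\cdot\textbf{n}_{\mathbb{N}\setminus\{i\}}^{\partial V}$, so $s(I_0)$ cancels from both sides of Theorem \ref{20241015thm1} and the substitution always produces $(-1)^{i}\int_V\delta_i f\,\mathrm{d}P=\int_{\partial V}f\cdot\textbf{n}_{\mathbb{N}\setminus\{i\}}^{\partial V}\,\mathrm{d}\mu_{\partial V}$, off from the corollary by $(-1)^{i}$. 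The missing ingredient is orientation: $\textbf{n}_{\mathbb{N}\setminus\{i\}}^{\partial V}$, as you compute it, depends only on the surface $\partial V$ and not on which side $V$ lies, yet the volume side of the identity does depend on it. Concretely, for $V=\{\textbf{x}\in\ell^2:x_1<0\}$ a direct Fubini plus one-dimensional integration by parts shows the boundary term enters with sign $+$, while for $V=\{\textbf{x}\in\ell^2:x_1>0\}$ it enters with sign $-$, against the identical $\textbf{n}_{\mathbb{N}\setminus\{1\}}^{\partial V}\equiv 1$ and the identical $\mu_{\partial V}$; hence no orientation-blind bookkeeping can yield the corollary. In the paper this sign is carried by the boundary structure of Definition \ref{20241009for1wb} — the position $i_0$ of the distinguished index $k_{i_0}$ inside the ordered set $K$ and the factor $(-1)^{i_0-1}$ in the defining function $g$, with $V$ locally $\{g<0\}$ — and these data feed into the boundary computation in the proof of Proposition \ref{20241102prop1}. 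To close your argument you must bring this structure into play and show that the chart conventions force exactly the sign that cancels $s((i,I_0))$, i.e.\ that they make $\textbf{n}_{\mathbb{N}\setminus\{i\}}^{\partial V}$ the outward normal component; your current proof never touches $i_0$ or $g$, so the residual $(-1)^{i-1}$ remains unaccounted for.
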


As another consequence of Theorem \ref{20241015thm1}, we see that the infinite-dimensional Gauss-Green theorem also holds for open subset (of $\ell^2$) which is covered by two $C^1$-surfaces.
\begin{corollary}\label{20241127cor2}
Let $V$ be an open subset of $\ell^2$ such that $\partial V= S_1\sqcup S_2\sqcup I$, where $S_k$ is an open subset of a $C^1$-surface $S^k$ for
$k=1,2$ and $I=S^1\cap S^2$. For each $\textbf{x}_0\in I$, there exists an open neighborhood $U$ of $\textbf{x}_0$ and $g_1,g_2\in C^1(U)$ such that
\begin{itemize}
\item[$(1)$] $S^i\cap U=\{\textbf{x}\in U:g_i(\textbf{x})=0\}$ for $i=1,2$;
\item[$(2)$] $S_1=\{\textbf{x}\in U:g_1(\textbf{x})=0, g_2(\textbf{x})<0\}$;
\item[$(3)$] $S_2=\{\textbf{x}\in U:g_1(\textbf{x})<0, g_2(\textbf{x})=0\}$;
\item[$(3)$] $I\cap U=\{\textbf{x}\in U:g_1(\textbf{x})=g_2(\textbf{x})=0\}$, $V\cap U=\{\textbf{x}\in U:g_1(\textbf{x})<0, \text{and } g_2(\textbf{x})<0\}$;
\item[$(4)$] There exists $i_0,i_0'\in\mathbb{N}$ such that $i_0\neq i_0'$, $D_{i_0}g_1(\textbf{x})\neq 0$ and $D_{i_0'}g_2(\textbf{x})\neq 0$ for all $\textbf{x}\in U$.
\end{itemize}
Suppose that $\widetilde{V}$ be an open subset of $\ell^2$ so that $V\cup \partial V\subset \widetilde{V}$, and $f$ be a function defined on $\widetilde{V}$. For each $i\in\mathbb{N}$, if $D_{x_i}f$ exists in $\widetilde{V}$, $f\in C_F^1(\widetilde{V})\cap B_{\ell^2}(\widetilde{V})$, both $D_{x_i}f$ and $x_i\cdot f$ are integrable on $V$ with respect to the measure $P$, and $f\cdot \textbf{n}_{\mathbb{N}\setminus\{i\}}^{S^k}$ is integrable on $S_k$ with respect to the surface measure $\mu_{S^k}$ for $k=1,2$, then
\begin{equation}\label{20241127eq1}
\int_{V}D_{x_i}f\,\mathrm{d}P
=\int_{V}\frac{x_i\cdot f}{ a_i^2} \,\mathrm{d}P +\int_{S_1}f\cdot \textbf{n}_{\mathbb{N}\setminus\{i\}}^{S^1}\,\mathrm{d}\mu_{S^1}+\int_{S_2}f\cdot \textbf{n}_{\mathbb{N}\setminus\{i\}}^{S^2}\,\mathrm{d}\mu_{S^2}.
\end{equation}
\end{corollary}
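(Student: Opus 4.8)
The plan is to reduce \eqref{20241127eq1} to the single-face situation already handled in Proposition \ref{20241102prop1} and Corollary \ref{20241127cor1}, localizing with a $C^1$-partition of unity and disposing of the edge $I$ by a finite-dimensional corner computation. Writing $\delta_i f \triangleq D_{x_i} f - \frac{x_i}{a_i^2} f$, the asserted identity is equivalent to
$$
\int_V \delta_i f \, \mathrm{d}P = \int_{S_1} f\cdot \textbf{n}_{\mathbb{N}\setminus\{i\}}^{S^1}\,\mathrm{d}\mu_{S^1}+\int_{S_2} f\cdot \textbf{n}_{\mathbb{N}\setminus\{i\}}^{S^2}\,\mathrm{d}\mu_{S^2},
$$
and it is this form that I would prove.

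First I would cover $\ell^2$ by open sets of four types: (i) corner neighborhoods $U$ around points $\textbf{x}_0\in I$ on which the local description by $g_1,g_2$ of the hypothesis holds; (ii) sets meeting $\partial V$ only along a single smooth face, i.e.\ a relatively open subset of $S^1$ or of $S^2$ lying away from $I$; (iii) sets contained in $V$; and (iv) a set contained in $\ell^2\setminus\overline{V}$. By Proposition \ref{H-C1 parition of unity l^2} choose a $C^1$-partition of unity $\{\gamma_n\}_{n=1}^{\infty}$ subordinate to this covering. Exactly as in the proof of Theorem \ref{20241015thm1}, the two sides of the identity are additive over $\{\gamma_n f\}$ once compact support is arranged, and the passage to general $f$ is handled by the cutoffs $\{X_k\}$ of Theorem \ref{230215Th1} together with Theorem \ref{20241013thm1} and dominated convergence; hence it suffices to prove the identity for $f$ supported in a single member of the covering. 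On a type (iv) set both sides vanish trivially, on a type (iii) set both sides vanish by Gaussian integration by parts (the second half of Proposition \ref{20241102prop1}), and on a type (ii) set the formula is Corollary \ref{20241127cor1} localized to a single $C^1$-surface. The whole problem thus concentrates on the corner neighborhoods of type (i).

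For a corner neighborhood $U$ with $V\cap U=\{g_1<0,\,g_2<0\}$ and $f$ supported in $U$, I would follow the reduction carried out in the proof of Proposition \ref{20241102prop1}, now with two defining inequalities instead of one. Condition (4)---namely $D_{i_0}g_1\neq0$ and $D_{i_0'}g_2\neq0$ with $i_0\neq i_0'$ throughout $U$---lets one solve $g_1=0$ for $x_{i_0}$ and $g_2=0$ for $x_{i_0'}$ in distinct coordinate directions, so that each of $S^1,S^2$ is locally a graph and $V\cap U$ is a bi-graph corner region. Projecting onto the appropriate coordinate plane $P_K$ and using the product factorization $P=\bn^{n,1}\times P_{n,1}$ of Lemma \ref{cha2prop1} to integrate out all but the finitely many coordinates (those containing $i,i_0,i_0'$) needed to describe both faces, as in the passage from $\mu_K$ to the iterated $\mu_{K_n}$-integral there, the inner integral becomes the classical integral of $D_{x_i}(f\cdot w)$ over a finite-dimensional corner $\{g_1<0,\,g_2<0\}\subset\mathbb{R}^{n}$, where $w$ is the Gaussian density and the $\frac{x_i}{a_i^2}$ term is produced by differentiating $w$. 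Applying the finite-dimensional Gauss-Green formula for a domain cut out by two transverse inequalities yields precisely two surface terms, one over $\{g_1=0,\,g_2<0\}$ and one over $\{g_1<0,\,g_2=0\}$, with no contribution from the transverse edge $\{g_1=g_2=0\}$. Reassembling the Gaussian weights $F_{\mathbb{N}\setminus I}$ and the normalizing factors into the surface measures $\mu_{S^1},\mu_{S^2}$ and normals $\textbf{n}_{\mathbb{N}\setminus\{i\}}^{S^k}$, exactly as in Definition \ref{20241012def2}, gives the local corner identity and closes the argument.

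The main obstacle is precisely this corner step: one must verify that, after freezing the infinitely many tail coordinates, the transverse edge $I$ contributes nothing to either face integral, i.e.\ that $\mu_{S^1}$ and $\mu_{S^2}$ do not charge $I$. The hypothesis $i_0\neq i_0'$ is what guarantees this cleanly, because it forces each face to be a graph in a coordinate direction not used by the other, so every finite-dimensional slice is an honest quadrant-type corner for which the classical divergence theorem applies with no residual lower-dimensional term. The surrounding convergence and integrability bookkeeping---the interchange of summation over coordinates with integration and the control of the Gaussian tails---are the same Fernique-type estimates (Proposition \ref{230522prop1}) and dominated-convergence arguments already employed throughout Section \ref{20240126chapter1}.
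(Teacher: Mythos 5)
Your overall architecture---localization by a $C^1$-partition of unity, the local Gauss--Green computation of Proposition \ref{20241102prop1} away from the edge, Corollary \ref{20241127cor1} on single-face pieces, and the cut-offs $\{X_k\}$ of Theorem \ref{230215Th1} with dominated convergence to remove the compact-support restriction---is exactly the machinery behind Theorem \ref{20241015thm1}, which is all the paper itself offers here: the corollary is stated without a separate proof, as a ``consequence'' of that theorem, even though the hypotheses of Definition \ref{20241009for1wb} on the boundary do not literally cover a boundary with a corner. In that sense your proposal follows the paper's route and supplies the corner analysis that a literal application of Theorem \ref{20241015thm1} leaves out. The problem is how you discharge that corner step.

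The genuine gap is your transversality claim. You assert that condition $(4)$---$D_{i_0}g_1\neq 0$ and $D_{i_0'}g_2\neq 0$ on $U$ with $i_0\neq i_0'$---forces every finite-dimensional slice of $V\cap U$ to be ``an honest quadrant-type corner'', so that the classical divergence theorem for a domain cut out by two \emph{transverse} inequalities applies with no edge term. This does not follow from the hypotheses: nothing in $(1)$--$(4)$ prevents the graph function of $S^1$ from depending on $x_{i_0'}$ and that of $S^2$ from depending on $x_{i_0}$, so the two faces may be tangent along $I$. Concretely, take (in two of the coordinates) $g_1(\textbf{x})=x_1-x_2$ and $g_2(\textbf{x})=x_2-x_1-x_1^2$; then $D_1g_1\equiv 1$, $D_2g_2\equiv 1$, $i_0=1\neq 2=i_0'$, and all of $(1)$--$(4)$ hold with $I\cap U=\{x_1=x_2=0\}$, yet $Dg_1$ and $Dg_2$ are linearly dependent along $I$ and the two-dimensional slice of $V$ is a cusp pinched between two tangent curves---neither Lipschitz nor a curvilinear quadrant. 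So the finite-dimensional tool you invoke does not apply as stated, precisely at the point you yourself single out as ``the main obstacle''. The step is repairable: on each slice one can instead use the Gauss--Green formula for sets of finite perimeter (De Giorgi--Federer), whose reduced boundary consists of the two arcs and carries no contribution from the edge, or one can shave off a shrinking neighborhood of $I$, apply the single-face case on the remainder, and pass to the limit using the assumed integrability of $D_{x_i}f$, $x_i\cdot f$ and $f\cdot\textbf{n}^{S^k}_{\mathbb{N}\setminus\{i\}}$. But as written, the justification of the corner computation rests on a transversality property that the hypotheses of the corollary do not give you.
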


\subsection{Differential Forms and Stokes Type Theorems}
For each $\textbf{x}\in S$, there exists $I_0\in S_{\mathbb{N}}^F$, an open neighborhood $U_{\textbf{x}}$ of $\textbf{x}$ such that $(S\cap U_{\textbf{x}}, P_{\overline{I_0}})$ is a coordinate pairing of $S$, $\overline{I_0}\in \Gamma_I$, we define
\begin{eqnarray*}
\left( \bigwedge_{I_0}\mathrm{d}\textbf{X}_{I_0}\right)(E)\triangleq \int_{P_{\overline{I_0}}E}F_{\mathbb{N}\setminus \overline{I_0}}(P_{\overline{I_0}}^{-1}\textbf{x}_{\overline{I_0}})\,\mathrm{d}\mu_{\overline{I_0}}(\textbf{x}_{\overline{I_0}}),
\end{eqnarray*}
for any Borel subset of $S\cap U_{\textbf{x}}$.
\begin{definition}
Suppose that $I$ is a non-empty subset of $\mathbb{N}$, $S$ is a surface of $\ell^2$ with codimension $\Gamma_{\mathbb{N}\setminus I}$ and $\mu$ is complex Borel measure on $S$. If for each $\textbf{x}\in S$, there exists $I_0\in S_{\mathbb{N}}^F$, an open neighborhood $U_{\textbf{x}}$ of $\textbf{x}$ and a complex Borel measurable function $f$ on $S\cap U_{\textbf{x}}$ such that $(S\cap U_{\textbf{x}}, P_{\overline{I_0}})$ is a coordinate pairing of $S$ and
\begin{equation}\label{202041121for1}
\mu(E)=\int_{P_{\overline{I_0}E}}f( P_{\overline{I_0}}^{-1}\textbf{x}_{\overline{I_0}})\cdot F_{\mathbb{N}\setminus \overline{I_0}}(P_{\overline{I_0}}^{-1}\textbf{x}_{\overline{I_0}})\,\mathrm{d}\mu_{\overline{I_0}}(\textbf{x}_{\overline{I_0}}),\qquad\forall\,
E\in \mathscr {B}(S\cap U_{\textbf{x}}),
\end{equation}
then we say that $\mu$ is a $\Gamma_I$-differential form on $S$. Meanwhile, we denote $\mu(S)$ by $\int_{S}\,\mu$.
\end{definition}

\begin{remark}
We should note that \eqref{202041121for1} means that $\mu|_{S\cap U_{\textbf{x}}}=f\cdot\left( \bigwedge\limits_{I_0}\mathrm{d}\textbf{X}_{I_0}\right)$ which is similar to property of $n$-differential forms on manifolds of dimension $n$ for each $n\in\mathbb{N}$.
\end{remark}
\begin{exa}
The measure $\mu_S$ on $S$ which appeared in Proposition \ref{20241013prop1} is a $\Gamma_I$-differential form on $S$.
\end{exa}
Now we come to define the differential operator $d$ on differential forms. Inspired by Theorem \ref{20241015thm1}, one maybe try to define

\begin{eqnarray*}
d \left(f \cdot    \textbf{n}_{I_0}^{\partial S}
\mu_{\partial S}\right)\triangleq-\sum\limits_{i=1}^{\infty}  \delta_{i} f \cdot \textbf{n}_{\left( i,I_0 \right)}^{S}\mu_{S},
\end{eqnarray*}
but, even in the finite dimensional case, this map is not well-defined, for example, one can see the smooth functions which are 0 on the unit circle and non-zero on the unit open disc. In order to overcome this problem, we will use the tools of equivalent class.
 Suppose that $f_1$ and $f_2$ are two complex-valued Borel measurable functions on $S$ such that $\int_S|f_1|\,\mathrm{d}\mu_S+\int_S|f_2|\,\mathrm{d}\mu_S<+\infty$, if $\int_Sf_1\mu_S=\int_Sf_2\mu_S$, then we write $f_1\mu_S\sim f_2\mu_S.$ It is easy to see that $\sim$ is an equivalent relation and for each complex-valued Borel measurable function $f$ on $S$ such that $\int_S|f|\,\mathrm{d}\mu_S<+\infty$, we denote the collection of all complex-valued Borel measurable function $g$ on $S$ such that $\int_S|g|\,\mathrm{d}\mu_S<+\infty$ and $\int_Sg\mu_S=\int_Sf\mu_S$ by $[f\mu_S]$. By the way, we define $\int_S[f\mu_S]\triangleq \int_Sg\,\mathrm{d}\mu_S$, where $g\mu_S\in [f\mu_S]$. Meanwhile, for each complex Borel measure $\mu$ on $\partial S$, we define $\int_{\partial S}\mu\triangleq\mu(\partial S)$.
\begin{definition}
Suppose that $f$ is a complex-valued function that satisfy the assumptions in Theorem \ref{20241015thm1}, then we define
\begin{eqnarray*}
d \left(f \cdot    \textbf{n}_{I_0}^{\partial S}
\mu_{\partial S}\right)\triangleq -\sum\limits_{i=1}^{\infty}  \delta_{i} f \cdot \textbf{n}_{\left( i,I_0 \right)}^{S}\mu_{S},
\end{eqnarray*}
and one can easily extend it by linearity. Then we denote the definition domain of $d$ by $D_d$.
\end{definition}
As a direct consequence of the definition, one can show the following Stokes type theorem.
\begin{theorem}{\rm (\textbf{Stokes Type Theorem})}
For each $\mu\in D_d$, it holds that 
$$\int_{\partial S} \mu=\int_S d\mu.$$
\end{theorem}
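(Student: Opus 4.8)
The plan is to recognize this Stokes Type Theorem as a reformulation of the Global Version of Gauss-Green Type Theorem (Theorem \ref{20241015thm1}), once the definitions of $\int_{\partial S}$, of $\int_S$ on equivalence classes, and of the operator $d$ are unwound. Accordingly, I would first establish the identity for a single basic $\Gamma_I$-differential form $\mu = f\cdot\textbf{n}_{I_0}^{\partial S}\mu_{\partial S}$, where $f$ satisfies the hypotheses of Theorem \ref{20241015thm1} and $I_0\in S_{\mathbb{N}}^F$, and then pass to general $\mu \in D_d$ by linearity.

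For the basic case I would unwind the left-hand side: by the definition $\int_{\partial S}\mu \triangleq \mu(\partial S)$ together with the fact that $\mu$ is the measure $E \mapsto \int_E f\cdot \textbf{n}_{I_0}^{\partial S}\,\mathrm{d}\mu_{\partial S}$, one gets $\int_{\partial S}\mu = \int_{\partial S} f\cdot \textbf{n}_{I_0}^{\partial S}\,\mathrm{d}\mu_{\partial S}$. For the right-hand side, by the definition of $d$ one has $d\mu = \big[-\sum_{i=1}^{\infty} \delta_i f\cdot \textbf{n}_{(i,I_0)}^S\,\mu_S\big]$, so by the definition of the integral of an equivalence class, $\int_S d\mu = -\sum_{i=1}^{\infty} \int_S \delta_i f\cdot \textbf{n}_{(i,I_0)}^S\,\mathrm{d}\mu_S$. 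The two expressions are then equated directly by Theorem \ref{20241015thm1}. For complex-valued $f$ I would split into real and imaginary parts and apply the theorem to each, the hypotheses being imposed on $f$ precisely so that every term here is finite and the series converges absolutely.

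To finish, I would observe that $D_d$ is by construction spanned by such basic forms and that $d$ is extended by linearity; since $\mu \mapsto \int_{\partial S}\mu = \mu(\partial S)$ is linear in the measure and $\int_S[\cdot]$ is linear on equivalence classes, the identity on basic forms propagates to all finite linear combinations, hence to every $\mu \in D_d$.

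The main point to guard is well-definedness, which is also where the equivalence-class device does its work. A priori, a given $\mu$ on $\partial S$ admits several descriptions (through different $I_0\in S_{\mathbb{N}}^F$ and coordinate pairings), so $d\mu$, and in particular $\int_S d\mu$, could depend on the chosen representation. I would argue that this is harmless: the value $\int_{\partial S}\mu = \mu(\partial S)$ depends only on the fixed measure $\mu$, the coordinate-independence of $\mu_{\partial S}$ being guaranteed by Lemma \ref{20241013lem1} and Proposition \ref{20241013prop1}; hence any two admissible representations yield the same left-hand side, and Theorem \ref{20241015thm1} then forces the corresponding values of $\int_S d\mu$ to coincide. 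Thus $\int_S d\mu$ is well-defined and equals $\int_{\partial S}\mu$. I expect the only genuine subtlety to be confirming the absolute convergence of the series $\sum_{i=1}^{\infty} \int_S \delta_i f\cdot \textbf{n}_{(i,I_0)}^S\,\mathrm{d}\mu_S$, so that the termwise manipulations and the linear extension are legitimate, but this is exactly the integrability hypothesis carried over verbatim from Theorem \ref{20241015thm1}.
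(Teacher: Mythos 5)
Your proposal is correct and is essentially the paper's own argument: the paper introduces the operator $d$ and the equivalence-class device precisely so that the Stokes formula becomes a restatement of the Global Gauss-Green Type Theorem (Theorem \ref{20241015thm1}), and its proof consists of exactly the unwinding of definitions plus linearity that you carry out. Your additional attention to well-definedness of $\int_S d\mu$ across different representations is a reasonable elaboration of what the paper leaves implicit in the phrase ``a direct consequence of the definition.''
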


\newpage

\section{Infinite Dimensional Cauchy-Riemann Equation }
\label{20240111chapter2}

In this section, we consider functions on the following space
$$
\left\{(x_i+\sqrt{-1}y_i)_{i\in\mathbb{N}}\in\mathbb{C}^{\infty}:x_j,y_j\in \mathbb{R},\,\forall\,j\in\mathbb{N},\,\sum_{j=1}^{\infty}(x_j^2+y_j^2)<\infty\right\}.
$$
For simplicity of notation, we use the same letter $\ell^2$ to denote the above space when no confusion can arise. If we write $\mathbb{N}_2\triangleq \{(x_i,y_i)_{i\in\mathbb{N}}\in (\mathbb{R}\times\mathbb{R})^{\mathbb{N}}: \text{ there exists }i_0\in\mathbb{N}\text{ such that }x_i=y_i=0,\,\forall\,i\in\mathbb{N}\setminus\{i_0\}\text{ and }(x_{i_0},y_{i_0})\in \{(i_0,0),(0,i_0)\} \}$ which can be viewed as $\mathbb{N}\sqcup \mathbb{N}$. Thus, the $\ell^2$ here is $\ell^2(\mathbb{N}_2)$, where the second notation is the same as in other sections. We will write them simply $\ell^2$ and $\mathbb{N}_2$ in this section when no confusion can arise.

We will fixed a positive number $r$. By the same arguments as in Section \ref{20231108sect1}, we continue to write $P_r$ for the restriction of the product measure $\prod\limits_{i\in \mathbb{N}}\bn_{ra_i}\times \bn_{ra_i}$ on $\left(\ell^{2},\mathscr{B}\big(\ell^{2}\big)\right)$. The counterparts of Sections \ref{20231108sect1} and \ref{20240126chapter1} holds for $P_r$.

This section is mainly based on \cite{WYZ1}.

\subsection{Extension of the Cauchy-Riemann Operator to (s,t)-forms}\label{230916sec1}

Suppose that $f$ is a complex-valued function defined on $\ell^2 $. For each $\textbf{z}=(z_i)_{i\in\mathbb{N}}=(x_i+\sqrt{-1}y_i)_{i\in\mathbb{N}}\in \ell^2$, where $x_j,y_j\in \mathbb{R}$ for each $j\in\mathbb{N}$, we define
\begin{eqnarray*}
&&\displaystyle D_{x_j} f(\textbf{z}) \triangleq\lim_{\mathbb{R}\ni t\to 0}\frac{f(z_1,\cdots,z_{j-1},z_j+t,z_{j+1},\cdots)-f(\textbf{z})}{t},\\[3mm]
&&\displaystyle D_{y_j}f(\textbf{z}) \triangleq\lim_{\mathbb{R}\ni t\to 0}\frac{f(z_1,\cdots,z_{j-1},z_j+\sqrt{-1}t,z_{j+1},\cdots)-f(\textbf{z})}{t},
\end{eqnarray*}
provided that the limits exist.
Just like the case of finite dimensions, for each $j\in\mathbb{N}$, we set $\overline{\textbf{z}}=(\overline{z_i})_{i\in\mathbb{N}}=(x_i-\sqrt{-1}y_i)_{i\in\mathbb{N}}$, and
\begin{eqnarray}
\partial_{j} f(\textbf{z}) &\triangleq&\frac{1}{2}\left(D_{x_j} f(\textbf{z})-\sqrt{-1}D_{y_j} f(\textbf{z})\right),\nonumber\\
\overline{\partial_{j}} f(\textbf{z})&\triangleq &\frac{1}{2}\left(D_{x_j} f(\textbf{z})+\sqrt{-1}D_{y_j} f(\textbf{z})\right),\nonumber\\
\delta_{j} f(\textbf{z})&\triangleq &\partial_{j} f(\textbf{z})-\frac{\overline{z_{j}} }{2r^2a^{2}_{j}} \cdot f(\textbf{z}),\, \overline{\delta_{j} } f(\textbf{z})\triangleq \overline{\partial_{j} } f(\textbf{z})-\frac{z_{j}}{2r^2a^{2}_{j}}\cdot f(\textbf{z}),\label{230629def1}
\end{eqnarray}
where $\{a_i\}_{i=1}^{\infty}$ is the sequence of positive numbers given in Section \ref{20231108sect1}.
\begin{corollary}\label{integration by Parts or deltai}
Suppose that $f, g$ are Borel measurable functions on $\ell^2$ such that $f,g\in C_{S,b}^1(\ell^2)$ and supp$f\cup$supp$g$  is a bounded subset of $\ell^2$. It holds that
$$
\int_{\ell^2} \overline{\partial}_{i}  f\cdot \bar{g}  \,\mathrm{d}P_r=-\int_{\ell^2} f\cdot\overline{ \delta_{i} g}  \;\mathrm{d}P_r,\quad \forall\; i\in\mathbb{N}.
$$
\end{corollary}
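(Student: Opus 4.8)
The plan is to reduce this complex integration-by-parts formula to the real-variable translation formula for the Gaussian measure $P_r$ already established in Lemma \ref{cha2prop1}. First I would unpack the definitions in \eqref{230629def1}: since $\overline{\partial_i}f = \tfrac12(D_{x_i}f + \sqrt{-1}\,D_{y_i}f)$ and $\overline{\delta_i}g = \overline{\partial_i}g - \tfrac{z_i}{2r^2a_i^2}g$, taking conjugates gives $\overline{\overline{\delta_i}g} = \tfrac12(D_{x_i}\bar g - \sqrt{-1}\,D_{y_i}\bar g) - \tfrac{\overline{z_i}}{2r^2a_i^2}\bar g$. So the claimed identity splits, upon separating real and imaginary coordinate directions $x_i$ and $y_i$, into two real integration-by-parts statements of the form
\[
\int_{\ell^2} (D_{x_i}f)\,\bar g\,\mathrm{d}P_r = \int_{\ell^2} f\Big(-D_{x_i}\bar g + \frac{x_i}{r^2a_i^2}\bar g\Big)\,\mathrm{d}P_r,
\]
and the analogous one with $y_i$ in place of $x_i$. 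Thus the whole problem collapses to proving a single scalar integration-by-parts formula in one Gaussian coordinate, which I would state and prove once and then apply twice (adding the two with the appropriate $\pm\sqrt{-1}$ weights reassembles exactly $\overline{\partial_i}f\cdot\bar g$ on the left and $-f\cdot\overline{\overline{\delta_i}g}$ on the right).

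For that scalar formula, the key step is the translation/differentiation identity \eqref{230702e1}: $\mathrm{d}P_r(\mathbf{y}+s\mathbf{e}_i) = e^{-(2sy_i+s^2)/(2r^2a_i^2)}\,\mathrm{d}P_r(\mathbf{y})$, viewing the real $x_i$-direction as one of the coordinates of the doubled index set $\mathbb{N}_2$. I would write $\int_{\ell^2}(D_{x_i}(f\bar g))\,\mathrm{d}P_r$ and evaluate it via the difference quotient: using the translation formula,
\[
\int_{\ell^2} \frac{(f\bar g)(\mathbf{z}+t\mathbf{e}_{x_i}) - (f\bar g)(\mathbf{z})}{t}\,\mathrm{d}P_r(\mathbf{z})
= \int_{\ell^2} (f\bar g)(\mathbf{z})\,\frac{e^{-(2t x_i + t^2)/(2r^2a_i^2)} - 1}{t}\,\mathrm{d}P_r(\mathbf{z}),
\]
and letting $t\to0$ shows $\int_{\ell^2} D_{x_i}(f\bar g)\,\mathrm{d}P_r = \int_{\ell^2} (f\bar g)\cdot\frac{x_i}{r^2a_i^2}\,\mathrm{d}P_r$. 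Expanding $D_{x_i}(f\bar g) = (D_{x_i}f)\bar g + f(D_{x_i}\bar g)$ by the product rule then rearranges into exactly the scalar identity above. The boundedness and support hypotheses ($f,g\in C^1_{S,b}(\ell^2)$ with $\mathrm{supp}\,f\cup\mathrm{supp}\,g$ bounded, hence uniformly included in $\ell^2$) guarantee that all integrands are $P_r$-integrable and, crucially, justify passing the limit $t\to0$ through the integral by dominated convergence, since the factor $x_i e^{|t x_i|/(r^2a_i^2)}$ is controlled by the Fernique-type exponential integrability in Proposition \ref{230522prop1}.

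The main obstacle I anticipate is the interchange of limit and integral in the difference-quotient step: one must dominate $\frac{e^{-(2tx_i+t^2)/(2r^2a_i^2)}-1}{t}$ uniformly for small $t$ by a $P_r$-integrable function times the bounded $|f\bar g|$. A clean way is to write this quotient as $-\frac{1}{r^2a_i^2}\int_0^1 (x_i + \tau t)\,e^{-(2\tau t x_i + \tau^2 t^2)/(2r^2a_i^2)}\,\mathrm{d}\tau$ (mirroring the mean-value computation already carried out in the proof of Proposition \ref{partial derivative of Ptf}), whose absolute value for $|t|\le1$ is bounded by $\frac{1}{r^2a_i^2}(|x_i|+1)e^{(|x_i|+1)/(2r^2a_i^2)}$; this is $P_r$-integrable by \eqref{230522e1}. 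I would also note that the $F$-continuity and $C^1_{S,b}$ regularity ensure the partial derivatives $D_{x_i}f, D_{x_i}\bar g$ exist everywhere and are themselves bounded, so the product rule applies pointwise. Once the scalar formula is secured, reassembling the complex statement is purely formal bookkeeping with the $\tfrac12$ and $\sqrt{-1}$ factors, which I would present compactly rather than expand in full.
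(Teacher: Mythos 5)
Your proposal is correct in substance but takes a genuinely different route from the paper's. The paper's proof is a one-line reduction to its infinite-dimensional Gauss--Green theorem: it extends $f$ and $g$ by zero, notes that $\supp f\cup\supp g$ sits strictly inside some ball $B_R$, applies Corollary \ref{20241127cor1} on $B_R$ (the boundary integral over $\partial B_R$ vanishes since $f\bar g$ vanishes near the sphere), and reads off the identity. You instead bypass the entire Section 4 surface-measure/Stokes machinery: you reduce the complex identity to a single real scalar Gaussian integration-by-parts formula and prove that formula directly from the Cameron--Martin translation formula \eqref{230702e1} of Lemma \ref{cha2prop1}, via a difference-quotient computation and dominated convergence. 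What the paper's route buys is brevity, given that the Gauss--Green theorem is already in hand; what yours buys is self-containedness --- it shows this corollary needs only the quasi-invariance of $P_r$ under coordinate translations plus elementary integrability. In fact, since $f\bar g$ has bounded support, your dominating function is supported in a fixed ball and hence bounded, so the appeal to the Fernique-type estimate of Proposition \ref{230522prop1} is stronger than necessary.

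Two slips, neither of which damages the architecture, but both should be fixed. First, the statement involves $\overline{\delta_i g}$, the complex conjugate of $\delta_i g=\partial_i g-\frac{\overline{z_i}}{2r^2a_i^2}g$; you instead expanded $\overline{\overline{\delta_i}\,g}$, the conjugate of the \emph{other} operator from \eqref{230629def1}. The correct expansion is $\overline{\delta_i g}=\frac12\bigl(D_{x_i}\bar g+\sqrt{-1}\,D_{y_i}\bar g\bigr)-\frac{x_i+\sqrt{-1}\,y_i}{2r^2a_i^2}\bar g$, and it is exactly this expression --- not the one you wrote --- that your weighted sum $\frac12(\hbox{$x_i$-identity})+\frac{\sqrt{-1}}{2}(\hbox{$y_i$-identity})$ reassembles; so your mechanics prove the right statement even though the preliminary expansion names the wrong operator. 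Second, the translation formula is applied in the wrong direction: from \eqref{230702e1} (substituting $s_1=-t$) one gets $\int_{\ell^2} h(\textbf{z}+t\textbf{e}_{x_i})\,\mathrm{d}P_r(\textbf{z})=\int_{\ell^2} h(\textbf{z})\,e^{(2tx_i-t^2)/(2r^2a_i^2)}\,\mathrm{d}P_r(\textbf{z})$, so the kernel in your difference-quotient identity should be $\frac{e^{(2tx_i-t^2)/(2r^2a_i^2)}-1}{t}$, whose limit is $+\frac{x_i}{r^2a_i^2}$, consistent with the scalar formula you state; with the exponent as written, the limit would be $-\frac{x_i}{r^2a_i^2}$ and would contradict your own conclusion. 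Neither correction affects the dominated-convergence bound or the final assembly.
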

\begin{proof}
By assumption, there exists $r,R\in(0,+\infty)$ such that
$$
\displaystyle\bigcup_{\textbf{z}\in  \supp f\bigcup \supp g}B_r(\textbf{z})\subset B_R.
$$
Then both $f$ and $g$ can be viewed as elements in $ C^{1}_{0} ( \ell^2 )$ by extending their values to $\ell^2\setminus (\supp f\cup \supp g)$ by 0.

Thus, for each $i\in\mathbb{N}$, it holds that
\begin{eqnarray*}
\int_{\ell^2} \overline{\partial}_{i}  f\cdot \overline{g} \,\mathrm{d}P_r=\int_{B_R}\frac{1}{2}( D_{x_{i}}f +\sqrt{-1}D_{y_{i}}f)\cdot \overline{g}  \,\mathrm{d}P_r.
\end{eqnarray*}
By Corollary \ref{20241127cor1}, we have
$$
\begin{array}{ll}
\displaystyle \int_{B_R}\frac{1}{2}( D_{x_{i}}f +\sqrt{-1}D_{y_{i}}f)\cdot \overline{g}  \,\mathrm{d}P_r\\[2mm]
\displaystyle
= -\int_{B_R}f\cdot \overline{ \frac{1}{2}( D_{x_{i}}g -\sqrt{-1}D_{y_{i}}g)}  \,\mathrm{d}P_r
+\int_{B_R}f\cdot \overline{ \frac{x_i-\sqrt{-1}y_i}{2r^2a_i^2} g}  \,\mathrm{d}P_r\\
\displaystyle
= -\int_{\ell^2}f\cdot \overline{ \frac{1}{2}( D_{x_{i}}g -\sqrt{-1}D_{y_{i}}g)}  \,\mathrm{d}P_r
+\int_{\ell^2}f\cdot \overline{ \frac{x_i-\sqrt{-1}y_i}{2r^2a_i^2} g}  \,\mathrm{d}P_r\\
\displaystyle= \int_{\ell^2}f\cdot \overline{ \left(-\partial_{i}g  +\frac{\overline{z_i}}{2r^2a_i^2}\cdot g\right) }\,\mathrm{d}P_r
=-\int_{\ell^2}f\cdot \overline{\delta_i g}\,\mathrm{d}P_r.
\end{array}
$$
This completes the proof of Corollary \ref{integration by Parts or deltai}.
\end{proof}
Suppose that $s,t$ are non-negative integers. If $s+t\geqslant 1$, write
$$
(\ell^2)^{s+t}=\underbrace{\ell^2\times\ell^2\times\cdots\times\ell^2}_{s+t\hbox{ \tiny times}}.
$$
Suppose that $I=(i_1,\cdots,i_s)$ and $J=(j_1,\cdots,j_t)$ are multi-indices, where $i_1,\cdots,i_s,$ $j_1,\cdots,j_t\in \mathbb{N}$, write
\begin{eqnarray}\label{200208t8}
\textbf{a}^{I,J}\triangleq\prod_{l=1}^{s}a_{i_l}^2\cdot \prod_{r=1}^{t}a_{j_r}^2.
\end{eqnarray}
and define $I\cup J\triangleq\{i_1,\cdots,i_s\}\cup \{j_1,\cdots,j_t\}$. For $j\in \mathbb{N}$, we define $J\cup\{j\}=\{j_1,\cdots,j_t\}\cup\{j\}$.
As in \cite[p. 530]{YZ22}, we define a complex-valued function $\mathrm{d}z^I\wedge \mathrm{d}\overline{z}^J$ on $(\ell^2)^{s+t}$ by
$$
(\mathrm{d}z^I\wedge \mathrm{d}\overline{z}^J)(\textbf{z}^1,\cdots,\textbf{z}^{s+t})\triangleq \frac{1}{\sqrt{(s+t)!}}\sum_{\sigma\in S_{s+t}}(-1)^{s(\sigma)}\cdot\prod_{k=1}^{s}z_{i_k}^{\sigma_k}\cdot\prod_{l=1}^{t}\overline{z_{j_l}^{\sigma_{s+l}}},
$$
where $\textbf{z}^l=(z_j^l)_{j\in\mathbb{N}} \in\ell^2,\,\,l=1,\cdots, s+t$, $S_{s+t}$ is the permutation group of $\{1,\cdots,s+t\}$, $s(\sigma)$ is the sign of $\sigma=(\sigma_1,\cdots,\sigma_s,\sigma_{s+1},\cdots,\sigma_{s+t})$, and we agree that $0!=1$. We call the following (formal) summation an $(s,t)$-form on $\ell^2$:
 \begin{equation}\label{230320e1}
 \sum^{\prime }_{\left| I\right|  =s} \sum^{\prime }_{\left| J\right|  =t} f_{I,J}\,\mathrm{d}z_{I}\wedge \,\mathrm{d}\overline{z_{J}},
 \end{equation}
where the sum $\sum\limits^{\prime }_{\left| I\right|  =s} \sum\limits_{\left| J\right|  =t}^{\prime } $ is taken only over strictly increasing multi-indices $I$ and $J$ with $\left| I\right|  =s$ and $\left| J\right|  =t$ (for which $\left| I\right|$ and $\left| J\right|$ stand for respectively the cardinalities of sets $\{i_1,\cdots,i_s\}$ and $\{j_1,\cdots,j_t\}$), and $f_{I,J}$ is a function on $\ell^2$ for any  strictly
increasing multi-indices $I$ and $J$. Clearly, each $f_{I,J}\,\mathrm{d}z_{I}\wedge \,\mathrm{d}\overline{z_{J}}$ can be viewed as a function on $\ell^2\times (\ell^2)^{s+t}$. Nevertheless, in the present setting of infinite dimensions (\ref{230320e1}) is an infinite series for which the convergence is usually not guaranteed, and therefore it is a formal summation (unless further conditions are imposed).

We need to introduce some working spaces which will play key roles in the sequel. For any $n\in\mathbb{N}$, we denote by $C_c^{\infty}(\mathbb{C}^n)$ the set of all $\mathbb{R}$-valued, $C^{\infty}$-functions on $\mathbb{C}^n(\equiv \mathbb{R}^{2n})$ with compact supports. Clearly, each function in $C_c^{\infty}(\mathbb{C}^n)$ can also be viewed as a cylinder function on $\ell^2$. Set
$$
\mathscr {C}_c^{\infty}(\mathbb{C}^n)\triangleq \left\{f+\sqrt{-1}g:\;f,g\in C_c^{\infty}(\mathbb{R}^{2n})\right\},\quad\mathscr {C}_c^{\infty}\triangleq \bigcup_{n=1}^{\infty}\mathscr {C}_c^{\infty}(\mathbb{C}^n).
$$
By the similar arguments as in \cite[Proposition 2.4, p. 528]{YZ22}, $\mathscr {C}_c^{\infty}$ is dense in $L^2(\ell^2,P_r)$. For each $r\in(0,+\infty)$, denote by
by $L^{2}_{\left( s,t\right)  }\left( \ell^2, P_r\right)$ the set of all $(s,t)$-forms $$\sum^{\prime }_{\left| I\right|  =s} \sum^{\prime }_{\left| J\right|  =t} f_{I,J}\,\mathrm{d}z_{I}\wedge \,\mathrm{d}\overline{z_{J}}$$ on $\ell^2$ for which
$$
f_{I,J}\in L^{2}\left(\ell^2, P_r\right)\hbox{ for each } I,J,\hbox{ and }\sum^{\prime }_{\left| I\right|  =s} \sum^{\prime }_{\left| J\right|  =t}\textbf{a}^{I,J} \int_{\ell^2} \left| f_{I,J}\right|^{2}\,\mathrm{d}P_r<\infty.
$$
Suppose that
\begin{eqnarray}\label{230629for1}
f=\sum^{\prime }_{\left| I\right|  =s} \sum^{\prime }_{\left| J\right|  =t} f_{I,J}\,\mathrm{d}z_{I}\wedge \,\mathrm{d}\overline{z_{J}}\in L^{2}_{\left( s,t\right)  }\left( \ell^2, P_r\right),
\end{eqnarray}
$j\in\mathbb{N}, I=(i_1,\cdots,i_s), J=(j_1,\cdots,j_t)$ and $K=(k_1,\cdots,k_{t+1})$ are multi-indices with strictly increasing order.
If $K\neq J\cup \{j\}$, set $\varepsilon_{j, J}^{K}=0$. If $K= J\cup \{j\}$, we denote the sign of the permutation taking $(j,j_1,\cdots,j_t)$ to $K$ by $\varepsilon_{j, J}^{K}$. If for each $I$ and $K$, there exists $g_{I,K}\in L^2\left( \ell^2, P_r\right)$ such that
\begin{eqnarray}\label{weak def of st form}
\int_{\ell^2}g_{I,K}\cdot\overline{\varphi}\,\mathrm{d}P_r=-\int_{\ell^2}\sum_{1\leqslant i<\infty}\sum_{|J|=t}'\varepsilon_{i, J}^{K}f_{I,J}\cdot\overline{\delta_i\varphi} \,\mathrm{d}P_r,
\end{eqnarray}
for any $\varphi\in \mathscr{C}_c^{\infty}$ and
\begin{eqnarray}\label{230929for1}
\sum_{|I|=s,|K|=t+1}'\textbf{a}^{I,K}\cdot \int_{\ell^2}|g_{I,K}|^2\,\mathrm{d}P_r<\infty,
\end{eqnarray}
then, we define
\begin{eqnarray}\label{d-dar-f-defi}
\overline{\partial}f\triangleq (-1)^s \sum_{|I|=s,|K|=t+1}'g_{I,K} \mathrm{d}z_I\wedge \mathrm{d}\overline{z_K}.
\end{eqnarray}
We will use the notation $D_{\overline{\partial}}$ to denote all $f\in L^{2}_{\left( s,t\right)  }\left( \ell^2, P_r\right)$ such that \eqref{weak def of st form}-\eqref{230929for1} holds.
Clearly,
$$\overline{\partial}f\in L^2_{(s,t+1)}\left( \ell^2, P_r\right).
 $$
Similarly to the proof of \cite[Lemma 2.5, pp. 529-530]{YZ22}, we can prove that the operator $\overline{\partial}$ given in the above is a densely defined, closed linear operator from $ L^2_{(s,t)}\left( \ell^2, P_r\right)$ to $L^2_{(s,t+1)}\left( \ell^2, P_r\right)$.

Similarly to the proof of \cite[Lemma 2.6, pp. 531-532]{YZ22}, we have the following result:

\begin{lemma}\label{lem2.5.1}
Suppose that $f\in L^2_{(s,t)}\left( \ell^2, P_r\right)$ is in the form of \eqref{230629for1} so that $\overline{\partial}f$ can be defined as that in \eqref{d-dar-f-defi}. Then,
\begin{eqnarray}\label{kernal-range-P}
\overline{\partial}(\overline{\partial}f)=0.
\end{eqnarray}
\end{lemma}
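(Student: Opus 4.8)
The plan is to establish the identity $\overline{\partial}(\overline{\partial}f)=0$ by reducing it, exactly as in the finite-dimensional theory, to the symmetry (commutativity) of the second-order mixed operators $\overline{\delta_i}$ and $\overline{\delta_j}$ when tested against smooth compactly supported functions. First I would unwind the definitions: writing $f=\sum^{\prime}_{|I|=s}\sum^{\prime}_{|J|=t}f_{I,J}\,\mathrm{d}z_I\wedge\mathrm{d}\overline{z_J}$, the form $\overline{\partial}f=(-1)^s\sum^{\prime}_{|I|=s,|K|=t+1}g_{I,K}\,\mathrm{d}z_I\wedge\mathrm{d}\overline{z_K}$ is characterized weakly by \eqref{weak def of st form}. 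Then $\overline{\partial}(\overline{\partial}f)$ is the $(s,t+2)$-form whose coefficients $h_{I,L}$ (indexed by strictly increasing $L$ with $|L|=t+2$) are defined weakly by
\begin{eqnarray*}
\int_{\ell^2}h_{I,L}\cdot\overline{\varphi}\,\mathrm{d}P_r=-\int_{\ell^2}\sum_{1\leqslant i<\infty}\sum_{|K|=t+1}'\varepsilon_{i,K}^{L}\,g_{I,K}\cdot\overline{\delta_i\varphi}\,\mathrm{d}P_r,\qquad\forall\,\varphi\in\mathscr{C}_c^{\infty}.
\end{eqnarray*}
The goal is to show each $h_{I,L}=0$ as an element of $L^2(\ell^2,P_r)$, and since $\mathscr{C}_c^{\infty}$ is dense in $L^2(\ell^2,P_r)$ it suffices to show the right-hand side above vanishes for every test form $\varphi$.

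The core step is to substitute the defining relation for $g_{I,K}$ into this expression. For each fixed $i$, applying \eqref{weak def of st form} with test function $\delta_i\varphi$ in place of $\varphi$ — which is legitimate because $\delta_i\varphi\in\mathscr{C}_c^{\infty}$ whenever $\varphi\in\mathscr{C}_c^{\infty}$, as differentiation and multiplication by the smooth cylinder coefficient $\overline{z_i}/(2r^2a_i^2)$ preserve membership in $\mathscr{C}_c^{\infty}$ — lets me rewrite $\int g_{I,K}\cdot\overline{\delta_i\varphi}\,\mathrm{d}P_r$ as a double sum involving $\int f_{I,J}\cdot\overline{\delta_j\delta_i\varphi}\,\mathrm{d}P_r$. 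After this substitution the right-hand side becomes, up to sign,
\begin{eqnarray*}
\int_{\ell^2}\sum_{i,j}\sum_{|J|=t}'\Big(\sum_{|K|=t+1}'\varepsilon_{i,K}^{L}\,\varepsilon_{j,J}^{K}\Big)f_{I,J}\cdot\overline{\delta_j\delta_i\varphi}\,\mathrm{d}P_r.
\end{eqnarray*}
I would then carry out the standard combinatorial bookkeeping: the inner coefficient $\sum_{|K|=t+1}'\varepsilon_{i,K}^{L}\varepsilon_{j,J}^{K}$ is antisymmetric under interchange of the indices $i$ and $j$ (this is the multilinear-algebra heart of $d\circ d=0$), so the sum pairs up the $(i,j)$ and $(j,i)$ terms with opposite signs. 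Consequently the whole expression collapses once I verify that the second mixed derivatives are symmetric, $\delta_i\delta_j\varphi=\delta_j\delta_i\varphi$, so that the factor $\overline{\delta_j\delta_i\varphi}$ is symmetric in $i,j$ while its combinatorial weight is antisymmetric, forcing cancellation.

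The symmetry $\delta_i\delta_j\varphi=\delta_j\delta_i\varphi$ is the technical crux, and I expect it to be the main obstacle to write cleanly, though it is genuinely elementary for $\varphi\in\mathscr{C}_c^{\infty}$: recalling \eqref{230629def1}, $\delta_j=\partial_j-\overline{z_j}/(2r^2a_j^2)$, each $\varphi$ is a smooth cylinder function depending on finitely many variables, so $\partial_i$ and $\partial_j$ commute by the classical equality of mixed partials, the multiplication operators by $\overline{z_i}/(2r^2a_i^2)$ and $\overline{z_j}/(2r^2a_j^2)$ commute with each other, and a direct check shows $\partial_i(\overline{z_j}\,\varphi/(2r^2a_j^2))$ and $\partial_j(\overline{z_i}\,\varphi/(2r^2a_i^2))$ produce matching cross terms since $\partial_i\overline{z_j}=0$. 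A subtle bookkeeping point deserving care is the interchange of the infinite sum over $i$ with the integral and the summation over $K$ and $J$: I would justify this using the square-summability hypotheses, namely $\sum^{\prime}\textbf{a}^{I,K}\int|g_{I,K}|^2\,\mathrm{d}P_r<\infty$ from \eqref{230929for1} together with the Fernique-type integrability bounds of Proposition \ref{230522prop1} controlling the factors $\overline{z_i}/a_i^2$, so that all rearrangements are absolutely convergent. Once symmetry and the antisymmetry of the $\varepsilon$-coefficients are in hand, the pairing argument gives $\int h_{I,L}\overline{\varphi}\,\mathrm{d}P_r=0$ for all $\varphi\in\mathscr{C}_c^{\infty}$, whence $h_{I,L}=0$ and $\overline{\partial}(\overline{\partial}f)=0$, which is precisely \eqref{kernal-range-P}.
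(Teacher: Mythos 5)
Your overall strategy coincides with the paper's: substitute the weak defining relation \eqref{weak def of st form} into the coefficients of $\overline{\partial}(\overline{\partial}f)$, then cancel using the antisymmetry of the $\varepsilon$-coefficients together with the commutativity $\delta_i\delta_j\varphi=\delta_j\delta_i\varphi$, and conclude by density of $\mathscr{C}_c^{\infty}$ in $L^2(\ell^2,P_r)$. However, the justification of your core substitution step contains a genuine gap. You assert that $\delta_i\varphi\in\mathscr{C}_c^{\infty}$ whenever $\varphi\in\mathscr{C}_c^{\infty}$, because ``differentiation and multiplication by $\overline{z_i}/(2r^2a_i^2)$ preserve membership in $\mathscr{C}_c^{\infty}$.'' This is false precisely when $i$ exceeds the cylinder dimension of $\varphi$: if $\varphi\in\mathscr{C}_c^{\infty}(\mathbb{C}^n)$ and $i>n$, then $\partial_i\varphi=0$ and $\delta_i\varphi=-\frac{\overline{z_i}}{2r^2a_i^2}\,\varphi$, which depends on the new variable $z_i$, is unbounded, and has unbounded support in that variable; it is a cylinder function but not a compactly supported one, hence not in $\mathscr{C}_c^{\infty}$. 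Since \eqref{weak def of st form} is stipulated only for test functions belonging to $\mathscr{C}_c^{\infty}$, you are not entitled to plug $\delta_j\varphi$ into it directly. Moreover the problematic indices genuinely occur: in the weak formula for the coefficients of $\overline{\partial}(\overline{\partial}f)$ indexed by a multi-index $M$ with $|M|=t+2$, the index $j$ runs over all elements of $M$ (those with $\varepsilon_{j,K}^{M}\neq 0$), and these need not be bounded by the cylinder dimension of the test function $\varphi$ you fixed.

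This is exactly where the paper spends its effort: it isolates the claim \eqref{noncompact test} --- that the defining identity extends to the non-compactly-supported test functions $\delta_j\varphi$ --- and proves it by a cutoff argument: choose $\varsigma\in C_c^{\infty}(\mathbb{R})$ with $\varsigma\equiv 1$ on $[-1,1]$, set $\varphi_n=\varsigma\big(\sum_{k=1}^{j}|z_k|^2/n^2\big)$, observe that $\delta_j(\varphi\varphi_n)\in\mathscr{C}_c^{\infty}$, apply \eqref{weak def of st form} to $\varphi\varphi_n$, and let $n\to\infty$. Your proof needs this (or an equivalent) approximation step to be complete; the commutativity $\delta_i\delta_j\varphi=\delta_j\delta_i\varphi$, which you single out as the technical crux, is indeed elementary (as you verify) and is not where the difficulty lies. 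A secondary remark: the convergence issues you propose to handle via the Fernique-type bounds of Proposition \ref{230522prop1} do not actually arise, since $\varepsilon_{i,J}^{K}=0$ unless $i\in K$ and $J=K\setminus\{i\}$, so every sum over $i$, $J$, $K$ appearing here has only finitely many nonzero terms; the only integrability needed is that $|z_j|\,|\varphi|$ lies in $L^2(\ell^2,P_r)$, which follows from elementary Gaussian moment bounds rather than from Fernique's theorem.
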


\begin{proof}
Suppose $I=(i_1,\cdots,i_s)$, $K=(k_1,\cdots,k_{t+1})$ and\\ $M=(m_1,\cdots,m_{t+2})$ are multi-indices with strictly increasing order. We claim that, for each $j\in\mathbb{N}$ and $\varphi\in \mathscr{C}_c^{\infty}$,
\begin{eqnarray}\label{noncompact test}
\int_{\ell^2}g_{I,K}\cdot\overline{\delta_j\varphi}\,\mathrm{d}P_r
=-\int_{\ell^2}\sum_{1\leq i<\infty}\sum_{|J|=t}'\varepsilon_{i, J}^{K}f_{I,J} \cdot\overline{\delta_i(\delta_j\varphi)} \,\mathrm{d}P_r.
\end{eqnarray}
Here we should note that $\delta_j\varphi$ (defined by \eqref{230629def1}) may not lie in $\mathscr{C}_c^{\infty}$. Because of this, we choose a function $\varsigma\in C_c^\infty(\mathbb{R})$ so that $\varsigma\equiv 1$ on $[-1,1]$. For each $n\in\mathbb{N}$, write $\varphi_n\equiv \varsigma\big(\frac{\sum^{j}_{k=1}|z_k|^2}{n^2}\big)$. Then, $\delta_j(\varphi\varphi_n)\in \mathscr{C}_c^{\infty}$. Applying (\ref{weak def of st form}), we arrive at
\begin{eqnarray*}
\int_{\ell^2}g_{I,K}\cdot\overline{\delta_j(\varphi\varphi_n)}\,\mathrm{d}P_r
=-\int_{\ell^2}\sum_{1\leq i<\infty}\sum_{|J|=t}'\varepsilon_{i, J}^{K}f_{I,J}\cdot\overline{\delta_i(\delta_j(\varphi\varphi_n))} \,\mathrm{d}P_r.
\end{eqnarray*}
Letting $n\to\infty$ in the above, we obtain \eqref{noncompact test}.

Thanks to \eqref{noncompact test}, it follows that
\begin{eqnarray*}
&&-\int_{\ell^2}\sum_{1\leq j<\infty}\sum_{|K|=t+1}'\varepsilon_{j, K}^{M}g_{I,K}\cdot\overline{\delta_j\varphi}\,\mathrm{d}P_r\\
&=&\sum_{1\leq i,j<\infty}\sum_{|K|=t+1}'\sum_{|J|=t}'\varepsilon_{i, J}^{K}\cdot\varepsilon_{j, K}^{M}\cdot\int_{\ell^2}f_{I,J}\cdot\overline{\delta_i(\delta_j\varphi)}\,\mathrm{d}P_r\\
&=&\sum_{1\leq i,j<\infty}\sum_{|K|=t+1}'\sum_{|J|=t}'\varepsilon_{j,i, J}^{j,K}\cdot\varepsilon_{j, K}^{M}\cdot\int_{\ell^2}f_{I,J}\cdot\overline{\delta_i(\delta_j\varphi)}\,\mathrm{d}P_r\\
&=&\sum_{1\leq i,j<\infty}\sum_{|J|=t}'\varepsilon_{j,i, J}^{M}\cdot\int_{\ell^2}f_{I,J}\cdot\overline{\delta_i(\delta_j\varphi)}\,\mathrm{d}P_r\\
&=&-\sum_{1\leq i,j<\infty}\sum_{|J|=t}'\varepsilon_{i,j, J}^{M}\cdot\int_{\ell^2}f_{I,J}\cdot\overline{\delta_i(\delta_j\varphi)}\,\mathrm{d}P_r\\
&=&-\sum_{1\leq i,j<\infty}\sum_{|J|=t}'\varepsilon_{i,j, J}^{M}\cdot\int_{\ell^2}f_{I,J}\cdot\overline{\delta_j(\delta_i\varphi)}\,\mathrm{d}P_r\\
&=&-\sum_{1\leq i,j<\infty}\sum_{|J|=t}'\varepsilon_{j,i, J}^{M}\cdot\int_{\ell^2}f_{I,J}\cdot\overline{\delta_i(\delta_j\varphi)}\,\mathrm{d}P_r,
\end{eqnarray*}
where the fourth equality follows from the fact that $\varepsilon_{j,i, J}^{M}=-\varepsilon_{i,j, J}^{M}$. The above equalities imply that
\begin{eqnarray*}
-\int_{\ell^2}\sum_{1\leq j<\infty}\sum_{|K|=t+1}'\varepsilon_{j, K}^{M}g_{I,K}\cdot\overline{\delta_j\varphi}\,\mathrm{d}P_r=0.
\end{eqnarray*}
The arbitrariness of $I$ and $M$ implies the desired property \eqref{kernal-range-P}. This completes the proof of Lemma \ref{lem2.5.1}.
\end{proof}

\subsection{Reduction of Dimension and Mollification}
Similar to the proof of \cite[Proposition 2.4, p. 528]{YZ22}, it holds that
\begin{proposition}\label{20230916prop1}
$\mathscr {C}_c^{\infty}$ is a dense subspace of $L^2\left( \ell^2, P_r\right)$.
\end{proposition}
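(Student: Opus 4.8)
The statement asserts that $\mathscr{C}_c^\infty$, the algebra of smooth compactly-supported cylinder functions on the complexified $\ell^2$, is dense in $L^2(\ell^2, P_r)$. The plan is to mimic the strategy of \cite[Proposition 2.4, p. 528]{YZ22}, combining a reduction-of-dimension argument with a classical finite-dimensional mollification. The key structural fact I would lean on is that $P_r$ is an infinite product of one-dimensional Gaussian measures, so that cylinder sets generate $\mathscr{B}(\ell^2)$ (Lemma \ref{230330lem1}) and finite-dimensional approximations are dense.

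First I would reduce to approximating a generic $f \in L^2(\ell^2, P_r)$ by cylinder functions. Since simple functions built from Borel sets are dense in $L^2$, and since (by Lemma \ref{230326lem2} and Lemma \ref{230330lem1}) the Borel $\sigma$-algebra of $\ell^2$ coincides with the trace of the product $\sigma$-algebra, it suffices to approximate $\chi_E$ for $E$ in the generating algebra of cylinder sets. Concretely, given $\varepsilon > 0$, I would first find $n \in \mathbb{N}$ and a function $g$ depending only on the first $n$ complex coordinates $(z_1,\ldots,z_n)$ with $\|f - g\|_{L^2(\ell^2,P_r)} < \varepsilon/2$; this is the \emph{reduction of dimension} step, carried out by conditioning on the first $n$ coordinates (equivalently, taking conditional expectations with respect to the finite sub-$\sigma$-algebras, whose increasing union generates $\mathscr{B}(\ell^2)$, so the martingale converges in $L^2$).

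Next I would approximate the cylinder function $g \in L^2(\mathbb{C}^n, \mathcal{N})$, where $\mathcal{N}$ is the finite Gaussian measure $\prod_{i=1}^n \bn_{ra_i}\times\bn_{ra_i}$ on $\mathbb{C}^n \equiv \mathbb{R}^{2n}$. Because $\mathcal{N}$ is equivalent to Lebesgue measure on $\mathbb{R}^{2n}$ with a bounded-below-on-compacts continuous density, $C_c(\mathbb{R}^{2n})$ is dense in $L^2(\mathbb{R}^{2n}, \mathcal{N})$, and standard mollification against a smooth compactly-supported kernel turns a continuous compactly-supported function into one in $C_c^\infty(\mathbb{R}^{2n})$ while keeping the $L^2(\mathcal{N})$ error small (the Gaussian weight being bounded on the relevant compact set). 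Taking real and imaginary parts, I obtain $h \in \mathscr{C}_c^\infty(\mathbb{C}^n) \subset \mathscr{C}_c^\infty$ with $\|g - h\|_{L^2(\mathbb{C}^n,\mathcal{N})} < \varepsilon/2$. Since the embedding of cylinder functions preserves the $L^2$ norm (the remaining coordinates integrate to $1$), this gives $\|f - h\|_{L^2(\ell^2, P_r)} < \varepsilon$.

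The main obstacle, and the point requiring genuine care rather than routine estimation, is the dimension-reduction step: one must justify that conditional expectations onto the first $n$ coordinates converge to $f$ in $L^2(\ell^2, P_r)$ as $n \to \infty$. This rests on the martingale convergence theorem together with the fact that the finite cylinder $\sigma$-algebras increase to $\mathscr{B}(\ell^2)$, which is precisely the content guaranteed by Lemmas \ref{230326lem2} and \ref{230330lem1}. Once this is in place, the mollification step is entirely classical and finite-dimensional, so no new infinite-dimensional difficulty arises there. I would therefore expect the proof to be short, citing \cite[Proposition 2.4]{YZ22} for the analogous real-valued statement and noting that the complex case follows by treating real and imaginary parts separately.
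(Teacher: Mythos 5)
Your proof is correct, and it takes a genuinely different (and in one respect safer) route than the paper. The paper disposes of this proposition in one line by citing the analogous statement \cite[Proposition 2.4, p.~528]{YZ22}; the hint left elsewhere in the text (the lemma asserting that $C_F^{\infty}(O)$ is dense in $L^2(O,P)$ ``as a consequence of Dynkin's theorem'') indicates that the cited argument is a monotone-class argument: cylinder sets form a $\pi$-system generating $\mathscr{B}(\ell^2)$ (Lemmas \ref{230326lem2} and \ref{230330lem1}), indicators of such sets are approximable by smooth compactly supported cylinder functions, and the family of approximable sets is a $\lambda$-system. You instead justify the dimension-reduction step by $L^2$ martingale convergence: the map $f\mapsto f_n$ of Definition \ref{Integration reduce dimension} is precisely the conditional expectation onto the $\sigma$-algebra $\mathcal{F}_n$ generated by the first $n$ coordinates (because $P_r=\bn^{n,r}\times P_{n,r}$ is a product), and since $\bigcup_n\mathcal{F}_n$ generates $\mathscr{B}(\ell^2)$ by the two lemmas just quoted, $f_n\to f$ in $L^2(\ell^2,P_r)$. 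One feature of your argument deserves emphasis: in this paper the convergence $f_n\to f$ (conclusion (2) of Proposition \ref{Reduce diemension}) is itself \emph{deduced from} the density of $\mathscr{C}_c^{\infty}$, so invoking that proposition here would be circular; your martingale argument establishes the dimension-reduction step independently and thus breaks the circle. The finite-dimensional tail of your proof (truncation, equivalence of $\bn^{n,r}$ with Lebesgue measure with density bounded above and below on compacts, mollification, splitting into real and imaginary parts) is routine and correct. As for what each approach buys: the Dynkin-type argument requires nothing beyond the product structure of $P_r$ and is shorter if one is content to cite it; your argument is self-contained, produces explicit approximants (conditional expectations followed by convolutions), and carries over verbatim to $L^p(\ell^2,P_r)$ for every $1\le p<\infty$.
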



Motivated by  Proposition \ref{20230916prop1},  we have the following definition for support of functions in $ L^2\left( \ell^2, P_r\right)$.
\begin{definition}
For $f\in L^2(\ell^2, P_r)$, we will use the symbol supp$f$ to denote $\ell^2$ minus the union of all open balls $B_{r_1}(\textbf{z})$ such that $\int_{B_{r_1}(\textbf{z})} f\phi \mathrm{d}P_r=0$ for all $\phi \in \mathscr {C}_c^{\infty}.$
	
As in Subsection \ref{230916sec1}, for
\begin{eqnarray*}
f=\sum^{\prime }_{\left| I\right|  =s} \sum^{\prime }_{\left| J\right|  =t} f_{I,J}\,\mathrm{d}z_{I}\wedge \,\mathrm{d}\overline{z_{J}}\in L^{2}_{\left( s,t\right)  }\left( \ell^2, P_r\right),
\end{eqnarray*}
write supp$f\triangleq \overline{\bigcup\limits^{\prime }_{\left| I\right|  =s,\left| J\right|  =t} \text{supp}  f_{I,J}}$ where the union $\bigcup\limits^{\prime }$ is taken only over strictly increasing multi-indices $I=\left( i_{1},i_{2},\cdots ,i_{s}\right)$ and $J=\left( j_{1},j_{2},\cdots ,j_{t}\right)$, i.e., $i_{1}<i_{2}<\cdots <i_{s}$ and $j_{1}<j_{2}<\cdots <j_{t}$ and the overline means the closure in $\ell^2$.
\end{definition}
For each $n\in\mathbb{N}$, write $\mathbb{N}_{2,\widehat{n} }\triangleq \{(x_i,y_i)_{i\in\mathbb{N}}\in (\mathbb{R}\times\mathbb{R})^{\mathbb{N}}: $ there exists $i_0\in\mathbb{N}\setminus\{1,\cdots,n\}$ such that $x_i=y_i=0,\,\forall\,i\in\mathbb{N}\setminus\{i_0\}$ and \\ $(x_{i_0},y_{i_0})\in \{(i_0,0),(0,i_0)\} \}$ which can be viewed as $(\mathbb{N}\setminus\{1,\cdots,n\})\sqcup (\mathbb{N}\setminus\{1,\cdots,n\})$, we define a probability measure $\bn^{n,r}$ in $(\mathbb{C}^n,\mathscr{B}(\mathbb{C}^n))$ by setting
$$
\bn^{n,r}\triangleq \prod_{i=1}^{n}\bn_{ra_i}\times \bn_{ra_i}
$$
and a probability measure ${\widehat{ \bn}^{n,r}}$ in $(\mathbb{C}^{\mathbb{N}\setminus\{1,\cdots,n\}},\mathscr{B}(\mathbb{C}^{\mathbb{N}\setminus\{1,\cdots,n\}}))$ by setting
$$
\widehat{\bn}^{n,r}\triangleq \prod_{i=n+1}^{\infty}\bn_{ra_i}\times \bn_{ra_i}.
$$
Let
\begin{equation}\label{220817e1zx}
P_{n,r}(E)\triangleq \widehat{\bn}^{n,r}(E),\quad\forall\;E\in \mathscr{B}(\ell^2(\mathbb{N}_{2,\widehat{n} })).
\end{equation}
Then, by a similar arguments around \eqref{220817e1}, we obtain a Borel probability measure $P_{n,r}$ on $\ell^2(\mathbb{N}_{2,\widehat{n} })$. Obviously, $\ell^2=\mathbb{C}^n\times \ell^2(\mathbb{N}_{2,\widehat{n} })$ and
$$P_r=\bn^{n,r}\times P_{n,r}\quad\forall\; n\in \mathbb{N}.$$

\begin{definition}\label{Integration reduce dimension}
Suppose that $f\in L^2(\ell^2, P_r)$ and $n\in\mathbb{N}$. Then let
\begin{eqnarray*}
f_n(\textbf{z}_n)&\triangleq &\int_{\ell^2(\mathbb{N}_{2,\widehat{n} })} f(\textbf{z}_n,\textbf{z}^n)\,\mathrm{d}P_{n,r}(\textbf{z}^n),
\end{eqnarray*}
where $\textbf{z}^n=(x_{i} +\sqrt{-1}y_{i})_{i=n+1}^{\infty}\in \ell^2(\mathbb{N}_{2,\widehat{n} }),\,\,\, \textbf{z}_n=(x_{i} +\sqrt{-1}y_{i})_{i=1}^{n}\in \mathbb{C}^n.$
\end{definition}

\begin{proposition}\label{Reduce diemension}
For $f\in L^{2} ( \ell^{2},P_r )$ and $n\in\mathbb{N}$, then $f_n$ can also be viewed as a cylinder function on $\ell^2$ with the following properties.
\begin{itemize}
\item[(1)]\label{z1}
		$\left| \left| f_{n}\right|  \right|_{L^{2} ( \ell^{2} ,P_r )  }  \leqslant \left| \left| f\right|  \right|_{L^{2} ( \ell^{2} ,P_r )  }  .$
\item[(2)] \label{z2}
		$\lim\limits_{n\rightarrow \infty } \left| \left| f_{n}-f\right|  \right|_{L^{2} ( \ell^{2},P_r)  }  =0.$
\item[(3)] \label{z3}
	$\lim\limits_{n\rightarrow \infty } \int_{\ell^2}\big|| f_{n}|^{2}  -| f|^{2}\big|\,\mathrm{d}P_r =0.$	
\end{itemize}
\end{proposition}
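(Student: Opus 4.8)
The plan is to prove the three assertions of Proposition \ref{Reduce diemension} by reducing everything to the conditional-expectation structure of the measure $P_r=\bn^{n,r}\times P_{n,r}$. The key observation is that $f_n$ is precisely the conditional expectation $\dbE[f\mid \mathscr{G}_n]$, where $\mathscr{G}_n$ is the sub-$\sigma$-algebra of $\mathscr{B}(\ell^2)$ generated by the first $n$ complex coordinates; equivalently, $f_n$ results from integrating out the tail variables $\textbf{z}^n$ against the tail product measure $P_{n,r}$. Once this identification is made, each part follows from standard $L^2$ conditional-expectation theory, but since the paper develops measure theory from scratch I would keep the arguments self-contained using Jensen's inequality and Fubini's theorem (justified because $\bn^{n,r}\times P_{n,r}$ is a product of $\sigma$-finite, indeed probability, measures).

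For part (1), I would apply the Cauchy-Buniakowsky-Schwarz inequality (invoked in Remark \ref{230307rem1}) pointwise in $\textbf{z}_n$: since $P_{n,r}$ is a probability measure,
\begin{eqnarray*}
|f_n(\textbf{z}_n)|^2=\left|\int_{\ell^2(\mathbb{N}_{2,\widehat{n}})}f(\textbf{z}_n,\textbf{z}^n)\,\mathrm{d}P_{n,r}(\textbf{z}^n)\right|^2\leqslant \int_{\ell^2(\mathbb{N}_{2,\widehat{n}})}|f(\textbf{z}_n,\textbf{z}^n)|^2\,\mathrm{d}P_{n,r}(\textbf{z}^n),
\end{eqnarray*}
and then integrate over $\mathbb{C}^n$ against $\bn^{n,r}$, using Fubini and $P_r=\bn^{n,r}\times P_{n,r}$ to recover $\|f\|_{L^2(\ell^2,P_r)}^2$ on the right. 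This also shows $f_n\in L^2(\ell^2,P_r)$, so the map $f\mapsto f_n$ is a contraction, which lets me pass from a dense class to all of $L^2$.

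For part (2), the strategy is the usual two-step density argument. I would first verify convergence on the dense subspace $\mathscr{C}_c^\infty$ (dense by Proposition \ref{20230916prop1}): if $f\in\mathscr{C}_c^\infty$ depends only on the first $m$ coordinates, then for every $n\geqslant m$ the tail integration is trivial and $f_n=f$, so $\|f_n-f\|_{L^2}\to 0$ immediately. For general $f\in L^2(\ell^2,P_r)$ and $\varepsilon>0$, choose $g\in\mathscr{C}_c^\infty$ with $\|f-g\|_{L^2}<\varepsilon$; then by the contraction property from part (1) applied to $f-g$,
\begin{eqnarray*}
\|f_n-f\|_{L^2}\leqslant \|f_n-g_n\|_{L^2}+\|g_n-g\|_{L^2}+\|g-f\|_{L^2}\leqslant 2\|f-g\|_{L^2}+\|g_n-g\|_{L^2},
\end{eqnarray*}
and the middle term vanishes for large $n$, giving $\limsup_n\|f_n-f\|_{L^2}\leqslant 2\varepsilon$.

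For part (3), I would write $\big||f_n|^2-|f|^2\big|=\big||f_n|-|f|\big|\cdot\big(|f_n|+|f|\big)$ and apply Cauchy-Buniakowsky-Schwarz in $L^2(\ell^2,P_r)$:
\begin{eqnarray*}
\int_{\ell^2}\big||f_n|^2-|f|^2\big|\,\mathrm{d}P_r\leqslant \big\||f_n|-|f|\big\|_{L^2}\cdot\big\||f_n|+|f|\big\|_{L^2}\leqslant \|f_n-f\|_{L^2}\cdot\big(\|f_n\|_{L^2}+\|f\|_{L^2}\big),
\end{eqnarray*}
where the reverse triangle inequality $\big||f_n|-|f|\big|\leqslant|f_n-f|$ handles the first factor and the ordinary triangle inequality the second. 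By part (1) the factor $\|f_n\|_{L^2}+\|f\|_{L^2}$ is bounded by $2\|f\|_{L^2}$, while part (2) sends $\|f_n-f\|_{L^2}\to 0$; hence the whole expression tends to zero. The only genuine point requiring care is the rigorous identification of $f_n$ as an honest $\mathscr{B}(\ell^2)$-measurable function and the Fubini step for the infinite-dimensional product measure, which is where I expect the main technical obstacle to lie; this is resolved by the product-measure construction and the decomposition $P_r=\bn^{n,r}\times P_{n,r}$ already established in the excerpt, together with the fact (from the construction preceding \eqref{220817e1zx}) that $\ell^2=\mathbb{C}^n\times\ell^2(\mathbb{N}_{2,\widehat{n}})$.
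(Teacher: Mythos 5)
Your proposal is correct and takes essentially the same route as the paper's own proof: Jensen/Cauchy--Schwarz together with the decomposition $P_r=\bn^{n,r}\times P_{n,r}$ for (1), a density argument in $\mathscr{C}_c^{\infty}$ (where cylinder functions satisfy $g_n=g$ for large $n$) combined with the contraction property for (2), and the factorization $\big||f_n|^2-|f|^2\big|=\big||f_n|-|f|\big|\cdot\big(|f_n|+|f|\big)$ with Cauchy--Schwarz and the reverse triangle inequality for (3). The conditional-expectation framing is a pleasant extra perspective, but it does not change the substance of the argument.
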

\begin{proof}
(1) Note that
\begin{eqnarray*}
\int_{\ell^2}|f_{n}|^2\,\mathrm{d}P&=&\int_{\mathbb{C}^n}|f_n(\textbf{z}_n)|^2\,\mathrm{d}\mathcal{N}^{n,r}(\textbf{z}_n)\\
&=&\int_{\mathbb{C}^n}\bigg|\int_{\ell^2(\mathbb{N}_{2,\widehat{n} })} f(\textbf{z}_n,\textbf{z}^n)\cdot
\mathrm{d}P_{n,r}(\textbf{z}^n)\bigg|^2\mathrm{d}\mathcal{N}^{n,r}(\textbf{z}_n)\\
&\leqslant&\int_{\mathbb{C}^n} \int_{\ell^2(\mathbb{N}_{2,\widehat{n} })} |f(\textbf{z}_n,\textbf{z}^n)|^2\cdot
\mathrm{d}P_{n,r}(\textbf{z}^n)\mathrm{d}\mathcal{N}^{n,r}(\textbf{z}_n)\\
&=&\int_{\ell^2}|f|^2\,\mathrm{d}P_r,
\end{eqnarray*}
where the inequality follows form the Jensen's inequality and the first and the third equalities follows form the fact that $P_r=\bn^{n,r}\times P_{n,r}.$\\
(2) Since $\mathscr {C}_c^{\infty}$ is dense in $L^2(\ell^2,P_r)$, for any $\epsilon>0$, there exists $m\in\mathbb{N}$ and $g\in \mathscr{C}_c^{\infty}(\mathbb{C}^m)$ such that
\begin{eqnarray*}
\bigg(\int_{\ell^2} |f-g|^2\mathrm{d}P_r\bigg)^{\frac{1}{2}}<\frac{\epsilon}{2}.
\end{eqnarray*}
Then for any $n\geqslant m$, we have $g_n=g$
and
\begin{eqnarray*}
\bigg(\int_{\ell^2} |f_n-f|^2\mathrm{d}P_r\bigg)^{\frac{1}{2}}
&=&\bigg(\int_{\ell^2} |f_n-g_n+g-f|^2\mathrm{d}P_r\bigg)^{\frac{1}{2}}\\
&\leqslant&\bigg(\int_{\ell^2} |f_n-g_n|^2\mathrm{d}P_r\bigg)^{\frac{1}{2}}+\bigg(\int_{\ell^2 } |g-f|^2\mathrm{d}P_r\bigg)^{\frac{1}{2}}\\
&\leqslant&2\bigg(\int_{\ell^2} |g-f|^2\mathrm{d}P_r\bigg)^{\frac{1}{2}}<\epsilon,
\end{eqnarray*}
where the second inequality follows from (1) of Proposition \ref{Reduce diemension} that we have proved.\\
(3) Note that
\begin{eqnarray*}
\int_{\ell^2}\big|| f_{n}|^{2}  -| f|^{2}\big|\,\mathrm{d}P_r  &=&
\int_{\ell^2}\big|| f_{n}|-| f|\big|\cdot(| f_{n}|+| f|)\,\mathrm{d}P_r\\
&\leqslant& \big|\big| |f_{n}|-| f|\big|\big|_{L^{2}\left(\ell^2 ,P_r\right)}\cdot \big|\big| |f_{n}|+| f|\big|\big|_{L^{2}\left(\ell^2,P_r\right)}\\
&\leqslant&  || f_{n}- f||_{L^{2}(\ell^2,P_r)}\cdot(|| f_n||_{L^{2}\left( \ell^2,P_r\right)}+ || f||_{L^{2}\left(\ell^2 ,P_r\right)})\\
&\leqslant& 2|| f_{n}- f||_{L^{2}\left(\ell^2,P_r\right)}\cdot || f||_{L^{2}\left(\ell^2,P_r\right)},
\end{eqnarray*}
where the first inequality follows from the Cauchy-Schwarz inequality. By (2) of Proposition \ref{Reduce diemension} that we have proved, we see that
\begin{eqnarray*}
\lim_{n\to\infty}2|| f_{n}- f||_{L^{2}\left( \ell^2,P_r\right)}\cdot || f||_{L^{2}\left( \ell^2,P_r\right)}=0,
\end{eqnarray*}
which implies that (3) of Proposition \ref{Reduce diemension} holds. This completes the proof of Proposition \ref{Reduce diemension}.
\end{proof}
For each $n\in\mathbb{N}$, choose $\chi_{n} \in C^{\infty }_{c}\left( \mathbb{C}^{n} \right)$ such that
\begin{itemize}
\item[(1)]
$\chi_{n} \geqslant 0$;
\item[(2)]
$\int_{\mathbb{C}^{n} } \chi_{n} \left( \textbf{z}_n\right)  d\textbf{z}_n=1$ where $d\textbf{z}_n$ is the Lebesuge meausre on $\mathbb{C}^n$;
\item[(3)]
 supp$\chi_{n} \subset \{ \textbf{z}_n \in \mathbb{C}^{n} :\left| \left| \textbf{z}_n\right|  \right|_{\mathbb{C}^{n} }  \leqslant 1\} ;$
\item[(4)] $\chi_{n} ( \textbf{z}_n)  =\chi_{n} ( \textbf{z}_n')$ for all $\textbf{z}_n,\textbf{z}_n'\in\mathbb{C}^{n}$ such that $| | \textbf{z}_n  |  |_{\mathbb{C}^{n}}  =\left| \left| \textbf{z}_n'\right|  \right|_{\mathbb{C}^{n}}  $.
\end{itemize}
For $\delta\in(0,+\infty)$, set
$\chi_{n,\delta } \left( \textbf{z}_n\right)  \triangleq \frac{1}{\delta^{2n} } \chi \left( \frac{\textbf{z}_n  }{\delta } \right),\,\textbf{z}_n  \in \mathbb{C}^{n}$.
\begin{definition}\label{Convolution after reduce dimension}
Suppose that $f\in L^2(\ell^2,P_r)$ and there exists $r_1\in(0,+\infty)$ such that supp$f\subset B_{r_1}$. Then for $n\in\mathbb{N}$, supp$f_n\subset \{ \textbf{z}_n \in \mathbb{C}^{n} :\left| \left| \textbf{z}_n\right|  \right|_{\mathbb{C}^{n} }  \leqslant r_1\}$. By (1) of Proposition \ref{Reduce diemension}, $f_n\in L^2(\mathbb{C}^n,\mathcal{N}^n)$. Thus $f_n\in L^2(\mathbb{C}^n, d\textbf{z}_n)\cap L^1(\mathbb{C}^n, d\textbf{z}_n)$ and for $\delta\in(0,+\infty)$, let
\begin{eqnarray*}
f_{n,\delta}(\textbf{z}_n)\triangleq  \int_{\mathbb{C}^n}f_n(\textbf{z}_n')\chi_{n,\delta }  ( \textbf{z}_n -\textbf{z}_n')\mathrm{d}\textbf{z}_n',\,\textbf{z}_n  \in \mathbb{C}^{n}.
\end{eqnarray*}
Then $f_{n,\delta}\in C_c^{\infty}(\mathbb{C}^n)$ and $f_{n,\delta}$ is the convolution of $f_n$ and $\chi_{n,\delta}.$
\end{definition}
\begin{definition}\label{def for Gauss weight}
For each $n\in\mathbb{N}$, set
$$
\varphi_n(\textbf{z}_n)\triangleq\prod_{i=1}^{n}\left(\frac{1}{2\pi r^2a_i^2}\cdot e^{-\frac{|z_i|^2}{2r^2a_i^2}}\right),\quad \; \textbf{z}_n=(z_1,\cdots,z_n)\in \mathbb{C}^n.
$$
\end{definition}
\begin{proposition}\label{convolution properties}
Suppose that $f\in L^2(\ell^2,P_r)$ and supp$f$ is a bounded subset of $\ell^2$. For each $n\in\mathbb{N}$, it holds that
\begin{itemize}
\item[(1)] $f_n\in  L^2(\mathbb{C}^n, d\textbf{z}_n)\cap L^1(\mathbb{C}^n, d\textbf{z}_n)$ and $f_{n,\delta}\in L^2(\ell^2,P_r)$ for each $\delta\in(0,+\infty)$ and $\lim\limits_{\delta\to 0+}||f_{n,\delta}-f_n||_{L^2(\ell^2,P_r)}=0$;
\item[(2)] $\lim\limits_{\delta\rightarrow 0+} \int_{\ell^2}\big|| f_{n,\delta}|^{2}  -| f_n|^{2}\big|\,\mathrm{d}P_r =0;$
\item[(3)] For $g\in L^2(\ell^2,P_r)$ and supp$g\subset B_{r_2}$ for some $r_2\in(0,+\infty)$, we have
\begin{eqnarray*}
\int_{\ell^2}f_{n,\delta}g\,\mathrm{d}P_r=\int_{\ell^2}f\varphi_n^{-1}(g_{n}\varphi_n)_{n,\delta}\,\mathrm{d}P_r.
\end{eqnarray*}
\end{itemize}
\end{proposition}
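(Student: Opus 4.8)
The plan is to prove the three assertions in sequence, reducing everything to finite-dimensional integrals over $\mathbb{C}^n$ via the factorization $P_r=\bn^{n,r}\times P_{n,r}$ together with the elementary identity $\mathrm{d}\bn^{n,r}(\textbf{z}_n)=\varphi_n(\textbf{z}_n)\,\mathrm{d}\textbf{z}_n$, which identifies the Gaussian weight $\varphi_n$ as the density of $\bn^{n,r}$ with respect to Lebesgue measure on $\mathbb{C}^n$. Throughout I would use the observation that any cylinder function $h$ on $\ell^2$ depending only on $\textbf{z}_n$ satisfies $h_n=h$, since $P_{n,r}$ is a probability measure.

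For part (1), I would first fix $r_1>0$ with $\supp f\subset B_{r_1}$ and note that $f_n$ vanishes outside $\{\textbf{z}_n:\;||\textbf{z}_n||_{\mathbb{C}^n}\le r_1\}$, because $||(\textbf{z}_n,\textbf{z}^n)||_{\ell^2}^2=||\textbf{z}_n||_{\mathbb{C}^n}^2+||\textbf{z}^n||^2$. On this compact ball the continuous positive density $\varphi_n$ is bounded above and below by positive constants $c_0,C_0$, so from Proposition \ref{Reduce diemension}(1) I obtain
\[
\int_{\mathbb{C}^n}|f_n|^2\,\mathrm{d}\textbf{z}_n\le c_0^{-1}\int_{\mathbb{C}^n}|f_n|^2\varphi_n\,\mathrm{d}\textbf{z}_n=c_0^{-1}||f_n||_{L^2(\ell^2,P_r)}^2<\infty,
\]
and then $f_n\in L^1(\mathbb{C}^n,\mathrm{d}\textbf{z}_n)$ follows by Cauchy--Schwarz on the bounded support. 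Standard mollifier theory then gives $f_{n,\delta}\in C_c^{\infty}(\mathbb{C}^n)$ with $f_{n,\delta}\to f_n$ in $L^2(\mathbb{C}^n,\mathrm{d}\textbf{z}_n)$ and supports uniformly contained in a fixed bounded set for $\delta\le 1$; since $\varphi_n\le C_0$ there, convergence in $L^2(\ell^2,P_r)$ follows from $||f_{n,\delta}-f_n||_{L^2(\ell^2,P_r)}^2\le C_0\,||f_{n,\delta}-f_n||_{L^2(\mathbb{C}^n,\mathrm{d}\textbf{z}_n)}^2$.

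Part (2) follows the same scheme as the proof of Proposition \ref{Reduce diemension}(3): using $\big||a|^2-|b|^2\big|\le|a-b|(|a|+|b|)$ together with the Cauchy--Schwarz inequality, I would bound
\[
\int_{\ell^2}\big||f_{n,\delta}|^2-|f_n|^2\big|\,\mathrm{d}P_r\le ||f_{n,\delta}-f_n||_{L^2(\ell^2,P_r)}\big(||f_{n,\delta}||_{L^2(\ell^2,P_r)}+||f_n||_{L^2(\ell^2,P_r)}\big),
\]
where the first factor tends to $0$ by part (1) and the second factor stays bounded. Part (3) is the crux. Writing $f_{n,\delta}$ as a cylinder function and integrating out the tail variable with $P_r=\bn^{n,r}\times P_{n,r}$, the left-hand side becomes $\int_{\mathbb{C}^n}f_{n,\delta}\,g_n\,\varphi_n\,\mathrm{d}\textbf{z}_n$. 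The decisive step is the self-adjointness of convolution against the radially symmetric mollifier $\chi_{n,\delta}$: because $\chi_{n,\delta}(\textbf{w})=\chi_{n,\delta}(-\textbf{w})$, a Fubini exchange (justified since $f_n$ and $g_n\varphi_n$ lie in $L^1\cap L^2$ with bounded support and $\chi_{n,\delta}$ is bounded with compact support) yields $\int_{\mathbb{C}^n}(f_n*\chi_{n,\delta})(g_n\varphi_n)\,\mathrm{d}\textbf{z}_n=\int_{\mathbb{C}^n}f_n\big((g_n\varphi_n)*\chi_{n,\delta}\big)\,\mathrm{d}\textbf{z}_n$. Since $g_n\varphi_n$ is a cylinder function, $(g_n\varphi_n)_{n,\delta}=(g_n\varphi_n)*\chi_{n,\delta}$, and reinserting the weight via $\varphi_n\cdot\varphi_n^{-1}$ converts the last Lebesgue integral back into $\int_{\ell^2}f\,\varphi_n^{-1}(g_n\varphi_n)_{n,\delta}\,\mathrm{d}P_r$, as claimed.

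I expect the main obstacle to be the purely technical bookkeeping in part (3): keeping track of exactly where the Gaussian weight $\varphi_n$ enters, checking that $\varphi_n^{-1}(g_n\varphi_n)_{n,\delta}$ is a legitimate $L^2(\ell^2,P_r)$ integrand (it is a bounded cylinder function with compact support, on which $\varphi_n^{-1}$ is bounded, so its product with $f$ is in $L^1(\ell^2,P_r)$), and verifying the integrability hypotheses needed for the Fubini exchange. The conceptual point is that the factor $\varphi_n^{-1}$ is engineered precisely to cancel the Gaussian density and render convolution self-adjoint, which makes the identity essentially forced once the finite-dimensional reduction is in place.
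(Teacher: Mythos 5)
Your proposal is correct and follows essentially the same route as the paper's own proof: the finite-dimensional reduction via $P_r=\bn^{n,r}\times P_{n,r}$ with the Gaussian density bounded above and below for parts (1)--(2), and a Fubini exchange combined with the radial symmetry of $\chi_{n,\delta}$ for part (3). The only (immaterial) difference is organizational: you integrate out the tail variables first and then invoke self-adjointness of convolution on $\mathbb{C}^n$, whereas the paper performs the same Fubini exchange directly on the infinite-dimensional integrals, with the mollifier's symmetry used implicitly rather than explicitly.
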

\begin{proof}
(1) Since $f_{n,\delta}\in C_c^{\infty}(\mathbb{C}^n)$, $f_{n,\delta}\in L^2(\ell^2,P_r)$.
By assumption there exists $r_1>0$ such that supp$f\subset B_{r_1}$. By Definition \ref{Integration reduce dimension} and \ref{Convolution after reduce dimension}, we see that supp$f_{n}\subset \left\{ \textbf{z}_n\in \mathbb{C}^{n} :\left| \left| \textbf{z}_n\right|  \right|_{\mathbb{C}^{n} }  \leqslant r_1\right\}$ and
$$
\text{supp} f_{n,\delta }\subset \left\{ \textbf{z}_n\in \mathbb{C}^{n} :\left| \left| \textbf{z}_n\right|  \right|_{\mathbb{C}^{n} }  \leqslant r_1+\delta \right\} .
$$
Combining (1) of Proposition \ref{Reduce diemension}, we have $f_{n}\in L^2(\mathbb{C}^n,\mathrm{d}\textbf{z}_n)\cap L^1(\mathbb{C}^n,\mathrm{d}\textbf{z}_n)$ and
\begin{eqnarray*}
\int_{\ell^2}|f_{n,\delta}  -f_n|^{2}\,\mathrm{d}P_r
&=&\int_{\mathbb{C}^n} |  f_{n,\delta} - f_n |^2\,\mathrm{d}\mathcal{N}^{n,r}\\
&\leqslant &\frac{1}{\left( 2\pi r^2\right)^{n}  a^{2}_{1}a^{2}_{2}\cdots a^{2}_{n}} \int_{\mathbb{C}^{n} } \left| f_{n,\delta }\left(  \textbf{z}_n\right)  -f_{n}\left( \textbf{z}_n\right)  \right|^{2}  \mathrm{d}\textbf{z}_n.
\end{eqnarray*}
Since $\lim\limits_{\delta\to 0+}\int_{\mathbb{C}^{n} } \left| f_{n,\delta }\left(  \textbf{z}_n\right)  -f_{n}\left( \textbf{z}_n\right)  \right|^{2}  \mathrm{d}\textbf{z}_n=0$, $\lim\limits_{\delta\to 0+}||f_{n,\delta}-f_n||_{L^2(\ell^2,P_r)}=0$.\\
(2) Similar to the proof  of $(3)$ of Proposition \ref{Reduce diemension}.\\
(3) Note that
\begin{eqnarray*}
&&\int_{\ell^2 } f_{n,\delta }g\mathrm{d}P_r\\
&=&\int_{\ell^2} \left(\int_{\mathbb{C}^{n} } f_{n}\left( \textbf{z}_n'\right)  \chi_{n,\delta } \left( \textbf{z}_n-\textbf{z}_n'\right)  \mathrm{d}\textbf{z}_n'\right)g\left( \textbf{z}_n,\textbf{z}^n\right) \mathrm{d}P_r\left( \textbf{z}_n,\textbf{z}^n\right)  \\
&=&\int_{\ell^2}\int_{\ell^2} f\left( \textbf{z}_n',\widetilde{\textbf{z}}^n\right)  \chi_{n,\delta } \left( \textbf{z}_n-\textbf{z}_n'\right)  \varphi^{-1}_{n} \left( \textbf{z}_n'  \mathrm{d}P_r\left( \textbf{z}_n',\widetilde{\textbf{z}}^n\right)  \right) g\left( \textbf{z}_n,\textbf{z}^n\right) \mathrm{d}P_r\left(\textbf{z}_n,\textbf{z}^n\right) \\
&=&\int_{\ell^2}f\left( \textbf{z}_n',\widetilde{\textbf{z}}^n\right)\varphi^{-1}_{n} \left( \textbf{z}_n'\right)\int_{\ell^2}g\left( \textbf{z}_n,\textbf{z}^n\right) \chi_{n,\delta } \left( \textbf{z}_n-\textbf{z}_n'\right)\mathrm{d}P_r\textbf{z}_n,\textbf{z}^n\mathrm{d}P_r\left( \textbf{z}_n',\widetilde{\textbf{z}}^n\right)  \\
&=&\int_{\ell^2}f\left( \textbf{z}_n',\widetilde{\textbf{z}}^n\right)\varphi^{-1}_{n} \left( \textbf{z}_n'\right) \int_{\mathbb{C}^{n}  }g_n\left( \textbf{z}_n\right)\varphi_n(\textbf{z}_n) \chi_{n,\delta }  \textbf{z}_n-\textbf{z}_n'\mathrm{d}\textbf{z}_n\mathrm{d}P_r\left( \textbf{z}_n',\widetilde{\textbf{z}}^n\right)  \\
&=&\int_{\ell^2} f\varphi^{-1}_{n} \left( g_{n}\varphi_{n} \right)_{n,\delta } \mathrm{d}P_r,
\end{eqnarray*}
where the third equality follows from the Fubini's theorem. This completes the proof of Proposition \ref{convolution properties}.
\end{proof}
We also need the following simple approximation result (which should be known but we do not find an exact reference). Recall \eqref{20231110for6} for the definition of $C^{1 }_{S,b}( \ell^{2})$.
\begin{lemma}\label{F}
Assume that $f\in C^{1 }_{S,b}( \ell^{2})$, $f$ is a Borel measurable function on $\ell^2$ and supp$f$ is a bounded subset of $\ell^2$. Then there exists $\{h_{k }\}_{k=1}^{\infty}\subset  \mathscr{C}_c^{\infty}$ such that $\lim\limits_{k \rightarrow \infty } h_{k}=f$ and $\lim\limits_{k \rightarrow \infty } D_{x_{j}}h_{k}=D_{x_{j}}f$  and $\lim\limits_{k \rightarrow \infty } D_{y_{j}}h_{k}=D_{y_{j}}f$ almost everywhere respect to $P_r$ for each $j\in\mathbb{N}$. Meanwhile, for each $j,k\in\mathbb{N}$, the following inequalities hold on $\ell^2$,
\begin{equation}\label{6zqqqx}
\left\{
\begin{array}{ll}
\displaystyle\left|h_{k }\right|  \leqslant \sup_{\ell^{2} } \left| f\right|  ,\  \left| \partial_{j} h_{k}\right|  \leqslant \sup_{ \ell^{2}} \left| \partial_{j} f\right|  ,\left| \overline{\partial_{j} } h_{k }\right|  \leqslant \sup_{ \ell^{2}} \left| \overline{\partial_{j} } f\right|,  \\[2mm]
\displaystyle\left| D_{x_{j}}h_{k }\right|  \leqslant \sup_{\ell^{2} } \left| D_{x_{j}}f\right|\text{ and } \left| D_{y_{j}}h_{k}\right|  \leqslant \sup_{\ell^{2} } \left| D_{y_{j}}f\right|,\\[2mm]	
\displaystyle\sum^{\infty }_{j=1}a_j^2 \left| \partial_{j} h_{k}\right|^{2}  \leqslant \sup_{\ell^{2}} \sum^{\infty }_{j=1} a_j^2\left| \partial_{j} f\right|^{2} ,\quad\sum^{\infty }_{j=1}a_j^2 \left| \overline{\partial_{j} } h_{k}\right|^{2}  \leqslant \sup_{\ell^{2} } \sum^{\infty }_{j=1}a_j^2 \left| \overline{\partial_{j} } f\right|^{2}.
\end{array}\right.
\end{equation}
\end{lemma}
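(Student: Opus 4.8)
The plan is to approximate $f$ in two stages — first by cylinder functions obtained from the dimension reduction of Definition~\ref{Integration reduce dimension}, then by smoothing these cylinder functions through the mollification of Definition~\ref{Convolution after reduce dimension} — and finally to extract a single sequence by a diagonal argument. At each stage both the sup-norm bounds and the quadratic-sum bounds in \eqref{6zqqqx} will be seen to be non-increasing, essentially because each operation is an averaging against a probability measure and hence a Jensen-type contraction. Throughout, the complex operators $\partial_j$ and $\overline{\partial_j}$ are fixed linear combinations of $D_{x_j}$ and $D_{y_j}$, so they are handled exactly as $D_{x_j}$.

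First I would treat the reduction $f_n$. Since $P_{n,r}$ is a probability measure on $\ell^2(\mathbb{N}_{2,\widehat n})$, Jensen's inequality gives $\sup_{\ell^2}|f_n|\le\sup_{\ell^2}|f|$ at once. The crucial point is that, for $j\le n$, differentiation commutes with the reduction: I claim $D_{x_j}f_n=(D_{x_j}f)_n$, and likewise $\partial_j f_n=(\partial_j f)_n$, $\overline{\partial_j}f_n=(\overline{\partial_j}f)_n$. To prove this I would write the difference quotient $\frac{f(\textbf{z}+t\textbf{e}_j)-f(\textbf{z})}{t}$, observe that it is bounded in absolute value by $\sup_{\ell^2}|D_{x_j}f|$ (applying the one–dimensional mean value theorem along the $F$–continuous, differentiable–in–$x_j$ slice) and converges pointwise to $D_{x_j}f$; the Lebesgue dominated convergence theorem then lets me pass the limit through $\int\,\mathrm{d}P_{n,r}$. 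With this interchange, $|D_{x_j}f_n|=|(D_{x_j}f)_n|\le\sup|D_{x_j}f|$, and, using $|(\partial_j f)_n|^2\le(|\partial_j f|^2)_n$ together with Tonelli's theorem to exchange the sum over $j$ with the integral, I obtain $\sum_j a_j^2|\partial_j f_n|^2\le\big(\sum_j a_j^2|\partial_j f|^2\big)_n\le\sup_{\ell^2}\sum_j a_j^2|\partial_j f|^2$, and similarly for $\overline{\partial_j}$. Finally, since $f$ and each $D_{x_j}f$, $D_{y_j}f$ lie in $L^2(\ell^2,P_r)$ (being bounded with bounded support), Proposition~\ref{Reduce diemension}(2) applied to each yields $f_n\to f$, $(D_{x_j}f)_n\to D_{x_j}f$ and $(D_{y_j}f)_n\to D_{y_j}f$ in $L^2(\ell^2,P_r)$ as $n\to\infty$.

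Next I would mollify. For fixed $n$, $f_{n,\delta}=f_n*\chi_{n,\delta}\in C_c^\infty(\mathbb{C}^n)\subset\mathscr{C}_c^\infty$, and, $\chi_{n,\delta}$ being a probability density, Jensen gives $\sup|f_{n,\delta}|\le\sup|f_n|$ and $|\partial_j f_{n,\delta}|^2\le(|\partial_j f_n|^2)*\chi_{n,\delta}$, and so forth. Because $f_n$ is $F$–continuous with everywhere–defined bounded partial derivatives, it is Lipschitz, hence absolutely continuous, along each coordinate line, so the classical identity $D_{x_j}(f_n*\chi_{n,\delta})=(D_{x_j}f_n)*\chi_{n,\delta}$ holds (differentiate the smooth kernel and integrate by parts). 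Consequently every bound in \eqref{6zqqqx} survives the mollification, with right–hand sides controlled by the sups of the derivatives of $f$ via the previous paragraph; and Proposition~\ref{convolution properties}(1) gives $f_{n,\delta}\to f_n$ and $(D_{x_j}f_n)*\chi_{n,\delta}\to D_{x_j}f_n$ in $L^2(\ell^2,P_r)$ as $\delta\to0^+$. Combining the two stages, for each $n$ I would pick $\delta(n)$ so small that $h_n:=f_{n,\delta(n)}$ satisfies $\|h_n-f_n\|_{L^2}+\sum_{j\le n}\big(\|D_{x_j}h_n-D_{x_j}f_n\|_{L^2}+\|D_{y_j}h_n-D_{y_j}f_n\|_{L^2}\big)<1/n$; then $h_n\in\mathscr{C}_c^\infty$, every bound in \eqref{6zqqqx} holds, and for each fixed $j$ one has $h_n\to f$, $D_{x_j}h_n\to D_{x_j}f$, $D_{y_j}h_n\to D_{y_j}f$ in $L^2(\ell^2,P_r)$ (for $j>n$ note $D_{x_j}h_n=0$, which is irrelevant once $n>j$). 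Passing to a subsequence along which these countably many $L^2$–convergent sequences converge simultaneously almost everywhere, by a standard diagonal extraction, produces the required sequence.

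The step I expect to be the main obstacle is the differentiation–reduction interchange $D_{x_j}f_n=(D_{x_j}f)_n$: since $f$ is only assumed $F$–continuous with partial derivatives and is \emph{not} Fr\'echet differentiable, I cannot differentiate under the integral sign by a soft argument and must instead control the difference quotients uniformly via the mean value theorem on one–dimensional slices before invoking dominated convergence. A secondary technical point is the bookkeeping required to convert the countably many separate $L^2$ (and hence subsequential almost-everywhere) convergences into one almost–everywhere convergent sequence; this is routine but must be organized so that the uniform bounds \eqref{6zqqqx}, which hold for every $h_n$ individually, are untouched by the diagonalization.
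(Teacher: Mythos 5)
Your proposal is correct and follows essentially the same route as the paper's proof: dimension reduction $f_n$ via integration against $P_{n,r}$, mollification $f_{n,\delta}$ in $\mathbb{C}^n$, Jensen-type contraction to preserve every bound in \eqref{6zqqqx}, $L^2$ convergence from Propositions \ref{Reduce diemension} and \ref{convolution properties}, and a double diagonal extraction to pass from $L^2$ to almost-everywhere convergence. The only substantive difference is that you supply a detailed justification (mean value theorem on coordinate slices plus dominated convergence) for the interchange $D_{x_j}f_n=(D_{x_j}f)_n$, which the paper simply asserts as ``it is easy to see.''
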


\begin{proof}
For each $n\in\mathbb{N}$, let
$$f_n(\textbf{z}_n)\triangleq \int_{\ell^2(\mathbb{N}_{2,\widehat{n} })} f(\textbf{z}_n,\textbf{z}^n)\,\mathrm{d}P_{n,r}(\textbf{z}^n),
 $$
where $\textbf{z}^n=(x_{i} +\sqrt{-1}y_{i})_{i=n+1}^\infty\in \ell^2(\mathbb{N}_{2,\widehat{n} })$ and $\textbf{z}_n=(x_{i} +\sqrt{-1}y_{i})_{i=1}^{n}\in \mathbb{C}^n$. It is easy to see that
$$
D_{x_j}f_n(\textbf{z}_n)=\int_{\ell^2(\mathbb{N}_{2,\widehat{n} })} D_{x_j}f(\textbf{z}_n,\textbf{z}^n)\,\mathrm{d}P_{n,r}(\textbf{z}^n)
$$
for $j=1,\cdots, n$. By the conclusion (2) of Proposition \ref{Reduce diemension}, we have
$$
\lim\limits_{n\rightarrow \infty } \int_{\ell^{2} } \left| f_{n}-f\right|^{2}  dP_r=0\, \text{and}\, \lim\limits_{n\rightarrow \infty } \int_{\ell^{2} } \left| D_{x_{j}} f_{n}  -D_{x_{j}} f\right|^{2}dP_r=0.
$$
Then, for each $\delta\in(0,+\infty)$ and $n,j\in\mathbb{N}$, we have $f_{n,\delta}\in \mathscr{C}_c^{\infty}$,
\begin{eqnarray*}
|f_{n,\delta}|&\leqslant &\sup\limits_{\mathbb{C}^n}|f_n|\leqslant \sup\limits_{\ell^2}|f|, \\
|D_{x_j}f_{n,\delta}|&\leqslant &\sup\limits_{\mathbb{C}^n}|D_{x_j}f_n|\leqslant \sup\limits_{\ell^2}|D_{x_j}f|,\\
|D_{y_j}f_{n,\delta}|&\leqslant &\sup\limits_{\mathbb{C}^n}|D_{y_j}f_n|\leqslant \sup\limits_{\ell^2}|D_{y_j}f|,\\
|\partial_{j}f_{n,\delta}|&\leqslant &\sup\limits_{\mathbb{C}^n}|\partial_{j}f_n|\leqslant \sup\limits_{\ell^2}|\partial_{j}f|,
\end{eqnarray*}
and
$$
\lim\limits_{\delta\rightarrow 0} \int_{\ell^{2} } \left| f_{n,\delta}-f_n\right|^{2}\,\mathrm{d}P_r=0,\qquad
\lim\limits_{\delta\rightarrow 0} \int_{\ell^{2} } \left| D_{x_{j}} f_{n,\delta}  -D_{x_{j}} f_n\right|^{2}\,\mathrm{d}P_r=0.
$$
By the Jensen inequality (See \cite[Theorem 3.3, p. 62]{Rud87}),
$$
\begin{array}{ll}
\displaystyle 		
\sum^{\infty }_{j=1}a_j^2 \left| \partial_{j} f_{n,\delta}( \textbf{z}_n)\right|^{2}
= \sum^{n}_{j=1} a_j^2\left| \partial_{j} f_{n,\delta}( \textbf{z}_n )\right|^{2}\\[3mm]
\displaystyle \leqslant
\int_{\mathbb{C}^n}\sum^{n}_{j=1} a_j^2\left| \partial_{j} f_{n }(\textbf{z}_n')\right|^{2}\gamma_{n,\delta }  ( \textbf{z}_n -\textbf{z}_n')\mathrm{d}\textbf{z}_n'\\[3mm]
\displaystyle \leqslant \sup_{\textbf{z}_n'\in\mathbb{C}^n}\sum^{n}_{j=1}a_j^2 \left| \partial_{j} f_{n }(\textbf{z}_n')\right|^{2}
= \sup_{\textbf{z}_n'\in\mathbb{C}^n}\sum^{n}_{j=1}a_j^2 \left| \int_{\ell^2(\mathbb{N}_{2,\widehat{n} })}\partial_{j} f (\textbf{z}_n',\textbf{z}^n)\,\mathrm{d}P_{n,r}(\textbf{z}^n)\right|^{2}\\[3mm]
\displaystyle \leqslant \sup_{\textbf{z}_n'\in\mathbb{C}^n} \int_{\ell^2(\mathbb{N}_{2,\widehat{n} })}\sum^{n}_{j=1} a_j^2\left|\partial_{j} f (\textbf{z}_n',\textbf{z}^n)\right|^{2}\,\mathrm{d}P_{n,r}(\textbf{z}^n)\\[3mm]
\displaystyle \leqslant \sup_{\textbf{z}\in\ell^2}  \sum^{n}_{j=1} a_j^2\left|\partial_{j} f (\textbf{z})\right|^{2}\leqslant \sup_{\textbf{z}\in\ell^2}  \sum^{\infty}_{j=1}a_j^2 \left|\partial_{j} f (\textbf{z})\right|^{2}.
\end{array}
$$
By the diagonal method we can pick a sequence of functions $\{g_k\}_{k=1}^{\infty}$ from $\{f_{n,\delta}:\;n\in\mathbb{N},\,\delta\in(0,+\infty)\}$ such that $\lim\limits_{k\rightarrow \infty } \int_{\ell^{2} } \left| g_{k}-f\right|^{2}  \,\mathrm{d}P_r=0$ and $\lim\limits_{k\rightarrow \infty } \int_{\ell^{2} } \left| D_{x_{j}} g_{k}  -D_{x_{j}} f\right|^{2}\,\mathrm{d}P_r=0$ for each $j\in\mathbb{N}$. By \cite[Theorem 6.3.1 (b) at pp. 171--172 and Section 6.5 at pp. 180--181]{Res} and using  the diagonal method again, we can find a subsequence $\{h_k\}_{k=1}^{\infty}$ of $\{g_k\}_{k=1}^{\infty}$ such that $\lim\limits_{k \rightarrow \infty } h_{k}=f$, $\lim\limits_{k \rightarrow \infty } D_{x_{j}}h_{k}=D_{x_{j}}f$ and $\lim\limits_{k \rightarrow \infty } D_{y_{j}}h_{k}=D_{y_{j}}f$  almost everywhere respect to $P_r$ for each $j\in\mathbb{N}$. Obviously, the sequence $\{h_k\}_{k=1}^{\infty}$ satisfies the conditions in (\ref{6zqqqx}).
This completes the proof of Lemma \ref{F}.
\end{proof}

\begin{definition}\label{definition of T S}
Suppose that $s,t\in\mathbb{N}_0$. Let
\begin{eqnarray*}
D_{T}&\triangleq &\left\{ u\in L^{2}_{(s,t)}(\ell^2,P_r)  :u\in D_{\overline{\partial }}\text{ and }\overline{\partial } u\in L^{2}_{(s,t+1)}(\ell^2,P_r)  \right\},\\
D_{S}&\triangleq &\left\{ f\in L^{2}_{\left( s,t+1\right)  }(\ell^2,P_r) :f\in D_{\overline{\partial }}\text{ and }\overline{\partial } f\in L^{2}_{(s,t+2)}(\ell^2,P_r)  \right\}.
\end{eqnarray*}
We define $Tu\triangleq  \overline{\partial } u,\,u\in D_T$ and $Sf\triangleq \overline{\partial } f,\,f\in D_S.$ Write
\begin{eqnarray*}
R_{T}&\triangleq &\left\{ Tu :u\in D_{T}  \right\},\\
N_{S}&\triangleq &\left\{ f :f\in D_{S}\text{ and }Sf=0  \right\},
\end{eqnarray*}
which are the range of $T$ and the kernel of $S$ respectively.
\end{definition}
Write
\begin{eqnarray*}
\mathscr{D}_{(s,t)}\triangleq \bigcup_{n=1}^{\infty}\Bigg\{f= \sum_{|I|=s,|J|=t,\,\max(I\cup J)\leq n}'f_{I,J}\,\mathrm{d}z^I\wedge \mathrm{d}\overline{z}^J:\;\,\,f_{I,J}\in \mathscr{C}_c^{\infty}\Bigg\}.
\end{eqnarray*}
 We have the following $L^2$ estimate for $(s,t+1)$-forms in $\mathscr{D}_{(s,t+1)}$:

\begin{theorem}\label{200207t1}
For any $f\in  \mathscr{D}_{(s,t+1)}$, it holds that $f\in D_{S}\cap D_{T^*}$ and
\begin{eqnarray}\label{202002071}
\frac{t+1}{2r^2}\cdot||f||_{L^2_{(s,t+1)}(\ell^2,P_r)}^2\leq ||T^*f||_{L^2_{(s,t)}(\ell^2,P_r)}^2
+||Sf||_{L^2_{(s,t+2)}(\ell^2,P_r)}^2.
\end{eqnarray}
\end{theorem}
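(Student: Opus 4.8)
The plan is to prove the identity \eqref{202002071} by the classical Morrey--Kohn--Hörmander technique, adapted to the Gaussian product measure $P_r$ on $\ell^2$, reducing everything to the commutator algebra of the operators $\delta_i$ and $\overline{\partial}_j$. First I would verify that for $f\in\mathscr{D}_{(s,t+1)}$ the form $f$ genuinely lies in $D_S\cap D_{T^*}$: since every $f_{I,J}\in\mathscr{C}_c^{\infty}$ is a cylinder function depending on finitely many variables with $Sf$ and $T^*f$ again finite sums of such cylinder functions, membership is automatic and the adjoint $T^*$ acts by the explicit formula $T^*f=(-1)^{s}\sum'_{I,J}\sum_i \delta_i f_{I,J\cup\{i\}}\,\mathrm{d}z^I\wedge\mathrm{d}\overline{z}^J$, which I would read off from the definition \eqref{weak def of st form} of $\overline{\partial}$ together with the integration-by-parts Corollary \ref{integration by Parts or deltai}. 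The key point I would stress is that because the elements of $\mathscr{D}_{(s,t+1)}$ have compactly supported, smooth coefficients, all the integrations by parts below are legitimate with no boundary contributions.

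The heart of the argument is the commutation relation between $\delta_i$ (the ``creation-type'' operator) and $\overline{\partial}_j$. From the definitions \eqref{230629def1}, namely $\delta_i g=\partial_i g-\tfrac{\overline{z_i}}{2r^2a_i^2}g$ and $\overline{\partial}_j g=\overline{\partial}_j g$, I would compute the commutator acting on a smooth cylinder function:
\begin{equation}
\overline{\partial}_j(\delta_i g)-\delta_i(\overline{\partial}_j g)=\frac{\delta_{ij}}{2r^2a_i^2}\,g,
\end{equation}
since $\overline{\partial}_j$ kills $\overline{z_i}$ unless $j=i$, and $\overline{\partial}_i\overline{z_i}=1$. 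This is the infinite-dimensional, Gaussian-weighted analogue of the Euclidean identity $[\partial/\partial z_j,\,\text{multiplication}]$, and the factor $\tfrac{1}{2r^2a_i^2}$ is exactly the source of the eigenvalue on the left side of \eqref{202002071}. Next I would write out $\|T^*f\|^2$ and $\|Sf\|^2$ in coordinates. Expanding $\|Sf\|^2=\sum'\textbf{a}\int|\sum_j \varepsilon\,\overline{\partial}_j f_{I,J}|^2\,\mathrm{d}P_r$ and $\|T^*f\|^2=\sum'\textbf{a}\int|\sum_i\delta_i f_{I,J\cup\{i\}}|^2\,\mathrm{d}P_r$, and integrating by parts via Corollary \ref{integration by Parts or deltai} (which gives $\int \overline{\partial}_i f\cdot\overline{g}\,\mathrm{d}P_r=-\int f\cdot\overline{\delta_i g}\,\mathrm{d}P_r$), the cross terms reorganize into $\sum_{i,j}\int \overline{\partial}_j f_{I,J}\cdot\overline{\overline{\partial}_i f_{I,J'}}$-type expressions plus commutator terms. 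The standard bookkeeping shows
\begin{equation}
\|T^*f\|^2+\|Sf\|^2=\sum{}'\,\textbf{a}^{I,J}\sum_{i,j}\int_{\ell^2}\overline{\partial}_j f_{I,J}\cdot\overline{\overline{\partial}_j f_{I,J}}\,\mathrm{d}P_r+\sum{}'\,\textbf{a}^{I,J}\sum_{i\in J}\frac{1}{2r^2a_i^2}\int_{\ell^2}|f_{I,J}|^2\,\mathrm{d}P_r,
\end{equation}
where the first sum is manifestly nonnegative and the second collects the commutator contributions, one for each index in $J$.

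Finally I would bound the first (gradient) sum below by zero and retain only the commutator sum, noting that each multi-index $J$ of length $t+1$ contributes $t+1$ commutator terms but that the weight $\textbf{a}^{I,J}$ carries the factor $a_i^2$ cancelling the $a_i^2$ in the denominator of $\tfrac{1}{2r^2a_i^2}$, so each term produces precisely $\tfrac{1}{2r^2}$ times $\textbf{a}^{I,J}\int|f_{I,J}|^2$; summing over the $t+1$ elements of $J$ yields the coefficient $\tfrac{t+1}{2r^2}$ exactly. Discarding the nonnegative gradient term then gives the inequality \eqref{202002071}. The main obstacle I anticipate is purely combinatorial rather than analytic: keeping the sign conventions $\varepsilon_{i,J}^K$ and the permutation-induced orientations consistent so that the cross terms in the expansion of $\|T^*f\|^2+\|Sf\|^2$ cancel correctly against each other, leaving only the diagonal gradient term and the clean commutator sum. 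I would handle this, as in the finite-dimensional Hörmander computation, by symmetrizing over ordered multi-indices and using the antisymmetry $\varepsilon_{i,j,J}^M=-\varepsilon_{j,i,J}^M$ that already appeared in the proof of Lemma \ref{lem2.5.1}; the convergence of all series is guaranteed because $f$ has only finitely many nonzero coefficients, each a compactly supported smooth cylinder function.
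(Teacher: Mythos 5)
Your strategy is the same as the paper's: it is precisely the Morrey--Kohn--H\"ormander computation (expand $\|T^*f\|^2+\|Sf\|^2$ in coordinates, cancel the mixed terms by integration by parts, and extract the zeroth-order term from the $\delta$--$\overline{\partial}$ commutator). However, two of your load-bearing formulas are wrong as stated, and the first of them breaks the argument. Computing directly from $\delta_i g=\partial_i g-\frac{\overline{z_i}}{2r^2a_i^2}\,g$, the extra term arises when $\overline{\partial}_j$ falls on $\overline{z_i}$, which carries the coefficient $-\frac{1}{2r^2a_i^2}$; hence
\[
\overline{\partial}_j(\delta_i g)-\delta_i(\overline{\partial}_j g)=-\frac{\delta_{ij}}{2r^2a_i^2}\,g,
\qquad\text{equivalently}\qquad
\delta_i(\overline{\partial}_j g)-\overline{\partial}_j(\delta_i g)=+\frac{\delta_{ij}}{2r^2a_i^2}\,g,
\]
which is the paper's identity \eqref{22.1.8}; you asserted the opposite sign. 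This sign is exactly where the positivity comes from. In the final step one writes $\int\delta_j(f_{I,jJ})\overline{\delta_i(f_{I,iJ})}\,\mathrm{d}P_r=-\int\overline{\partial}_i\big(\delta_j f_{I,jJ}\big)\,\overline{f_{I,iJ}}\,\mathrm{d}P_r$ by \eqref{22.1.8'}, then commutes $\overline{\partial}_i$ past $\delta_j$ and integrates by parts once more; with the correct sign this yields $\int\overline{\partial}_i f_{I,jJ}\cdot\overline{\overline{\partial}_j f_{I,iJ}}\,\mathrm{d}P_r+\frac{\delta_{ij}}{2r^2a_j^2}\int f_{I,jJ}\overline{f_{I,iJ}}\,\mathrm{d}P_r$, so the commutator contribution enters with a plus sign and sums to $+\frac{t+1}{2r^2}\|f\|^2$. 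With your sign it enters with a minus sign, and the computation produces $\|T^*f\|^2+\|Sf\|^2=(\text{gradient terms})-\frac{t+1}{2r^2}\|f\|^2$, from which \eqref{202002071} does not follow (the resulting statement is vacuous). Note that the intermediate identity you display is the correct one, but it is inconsistent with the commutator you stated: it follows only from the correctly signed commutator.

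The second problem is the adjoint formula. Because the inner products carry the weights $\textbf{a}^{I,J}$, the adjoint is $T^*f=(-1)^{s-1}\sum'_{|I|=s,\,|K|=t}\sum_{j}a_j^2\,\delta_j(f_{I,jK})\,\mathrm{d}z^I\wedge\mathrm{d}\overline{z}^K$ (the paper's \eqref{20231110for1}); your formula omits the factors $a_j^2$. This is not cosmetic: with them, the diagonal commutator term carries the weight $\textbf{a}^{I,J}a_i^2a_j^2\cdot\frac{\delta_{ij}}{2r^2a_j^2}=\frac{1}{2r^2}\,\textbf{a}^{I,J}a_i^2$, which is exactly the weight $\textbf{a}^{I,iJ}$ of the coefficient $f_{I,iJ}$, and summing over the $t+1$ ways of writing a multi-index $K$ of length $t+1$ as $\{i\}\cup J$ gives the constant $\frac{t+1}{2r^2}$. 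Without them, your own cancellation claim fails: $\textbf{a}^{I,J}\cdot\frac{1}{2r^2a_i^2}$ equals $\frac{1}{2r^2}\textbf{a}^{I,J\setminus\{i\}}$, not $\frac{1}{2r^2}\textbf{a}^{I,J}$, so the bookkeeping does not produce $\frac{t+1}{2r^2}\|f\|^2$. Finally, membership $f\in D_{T^*}$ is not ``automatic'' from $f$ being a finite sum of compactly supported cylinder forms: $D_{T^*}$ requires that $u\mapsto(f,Tu)_{L^2_{(s,t+1)}}$ be bounded on all of $D_T$, which is what the paper's explicit computation of $(f,Tu)$ together with Cauchy--Schwarz establishes. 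All three points are repairable without changing your overall plan, but the commutator sign and the weighted adjoint must be corrected for the proof to yield \eqref{202002071}.
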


\begin{proof} We borrow some idea from H\"omander \cite{Hor65}. Before proceeding, we need two simple equalities (Recall \eqref{230629def1} for $\delta_j$):
\begin{eqnarray}
\int_{\ell^2}\frac{\partial \varphi}{\partial \overline{z_j}}\cdot\overline{\psi} \,\mathrm{d}P_r
=-\int_{\ell^2}\varphi\cdot\overline{\delta_j\psi}\,\mathrm{d}P_r,\,\,\quad\forall\,j\in\mathbb{N},\,\,\,\varphi,\psi\in\mathscr{C}_c^{\infty},\label{22.1.8'}
\end{eqnarray}
and
\begin{eqnarray}
\bigg(\delta_k\frac{\partial  }{\partial \overline{z_j}}-\frac{\partial  }{\partial \overline{z_j}}\delta_k\bigg)w=\frac{w}{2r^2a_k^2}\frac{\partial\overline{z_k}}{\partial \overline{z_j}} ,\,\,\quad\forall\,k,j\in\mathbb{N},\,\,\,w\in \mathscr{C}_c^{\infty}.\label{22.1.8}
\end{eqnarray}

Suppose that $f\in \mathscr{D}_{(s,t+1)}$ and $u\in D_{T}(\subset L^2_{(s,t)}(\ell^2, P_r))$ are as follows:
\begin{eqnarray*}
f= \sum_{|I|=s,|J|=t+1}'f_{I,J}\,\mathrm{d}z^I\wedge \mathrm{d}\overline{z}^J,
\quad\,\,u= \sum_{|I|=s,|K|=t}'u_{I,K}\,\mathrm{d}z^I\wedge \mathrm{d}\overline{z}^K
\end{eqnarray*}
and $Tu= (-1)^s\sum\limits_{|I|=s,|J|=t+1}'g_{I,J}\,\mathrm{d}z^I\wedge \mathrm{d}\overline{z}^J$ such that
\begin{eqnarray}\label{2weak def of st form on Cn}
\int_{\ell^2}g_{I,J}\overline{\varphi}\,\mathrm{d}P_r=-\int_{\ell^2}\sum_{1\leq j<\infty}\sum_{|K|=t}'\varepsilon_{j, K}^{J}u_{I,K}\cdot\overline{\delta_j\varphi} \,\mathrm{d}P_r,\,\forall\;\varphi\in \mathscr{C}_c^{\infty}.
\end{eqnarray}
Then,
\begin{eqnarray*}
&&(f,T u)_{L^2_{(s,t+1)}(\ell^2)}\\
&=&(-1)^s\cdot\sum_{|I|=s,|J|=t+1}' \textbf{a}^{I,J} \cdot\int_{\ell^2}f_{I,J}\cdot\overline{g_{I,J}}\,\mathrm{d}P_r\\
&=&(-1)^{s-1}\cdot \sum_{|I|=s,|J|=t+1}' \textbf{a}^{I,J}\cdot \int_{\ell^2}\sum_{1\leq j<\infty}\sum_{|K|=t}'\varepsilon_{j, K}^{J}\cdot\delta_j(f_{I,J})\cdot\overline{u_{I,K}} \,\mathrm{d}P_r\\
&=&(-1)^{s-1}\cdot \sum_{|I|=s,|K|=t}'\textbf{a}^{I,K}\cdot\int_{\ell^2} \sum_{1\leq j<\infty}\sum_{|J|=t+1}'a_j^2\cdot\varepsilon_{j,K}^{J}\cdot \delta_j(f_{I,J})\cdot\overline{ u_{I,K}}\,\mathrm{d}P_r.
\end{eqnarray*}
Note that if $J\neq \{j\}\cup K$ then $\varepsilon_{j,K}^{J}=0$. Therefore, if $j\in K$ we let $f_{I,jK}\triangleq0$ and if $j\notin K$, there exists a unique multi-index $J$ with strictly order such that $|J|=t+1$ and $J= \{j\}\cup K$, we let $f_{I,jK}\triangleq\varepsilon_{j,K}^{J}\cdot f_{I,J}$. Therefore,
\begin{eqnarray*}
&&(f,T u)_{L^2_{(s,t+1)}(\ell^2)}\\
&=&(-1)^{s-1} \sum_{|I|=s,|K|=t}'\textbf{a}^{I,K}\cdot\int_{\ell^2} \sum_{1\leq j<\infty} a_j^2\delta_j(f_{I,jK})\cdot\overline{ u_{I,K}}\,\mathrm{d}P_r.
\end{eqnarray*}
Note that
\begin{eqnarray*}
&&\sum_{|I|=s,|K|=t}'\textbf{a}^{I,K}\int_{\ell^2}\left|\sum_{1\leq j<\infty}a_j^2\delta_j(f_{I,jK})\right|^2\,\mathrm{d}P_r\\
&=& \sum_{|I|=s,|K|=t}'\textbf{a}^{I,K}\int_{\ell^2} \sum_{1\leq j<\infty}a_j^2\delta_j(f_{I,jK})\cdot\overline{ \sum_{1\leq i<\infty}a_i^2\delta_i(f_{I,iK})}\,\mathrm{d}P_r\\
&=& \sum_{|I|=s,|K|=t}'\textbf{a}^{I,K} \sum_{1\leq i,j<\infty}\int_{\ell^2}a_j^2\delta_j(f_{I,jK})\cdot\overline{ a_i^2\delta_i(f_{I,iK})}\,\mathrm{d}P_r<\infty,
\end{eqnarray*}
which implies that $f\in D_{T^*}$,
\begin{eqnarray}\label{20231110for1}
T^*f&=&(-1)^{s-1}\sum_{|I|=s,|K|=t}'\sum_{1\leq j<\infty}a_j^2\delta_j(f_{I,jK})\mathrm{d}z^I\wedge \mathrm{d}\overline{z}^K,
\end{eqnarray}
 and
\begin{eqnarray*}
||T^*f||_{L^2_{(s,t)}(\ell^2,P_r)}^2
= \sum_{|I|=s,|K|=t}'\textbf{a}^{I,K} \sum_{1\leq i,j<\infty}\int_{\ell^2}a_j^2\delta_j(f_{I,jK})\cdot\overline{ a_i^2\delta_i(f_{I,iK})}\,\mathrm{d}P_r.
\end{eqnarray*}

On the other hand, combining (\ref{weak def of st form}), (\ref{d-dar-f-defi}) and (\ref{22.1.8'}) gives
\begin{eqnarray}\label{formula of d-bar}
Sf=(-1)^s\sum_{|I|=s,|M|=t+2}'\sum_{1\leq j<\infty}\sum_{|J|=t+1}'\varepsilon_{j,J}^{M}\cdot\frac{\partial f_{I,J}}{\partial \overline{z_j}}\,\mathrm{d}z^I\wedge \mathrm{d}\overline{z}^M.
\end{eqnarray}
Hence,
\begin{eqnarray*}
&&||Sf||_{L^2_{(s,t+2)}(\ell^2,P_r)}^2\\
&=&\sum_{|I|=s,|M|=t+2}'\textbf{a}^{I,M}\cdot\int_{\ell^2}\bigg(\sum_{1\leq j<\infty}\sum_{|K|=t+1}'\varepsilon_{j,K}^{M}\cdot\frac{\partial f_{I,K}}{\partial \overline{z_j}}\\
&&\qquad\qquad\qquad\qquad\qquad\qquad\qquad \times\;\overline{\sum_{1\leq i<\infty}\sum_{|L|=t+1}'\varepsilon_{i,L}^{M}\cdot\frac{\partial f_{I,L}}{\partial \overline{z_i}}}\;\bigg)\,\mathrm{d}P_r\\
&=&\sum_{|I|=s,|M|=t+2}'\sum_{1\leq i, j<\infty}\sum_{|K|=t+1}'\sum_{|L|=t+1}'\textbf{a}^{I,M}\int_{\ell^2}\varepsilon_{j,K}^{M}\varepsilon_{i,L}^{M}\frac{\partial f_{I,K}}{\partial \overline{z_j}}\cdot\overline{\frac{\partial f_{I,L}}{\partial \overline{z_i}}}\,\mathrm{d}P_r.
\end{eqnarray*}
Note that $\varepsilon_{j,K}^{M}\cdot\varepsilon_{i,L}^{M}\neq 0$ if and only if $j\notin K,\,i\notin L$ and $M=\{j\}\cup K=\{i\}\cup L$, in this case $\varepsilon_{j,K}^{M}\cdot\varepsilon_{i,L}^{M}=\varepsilon_{i,L}^{j,K}$. Thus we have
\begin{equation}\label{norm of Sf}
\begin{array}{ll}
\displaystyle||Sf||_{L^2_{(s,t+2)}(\ell^2,P_r)}^2\\[3mm]\displaystyle
=\sum_{|I|=s,|K|=t+1,|L|=t+1}'\sum_{1\leq i,j<\infty}\textbf{a}^{I,K}\cdot a_j^2\cdot\int_{\ell^2}\varepsilon_{i,L}^{j,K}\frac{\partial f_{I,K}}{\partial \overline{z_j}}\cdot\overline{\frac{\partial f_{I,L}}{\partial \overline{z_i}}}\,\mathrm{d}P_r.
\end{array}
\end{equation}
If $i=j$ then, $\varepsilon_{i,L}^{j,K}\neq 0$ if and only if $i\notin K$ and $K=L$. Hence, the corresponding part of (\ref{norm of Sf}) is
\begin{eqnarray*}
\sum_{|I|=s,|K|=t+1}'\sum_{ i\notin K}\textbf{a}^{I,K}\cdot a_i^2\cdot\int_{\ell^2}\bigg|\frac{\partial f_{I,K}}{\partial \overline{z_i}}\bigg|^2\,\mathrm{d}P_r.
\end{eqnarray*}
Also, if $i\neq j$ then, $\varepsilon_{i,L}^{j,K}\neq 0$ if and only if $i\in K,\,j\in L$ and $i\notin L,\,j\notin K$. Then there exists multi-index $J$ with strictly increasing order such that $|J|=t$ and $J\cup\{i,j\}=\{i\}\cup L=\{j\}\cup K$. Since $\varepsilon_{i,L}^{j,K}=\varepsilon_{i,L}^{i,j,J}\cdot\varepsilon_{i,j,J}^{j,i,J}\cdot\varepsilon_{j,i,J}^{j,K}
=-\varepsilon_{L}^{j,J}\cdot\varepsilon_{i,J}^{K}$, the rest part of (\ref{norm of Sf}) is
\begin{eqnarray*}
&&-\sum_{|I|=s,|K|=t+1,|L|=t+1,|J|=t}'\sum_{i\neq j,\,1\leq i,j<\infty}\textbf{a}^{I,K} a_j^2\int_{\ell^2}\varepsilon_{L}^{j,J}\varepsilon_{i,J}^{K}\frac{\partial f_{I,K}}{\partial \overline{z_j}}\overline{\frac{\partial f_{I,L}}{\partial \overline{z_i}}}\,\mathrm{d}P_r\\
&=&-\sum_{|I|=s,|K|=t+1,|L|=t+1,|J|=t}'\sum_{i\neq j,\,1\leq i,j<\infty}\textbf{a}^{I,K} a_j^2\int_{\ell^2}\varepsilon_{i,J}^{K}\frac{\partial f_{I,K}}{\partial \overline{z_j}}\overline{\varepsilon_{L}^{j,J}\frac{\partial f_{I,L}}{\partial \overline{z_i}}}\,\mathrm{d}P_r\\
&=&-\sum_{|I|=s,|J|=t}'\sum_{ i\neq j}\textbf{a}^{I,J} a_j^2 a_i^2\int_{\ell^2}\frac{\partial f_{I,iJ}}{\partial \overline{z_j}}\overline{\frac{\partial f_{I,jJ}}{\partial \overline{z_i}}}\,\mathrm{d}P_r.
\end{eqnarray*}
Therefore, we have
\begin{eqnarray*}
&&||T^*f||_{L^2_{(s,t)}(\ell^2,P_r)}^2
+
||Sf||_{L^2_{(s,t+2)}(\ell^2,P_r)}^2\\
&=& \sum_{|I|=s,|J|=t}'\sum_{1\leq i,j<\infty} \textbf{a}^{I,J}\cdot a_j^2\cdot a_i^2\cdot\int_{\ell^2}\delta_j(f_{I,jJ})\cdot\overline{\delta_i(f_{I,iJ})}\,\mathrm{d}P_r\\
&&+ \sum_{|I|=s,|K|=t+1}'\sum_{ i\notin K}\textbf{a}^{I,K}\cdot a_i^2\cdot\int_{\ell^2}\bigg|\frac{\partial f_{I,K}}{\partial \overline{z_i}}\bigg|^2\,\mathrm{d}P_r\\
&&- \sum_{|I|=s,|J|=t}'\sum_{ i\neq j}\textbf{a}^{I,J}\cdot a_j^2\cdot a_i^2\cdot\int_{\ell^2}\frac{\partial f_{I,iJ}}{\partial \overline{z_j}}\cdot\overline{\frac{\partial f_{I,jJ}}{\partial \overline{z_i}}}\,\mathrm{d}P\\
&=& \sum_{|I|=s,|J|=t}'\sum_{1\leq i,j<\infty} \textbf{a}^{I,J}\cdot a_j^2\cdot a_i^2\cdot\int_{\ell^2}\delta_j(f_{I,jJ})\cdot\overline{ \delta_i(f_{I,iJ})}\,\mathrm{d}P_r\\
&&+ \sum_{|I|=s,|K|=t+1}'\sum_{ 1\leq i<\infty}\textbf{a}^{I,K}\cdot a_i^2\cdot\int_{\ell^2}\bigg|\frac{\partial f_{I,K}}{\partial \overline{z_i}}\bigg|^2\,\mathrm{d}P_r\\
&&- \sum_{|I|=s,|J|=t}'\sum_{1\leq i,j<\infty}\textbf{a}^{I,J}\cdot a_j^2\cdot a_i^2\cdot\int_{\ell^2}\frac{\partial f_{I,iJ}}{\partial \overline{z_j}}\cdot\overline{\frac{\partial f_{I,jJ}}{\partial \overline{z_i}}}\,\mathrm{d}P_r\\
&=& \sum_{|I|=s,|J|=t}' \sum_{1\leq i,j<\infty}\textbf{a}^{I,J}\cdot a_j^2\cdot a_i^2\cdot\int_{\ell^2}\Bigg(\delta_j(f_{I,jJ})\cdot\overline{ \delta_i(f_{I,iJ})}\\
&&\qquad\qquad\qquad\qquad\qquad\qquad\qquad\qquad\qquad\qquad\qquad-\frac{\partial f_{I,jJ}}{\partial \overline{z_i}}\cdot\overline{\frac{\partial f_{I,iJ}}{\partial \overline{z_j}}}\Bigg)\,\mathrm{d}P_r\\
&&+ \sum_{|I|=s,|K|=t+1}'\sum_{ 1\leq i<\infty}\textbf{a}^{I,K}\cdot a_i^2\cdot\int_{\ell^2}\bigg|\frac{\partial f_{I,K}}{\partial \overline{z_i}}\bigg|^2\,\mathrm{d}P_r,
\end{eqnarray*}
where the second equality follows from
\begin{eqnarray*}
\sum_{|I|=s,|K|=t+1}'\sum_{ i\in K}\textbf{a}^{I,K}\cdot a_i^2\cdot\int_{\ell^2}\bigg|\frac{\partial f_{I,K}}{\partial \overline{z_i}}\bigg|^2\,\mathrm{d}P_r\\
=\sum_{|I|=s,|J|=t}'\sum_{1\leq i <\infty}\textbf{a}^{I,J}\cdot  a_i^4\cdot\int_{\ell^2}\bigg|\frac{\partial f_{I,iJ}}{\partial \overline{z_i}}\bigg|^2\,\mathrm{d}P_r,
\end{eqnarray*}
and the third equality follows from
\begin{eqnarray*}
&&-\sum_{|I|=s,|J|=t}'\sum_{1\leq i,j<\infty}\textbf{a}^{I,J}\cdot a_j^2\cdot a_i^2\cdot\int_{\ell^2}\frac{\partial f_{I,iJ}}{\partial \overline{z_j}}\cdot\overline{\frac{\partial f_{I,jJ}}{\partial \overline{z_i}}}\,\mathrm{d}P_r\\
&=&-\sum_{|I|=s,|J|=t}' \sum_{1\leq i,j<\infty}\textbf{a}^{I,J}\cdot a_j^2\cdot a_i^2\cdot\int_{\ell^2}\frac{\partial f_{I,jJ}}{\partial \overline{z_i}}\cdot\overline{\frac{\partial f_{I,iJ}}{\partial \overline{z_j}}}\,\mathrm{d}P_r.
\end{eqnarray*}
In view of $(\ref{22.1.8'})$ and $(\ref{22.1.8})$, we conclude that
\begin{eqnarray*}
&&||T^*f||_{L^2_{(s,t)}(\ell^2,P_r)}^2
+
||Sf||_{L^2_{(s,t+2)}(\ell^2,P_r)}^2\\
&=& \sum_{|I|=s,|J|=t}' \sum_{1\leq i,j<\infty}\textbf{a}^{I,J}\cdot a_j^2\cdot a_i^2\cdot\int_{\ell^2}\frac{1}{2r^2a_j^2}\cdot \frac{\partial\overline{z_j}}{\partial \overline{z_i}}\cdot f_{I,jJ}\cdot\overline{f_{I,iJ}}\,\mathrm{d}P_r\\
&&+ \sum_{|I|=s,|K|=t+1}'\sum_{ 1\leq i<\infty}\textbf{a}^{I,K}\cdot a_i^2\cdot\int_{\ell^2}\bigg|\frac{\partial f_{I,K}}{\partial \overline{z_i}}\bigg|^2\,\mathrm{d}P_r\\
&\geq& \frac{1}{2r^2}\sum_{|I|=s,|J|=t}' \sum_{1\leq i<\infty}\textbf{a}^{I,J}\cdot a_i^2\cdot\int_{\ell^2}|f_{I,iJ}|^2\,\mathrm{d}P_r\\
&=&\frac{t+1}{2r^2}\cdot||f||_{L^2_{(s,t+1)}(\ell^2,P_r)}^2,
\end{eqnarray*}
which gives the desired $L^2$ estimate \eqref{202002071}. This completes the proof of Theorem \ref{200207t1}.
\end{proof}
\begin{corollary}\label{20231108cor1}
$\mathscr{D}_{(s,t+1)}$  and $D_S$ are dense in $L^2_{(s,t+1)}(\ell^2,P_r)$.
\end{corollary}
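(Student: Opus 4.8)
The plan is to prove the two density assertions simultaneously by exploiting the inclusions $\mathscr{D}_{(s,t+1)}\subseteq D_S\subseteq L^2_{(s,t+1)}(\ell^2,P_r)$. The second inclusion is immediate from Definition \ref{definition of T S}, and the first is exactly the content of Theorem \ref{200207t1}, which guarantees that every element of $\mathscr{D}_{(s,t+1)}$ belongs to $D_S\cap D_{T^*}$. Consequently it suffices to show that the smallest of the three spaces, $\mathscr{D}_{(s,t+1)}$, is dense in $L^2_{(s,t+1)}(\ell^2,P_r)$: once that is established, $D_S$ contains a dense subset of $L^2_{(s,t+1)}(\ell^2,P_r)$ and is therefore itself dense.

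For the density of $\mathscr{D}_{(s,t+1)}$, I would argue by a standard truncate-then-mollify scheme adapted to the weighted norm. Fix an arbitrary
$$
f=\sideset{}{'}\sum_{|I|=s,\,|J|=t+1} f_{I,J}\,\mathrm{d}z^I\wedge \mathrm{d}\overline{z}^J \in L^2_{(s,t+1)}(\ell^2,P_r)
$$
and $\varepsilon>0$. Since by definition of $L^2_{(s,t+1)}(\ell^2,P_r)$ the weighted series $\sum'_{|I|=s,|J|=t+1}\textbf{a}^{I,J}\int_{\ell^2}|f_{I,J}|^2\,\mathrm{d}P_r$ converges, I would first choose a finite set $\Lambda$ of strictly increasing index pairs $(I,J)$ with $|I|=s$, $|J|=t+1$ such that the tail satisfies
$$
\sideset{}{'}\sum_{(I,J)\notin\Lambda}\textbf{a}^{I,J}\int_{\ell^2}|f_{I,J}|^2\,\mathrm{d}P_r<\frac{\varepsilon}{2}.
$$
Then, invoking Proposition \ref{20230916prop1} (density of $\mathscr{C}_c^{\infty}$ in $L^2(\ell^2,P_r)$), for each of the finitely many pairs $(I,J)\in\Lambda$ I would select $g_{I,J}\in\mathscr{C}_c^{\infty}$ with $\textbf{a}^{I,J}\int_{\ell^2}|f_{I,J}-g_{I,J}|^2\,\mathrm{d}P_r<\varepsilon/(2\,|\Lambda|)$.

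Finally I would assemble $g\triangleq\sum'_{(I,J)\in\Lambda} g_{I,J}\,\mathrm{d}z^I\wedge \mathrm{d}\overline{z}^J$ and verify two things. First, $g\in\mathscr{D}_{(s,t+1)}$: because $\Lambda$ is finite, the integer $n_0\triangleq\max\{\max(I\cup J):(I,J)\in\Lambda\}$ is finite, so all the occurring multi-indices satisfy $\max(I\cup J)\le n_0$ and the coefficients lie in $\mathscr{C}_c^{\infty}$, which is precisely membership in the $n_0$-th constituent of the union defining $\mathscr{D}_{(s,t+1)}$. Second, splitting the weighted norm over $\Lambda$ and its complement gives
$$
\|f-g\|_{L^2_{(s,t+1)}(\ell^2,P_r)}^2=\sideset{}{'}\sum_{(I,J)\in\Lambda}\textbf{a}^{I,J}\!\int_{\ell^2}\!|f_{I,J}-g_{I,J}|^2\,\mathrm{d}P_r+\sideset{}{'}\sum_{(I,J)\notin\Lambda}\textbf{a}^{I,J}\!\int_{\ell^2}\!|f_{I,J}|^2\,\mathrm{d}P_r<\varepsilon.
$$
This is the whole argument; I do not expect a genuine obstacle, only the two pieces of bookkeeping worth flagging. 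The mild point is that the defining constraint of $\mathscr{D}_{(s,t+1)}$ is a uniform bound $\max(I\cup J)\le n$ on the multi-indices appearing (not on the number of variables the coefficients depend on), so one must confirm that a single finite $n_0$ works after truncation — which it does precisely because $\Lambda$ is finite. The only conceptual input beyond this is the coefficient-wise density supplied by Proposition \ref{20230916prop1} together with the convergence of the weighted series, and the reduction to $D_S$ via Theorem \ref{200207t1}.
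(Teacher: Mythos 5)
Your proof is correct and follows essentially the same route as the paper: truncate the weighted series over multi-indices to a finite set, approximate the finitely many surviving coefficients by cylinder functions in $\mathscr{C}_c^{\infty}$, and obtain the density of $D_S$ from the inclusion $\mathscr{D}_{(s,t+1)}\subset D_S$ given by Theorem \ref{200207t1}. The only difference is in the coefficient step: where you cite Proposition \ref{20230916prop1} as a black box, the paper instead multiplies each surviving coefficient by $\chi_{B_{r_0}}$ (to get bounded support) and applies the reduce-then-mollify operator $(\cdot)_{n,\delta}$ of Proposition \ref{convolution properties}(1), i.e., it carries out the needed $L^2$-approximation explicitly rather than quoting it.
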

\begin{proof}
For each
\begin{eqnarray*}
f= \sum_{|I|=s,|J|=t+1}'f_{I,J}\,\mathrm{d}z^I\wedge \mathrm{d}\overline{z}^J\in L^2_{(s,t+1)}(\ell^2,P_r),
\end{eqnarray*}
and given $\epsilon>0$. Since
\begin{eqnarray*}
\sum_{|I|=s,|J|=t+1}'  \textbf{a}^{I,J}\cdot \int_{\ell^2}|f_{I,J}|^2\,\mathrm{d}P_r<\infty,
\end{eqnarray*}
there exists $n_0\in\mathbb{N}$ and $r_0\in(0,+\infty)$ such that
\begin{eqnarray*}
\sum_{|I|=s,|J|=t+1,\max I\cup J>n_0}'  \textbf{a}^{I,J}\cdot \int_{\ell^2}|f_{I,J}|^2\,\mathrm{d}P_r<\frac{\epsilon^2}{9},
\end{eqnarray*}
and
\begin{eqnarray*}
\sum_{|I|=s,|J|=t+1,\max I\cup J\leq n_0}'  \textbf{a}^{I,J}\cdot \int_{\ell^2\setminus B_{r_0}}|f_{I,J}|^2\,\mathrm{d}P_r<\frac{\epsilon^2}{9},
\end{eqnarray*}
By the conclusion (1) of Proposition \ref{convolution properties}, there exists $n\geq n_0$ and $\delta\in(0,+\infty)$ such that
\begin{eqnarray*}
\sum_{|I|=s,|J|=t+1,\max I\cup J\leq n_0}'  \textbf{a}^{I,J}\cdot \int_{\ell^2 }|(f_{I,J}\cdot\chi_{B_{r_0}})_{n,\delta}-f_{I,J}\cdot\chi_{B_{r_0}}|^2\,\mathrm{d}P_r<\frac{\epsilon^2}{9}.
\end{eqnarray*}
Let
\begin{eqnarray*}
g\triangleq\sum_{|I|=s,|J|=t+1,\max I\cup J\leq n_0}'(f_{I,J}\cdot\chi_{B_{r_0}})_{n,\delta}.
\end{eqnarray*}
Then $g\in \mathscr{D}_{(s,t+1)}$ and $||f-g||_{L^2_{(s,t+1)}(\ell^2,P_r)}<\epsilon$. By Theorem \ref{200207t1}, $\mathscr{D}_{(s,t+1)}\subset D_S$ and this completes the proof of Corollary \ref{20231108cor1}.
\end{proof}

\subsection{Main Approximation Results}

\begin{proposition}\label{weak equality for more test functions}
For any $f=\sum\limits^{\prime }_{\left| I\right|  =s} \sum\limits^{\prime }_{\left| J\right|  =t} f_{I,J}\,\mathrm{d}z_{I}\wedge \,\mathrm{d}\overline{z_{J}}\in L^{2}_{\left( s,t\right)  }\left( \ell^2, P_r\right)$, the following assertions hold: If $f\in  D_{\overline{\partial } }$, then for all strictly increasing multi-indices $I$ and $K$ with $\left| I\right|=s$ and $\left| K\right|=t+1$,
\begin{equation}\label{230203e1}
\left( -1\right)^{s+1}  \int_{\ell^2} \sum^{\infty }_{i=1} \sum^{\prime }_{\left| J\right|  =t} \varepsilon^{K}_{iJ} f_{I,J}\overline{\delta_{i} \phi }\,\mathrm{d}P_r=\int_{\ell^2} \left( \overline{\partial } f\right)_{I,K}  \bar{\phi }\,\mathrm{d}P_r,
\end{equation}
for any Borel measurable function $\phi\in C_{S,b}^1(\ell^2)$ such that supp$\phi$ is a bounded subset of $\ell^2$.
\begin{proof}
By Lemma \ref{F}, there exists $\{\phi_{k}\}_{k=1}^{\infty}\subset  \mathscr{C}_c^{\infty}$ such that the conditions corresponding to (\ref{6zqqqx}) are satisfied (for which $f$ and $h_k$ therein are replaced by $\phi$ and $\phi_k$, respectively). For any $k\in\mathbb{N}$, by \eqref{weak def of st form}, we have
\begin{equation}\label{230204e4}
\left( -1\right)^{s+1}  \int_{\ell^2} \sum^{\infty }_{i=1} \sum^{\prime }_{\left| J\right|  =t} \varepsilon^{K}_{iJ} f_{I,J}\overline{\delta_{i} \left( \phi_{k} \right)  }\,\mathrm{d}P_r=\int_{\ell^2} \left( \overline{\partial } f\right)_{I,K}  \overline{\phi_{k} }\,\mathrm{d}P_r.
\end{equation}
Letting $k\rightarrow \infty $ in (\ref{230204e4}), we arrive at \eqref{230203e1}. The proof of Proposition \ref{weak equality for more test functions} is completed.
\end{proof}
 We need the following technical result.
\begin{lemma}\label{T*formula}
Assume that $ f=\sum\limits^{\prime }_{\left| I\right|  =s,\left| J\right|  =t+1} f_{I,J}dz_{I}\wedge d\overline{z_{J}} \in D_{T^{\ast }}$ and $\phi$ is a Borel measurable function $\phi\in C_{S,b}^1(\ell^2)$ such that supp$\phi$ is a bounded subset of $\ell^2$.
for some given strictly increasing multi-indices $K$ and $L$  with $\left| K\right|=s$ and $\left| L\right|=t$.
Then,
$$
\int_{\ell^2} \left( T^{\ast }f\right)_{K,L}  \phi  \,\mathrm{d}P_r=\left( -1\right)^{s}  \sum_{j=1}^{\infty}a_j^2 \int_{\ell^2}\partial_{j} \phi \cdot f_{K,jL} \,\mathrm{d}P_r.
$$
\end{lemma}
\begin{proof}
Let $g\triangleq \sum\limits^{\prime }_{\left| I_2\right|  =s,\left| L_2\right|  =t} g_{I_2,L_2}dz_{I_2}\wedge d\overline{z_{L_2}}$, where $g_{K,L}\triangleq \overline{\phi}$ and $g_{I_2,L_2}\triangleq 0$ for any other strictly increasing multi-indices $I_2$ and $L_2$ with $\left| I_2\right|=s$ and $\left| L_2\right|=t$.  Fix any $ \varphi \in \mathscr{C}_c^{\infty}$ and strictly increasing multi-indices $I_3$ and $J_3$ with $\left| I_3\right|=s$ and $\left| J_3\right|=t+1$. If $I_3\neq K$ or there does not exist $j_0\in\mathbb{N}$ such that $\{j_0\}\cup L=J_3$, then we have
$$
\left( -1\right)^{s+1}  \int_{\ell^2} \sum^{\infty }_{i=1} \sum^{\prime }_{\left| L_3\right|  =t} \varepsilon^{J_3}_{iL_3} g_{I_3,L_3}\overline{\delta_{i} \varphi }\,\mathrm{d}P_r=0.
$$
If $I_3=K$ and there exists $j_0\in\mathbb{N}$ such that $\{j_0\}\cup L=J_3$, then we have
$$
\begin{array}{ll}
\displaystyle \left( -1\right)^{s+1}  \int_{\ell^2} \sum^{\infty }_{i=1} \sum^{\prime }_{\left| L_3\right|  =t} \varepsilon^{J_3}_{iL_3} g_{K,L_3}\overline{\delta_{i} \varphi }\,\mathrm{d}P_r\\[3mm]
\displaystyle =\left( -1\right)^{s+1}  \int_{\ell^2} \varepsilon^{J_3}_{j_0L}\cdot \overline{\phi}\cdot\overline{\delta_{j_0} \varphi }\,\mathrm{d}P_r=\left( -1\right)^{s}  \int_{\ell^2} \varepsilon^{J_3}_{j_0L}\cdot \left(\overline{\partial_{j_0}}\overline{\phi}\right)\cdot\overline{ \varphi }\,\mathrm{d}P_r\\[3mm]
\displaystyle
=\left( -1\right)^{s}  \int_{\ell^2}\sum_{j=1}^{\infty} \varepsilon^{J_3}_{jL}\cdot \left(\overline{\partial_{j}}\overline{\phi}\right)\cdot\overline{ \varphi }\,\mathrm{d}P_r,
\end{array}
$$
where the second equality follows from Corollary \ref{integration by Parts or deltai}.
Since $\phi\in C_{S,b}^1(\ell^2)$, it is easy to see that $g\in L^{2}_{(s,t)}\left( \ell^2,P_r\right)$ and
$$
(-1)^s\sum^{\prime }_{\left| J\right|  =t+1} \sum_{j=1}^{\infty}\varepsilon^{J}_{jL}\overline{\partial_{j} }\overline{\phi} dz_{K}\wedge d\,\overline{z_{J}}\in L^{2}_{(s,t+1)}\left( \ell^2,P_r\right).
$$
By Definition \ref{definition of T S}, $g\in D_T$ and
$
Tg = (-1)^s\sum^{\prime }_{\left| J\right|  =t+1} \sum_{j=1}^{\infty}\varepsilon^{J}_{jL}\overline{\partial_{j} }\overline{\phi} dz_{K}\wedge d\,\overline{z_{J}}$.
By $(T^{\ast }f,g)_{L^2_{(s,t)}(\ell^2,P_r)}=
(f,Tg)_{L^2_{(s,t+1)}(\ell^2,P_r)}$, using the fact that $\varepsilon^{J}_{jL}=\varepsilon^{J}_{jL}|\varepsilon^{J}_{jL}|$, we have
$$
\begin{array}{ll}
\displaystyle
\textbf{a}^{K,L}\int_{\ell^2} \left( T^{\ast }f\right)_{K,L} \phi \,\mathrm{d}P_r
=\left( -1\right)^{s}  \sum^{\prime }_{\left| J\right|  =t+1} \sum_{j=1}^{\infty} \textbf{a}^{K,J}\varepsilon^{J}_{jL}\int_{\ell^2}f_{K,J}\cdot\overline{\overline{ \partial_{j}} \overline{\phi}}  \,\mathrm{d}P_r\\[3mm]
\displaystyle =\left( -1\right)^{s}  \sum^{\prime }_{\left| J\right|  =t+1} \sum_{j=1}^{\infty} \textbf{a}^{K,J}\varepsilon^{J}_{jL}\int_{\ell^2}f_{K,J}\cdot\partial_{j} \phi  \,\mathrm{d}P_r\\[3mm]
\displaystyle =\left( -1\right)^{s} \sum_{j=1}^{\infty} \textbf{a}^{K,L}a_j^2\int_{\ell^2}f_{K,jL}\cdot\partial_{j}\phi  \,\mathrm{d}P_r,
\end{array}
$$
which completes the proof of Lemma \ref{T*formula}.
\end{proof}
\begin{lemma}\label{20231111lem11}
Suppose that $f\in L^{2}\left( \ell^2, P_r\right)$ and for any Borel measurable function $\phi\in C_{S,b}^1(\ell^2)$ satifying $\phi=0$ on $B_{r+\frac{1}{2}}$, it holds that
\begin{equation*}
\int_{\ell^2} f  \phi  \,\mathrm{d}P_r=0.
\end{equation*}
Then we have supp$f\subset B_{r+1}$.
\end{lemma}
\begin{proof}
We only need to prove that $f\cdot \chi_{\ell^2\setminus B_{r+1}}=0$ almost everywhere respect to $P_r$. From the proof of Corollary \ref{20231108cor1},  we see that  $\mathscr{C}_c^{\infty}$ is dense in $L^{2}\left( \ell^2, P_r\right)$. Thus we only need to prove that
\begin{equation*}
\int_{\ell^2} f\cdot \chi_{\ell^2\setminus B_{r+1}}\cdot \varphi\,\mathrm{d}P_r=0,\quad\forall\,\varphi\in\mathscr{C}_c^{\infty}.
\end{equation*}
For each $n\in\mathbb{N}$, choosing $\gamma_n\in C^{\infty}(\mathbb{R})$ such that $\gamma_n(x)=1$ for all $x\leq (r+1-\frac{1}{4n})^2$, $\gamma_n(x)=0$ for all $x\geq (r+1-\frac{1}{8n})^2$ and $0\leq \gamma_n\leq 1$. Let $\psi_n(\textbf{z})=\gamma_n(||\textbf{z}||^2_{\ell^2}),\,\forall \,\textbf{z}\in\ell^2$. Then it is easy to see that $\{\psi_n\}_{n=1}^{\infty}\subset C_{S,b}^1(\ell^2)$, $\psi_n\cdot \varphi$ is Borel measurable, $\psi_n\cdot \varphi\in C_{S,b}^1(\ell^2)$, $\psi_n\cdot \varphi=0$ on $B_{r+\frac{1}{2}}$ and
\begin{equation*}
\lim_{n\to\infty}\psi_n(\textbf{z})=\chi_{\ell^2\setminus B_{r+1}}(\textbf{z}),\quad\forall\, \textbf{z}\in\ell^2,\,n\in\mathbb{N}.
\end{equation*}
These imply that
\begin{equation*}
\int_{\ell^2} f\cdot \chi_{\ell^2\setminus B_{r+1}}\cdot \varphi\,\mathrm{d}P_r
=\lim_{n\to\infty}\int_{\ell^2} f\cdot \psi_n \cdot \varphi\,\mathrm{d}P_r=0.
\end{equation*}
The proof of Lemma \ref{20231111lem11} is completed.
\end{proof}
\begin{lemma}\label{20231110lem1}
If $f\in D_{S}$ and supp$f\subset B_r$ for some $r\in(0,+\infty)$, then supp$Sf\subset B_{r+1}$; If $f\in D_{T^*}$ and supp$f\subset B_r$ for some $r\in(0,+\infty)$, then supp$T^*f\subset B_{r+1}$.
\end{lemma}
\begin{proof}
Suppose that
\begin{equation*}
f=\sum\limits^{\prime }_{\left| I\right|  =s} \sum\limits^{\prime }_{\left| J\right|  =t+1} f_{I,J}\,\mathrm{d}z_{I}\wedge \,\mathrm{d}\overline{z_{J}}\in L^{2}_{\left( s,t+1\right)  }\left( \ell^2, P_r\right).
\end{equation*}
For each Borel measurable function $\phi\in C_{S,b}^1(\ell^2)$ such that $\phi=0$ on $B_{r+\frac{1}{2}}$ and multi index $I$ and $K$ such that $|I|=s,|K|=t+2$, by Proposition \ref{weak equality for more test functions},
\begin{equation*}
\int_{\ell^2} \left(Sf\right)_{I,K}  \bar{\phi }\,\mathrm{d}P_r=\left( -1\right)^{s+1}  \int_{\ell^2} \sum^{\infty }_{i=1} \sum^{\prime }_{\left| J\right|  =t} \varepsilon^{K}_{iJ} f_{I,J}\overline{\delta_{i} \phi }\,\mathrm{d}P_r=0.
\end{equation*}
By Lemma \ref{20231111lem11}, supp$Sf\subset B_{r+1}$. Similarly, by Lemma \ref{T*formula} we can deduce that supp$T^*f\subset B_{r+1}$. The proof of Lemma \ref{20231110lem1} is completed.
\end{proof}
\end{proposition}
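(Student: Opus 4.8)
The plan is to prove both assertions in parallel, reducing each to the scalar localization criterion of Lemma \ref{20231111lem11} applied to the individual components. Write $f=\sum'_{|I|=s,|J|=t+1}f_{I,J}\,\mathrm{d}z_I\wedge\mathrm{d}\overline{z_J}$. The starting point in each case is one of the two integration-by-parts identities already available. For $f\in D_S\subset D_{\overline\partial}$, I would apply Proposition \ref{weak equality for more test functions} with $t$ replaced by $t+1$ (so the target index satisfies $|K|=t+2$), obtaining, for every bounded-support test function $\phi\in C^1_{S,b}(\ell^2)$,
\begin{equation*}
\int_{\ell^2}(Sf)_{I,K}\,\overline\phi\,\mathrm{d}P_r=(-1)^{s+1}\int_{\ell^2}\sum_{i=1}^\infty\sum'_{|J|=t+1}\varepsilon^K_{iJ}\,f_{I,J}\,\overline{\delta_i\phi}\,\mathrm{d}P_r;
\end{equation*}
while for $f\in D_{T^*}$, Lemma \ref{T*formula} gives
\begin{equation*}
\int_{\ell^2}(T^*f)_{K,L}\,\phi\,\mathrm{d}P_r=(-1)^s\sum_{j=1}^\infty a_j^2\int_{\ell^2}\partial_j\phi\cdot f_{K,jL}\,\mathrm{d}P_r.
\end{equation*}
In both identities the right-hand side pairs a component of $f$ with a first-order differential expression in $\phi$.

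The heart of the argument is a disjoint-support observation. Fix any $\phi\in C^1_{S,b}(\ell^2)$ with $\phi\equiv0$ on the open ball $B_{r+1/2}$. Because $\phi$ vanishes on an open set, all of its one-dimensional directional difference quotients vanish there, so $D_{x_i}\phi=D_{y_i}\phi=0$ on $B_{r+1/2}$; hence $\partial_i\phi=0$ and $\delta_i\phi=\partial_i\phi-\frac{\overline{z_i}}{2r^2a_i^2}\phi=0$ on $B_{r+1/2}$ for every $i$. On the other hand, $\supp f\subset B_r$ forces each component $f_{I,J}$ (resp. $f_{K,jL}$) to vanish $P_r$-almost everywhere on $\ell^2\setminus B_r$. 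Since $B_r\subset B_{r+1/2}$, on $B_r$ the factors $\delta_i\phi$ and $\partial_j\phi$ vanish, while off $B_r$ the components of $f$ vanish; thus every integrand on the right-hand sides above is zero $P_r$-almost everywhere, giving $\int(Sf)_{I,K}\overline\phi\,\mathrm{d}P_r=0$ (resp. $\int(T^*f)_{K,L}\phi\,\mathrm{d}P_r=0$) for all bounded-support $\phi$ vanishing on $B_{r+1/2}$.

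To upgrade this to the hypothesis of Lemma \ref{20231111lem11} for an arbitrary, possibly unbounded-support, $\phi\in C^1_{S,b}(\ell^2)$ vanishing on $B_{r+1/2}$, I would truncate using the radial cutoffs $\psi_n=\gamma_n(\|\cdot\|_{\ell^2}^2)$ of the kind built in the proof of Lemma \ref{20231111lem11} (equal to $1$ on a large ball, $0$ outside a slightly larger one). Each $\psi_n\phi$ lies in $C^1_{S,b}(\ell^2)$, has bounded support, and still vanishes on $B_{r+1/2}$; applying the previous paragraph to $\psi_n\phi$ and letting $n\to\infty$ by dominated convergence (using that $\phi$ is bounded and each component lies in $L^2(\ell^2,P_r)\subset L^1(\ell^2,P_r)$) yields the identities for $\phi$ itself. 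As the admissible test functions are closed under complex conjugation, this is exactly the hypothesis of Lemma \ref{20231111lem11} for each scalar component $(Sf)_{I,K}$ and $(T^*f)_{K,L}$; that lemma then gives $\supp(Sf)_{I,K}\subset B_{r+1}$ and $\supp(T^*f)_{K,L}\subset B_{r+1}$, and taking the union over components and closing up yields $\supp Sf\subset B_{r+1}$ and $\supp T^*f\subset B_{r+1}$.

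There is no deep obstacle here; the points that demand care are the index bookkeeping in Proposition \ref{weak equality for more test functions} (the shift from $t$ to $t+1$ and the sign $\varepsilon^K_{iJ}$) and the verification that $\psi_n\phi$ genuinely remains in $C^1_{S,b}(\ell^2)$. The latter rests on the fact that a radial function satisfies $\sum_i\bigl(|D_{x_i}\psi_n|^2+|D_{y_i}\psi_n|^2\bigr)=4|\gamma_n'(\|z\|_{\ell^2}^2)|^2\|z\|_{\ell^2}^2$, which is bounded precisely because $\gamma_n'$ is supported on a bounded annulus; everything else reduces to the routine disjoint-support cancellation above.
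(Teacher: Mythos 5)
Your proposal is correct and follows essentially the same route as the paper: apply Proposition \ref{weak equality for more test functions} (with $t$ shifted to $t+1$) and Lemma \ref{T*formula} to test functions vanishing on $B_{r+\frac{1}{2}}$, kill the right-hand sides by the disjoint-support cancellation between $\supp f\subset B_r$ and $\delta_i\phi=\partial_j\phi=0$ on $B_{r+\frac{1}{2}}$, and then invoke Lemma \ref{20231111lem11} componentwise. Your extra truncation step with the radial cutoffs $\psi_n$ is a small but genuine improvement: the paper applies the Proposition and Lemma \ref{T*formula} directly to every $\phi\in C^1_{S,b}(\ell^2)$ vanishing on $B_{r+\frac{1}{2}}$, silently ignoring that both results are stated only for $\phi$ with bounded support, whereas your dominated-convergence passage from $\psi_n\phi$ to $\phi$ closes that gap explicitly.
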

The following result will be useful later.
\begin{proposition}\label{general multiplitier}
Suppose that $\{X_{k}\}_{k=1}^{\infty}$ is the sequence of cut-off functions given in Theorem \ref{230215Th1}. Then for any $k\in \mathbb{N}$, it holds that
\begin{itemize}
		\item[$(1)$]  If $f\in D_{T}$, then $X_k\cdot f\in D_{T}$ and $T\left(X_k\cdot f\right)  =X_k\cdot (Tf)+(TX_k) \wedge f,$ where
$$
(TX_k)\wedge f\triangleq \left( -1\right)^{s}  \sum^{\prime }_{\left| I\right|  =s} \sum^{\prime }_{\left| K\right|  =t+1} \left(\sum^{\infty }_{i=1} \sum^{\prime }_{\left| J\right|  =t} \varepsilon^{K}_{iJ} f_{I,J}\overline{\partial_{i} } X_k \right)dz_{I}\wedge d\overline{z_{K}};
$$

\item[$(2)$] If $g\in D_{T^{\ast }}$, then $X_k\cdot g\in D_{{T}^{\ast }}$.
\end{itemize}
\end{proposition}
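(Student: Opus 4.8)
The plan is to treat part (1) as a direct verification of the weak definition of $\overline{\partial}$ in \eqref{weak def of st form}--\eqref{d-dar-f-defi}, and then to deduce part (2) from part (1) by an adjointness argument. Throughout I write $f=\sum'_{|I|=s}\sum'_{|J|=t}f_{I,J}\,dz_I\wedge d\overline{z}_J$ and let $g_{I,K}$ denote the weak components of $Tf=\overline{\partial}f$, so that $(Tf)_{I,K}=(-1)^s g_{I,K}$ with each $g_{I,K}$ satisfying the defining identity against every $\varphi\in\mathscr{C}_c^\infty$.

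For part (1), I would fix $I,K$ and propose the candidate coefficient $\tilde g_{I,K}:=X_k g_{I,K}+\sum_{i}\sum'_{|J|=t}\varepsilon^K_{iJ}f_{I,J}\,\overline{\partial_i}X_k$, which is exactly the $(I,K)$ entry dictated by the asserted Leibniz formula. The heart of the matter is testing $\tilde g_{I,K}$ against an arbitrary $\varphi\in\mathscr{C}_c^\infty$. Since $X_k$ is bounded, $F$-continuous, of bounded support, with $\overline{\partial_i}X_k\in C_F$ and $\sum_i a_i^2|\overline{\partial_i}X_k|^2\le C^2$ (the complex counterpart of Theorem \ref{230215Th1}), the product $X_k\varphi$ lies in $C^1_{S,b}(\ell^2)$ with bounded support and is therefore an admissible test function in Proposition \ref{weak equality for more test functions}. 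Applying that proposition to $f$ with $\phi=X_k\varphi$, and using that $X_k$ is real (so $\overline{X_k\varphi}=X_k\overline{\varphi}$ and $\overline{\partial_i X_k}=\overline{\partial_i}X_k$) together with the Leibniz rule $\delta_i(X_k\varphi)=(\partial_i X_k)\varphi+X_k\,\delta_i\varphi$, I would split the integral into a ``differentiate $X_k$'' part and a ``differentiate $\varphi$'' part. Collecting terms gives precisely $\int_{\ell^2}\tilde g_{I,K}\overline{\varphi}\,dP_r=-\int_{\ell^2}\sum_i\sum'_{|J|=t}\varepsilon^K_{iJ}(X_k f_{I,J})\overline{\delta_i\varphi}\,dP_r$, which is the weak equation \eqref{weak def of st form} for $X_kf$.

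It remains to verify the $L^2$ summability \eqref{230929for1}, which I expect to be the main obstacle and the only place where the weighted bound on the cut-offs is indispensable. The piece $X_k g_{I,K}$ is controlled by $\sup|X_k|$ times $\|Tf\|$. For the second piece one uses that $\varepsilon^K_{iJ}\ne0$ forces $i\in K$ and $J=K\setminus\{i\}$, so the sum over $i$ is finite with $t+1$ terms; a $(t+1)$-term Cauchy--Schwarz, the identity $\textbf{a}^{I,K}=a_i^2\,\textbf{a}^{I,K\setminus\{i\}}$, the uniform bound $\sum_i a_i^2|\overline{\partial_i}X_k|^2\le C^2$, and the reindexing $(I,K,\,i\in K)\mapsto(I,K\setminus\{i\})$ yield $\sum'_{I,K}\textbf{a}^{I,K}\int|\tilde g_{I,K}|^2\,dP_r\le \mathrm{const}\cdot(t+1)C^2\|f\|^2_{L^2_{(s,t)}(\ell^2,P_r)}<\infty$. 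This simultaneously gives $\tilde g_{I,K}\in L^2(\ell^2,P_r)$ and \eqref{230929for1}, hence $X_kf\in D_{\overline{\partial}}=D_T$ and the formula $T(X_kf)=X_k(Tf)+(TX_k)\wedge f$.

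For part (2), I would argue by duality. Let $g\in D_{T^*}$. For every $u\in D_T$, reality of $X_k$ gives $(Tu,X_kg)=(X_kTu,g)$; rewriting $X_kTu=T(X_ku)-(TX_k)\wedge u$ via part (1) (valid since $X_ku\in D_T$) and using $g\in D_{T^*}$ yields $(T(X_ku),g)=(X_ku,T^*g)=(u,X_kT^*g)$. The remaining pairing $((TX_k)\wedge u,g)$ is linear in the components of $u$, and collecting the coefficient of $u_{I,J}$ identifies an $(s,t)$-form $\Xi$ with $\Xi_{I,J}=(-1)^s\sum_{i\notin J}a_i^2\,\varepsilon^{\{i\}\cup J}_{iJ}(\partial_i X_k)\,g_{I,\{i\}\cup J}$ such that $((TX_k)\wedge u,g)=(u,\Xi)$; the same estimate as above, now splitting the weight $a_i^2=a_i\cdot a_i$ to absorb the infinite sum over $i$ and pairing against $\|g\|$, shows $\Xi\in L^2_{(s,t)}(\ell^2,P_r)$ with $\|\Xi\|^2\le \mathrm{const}\cdot(t+1)C^2\|g\|^2$. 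Consequently $(Tu,X_kg)=(u,\,X_kT^*g-\Xi)$ for all $u\in D_T$, and since $X_kT^*g-\Xi$ is a fixed element of $L^2_{(s,t)}(\ell^2,P_r)$, the definition of the adjoint gives $X_kg\in D_{T^*}$ (indeed $T^*(X_kg)=X_kT^*g-\Xi$). The only delicate points are the bookkeeping of the signs $\varepsilon^K_{iJ}$ under index transpositions and the uniformity of the constant $C$ in $k$, both of which rest on the structural bound on the cut-offs furnished by Theorem \ref{230215Th1}.
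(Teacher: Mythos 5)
Your proposal is correct and follows essentially the same route as the paper: part (1) tests the candidate Leibniz components against $\mathscr{C}_c^{\infty}$ via Proposition \ref{weak equality for more test functions} applied with the admissible test function $X_k\varphi$, then establishes \eqref{230929for1} by the same $(t+1)$-term Cauchy--Schwarz argument using $\sup_{\ell^2}\sum_i a_i^2|\overline{\partial_i}X_k|^2\le C^2$; part (2) is the same duality argument built on part (1) applied to $X_k u$. The only (harmless) difference is that in part (2) you exhibit $T^*(X_kg)=X_kT^*g-\Xi$ explicitly, whereas the paper merely bounds the functional $u\mapsto(Tu,X_kg)_{L^2_{(s,t+1)}(\ell^2,P_r)}$ by a constant times $\|u\|_{L^2_{(s,t)}(\ell^2,P_r)}$ and invokes the definition of the adjoint domain.
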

\begin{proof}
(1) Note that for any $k\in\mathbb{N}$, it is easy to see that $X_k$ is a Borel measurable function, $X_k\in C_{S,b}^1(\ell^2)$ and supp$X_k$ is a bounded subset of $\ell^2$. Hence, for any strictly increasing multi-indices $I$ and $K$ with $\left| I\right|  =s$ and $\left| K\right|  =t+1$ and $\phi\in \mathscr{C}_c^{\infty}$, the function $X_k\cdot \phi$ satisfies the assumptions in Proposition \ref{weak equality for more test functions}, by Proposition \ref{weak equality for more test functions}, we have
\begin{eqnarray*}
&&\left( -1\right)^{s+1}  \int_{\ell^2} \sum^{\infty }_{i=1} \sum^{\prime }_{\left| J\right|  =t} \varepsilon^{K}_{iJ} X_k f_{I,J}\overline{\delta_{i} \phi } \,\mathrm{d}P_r\\
&=&\left( -1\right)^{s+1}  \int_{\ell^2} \sum^{\infty }_{i=1} \sum^{\prime }_{\left| J\right|  =t} \varepsilon^{K}_{iJ} f_{I,J}\overline{(\delta_{i} \left( X_k  \phi \right)  -(\partial_{i}X_k )\cdot \phi }\,\mathrm{d}P_r\\
&=&\int_{\ell^2} X_k (T f)_{I,K}\overline{\phi } \,\mathrm{d}P_r+\left( -1\right)^{s}  \int_{\ell^2} \sum^{\infty }_{i=1} \sum^{\prime }_{\left| J\right|  =t} \varepsilon^{K}_{iJ} f_{I,J}(\overline{\partial_{i} } X_k) \cdot \overline{\phi } \,\mathrm{d}P_r\\
&=&\int_{\ell^2} X_k (T f)_{I,K}\overline{\phi }\,\mathrm{d}P_r+  \int_{\ell^2}((TX_k )\wedge f)_{I,K} \cdot \overline{\phi } \,\mathrm{d}P_r.
\end{eqnarray*}
$$
		\begin{array}{ll}
\displaystyle
			\end{array}
$$
Note that
\begin{equation}\label{general inequality for Teta wedege f}
\begin{array}{ll}
\displaystyle\left| \left|(T X_k) \wedge f\right|  \right|_{L^{2}_{\left( s,t+1\right)  }\left( \ell^2, P_r\right)}^2  \\[3mm]
\displaystyle\leq  \left( t+1\right)  \sum^{\prime }_{\left| I\right|  =s,\left| K\right|  =t+1} \textbf{a}^{I,K}\int_{\ell^2} \sum^{\infty }_{i=1} \sum^{\prime }_{\left| J\right|  =t} |\varepsilon^{K}_{iJ} f_{I,J}\overline{\partial_{i} } X_k |^{2}\,\mathrm{d}P_r \\[3mm]
\displaystyle\leq\left( t+1\right)  \sum^{\prime }_{\left| I\right|  =s}\int_{\ell^2} \sum^{\infty }_{i=1} \sum^{\prime }_{\left| J\right|  =t}a_i^2|\overline{\partial_{i} } X_k|^2\cdot \textbf{a}^{I,J}\cdot|f_{I,J}|^{2} \,\mathrm{d}P_r \\[3mm]
\displaystyle= \left( t+1\right) \cdot \int_{\ell^2} \left(\sum^{\infty }_{i=1}a_i^2 |\overline{\partial_{i} } X_k |^{2} \right)\cdot \left(\sum^{\prime }_{\left| I\right|  =s}\sum^{\prime }_{\left| J\right|  =t}\textbf{a}^{I,J}\cdot\left| f_{I,J}\right|^{2}\right)  \,\mathrm{d}P_r\\[3mm]
\displaystyle\leq \left( t+1\right) \cdot\left(\sup_{\ell^2} \sum^{\infty }_{i=1} a_i^2|\overline{\partial_{i} } X_k |^{2}\right)\cdot \left| \left| f\right|  \right|^{2}_{L^{2}_{\left( s,t\right)  }\left( \ell^2,P_r\right)  }\\[3mm]
\displaystyle \leq \left( t+1\right) \cdot C^2\cdot \left| \left| f\right|  \right|^{2}_{L^{2}_{\left( s,t\right)  }\left( \ell^2,P_r\right)  }<\infty,
\end{array}
\end{equation}
where the last inequality follows from the conclusion (2) of Theorem \ref{230215Th1} and the real number $C$ is given in the conclusion (2) of Theorem \ref{230215Th1}. Also, $||X_k\cdot(Tf)||_{L^{2}_{\left( s,t+1\right)  }\left( \ell^2,P_r\right)  }  \leqslant  ||Tf||_{L^{2}_{\left( s,t+1\right)  }\left( \ell^2,P_r\right)  }$. Therefore, $X_k \cdot f \in D_{T}$ and $T ( X_k\cdot f )  =X_k\cdot (Tf)+(TX_k) \wedge f.$

\medskip

(2) Since $\overline{X_k}=X_k$. For any $u\in D_{T}$, by the conclusion (1) that we have proved, it holds that $\overline{X_k}\cdot u\in D_T$ and $T\left( \overline{X_k}\cdot u\right)  =\overline{X_k}\cdot (Tu)+(T\overline{X_k}) \wedge u$ and hence
$$
\begin{array}{ll}
\displaystyle
			\left( Tu,X_k \cdot g\right)_{L^{2}_{(s,t+1)}\left( \ell^2,P_r\right)  }  =\left( \overline{X_k }\cdot (Tu),g\right)_{L^{2}_{(s,t+1)}\left( \ell^2, P_r\right)  } \\[3mm]
\displaystyle =\left(T\left( \overline{X_k}\cdot u\right),g\right)_{L^{2}_{\left( s,t+1\right)  }\left( \ell^2,P_r\right)  }  -\left( (T\overline{X_k}) \wedge u,g\right)_{L^{2}_{\left( s,t+1\right)  }\left( \ell^2,P_r\right)  } \\[3mm]
\displaystyle =\left( u,X_k\cdot T^{\ast }g\right)_{L^{2}_{\left( s,t\right)  }\left( \ell^2,P_{r}\right)  }  -\left( (T\overline{X_k}) \wedge u,g\right)_{L^{2}_{\left( s,t+1\right)  }\left( \ell^2,P_{r}\right)  }.
		\end{array}
$$
By \eqref{general inequality for Teta wedege f}, we have
\begin{eqnarray*}
&&| ( Tu,X_k\cdot g )_{L^{2}_{(s,t+1)}\left( \ell^2,P_{r}\right)  }  |\\
&&\leqslant |( u,X_k\cdot T^{\ast }g)_{L^{2}_{\left( s,t\right)  }\left(\ell^2,P_{r}\right)  }| +|( (T\overline{X_k}) \wedge u,g)_{L^{2}_{\left( s,t+1\right)  }\left(\ell^2,P_{r}\right)  }|\\
&&\leqslant \left(  || X_k\cdot T^{\ast }g||_{L^{2}_{\left( s,t\right)  }\left( \ell^2,P_{r}\right)  }  +\sqrt{t+1}\cdot C\cdot\left| \left| g\right|  \right|_{L^{2}_{\left( s,t+1\right)  }\left( \ell^2,P_{r}\right)  }  \right) \cdot \left| \left| u\right|  \right|_{L^{2}_{\left( s,t\right)  }\left( \ell^2,P_{r}\right)  }\\
&&\leqslant \left(  || T^{\ast }g ||_{L^{2}_{\left( s,t\right)  }\left( \ell^2,P_{r}\right)  }  +\sqrt{t+1}\cdot C  \cdot\left| \left| g\right|  \right|_{L^{2}_{\left( s,t+1\right)  }\left( \ell^2,P_{r}\right)  }  \right) \cdot \left| \left| u\right|  \right|_{L^{2}_{\left( s,t\right)  }\left( \ell^2,P_{r}\right)  }.
\end{eqnarray*}
By the definition of $T^*$ (e.g., \cite[Definition 4.2.2, p. 177]{Kra}), we have $X_k\cdot g\in D_{T^{\ast }}$ and this completes the proof of Proposition \ref{general multiplitier}.
\end{proof}
\begin{proposition}\label{230924prop1}
Suppose that $s,t\in\mathbb{N}_0$, $T,S$ are operators defined in Definition \ref{definition of T S} and $\{X_{k}\}_{k=1}^{\infty}$ is the sequence of cut-off functions given in Theorem \ref{230215Th1}. Then for each $f\in D_{S}\bigcap D_{T^{\ast }}$, it holds that $X_k\cdot f\in D_{S}\bigcap D_{T^{\ast }}$ for each $k\in\mathbb{N}$ and the following sequence tends to $0$ as $k\to\infty$,
$$
 \left| \left| S\left( X_{k}\cdot f\right)  -X_{k}\cdot(Sf)\right|  \right|_{3}  +\left| \left| T^{\ast }\left( X_{k}\cdot f\right)  -X_{k }\cdot(T^{\ast }f)\right|  \right|_{1}  +\left| \left| X_{k}\cdot f-f\right|  \right|_{2}
$$
where we denote $ \left| \left| \cdot \right|  \right|_{L^{2}_{\left( s,t+i-1\right)  }\left( \ell^2,P_r\right)  }$ briefly by $\left| \left| \cdot \right|  \right|_{i}$ for each $i=1,2,3$.
\end{proposition}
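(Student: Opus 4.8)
The plan is first to settle the membership $X_k\cdot f\in D_S\cap D_{T^*}$ as a direct consequence of Proposition \ref{general multiplitier}: part (2) gives $X_k\cdot f\in D_{T^*}$, while $S=\overline{\partial}$ acting on $(s,t+1)$-forms is the same type of operator as $T$, so part (1) applies verbatim with $t+1$ in place of $t$ and yields $X_k\cdot f\in D_S$ together with the product formula $S(X_k\cdot f)=X_k\cdot(Sf)+(SX_k)\wedge f$. Then each of the three terms is reduced to a single dominated-convergence argument. The crucial input is that, by the construction of $\{X_k\}$ in Theorem \ref{230215Th1}, one has $X_k\equiv 1$ and all first-order derivatives of $X_k$ vanishing on $K_k$, together with the uniform bound $\sum_{i=1}^{\infty}a_i^2|\overline{\partial_i}X_k|^2\le C^2$ on $\ell^2$ (recall $|\partial_j X_k|=|\overline{\partial_j}X_k|$ for the real-valued $X_k$). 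Since $P_r(K_k)\to 1$ by Lemma \ref{basic propeties of Knm} and $K=\bigcup_k K_k$ has full $P_r$-measure, for $P_r$-a.e.\ $\textbf{z}$ one has $\textbf{z}\in K_k$ for all large $k$, whence $X_k(\textbf{z})\to 1$ and $\sum_i a_i^2|\overline{\partial_i}X_k(\textbf{z})|^2\to 0$ (indeed these are eventually constant). Writing $|f|_w^2\triangleq \sum^{\prime}_{|I|=s,|J|=t+1}\textbf{a}^{I,J}|f_{I,J}|^2$, so that $\int_{\ell^2}|f|_w^2\,\mathrm{d}P_r=\|f\|_2^2<\infty$, these facts furnish both the a.e.\ decay and the integrable domination every step needs.

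For the $S$-term, $\|S(X_k f)-X_k(Sf)\|_3=\|(SX_k)\wedge f\|_{L^2_{(s,t+2)}(\ell^2,P_r)}$, and re-examining the chain of inequalities in \eqref{general inequality for Teta wedege f}, but stopping one line before the supremum is pulled out of the integral, gives
\begin{equation*}
\|(SX_k)\wedge f\|_{L^2_{(s,t+2)}(\ell^2,P_r)}^2\le (t+2)\int_{\ell^2}\Big(\sum_{i=1}^{\infty} a_i^2|\overline{\partial_i}X_k|^2\Big)\,|f|_w^2\,\mathrm{d}P_r,
\end{equation*}
the factor $t+2$ coming from Cauchy--Schwarz over the at most $t+2$ nonvanishing indices $\varepsilon^M_{iJ}$ and the reindexing $\textbf{a}^{I,M}=\textbf{a}^{I,J}a_i^2$. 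The integrand tends to $0$ a.e.\ and is dominated by $C^2|f|_w^2\in L^1(\ell^2,P_r)$, so this term vanishes. The third term is even simpler, since $\|X_k f-f\|_2^2=\int_{\ell^2}|X_k-1|^2\,|f|_w^2\,\mathrm{d}P_r$ and $0\le X_k\le 1$ gives $|X_k-1|^2\le 1$ with $X_k\to 1$ a.e., so dominated convergence again yields $0$.

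The genuinely delicate point is the $T^*$-term, because Proposition \ref{general multiplitier}(2) asserts $X_k f\in D_{T^*}$ without furnishing a formula for $T^*(X_k f)$, and for a general $f\in D_{T^*}$ the coefficients of $T^*f$ are defined only weakly. I would extract the commutator from Lemma \ref{T*formula}: applying it to $X_k f$ with a test function $\phi$ and writing $\partial_j\phi\cdot X_k=\partial_j(X_k\phi)-(\partial_j X_k)\phi$ splits the right-hand side, the first piece being $\int(T^*f)_{K,L}(X_k\phi)\,\mathrm{d}P_r=\int X_k(T^*f)_{K,L}\phi\,\mathrm{d}P_r$ by a second use of Lemma \ref{T*formula} with the admissible test function $X_k\phi$, and the second piece producing $(-1)^{s-1}\int\phi\sum_j a_j^2(\partial_j X_k)f_{K,jL}\,\mathrm{d}P_r$. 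As $\phi$ ranges over $\mathscr{C}_c^{\infty}$, which is dense in $L^2(\ell^2,P_r)$, I conclude the pointwise identity
\begin{equation*}
T^*(X_k f)-X_k\,(T^*f)=(-1)^{s-1}\sum^{\prime}_{|K|=s,|L|=t}\Big(\sum_{j=1}^{\infty} a_j^2(\partial_j X_k)f_{K,jL}\Big)\,\mathrm{d}z^K\wedge\mathrm{d}\overline{z}^L .
\end{equation*}
From here the estimate is parallel to the $S$-term: Cauchy--Schwarz in $j$ and the reindexing $\sum^{\prime}_{K,L}\textbf{a}^{K,L}\sum_j a_j^2|f_{K,jL}|^2=(t+1)|f|_w^2$ give
\begin{equation*}
\|T^*(X_k f)-X_k(T^*f)\|_1^2\le (t+1)\int_{\ell^2}\Big(\sum_{j=1}^{\infty} a_j^2|\partial_j X_k|^2\Big)\,|f|_w^2\,\mathrm{d}P_r\longrightarrow 0,
\end{equation*}
again by dominated convergence. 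Summing the three estimates completes the proof.

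The main obstacle, as flagged, is making the explicit $T^*$-commutator formula rigorous: one must justify that the weak identity produced by Lemma \ref{T*formula} pins down the coefficient $P_r$-a.e.\ (this rests on density of $\mathscr{C}_c^{\infty}$ and on the $L^2$-integrability of $\sum_j a_j^2(\partial_j X_k)f_{K,jL}$, itself secured by Cauchy--Schwarz with the uniform bound $\sum_j a_j^2|\partial_j X_k|^2\le C^2$), and that $X_k\phi$ remains admissible for Lemma \ref{T*formula}. Everything else is bookkeeping with the multi-index weights $\textbf{a}^{I,J}$ and routine dominated-convergence estimates.
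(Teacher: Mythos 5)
Your proposal is correct, and for three of the four pieces it coincides with the paper's own proof: membership $X_k\cdot f\in D_S\cap D_{T^*}$ via Proposition \ref{general multiplitier}, the identity $S(X_k\cdot f)-X_k\cdot(Sf)=(SX_k)\wedge f$ estimated by $(t+2)\int_{\ell^2}\bigl(\sum_i a_i^2|\overline{\partial_i}X_k|^2\bigr)|f|_w^2\,\mathrm{d}P_r$, and the term $\|X_kf-f\|_2$, all handled by dominated convergence using the bound $\sum_i a_i^2|\overline{\partial_i}X_k|^2\le C^2$, the vanishing of the derivatives of $X_k$ on $K_k$, and $P_r\bigl(\bigcup_n K_n\bigr)=1$. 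Where you genuinely diverge is the $T^*$-term. The paper never identifies $T^*(X_kf)$ explicitly: it estimates, for arbitrary $u\in D_T$, the pairing $\bigl(T^*(X_kf)-X_k(T^*f),u\bigr)$, which by Proposition \ref{general multiplitier} collapses (up to sign) to $\bigl(f,(TX_k)\wedge u\bigr)$; it then observes that $(TX_k)\wedge u$ vanishes on $K_{k-N_1}$, bounds the pairing by $\sqrt{(t+1)}\,C\,\bigl(\int_{\ell^2\setminus K_{k-N_1}}\sum'\textbf{a}^{I,J}|f_{I,J}|^2\,\mathrm{d}P_r\bigr)^{1/2}\|u\|_{L^2_{(s,t)}}$, and concludes via density of $D_T$ in $L^2_{(s,t)}(\ell^2,P_r)$ and Lemma \ref{basic propeties of Knm} (the tail mass of $f$ outside $K_{k-N_1}$ tends to zero). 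You instead pin down the commutator pointwise, $T^*(X_kf)-X_k(T^*f)=(-1)^{s-1}\sum'\bigl(\sum_j a_j^2(\partial_jX_k)f_{K,jL}\bigr)\mathrm{d}z^K\wedge\mathrm{d}\overline{z}^L$, by two applications of Lemma \ref{T*formula}, and then run the same dominated-convergence mechanism as for the $S$-term (your reindexing factor $(t+1)$ and the domination by $C^2|f|_w^2$ are both right). Both routes are sound; yours buys a genuine product rule for $T^*$ symmetric to the one for $S$ and a single uniform argument for all three terms, while the paper's duality argument avoids identifying weak coefficients altogether and hence needs no discussion of which test functions separate points of $L^2$.

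One step of yours does need repair, though the repair is easy and you partially flagged it. Lemma \ref{T*formula} requires the test function to have \emph{bounded support in $\ell^2$}, and elements of $\mathscr{C}_c^{\infty}$ do not satisfy this: a cylinder function compactly supported in its first $n$ variables has, as a function on $\ell^2$, the unbounded support $\{\textbf{z}\in\ell^2:(z_1,\dots,z_n)\in\mathrm{supp}_{\mathbb{C}^n}\}$. So your first application of the lemma (to $X_kf$ tested against $\phi\in\mathscr{C}_c^{\infty}$) is not licensed as stated; only the second (against $X_k\phi$, whose support sits inside the compact set $\mathrm{supp}\,X_k$) is. The fix is to test instead against the class $\{X_m\varphi:\varphi\in\mathscr{C}_c^{\infty},\,m\in\mathbb{N}\}$, whose members are Borel, lie in the required smoothness class, and have bounded support, and which is dense in $L^2(\ell^2,P_r)$ since $X_m\varphi\to\varphi$ in $L^2$ by dominated convergence. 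Running your two-application argument against these test functions, and noting that the candidate commutator coefficient lies in $L^2$ (Cauchy--Schwarz with $\sum_j a_j^2|\partial_jX_k|^2\le C^2$), identifies it $P_r$-a.e., after which your estimate and conclusion stand.
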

\begin{proof}
Similar to the proof of the conclusion (1) of Proposition \ref{general multiplitier}, one can show that $X_{k}\cdot f\in D_{S}$  for each $k\in\mathbb{N}$ and $S\left( X_{k}\cdot f\right)  =X_{k}\cdot (Sf)+(SX_{k}) \wedge f,$ where
$$
(SX_{k})\wedge f\triangleq \left( -1\right)^{s}  \sum^{\prime }_{\left| I\right|  =s} \sum^{\prime }_{\left| K\right|  =t+2} \left(\sum^{\infty }_{i=1} \sum^{\prime }_{\left| J\right|  =t+1} \varepsilon^{K}_{iJ} f_{I,J}\overline{\partial_{i} } X_{k} \right)dz_{I}\wedge d\overline{z_{K}}.
$$
By the conclusion (2) of Proposition \ref{general multiplitier}, we have $X_k\cdot f\in D_{T^{\ast }}$, and hence $X_k\cdot f\in D_{S}\bigcap D_{T^{\ast }}$, and
\begin{eqnarray*}
&&\left| \left| S\left( X_{k}\cdot f\right)  -X_{k}\cdot(Sf)\right|  \right|^{2}_{3 }\\
&=&\sum^{\prime }_{\left| I\right|  =s}\sum^{\prime }_{\left| K\right|  =t+2} \textbf{a}^{I,K}\int_{\ell^2} \left|\sum^{\infty }_{i=1} \sum^{\prime }_{\left| J\right|  =t+1} \varepsilon^{K}_{i,J} f_{I,J}\overline{\partial_{i} } X_{k }\right|^{2}\,\mathrm{d}P_r\\
&\leqslant &(t+2)\sum^{\prime }_{\left| I\right|  =s}\sum^{\prime }_{\left| K\right|  =t+2}\textbf{a}^{I,K}\int_{\ell^2}\sum^{\infty }_{i=1} \sum^{\prime }_{\left| J\right|  =t+1} \left|\varepsilon^{K}_{i,J} f_{I,J}\overline{\partial_{i} } X_{k }\right|^{2} \,\mathrm{d}P_r\\
&=&(t+2)\sum^{\prime }_{\left| I\right|  =s}\sum^{\prime }_{\left| K\right|  =t+2}\int_{\ell^2}\sum^{\infty }_{i=1} \sum^{\prime }_{\left| J\right|  =t+1} \frac{\textbf{a}^{I,K}\cdot|\varepsilon^{K}_{i,J}|}{\textbf{a}^{I,J}} \cdot \textbf{a}^{I,J}\cdot |f_{I,J}|^2\cdot|\overline{\partial_{i} }X_{k } |^{2} \,\mathrm{d}P_r\\
&\leq &(t+2)\sum^{\prime }_{\left| I\right|  =s}\int_{\ell^2}\sum^{\infty }_{i=1} \sum^{\prime }_{\left| J\right|  =t+1} a_i^2 \cdot \textbf{a}^{I,J}\cdot |f_{I,J}|^2\cdot|\overline{\partial_{i} }X_{k } |^{2} \,\mathrm{d}P_r\\
&\leq &(t+2)\cdot\sum^{\prime }_{\left| I\right|  =s}\sum^{\prime }_{\left| J\right|  =t+1}\textbf{a}^{I,J}\cdot\int_{\ell^2} |f_{I,J}|^2\cdot\left(\sum^{\infty }_{i=1} a_i^2|\overline{\partial_{i} }X_{k } |^{2} \right)\,\mathrm{d}P_r.
\end{eqnarray*}
Recalling Theorem \ref{230215Th1}, we have $\sum\limits^{\infty }_{i=1} |\overline{\partial_{i} } X_{k}|^{2}\leqslant C^2$ for all $k\in\mathbb{N}.$ Thus
$$
		 \sum^{\prime }_{\left| I\right|  =s,\left| J\right|  =t+1} \textbf{a}^{I,J} \left| f_{I,J}\right|^{2} \cdot\left(\sum^{\infty }_{i=1} a_i^2|\overline{\partial_{i} }X_{k } |^{2} \right) \leqslant C^2\cdot\sum^{\prime }_{\left| I\right|  =s,\left| J\right|  =t+1} \textbf{a}^{I,J}  \left| f_{I,J}\right|^{2}  .
$$
By Lebesgue's Dominated Convergence Theorem, it follows that
$$
		\lim_{k\rightarrow \infty }\sum^{\prime }_{\left| I\right|  =s}\sum^{\prime }_{\left| J\right|  =t+1}\textbf{a}^{I,J}\cdot\int_{\ell^2} |f_{I,J}|^2\cdot\left(\sum^{\infty }_{i=1} a_i^2|\overline{\partial_{i} }X_{k } |^{2} \right)\,\mathrm{d}P_r=0,
		$$
and hence $\lim\limits_{k\rightarrow \infty } \left| \left| S\left( X_{k}\cdot f\right)  -X_{k}\cdot(Sf)\right|  \right|^{2}_{3}  =0.$

On the other hand, for every $u\in D_{T}$ and $k>N_1$(Recall \eqref{230409eq1}), by Proposition \ref{general multiplitier}, we obtain that
\begin{eqnarray*}
&&\left|\left( T^{\ast }\left( X_{k}\cdot f\right)  -X_{k}\cdot(T^{\ast }f),u\right)_{L^{2}_{\left( s,t\right)  }\left( \ell^2,P_r\right)  }  \right|\\
&=&\left|\left(f, X_{k}\cdot(Tu)\right)_{L^{2}_{\left( s,t+1\right)  }\left( \ell^2,P_{r}\right)  }  -\left( f,T(X_{k}\cdot u)\right)_{L^{2}_{\left( s,t+1\right)  }\left( \ell^2,P_r\right)  }  \right|\\
&&=\left|\left( f,(T X_{k})\wedge u\right)_{L^{2}_{\left( s,t+1\right)  }\left(\ell^2,P_r\right)  }  \right|\\
&=&\left| \sum^{\prime }_{\left| I\right|  =s}\sum^{\prime }_{\left| J\right|  =t+1} \textbf{a}^{I,J}\int_{\ell^2} f_{I,J}\overline{((TX_{k})\wedge u)_{I,J}} \,\mathrm{d}P_r\right|  \\
&\leqslant& \sum^{\prime }_{\left| I\right|  =s}\sum^{\prime }_{\left| J\right|  =t+1} \textbf{a}^{I,J}\int_{\ell^2} \left|f_{I,J} \right|\cdot \left|((TX_{k})\wedge u)_{I,J}\right|\,\mathrm{d}P_r\\
&\leqslant &\int_{\ell^2} \left(\sum^{\prime }_{\left| I\right|  =s}\sum^{\prime }_{\left| J\right|  =t+1}\textbf{a}^{I,J} |f_{I,J}|^{2} \right)^{\frac{1}{2} }\\&&
\times\left(\sum^{\prime }_{\left| I\right|  =s}\sum_{\left| J\right|  =t+1}^{\prime }  \textbf{a}^{I,J}|((TX_{k})\wedge u)_{I,J}|^{2} \right)^{\frac{1}{2} }\,\mathrm{d}P_r\\
&\leqslant &\sqrt{(t+1)\cdot C^2}  \cdot\int_{\ell^2\setminus K_{k-N_1}} \left(\sum^{\prime }_{\left| I\right|  =s}\sum^{\prime }_{\left| J\right|  =t+1} \textbf{a}^{I,J} |f_{I,J}|^{2} \right)^{\frac{1}{2} }\\&&\times \left(\sum^{\prime }_{\left| I\right|  =s}\sum_{\left| L\right|  =t}^{\prime } \textbf{a}^{I,J}|u_{I,L}|^{2} \right)^{\frac{1}{2} }\,\mathrm{d}P_r\\
&&\leqslant \sqrt{(t+1)\cdot C^2} \cdot\left(\int_{\ell^2\setminus K_{k-N_1}} \sum^{\prime }_{\left| I\right|  =s}\sum^{\prime }_{\left| J\right|  =t+1} \textbf{a}^{I,J}|f_{I,J}|^{2} \,\mathrm{d}P_r\right)^{\frac{1}{2} }\\&&\times \left| \left| u\right|  \right|_{L^{2}_{\left( s,t\right)  }\left( \ell^2, P_r\right)  } ,
\end{eqnarray*}
where the third inequality follows from the facts that $((TX_{k})\wedge u)_{I,J}=0$ on $K_{k-N_1}$ for all $I,J$, and
$$
		\begin{array}{ll}
\displaystyle
			\sum_{\left| I\right|  =s}\sum^{\prime }_{\left| J\right|  =t+1}\textbf{a}^{I,J} |((T X_{k })\wedge u)_{I,J}|^{2} =\sum^{\prime }_{\left| I\right|  =s}\sum^{\prime }_{\left| J\right|  =t+1} \textbf{a}^{I,J}\left|\sum^{\infty }_{i=1} \sum^{\prime }_{\left| L\right|  =t} \varepsilon^{J}_{iL} u_{I,L}\overline{\partial_{i} } X_{k}\right|^{2} \\[3mm]
\displaystyle \leq \left( t+1\right)\cdot  \sum^{\prime }_{\left| I\right|  =s}\sum^{\prime }_{\left| J\right|  =t+1}\textbf{a}^{I,J} \sum^{\infty }_{i=1} \sum^{\prime }_{\left| L\right|  =t} |\varepsilon^{J}_{iL} u_{I,L}\overline{\partial_{i} } X_{k}|^{2} \\[3mm]
\displaystyle \leq \left( t+1\right)\cdot  \sum^{\prime }_{\left| I\right|  =s}\sum^{\prime }_{\left| J\right|  =t+1}\sum^{\infty }_{i=1} \sum^{\prime }_{\left| L\right|  =t} \frac{\textbf{a}^{I,J} |\varepsilon^{J}_{iL}|}{\textbf{a}^{I,L}}\cdot \textbf{a}^{I,L} \cdot|u_{I,L}|^2\cdot|\overline{\partial_{i} } X_{k}|^{2} \\[3mm]
\displaystyle \leq \left( t+1\right) \cdot \sum^{\prime }_{\left| I\right|  =s} \sum^{\infty }_{i=1} \sum^{\prime }_{\left| L\right|  =t} a_i^2\cdot \textbf{a}^{I,L} \cdot|u_{I,L}|^2\cdot|\overline{\partial_{i} } X_{k}|^{2} \\[3mm]
\displaystyle \leq \left( t+1\right)\cdot  \sum^{\prime }_{\left| I\right|  =s} \sum^{\prime }_{\left| L\right|  =t} \textbf{a}^{I,L}|u_{I,L}|^2\left(\sum^{\infty }_{i=1}a_i^2|\overline{\partial_{i} } X_{k}|^{2}\right)\\[3mm]
\displaystyle \leq \left( t+1\right)\cdot C^2\cdot  \sum^{\prime }_{\left| I\right|  =s} \sum^{\prime }_{\left| L\right|  =t} \textbf{a}^{I,L}|u_{I,L}|^2 .
		\end{array}
$$
Similar to the proof Corollary \ref{20231108cor1}, it is easy to see that $D_T$ is dense in $L^{2}_{\left( s,t\right)  }\left(\ell^2, P_r\right)$, from which we deduce that
\begin{eqnarray*}
&&\left| \left| T^{\ast }\left( X_{k}\cdot f\right)  -X_{k}\cdot( T^{\ast }f)\right|  \right|_{L^{2}_{(s,t)}\left( \ell^2, P_r\right)  }\\
&\leq&\sqrt{(t+1)\cdot C^2}  \left(\sum^{\prime }_{\left| I\right|  =s} \sum^{\prime }_{\left| J\right|  =t+1} \textbf{a}^{I,J}\int_{\ell^2\setminus K_{k-N_1}} |f_{I,J}|^{2} \,\mathrm{d}P_r\right)^{\frac{1}{2} }.
\end{eqnarray*}
By Lemma \ref{basic propeties of Knm}, we note that the right side of the above inequality tends to zero as $k\to\infty$, which implies that $\lim\limits_{k\rightarrow \infty } \left| \left| T^{\ast }\left( X_{k}\cdot f\right)  -X_{k} \cdot(T^{\ast }f)\right|  \right|_{L^{2}_{(s,t)}\left( \ell^2, P_r\right)  }  =0.$

Since $|X_{k}|\leq1$, we have
$$
\sum\limits^{\prime }_{\left| I\right|  =s}\sum\limits^{\prime }_{\left| J\right|  =t+1} \textbf{a}^{I,J}|\left( X_{k}-1\right)  f_{I,J}|^{2} \leq 4 \sum\limits^{\prime }_{\left| I\right|  =s}\sum\limits^{\prime }_{\left| J\right|  =t+1} \textbf{a}^{I,J}\left| f_{I,J}\right|^{2} .
$$
We also note that, by the conclusion (2) of Theorem \ref{230215Th1}, $\lim\limits_{k \rightarrow \infty }  X_{k } =1$ almost everywhere respect to $P_r$.
By Lebesgue's Dominated Convergence Theorem, we arrive at
$$
		\begin{array}{ll}
\displaystyle
			\lim_{k\rightarrow \infty } \left| \left| X_{k} \cdot f-f\right|  \right|^{2}_{L^{2}_{(s,t+1)}\left( \ell^2,P_r\right)  }\\[3mm]
\displaystyle=\lim_{k \rightarrow \infty } \int_{\ell^2} \sum^{\prime }_{\left| I\right|  =s}\sum^{\prime }_{\left| J\right|  =t+1} \textbf{a}^{I,J}|\left( X_{k}-1\right)  f_{I,J}|^{2} \,\mathrm{d}P_r \\[3mm]
\displaystyle = \int_{\ell^2} \lim_{k\rightarrow \infty }\sum^{\prime }_{\left| I\right|  =s}\sum^{\prime }_{\left| J\right|  =t+1} \textbf{a}^{I,J}|\left( X_{k}-1\right)  f_{I,J}|^{2} \,\mathrm{d}P_r=0 .
		\end{array}
$$
The proof of Proposition \ref{230924prop1} is completed.
\end{proof}
Now we need some notations for our furthermore arguments. For $f\in L^2_{(s,t+1)}(\ell^2,P_r)$ such that $f\in D_{T^*}\cap D_S$ and supp$f$ is a compact subset of $\ell^2$,
\begin{eqnarray*}
f= \sum_{|I|=s,|J|=t+1}'f_{I,J}\,\mathrm{d}z^I\wedge \mathrm{d}\overline{z}^J,
\end{eqnarray*}
where $f_{I,J}\in L^2(\ell^2,P_r)$ and for each $I$ and $J$. For each $n\in\mathbb{N}$ and $\delta\in(0,1)$, let
\begin{eqnarray}\label{f_{ndelta}}
f_{n,\delta}\triangleq \sum_{|I|=s,|J|=t+1,\max(I\cup J)\leqslant n}'(f_{I,J})_{n,\delta}\,\mathrm{d}z^I\wedge \mathrm{d}\overline{z}^J,
\end{eqnarray}
and
\begin{eqnarray}\label{f_n}
f_{n}\triangleq\sum_{|I|=s,|J|=t+1,\max(I\cup J)\leqslant n}'(f_{I,J})_n\,\mathrm{d}z^I\wedge \mathrm{d}\overline{z}^J,
\end{eqnarray}
and for multi index $I$ and $J$ such that $|I|=s,|J|=t+1,\max(I\cup J)\leqslant n$,
$(f_{I,J})_{n,\delta  }$ is defined in the Definition \ref{Convolution after reduce dimension}.
\begin{remark}\label{(f{n,delta}){I,J}}
For multi index $I$ and $J$ such that $|I|=s,|J|=t+1$, we use $(f_{n,\delta})_{I,J}$ to denote the $I,J$ component of $f_{n,\delta}$, then if $\max(I\cup J)\leqslant n$, $(f_{n,\delta})_{I,J}=(f_{I,J})_{n,\delta  }$ and if $\max(I\cup J)>n$, $(f_{n,\delta})_{I,J}=0.$
\end{remark}
\begin{theorem}\label{main approximation theoremhs}
Suppose that $f\in D_{S}\bigcap D_{T^{\ast }}$ and supp$f$ is a compact subset of $\ell^2$. It holds that $\{f_{n,\delta}:n\in\mathbb{N},\,\delta\in(0,+\infty)\}\subset \mathscr{D}_{(s,t+1)}\subset D_{T^*}\cap D_S$ and
\begin{itemize}
\item[$(1)$] $\lim\limits_{n\to\infty}f_n=f$ and $\lim\limits_{\delta\to 0}f_{n,\delta}=f_n$ in $L^2_{(s,t+1)}(\ell^2,P_r)$ for each $n\in\mathbb{N}$ ;
\item[$(2)$] $\lim\limits_{n\to\infty}(Sf)_n=Sf$ and $\lim\limits_{\delta\to 0}S(f_{n,\delta})=(Sf)_n$ in $L^2_{(s,t+2)}(\ell^2,P_r)$ for each $n\in\mathbb{N}$;
\item[$(3)$] $\lim\limits_{n\to\infty}(T^*f)_n=T^*f$ and $\lim\limits_{\delta\to 0}T^*(f_{n,\delta})=(T^*f)_n$ in $L^2_{(s,t)}(\ell^2,P_r)$ for each $n\in\mathbb{N}$.
\end{itemize}
\end{theorem}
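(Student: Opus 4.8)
The plan is to prove all three assertions by reducing every claim to the interplay of two elementary operations: the \emph{reduction of dimension} $f\mapsto f_n$, which componentwise is the $L^2(\ell^2,P_r)$-contraction of Proposition \ref{Reduce diemension} (morally the conditional expectation onto cylinder functions of $z_1,\dots,z_n$), and the finite-dimensional \emph{mollification} $h\mapsto h*\chi_{n,\delta}$ on $\mathbb{C}^n$. First I would record the membership $\{f_{n,\delta}\}\subset\mathscr{D}_{(s,t+1)}$: by construction each component $(f_{I,J})_{n,\delta}$ lies in $C^\infty_c(\mathbb{C}^n)\subset\mathscr{C}^\infty_c$ (Definition \ref{Convolution after reduce dimension} together with Proposition \ref{convolution properties}(1), using that $\operatorname{supp}f$, and hence $\operatorname{supp}f_n$, is bounded), and only multi-indices with $\max(I\cup J)\le n$ occur; then $\mathscr{D}_{(s,t+1)}\subset D_{T^*}\cap D_S$ is exactly Theorem \ref{200207t1}.

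Next I would dispose of the three $n\to\infty$ limits at once, each being an instance of the \emph{form version} of Proposition \ref{Reduce diemension}(2): for a fixed $\omega=\sum'\omega_{I,J}\,dz^I\wedge d\overline z^J$ in the relevant $L^2_{(s,\cdot)}(\ell^2,P_r)$ one splits $\|\omega_n-\omega\|^2$ into the genuine part $\sum'_{\max(I\cup J)\le n}\textbf{a}^{I,J}\|(\omega_{I,J})_n-\omega_{I,J}\|^2$ and the tail $\sum'_{\max(I\cup J)>n}\textbf{a}^{I,J}\|\omega_{I,J}\|^2$; the tail vanishes because the series converges, while the genuine part tends to $0$ by the scalar Proposition \ref{Reduce diemension}(2) applied termwise plus a discrete dominated convergence argument, the summable domination $4\,\textbf{a}^{I,J}\|\omega_{I,J}\|^2$ being furnished by the contraction Proposition \ref{Reduce diemension}(1). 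Taking $\omega=f$, $\omega=Sf\in L^2_{(s,t+2)}$ and $\omega=T^*f\in L^2_{(s,t)}$ (the latter two legitimate since $f\in D_S\cap D_{T^*}$) yields $f_n\to f$, $(Sf)_n\to Sf$ and $(T^*f)_n\to T^*f$. The first $\delta\to0$ claim, $f_{n,\delta}\to f_n$ for fixed $n$, is then immediate, since $f_n$ is a \emph{finite} truncated sum and it suffices to invoke Proposition \ref{convolution properties}(1) on each component.

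The heart is the remaining two $\delta\to0$ limits. Here I would establish the intertwining identity
$$\frac{\partial}{\partial\overline z_j}\big(h_{n,\delta}\big)=\big(\overline{\partial_j}h\big)_{n,\delta},\qquad \partial_j\big(h_{n,\delta}\big)=\big(\partial_j h\big)_{n,\delta}\quad(j\le n)$$
for a scalar $h\in L^2(\ell^2,P_r)$ of bounded support, the operators on the right being taken in the weak $P_r$-sense. The point is that reduction converts a Gaussian-weak $\overline\partial_j$ (resp.\ $\partial_j$) into the ordinary Lebesgue derivative of the cylinder function $h_n$: writing the Gaussian density as $\varphi_n$ (Definition \ref{def for Gauss weight}) and combining the adjoint identity Proposition \ref{convolution properties}(3), the weak definition of $\overline\partial$, and Corollary \ref{integration by Parts or deltai}, the holomorphic and antiholomorphic factors of $\varphi_n$ cancel so that no zeroth-order weight survives, after which mollification commutes with the resulting constant-coefficient derivative. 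Since $S$ is assembled purely from the $\overline\partial_j$ (formula \eqref{formula of d-bar}), since $f_{n,\delta}$ is a cylinder function of $z_1,\dots,z_n$, and since the truncation $\max(I\cup M)\le n$ matches on both sides, this gives the \emph{exact} identity $S(f_{n,\delta})=(Sf)_{n,\delta}$; combined with Proposition \ref{convolution properties}(1) applied to the fixed form $Sf$ (whose support is bounded by Lemma \ref{20231110lem1}), it yields $\lim_{\delta\to0}S(f_{n,\delta})=(Sf)_n$, proving (2).

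For (3) the operator $T^*$ is built from $\delta_j=\partial_j-\tfrac{\overline z_j}{2r^2a_j^2}$ (formula \eqref{20231110for1}). The holomorphic part $\partial_j$ commutes with reduction and mollification as above, but the multiplier part does not, so one finds $T^*(f_{n,\delta})=(T^*f)_{n,\delta}+R_{n,\delta}$, where $R_{n,\delta}$ is, up to the constants $(-1)^{s-1}a_j^2$, the commutator of multiplication by $\tfrac{\overline z_j}{2r^2a_j^2}$ with mollification applied to the reduced components $(f_{I,jK})_n$. Because all functions involved are supported in a fixed ball (Definition \ref{Convolution after reduce dimension} and Lemma \ref{20231110lem1}), on which $\tfrac{\overline z_j}{2r^2a_j^2}$ is bounded, and because $h*\chi_{n,\delta}\to h$ in $L^2(\ell^2,P_r)$ for such $h$ by Proposition \ref{convolution properties}(1), the commutator $R_{n,\delta}\to0$; together with $(T^*f)_{n,\delta}\to(T^*f)_n$ this gives $\lim_{\delta\to0}T^*(f_{n,\delta})=(T^*f)_n$. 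The main obstacle is precisely the intertwining lemma of the previous paragraph: verifying that reduction of dimension converts the weak Gaussian operators $\overline\partial_j,\partial_j,\delta_j$ into the correct operators on $\mathbb{C}^n$ --- in particular that the $\delta_j$-contributions with $j>n$ reduce to zero and that the weight $\varphi_n$ cancels rather than leaving spurious lower-order terms --- which demands careful bookkeeping with $\varphi_n$ and a disciplined use of the duality formula Proposition \ref{convolution properties}(3); everything else is soft functional analysis.
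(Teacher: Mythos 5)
Your overall architecture coincides with the paper's: membership $\{f_{n,\delta}\}\subset\mathscr{D}_{(s,t+1)}\subset D_{T^*}\cap D_S$ via Theorem \ref{200207t1}, the three $n\to\infty$ limits by tail-splitting plus the contraction and convergence statements of Proposition \ref{Reduce diemension}, the limit $f_{n,\delta}\to f_n$ from Proposition \ref{convolution properties}, an exact commutation identity for $S$, and a vanishing Friedrichs-type commutator for the multiplier part of $T^*$. All of that is sound. The genuine gap is in how you package the ``heart'': you formulate the intertwining identity for a \emph{scalar} $h\in L^2(\ell^2,P_r)$ of bounded support, ``the operators on the right being taken in the weak $P_r$-sense,'' and you then apply it componentwise to $h=f_{I,J}$. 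But $f\in D_S\cap D_{T^*}$ does not provide weak derivatives $\overline{\partial_j}f_{I,J}$ (nor $\partial_jf_{I,J}$, $\delta_jf_{I,J}$) of the individual coefficients: by the definition \eqref{weak def of st form} of $D_{\overline{\partial}}$, only the signed combinations $\sum_{j}\sum_{|J|=t+1}'\varepsilon_{j,J}^{M}\overline{\partial_j}f_{I,J}$ are guaranteed to be represented by $L^2(P_r)$ functions, and $T^*f$ exists only through the duality $(T^*f,u)=(f,Tu)$, not through componentwise formulas. So the hypothesis of your key lemma fails (or is unverifiable) precisely for the functions you must apply it to; assuming componentwise regularity of elements of $D_S\cap D_{T^*}$ would moreover be circular, since such regularity is (at best) a corollary of the density theorem you are proving. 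There is also no distribution theory on $\ell^2$ available to interpret $\overline{\partial_j}f_{I,J}$ in a weaker sense before reducing.

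The repair --- and this is exactly what the paper does --- is to run your weight-cancellation computation at the level of the combinations rather than the components. For assertion (2): compute the classical derivative $\frac{\partial}{\partial\overline{z_j}}\big((f_{I,J})_n*\chi_{n,\delta}\big)(\textbf{z})$ by letting the derivative fall on the kernel and rewriting it as the pairing $-\int_{\ell^2}f_{I,J}\,\overline{\delta_j h_{n,\delta,\textbf{z}}}\,\mathrm{d}P_r$ with the single test function $h_{n,\delta,\textbf{z}}=\chi_{n,\delta}(\textbf{z}-\cdot)\,\varphi_n^{-1}\in\mathscr{C}_c^{\infty}$ (this is \eqref{formula 15nt}; it requires no regularity of $f_{I,J}$ whatsoever); then sum over $j$ and $J$ with the signs $\varepsilon_{j,J}^{K}$ and invoke \eqref{weak def of st form} \emph{once}, for this particular test function, to obtain the exact identity $(Sf_{n,\delta})_{I,K}=((Sf)_n)_{I,K}*\chi_{n,\delta}$. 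For assertion (3) one likewise cannot ``split off the $\partial_j$ part'' of $T^*f$; instead one pairs $f$ with $T\big(h_{n,\delta,\textbf{z}}\,\mathrm{d}z^I\wedge\mathrm{d}\overline{z}^K\big)$ and uses $(T^*f,\cdot)=(f,T\cdot)$, which yields \eqref{20231110for2}; comparing this with the classical expression for $\delta_j\big((f_{I,J})_n*\chi_{n,\delta}\big)$ leaves exactly your commutator term \eqref{20231110for4}, which tends to $0$ in $L^2$ as $\delta\to0$ because the supports are bounded. Note that your list of ingredients --- Proposition \ref{convolution properties}(3), the weak definition, Corollary \ref{integration by Parts or deltai}, and the cancellations $\partial_j(\varphi\varphi_n)=(\delta_j\varphi)\varphi_n$, $\delta_j(\varphi_n^{-1}\psi)=\varphi_n^{-1}\partial_j\psi$ --- is precisely what this restructured argument uses; the point is that the weak information must be consumed through a single application to a transported test function, never disassembled into componentwise derivatives.
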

\begin{proof}
Suppose that
\begin{eqnarray*}
f&=& \sum_{|I|=s,|J|=t+1}'f_{I,J}\,\mathrm{d}z^I\wedge \mathrm{d}\overline{z}^J,\\
Sf&=& (-1)^s\sum_{|I|=s,|K|=t+2}'g_{I,K}\,\mathrm{d}z^I\wedge \mathrm{d}\overline{z}^K.
\end{eqnarray*}
(1) Note that
\begin{eqnarray*}
&&||f_n-f||^2_{L^2_{(s,t+1)}(\ell^2)}\\
&=& \sum_{|I|=s,|J|=t+1,\max(I\cup J)\leq n}'\textbf{a}^{I,J}\cdot \int_{\ell^2}|(f_{I,J})_n-f_{I,J}|^2\,\mathrm{d}P_r+\\
&& \sum_{|I|=s,|J|=t+1,\max(I\cup J)> n}'\textbf{a}^{I,J}\cdot \int_{\ell^2}|f_{I,J}|^2\,\mathrm{d}P_r.
\end{eqnarray*}
Given $\epsilon>0$, there exists $n_0\in\mathbb{N}$ such that
\begin{eqnarray*}
\sum_{|I|=s,|J|=t+1,\max(I\cup J)> n_0}'4\cdot\textbf{a}^{I,J}\cdot \int_{\ell^2}|f_{I,J}|^2\,\mathrm{d}P_r<\frac{\epsilon}{2}.
\end{eqnarray*}
By (2) of Proposition \ref{convolution properties}, we have
\begin{eqnarray*}
\lim_{n\to\infty} \sum_{|I|=s,|J|=t+1,\max(I\cup J)\leq n_0}'\textbf{a}^{I,J}\cdot \int_{\ell^2}|(f_{I,J})_n-f_{I,J}|^2\,\mathrm{d}P_r=0.
\end{eqnarray*}
Then there exists $n_1\geq n_0$ such that for all $n\geq n_1$ it holds that
\begin{eqnarray*}
 \sum_{|I|=s,|J|=t+1,\max(I\cup J)\leq n_0}'\textbf{a}^{I,J}\cdot \int_{\ell^2}|(f_{I,J})_n-f_{I,J}|^2\,\mathrm{d}P_r<\frac{\epsilon}{2}
\end{eqnarray*}
and hence
\begin{eqnarray*}
&&||f_n-f||^2_{L^2_{(s,t+1)}(\ell^2)}\\
&=&\sum_{|I|=s,|J|=t+1,\max(I\cup J)\leq n_0}'\textbf{a}^{I,J}\cdot \int_{\ell^2}|(f_{I,J})_n-f_{I,J}|^2\,\mathrm{d}P_r+\\
&&\sum_{|I|=s,|J|=t+1,n_0<\max(I\cup J)\leq n}'\textbf{a}^{I,J}\cdot \int_{\ell^2}|(f_{I,J})_n-f_{I,J}|^2\,\mathrm{d}P_r+\\
&&\sum_{|I|=s,|J|=t+1,\max(I\cup J)> n}'\textbf{a}^{I,J}\cdot \int_{\ell^2}|f_{I,J}|^2\,\mathrm{d}P_r\\
&\leq&\sum_{|I|=s,|J|=t+1,\max(I\cup J)\leq n_0}'\textbf{a}^{I,J}\cdot \int_{\ell^2}|(f_{I,J})_n-f_{I,J}|^2\,\mathrm{d}P_r+\\
&&\sum_{|I|=s,|J|=t+1,\max(I\cup J)> n_0}'4\cdot\textbf{a}^{I,J}\cdot \int_{\ell^2}|f_{I,J}|^2\,\mathrm{d}P_r\\
&<&\epsilon.
\end{eqnarray*}
Therefore, $\lim\limits_{n\to\infty}f_n=f$ in $L^2_{(s,t+1)}(\ell^2, P_r)$. By Lemma \ref{20231110lem1}, supp$Sf$ and supp$T^*f$ are bounded subsets of $\ell^2$, similarly, one can prove that\\ $\lim\limits_{n\to\infty}(Sf)_n=Sf$ in $L^2_{(s,t+2)}(\ell^2, P_r)$ and
$\lim\limits_{n\to\infty}(T^*f)_n=T^*f$ in $L^2_{(s,t)}(\ell^2, P_r)$. Since supp$f$ is a compact subset of $\ell^2$, by Proposition \ref{convolution properties}, $\lim\limits_{\delta\to 0}(f_{n,\delta})_{I,J}=(f_n)_{I,J}$ in $L^2(\ell^2, P_r)$ for each multi index $I$ and $J$ such that $|I|=s,|J|=t+1,\max(I\cup J)\leq n$ and hence $\lim\limits_{\delta\to 0}f_{n,\delta}=f_n$ in $L^2_{(s,t+1)}(\ell^2, P_r)$.

(2) Since  $f_{n,\delta}\in \mathscr{D}_{(s,t+1)}$, by Theorem \ref{200207t1}, $f_{n,\delta}\in D_{S}$ and
\begin{eqnarray}\label{S(f_ndelta)}
S(f_{n,\delta})&=&(-1)^s\sum_{|I|=s,|K|=t+2,\max(I\cup K)\leq n}'\sum_{1\leq j\leq n}\sum_{|J|=t+1,\max J\leq n}'\varepsilon_{j, J}^{K}\\
&&\times\frac{\partial }{\partial \overline{z_j}}((f_{I,J})_n*\chi_{n,\delta})\,\mathrm{d}z^I\wedge \mathrm{d}\overline{z}^K.\nonumber
\end{eqnarray}
Note that for any $\textbf{z}=(z_1,\cdots,z_n)\in\mathbb{C}^n$ and $j=1,\cdots,n$, it holds that
\begin{eqnarray}
&&\frac{\partial }{\partial \overline{z_j}}((f_{I,J})_n*\chi_{n,\delta})(\textbf{z})\label{formula 15nt}\\
&=& \int_{\mathbb{C}^n}(f_{I,J})_n(\textbf{z}')\bigg(\frac{\partial }{\partial \overline{z_j}}\chi_{n,\delta}(\textbf{z}-\textbf{z}')
\bigg)
\varphi_n^{-1}(\textbf{z}')\varphi_n(\textbf{z}')
\,\mathrm{d}\textbf{z}'\nonumber\\
&=&\int_{\mathbb{C}^n}(f_{I,J})_n(\textbf{z}')\Bigg(\frac{\partial }{\partial \overline{z_j'}}\bigg(-\chi_{n,\delta}(\textbf{z}-\textbf{z}')
\varphi_n^{-1}(\textbf{z}')
\bigg)\nonumber\\
&&+\frac{z_j'\chi_{n,\delta}(\textbf{z}-\textbf{z}')\varphi_n^{-1}(\textbf{z}')}{2r^2a_j^2}
\Bigg)\varphi_n(\textbf{z}')
\,\mathrm{d}\textbf{z}'\nonumber\\
&=&-\int_{\mathbb{C}^n}(f_{I,J})_n(\textbf{z}')\overline{\delta_{j}\bigg(\chi_{n,\delta}(\textbf{z}-\textbf{z}')
\varphi_n^{-1}(\textbf{z}')
\bigg)}\varphi_n(\textbf{z}')
\,\mathrm{d}\textbf{z}'\nonumber\\
&=&-\int_{\mathbb{C}^n}(f_{I,J})_n(\textbf{z}')\overline{\delta_{j}\bigg(\chi_{n,\delta}(\textbf{z}-\textbf{z}')
\varphi_n^{-1}(\textbf{z}')
\bigg)}
\,\mathrm{d}\mathcal{N}^{n,r}(\textbf{z}')\nonumber\\
&=&-\int_{\mathbb{C}^n}\bigg(\int_{\ell^2(\mathbb{N}_{2,\widehat{n} })}f_{I,J}(\textbf{z}',\textbf{z}_n)
\mathrm{d}P_{n,r}(\textbf{z}^n)\bigg)\nonumber\\
&&\times\overline{\delta_{j}\bigg(\chi_{n,\delta}(\textbf{z}-\textbf{z}')
\varphi_n^{-1}(\textbf{z}')
\bigg)}
\,\mathrm{d}\mathcal{N}^{n,r}(\textbf{z}')\nonumber\\
&=&-\int_{\mathbb{C}^n}\int_{\ell^2(\mathbb{N}_{2,\widehat{n} })}f_{I,J}(\textbf{z}',\textbf{z}_n)
\overline{\delta_{j}\bigg(\chi_{n,\delta}(\textbf{z}-\textbf{z}')
\varphi_n^{-1}(\textbf{z}')
\bigg)}
\mathrm{d}P_{n,r}(\textbf{z}^n)\,\mathrm{d}\mathcal{N}^{n,r}(\textbf{z}')\nonumber\\
&=&-\int_{ \ell^2}f_{I,J}(\textbf{z}',\textbf{z}_n)\overline{\delta_{j}\bigg(\chi_{n,\delta}(\textbf{z}-\textbf{z}')
\varphi_n^{-1}(\textbf{z}')
\bigg)}\,\mathrm{d}P_r(\textbf{z}',\textbf{z}_n)\nonumber\\
&=&-\int_{\ell^2}f_{I,J}(\textbf{z}',\textbf{z}_n)\overline{\delta_{j}(h_{n,\delta,\textbf{z}}(\textbf{z}')
)}\,\mathrm{d}P_r(\textbf{z}',\textbf{z}_n)\nonumber\\
&=&-\int_{\ell^2}f_{I,J}\overline{\delta_{j}(h_{n,\delta,\textbf{z}}
)}\,\mathrm{d}P_r,\nonumber
\end{eqnarray}
where $\textbf{z}_n=(x_{i} +\sqrt{-1}y_{i})_{i=1}^{ n+1}\in \ell^2(\mathbb{N}_{2,\widehat{n} })$ and $h_{n,\delta,\textbf{z}}(\textbf{z}')\triangleq \chi_{n,\delta}(\textbf{z}-\textbf{z}')
\varphi_n^{-1}(\textbf{z}')
,$\\$\forall\,\textbf{z}'=(x_{i}' +\sqrt{-1}y_{i}')_{i=1}^{n}\in\mathbb{C}^n.$
Then $h_{n,\delta,\textbf{z}}\in  C_c^{\infty}(\mathbb{C}^n)$. By \eqref{weak def of st form}, for each multi index $I$ and $K$ such that $|I|=s,|K|=t+2,\max(I\cup K)\leq n$, it holds that
\begin{eqnarray}
&&\int_{\ell^2}g_{I,K}\overline{h_{n,\delta,\textbf{z}}}\,\mathrm{d}P_r\label{nt}\\
&=&-\int_{\ell^2}\bigg(\sum_{1\leq j<\infty}\sum_{|J|=t+1}'\varepsilon_{j, J}^{K}f_{I,J}\bigg)  \overline{\delta_{j}(h_{n,\delta,\textbf{z}})}  \,\mathrm{d}P_r.\nonumber
\end{eqnarray}
By (\ref{formula 15nt}) and (\ref{nt}), we have
\begin{eqnarray}
&&(-1)^s\int_{\mathbb{C}^n}(g_{I,K})_n(\textbf{z}')\chi_{n,\delta}(\textbf{z}-\textbf{z}')\,\mathrm{d}\textbf{z}'\label{sdfds dfsdnt}\\
&=&(-1)^s\int_{\mathbb{C}^n}(g_{I,K})_n\overline{h_{n,\delta,\textbf{z}}}\,\mathrm{d}\mathcal{N}^{n,r}\nonumber\\
&=&(-1)^s\int_{\ell^2}g_{I,K}\overline{h_{n,\delta,\textbf{z}}}\,\mathrm{d}P_r\nonumber\\
&=&(-1)^{s+1}\int_{\ell^2}\bigg(\sum_{1\leq j<\infty}\sum_{|J|=t+1}'\varepsilon_{j, J}^{K}f_{I,J}\bigg) (\overline{\delta_{j} h_{n,\delta,\textbf{z}}}) \,\mathrm{d}P_r\nonumber\\
&=&(-1)^s\sum_{1\leq j<\infty}\sum_{|J|=t+1}'\varepsilon_{j, J}^{K}\frac{\partial }{\partial \overline{z_j}}((f_{I,J})_n*\chi_{n,\delta})(\textbf{z})\nonumber\\
&=&(-1)^s\sum_{1\leq j\leq n}\sum_{|J|=t+1,\max J\leq n}'\varepsilon_{j, J}^{K}\frac{\partial }{\partial \overline{z_j}}((f_{I,J})_n*\chi_{n,\delta})(\textbf{z})\nonumber\\
&=&(S f_{n,\delta})_{I,K}(\textbf{z}),\nonumber
\end{eqnarray}
where the last equality follows from (\ref{S(f_ndelta)}). Recall the notations in (\ref{f_{ndelta}})-(\ref{f_n}), we have
\begin{eqnarray*}
Sf&=&(-1)^s\sum_{|I|=s,|K|=t+2}'g_{I,K} \,\mathrm{d}z^I\wedge \mathrm{d}\overline{z}^K,\\
(Sf)_{n}&=&(-1)^s\sum_{|I|=s,|K|=t+2,\max(I\cup K)\leq n}'(g_{I,K})_n\,\mathrm{d}z^I\wedge \mathrm{d}\overline{z}^K.
\end{eqnarray*}
Thus (\ref{sdfds dfsdnt}) implies that for multi index $I$ and $K$ such that $|I|=s,|K|=t+2,\max(I\cup K)\leq n$, we have
\begin{eqnarray}
 ((Sf)_n)_{I,K}\ast\chi_{n,\delta}  =(S f_{n,\delta})_{I,K} .\nonumber
\end{eqnarray}
By Lemma \ref{20231110lem1} and Proposition \ref{convolution properties}, it holds that
\begin{eqnarray*}
\lim_{\delta\to 0}(S f_{n,\delta})_{I,K}=((Sf)_n)_{I,K}
\end{eqnarray*}
in $L^2(\ell^2, P_r)$. Therefore,
\begin{eqnarray*}
\lim_{\delta\to 0}S f_{n,\delta}=(Sf)_n
\end{eqnarray*}
in $L^2_{(s,t+2)}(\ell^2, P_r)$.

(3) 
Obviously, $f_{n,\delta}\in \mathscr{D}_{(s,t+1)}$, by Theorem \ref{200207t1} and \eqref{20231110for1}, $f_{n,\delta}\in D_{T^*}$ and
\begin{eqnarray*}
&&T^*f_{n,\delta}\\
&=&\sum_{|I|=s,|K|=t}'\sum_{1\leq j<\infty}\sum\limits_{|J|=t+1}'(-1)^{s-1}a_j^2\varepsilon_{j, K}^{J}\delta_j((f_{n,\delta})_{I,J})\mathrm{d}z^I\wedge \mathrm{d}\overline{z}^K\\
&=&\sum_{|I|=s,|K|=t,\,\max(I\cup K)\leq n}'\sum_{1\leq j\leq n}\sum\limits_{|J|=t+1,\,\max J\leq n}'(-1)^{s-1}a_j^2\varepsilon_{j, K}^{J}\\
&&\qquad\qquad\qquad\qquad\times\delta_j((f_{n,\delta})_{I,J})\mathrm{d}z^I\wedge \mathrm{d}\overline{z}^K.
\end{eqnarray*}



Note that for multi index $I$ and $K$ such that $|I|=s,|K|=t,\max(I\cup K)\leq n$, we have
\begin{eqnarray*}
(T^*f_{n,\delta})_{I,K}
&=&(-1)^{s-1}\sum_{1\leq j\leq n}\sum\limits_{|J|=t+1,\,\max J\leq n}'a_j^2\varepsilon_{j,K}^{J}\cdot \delta_j(((f_{n,\delta})_{I,J})_n*\chi_{n,\delta}).
\end{eqnarray*}
Then for multi index $J$ such that $|J|=t+1,$ and $1\leq j\leq n$, we have
\begin{eqnarray}
&&\delta_j((f_{I,J})_n*\chi_{n,\delta})(\textbf{z})\label{20231110for3}\\
&=& \int_{\mathbb{C}^n}(f_{I,J})_n(\textbf{z}')\left(\delta_j\chi_{n,\delta}(\textbf{z}-\textbf{z}')
\right)
\varphi_n^{-1}(\textbf{z}')\varphi_n(\textbf{z}')
\,\mathrm{d} \textbf{z}'\nonumber\\
&=&\int_{\mathbb{C}^n}(f_{I,J})_n(\textbf{z}')\Bigg(\frac{\partial }{\partial z_j}\chi_{n,\delta}(\textbf{z}-\textbf{z}')
-\frac{\overline{z_j}}{2r^2a_j^2}\chi_{n,\delta}(\textbf{z}-\textbf{z}')
\Bigg) \varphi_n^{-1}(\textbf{z}')
\,\mathrm{d}\mathcal{N}^{n,r}(\textbf{z}')\nonumber\\
&=&\int_{\mathbb{C}^n}(f_{I,J})_n(\textbf{z}')\Bigg( \frac{\partial }{\partial z_j'}\bigg(-\chi_{n,\delta}(\textbf{z}-\textbf{z}')
\varphi_n^{-1}(\textbf{z}')\bigg)\nonumber\\
&&\qquad\qquad\qquad\qquad\qquad+\frac{\overline{z_j'}-\overline{z_j}}{2r^2a_j^2}\chi_{n,\delta}(\textbf{z}-\textbf{z}')
\varphi_n^{-1}(\textbf{z}')\Bigg)
\,\mathrm{d}\mathcal{N}^{n,r}(\textbf{z}')\nonumber\\
&=&\int_{\ell^2}f_{I,J}(\textbf{z}',\textbf{z}_n)\Bigg(\frac{\partial }{\partial z_j'}\bigg(-\chi_{n,\delta}(\textbf{z}-\textbf{z}')
\varphi_n^{-1}(\textbf{z}')\bigg)\nonumber
\\
&&\qquad\qquad\qquad\qquad\qquad+\frac{\overline{z_j'}-\overline{z_j}}{2r^2a_j^2}\chi_{n,\delta}(\textbf{z}-\textbf{z}')
\varphi_n^{-1}(\textbf{z}')\Bigg)
\,\mathrm{d}P_r(\textbf{z}',\textbf{z}_n),\nonumber
\end{eqnarray}
and
\begin{eqnarray*}
&&\textbf{a}^{I,K}\cdot(-1)^{s-1}\sum_{1\leq j\leq n}\sum\limits_{|L|=t+1,\,\max L\leq n}'a_j^2\varepsilon_{j,K}^{L}\\&&
\qquad\qquad\qquad\qquad\qquad\times\bigg(-\int_{\mathbb{C}^n}(f_{I,L})_n\overline{\frac{\partial }{\partial \overline{z_j}}(
h_{n,\delta,\textbf{z}} )}\,\mathrm{d}\mathcal{N}^{n,r}\bigg)\\
&=&(-1)^{s}\sum_{1\leq j\leq n}\sum\limits_{|L|=t+1,\,\max L\leq n}'\textbf{a}^{I,L}\cdot\varepsilon_{j,K}^{L}\cdot
\bigg(\int_{\ell^2} f_{I,L} \overline{\frac{\partial }{\partial \overline{z_j}}(
h_{n,\delta,\textbf{z}} )}\,\mathrm{d}P_r\bigg)\\
&=&(f,T(h_{n,\delta,\textbf{z}}\mathrm{d}z^I\wedge \mathrm{d}\overline{z}^K))_{L^2_{(s,t+1)}(\ell^2,P_r)}\\
&=&(T^{*} f,h_{n,\delta,\textbf{z}}\mathrm{d}z^I\wedge \mathrm{d}\overline{z}^K)_{L^2_{(s,t)}(\ell^2,P_r)}\\
&=&\textbf{a}^{I,K}\cdot\int_{\ell^2}(T^{*} f)_{I,K}h_{n,\delta,\textbf{z}}\,\mathrm{d}P_r\\
&=&\textbf{a}^{I,K}\cdot\int_{\mathbb{C}^n}((T^{*} f)_n)_{I,K}(\textbf{z}')\chi_{n,\delta}(\textbf{z}-\textbf{z}')\,\mathrm{d}\textbf{z}',
\end{eqnarray*}
where $\textbf{z}_n=(x_{i} +\sqrt{-1}y_{i})_{i=n+1}^{ \infty}\in \ell^2(\mathbb{N}_{2,\widehat{n} })$, $\textbf{z}=(x_{i} +\sqrt{-1}y_{i})_{ i=1}^{ n},\textbf{z}'=(x_{i}' +\sqrt{-1}y_{i}')_{ i=1}^{n}\in\mathbb{C}^n$ and the second equality follows from \eqref{formula of d-bar}. Therefore,
\begin{eqnarray}
&&(-1)^{s-1}\sum_{1\leq j\leq n}\sum\limits_{|J|=t+1,\,\max J\leq n}'a_j^2\varepsilon_{j,K}^{J}\cdot
\bigg(-\int_{\mathbb{C}^n}(f_{I,J})_n\overline{\frac{\partial }{\partial \overline{z_j}}(
h_{n,\delta,\textbf{z}} )}\,\mathrm{d}\mathcal{N}^{n,r}\bigg)\nonumber\\
&=&
\int_{\mathbb{C}^n}((T^{*}f)_n)_{I,K}(\textbf{z}')\chi_{n,\delta}(\textbf{z}-\textbf{z}')\,\mathrm{d}\textbf{z}'.\label{20231110for2}
 \end{eqnarray}
By Lemma \ref{20231110lem1} and Proposition \ref{convolution properties}, the above function in \eqref{20231110for2} tends to $((T^{*}f)_n)_{I,K}$ in $L^2(\ell^2,P_r)$ as $\delta\to 0$. Recall \eqref{20231110for3} that
\begin{eqnarray*}
&&\delta_j((f_{I,J})_n*\chi_{n,\delta})(\textbf{z})\\
&=&\int_{\mathbb{C}^n}(f_{I,J})_n(\textbf{z}')\Bigg(\frac{\partial }{\partial z_j'}\bigg(-\chi_{n,\delta}(\textbf{z}-\textbf{z}')
\varphi_n^{-1}(\textbf{z}')\bigg)\\
&&\qquad\qquad\qquad\qquad\qquad+\frac{\overline{z_j'}-\overline{z_j}}{2r^2a_j^2}\chi_{n,\delta}(\textbf{z}-\textbf{z}')
\varphi_n^{-1}(\textbf{z}')\Bigg)
\,\mathrm{d}\mathcal{N}^{n,r}(\textbf{z}')\\
&=&\int_{\mathbb{C}^n}(f_{I,J})_n(\textbf{z}')\Bigg(-\overline{\frac{\partial }{\partial \overline{z_j'}}(
h_{n,\delta,\textbf{z}} )}
+\frac{\overline{z_j'}-\overline{z_j}}{2r^2a_j^2}\chi_{n,\delta}(\textbf{z}-\textbf{z}')
\varphi_n^{-1}(\textbf{z}')\Bigg)
\,\mathrm{d}\mathcal{N}^{n,r}(\textbf{z}').
\end{eqnarray*}
Note that
\begin{eqnarray}
&&\int_{\mathbb{C}^n}(f_{I,J})_n(\textbf{z}')\left(
\frac{\overline{z_j'}-\overline{z_j}}{2r^2a_j^2}\chi_{n,\delta}(\textbf{z}-\textbf{z}')
\varphi_n^{-1}(\textbf{z}')\right)
\,\mathrm{d}\mathcal{N}^n(\textbf{z}')\label{20231110for4}\\
&=&
\int_{\mathbb{C}^n}
\frac{(f_{I,J})_n(\textbf{z}')\overline{z_j'}}{2r^2a_j^2}\chi_{n,\delta}(\textbf{z}-\textbf{z}')
\,\mathrm{d}\textbf{z}'
-\frac{\overline{z_j}}{2r^2a_j^2}\int_{\mathbb{C}^n}
(f_{I,J})_n(\textbf{z}')\chi_{n,\delta}(\textbf{z}-\textbf{z}')
\,\mathrm{d}\textbf{z}'\nonumber
\end{eqnarray}
and supp$(f_{I,J})_n$ is a bounded subset of $\mathbb{C}^n$. Therefore, the function in \eqref{20231110for4} tends to 0 in $L^2(\ell^2,P_r)$ as $\delta\to 0.$ Thus
\begin{eqnarray*}
\lim_{\delta\to 0}(T^*f_{n,\delta})_{I,K}=((T^{*}f)_n)_{I,K}
\end{eqnarray*}
in $L^2(\ell^2,P_r)$ and hence
\begin{eqnarray*}
\lim_{\delta\to 0}T^{*} f_{n,\delta}=(T^{*}f)_n
\end{eqnarray*}
in $L^2_{(s,t)}(\ell^2,P_r)$. The proof of Theorem \ref{main approximation theoremhs} is completed.
\end{proof}
Combining  Theorem \ref{200207t1}, Proposition \ref{230924prop1} and Theorem \ref{main approximation theoremhs}, we have the following inequality.
\begin{corollary}\label{20231110cor2}
For any $f\in D_{S}\cap D_{T^*}$, it holds that
\begin{eqnarray*}\label{20231110for5}
||T^*f||_{L^2_{(s,t)}(\ell^2,P_r)}^2
+||Sf||_{L^2_{(s,t+2)}(\ell^2,P_r)}^2
\geq \frac{t+1}{2r^2}\cdot||f||_{L^2_{(s,t+1)}(\ell^2,P_r)}^2.\nonumber
\end{eqnarray*}
\end{corollary}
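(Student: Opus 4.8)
The plan is to establish the inequality by a two-step density argument that transports the $L^2$ estimate of Theorem \ref{200207t1}, which is proved only for the smooth compactly supported cylinder forms in $\mathscr{D}_{(s,t+1)}$, to an arbitrary $f\in D_S\cap D_{T^*}$. The key observation throughout is that the three quantities appearing in the estimate are squared $L^2$ norms of $f$, $Sf$ and $T^*f$, and these are continuous with respect to convergence in $L^2_{(s,t+1)}(\ell^2,P_r)$, $L^2_{(s,t+2)}(\ell^2,P_r)$ and $L^2_{(s,t)}(\ell^2,P_r)$ respectively. Hence, once I produce an approximating sequence along which $f$, $Sf$ and $T^*f$ all converge, the inequality passes to the limit with no loss.

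First I would treat the case where $\supp f$ is a compact subset of $\ell^2$. Here Theorem \ref{main approximation theoremhs} supplies the mollified forms $f_{n,\delta}\in\mathscr{D}_{(s,t+1)}$, and Theorem \ref{200207t1} gives
$$\frac{t+1}{2r^2}\|f_{n,\delta}\|_{L^2_{(s,t+1)}(\ell^2,P_r)}^2\leq \|T^*f_{n,\delta}\|_{L^2_{(s,t)}(\ell^2,P_r)}^2+\|Sf_{n,\delta}\|_{L^2_{(s,t+2)}(\ell^2,P_r)}^2$$
for every $n$ and $\delta$. I then let $\delta\to 0^+$ with $n$ fixed, using $f_{n,\delta}\to f_n$, $Sf_{n,\delta}\to (Sf)_n$ and $T^*f_{n,\delta}\to (T^*f)_n$ from parts (1)--(3) of Theorem \ref{main approximation theoremhs}, to obtain the same inequality with $f_n$, $(Sf)_n$, $(T^*f)_n$ in place of $f_{n,\delta}$, $Sf_{n,\delta}$, $T^*f_{n,\delta}$. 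A second passage to the limit $n\to\infty$, again using Theorem \ref{main approximation theoremhs} (now $f_n\to f$, $(Sf)_n\to Sf$, $(T^*f)_n\to T^*f$), yields the desired estimate for compactly supported $f$.

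For general $f\in D_S\cap D_{T^*}$, I would remove the compact support hypothesis by multiplying with the cut-off functions $\{X_k\}$ of Theorem \ref{230215Th1}. Since each $X_k$ has compact support, $X_k\cdot f$ is supported in $\supp X_k$ and is therefore compactly supported; moreover Proposition \ref{230924prop1} guarantees $X_k\cdot f\in D_S\cap D_{T^*}$. Applying the compactly supported case to $X_k\cdot f$ gives
$$\frac{t+1}{2r^2}\|X_k f\|_{L^2_{(s,t+1)}(\ell^2,P_r)}^2\leq \|T^*(X_k f)\|_{L^2_{(s,t)}(\ell^2,P_r)}^2+\|S(X_k f)\|_{L^2_{(s,t+2)}(\ell^2,P_r)}^2,$$
and letting $k\to\infty$ with the three convergences $X_k f\to f$, $S(X_k f)\to Sf$, $T^*(X_k f)\to T^*f$ supplied by Proposition \ref{230924prop1} completes the proof.

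The substantive analytic content is already carried by the three cited results: the commutator computation behind the H\"ormander-type estimate in Theorem \ref{200207t1}, the dimension-reduction-plus-mollification approximation of Theorem \ref{main approximation theoremhs}, and the cut-off bounds of Proposition \ref{230924prop1}. Consequently the only thing to watch here is the bookkeeping of the nested limits. The one genuine subtlety is that Theorem \ref{main approximation theoremhs} is available only for compactly supported forms, which is precisely why the cut-off reduction must come first; I would be careful to invoke the inequality only after genuine $L^2$ convergence of all of $f$, $Sf$ and $T^*f$ has been secured, so that each squared-norm term converges and the estimate is preserved intact rather than merely as a $\liminf$.
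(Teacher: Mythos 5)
Your proof is correct and is essentially the paper's own argument: the paper proves this corollary in one line by ``combining Theorem \ref{200207t1}, Proposition \ref{230924prop1} and Theorem \ref{main approximation theoremhs}'', and your write-up supplies exactly the intended bookkeeping — cut off by $X_k$ to reduce to compactly supported forms, then mollify via Theorem \ref{main approximation theoremhs} to reach $\mathscr{D}_{(s,t+1)}$ where Theorem \ref{200207t1} applies, then pass the estimate through the nested limits using convergence of all three norms. The only micro-detail to note is that Proposition \ref{230924prop1} literally gives $S(X_k f)-X_k\cdot(Sf)\to 0$ and $T^*(X_k f)-X_k\cdot(T^*f)\to 0$ rather than $S(X_k f)\to Sf$ and $T^*(X_k f)\to T^*f$ directly, so one must also observe that $X_k\cdot(Sf)\to Sf$ and $X_k\cdot(T^*f)\to T^*f$, which follows by dominated convergence exactly as in the last step of that proposition's proof.
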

Combining Corollary \ref{estimation lemma}, Corollary \ref{20231110cor2} and Lemma \ref{lem2.5.1}, we arrive at:
\begin{theorem}\label{existence theorem}
For any $f\in N_S$, there exists $u\in L^2_{(s,t)}(\ell^2,P_r)$ such that
$$
u\in D_T,\qquad Tu=f
$$
and
\begin{equation*}\label{200209e5}
||u||_{L^2_{(s,t)}(\ell^2,P_r)}\leq \sqrt{\frac{2r^2}{t+1}}\cdot||f||_{L^2_{(s,t+1)}(\ell^2,P_r)}.
 \end{equation*}
\end{theorem}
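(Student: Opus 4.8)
The plan is to obtain Theorem \ref{existence theorem} as a direct application of the abstract solvability criterion in Corollary \ref{estimation lemma}, applied to the concrete operators $T$ and $S$ of Definition \ref{definition of T S}. Concretely, I would set $H_1=L^2_{(s,t)}(\ell^2,P_r)$, $H_2=L^2_{(s,t+1)}(\ell^2,P_r)$ and $H_3=L^2_{(s,t+2)}(\ell^2,P_r)$, with $T=\overline{\partial}$ mapping $H_1$ into $H_2$ and $S=\overline{\partial}$ mapping $H_2$ into $H_3$. As recorded just after Definition \ref{definition of T S} (following the argument of \cite[Lemma 2.5]{YZ22}), both $T$ and $S$ are linear, closed and densely defined, so the structural hypotheses of Corollary \ref{estimation lemma} are in force and the adjoint $T^*$ is well defined.

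First I would verify the standing inclusion $R_T\subset N_S$. Indeed, if $u\in D_T$, then $S(Tu)=\overline{\partial}(\overline{\partial}u)=0$ by Lemma \ref{lem2.5.1}, so $Tu\in N_S$; hence $R_T\subset N_S$, as required. Next I would feed in the a priori lower bound. Corollary \ref{20231110cor2} asserts that every $g\in D_S\cap D_{T^*}$ satisfies
\[
||T^*g||_{L^2_{(s,t)}(\ell^2,P_r)}^2+||Sg||_{L^2_{(s,t+2)}(\ell^2,P_r)}^2\ \ge\ \frac{t+1}{2r^2}\,||g||_{L^2_{(s,t+1)}(\ell^2,P_r)}^2 .
\]
Rearranging gives exactly the lower-bound hypothesis of part 1) of Corollary \ref{estimation lemma} with the explicit constant $C=\sqrt{2r^2/(t+1)}$, namely $||g||_{H_2}\le C\sqrt{||T^*g||_{H_1}^2+||Sg||_{H_3}^2}$ for all $g\in D_{T^*}\cap D_S$.

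Having checked both $R_T\subset N_S$ and this estimate, part 1) of Corollary \ref{estimation lemma} applies and yields $R_T=N_S$ together with the conclusion that for every $f\in N_S$ there is a (unique) $u\in D_T\cap N_T^\bot$ with $Tu=f$ and $||u||_{H_1}\le C||f||_{H_2}$. Since $Tu=\overline{\partial}u$, this is precisely the desired solution of $\overline{\partial}u=f$, and substituting $C=\sqrt{2r^2/(t+1)}$ recovers the stated norm estimate.

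I do not expect any genuine obstacle at this final stage, because the analytic heart of the matter has already been settled upstream: the Morrey--Kohn--H\"ormander-type identity behind the $L^2$ estimate of Theorem \ref{200207t1} on the dense space $\mathscr{D}_{(s,t+1)}$, and, crucially, its extension to the full domain $D_S\cap D_{T^*}$ via the cutoff-and-mollification approximation of Proposition \ref{230924prop1} and Theorem \ref{main approximation theoremhs}. The only points demanding care are bookkeeping ones: confirming that the range of validity of Corollary \ref{20231110cor2} is exactly $D_{T^*}\cap D_S$, so that it matches the hypothesis of Corollary \ref{estimation lemma}, and that the closed, densely defined structure of $T$ and $S$ (hence the legitimacy of $T^*$ and of the abstract lemma) has genuinely been established rather than merely asserted.
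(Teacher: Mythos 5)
Your proposal is correct and is essentially identical to the paper's own proof, which obtains Theorem \ref{existence theorem} precisely by combining Lemma \ref{lem2.5.1} (giving $R_T\subset N_S$), Corollary \ref{20231110cor2} (the estimate on $D_S\cap D_{T^*}$ with constant $C=\sqrt{2r^2/(t+1)}$), and part 1) of Corollary \ref{estimation lemma}. The only difference is that you spell out the verification the paper leaves implicit, which is a point in your favor.
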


\newpage

\section{Infinite Dimensional C-K and Holmgren Type Theorems}
\label{20240111chapter3}

This section is mainly based on \cite{YZ-a}.

\subsection{Analyticity for Functions with Infinitely Many Variables}\label{section 2.2}
The history of the concept of analyticity for functions with infinitely many variables began with V. Volterra in 1887 (\cite{Vol1,Vol2,Vol3,Vol4,Vol5}) and was continued by H. von Koch in 1899 (\cite{Koc}), D. Hilbert in 1909 (\cite{Hil}), M. Fr\'{e}chet in 1900s (\cite{Fre1, Fre2}), and R. G\^{a}teaux in 1910s (\cite{Gat1,Gat2,Gat3}). Denote by $\mathbb{N}_0^{(\mathbb{N})}$ the set of all finitely supported sequences of nonnegative integers. This notation is borrowed from \cite[(1.2), p. 14]{DMP}. Then for $\textbf{x}=(x_i)_{i\in\mathbb{N}}\in \mathbb{R}^{\mathbb{N}}$ and $\alpha=(\alpha_i)_{i\in\mathbb{N}}\in \mathbb{N}_0^{(\mathbb{N})}$ with $\alpha_k=0$ for $k\geqslant n+1$ (for some $n\in \mathbb{N}$), write
\begin{equation}\label{220812e1}
\textbf{x}^{\alpha}\triangleq x_1^{\alpha_1}x_2^{\alpha_2}\cdots x_n^{\alpha_n},
\end{equation}
which is called a monomial on $\mathbb{R}^{\mathbb{N}}$. The following concept of analyticity is essentially from \cite{Hil, Koc} (see also \cite{DMP, Moo}).
\begin{definition}\label{def of analyticity}
Suppose $f$ is a real-valued function defined on a subset $D$ of $\mathbb{R}^{\mathbb{N}}$ and $\textbf{x}_0\in D$. If for each $\alpha\in \mathbb{N}_0^{(\mathbb{N})}$,  there exists a real number $c_{\alpha}$ (depending on $f$ and $x_0$) such that the series $\sum_{\alpha \in \mathbb{N}_0^{(\mathbb{N})}}  c_{\alpha}\textbf{v}^{\alpha}$
is absolutely convergent for some $\textbf{v}=(v_i)_{i\in\mathbb{N}}\in \mathbb{R}^{\mathbb{N}}$ with $v_i\neq 0$ for each $i\in \mathbb{N}$, and for each $\textbf{h}=(h_i)_{i\in\mathbb{N}}\in \mathbb{R}^{\mathbb{N}}$ with $|h_i|\leqslant |v_i|$ for all $i\in\mathbb{N}$, it holds that $\textbf{x}_0+\textbf{h}\in D$ and
\begin{eqnarray}\label{monomial expansion}
f(\textbf{x}_0+\textbf{h})=\sum_{\alpha \in \mathbb{N}_0^{(\mathbb{N})}}  c_{\alpha}\textbf{h}^{\alpha},
\end{eqnarray}
then we say $f$ is
analytic at $\textbf{x}_0$ (with the monomial expansion (\ref{monomial expansion}), or with the monomial expansion\index{monomial expansion} $f(\textbf{x})=\sum_{\alpha \in \mathbb{N}_0^{(\mathbb{N})}} c_{\alpha}(\textbf{x}-\textbf{x}_0)^{\alpha}$). In this case, we write
$$D_f^{\textbf{x}_0}\triangleq \left\{\textbf{h}\in \mathbb{R}^{\mathbb{N}}:\;\textbf{x}_0+\textbf{h}\in D, \;\sum_{\alpha \in \mathbb{N}_0^{(\mathbb{N})}}  |c_{\alpha}\textbf{h}^{\alpha}|<\infty \right\},
$$
and call the set $D_f^{\textbf{x}_0}$ the domain of convergence of the monomial expansion (\ref{monomial expansion}).

Further, if the above $\textbf{v}$ satisfies $\sum\limits_{i=1}^{\infty}\frac{1}{|v_i|^p}<\infty$ for some $p\in (0,\infty)$, then the monomial expansion (\ref{monomial expansion}) of $f$ at $\textbf{x}_0$ is called absolutely convergent at a point near $\infty$ (in the topology $\mathcal {T}^{p}$).
\end{definition}

Obviously, the closer (the above) {\it $\textbf{v}$} to $\infty$ is, the stronger the analyticity of $f$ at $\textbf{x}_0$ is. In other word, we may measure the analyticity of $f$ at $\textbf{x}_0$ by the ``distance" between {\it $\textbf{v}$} and $\infty$.
\begin{remark}\label{230820rem1}
The notations in the Definition \ref{def of analyticity} also works on any countable Cartesian product of $\mathbb{R}$ instead of $\mathbb{R}^{\mathbb{N}}$.
\end{remark}

\begin{example}
{(\textbf{Riemann Zeta Function})} Recall that the Riemann zeta function\index{Riemann zeta function} is defined by $\zeta(s)=\sum\limits_{n=1}^{\infty}\frac{1}{n^s}$ when $s\in \mathbb{C}$ with $\rm{Re}\,(s)>2$. Suppose that $\{p_n\}_{n=1}^{\infty}$ is the collection of all positive prime numbers. Then for any $s\in \mathbb{R}$ with $s>2$, one has
$$
\begin{array}{ll}
\displaystyle
\zeta(s)&\displaystyle=\prod_{n=1}^{\infty}\frac{1}{1-p_n^{-s}}\\[5mm]
&\displaystyle
=\sum_{\alpha \in \mathbb{N}_0^{(\mathbb{N})}}(p_1^{-s},\cdots,p_n^{-s},\cdots)^{\alpha}
=f(p_1^{-s},\cdots,p_n^{-s},\cdots),
\end{array}
$$
where $f(\textbf{x})\triangleq\sum\limits_{\alpha \in \mathbb{N}_0^{(\mathbb{N})}}\textbf{x}^{\alpha}$, defined on $D\triangleq\left\{\textbf{x}=(x_i)_{i\in\mathbb{N}}\in\mathbb{R}^{\mathbb{N}}:\;\sum\limits_{i=1}^{\infty}|x_i|<1\right\}$, is the function of geometric series of infinitely many variables.
\end{example}

The notion of majority function can be defined naturally as follows:

\begin{definition}\label{def of majority function}
Suppose $f$ and $F$ are analytic at $\textbf{x}_0\in \mathbb{R}^{\mathbb{N}}$ with monomial expansions $f(\textbf{x})=\sum\limits_{\alpha \in \mathbb{N}_0^{(\mathbb{N})}} c_{\alpha}(\textbf{x}-\textbf{x}_0)^{\alpha}$ and $F(\textbf{x})=\sum\limits_{\alpha \in \mathbb{N}_0^{(\mathbb{N})}} C_{\alpha}(\textbf{x}-\textbf{x}_0)^{\alpha}$, respectively,
where $c_{\alpha}\in\mathbb{R}$ and $C_{\alpha}\geqslant 0$ for each $\alpha \in \mathbb{N}_0^{(\mathbb{N})}$. If $|c_{\alpha}|\leqslant C_{\alpha}$ for all $\alpha \in \mathbb{N}_0^{(\mathbb{N})}$, then $F$ is called a majority function of $f$ at $\textbf{x}_0$.
\end{definition}

Let us recall definitions of polynomials and analyticity on a real Banach space $(X,||\cdot||)$, introduced by Fr\'{e}chet in 1909 (\cite{Fre1}). For each $n\in \mathbb{N}$, denote by $\prod_{i=1}^{n}X$ the $n$-fold Cartesian product of $X$, i.e., $\prod_{i=1}^{n}X\triangleq\overbrace{X\times
X\times \cdots\times X}^{n\hbox{ \tiny
times}}$. A function $g:\,X\to \mathbb{R}$ is called a continuous $n$-homogeneous polynomial if there exists a continuous $n$-linear map $T:\,\prod_{i=1}^{n}X\to\mathbb{R}$ such that $g(x)=T(x,\cdots,x)$ for each $x\in X$. For $n=0$, any function from $X$ into $\mathbb{R}$ with constant value is called a continuous $0$-homogeneous polynomial.
Suppose that $f$ is a real-valued function defined on a subset $D$  of $X$. If for each $\xi\in D$, one can find a sequence $\{P_{n}\}_{n=0}^{\infty}$ (where $P_n$ is a continuous $n$-homogeneous polynomials on $X$ for each $n\in\mathbb{N}$) and a radius $r>0$ such that $\xi+B(r)\subset D$ (where $B(r)\triangleq \{y\in X:\; ||y||<r\}$), and
\begin{eqnarray}\label{power series expansion}
f(\xi+\cdot)=\sum_{n=0}^{\infty} P_{n}(\cdot)\ \hbox{ uniformly in }B(r),
 \end{eqnarray}
then we say that $f$ is analytic on $D$ (with the power series expansion (\ref{power series expansion})).
In the above, each $P_n$ and $r$ depend on $f$ and $\xi$.


We emphasize that $\mathbb{R}^{\mathbb{N}}$ is NOT a Banach space. Hence, generally speaking, the above analyticities by monomial expansions and by power series expansions are two distinct notions.
Nevertheless, for every $\textbf{x}_0\in\mathbb{R}^{\mathbb{N}}$, $r\in(0,\infty)$ and $p\in(0,\infty]$, it holds that $\textbf{x}_0+B_r^{p,\mathbb{N}}\subset \mathbb{R}^{\mathbb{N}}$ and $B_r^{p,\mathbb{N}} \subset\ell^p $ (See (\ref{eq1}) for $B_r^{p,\mathbb{N}}$), which means that there exist many Banach sub-spaces in $\mathbb{R}^{\mathbb{N}}$. Motivated by this simple observation, we introduce the following concept.
\begin{definition}
Suppose that $S$ is a nonempty set and $f$ is a real-valued function defined on a subset $U$ of $\mathbb{R}^{S}$ and $\textbf{x}_0\in U$. We say that $f$ is Fr\'{e}chet differentiable with respect to $\ell^p(S)$ (for some $p\in[1,\infty]$) in a neighborhood of $\textbf{x}_0$ in the topology $\mathscr{T}_p$ if there exists $r\in(0,\infty)$ such that $\textbf{x}_0+B_r^{p,S} \subset U$, and the function defined by $g(\cdot)\triangleq f(\textbf{x}_0+\cdot)$ is Fr\'{e}chet differentiable in $B_r^{p,S}$ (with respect to the Banach space $\ell^p(S)$).
\end{definition}

In the sequel, we shall need to use the following two lemmas in the proof of Cauchy-Kowalevski type theorems on $\mathbb{R}^{\mathbb{N}}$.
\begin{lemma}\label{absolutely convergent lemma}
Suppose that $p\in(0,\infty)$ and $g$ is analytic at $\textbf{x}_0\in \mathbb{R}^{\mathbb{N}}$ with the following monomial expansion
$$
g(\textbf{x})=\sum_{\alpha \in \mathbb{N}_0^{(\mathbb{N})}} B_{\alpha}(\textbf{x}-\textbf{x}_0)^{\alpha}
$$
being absolutely convergent at a point near $\infty$ in the topology $\mathcal {T}^p$. Then there exists $\rho_0\in(0,+\infty)$, $s_i\in(0,1)$ for each $i\in \mathbb{N}$ such that
$$\sum_{j=1}^{\infty}s_j^p =1\quad\hbox{ and }\quad\sum_{\alpha\in \mathbb{N}_0^{(\mathbb{N})}}|B_{\alpha}|
\left(\frac{\rho_0 }{s_i}\right)_{i\in\mathbb{N}}^{\alpha}
<\infty.$$
\end{lemma}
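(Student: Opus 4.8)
The plan is to extract from the hypothesis a single witnessing point $\textbf{v}$, replace it by its coordinatewise absolute value so that monotonicity of the nonnegative monomial series becomes available, and then perturb one reciprocal coordinate so that the scaled $p$-th powers sum to exactly $1$ while the perturbed point stays below $\textbf{v}$. First I would unwind Definition \ref{def of analyticity}: since the expansion $g(\textbf{x})=\sum_{\alpha}B_{\alpha}(\textbf{x}-\textbf{x}_0)^{\alpha}$ is absolutely convergent at a point near $\infty$ in the topology $\mathcal{T}^{p}$, there is $\textbf{v}=(v_i)_{i\in\mathbb{N}}\in\mathbb{R}^{\mathbb{N}}$ with $v_i\neq 0$ for every $i$ such that $\sum_{\alpha}|B_{\alpha}\textbf{v}^{\alpha}|<\infty$ and $\sum_{i}|v_i|^{-p}<\infty$. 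Writing $u_i\triangleq|v_i|>0$ and $\textbf{u}\triangleq(u_i)_{i\in\mathbb{N}}$, the finite support of each $\alpha$ gives $|B_{\alpha}\textbf{v}^{\alpha}|=|B_{\alpha}|\,\textbf{u}^{\alpha}$, so $\sum_{\alpha}|B_{\alpha}|\,\textbf{u}^{\alpha}<\infty$ and $C\triangleq\sum_{i}u_i^{-p}\in(0,\infty)$.

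The preparatory observation is that convergence of $\sum_i u_i^{-p}$ forces $u_i\to\infty$, so that, together with $u_i>0$ for each $i$, only finitely many $u_i$ fall below any fixed positive threshold; hence $m\triangleq\inf_i u_i>0$. This is what lets a single constant sit below every coordinate. I would then fix any $\rho_0$ with $0<\rho_0<\min\{C^{-1/p},m\}$, so that simultaneously $\rho_0^{-p}>C$ and $\rho_0<u_i$ for all $i$.

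The central step is the construction of $\textbf{w}=(w_i)_{i\in\mathbb{N}}$. I set $w_i\triangleq u_i$ for $i\geqslant 2$ and take $w_1$ to be the unique positive solution of $w_1^{-p}=\rho_0^{-p}-C+u_1^{-p}$. The right-hand side lies strictly between $u_1^{-p}$ and $\rho_0^{-p}$: it exceeds $u_1^{-p}$ because $\rho_0^{-p}>C$, and it is below $\rho_0^{-p}$ because $u_1^{-p}<C$ (the tail $\sum_{i\geqslant 2}u_i^{-p}$ is strictly positive). Consequently $\rho_0<w_1<u_1$, and by construction $\sum_i w_i^{-p}=\rho_0^{-p}$ while $\rho_0<w_i\leqslant u_i$ for every $i$. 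Setting $s_i\triangleq\rho_0/w_i$, I would then verify directly that $s_i\in(0,1)$ (from $\rho_0<w_i$), that $\sum_j s_j^{p}=\rho_0^{p}\sum_j w_j^{-p}=1$, and that $\rho_0/s_i=w_i\leqslant u_i$; by monotonicity of the series with nonnegative coefficients $|B_{\alpha}|$ this yields $\sum_{\alpha}|B_{\alpha}|\,(\rho_0/s_i)_{i\in\mathbb{N}}^{\alpha}=\sum_{\alpha}|B_{\alpha}|\,\textbf{w}^{\alpha}\leqslant\sum_{\alpha}|B_{\alpha}|\,\textbf{u}^{\alpha}<\infty$, as required.

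There is no deep analytic obstacle here; the delicate bookkeeping is twofold. First, one must confirm $\inf_i u_i>0$ so that a single $\rho_0$ can lie below every coordinate and below $C^{-1/p}$ at once — without this the normalization and the containment $w_i\leqslant u_i$ could not be reconciled. Second, the constraint $\sum_j s_j^{p}=1$ must hold exactly rather than merely being finite, which is why the total is tuned through a single coordinate via the intermediate-value computation above; the preservation of convergence of the monomial series then requires no further input beyond $w_i\leqslant u_i$ and $|B_{\alpha}|\geqslant 0$.
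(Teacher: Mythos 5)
Your proof is correct, but it takes a detour that the paper avoids. Both arguments start identically: extract the witness $\textbf{v}$, pass to $u_i=|v_i|$, and look for $s_i$ of the form $\rho_0/w_i$. The difference is in how the normalization $\sum_j s_j^p=1$ is achieved. The paper simply sets $\rho_0\triangleq\bigl(\sum_{i}u_i^{-p}\bigr)^{-1/p}=C^{-1/p}$ and $s_i\triangleq\rho_0/u_i$; then $\sum_j s_j^p=\rho_0^p\,C=1$ holds automatically, $s_i<1$ follows from $\rho_0^{-p}=\sum_j u_j^{-p}>u_i^{-p}$, and the final series \emph{equals} the convergent series $\sum_{\alpha}|B_{\alpha}|\,\textbf{u}^{\alpha}$ because $\rho_0/s_i=u_i$ exactly. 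You instead fix $\rho_0$ strictly below $\min\{C^{-1/p},\inf_i u_i\}$ and then restore the exact normalization by solving $w_1^{-p}=\rho_0^{-p}-C+u_1^{-p}$ in the first coordinate, after which you need the domination $w_i\leqslant u_i$ together with nonnegativity of the $|B_\alpha|$ to conclude convergence. All of these steps are sound (your observation that $\sum_i u_i^{-p}<\infty$ forces $u_i\to\infty$, hence $\inf_i u_i>0$, is correct, as is the intermediate-value bracketing $\rho_0<w_1<u_1$), but they are rendered unnecessary by the single observation that the normalizing constant itself is an admissible choice of $\rho_0$: with that choice the constraint $\sum_j s_j^p=1$ and the convergence statement collapse into identities, and neither the infimum claim, nor the one-coordinate perturbation, nor the monotone comparison is needed.
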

\begin{proof}
By Definition \ref{def of analyticity} and our assumption, there exists $\textbf{v}=(v_i)\in \mathbb{R}^{\mathbb{N}}$ such that $v_i\neq 0$ for each $i\in \mathbb{N}$, $\sum\limits_{i=1}^{\infty}\frac{1}{|v_i|^p}<\infty$ and
\begin{equation}\label{220809e1}
\sum_{\alpha \in \mathbb{N}_0^{(\mathbb{N})}}  |B_{\alpha}|\cdot|\textbf{v}^{\alpha}|<\infty.
\end{equation}
Let
$$\rho_0=\bigg(\sum_{i=1}^{\infty}\frac{1}{|v_i|^p }\bigg)^{-\frac{1}{p}}\ \ \hbox{ and }\ \ s_i\triangleq\frac{\rho_0 }{|v_i|  }
\ \ \hbox{ for }\ \ i\in \mathbb{N}.
$$
Note that for each $i\in \mathbb{N}$ it holds that $0<s_i<1$ and $\sum\limits_{j=1}^{\infty}s_j^p =1.$
Hence, by (\ref{220809e1}), we have
$$\sum_{\alpha\in \mathbb{N}_0^{(\mathbb{N})}}|B_{\alpha}|
\left(\frac{\rho_0 }{s_i}\right)_{i\in\mathbb{N}}^{\alpha}=\sum_{\alpha\in \mathbb{N}_0^{(\mathbb{N})}}|B_{\alpha}|\cdot|\textbf{v}^{\alpha}|<\infty.$$
This completes the proof of Lemma \ref{absolutely convergent lemma}.
\end{proof}

 For any $p\in [1,\infty)$, denote by $p'$ the H\"older conjugate of $p$, i.e., \index{$p'$, the H\"older conjugate of $p$}
$$p'=\left\{\begin{array}{ll}
\frac{p}{p-1},\quad&p\in (1,\infty),\\
\infty,& p=1.\end{array}\right.$$

\begin{lemma}\label{Frechet lemma}
Suppose that $p\in [1,\infty)$ and there exist $r>0$, $r_i\in (0,1)$ for each $i\in \mathbb{N}$ and $C_{n,m}\geqslant 0$ for each $n,m\in \mathbb{N}_0$ such that $\sum\limits_{i=1}^{\infty}r_i^p<1$ and $\sum\limits_{n,m\in \mathbb{N}_0}C_{n,m}r^{m+n} <\infty.$
For each $(t,(x_i)_{i\in\mathbb{N}})\in B_r^{p',\mathbb{N}_0}$, let
\begin{eqnarray*}
f(t,(x_i)_{i\in\mathbb{N}})\triangleq\sum_{n,m\in \mathbb{N}_0 }C_{n,m}t^m \left(\sum_{i=1}^{\infty}r_ix_i \right)^n.
\end{eqnarray*}
Then $f$ is Fr\'{e}chet differentiable in $B_r^{p',\mathbb{N}_0}$ (with respect to the Banach space $\ell^{p'}(\mathbb{N}_0)$), and its Fr\'{e}chet derivative $Df$ is continuous.
\end{lemma}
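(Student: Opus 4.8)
The plan is to exhibit $f$ as the composition $f = F \circ \phi$ of a bounded linear map $\phi$ from $\ell^{p'}(\mathbb{N}_0)$ into the finite-dimensional space $\mathbb{R}^2$ with an ordinary two-variable real-analytic function $F$, and then to invoke the chain rule for Fréchet derivatives (in the sense of Definition \ref{def-con}). Concretely, I would set $\phi(t,(x_i)_{i\in\mathbb{N}}) \triangleq \big(t,\ \sum_{i=1}^\infty r_i x_i\big)$ and $F(a,b)\triangleq \sum_{n,m\in\mathbb{N}_0} C_{n,m}\, a^m b^n$, so that $f(t,(x_i)) = F(\phi(t,(x_i)))$ on $B_r^{p',\mathbb{N}_0}$. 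This reduces everything to a routine finite-dimensional fact about $F$ plus the boundedness and mapping properties of the linear map $\phi$.

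First I would check that $\phi$ is a well-defined bounded linear operator. The zeroth-coordinate projection $(t,(x_i))\mapsto t$ has norm at most one; for the second component, the sequence $(r_i)_{i\in\mathbb{N}}$ lies in $\ell^p(\mathbb{N})$ since $\sum_i r_i^p < 1$, so by Hölder's inequality $\big|\sum_i r_i x_i\big| \le \big(\sum_i r_i^p\big)^{1/p}\,\|(x_i)\|_{\ell^{p'}(\mathbb{N})}$, which shows $(x_i)\mapsto \sum_i r_i x_i$ is a bounded functional on $\ell^{p'}(\mathbb{N})$. Hence $\phi\in\mathcal{L}(\ell^{p'}(\mathbb{N}_0);\mathbb{R}^2)$, and as a bounded linear map it is everywhere Fréchet differentiable with $D\phi \equiv \phi$.

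Next comes the crucial containment: for $(t,(x_i))\in B_r^{p',\mathbb{N}_0}$ I would use the same Hölder estimate together with $\big(\sum_i r_i^p\big)^{1/p}<1$ and $\|(t,(x_i))\|_{\ell^{p'}(\mathbb{N}_0)}<r$ to conclude that $|t|<r$ and $\big|\sum_i r_i x_i\big|<r$; since one factor is strictly below $1$ and the other is nonnegative and strictly below $r$, their product is strictly below $r$. Thus $\phi$ maps the open ball $B_r^{p',\mathbb{N}_0}$ strictly inside the open square $(-r,r)^2$. On this square $F$ converges absolutely, because $\sum_{n,m}C_{n,m}|a|^m|b|^n\le \sum_{n,m}C_{n,m}r^{m+n}<\infty$, and by a standard Weierstrass M-test argument its formal term-by-term partial derivatives converge locally uniformly there as well (the factor $m(\rho/r)^m$ or $n(\rho/r)^n$ gained upon differentiating stays bounded for $\rho<r$), so that $F$ is real-analytic, in particular $C^1$, on $(-r,r)^2$.

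Finally I would conclude by the chain rule: since $\phi$ is Fréchet differentiable with derivative $\phi$ and $F$ is $C^1$ on an open set containing $\phi\big(B_r^{p',\mathbb{N}_0}\big)$, the composition $f=F\circ\phi$ is Fréchet differentiable on $B_r^{p',\mathbb{N}_0}$ with $Df(t,(x_i)) = DF(\phi(t,(x_i)))\circ\phi$; continuity of $Df$ then follows from continuity of $DF$ and of $\phi$, composition with the fixed continuous map $\phi$ being continuous. The points requiring care are the Hölder step and the edge case $p=1$, where $p'=\infty$: there $\ell^\infty(\mathbb{N})$ is no longer the dual of $\ell^{p}(\mathbb{N})$, so I must argue directly that an $\ell^1$-sequence $(r_i)$ still defines a bounded functional on $\ell^\infty(\mathbb{N})$ via $\big|\sum_i r_i x_i\big|\le \big(\sum_i r_i\big)\sup_i|x_i|$, and that the resulting bound is again strictly less than $r$. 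The remaining finite-dimensional analyticity of $F$ is entirely routine.
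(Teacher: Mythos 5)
Your proposal is correct, but it follows a genuinely different route from the paper's proof. The paper argues by hand: for each monomial it sets $g_{n,m}(s)=(t+s\Delta t)^m\bigl(\sum_{i=1}^\infty r_i(x_i+s\Delta x_i)\bigr)^n$, applies the one-dimensional mean value theorem with a second-order remainder, bounds $|g_{n,m}(1)-g_{n,m}(0)-g_{n,m}'(0)|$ by $\bigl(m(m-1)+mn+n(n-1)\bigr)r_0^{m+n-2}\,\|(\Delta t,(\Delta x_i))\|^2_{\ell^{p'}(\mathbb{N}_0)}$, sums over $n,m$ using $\sum_{n,m}C_{n,m}r^{m+n}<\infty$, and then exhibits $Df$ explicitly as pairing against a vector in $\ell^p(\mathbb{N}_0)$, invoking $\ell^p(\mathbb{N}_0)=(\ell^{p'}(\mathbb{N}_0))^*$ for $p\in(1,\infty)$ and $\ell^p(\mathbb{N}_0)\subset(\ell^{p'}(\mathbb{N}_0))^*$ for $p=1$, before checking continuity of $Df$ directly. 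You instead factor $f=F\circ\phi$ through $\mathbb{R}^2$, where your H\"older step (including the direct $\ell^1$--$\ell^\infty$ estimate at $p=1$) shows both that $\phi$ is bounded linear and that $\phi$ maps $B_r^{p',\mathbb{N}_0}$ into the open square $(-r,r)^2$, on which the two-variable power series $F$ is real-analytic; the chain rule then gives differentiability, and continuity of $Df$ is automatic since precomposition with the fixed operator $\phi$ is itself bounded linear. Your route is shorter and conceptually cleaner, and it yields more than the statement asks (in fact $f$ is $C^\infty$, with all higher derivatives obtained the same way); the paper's computation, in exchange, is self-contained (no appeal to an external chain rule) and records the derivative concretely as the $\ell^p(\mathbb{N}_0)$-vector $\bigl(\partial_aF,\ (r_j\,\partial_bF)_j\bigr)$ evaluated at $\phi(u)$ --- which is exactly your $DF(\phi(u))\circ\phi$, so the two proofs identify the same derivative. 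One harmless phrasing slip on your side: at $p=1$ the point is not that $\ell^\infty(\mathbb{N})$ fails to be the dual of $\ell^1(\mathbb{N})$ (it is), but that $\ell^1$ is only a proper subspace of $(\ell^\infty)^*$; the inequality $\bigl|\sum_i r_ix_i\bigr|\le\bigl(\sum_i r_i\bigr)\sup_i|x_i|$ that you give is precisely what is needed, so the argument stands.
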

\begin{proof}
By $(t,(x_i)_{i\in\mathbb{N}})\in B_r^{p',\mathbb{N}_0}$, there exists $r_0\in (0,r)$ such that $$
||(t,(x_i)_{i\in\mathbb{N}})||_{\ell^{p'}(\mathbb{N}_0)}<r_0
$$
and
\begin{eqnarray*}
\sum_{n,m\in \mathbb{N}_0}(m+n)C_{n,m}r_0^{m+n-2}(r_0+ (m+n-1)) <\infty.
\end{eqnarray*}
Suppose $(\Delta t,(\Delta x_i)_{i\in\mathbb{N}})\in \ell^{p'}(\mathbb{N}_0)$ such that
$$
||(t,(x_i)_{i\in\mathbb{N}})||_{\ell^{p'}(\mathbb{N}_0)}+||(\Delta t,(\Delta x_i)_{i\in\mathbb{N}})||_{\ell^{p'}(\mathbb{N}_0)}<r_0.
$$
For $n,m\in \mathbb{N}_0$
 and $s\in[0,1]$, let
\begin{eqnarray*}
g(s)&\triangleq &f(t+s\Delta t,(x_i+s\Delta x_i)_{i\in\mathbb{N}})-f(t,(x_i)_{i\in\mathbb{N}}),\\
g_{n,m}(s)&\triangleq &(t+s\Delta t)^m \bigg(\sum_{i=1}^{\infty}r_i(x_i+s\Delta x_i) \bigg)^n.
\end{eqnarray*}
By the classical mean value theorem, there exists $\theta_{n,m},\theta_{n,m}'\in(0,1)$ such that
$$g_{n,m}(1)-g_{n,m}(0)-g_{n,m}'(0)=\theta_{n,m}'\cdot g_{n,m}^{''}(\theta_{n,m}).
$$
Then,
\begin{eqnarray*}
&&g_{n,m}(1)-g_{n,m}(0)-g_{n,m}'(0)\\
&=&\theta_{n,m}'\cdot m(m-1)(\Delta t)^2(t+\theta_{n,m}\Delta t)^{m-2} \bigg(\sum_{i=1}^{\infty}r_i(x_i+\theta_{n,m}\Delta x_i )\bigg)^n\\
&&+\theta_{n,m}'\cdot mn(\Delta t)\bigg(\sum_{i=1}^{\infty}r_i\Delta x_i \bigg)(t+\theta_{n,m}\Delta t)^{m-1} \bigg(\sum_{i=1}^{\infty}r_i(x_i+\theta_{n,m}\Delta x_i ) \bigg)^{n-1}\\
&&+\theta_{n,m}'\cdot n(n-1)\bigg(\sum_{i=1}^{\infty}r_i\Delta x_i \bigg)^2(t+\theta_{n,m}\Delta t)^{m} \bigg(\sum_{i=1}^{\infty}r_i(x_i+\theta_{n,m}\Delta x_i)\bigg)^{n-2}.
\end{eqnarray*}
Thus, by $\sum\limits_{i=1}^{\infty}r_i^p<1$, we have
\begin{eqnarray*}
&&|g_{n,m}(1)-g_{n,m}(0)-g_{n,m}'(0)|\\
&\leqslant&\Bigg(m(m-1)(\Delta t)^2+mn(\Delta t)\bigg(\sum_{i=1}^{\infty}r_i\Delta x_i \bigg)+n(n-1)\bigg(\sum_{i=1}^{\infty}r_i\Delta x_i \bigg)^2\Bigg)r_0^{m+n-2}\\
&\leqslant& (m(m-1)+mn+n(n-1) )||(\Delta t,(\Delta x_i)_{i\in\mathbb{N}})||_{\ell^{p'}(\mathbb{N}_0)}^2\cdot r_0^{m+n-2}
\end{eqnarray*}
and
\begin{eqnarray*}
&&\bigg|g(1)-g(0)-\sum_{n,m\in \mathbb{N}_0 }C_{n,m}g_{n,m}'(0)\bigg|\\
&\leqslant&\sum_{n,m\in \mathbb{N}_0 }C_{n,m}|g_{n,m}(1)-g_{n,m}(0)-g_{n,m}'(0)|\\
&\leqslant& \sum_{n,m\in \mathbb{N}_0 }C_{n,m}(m(m-1)+mn+n(n-1))\cdot r_0^{m+n-2}\cdot||(\Delta t,(\Delta x_i)_{i\in\mathbb{N}})||_{\ell^{p'}(\mathbb{N}_0)}^2\\
&=&o(||(\Delta t,(\Delta x_i)_{i\in\mathbb{N}})||_{\ell^{p'}(\mathbb{N}_0)}),\quad\hbox{as }||(\Delta t,(\Delta x_i)_{i\in\mathbb{N}})||_{\ell^{p'}(\mathbb{N}_0)}\to0.
\end{eqnarray*}
Note that
\begin{eqnarray*}
\sum_{n,m\in \mathbb{N}_0 }C_{n,m}g_{n,m}'(0)
&=&(\Delta t)\cdot\bigg(\sum_{n,m\in \mathbb{N}_0 }C_{n,m}mt^{m-1} \bigg(\sum_{i=1}^{\infty}r_ix_i \bigg)^n\bigg)\\
&&+ \left( \sum_{j=1}^{\infty}(\Delta x_j)\cdot r_j \right)\cdot\bigg(\sum_{n,m\in \mathbb{N}_0 }C_{n,m}nt^m\bigg(\sum_{i=1}^{\infty}r_ix_i \bigg)^{n-1}\bigg)
\end{eqnarray*}
and denote the  following vector by $ Dg(0)$,
\begin{eqnarray*}
\left(\sum_{n,m\in \mathbb{N}_0 }C_{n,m}mt^{m-1} \bigg(\sum_{i=1}^{\infty}r_ix_i \bigg)^n,\left( \sum_{n,m\in \mathbb{N}_0 }r_j C_{n,m}nt^m\bigg(\sum_{i=1}^{\infty}r_ix_i \bigg)^{n-1}\right)_{j=1}^{\infty} \right).
\end{eqnarray*}
Then $ Dg(0)\in \ell^p(\mathbb{N}_0)$. Note that $\ell^p(\mathbb{N}_0)=(\ell^{p'}(\mathbb{N}_0))^*$ for $p\in (1,\infty)$ and $\ell^p(\mathbb{N}_0)\subset(\ell^{p'}(\mathbb{N}_0))^*$ for $p=1$. Therefore,
\begin{eqnarray*}
&&|f(t+\Delta t,(x_i+\Delta x_i)_{i\in\mathbb{N}})-f(t,(x_i)_{i\in\mathbb{N}})-  Dg(0) (\Delta t,(\Delta x_i)_{i\in\mathbb{N}})|\\[2mm]
&&=o(||(\Delta t,(\Delta x_i)_{i\in\mathbb{N}})||_{\ell^{p'}(\mathbb{N}_0)})\qquad\hbox{as }||(\Delta t,(\Delta x_i)_{i\in\mathbb{N}})||_{\ell^{p'}(\mathbb{N}_0)}\to0,
\end{eqnarray*}
which implies that $f$ is Fr\'{e}chet differentiable at $(t,(x_i)_{i\in\mathbb{N}})$ (with respect to the Banach space $\ell^{p'}(\mathbb{N}_0)$). Since $Df(t,(x_i)_{i\in\mathbb{N}})$ equals the following vector
\begin{eqnarray*}
\left(\sum_{n,m\in \mathbb{N}_0 }C_{n,m}mt^{m-1} \bigg(\sum_{i=1}^{\infty}r_ix_i \bigg)^n,\left( \sum_{n,m\in \mathbb{N}_0 }r_j C_{n,m}nt^m\bigg(\sum_{i\in\mathbb{N}}r_ix_i \bigg)^{n-1}\right)_{j=1}^{\infty}\right),
\end{eqnarray*}
it is easy to see that $Df$ is a continuous map from $B_r^{p',\mathbb{N}_0}$ into $\ell^{p}(\mathbb{N}_0)$. This completes the proof of Lemma \ref{Frechet lemma}.
\end{proof}

Convergence radius is a key tool to describe the analyticity of functions on $\mathbb{R}^n$. However, there exist no similar concepts for analytic functions on $\mathbb{R}^{\mathbb{N}}$. Instead, we shall introduce below a family of topologies on $\mathbb{R}^{\mathbb{N}}$ (which are finer than the usual product topology) to describe the analyticity of functions with infinite many variables. These topologies are metrizable, but $\mathbb{R}^{\mathbb{N}}$ endowed with them is no longer a topological vector space.

For any $p\in (0,\infty]$ and $(x_i)_{i\in\mathbb{N}},(y_i)_{i\in\mathbb{N}}\in \mathbb{R}^{\mathbb{N}}$, define
$$\text{d}_p((x_i)_{i\in\mathbb{N}},(y_i)_{i\in\mathbb{N}})\triangleq
\left\{
\begin{array}{ll}
	\displaystyle\min \left\{1,  \left(\sum_{i=1}^{\infty}|x_i-y_i|^p \right)^{\frac{1}{\max\{1,p\}}}  \right\},\quad &\hbox{if }p\in (0,\infty),\\[3mm]
	\displaystyle\min  \left\{1,\sup_{ i\in\mathbb{N} }|x_i-y_i| \right\},\quad &\hbox{if }p=\infty.
\end{array}
\right.
$$
Then,  $d_p$ is a complete metric on $\mathbb{R}^{\mathbb{N}}$. Denote by $\mathscr{T}_p$ the topology induced by the metric $d_p$. Let us consider the following family of subsets of $\mathbb{R}^{\mathbb{N}}$:
$$\mathscr{B}_p\triangleq\{(x_i)_{i\in\mathbb{N}}+B_r^{p,\mathbb{N}}:\; (x_i)_{i\in\mathbb{N}}\in \mathbb{R}^{\mathbb{N}},\,r\in(0,+\infty)\},$$
where for each nonempty set $S$,
\begin{equation}\label{eq1}
	B_r^{p,S}\triangleq
	\left\{
	\begin{array}{ll}
		\displaystyle\left\{(x_i)_{i\in S}\in \mathbb{R}^{S}: \; \left(\sum_{i
			\in S}|x_i|^p\right)^{\frac{1}{\max\{1,p\}}}<r\right\},\quad &\hbox{if }p\in (0,\infty),\\[5mm]
		\displaystyle \left\{(x_i)_{i\in S}\in \mathbb{R}^{S}: \;\sup_{ i\in S}|x_i|<r \right\},\quad &\hbox{if }p=\infty,
	\end{array}
	\right.
\end{equation}
$$
(x_i)_{i\in S}+B_r^{p,S}\triangleq \{(x_i+y_i)_{i\in S}:\;(y_i)_{i\in S}\in B_r^{p,S}\}.\qquad\qquad\qquad\qquad\qquad\qquad\;
$$
Clearly, the above topologies $\mathscr{T}_p$  are constructed based on the classical $\ell^p$. One can show the following simple result.

\begin{proposition}\label{prop 3}
	For any $p\in (0,\infty]$, the following assertions hold:
	\begin{itemize}
		\item[{\rm 1)}]  $(\mathbb{R}^{\mathbb{N}},\mathscr{T}_{p})$ is not a topological vector space;
		\item[{\rm 2)}]  $\mathscr{B}_p$ is a base for the topology $\mathscr{T}_p$,  and $(\mathbb{R}^{\mathbb{N}},d_{p})$ is a nonseparable complete metric space.
	\end{itemize}
\end{proposition}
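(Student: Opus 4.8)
The plan is to treat the two assertions separately: the first is a short obstruction argument, and the second is a packaging of standard metric-space facts once the right $p$-dependent separated family has been identified.

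For 1) I would show that scalar multiplication $(\alpha,\textbf{x})\mapsto\alpha\textbf{x}$ fails to be continuous, which already rules out $(\mathbb{R}^{\mathbb{N}},\mathscr{T}_p)$ being a topological vector space; addition, by contrast, is continuous (since $d_p$ is translation invariant, $d_p(\textbf{x}+\textbf{y},\textbf{x}_0+\textbf{y}_0)\leq d_p(\textbf{x},\textbf{x}_0)+d_p(\textbf{y},\textbf{y}_0)$), so one should not look there. As $\ell^p\subsetneq\mathbb{R}^{\mathbb{N}}$ for every $p\in(0,\infty]$, I would fix a point $\textbf{x}_0\in\mathbb{R}^{\mathbb{N}}\setminus\ell^p$; concretely $\textbf{x}_0=(1,1,\ldots)$ works when $p<\infty$ and $\textbf{x}_0=(1,2,3,\ldots)$ when $p=\infty$. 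The key computation is that for every $\alpha\neq0$ one has $\sum_i|\alpha x_{0,i}|^p=|\alpha|^p\sum_i|x_{0,i}|^p=\infty$ (resp. $\sup_i|\alpha x_{0,i}|=\infty$ when $p=\infty$), whence $d_p(\alpha\textbf{x}_0,\textbf{0})=1$. Thus along $\alpha_n=1/n\to0$ the images $\alpha_n\textbf{x}_0$ stay at distance $1$ from $\textbf{0}=0\cdot\textbf{x}_0$, so scalar multiplication is discontinuous at $(0,\textbf{x}_0)$ and the space is not a topological vector space.

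For the base claim in 2) I would use translation invariance of $d_p$ together with the elementary fact that $\rho(\textbf{z})\triangleq(\sum_i|z_i|^p)^{1/\max\{1,p\}}$ (resp. $\sup_i|z_i|$) satisfies the triangle inequality: for $p\geq1$ this is the $\ell^p$ (or $\ell^\infty$) triangle inequality, and for $0<p<1$ it is the subadditivity $|a+b|^p\leq|a|^p+|b|^p$. For $0<r\leq1$ the $\min\{1,\cdot\}$ in $d_p$ is inactive, so the $d_p$-ball of radius $r$ centred at $\textbf{x}$ equals exactly $\textbf{x}+B_r^{p,\mathbb{N}}$; since open balls of radius $<1$ form a base of any metric topology, these particular members of $\mathscr{B}_p$ already form a base. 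It then remains only to check that the members with $r>1$ are $\mathscr{T}_p$-open, which follows from the triangle inequality for $\rho$; and since any family of open sets that contains a base is itself a base, $\mathscr{B}_p$ is a base for $\mathscr{T}_p$. Completeness I would prove by a coordinatewise argument that is actually easier here than for $\ell^p$ itself: given a $d_p$-Cauchy sequence $\{\textbf{x}^{(k)}\}$, eventual distances are $<1$, so each coordinate sequence $\{x_i^{(k)}\}_k$ is Cauchy in $\mathbb{R}$ and converges to some $x_i$; setting $\textbf{x}=(x_i)_{i\in\mathbb{N}}$, no membership condition has to be verified because $\mathbb{R}^{\mathbb{N}}$ contains all sequences. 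Convergence $\textbf{x}^{(k)}\to\textbf{x}$ then follows by truncating the sum at level $M$, letting the other index tend to infinity to control $\rho$ on the first $M$ coordinates, and finally letting $M\to\infty$.

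The main substance of the proposition is the nonseparability, and the one genuine subtlety — the point I expect to be the main obstacle — is that the simplest separated family depends on $p$. The uncountable family $\{c\textbf{1}:c\in\mathbb{R}\}$ is pairwise at $d_p$-distance $1$ when $p<\infty$ (since $\sum_i|c-c'|^p=\infty$ for $c\neq c'$), but it degenerates to distance $\min\{1,|c-c'|\}$ when $p=\infty$; so for $p=\infty$ I would instead use $\{0,1\}^{\mathbb{N}}$, whose distinct members differ in at least one coordinate and are therefore pairwise at $d_\infty$-distance $1$. In either case one obtains an uncountable set that is $1$-separated, so the open balls of radius $1/2$ about its points are pairwise disjoint and nonempty, forcing any dense subset to meet uncountably many of them and hence to be uncountable. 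Therefore $(\mathbb{R}^{\mathbb{N}},d_p)$ is nonseparable, completing the proof.
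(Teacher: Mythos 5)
Your proof is correct. There is, however, nothing in the paper to compare it against: the proposition is introduced with the phrase ``One can show the following simple result'' and no proof is supplied, so the result is left to the reader. Judged on its own merits, your argument is complete and sound. For 1), the discontinuity of scalar multiplication at $(0,\textbf{x}_0)$ for a point $\textbf{x}_0\notin\ell^p$, with the correctly $p$-dependent choice $(1,1,1,\dots)$ for $p<\infty$ versus $(1,2,3,\dots)$ for $p=\infty$, is exactly the right obstruction, and your side remark that addition \emph{is} continuous correctly locates where the TVS axioms fail. For 2), identifying the $d_p$-balls of radius $r\le 1$ with the translates $\textbf{x}+B_r^{p,\mathbb{N}}$, and then using the $[0,\infty]$-valued triangle inequality for the gauge $\rho(\textbf{z})=\bigl(\sum_i|z_i|^p\bigr)^{1/\max\{1,p\}}$ (Minkowski for $p\ge1$, subadditivity of $t\mapsto t^p$ for $0<p<1$, the sup for $p=\infty$) to show that the members of $\mathscr{B}_p$ with $r>1$ are open, gives the base claim in full; the coordinatewise-limit-plus-truncation argument gives completeness, and you rightly note that no membership condition needs checking since the ambient space is all of $\mathbb{R}^{\mathbb{N}}$; and the uncountable $1$-separated family, $\{c\,(1,1,\dots):c\in\mathbb{R}\}$ for $p<\infty$ and $\{0,1\}^{\mathbb{N}}$ for $p=\infty$, yields nonseparability via pairwise disjoint balls of radius $1/2$. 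Your observation that the separated family must change at $p=\infty$ is the one genuinely delicate point; note that one can avoid the case split entirely by using $\{0,1\}^{\mathbb{N}}$ for every $p\in(0,\infty]$, since two distinct members differ by at least $1$ in some coordinate, which already forces $d_p=1$.
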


In the above, we have introduced a family of topologies $\mathscr{T}_p$ (for each $p\in (0,\infty]$) on $\mathbb{R}^{\mathbb{N}}$. Denote by $\mathscr{T}$ the usual product topology on $\mathbb{R}^{\mathbb{N}}$. We have
the following result which reveals the relationship between these topologies.

\begin{proposition} \label{toplogies inclusion proposition}
	For any real numbers $p$ and $q$ with $0<p<q<\infty$, it holds that
	\begin{eqnarray*}
		\mathscr{T}\subsetneqq \mathscr{T}_{\infty}\subsetneqq \mathscr{T}_q\subsetneqq\mathscr{T}_p.
	\end{eqnarray*}
\end{proposition}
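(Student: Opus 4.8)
The plan is to use the translation invariance of each $d_p$ (and $d_\infty$): since $d_p((x_i),(y_i))$ depends only on the differences $x_i-y_i$, the ball $(x_i)_{i\in\mathbb{N}}+B_r^{p,\mathbb{N}}$ really is a metric ball, and by Proposition \ref{prop 3} the family $\mathscr{B}_p$ is a base for $\mathscr{T}_p$. Therefore comparing two of these topologies reduces to comparing basic balls, and by invariance it suffices to compare balls centered at the origin: for instance $\mathscr{T}_q\subset\mathscr{T}_p$ holds as soon as for every $r>0$ there is $r'>0$ with $B_{r'}^{p,\mathbb{N}}\subset B_r^{q,\mathbb{N}}$. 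I would phrase everything through the functional $\phi_s(z)=\sum_{i}|z_i|^s$ for $s\in(0,\infty)$, observing that for $r\le 1$ one has $B_r^{s,\mathbb{N}}=\{z:\phi_s(z)<r^{\max\{1,s\}}\}$, so that the truncation $\min\{1,\cdot\}$ in $d_s$ plays no role for the small balls that generate the topology.

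For the four inclusions I would work from the coarsest topology to the finest. The step $\mathscr{T}\subset\mathscr{T}_\infty$ is the odd one out and I would treat it by hand: given a product-basic neighborhood of a point $x$ that constrains only the finitely many coordinates in a set $F$ to lie within $\varepsilon$ of $x_i$, the sup-ball $x+B_\varepsilon^{\infty,\mathbb{N}}$ forces $\sup_i|y_i-x_i|<\varepsilon$ and hence lies inside that cylinder. The remaining inclusions $\mathscr{T}_\infty\subset\mathscr{T}_q\subset\mathscr{T}_p$ all rest on the same elementary fact: if $0<s\le t$ and $|z_i|\le 1$ for all $i$, then $|z_i|^t\le|z_i|^s$, so $\phi_t(z)\le\phi_s(z)$ and likewise $\sup_i|z_i|=(\sup_i|z_i|^t)^{1/t}\le\phi_t(z)^{1/t}$. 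Since a sufficiently small $\ell^p$-ball already forces every $|z_i|<1$, this yields the containments $B_{r'}^{p,\mathbb{N}}\subset B_r^{q,\mathbb{N}}$ and $B_{r'}^{q,\mathbb{N}}\subset B_r^{\infty,\mathbb{N}}$ once $r'$ is chosen to absorb the mismatch between the exponents $1/\max\{1,p\}$ and $1/\max\{1,q\}$; the case distinction $s<1$ versus $s\ge1$ enters only in this radius bookkeeping.

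For the three strict inclusions I would exhibit separating open balls via a mass-spreading construction. The key computation is that the vector $z=z(a,N)$ whose first $N$ coordinates equal $a$ and whose others vanish satisfies $\phi_s(z)=N a^s$. To show $B_1^{p,\mathbb{N}}$ is $\mathscr{T}_p$-open but not $\mathscr{T}_q$-open (for $p<q$), I fix any $\varepsilon\in(0,1)$ and seek $z\in B_\varepsilon^{q,\mathbb{N}}$ with $z\notin B_1^{p,\mathbb{N}}$, i.e. $N a^q$ small but $N a^p\ge 1$: choosing $N=\lceil a^{-p}\rceil$ gives $N a^p\ge1$ while $N a^q\le a^{q-p}+a^q\to0$ as $a\to0$ because $q-p>0$, so some such $z$ lies in $B_\varepsilon^{q,\mathbb{N}}$ yet escapes $B_1^{p,\mathbb{N}}$; hence no $\mathscr{T}_q$-ball about $0$ sits inside $B_1^{p,\mathbb{N}}$. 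The same idea with $z$ having $N$ entries equal to $\varepsilon/2$ shows $B_1^{q,\mathbb{N}}$ is not $\mathscr{T}_\infty$-open (its $\ell^q$-mass $N(\varepsilon/2)^q\to\infty$), giving $\mathscr{T}_\infty\subsetneqq\mathscr{T}_q$; and $B_1^{\infty,\mathbb{N}}$ fails to be $\mathscr{T}$-open because it constrains all coordinates whereas every nonempty product-open set contains a cylinder leaving all but finitely many coordinates free, giving $\mathscr{T}\subsetneqq\mathscr{T}_\infty$.

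The hard part will not be any single estimate but organizing the case analysis cleanly: the metric $d_s$ is the genuine $\ell^s$-metric only for $s\ge1$, while for $s<1$ it is the $F$-space metric $\phi_s$ without the $1/s$ root, so the two regimes $s<1$ and $s\ge1$ must be reconciled, in particular when the chain $p<q$ straddles the value $1$. Routing every inequality through the single functional $\phi_s$ and invoking the monotonicity $\phi_t\le\phi_s$ on the unit box is what I expect to make this bookkeeping uniform; the genuinely conceptual points — translation invariance reducing everything to origin-centered balls, and mass-spreading producing the separating balls — are comparatively short.
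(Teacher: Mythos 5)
Your proof is correct, and its skeleton matches the paper's: the inclusions come from the elementary inequality $|z_i|^{q}\leq |z_i|^{p}$ valid when all $|z_i|\leq 1$, and each strictness comes from showing that no ball of the coarser topology centered at the origin fits inside a fixed ball of the finer one. But the execution differs in two ways worth noting. First, the paper compares balls centered at arbitrary points and splits the chain into cases ($\mathscr{T}_\infty$ versus $\mathscr{T}_q$ with $q\geq 1$; $1\leq p_1<p_2$; $\mathscr{T}_1$ versus $\mathscr{T}_p$ with $p<1$; both exponents below $1$), with the mixed case $p<1<q$ covered only implicitly by transitivity of inclusion; you instead use translation invariance of the metrics (together with their triangle inequalities) to reduce every comparison to origin-centered balls, and route all cases through the single functional $\phi_s(z)=\sum_i|z_i|^s$, which treats $0<p<q<\infty$ uniformly at the cost of the radius bookkeeping $r'\leq\min\{1,r^{\max\{1,q\}}\}$. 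Second --- the genuinely different ingredient --- your strictness witnesses are the finitely supported mass-spreading vectors $z(a,N)$ with $N=\lceil a^{-p}\rceil$ coordinates equal to $a$, for which $\phi_p(z)\geq 1$ while $\phi_q(z)\leq a^{q-p}+a^{q}\to 0$ as $a\to 0^{+}$; the paper instead exhibits explicit elements of $\ell^{q}\setminus\ell^{p}$, namely $(i^{-1/p_1})_{i\in\mathbb{N}}$ for $1\leq p_1<p_2$ and $(i^{-2/(p+1)})_{i\in\mathbb{N}}$ for $p<1$, which (after scaling) lie in any small $q$-ball but in no $p$-ball. Your construction is more elementary, needing no facts about convergence or divergence of $p$-series, and it requires no case distinction in the exponents; the paper's has the minor virtue of producing a single fixed sequence witnessing $\ell^{q}\setminus\ell^{p}\neq\emptyset$. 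Both then conclude identically: since every origin-centered ball of the coarser topology contains a point outside the fixed ball of the finer topology, that fixed ball cannot be open in the coarser topology.
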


\begin{proof}
	We begin by proving that $\mathscr{T}\subsetneqq \mathscr{T}_{\infty}$. It is easy to see that $\mathscr{T}\subset\mathscr{T}_{\infty}$. To show the proper inclusion, we only need to note that for any $r>0$ there does not exist $U\in \mathscr{T}$ with $(0)\in U $ such that $U\subseteq B_r^{\infty,\mathbb{N}}$.
	
	We proceed to prove that $\mathscr{T}_{\infty} \subsetneqq \mathscr{T}_q$, where $ q\in [1,\infty)$. For any given $r\in(0, 1]$, $(x_i)_{i\in\mathbb{N}}\in \mathbb{R}^{\mathbb{N}}$ and
	$(y_i)_{i\in\mathbb{N}}\in (x_i)_{i\in\mathbb{N}}+B_r^{\infty,\mathbb{N}}$,
	let
	$$s_0\triangleq\max \left\{\sup_{  i\in\mathbb{N}}|y_i-x_i|,\frac{r}{2}\right\}<r\
	\hbox{ and }\ s_1\triangleq\frac{1}{2}\min\{r-s_0,s_0\}>0.
	$$
	Note that for any $(z_i)_{i\in\mathbb{N}}\in (y_i)_{i\in\mathbb{N}}+B_{s_1}^{q,\mathbb{N}}$, it holds that
	$$\sup_{ i\in\mathbb{N}}|z_i-y_i|\leqslant \left(\sum_{i=1}^{\infty}|z_i-y_i|^q \right)^{\frac{1}{q}}<s_1.$$
	Therefore,
	$$\sup_{ i\in\mathbb{N}}|z_i-x_i|\leqslant \sup_{ i\in\mathbb{N}}|z_i-y_i|+\sup_{ i\in\mathbb{N} }|y_i-x_i|<s_0+s_1<r.$$
	This gives $(z_i)_{i\in\mathbb{N}}\in (x_i)_{i\in\mathbb{N}}+B_r^{\infty,\mathbb{N}},$
	which implies that $(y_i)_{i\in\mathbb{N}}+B_{s_1}^{q,\mathbb{N}}\subseteq (x_i)_{i\in\mathbb{N}}+B_r^{\infty,\mathbb{N}}$ and hence $\mathscr{T}_{\infty} \subset \mathscr{T}_q$. To prove the proper inclusion we only need to note that there is no $r_1,r_2>0$ satisfying $B_{r_1}^{\infty,\mathbb{N}}\subseteq B_{r_2}^{q,\mathbb{N}}.$
	
	We now turn to prove that $\mathscr{T}_{p_2} \subsetneqq \mathscr{T}_{p_1}$ where $1 \leqslant p_1<p_2<\infty$. For any $r\in(0, 1]$, $(x_i)_{i\in\mathbb{N}}\in \mathbb{R}^{\mathbb{N}}$ and $(y_i)_{i\in\mathbb{N}}\in (x_i)_{i\in\mathbb{N}}+B_{r }^{p_2,\mathbb{N}},$
	let
	$$s_2\triangleq\left(\sum_{i=1}^{\infty}|y_i-x_i|^{p_2}\right)^{\frac{1}{p_2}}<r\leqslant 1\
	\hbox{ and } \ s_3\triangleq(r-s_2)^{\frac{p_2}{p_1}}.
	$$
	Note that for any $(z_i)_{i\in\mathbb{N}}\in (y_i)_{i\in\mathbb{N}}+B_{s_3}^{p_1,\mathbb{N}}$, it holds that
	$$\left(\sum_{ i=1}^{\infty}|z_i-y_i|^{p_1} \right)^{\frac{1}{p_1}}<s_3\leqslant 1,\quad\sum_{ i=1}^{\infty}|z_i-y_i|^{p_1}<1.
	$$
	Then  $\displaystyle 1>\sum_{ i=1}^{\infty}|z_i-y_i|^{p_1}\geqslant \sum_{ i=1}^{\infty}|z_i-y_i|^{p_2}$
	and hence
	\begin{eqnarray*}
		s_3>\left(\sum_{ i=1}^{\infty}|z_i-y_i|^{p_1}\right)^{\frac{1}{p_1}}\geqslant \left(\sum_{ i=1}^{\infty}|z_i-y_i|^{p_2}\right)^{\frac{1}{p_1}}
		= \left[\left(\sum_{ i=1}^{\infty}|z_i-y_i|^{p_2}\right)^{\frac{1}{p_2}}\right]^{\frac{p_2}{p_1}}.
	\end{eqnarray*}
	Therefore,
	\begin{eqnarray*}
		\left(\sum_{ i=1}^{\infty}|z_i-x_i|^{p_2}\right)^{\frac{1}{p_2}}&\leqslant &\left(\sum_{ i=1}^{\infty}|z_i-y_i|^{p_2}\right)^{\frac{1}{p_2}}+\left(\sum_{ i=1}^{\infty}|y_i-x_i|^{p_2}\right)^{\frac{1}{p_2}}\\
		&<& (s_3)^{\frac{p_1}{p_2}}+s_2
		=r,
	\end{eqnarray*}
	which proves that $(y_i)_{i\in\mathbb{N}}+B_{s_3}^{p_1,\mathbb{N}}\subseteq (x_i)_{i\in\mathbb{N}}+B_r^{p_2,\mathbb{N}}$ and hence $\mathscr{T}_{p_2} \subset \mathscr{T}_{p_1}$. Meanwhile, the proper inclusion follows from the fact that there is no $r_1,r_2>0$ such that
	$B_{r_1}^{p_2,\mathbb{N}}\subseteq B_{r_2}^{p_1,\mathbb{N}}.$
	To see this we only need to note that $\left(\frac{1}{i^{\frac{1}{p_1}}}\right)_{i\in\mathbb{N}}\in \ell^{p_2}\backslash \ell^{p_1}.$
	
	Further, we prove that $\mathscr{T}_{1} \subsetneqq \mathscr{T}_{p} $ where $0< p<1$. Given $0<r\leqslant 1$, $(x_i)_{i\in\mathbb{N}}\in \mathbb{R}^{\mathbb{N}}$ and $(y_i)_{i\in\mathbb{N}}\in (x_i)_{i\in\mathbb{N}}+B_r^{1,\mathbb{N}},$
	it holds that $\sum\limits_{ i=1}^{\infty}|x_i-y_i|   <r\leqslant 1.$
	Let
	$$r_0\triangleq\frac{1}{2}\min \left\{r-\sum_{ i=1}^{\infty}|x_i-y_i|,\;\max \left\{\frac{r}{2},\sum_{ i=1}^{\infty}|x_i-y_i|\right\} \right\}.
	$$
	Note that for any $(z_i)_{i\in\mathbb{N}}\in (y_i)_{i\in\mathbb{N}}+B_{r_0}^{p,\mathbb{N}}$, it holds that $\sum\limits_{ i=1}^{\infty}|z_i-y_i|^{p}<r_0<1.$
	Hence
	$$\sum_{ i=1}^{\infty}|z_i-y_i| \leqslant \sum_{ i=1}^{\infty}|z_i-y_i|^{p}<r_0.$$
	We thus get
	\begin{eqnarray*}
		\sum_{ i=1}^{\infty}|z_i-x_i| \leqslant  \sum_{ i=1}^{\infty}|z_i-y_i| + \sum_{ i=1}^{\infty}|y_i-x_i| < r_0+ \sum_{ i=1}^{\infty}|y_i-x_i|
		<r.
	\end{eqnarray*}
	which proves that $(y_i)_{i\in\mathbb{N}}+B_{r_0}^{p,\mathbb{N}}\subseteq (x_i)_{i\in\mathbb{N}}+B_r^{1,\mathbb{N}}$ and hence implies that $\mathscr{T}_{1} \subset\mathscr{T}_{p} $. To see the proper inclusion of $\mathscr{T}_{1} \subsetneqq \mathscr{T}_{p} $ we only need to note that there is no $r_1,r_2>0$ such that $B_{r_1}^{1,\mathbb{N}}\subseteq B_{r_2}^{p,\mathbb{N}},$ which follows from the fact that $\left(\frac{1}{i^{\frac{2}{p +1}}}\right)_{i\in\mathbb{N}}\in \ell^{1}\backslash \ell^{p}.$
	
	Finally, the above argument can be applied to prove that $\mathscr{T}_{p_2} \subsetneqq \mathscr{T}_{p_1}$, where $0< p_1<p_2<1$. 
	This completes the proof of Proposition \ref{toplogies inclusion proposition}.
\end{proof}


\medskip

Next, let us recall a simple fact: Suppose that $f(\cdot)$ is a real analytic function (on a suitable sub-interval of $\mathbb{R}$) with the power series expansion $f(x)=\sum\limits_{n=0}^{\infty}a_n x^n,$
where $\{a_n\}_{n=0}^{\infty}$ is a sequence of real numbers so that $\sum\limits_{n=0}^{\infty}|a_n|r^n<\infty$ for some $r\in (0,+\infty)$.
Then, the analyticity of $f(\cdot)$ can be ``measured" by three equivalent manners:
\begin{itemize}
	\item[1)] The larger $r$ is, the ``stronger" the analyticity of $f(\cdot)$ is;
	\item[2)] The closer $r$ to the infinity point is, the ``stronger" the analyticity of $f(\cdot)$ is;
	\item[3)] The  closer $\frac{1}{r}$ to 0 is, the ``stronger" the analyticity of $f(\cdot)$ is.
\end{itemize}
Let $\widetilde{\mathbb{R}^{\mathbb{N}} }\triangleq \mathbb{R}^{\mathbb{N}} \sqcup\{\infty\}$, with $\infty$ being a given point not belonging to $\mathbb{R}^{\mathbb{N}}$, which can be viewed as an infinity point of $\mathbb{R}^{\mathbb{N}}$. Motivated by the above analysis, three questions arise naturally:
\begin{itemize}
	\item[1)] What points in $\mathbb{R}^{\mathbb{N}}$ are large?
	\item[2)] What points in $\mathbb{R}^{\mathbb{N}}$ are close to $\infty$?
	\item[3)] What points in $\mathbb{R}^{\mathbb{N}}$ are close to $(0)$?
\end{itemize}
The situation in $\mathbb{R}^{\mathbb{N}}$ is much more complicated than that in the one (or even finite) dimensional setting. Indeed, the main difficulty is how to properly measure points in $\mathbb{R}^{\mathbb{N}}$ are close or far to $\infty$. For example,
$\sum\limits_{i=1}^{\infty}\frac{1}{i^p}<\infty$
for $1<p<\infty$ and $\sum\limits_{i=1}^{\infty}\frac{1}{i^p}=\infty$
for $0<p\leqslant 1$. In this sense, $\big(\frac{1}{\sqrt{i}}\big)_{i\in\mathbb{N}}$ is a ``large" point in $\mathscr{T}_{2}$, while $\big(\frac{1}{i}\big)_{i\in\mathbb{N}}$ is not a ``large" point in $\mathscr{T}_{2}$. Meanwhile, these two points are ``large" points in $\mathscr{T}_{1}$ and both of their $\ell^1$-norms are $\infty$ (and therefore such a norm does not distinguish them). Nevertheless, the third viewpoint in the above one dimensional case provides some useful idea for its infinite dimensional counterpart.
Precisely, we introduce the following family of subsets (in $\widetilde{\mathbb{R}^{\mathbb{N}} }$): for $0<p<\infty$, we use the notation $\mathscr{B}^p$ to denote the union of $\mathscr{B}_p$ and the collection of all sets of the following form:
\begin{eqnarray*}
	\left\{(x_i)_{i\in\mathbb{N}}\in \mathbb{R}^{\mathbb{N}}:\;x_i\neq 0 \text{ for each }i\in \mathbb{N}\,\text{ and } \,\,\sum_{i=1}^{\mathbb{N}}\frac{1}{|x_i|^p}<r \right\} \sqcup \{\infty\},
\end{eqnarray*}
where $r$ goes over positive numbers, and we use the notation $\mathscr{B}^{\infty}$ to denote the union of $\mathscr{B}_{\infty}$ and the collection of all sets of the following form:
\begin{eqnarray*}
	\left\{(x_i)_{i\in\mathbb{N}}\in \mathbb{R}^{\mathbb{N}}:\;x_i\neq 0 \text{ for each }i\in \mathbb{N}\,\text{ and } \,\,\sup_{i\in\mathbb{N}}\frac{1}{|x_i|}<r \right\}\sqcup\{\infty\},
\end{eqnarray*}
where $r$ goes over positive numbers. It is easy to check that, for any $p\in (0,\infty]$, $\mathscr{B}^p$ is a base for a topological space (on $\widetilde{\mathbb{R}^{\mathbb{N}} }$), denoted by $\mathcal {T}^p$.

\begin{proposition} \label{subspace topology}
	The subspace topology of $(\widetilde{\mathbb{R}^{\mathbb{N}} }, \mathcal {T}^p)$ restricted on $\mathbb{R}^{\mathbb{N}}$ is $\mathscr{T}_p$.
\end{proposition}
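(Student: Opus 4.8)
The plan is to work entirely at the level of bases and to reduce the whole statement to showing that one family of sets is open in $\mathscr{T}_p$. I would start from the standard fact that if $\mathscr{B}$ is a base for a topology on a set $X$ and $Y\subseteq X$, then $\{B\cap Y:\;B\in\mathscr{B}\}$ is a base for the subspace topology on $Y$. Applying this with $X=\widetilde{\mathbb{R}^{\mathbb{N}}}$, $Y=\mathbb{R}^{\mathbb{N}}$ and $\mathscr{B}=\mathscr{B}^p$, the family $\{B\cap\mathbb{R}^{\mathbb{N}}:\;B\in\mathscr{B}^p\}$ is a base for $\mathcal{T}^p|_{\mathbb{R}^{\mathbb{N}}}$. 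Now $\mathscr{B}^p$ splits into two kinds of sets: the members of $\mathscr{B}_p$, which already lie in $\mathbb{R}^{\mathbb{N}}$ and so are unaffected by intersecting with $\mathbb{R}^{\mathbb{N}}$, and the basic neighborhoods of $\infty$. Tracing the latter on $\mathbb{R}^{\mathbb{N}}$ simply deletes the point $\infty$, so the subspace base is exactly $\mathscr{B}_p\cup\{M_r:\;r\in(0,+\infty)\}$, where for $p\in(0,\infty)$
\[
M_r=\left\{(x_i)_{i\in\mathbb{N}}\in\mathbb{R}^{\mathbb{N}}:\;x_i\neq 0\ \text{for all }i,\ \sum_{i=1}^{\infty}\frac{1}{|x_i|^p}<r\right\}
\]
(with the obvious $\sup$-version for $p=\infty$).

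With this reduction the two inclusions are easy to organize. Since $\mathscr{B}_p$ is a base for $\mathscr{T}_p$ and $\mathscr{B}_p$ is contained in the subspace base, we get $\mathscr{T}_p\subseteq\mathcal{T}^p|_{\mathbb{R}^{\mathbb{N}}}$ at once. For the reverse inclusion, because $\mathcal{T}^p|_{\mathbb{R}^{\mathbb{N}}}$ is generated by $\mathscr{B}_p\cup\{M_r\}$, it suffices to verify that each generator is $\mathscr{T}_p$-open; the members of $\mathscr{B}_p$ are $\mathscr{T}_p$-basic by definition, so the entire proposition collapses to the single claim that each $M_r$ is open in $(\mathbb{R}^{\mathbb{N}},\mathscr{T}_p)$.

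To prove that claim I would fix $\textbf{x}=(x_i)\in M_r$ and produce $\delta>0$ with $\textbf{x}+B_\delta^{p,\mathbb{N}}\subseteq M_r$. The crucial preliminary observation is that $\sum_i|x_i|^{-p}<\infty$ forces $|x_i|\to\infty$, whence $m\triangleq\inf_i|x_i|>0$. For $\textbf{y}\in\textbf{x}+B_\delta^{p,\mathbb{N}}$ the coordinate perturbations $\epsilon_i\triangleq|x_i-y_i|$ tend to $0$ uniformly in $i$ as $\delta\to 0$, so once $\delta$ is small each $\epsilon_i<m/2\leqslant|x_i|/2$, giving $y_i\neq 0$ and, by monotonicity of $t\mapsto t^{-p}$, $|y_i|^{-p}\leqslant(|x_i|-\epsilon_i)^{-p}$. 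A one-variable mean-value estimate on $[0,\tfrac12]$ then furnishes a constant $C_p$ with $(|x_i|-\epsilon_i)^{-p}-|x_i|^{-p}\leqslant C_p\,\epsilon_i|x_i|^{-p-1}$; summing and using $|x_i|^{-p-1}\leqslant m^{-1}|x_i|^{-p}$ together with the uniform smallness of the $\epsilon_i$ bounds $\sum_i|y_i|^{-p}$ by $\big(\sum_i|x_i|^{-p}\big)\,(1+C_p\eta(\delta)/m)$ for some $\eta(\delta)\to 0$. Since $\sum_i|x_i|^{-p}<r$ strictly, a small enough $\delta$ keeps this product below $r$, i.e.\ $\textbf{y}\in M_r$. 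The case $p=\infty$ is even more direct: if $\sup_i|x_i|^{-1}=s<r$ then $|x_i|\geqslant 1/s$, and a uniform perturbation of size $<\delta$ keeps $\sup_i|y_i|^{-1}\leqslant s/(1-s\delta)<r$ for small $\delta$.

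The main obstacle I anticipate is exactly this uniform estimate: one must control the \emph{whole} series $\sum_i|y_i|^{-p}$ under a perturbation that is only small in the $\ell^p$-type sense, not coordinatewise-uniformly, and the real danger is a single term blowing up because some $x_i$ of moderate size gets pushed toward $0$. The observation $m=\inf_i|x_i|>0$ — itself a consequence of convergence of $\sum_i|x_i|^{-p}$ — is precisely what excludes this and makes the Taylor bound uniform in $i$. Everything else (choosing the explicit $\delta$, and handling the exponent $\max\{1,p\}$ uniformly across $p\in(0,1)$, $p\in[1,\infty)$ and $p=\infty$) is routine bookkeeping once that point is secured.
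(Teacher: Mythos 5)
Your proposal is correct, and at the structural level it follows the same route as the paper: both arguments reduce the proposition to the single claim that each trace set $M_r$ (a near-infinity basic neighborhood intersected with $\mathbb{R}^{\mathbb{N}}$) is $\mathscr{T}_p$-open --- a reduction the paper compresses into ``it suffices to show'' and you spell out via the subspace-base fact --- and both hinge on the observation that convergence of $\sum_i|x_i|^{-p}$ forces $|x_i|\to\infty$. Where you genuinely diverge is in how the perturbation estimate is executed. The paper splits the series at an index $N$ beyond which $|x_i|\geq M$, bounds every tail term by the elementary inequality $\frac{1}{|x_i+y_i|}\leq\frac{M}{M-1}\cdot\frac{1}{|x_i|}$ (needing only $|y_i|<1$), and disposes of the finite head by plain continuity, having chosen $M$ in advance so that $\left(\frac{M}{M-1}\right)^p\sum_i|x_i|^{-p}<r$. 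You avoid the head/tail split altogether: from $|x_i|\to\infty$ you extract the uniform lower bound $m=\inf_i|x_i|>0$, and a single mean-value estimate then yields the global multiplicative bound $\sum_i|y_i|^{-p}\leq\left(\sum_i|x_i|^{-p}\right)\left(1+C_p\,\eta(\delta)/m\right)$, which the strict inequality $\sum_i|x_i|^{-p}<r$ absorbs for small $\delta$. The trade-off: the paper's split uses nothing beyond arithmetic of the inflation factor $M/(M-1)$, while your one-shot bound needs the Taylor/mean-value inequality but treats all coordinates uniformly and, as you note, transfers verbatim to $p=\infty$ --- a case the paper explicitly omits (``We only consider the case that $p\in(0,\infty)$''). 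Both are complete proofs; yours is marginally more general, and your explicit justification of the base-trace reduction is a small gain in rigor over the paper's version.
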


\begin{proof}
	We only consider the case that $p\in (0,\infty)$. It suffices to show that for any given $r>0$ and $(x_i)_{i\in\mathbb{N}}\in \mathbb{R}^{\infty}$ with $x_i\neq 0$ for each $i\in \mathbb{N}$ and
	$\sum\limits_{i=1}^{\infty}\frac{1}{|x_i|^p}<r$,
	there exists $\varepsilon>0$ such that for each $(y_i)_{i\in\mathbb{N}}\in \mathbb{R}^{\mathbb{N}}$ with
	$\sum\limits_{i=1}^{\infty} |y_i|^p <\varepsilon$, it holds that
	$x_i+y_i\neq 0$ for each $i\in \mathbb{N}$ and
	\begin{eqnarray*}
		\sum_{i=1}^{\infty}\frac{1}{|x_i+y_i|^p}<r.
	\end{eqnarray*}
	
	For the above $r>0$ and $(x_i)_{i\in\mathbb{N}}\in \mathbb{R}^{\mathbb{N}}$, clearly, $\lim\limits_{i\to\infty}|x_i|=+\infty$, and one can find $M>1$ such that
	\begin{eqnarray*}
		\bigg(\frac{M}{M-1}\bigg)^p\cdot\sum_{i=1}^{\infty}\frac{1}{|x_i|^p}<r.
	\end{eqnarray*}
	Thus there exists $N\in\mathbb{N}$ such that $ |x_i|\geqslant M$ for all $i\geqslant N$.
	For any $i\geqslant N$ and $y_i\in \mathbb{R}$ with $|y_i|<1$,
	\begin{eqnarray*}
		\frac{1}{|x_i+y_i|}&=&\frac{|x_i|}{|x_i+y_i|}\cdot \frac{1}{|x_i|}\leqslant \frac{|x_i|}{|x_i|-|y_i|}\cdot \frac{1}{|x_i|}\\
		&< &\frac{|x_i|}{|x_i|-1}\cdot \frac{1}{|x_i|}\leqslant \frac{M}{M-1}\cdot \frac{1}{|x_i|}.
	\end{eqnarray*}
	On the other hand,
	\begin{eqnarray*}
		\lim_{(y_1,y_2,\cdots,y_N)\to(0,0,\cdots,0)}\sum_{i=1}^{N}\frac{1}{|x_i+y_i|^p}=\sum_{i=1}^{N}\frac{1}{|x_i|^p}.
	\end{eqnarray*}
	Hence, one can find $\varepsilon>0$ so that for each $(y_i)_{i\in\mathbb{N}}\in \mathbb{R}^{\infty}$ with
	$\sum\limits_{i=1}^{\infty} |y_i|^p <\varepsilon$,
	\begin{eqnarray*}
		\sum_{i=1}^{\infty}\frac{1}{|x_i+y_i|^p}
		&=&\sum_{i=1}^{N}\frac{1}{|x_i+y_i|^p}+\sum_{i=N+1}^{\infty}\frac{1}{|x_i+y_i|^p}\\
		&\leqslant&\sum_{i=1}^{N}\frac{1}{|x_i+y_i|^p}+\bigg(\frac{M}{M-1}\bigg)^p\cdot\bigg(\sum_{i=N+1}^{\infty}\frac{1}{|x_i|^p}\bigg)\\
		&<&r.
	\end{eqnarray*}
	This completes the proof of Proposition \ref{subspace topology}.
\end{proof}

\begin{remark}
	Obviously, $\mathcal {T}^p$  is not the usual one-point compactification of $\mathscr{T}_p$, and therefore we use the notion $\widetilde{\mathbb{R}^{\mathbb{N}} }$, instead of $\widehat{\mathbb{R}^{\mathbb{N}} }$ for the usual one-point compactification of $(\mathbb{R}^{\mathbb{N}},\mathscr{T}_p)$.
	The above topologies and conclusions also hold on any countable Cartesian product of $\mathbb{R}$ instead of $\mathbb{R}^{\mathbb{N}}$.
\end{remark}

\subsection{The Cauchy-Kowalevski Type Theorem}\index{The Cauchy-Kowalevski Type Theorem}
In this subsection, we consider the following form of initial value problem:
\begin{equation}\label{one22-8-11}
\left\{
\begin{array}{ll}
 \displaystyle\partial^m_t u(t,\textbf{x})=f\big(t,\textbf{x},u, (\partial^{\beta}_{\textbf{x}}\partial^{j}_t  u)_{(\beta,j)\in \mathbb{I}_m} \big),\\[2mm]\displaystyle
 \partial^k_tu(t,\textbf{x})\mid_{t=0}=\phi_k(\textbf{x}),\ \ k=0,1,\cdots,m-1.
 \end{array}
\right.
\end{equation}
Here $m$ is a given positive integer, $\mathbb{I}_m\triangleq \{(\beta,j):\beta\in \mathbb{N}_0^{(\mathbb{N})},j\in\mathbb{N}_0, j<m, 1\leqslant|\beta|+j\leqslant m\}$, $t\in \mathbb{R}, \textbf{x}=(x_i)_{i\in\mathbb{N}}\in \mathbb{R}^{\mathbb{N}}$, $\partial^{\beta}_{\textbf{x}}\;{\buildrel \triangle \over =}\;\partial^{\beta_1}_{x_1 }\cdots\partial^{\beta_n}_{x_n }$  for
\begin{eqnarray*}
 \beta&=&(\beta_1,\cdots,\beta_n,0,\cdots)\in \mathbb{N}_0^{(\mathbb{N})} (\text{for some positive }n\in\mathbb{N}),\\
  |\beta|&\triangleq &\beta_1+\cdots+\beta_n,
\end{eqnarray*}
the unknown $u$ is a real-valued function depending on $t$ and $\textbf{x}$; $f$ is a non-linear real-valued function depending on $t,$ $\textbf{x}$, $u$ and all of its derivatives of the form $\partial^{\beta}_\textbf{x}\partial^{j}_t  u$ where $(\beta,j)\in \mathbb{I}_m$.

Note that the values $u(0,(0)_{i\in\mathbb{N}})$ and $\partial^{\beta}_\textbf{x}\partial^{j}_t  u(0,(0)_{i\in\mathbb{N}})$ (for $(\beta,j)\in \mathbb{I}_m$) can be determined by the equation (\ref{one22-8-11}). For simplicity, we write these values by
$$u_0\triangleq \phi_0((0)_{i\in\mathbb{N}}),\quad w_{\beta,j}^0\triangleq \partial^{\beta}_\textbf{x}\phi_j((0)_{i\in\mathbb{N}}),\ \hbox{ and }\ \textbf{w}^0\triangleq  (w_{\beta,j}^0)_{(\beta,j)\in \mathbb{I}_m}.$$
It is easy to see that $f$ is a function on $\mathbb{R}\times \mathbb{R}^{\mathbb{N}}\times \mathbb{R}\times \mathbb{R}^{\mathbb{I}_m}$, which is also a countable Cartesian product of $\mathbb{R}$. Hence, by Remark \ref{230820rem1}, we can use the same notations as in the Definition \ref{def of analyticity} when no confusion can arise. Suppose that $f$ is analytic at $(0,(0)_{i\in\mathbb{N}},u_0, \textbf{w}^0 )$ with the monomial expansion
$$
\begin{array}{ll}
\displaystyle f\big(t,(x_i)_{i\in\mathbb{N}},u, (\partial^{\beta}_{\textbf{x}}\partial^{j}_t  u)_{(\beta,j)\in \mathbb{I}_m} \big)\\[3mm]
\displaystyle=\sum_{\alpha\in\mathbb{N}_0^{(\mathbb{N})}}C_{\alpha}\big(t,(x_i)_{i\in\mathbb{N}},u-u_0, (\partial^{\beta}_\textbf{x}\partial^{j}_t  u-w_{\beta,j}^0)_{(\beta,j)\in \mathbb{I}_m} \big)^{\alpha}
\end{array}
$$
and let
$$F(t,\textbf{x},u, \textbf{w})\triangleq\sum_{\alpha\in \mathbb{N}_0^{(\mathbb{N})}}|C_{\alpha}| (t,\textbf{x},u, \textbf{w})^{\alpha},$$ where
$t,u\in\mathbb{R}$,\,$\textbf{w}= (w_{\beta,j})_{(\beta,j)\in \mathbb{I}_m}\in \mathbb{R}^{\mathbb{I}_m}$ and $\textbf{x}= (x_i)_{i\in\mathbb{N}}\in \mathbb{R}^{\mathbb{N}}.$ We also suppose that for each $0\leqslant k\leqslant m-1$, $\phi_k$ is analytic at $(0)_{i\in\mathbb{N}}$ with the monomial expansion
$$\phi_k(\textbf{x})=\sum_{\alpha\in \mathbb{N}_0^{(\mathbb{N})}}C_{\alpha,k}\textbf{x}^{\alpha}$$ and let
$$\Phi_k (t,\textbf{x},u, \textbf{w})\triangleq\sum_{\alpha\in \mathbb{N}_0^{(\mathbb{N})}}|C_{\alpha,k}|\textbf{x}^{\alpha}.$$

By means of the majorant method, we shall show below the following Cauchy-Kowalevski type theorem of infinitely many variables:

\begin{theorem}\label{Infinite Dimensional Cauchy-Kowalevski Theorem}
Suppose $1\leqslant p<\infty$ and that the monomial expansions of $\Phi_k$, $0\leqslant k\leqslant m-1$ and $F$ at $(0,(0)_{i\in\mathbb{N}},0,(0)_{(\beta,j)\in \mathbb{I}_m})$ are absolutely convergent at a point near $\infty$ in the topology $\mathcal {T}^p$(see Definition \ref{def of analyticity} and Remark \ref{230820rem1}). Then the Cauchy problem (\ref{one22-8-11}) admits a locally analytic solution (at $(0,(0)_{i\in\mathbb{N}})$), which is unique in the class of analytic functions under the topology $\mathscr{T}_{p'}$ where $p'$ is the usual H\"older conjugate of $p$. Furthermore, the solution $u$ is Fr\'{e}chet differentiable with respect to $\ell^{p'}(\mathbb{N}_0)$ in a neighborhood of $(0,(0)_{i\in\mathbb{N}}) $ in the topology $\mathcal {T}^{p'}$ and the corresponding Fr\'{e}chet derivative  $Du$ is continuous.
\end{theorem}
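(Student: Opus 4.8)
The plan is to run Cauchy's method of majorants, adapted to the monomial-expansion notion of analyticity in Definition~\ref{def of analyticity} and to the topologies $\mathscr{T}_p$, $\mathcal{T}^p$. First I would construct the formal solution and settle uniqueness simultaneously. Differentiating the initial conditions $\partial_t^k u\mid_{t=0}=\phi_k$ in $\textbf{x}$ determines every coefficient $\partial_\textbf{x}^\beta\partial_t^j u(0,(0)_{i\in\mathbb{N}})$ with $j\le m-1$; the equation (\ref{one22-8-11}) then yields $\partial_t^m u\mid_{t=0}$ as $f$ evaluated at the already-known data, and repeatedly applying $\partial_t$ and $\partial_\textbf{x}^\beta$ to the equation and using the chain rule expresses each higher coefficient as a fixed universal polynomial, with nonnegative integer coefficients, in the Taylor coefficients $C_\alpha$ of $f$ and $C_{\alpha,k}$ of the $\phi_k$. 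This produces a unique candidate monomial expansion $u(t,\textbf{x})=\sum_\alpha c_\alpha (t,\textbf{x})^\alpha$ and, since the coefficients are forced by the recursion, shows that any two solutions analytic in a $\mathscr{T}_{p'}$-neighborhood have identical expansions and hence coincide — this is exactly the asserted uniqueness.

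Next I would realize the majorant principle. Replacing $f$ by $F$ and $\phi_k$ by $\Phi_k$ (Definition~\ref{def of majority function}) and running the same recursion yields a majorant problem whose formal solution $\bar u=\sum_\alpha C_\alpha^{*}(t,\textbf{x})^\alpha$ has nonnegative coefficients with $|c_\alpha|\le C_\alpha^{*}$, because the universal polynomials have nonnegative coefficients. The decisive infinite-dimensional step is to make this problem explicitly solvable. Applying Lemma~\ref{absolutely convergent lemma} to $F$ and to each $\Phi_k$ (whose expansions converge at a point near $\infty$ in $\mathcal{T}^p$) gives $\rho_0>0$ and $s_i\in(0,1)$ with $\sum_j s_j^p=1$ such that, writing $\textbf{r}\triangleq(s_i/\rho_0)_{i\in\mathbb{N}}\in\ell^p$, the majorant coefficients are dominated by the geometric factors $\textbf{r}^\alpha$. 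I would then take $F,\Phi_k$ in the standard geometric form depending on $\textbf{x}$ only through the single combined variable $\xi=\sum_i (s_i/\rho_0) x_i$ and on the derivative arguments only through their weighted sum. Under the collapse $\partial_{x_i}=(s_i/\rho_0)\partial_\xi$, so that $\partial_\textbf{x}^\beta=\textbf{r}^\beta\partial_\xi^{|\beta|}$, the infinite-variable majorant Cauchy problem reduces to a scalar problem $\partial_t^m U=G(t,\xi,U,(\partial_\xi^a\partial_t^b U))$ in two scalar variables with $G$ explicit analytic; its analytic solvability near the origin is classical (e.g. via reduction to a first-order system and the finite-dimensional Cauchy--Kowalevski theorem, or by reduction to an algebraic relation and Theorem~\ref{230324the1}), giving an analytic $U(t,\xi)$, and $\bar u(t,\textbf{x})\triangleq U(t,\sum_i (s_i/\rho_0)x_i)$ is the majorant solution.

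Now I would transfer back and establish regularity. Since $\textbf{r}\in\ell^p$, Hölder's inequality gives $|\xi|\le\|\textbf{r}\|_{\ell^p}\|\textbf{x}\|_{\ell^{p'}}$, so for $\textbf{x}$ in a small ball $B_r^{p',\mathbb{N}}$ the point $\xi$ stays in the convergence disc of $U$; expanding $U=\sum_{n,m}C_{n,m}t^m\xi^n$ with $C_{n,m}\ge 0$ and substituting $\xi=\sum_i (s_i/\rho_0)x_i$ exhibits $\bar u$ as a monomial series with nonnegative coefficients, absolutely convergent at a point near $\infty$ in $\mathcal{T}^{p'}$. By $|c_\alpha|\le C_\alpha^{*}$ the candidate series for $u$ converges there too, so $u$ is analytic in the sense of Definition~\ref{def of analyticity} and solves (\ref{one22-8-11}), proving existence. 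Finally, after a harmless rescaling of the combined variable to meet the normalization $\sum_i r_i^p<1$, the function $\bar u=U(t,\sum_i r_i x_i)$ is exactly of the form treated in Lemma~\ref{Frechet lemma} (with the index $0\in\mathbb{N}_0$ carrying the variable $t$), which yields its Fréchet differentiability with respect to $\ell^{p'}(\mathbb{N}_0)$ and continuity of the derivative; applying the remainder estimate from that lemma termwise and dominating via $|c_\alpha|\le C_\alpha^{*}$ transfers both conclusions to $u$, so $Du$ exists and is continuous in a neighborhood of $(0,(0)_{i\in\mathbb{N}})$ in $\mathcal{T}^{p'}$.

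The hard part will be the collapse-and-transfer step: verifying that the infinitely many spatial variables and the infinitely many derivative arguments indexed by $\mathbb{I}_m$ can be consistently funneled through the single weight vector $\textbf{r}\in\ell^p$ so that the majorant problem genuinely reduces to a two-variable classical one, all while keeping the domination $|c_\alpha|\le C_\alpha^{*}$ and the $\ell^p$--$\ell^{p'}$ duality in force uniformly. This is precisely where the normalization $\sum_j s_j^p=1$ from Lemma~\ref{absolutely convergent lemma} and the dual pairing underlying Lemma~\ref{Frechet lemma} carry out the work that has no finite-dimensional analogue.
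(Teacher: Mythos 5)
Your proposal is correct and follows essentially the same route as the paper's proof: uniqueness via the forced coefficient recursion, the majorant method with Lemma \ref{absolutely convergent lemma} supplying the weights, collapse of the spatial variables through the single weighted sum $\sum_i r_i x_i$ so that the majorant problem reduces to a two-variable Cauchy problem solved by the classical finite-dimensional Cauchy--Kowalevski theorem, and finally Fr\'echet differentiability of the majorant $U(t,\sum_i r_i x_i)$ via Lemma \ref{Frechet lemma}, transferred to $u$ by domination. The only differences are cosmetic (you apply Lemma \ref{absolutely convergent lemma} separately to $F$ and the $\Phi_k$ and rescale afterwards, while the paper applies it jointly to all variables so that $\sum_i r_i^p<1$ holds automatically), and the ``hard part'' you flag at the end is exactly the step the paper carries out in its inequalities \eqref{230822e1}--\eqref{230822e4} and the construction of the reduced equation \eqref{one22-8-12}.
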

\begin{proof} Suppose that $u$ is analytic at $(0,(0)_{i\in\mathbb{N}})$ with the monomial expansion
$$u(t,\textbf{x})=\sum_{\alpha\in \mathbb{N}_0^{(\mathbb{N})}} c_{\alpha}(t,\textbf{x})^{\alpha}.$$
Then the coefficients $c_{\alpha}$'s are uniquely determined by (\ref{one22-8-11}) which shows the uniqueness of $u$.

We only need to prove the existence of locally analytic solutions to the Cauchy problem (\ref{one22-8-11}). Replacing respectively $f(\cdot)$ and $\phi_k(\cdot)$  in (\ref{one22-8-11}) by their majority functions ($F(\cdot)$ and $\Phi_k(\cdot)$),  we obtain a new Cauchy problem, called the majority equation
\begin{equation}\label{majority equation 1}
\left\{
\begin{array}{ll}
 \displaystyle\partial^m_t u(t,\textbf{x})=F\big(t,\textbf{x},u, (\partial^{\beta}_{\textbf{x}}\partial^{j}_t  u)_{(\beta,j)\in \mathbb{I}_m} \big),\\[2mm]\displaystyle
 \partial^k_tu(t,\textbf{x})\mid_{t=0}=\Phi_k(\textbf{x}),\ \ k=0,1,\cdots,m-1.
 \end{array}
\right.
\end{equation}
Note that the analytic solution to the majority equation (\ref{majority equation 1}) is also a majority function (see Definition \ref{def of majority function}) of the analytic solution to the Cauchy problem (\ref{one22-8-11}). This is equivalent to say that if the analytic solution $u$ of the Cauchy problem (\ref{one22-8-11}) has the following monomial expansion at $(0,(0)_{i\in\mathbb{N}})$
\begin{eqnarray}
 u(t,\textbf{x})=\sum_{ \alpha\in \mathbb{N}_0^{(\mathbb{N})}} a_{\alpha }(t,\textbf{x} )^{\alpha}.\label{power series solution of Cauchy problem}
\end{eqnarray}
Meanwhile, if the analytic solution $U$ of the majority equation (\ref{majority equation 1}) has the following monomial expansion at $(0,(0)_{i\in\mathbb{N}})$
\begin{eqnarray*}
 U(t,\textbf{x})=\sum_{\alpha\in \mathbb{N}_0^{(\mathbb{N})}} A_{\alpha }(t,\textbf{x})^{\alpha}.
\end{eqnarray*}
Then we have $|a_{\alpha}|\leqslant A_{\alpha}$ for each $\alpha\in  \mathbb{N}_0^{(\mathbb{N})}.$
Therefore, the solvability of the majority equation (\ref{majority equation 1}) implies the solvability of the Cauchy problem (\ref{one22-8-11})(the convergence of monomial expansion (\ref{power series solution of Cauchy problem})). Motivated by this observation, we only need to prove the solvability of one majority equation of (\ref{one22-8-11}).

Firstly, we need to establish the majority equation of the Cauchy problem (\ref{one22-8-11}). To do this, by the assumption that the monomial expansions of $\Phi_k$, $0\leqslant k\leqslant m-1$ and $F$ (at $(0,(0)_{i\in\mathbb{N}},0,(0)_{(\beta,j)\in \mathbb{I}_m})$) are absolutely convergent at a point near $\infty$ in the topology $\mathcal {T}^p$. By Lemma \ref{absolutely convergent lemma}, there exists $\rho>0, 0<t_1<1,0<u_1<1, \textbf{r}=(r_i)_{i\in\mathbb{N}}\in  \mathbb{R}^{\mathbb{N}}$ with $0<r_i<1$ for each $i\in \mathbb{N}$ and $\textbf{w}_1=  (\frac{\rho}{ w_{\beta,j}^1 } )_{(\beta,j)\in \mathbb{I}_m}$
with $0<w_{\beta,j}^1<1$ for each $(\beta,j)\in \mathbb{I}_m$ such that
\begin{eqnarray*}
 t_1^{p}+u_1^p+\sum_{i=1}^{\infty}r_i^p+\sum_{(\beta,j)\in \mathbb{I}_m} (w_{\beta,j}^1)^p =1
\end{eqnarray*}
and the monomial expansions of $\Phi_k$, $0\leqslant k\leqslant m-1$ and $F$ at\\ $(0,(0)_{i\in\mathbb{N}},0,(0)_{(\beta,j)\in \mathbb{I}_m})$ are absolutely convergent at $(\frac{\rho}{t_1},(\frac{\rho}{r_i})_{i\in\mathbb{N}},\frac{\rho}{u_1}, \textbf{w}_1)$. Then for $(y_i)_{i\in\mathbb{N}}\in \mathbb{R}^{\mathbb{N}}$ with $||(y_i)_{i\in\mathbb{N}}||_{\ell^{p'}(\mathbb{N})}<\rho$ it holds that
\begin{eqnarray}\label{230822e1}
\sum_{\alpha\in \mathbb{N}_0^{(\mathbb{N})}}|C_{\alpha}|\cdot  \rho^{|\alpha|}
\leqslant \sum_{\alpha\in \mathbb{N}_0^{(\mathbb{N})}}|C_{\alpha}|\cdot \bigg|\bigg(\frac{\rho}{t_1},\bigg(\frac{\rho}{r_i}\bigg)_{i\in\mathbb{N}},\frac{\rho}{u_1}, \textbf{w}_1\bigg)^{\alpha}\bigg|
<\infty,
\end{eqnarray}
\begin{eqnarray}\label{230822e2}
&&\sum_{\alpha\in \mathbb{N}_0^{(\mathbb{N})}}|C_{\alpha}|\cdot  \bigg| \bigg(\frac{\rho}{t_1},(y_i )_{i\in\mathbb{N}},\frac{\rho}{u_1}, \textbf{w}_1 \bigg) ^{\alpha}\bigg|\\
&= &\sum_{\alpha\in \mathbb{N}_0^{(\mathbb{N})}}|C_{\alpha}|\cdot \bigg|\bigg(\frac{\rho}{t_1},\bigg(\frac{|r_iy_i|}{r_i}\bigg)_{i\in\mathbb{N}},\frac{\rho}{u_1}, \textbf{w}_1\bigg)^{\alpha}\bigg|\nonumber\\
&\leqslant& \sum_{\alpha\in \mathbb{N}_0^{(\mathbb{N})}}|C_{\alpha}|\cdot \bigg|\bigg(\frac{\rho}{t_1},\bigg(\frac{\sum_{j=1}^{\infty}|r_j y_j|}{r_i}\bigg)_{i\in\mathbb{N}},\frac{\rho}{u_1}, \textbf{w}_1\bigg)^{\alpha}\bigg|\nonumber\\
&\leqslant &\sum_{\alpha\in \mathbb{N}_0^{(\mathbb{N})}}|C_{\alpha}|\cdot \bigg|\bigg(\frac{\rho}{t_1},\bigg(\frac{\rho}{r_i}\bigg)_{i\in\mathbb{N}},\frac{\rho}{u_1}, \textbf{w}_1\bigg)^{\alpha}\bigg|\nonumber\\
&<&\infty.\nonumber
\end{eqnarray}
Meanwhile, for each $0\leqslant k\leqslant m-1$, it holds that
\begin{eqnarray}\label{230822e3}
\sum_{\alpha\in \mathbb{N}_0^{(\mathbb{N})}}|C_{\alpha,k}| \cdot  \rho^{|\alpha |}\leqslant \sum_{\alpha\in \mathbb{N}_0^{(\mathbb{N})}}|C_{\alpha,k}|\cdot\bigg(\frac{\rho}{r_i}\bigg)_{i\in\mathbb{N}}^{\alpha}
 <\infty
\end{eqnarray}
and
\begin{eqnarray}
&&\sum_{\alpha\in \mathbb{N}_0^{(\mathbb{N})}}|C_{\alpha,k}|\cdot  |(y_i )_{i\in\mathbb{N}}^{\alpha}|\nonumber\\
&= &\sum_{\alpha\in \mathbb{N}_0^{(\mathbb{N})}}|C_{\alpha,k}|\cdot \bigg(\frac{|r_iy_i|}{r_i}\bigg)_{i\in\mathbb{N}}^{\alpha}\label{230822e4}\\
&\leqslant &\sum_{\alpha\in \mathbb{N}_0^{(\mathbb{N})}}|C_{\alpha,k}|\cdot \bigg(\frac{\sum_{j=1}^{\infty}|r_j y_j|}{r_i}\bigg)_{i\in\mathbb{N}}^{\alpha}\nonumber\\
&\leqslant &\sum_{\alpha\in \mathbb{N}_0^{(\mathbb{N})}}|C_{\alpha,k}|\cdot \bigg(\frac{\rho}{r_i}\bigg)_{i\in\mathbb{N}}^{\alpha}
<\infty.\nonumber
\end{eqnarray}
Then we get a majority equation
\begin{equation}\label{majority equation}
\left\{
\begin{array}{ll}
 \displaystyle \partial^m_t u(t,\textbf{x}) =F\bigg(t,\bigg(\frac{\sum_{j=1}^{\infty}r_j x_j}{r_i}\bigg)_{i\in\mathbb{N}},u-u_2, \left(\partial^{\beta}_\textbf{x}\partial^{j}_t  u-w_{\beta,j}^2\right)_{(\beta,j)\in \mathbb{I}_m} \bigg),\\[2mm]\displaystyle
\partial^k_tu\mid_{t=0}=\Phi_k\bigg(\bigg(\frac{\sum_{j=1}^{\infty}r_j x_j}{r_i}\bigg)_{i\in\mathbb{N}}\bigg),\,k=0,1,\cdots,m-1,
 \end{array}
\right.
\end{equation}
where $\textbf{x}=(x_i)_{i\in\mathbb{N}}\in\mathbb{R}^{\mathbb{N}},$
and $u_2,w_{\beta,j}^2,(\beta,j)\in \mathbb{I}_m$ are determined by the initial condition in (\ref{majority equation}). Precisely,
\begin{eqnarray*}
u_2 \triangleq \Phi_0((0)_{i\in\mathbb{N}}),\quad w_{\beta,j}^2\triangleq\partial^{\beta}_\textbf{x}\left(\Phi_j\bigg(\bigg(\frac{\sum_{k=1}^{\infty}r_k x_k}{r_i}\bigg)_{i\in\mathbb{N}}\bigg) \right)\Bigg|_{(x_i)_{i\in\mathbb{N}}=(0)_{i\in\mathbb{N}}},
\end{eqnarray*}
for any $(\beta,j)\in \mathbb{I}_m$. One can easily see that $\frac{w_{\beta,j}^2}{\textbf{r}^{\beta}}$ only depends on $|\beta|$ and $j$. Let $h_{|\beta|,j}^0\triangleq\frac{w_{\beta,j}^2}{\textbf{r}^{\beta}},(\beta,j)\in \mathbb{I}_m$. Now we consider solutions of the majority equation (\ref{majority equation}) of the following form
$$
u(t,(x_i)_{i\in\mathbb{N}})=U \left(t,\sum_{i=1}^{\infty}r_ix_i \right)=U(t,y),
$$
where $y=\sum\limits_{i=1}^{\infty}r_ix_i.$ If $U\bigg(t,\sum\limits_{i=1}^{\infty}r_ix_i \bigg)$ is a solution of (\ref{majority equation}), then it holds that
\begin{equation}\nonumber
\left\{
\begin{array}{ll}
 \displaystyle  \partial^m_tU\bigg(t,\sum_{j=1}^{\infty}r_j x_j \bigg) =F\Bigg(t,\bigg(\frac{\sum_{j=1}^{\infty}r_j x_j}{r_i}\bigg)_{i\in\mathbb{N}},U-u_2, \left(\textbf{r}^{\beta}\partial^{|\beta|}_y\partial^{j}_t  U-\textbf{r}^{\beta}h_{|\beta|,j}^0\right)_{(\beta,j)\in \mathbb{I}_m}\Bigg),\\[2mm]\displaystyle
 \partial^k_tU\mid_{t=0}=\Phi_k\Bigg(\bigg(\frac{\sum_{j=1}^{\infty}r_j x_j}{r_i}\bigg)_{i\in\mathbb{N}}\Bigg),\,k=0,1,\cdots,m-1.
 \end{array}
\right.
\end{equation}
Let
\begin{eqnarray*}
G\Big(t,y,u, (h_{i,j})_{(i,j)\in I_m}\Big)\triangleq F\Big(t,\Big(\frac{y}{r_i}\Big)_{i\in\mathbb{N}},u, \big(\textbf{r}^{\beta}h_{|\beta|,j}\big)_{(\beta,j)\in \mathbb{I}_m}\Big)
\end{eqnarray*}
and $\Psi_k(y)\triangleq \Phi_k((\frac{y}{r_i})_{i\in\mathbb{N}})\,$for each $0\leqslant k\leqslant m-1$, where $t,y,u\in\mathbb{R}$,\,$\textbf{r}=(r_i)_{i\in\mathbb{N}}$, $I_m\triangleq\{(i,j):i,j\in\mathbb{N}_0, 1\leqslant i+j\leqslant m,\,j<m\}$ and $h_{i,j}\in\mathbb{R}$ for each $(i,j)\in I_m$.
By \eqref{230822e1}-\eqref{230822e4}, it is easy to check that $G$ and $\Psi_k,\,k=0,1,\cdots,m-1$ are analytic functions of finite variables with positive convergent radium. Then the solvability of the following equation with initial values can imply the locally solvability of majority equation (\ref{majority equation}):
\begin{equation}\label{one22-8-12}
\left\{
\begin{array}{ll}
 \displaystyle \partial^m_t U(t,y) =G\big(t,y,U-u_2,(\partial^{i}_y\partial^{j}_t  U-h_{i,j}^0)_{_{(i,j)\in I_m}}\big),\\[2mm]\displaystyle
\partial^k_tU\mid_{t=0}=\Psi_k(y),\quad k=0,1,\cdots,m-1.
 \end{array}
\right.
\end{equation}
Note that $G$ and $\Psi_k$'s are analytic functions of finite variables with positive convergent radium, $u_2$ and $h_{i,j}^0,\,(i,j)\in I_m$, are determined by initial condition (\ref{one22-8-12}). Then by finite dimensional Cauchy-Kowalevski theorem, equation (\ref{one22-8-12}) has a locally analytic solution at $(0,0)$. From the above arguments we deduce that the solution $u$ of Theorem \ref{Infinite Dimensional Cauchy-Kowalevski Theorem} has a majority function
\begin{eqnarray*}
M(t,(x_i)_{i\in\mathbb{N}})\triangleq U\bigg(t,\sum_{i=1}^{\infty}r_ix_i \bigg)=\sum_{n,m\in \mathbb{N}_0}C_{n,m}t^m \bigg(\sum_{i=1}^{\infty}r_ix_i \bigg)^n,
\end{eqnarray*}
where $(t,(x_i)_{i\in\mathbb{N}})\in B_r^{p',\mathbb{N}_0}$, $C_{n,m}\geqslant 0,\forall\, n,m\in \mathbb{N}_0$, and there exists $r>0$ such that $$
\sum\limits_{n,m\in \mathbb{N}_0}C_{n,m}r^{m+n} <\infty.
$$
By Lemma \ref{Frechet lemma}, $M$ is Fr\'{e}chet differentiable respect to $\ell^{p'}(\mathbb{N}_0)$ in $B_r^{p',\mathbb{N}_0}$ and the corresponding Fr\'{e}chet derivative $DM$ is continuous. Suppose that $u$ is the analytic solution of the Cauchy problem (\ref{one22-8-11}). Then $M$ is a majority function of $u$, $u$ is also Fr\'{e}chet differentiable respect to $\ell^{p'}(\mathbb{N}_0)$ in $B_r^{p',\mathbb{N}_0}$ and the corresponding Fr\'{e}chet derivative $Du$ is also continuous. This finishes the proof Theorem \ref{Infinite Dimensional Cauchy-Kowalevski Theorem}.
\end{proof}

Now, let us consider the Cauchy problem of the following first order linear homogenous partial differential equation  of infinitely many variables:
\begin{equation}\label{one order-zx}
\left\{
\begin{array}{ll}
 \displaystyle\partial_t u(t,\textbf{x}) -\sum_{i=1}^{\infty}A_{i}(t,\textbf{x}) \partial_{ x_i} u(t,\textbf{x}) -A_0(t,\textbf{x}) u(t,\textbf{x})= 0,\\[2mm]
 \displaystyle u(t,\textbf{x})\mid_{t=0}=\Phi(\textbf{x}).
 \end{array}
\right.
 \end{equation}
Here $t\in \mathbb{R}, \textbf{x}=(x_i)_{i\in\mathbb{N}}\in \mathbb{R}^{\mathbb{N}}$, the unknown $u$ is a real-valued function depending on $t$ and $\textbf{x}$, and the data $A_i(t,\textbf{x})$'s, and $\Phi$ are analytic at $(0)_{i\in\mathbb{N}_0}$. Let
\begin{equation}\label{def of G}
G(t,\textbf{x}, \textbf{w})\triangleq\sum_{i=0}^{\infty}A_i(t,\textbf{x})w_i,\quad \textbf{w}= (w_{ j})_{j\in\mathbb{N}_0}\in \mathbb{R}^{\mathbb{N}_0}.
 \end{equation}
By modifying the proof of Theorem \ref{Infinite Dimensional Cauchy-Kowalevski Theorem}, we can show the following result.
\begin{corollary}\label{corollary 20}
Suppose $1\leqslant p<\infty$ and $p'$ is the usual H\"older conjugate of $p$, the monomial expansion of $G $ at $(0,(0)_{i\in\mathbb{N}},(0)_{i\in\mathbb{N}_0})$ is absolutely convergent at a point near $\infty$ in the topology $\mathcal {T}^{p}$ and $D_\Phi^{(0)_{i\in\mathbb{N}}}=\mathbb{R}^{\mathbb{N}}$. Then there exists $r\in (0,\infty)$, independent of $\Phi$, such that the equation (\ref{one order-zx}) admits locally an analytic solution $u$ at $(0)_{i\in\mathbb{N}_0}$ and $ B_r^{p',\mathbb{N}_0}\subset D_u^{(0)_{i\in\mathbb{N}_0}}$. Furthermore, $u$ is Fr\'{e}chet differentiable with respect to $\ell^{p'}(\mathbb{N}_0)$ in $B_r^{p',\mathbb{N}_0}$, and the corresponding Fr\'{e}chet derivative $Du$ is continuous.
\end{corollary}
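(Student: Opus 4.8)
The plan is to reduce Corollary~\ref{corollary 20} to the situation of Theorem~\ref{Infinite Dimensional Cauchy-Kowalevski Theorem}, while tracking carefully the dependence of the radius of convergence on the initial datum $\Phi$, and exploiting the linear homogeneous structure of (\ref{one order-zx}) to decouple this radius from $\Phi$. First I would observe that here $m=1$, so the index set $\mathbb{I}_m$ consists precisely of the multi-indices $(\textbf{e}_i,0)$ with $i\in\mathbb{N}$; in the notation of Theorem~\ref{Infinite Dimensional Cauchy-Kowalevski Theorem} the right-hand side is $f=G$ with the slot $w_0$ standing for $u$ and $w_i$ for $\partial_{x_i}u$, $i\geqslant 1$, where $G$ is the function (\ref{def of G}). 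The crucial structural fact is that $G$ is \emph{linear and homogeneous} in $\textbf{w}$, so that its majorant (and hence the scalar $\rho>0$ and the reduction sequence $\textbf{r}=(r_i)_{i\in\mathbb{N}}$ produced by Lemma~\ref{absolutely convergent lemma} applied to $G$) depend only on the coefficients $A_i$, and in no way on $\Phi$.

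Next I would carry out the dimension reduction exactly as in the proof of Theorem~\ref{Infinite Dimensional Cauchy-Kowalevski Theorem}: seek the majorant solution in the form $u(t,\textbf{x})=U\bigl(t,\sum_{i=1}^{\infty}r_i x_i\bigr)=U(t,y)$, reducing the infinite-dimensional problem to the two-variable linear Cauchy problem (\ref{one22-8-12}) with analytic coefficients of convergence radius $\rho$. Since the reduction parameters $r_i$ were fixed using $G$ alone, the only place where $\Phi$ enters the reduced problem is through the reduced initial datum $\Psi(y)=\Phi\bigl((y/r_i)_{i\in\mathbb{N}}\bigr)$. Here the hypothesis $D_\Phi^{(0)_{i\in\mathbb{N}}}=\mathbb{R}^{\mathbb{N}}$ is decisive: writing $\Phi(\textbf{x})=\sum_{\alpha\in\mathbb{N}_0^{(\mathbb{N})}}c_\alpha\textbf{x}^\alpha$, absolute convergence at every point of $\mathbb{R}^{\mathbb{N}}$ gives, for each $R>0$, that $\sum_{\alpha}|c_\alpha|(R/r_i)_{i\in\mathbb{N}}^{\alpha}<\infty$, so that $\Psi(y)=\sum_{n=0}^{\infty}\bigl(\sum_{|\alpha|=n}c_\alpha/\textbf{r}^{\alpha}\bigr)y^n$ is an \emph{entire} function of the single real variable $y$.

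I would then solve the reduced linear two-variable Cauchy problem (\ref{one22-8-12}) by the classical finite-dimensional majorant method. The point to prove --- and this is where the main work lies --- is that for a linear first order equation with analytic coefficients of radius $\rho$ and \emph{entire} initial data, the Cauchy--Kowalevski majorant can be chosen so that the resulting solution is analytic on a $t$-disc whose radius $r$ depends only on $\rho$ and the coefficient majorant bound, and not on $\Psi$. Concretely, I would dominate the coefficients by a standard Cauchy majorant of the form $\tfrac{M\rho}{(\rho-t)(\rho-y)}$ and note that, because $\Psi$ is entire, the data contributes no finite radius constraint in $y$; each $t$-differentiation in the coefficient recursion costs one power of $y$-radius, which is unlimited, so the $t$-radius is governed purely by the coefficient data. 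Solving the resulting scalar majorant problem explicitly yields a majorant of $u$ of the shape $M(t,\textbf{x})=\sum_{n,m\in\mathbb{N}_0}C_{n,m}t^m\bigl(\sum_{i=1}^{\infty}r_i x_i\bigr)^n$ with $\sum_{n,m}C_{n,m}r^{n+m}<\infty$ for a radius $r$ independent of $\Phi$, which forces $B_r^{p',\mathbb{N}_0}\subset D_u^{(0)_{i\in\mathbb{N}_0}}$.

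Finally, the Fr\'echet regularity is immediate from Lemma~\ref{Frechet lemma}: the majorant $M$ of $u$ is exactly of the admissible form there, with $\sum_{i=1}^{\infty}r_i^{p}<1$ (inherited from Lemma~\ref{absolutely convergent lemma}) and $\sum_{n,m}C_{n,m}r^{n+m}<\infty$, so $M$ --- and therefore $u$, being dominated by $M$ --- is Fr\'echet differentiable with respect to $\ell^{p'}(\mathbb{N}_0)$ on $B_r^{p',\mathbb{N}_0}$ with continuous derivative $Du$. The main obstacle throughout is the uniformity of $r$ in $\Phi$; every other step is a specialization of Theorem~\ref{Infinite Dimensional Cauchy-Kowalevski Theorem}, and the uniformity rests entirely on combining the $\Phi$-independence of the coefficient majorant (from homogeneity) with the entireness of the reduced datum $\Psi$ (from $D_\Phi^{(0)_{i\in\mathbb{N}}}=\mathbb{R}^{\mathbb{N}}$).
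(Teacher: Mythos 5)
Your proposal is correct and is essentially the proof the paper intends: the paper gives no separate argument for Corollary \ref{corollary 20} beyond stating that it follows by modifying the proof of Theorem \ref{Infinite Dimensional Cauchy-Kowalevski Theorem}, and your modification --- choosing the reduction data $\rho$, $(r_i)_{i\in\mathbb{N}}$ from $G$ alone (its linearity in $\textbf{w}$ makes the majorant $\Phi$-independent), observing that $D_\Phi^{(0)_{i\in\mathbb{N}}}=\mathbb{R}^{\mathbb{N}}$ makes the reduced datum $\Psi$ entire so the two-variable linear problem has a convergence bidisc governed by the coefficients only, and transferring Fr\'echet regularity through Lemma \ref{Frechet lemma} --- is exactly that modification. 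One phrase should be tightened: ``each $t$-differentiation costs one power of $y$-radius, which is unlimited'' is not literally right, since the usable $y$-radius is capped at $\rho$ by the coefficients; the clean version is to dominate the entire majorant datum $\hat\Psi$ on the disc of radius $2\rho$ by $\hat\Psi(2\rho)\cdot\frac{2\rho}{2\rho-y}$ and let linearity factor the $\Phi$-dependent constant $\hat\Psi(2\rho)$ out of the majorant solution, so that the resulting bidisc of convergence depends only on $M$ and $\rho$.
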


\subsection{The Holmgren Type Theorem }
In this section, we will establish Holmgren type theorem\index{Holmgren type theorem} for a class of ``$C^1$" functions as following.

Denote by $\Xi$ the set of all real-valued functions $f$ satisfying the following conditions:
\begin{itemize}
\item[$(1)$] there exists an open neighborhood $O$ of $(0)_{i\in\mathbb{N}_0}$ in $\ell^2(\mathbb{N}_0)$;
\item[$(2)$] $f$ is a real-valued function defined on $O;$
\item[$(3)$] $\partial_{t} f$, $\partial_{x_i} f$ exists and $\partial_{t} f$, $\partial_{x_i} f$ are a continuous function on $O$ with $\ell^2(\mathbb{N}_0)$ topology for each $i\in\mathbb{N};$
\item[$(4)$] there exists $0<M<\infty$ such that $|f_{x_i}(\textbf{x})|\leqslant M$ for any $\textbf{x}\in O$ and $i\in\mathbb{N}.$
\end{itemize}


\begin{theorem}\label{Infinite Dimensional Holmgren Type Theorem}
Suppose the monomial expansion of $G$ (defined by (\ref{def of G})) at $(0,(0)_{i\in\mathbb{N}},(0)_{i\in\mathbb{N}_0})$ is absolutely convergent at a point near $\infty$ in the topology $\mathcal {T}^{\frac{1}{2}}$ and $\Phi\in \Xi$. Then the solution to (\ref{one order-zx}) is locally unique in the class $\Xi$.
\end{theorem}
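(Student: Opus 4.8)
The plan is to adapt the classical Holmgren duality argument to the present Gaussian framework, using the infinite-dimensional Cauchy-Kowalevski theorem (Corollary~\ref{corollary 20}) to manufacture adjoint test solutions and the infinite-dimensional Gauss-Green theorem (Corollaries~\ref{20241127cor1} and \ref{20241127cor2}) to integrate by parts. First I would reduce to a homogeneous statement: if $u_1,u_2\in\Xi$ both solve \eqref{one order-zx} with the same initial datum $\Phi$, then by linearity $w\triangleq u_1-u_2$ satisfies $Lw=0$ with $L\triangleq\partial_t-\sum_{i=1}^{\infty}A_i\partial_{x_i}-A_0$ and $w|_{t=0}=0$, and it suffices to prove $w\equiv 0$ on a neighborhood of $(0)_{i\in\mathbb{N}_0}$ in $\ell^2(\mathbb{N}_0)$ (with $t=x_0$).

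Next I would set up duality with respect to the Gaussian measure $P$. Writing $\delta_i\triangleq D_{x_i}-\frac{x_i}{a_i^2}$ (with $\delta_0=\partial_t-\frac{t}{a_0^2}$), Corollaries~\ref{20241127cor1}--\ref{20241127cor2} give $\int_V(D_{x_i}f)\,dP=\int_V\frac{x_i f}{a_i^2}\,dP+(\text{surface terms})$, whence the formal $P$-adjoint of $L$ is the first-order operator
\[
L^{*}v=-\delta_0 v+\sum_{i=1}^{\infty}\delta_i(A_i v)-A_0 v=-\partial_t v+\sum_{i=1}^{\infty}A_i\partial_{x_i}v+\Big[\tfrac{t}{a_0^2}+\sum_{i=1}^{\infty}\big(\partial_{x_i}A_i-\tfrac{x_i}{a_i^2}A_i\big)-A_0\Big]v .
\]
For a function $g$ ranging over a dense class of $L^2(\ell^2(\mathbb{N}_0),P)$ (e.g. cylinder polynomials, which are dense by the density of $\mathscr{C}_c^{\infty}$), I would solve the inhomogeneous adjoint Cauchy problem $L^{*}v=g$ with $v=0$ on a $C^1$-hypersurface $S$ lying just above the initial hyperplane $\{t=0\}$, by casting it in the form of Theorem~\ref{Infinite Dimensional Cauchy-Kowalevski Theorem}/Corollary~\ref{corollary 20} (a first-order linear equation, with $g$ folded into the right-hand side). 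This is exactly where the $\mathcal{T}^{1/2}$ hypothesis is used: forming $L^{*}$ multiplies the $A_i$ by the \emph{growing} weights $x_i/a_i^2$, so the coefficient function $G^{*}$ of the adjoint is two ``powers of convergence'' worse than $G$; starting from convergence of $G$ at a point near $\infty$ in the finer topology $\mathcal{T}^{1/2}$ (finer than $\mathcal{T}^{1}$ by Proposition~\ref{toplogies inclusion proposition}) should guarantee that $G^{*}$ still converges in $\mathcal{T}^{2}$, so that Corollary~\ref{corollary 20} applies with a radius $r$ \emph{independent of} $g$ and yields $v$ analytic and Fréchet differentiable with respect to $\ell^{2}(\mathbb{N}_0)$.

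With $v$ in hand, I would apply the Gauss-Green theorem for the lens region $V$ between $\{t=0\}$ and $S$ (Corollary~\ref{20241127cor2}) termwise to $\int_V (Lw)\,v\,dP$: each integration by parts converts $\partial_{x_i}$ into $\delta_i$ up to surface integrals over $S_1\subset\{t=0\}$ and $S_2\subset S$ whose integrands are proportional to the product $wv$. The boundary contribution over $\{t=0\}$ vanishes because $w=0$ there, and the contribution over $S$ vanishes because $v=0$ there; since $Lw=0$, this leaves $\int_V w\,g\,dP=\int_V w\,(L^{*}v)\,dP=0$ for every $g$ in the dense class, so $w=0$ $P$-almost everywhere on $V$. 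As $w\in\Xi$ is continuous and $P$ is strictly positive (Proposition~\ref{230410prop1}), it follows that $w\equiv 0$ on $V$. Finally, because the Cauchy-Kowalevski radius $r$ in Corollary~\ref{corollary 20} is independent of the data, I would iterate, advancing the surface $S$ by a fixed $\ell^{2}$-amount at each step and using $w=0$ on the previous lens as the new zero initial condition, thereby covering a full neighborhood of the origin in finitely many steps.

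The main obstacle will be the analytic bookkeeping and integrability verifications, not the overall architecture. Concretely, I expect the delicate points to be (i) showing rigorously that $G^{*}$, after the multiplication by $x_i/a_i^2$ and the appearance of the series $\sum_i\frac{x_i}{a_i^2}A_i$, still converges at a point near $\infty$ in $\mathcal{T}^{2}$ (this is the precise reason $\mathcal{T}^{1/2}$ rather than $\mathcal{T}^{1}$ is assumed), and (ii) checking the hypotheses of Corollary~\ref{20241127cor2} for $f=wv$ — boundedness on the relevant domain, and $P$-integrability of $D_{x_i}(wv)$, of $\frac{x_i}{a_i^2}wv$, and of the surface integrands, together with the convergence of the series $\sum_i\int_V(\partial_{x_i}w)(A_i v)\,dP$. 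The latter I would control using the uniform bounds $|w_{x_i}|\le M$ from the definition of $\Xi$, the local $\ell^{2}$-Lipschitz bounds coming from the $\ell^{2}$-Fréchet differentiability of $v$, and the Fernique-type Gaussian integrability of Proposition~\ref{230522prop1} to absorb the weights $x_i/a_i^2$.
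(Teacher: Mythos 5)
Your overall architecture (Holmgren duality with a Gaussian adjoint $L^{*}$, Cauchy--Kowalevski for the adjoint, Gauss--Green over a lens-shaped region, density of cylinder polynomials, strict positivity of $P$) is recognizably the same as the paper's, but it has a genuine gap at its central step: the construction of the lens and the solvability of the adjoint problem on it. Corollary \ref{corollary 20} produces solutions only with initial data on a \emph{flat} hyperplane $\{t=\mathrm{const}\}$, and only on a ball $B_r^{p',\mathbb{N}_0}$ of fixed radius. If your surface $S$ is curved --- as it must be for the region $V$ between $\{t=0\}$ and $S$ to be bounded, hence contained both in the ball where $v$ exists and in the neighborhood $O$ where $w\in\Xi$ is defined --- then you cannot invoke Corollary \ref{corollary 20} to solve $L^{*}v=g$ with $v=0$ on $S$; flattening $S$ first is unavoidable, and that is a change of variables whose effect on the coefficients you never analyze. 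If instead you take $S$ flat, the slab between two parallel hyperplanes is unbounded in $\ell^2(\mathbb{N}_0)$, so neither $v$ nor $w$ is defined on all of it and the Gauss--Green identity collapses. The paper resolves exactly this by the quadratic change of variables $t'=t+\sum_{i=1}^{\infty}a_i^4x_i^2$, $x_i'=x_i$, with the $a_i$ manufactured from the $\mathcal{T}^{1/2}$ convergence data via Lemma \ref{absolutely convergent lemma}: it bends the initial plane into a paraboloid on which $\widetilde{U}=0$, makes the lens $V_\lambda$ between that paraboloid and the flat plane $\{t'=\lambda\}$ shrink into a small ball as $\lambda\to 0$, and lets the adjoint problem be posed with data on the flat top, where Corollary \ref{corollary 20} applies with $p=2$. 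This transformation is also the main consumer of the hypothesis: the composed coefficients $A_i\bigl(t'-\sum_j a_j^4(x_j')^2,\mathbf{x}'\bigr)$ and the division by $1-2\sum_i a_i^4x_i'A_i$ must be re-expanded, and proving that the resulting adjoint symbol $G'$ still converges near $\infty$ in $\mathcal{T}^{2}$ is the bulk of the paper's proof (estimates (\ref{equation 1133})--(\ref{convergence 1})). Your accounting, which attributes $\mathcal{T}^{1/2}$ solely to the Gaussian weights $x_i/a_i^2$ in $L^{*}$, misses this dominant loss.

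Two smaller deviations would also need repair. First, your bulk duality pairs $w$ against arbitrary $g=L^{*}v$ and therefore needs an \emph{inhomogeneous} Cauchy--Kowalevski theorem, which Corollary \ref{corollary 20} (stated for the homogeneous equation) does not provide. The paper avoids this by boundary duality: it solves the homogeneous adjoint equation $G_2[W]=0$ with monomial data $(x_1')^{k_1}\cdots(x_n')^{k_n}$ on the flat top (data with full domain of convergence, as Corollary \ref{corollary 20} requires), and Gauss--Green then yields $\int_{S_{1,\lambda}}(x_1')^{k_1}\cdots(x_n')^{k_n}\,\widetilde{U}\,\mathrm{d}P_1^{\widehat{0}}=0$ for all cylinder monomials, hence $\widetilde{U}=0$ on the flat surface, hence on $V_\lambda$. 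Second, your closing iteration step (advancing $S$ by a fixed amount to cover a full neighborhood) is unnecessary for the statement, which is purely local: the paper concludes by treating $t>0$ and $t<0$ symmetrically and invoking continuity of $U$ at $t=0$, and your iteration would anyway require re-verifying the hypotheses of Corollary \ref{corollary 20} at non-origin base points, where the coefficients' monomial expansions are not assumed to converge.
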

\begin{proof}
We only need to prove that if $U\in \Xi$ and $U$ satisfies
\begin{eqnarray*}
 \partial_t U(t,\textbf{x}) -\sum_{i=1}^{\infty}A_{i}(t,\textbf{x}) \partial_{ x_i} U(t,\textbf{x}) -A_0(t,\textbf{x}) U(t,\textbf{x})= 0,\,\,U\mid_{t=0}=0,
\end{eqnarray*}
then $U\equiv0$ locally in the class $\Xi$.

The proof will be divided into three steps. Firstly, we will take transformation of variables to simplify the equation. Secondly, we will prove that  $\widetilde{U}\equiv 0$ on $H_{\lambda}$ for sufficiently small $\lambda$(Recall Example 2.1 for the notation $H_{\lambda}$).  Thirdly, we will see that $U\equiv0$ on a neighborhood of $(0)_{i\in\mathbb{N}_0}$ in $\ell^2(\mathbb{N}_0)$.

We come to take transformation of variables to simplify the equation. To do this, note that for each $i\in \mathbb{N}_0$, $A_{i}$ has a monomial expansion at $(0)_{i\in\mathbb{N}_0}$ as following $A_{i}(t,\textbf{x})=\sum_{\alpha\in \mathbb{N}_0^{(\mathbb{N})}}A_{i,\alpha}(t,\textbf{x})^{\alpha}$.
By assumption and Lemma \ref{absolutely convergent lemma}, there exists $\rho_0>0$, $0<s_j,t_i<1$ for each $i,j\in\mathbb{N}_0$
such that $\sum_{j=0}^{\infty} s_j^{\frac{1}{2}}+\sum_{i=0}^{\infty}t_i^{\frac{1}{2}}  =1$
and
\begin{eqnarray}\label{equation 1133}
\sum_{i=0}^{\infty}\Bigg(\sum_{\alpha\in \mathbb{N}_0^{(\mathbb{N})}}|A_{i,\alpha}|\bigg(\frac{\rho_0}{s_j}\bigg)^{\alpha}_{j\in\mathbb{N}_0}\Bigg)\frac{\rho_0}{t_i}<\frac{1}{2}.
\end{eqnarray}
Let $r_0\triangleq \min\left\{1,\frac{\rho_0}{1+a_0}\right\}$ and $a_{i }\triangleq\max\Big\{t_i^{\frac{1}{4}},s_i^{\frac{1}{4}}\Big\}$ for each $i\in \mathbb{N}_0$.
Then we have
\begin{eqnarray}\label{230822e5}
\sum_{i=0}^{\infty}a_{i}^2  < \sum_{j=0}^{\infty} s_j^{\frac{1}{2}}+\sum_{i=0}^{\infty}t_i^{\frac{1}{2}}  =1
\end{eqnarray}
and by (\ref{equation 1133}) we have
\begin{eqnarray}\label{230822e7}
\sum_{i=0}^{\infty}\Bigg(\sum_{\alpha\in \mathbb{N}_0^{(\mathbb{N})}}|A_{i,\alpha}|\bigg(\frac{\rho_0}{a_j^4}\bigg)_{j\in\mathbb{N}_0}^{\alpha}\Bigg)\frac{\rho_0}{a_i^4}<\frac{1}{2}.
\end{eqnarray}
Choose a neighborhood $O_0$ of $(0)_{i\in \mathbb{N}_0}$ in $\ell^2(\mathbb{N}_0)$ such that $O_0 \subset B_{r_0}^{2,\mathbb{N}_0}$, for $(t,(x_i)_{i\in\mathbb{N}})\in O_0$, let
\begin{eqnarray*}
t^{'}&\triangleq& t+\sum_{i=1}^{\infty} a_i^4 x_i^2,\quad
x_i^{'}\triangleq x_i,\,\,\forall\,i\in\mathbb{N},\\
\widetilde{U}(t^{'},(x_i^{'})_{i\in\mathbb{N}})&\triangleq& U\bigg(t^{'}-\sum_{i=1}^{\infty}  a_i^4(x_i^{'})^2,(x_i^{'})_{i\in\mathbb{N}}\bigg).
\end{eqnarray*}
Secondly, we will prove that $\widetilde{U}\equiv 0$ on $V_{\lambda}$(Recall Example 2.1 for the notation $V_{\lambda}$) for sufficiently small $\lambda$. For this purpose, note that $\widetilde{U}\in \Xi$ and it holds that
\begin{eqnarray}\label{transformation 1}
&& \left(1-2\sum_{i=1}^{\infty}a_i^4x_i^{'}A_{i}\bigg(t^{'}-\sum_{i=1}^{\infty}  a_i^4(x_i^{'})^2,\textbf{x}^{'}\bigg)\right)\partial_{t'} \widetilde{U}\\
 &=&\sum_{i=1}^{\infty}A_{i}\bigg(t^{'}-\sum_{i=1}^{\infty}  a_i^4(x_i^{'})^2,\textbf{x}^{'}\bigg)\partial_{x_i^{'}} \widetilde{U}+A_0\bigg(t^{'}-\sum_{i=1}^{\infty}  a_i^4(x_i^{'})^2,\textbf{x}^{'}\bigg)\widetilde{U},\nonumber
\end{eqnarray}
where $\textbf{x}^{'}=(x_i^{'})_{i\in\mathbb{N}}$ and $(t',\textbf{x}^{'})\in\ell^2(\mathbb{N}_0)$. For $(t^{'},\textbf{x}^{'})\in B_{r_0}^{2}(\mathbb{N}_0)$, it holds that
\begin{eqnarray*}
\bigg|2\sum_{i=1}^{\infty}a_i^4x_i^{'}A_{i}\bigg(t^{'}-\sum_{i=1}^{\infty}  a_i^4(x_i^{'})^2,\textbf{x}^{'}\bigg)\bigg|&\leqslant&
 2\sum_{i=1}^{\infty}r_0\bigg|A_{i}\bigg(t^{'}-\sum_{i=1}^{\infty} a_i^4(x_i^{'})^2,\textbf{x}^{'}\bigg)\bigg|\\
 &\leqslant&2\sum_{i=0}^{\infty}\Bigg(\sum_{\alpha\in \mathbb{N}_0^{(\mathbb{N})}}|A_{i,\alpha}|\bigg(\frac{\rho_0}{s_j}\bigg)_{j\in\mathbb{N}_0}^{\alpha}\Bigg)\frac{\rho_0}{t_i}\\
 &<&1.
\end{eqnarray*}
Thus (\ref{transformation 1}) can be written as
\begin{eqnarray}\label{230822e8}
\partial_{t'} \widetilde{U}
 =\sum_{i=1}^{\infty}\widetilde{A}_{i}(t^{'},\textbf{x}^{'})\partial_{x_i^{'}} \widetilde{U}+\widetilde{A}_0(t^{'},\textbf{x}^{'})\widetilde{U},
\end{eqnarray}
where
\begin{eqnarray*}
 \widetilde{A}_{i}(t^{'},\textbf{x}^{'})\triangleq\frac{A_{i}\left(t^{'}-\sum\limits_{i=1}^{\infty}  a_i^4(x_i^{'})^2,\textbf{x}^{'}\right)}{1-2\sum\limits_{i=1}^{\infty} a_i^4x_i^{'}A_{i}\left(t^{'}-\sum\limits_{i=1}^{\infty}   a_i^4(x_i^{'})^2,\textbf{x}^{'}\right)},\quad\forall\, i\in\mathbb{N}_0.
\end{eqnarray*}
For each $i\in \mathbb{N}_0$, writing $ \widetilde{A}_{i}(t^{'},\textbf{x}^{'})$ as their monomial expansion at $(0)_{i\in\mathbb{N}_0}$:\;
$$
\widetilde{A}_{i}(t^{'},\textbf{x}^{'})=\sum_{\alpha\in \mathbb{N}_0^{(\mathbb{N})}}\widetilde{A_{i,\alpha}}(t^{'},\textbf{x}^{'})^{ \alpha}.
$$
It is easily seen that for each $i\in\mathbb{N}_0$,
\begin{eqnarray}\label{convergence 1}
&&\sum_{i=0}^{\infty}\Bigg(\sum_{\alpha\in \mathbb{N}_0^{(\mathbb{N})}}|\widetilde{A_{i,\alpha}}|\bigg(\frac{r_0}{a_j }\bigg)_{j\in\mathbb{N}_0}^{\alpha}\Bigg)\frac{r_0}{a_i^3}\\
&\leqslant&\frac{\sum\limits_{i=0}^{\infty}\left(\sum\limits_{\alpha\in \mathbb{N}_0^{(\mathbb{N})}}|A_{i,\alpha}|\left(\frac{r_0}{a_0}+\left(\sum\limits_{i=1}^{\infty}   a_i^2\right)r_0^2,\left(\frac{r_0}{a_j  }\right)_{j\in\mathbb{N}}\right)^{\alpha}\right)\frac{r_0}{a_i^3 }}{1-2\sum\limits_{i=0}^{\infty}\left(\sum\limits_{\alpha\in \mathbb{N}_0^{(\mathbb{N})}}|A_{i,\alpha}|\left(\frac{r_0}{a_0}+\left(\sum\limits_{i=1}^{\infty}   a_i^2\right)r_0^2,\left(\frac{r_0}{a_j  }\right)_{j\in\mathbb{N}}\right)^{\alpha}\right)a_i^2r_0}\nonumber\\
&\leqslant&\frac{\sum\limits_{i=0}^{\infty}\left(\sum\limits_{\alpha\in \mathbb{N}_0^{(\mathbb{N})}}|A_{i,\alpha}|(\frac{\rho_0}{a_j  })_{j\in\mathbb{N}_0}^{\alpha}\right)\frac{\rho_0}{a_i^3 }}{1-2\sum\limits_{i=0}^{\infty}\left(\sum\limits_{\alpha\in \mathbb{N}_0^{(\mathbb{N})}}|A_{i,\alpha}|(\frac{\rho_0}{a_j  })_{j\in\mathbb{N}_0}^{\alpha}\right) \rho_0}\nonumber\\
&\leqslant&\frac{\sum\limits_{i=0}^{\infty}\left(\sum\limits_{\alpha\in \mathbb{N}_0^{(\mathbb{N})}}|A_{i,\alpha}|(\frac{\rho_0}{a_j^4  })_{j\in\mathbb{N}_0}^{\alpha}\right)\frac{\rho_0}{a_i^4 }}{1-2\sum\limits_{i=0}^{\infty}\left(\sum\limits_{\alpha\in \mathbb{N}_0^{(\mathbb{N})}}|A_{i,\alpha}|(\frac{\rho_0}{a_j^4  })_{j\in\mathbb{N}_0}^{\alpha}\right) \frac{\rho_0}{a_i^4}}<\infty,\nonumber
\end{eqnarray}
where the second inequality follows from the facts that \eqref{230822e5} and $r_0=\min\left\{1, \frac{\rho_0}{1+a_0}\right\}$ and the last inequality follows from \eqref{230822e7}. Let
\begin{eqnarray*}
A_0'(t^{'},\textbf{x}^{'})&\triangleq &-\widetilde{A_0}(t',\textbf{x}^{'})+\sum\limits_{i=1}^{\infty}\bigg(\partial_{x_i^{'}}(\widetilde{A}_{i}(t^{'},\textbf{x}^{'}))-\frac{x_i'}{a_i^2}\cdot\widetilde{A}_{i}(t^{'},\textbf{x}^{'})\bigg),\\
G' (t^{'},\textbf{x}', \textbf{w} )&\triangleq &\sum_{i=1}^{\infty}\widetilde{A}_{i}(t^{'},\textbf{x}')w_i+A_0'(t^{'},\textbf{x}')w_0,
\end{eqnarray*}
where $t'\in\mathbb{R},\,\textbf{w}= (w_{ j})_{j\in\mathbb{N}_0}\in \mathbb{R}^{\mathbb{N}_0}$ and $\textbf{x}^{'}= (x_i^{'})_{i\in\mathbb{N}}\in \mathbb{R}^{\mathbb{N}}.$ We will prove that the monomial expansion of $G' $ at $(0,(0)_{i\in\mathbb{N}},(0)_{i\in\mathbb{N}_0})$ is absolutely convergent at a point near $\infty$ in the topology $\mathcal {T}^{2}.$
By (\ref{convergence 1}), we have
\begin{eqnarray*}
\sum_{i=0}^{\infty}\Bigg(\sum_{\alpha\in \mathbb{N}_0^{(\mathbb{N})}}|\widetilde{A_{i,\alpha}}|\bigg(\frac{r_0}{a_j }\bigg)_{j\in\mathbb{N}_0}^{\alpha}\Bigg)\frac{r_0}{a_i^3}
&=&\sum_{n=0}^{\infty}\sum_{i=0}^{\infty}\sum_{\alpha\in \mathbb{N}_0^{(\mathbb{N})},|\alpha|=n} \frac{1}{a_i^3 }|\widetilde{A_{i,\alpha}}|\bigg(\frac{1}{a_j }\bigg)_{j\in\mathbb{N}_0}^{\alpha}r_0^{n+1}<\infty.
\end{eqnarray*}
Thus, for any $\rho\in (0,r_0)$, it holds that
\begin{eqnarray*}
\sum_{n=0}^{\infty}\sum_{i=0}^{\infty}\sum_{\alpha\in \mathbb{N}_0^{(\mathbb{N})},|\alpha|=n} \frac{1}{a_i^3 }|\widetilde{A_{i,\alpha}}|\bigg(\frac{1}{a_j }\bigg)_{j\in\mathbb{N}_0}^{\alpha}(n+1)\rho^{n}<\infty,
\end{eqnarray*}
and hence
\begin{eqnarray*}
&&\sum_{i=0}^{\infty}\bigg|\partial_{x_i^{'}}\bigg(\widetilde{A}_{i}\bigg(\bigg(\frac{\rho}{a_j}\bigg)_{j\in\mathbb{N}_0}\bigg)\bigg)\bigg|\\
&\leqslant&\sum_{i=0}^{\infty} \sum_{\alpha\in \mathbb{N}_0^{(\mathbb{N})}}\frac{a_i \alpha_i}{\rho }|\widetilde{A_{i,\alpha}}|\bigg(\frac{\rho }{a_j }\bigg)_{j\in\mathbb{N}_0}^{\alpha} \\
&=&\sum_{n=0}^{\infty}\sum_{i=0}^{\infty}\sum_{\alpha\in \mathbb{N}_0^{(\mathbb{N})},|\alpha|=n} a_i \alpha_i\rho^{n-1}|\widetilde{A_{i,\alpha}}|\bigg(\frac{1}{a_j}\bigg)_{j\in\mathbb{N}_0}^{\alpha}\\
&\leqslant&\frac{1}{\rho}\sum_{n=0}^{\infty}\sum_{i=0}^{\infty}\sum_{\alpha\in \mathbb{N}_0^{(\mathbb{N})},|\alpha|=n} \frac{1}{a_i^3 }(n+1)\rho^{n }|\widetilde{A_{i,\alpha}}|\bigg(\frac{1}{a_j }\bigg)_{j\in\mathbb{N}_0}^{\alpha}\\
&<&\infty,
\end{eqnarray*}
and
\begin{eqnarray*}
\sum_{i=0}^{\infty}\frac{\rho}{a_i^3}\bigg|\widetilde{A}_{i}\bigg(\bigg(\frac{\rho}{a_j}\bigg)_{j\in\mathbb{N}_0}\bigg)\bigg|
\leqslant\sum_{i=0}^{\infty}\Bigg(\sum_{\alpha\in \mathbb{N}_0^{(\mathbb{N})}}|\widetilde{A_{i,\alpha}}|\bigg(\frac{r_0}{a_j  }\bigg)_{j\in\mathbb{N}_0}^{\alpha}\Bigg)\frac{r_0 }{a_i^3 }
<\infty,
\end{eqnarray*}
which implies that for any $\rho\in (0,r_0)$, the monomial expansion of $G'$ at $(0,(0)_{j\in\mathbb{N}_0},(0)_{i\in\mathbb{N}_0})$  is absolutely convergent at $(\frac{\rho}{a_0},(\frac{\rho}{a_j})_{j\in\mathbb{N}_0},(\frac{\rho}{a_i})_{i\in\mathbb{N}_0})$
which is a point near $\infty$ in the topology $\mathcal {T}^{2}.$

Write
$$
G_1[\widetilde{U}]\triangleq\partial_{t'} \widetilde{U}
-\sum_{i=1}^{\infty}\widetilde{A}_{i}\partial_{x_i^{'}} \widetilde{U}-\widetilde{A}_0 \widetilde{U},
$$
and for $W\in \Xi$, let
\begin{eqnarray*}
G_2[W]&\triangleq&-\partial_{t'} W+\frac{t'}{a_0^2}W
+\sum_{i=1}^{\infty}\bigg(\partial_{x_i^{'}} (\widetilde{A}_{i} W)-\frac{x_i'}{a_i^2}\cdot\widetilde{A}_{i}\cdot W\bigg)-\widetilde{A}_0 W.
\end{eqnarray*}
Recall \eqref{230822e8}, we see that $G_1[\widetilde{U}]=0$. For sufficient small positive number $\lambda$,
one can see that $V_{\lambda}$ is an open subset of $\ell^2(\mathbb{N}_0)$ and $\widetilde{U}\mid_{I_{\lambda}\cup S_{2,\lambda}}=0$. Note that the equation $G_2[W]=0$ satisfies the assumptions in Corollary \ref{corollary 20} for $p=2$. Apply Corollary \ref{corollary 20} to any $n\in \mathbb{N}$ and $k_1,\cdots,k_n\in \mathbb{N}_0$, there exists analytic solution $W$ satisfying $G_2[W]=0$ and $W\mid_{t^{'}=\lambda}=(x_1^{'})^{k_1}\cdots(x_n^{'})^{k_n}$. From Corollary \ref{corollary 20}, it follows that $V_{\lambda}\subset D_W^{(0)_{i\in\mathbb{N}_0}}$ for sufficiently small $\lambda$ and $W$ is Fr\`{e}chet differentiable with respect to $\ell^2(\mathbb{N}_0)$ and the Fr\`{e}chet derivative is continuous in a neighborhood of $(0)_{i\in\mathbb{N}_0}$ in $\ell^2(\mathbb{N}_0)$ containing $V_{\lambda}$. Combining the definition of $\widetilde{U}$, we see that $W \widetilde{U}$ the assumptions in Corollary \ref{20241127cor2}. We also note that
\begin{eqnarray*}
&&WG_1 [\widetilde{U}]- \widetilde{U}G_2[W]\\
&=& \frac{\partial (W\widetilde{U})}{\partial_{t^{'}}}-\frac{t'}{a_0^2}W\widetilde{U}
-\sum_{i=1}^{\infty}\bigg( \frac{\partial (W\widetilde{U}\cdot\widetilde{A}_{i})}{\partial_{x_i^{'}}}-\frac{x_i'}{a_i^2}\cdot\widetilde{A}_{i}\cdot W\widetilde{U}\bigg).
\end{eqnarray*}
Similarly, one can obtain a Borel probability measure on $(\ell^2(\mathbb{N}_0),\mathscr{B}(\ell^2(\mathbb{N}_0))$ by restricting the product measure $\prod\limits_{i=0}^{\infty}\bn_{a_i}$ to $\ell^2(\mathbb{N}_0)$. For abbreviation, we use the same letter $P$ for it. Denote the restriction of the product measure $\prod\limits_{i\in \mathbb{N}}\bn_{a_i}$ on $\left(\ell^{2}(\mathbb{N}),\mathscr{B}\big(\ell^{2}(\mathbb{N})\big)\right)$ by $P^{\widehat{0}}_1$. Applying the counterparts of Corollary \ref{20241127cor2}, we have
\begin{eqnarray}
&&\int_{V_{\lambda}}(WG_1[\widetilde{U}]- \widetilde{U}G_2[W])\,\mathrm{d}P\label{formula 20}\\
&=&\int_{S_{1,\lambda}}\bigg( (W\widetilde{U})\cdot  n_0
-\sum_{i=1}^{\infty} (W\widetilde{U}\cdot\widetilde{A}_{i})\cdot  n_i \bigg)\cdot\frac{e^{-\frac{\lambda^2}{2a_{0}^2}}}{\sqrt{2\pi a_{0}^2}}\,\mathrm{d}P_1^{\widehat{0}}\nonumber\\
&&  +\int_{S_{2,\lambda}}\bigg( (W\widetilde{U})\cdot  n_0
-\sum_{i=1}^{\infty} (W\widetilde{U}\cdot\widetilde{A}_{i})\cdot  n_i \bigg)\,\mathrm{d}P_1^{S^2}\nonumber\\
&=&\int_{S_{1,\lambda}}\bigg( (W\widetilde{U})\cdot  n_0
-\sum_{i=1}^{\infty} (W\widetilde{U}\cdot\widetilde{A}_{i})\cdot  n_i \bigg)\cdot\frac{e^{-\frac{\lambda^2}{2a_{0}^2}}}{\sqrt{2\pi a_{0}^2}}\,\mathrm{d}P_1^{\widehat{0}}\nonumber \\
 &=& \int_{S_{1,\lambda}}W\widetilde{U} \cdot\frac{e^{-\frac{\lambda^2}{2a_{0}^2}}}{\sqrt{2\pi a_{0}^2}}\,\mathrm{d}P_1^{\widehat{0}} .\nonumber
\end{eqnarray}
Therefore, for sufficiently small $\lambda>0$, (\ref{formula 20}) implies that
\begin{eqnarray*}
\int_{S_{1,\lambda}}(x_1^{'})^{k_1}\cdots(x_n^{'})^{k_n}\widetilde{U} \,\mathrm{d}P_1^{\widehat{0}}=0
\end{eqnarray*}
for any $n\in \mathbb{N}$ and $k_1,\cdots,k_n\in \mathbb{N}_0$. Hence, we have $\widetilde{U}=0$ in $L^2(S_{1,\lambda},P_1^{\widehat{0}})$. Combining the continuity of $\widetilde{U}$, we deduce that $\widetilde{U}\equiv 0$ on $S_{1,\lambda}$ for sufficiently small $\lambda>0$ and by the definition of $\widetilde{U}$ we have $\widetilde{U}\mid_{\partial V_{\lambda}}=0$ for sufficiently small $\lambda>0$. Therefore, $\widetilde{U}\equiv 0$ on $V_{\lambda}$ for sufficiently small $\lambda>0$ which implies that $U\equiv 0$ on a neighborhood of $(0)_{i\in\mathbb{N}_0}$ in $\ell^2(\mathbb{N}_0)$ intersect $\{(t,\textbf{x})\in \ell^2(\mathbb{N}_0):\;t>0\}$. By the same way we can prove that $U\equiv 0$ on a neighborhood of $(0)_{i\in\mathbb{N}_0}$ in $\ell^2(\mathbb{N}_0)$ intersect $\{(t,\textbf{x})\in \ell^2(\mathbb{N}_0):\;t<0\}$.

At last, by the continuity of $U$ we have $U\equiv 0$ on a neighborhood of $(0)_{i\in\mathbb{N}_0}$ in $\ell^2(\mathbb{N}_0)$ . This completes the proof of Theorem \ref{Infinite Dimensional Holmgren Type Theorem}.
\end{proof}

\newpage

\section{Infinite Dimensional Sobolev Spaces}
\label{20240111chapter1}

This section is mainly based on \cite{YZ-b}.

\subsection{Definition of Sobolev Spaces}
Suppose that $O$ is an open subset of $\ell^2$.
\begin{definition}
Suppose $\alpha=(\alpha_1,\cdots,\alpha_k,0,\cdots)\in \mathbb{N}_0^{(\mathbb{N})}$ and $\varphi\in C_S^{\infty}(O)$.
We define $(D^{\alpha})^{*}\varphi$ by
\begin{eqnarray*}
(D^{\alpha})^{*}\varphi\triangleq(D_1^*)^{\alpha_1}\cdots (D_k^*)^{\alpha_k}\varphi,
\end{eqnarray*}
where $D_i^*\varphi\triangleq-D_i\varphi +\frac{x_i}{a_i^2}\cdot \varphi$ for $i=1,\cdots,k$.
\end{definition}
\begin{definition}\label{231013def1}
Assume that $f\in L^2(O,P)$ and $\alpha=(\alpha_1,\cdots,\alpha_k,0,\cdots)\in \mathbb{N}_0^{(\mathbb{N})}$. We define $D^{\alpha}f$ to be a function on $C_F^{\infty}(O)$ by
\begin{eqnarray*}
(D^{\alpha}f)(\varphi)\triangleq \int_{O}f D^{*}_{\alpha}\varphi\,\mathrm{d}P,\quad\forall\,\varphi\in C_F^{\infty}(O).
\end{eqnarray*}
If there exists $g\in  L^2(O,P)$ such that
\begin{eqnarray*}
\int_{O}g\varphi\,\mathrm{d}P= \int_{O}f D^{*}_{\alpha}\varphi\,\mathrm{d}P,\quad\forall\,\varphi\in C_F^{\infty}(O).
\end{eqnarray*}
We say that $D^{\alpha}f\in L^2(O,P)$ and $D^{\alpha}f=g$.
\end{definition}
\begin{definition}
Suppose that $m\in\mathbb{N}$. Denote by $W^{m,2}(O)$ the set of all $f\in L^2(O,P)$ such that $D^{\alpha}f\in L^2(O,P)$ for any multi-index $\alpha=(\alpha_1,\cdots,\alpha_k,0,\cdots)\in \mathbb{N}_0^{(\mathbb{N})}$ with $|\alpha|\triangleq\sum\limits_{i=1}^{k}\alpha_i\leq m$ and
\begin{eqnarray*}
\sum_{\alpha\in \mathbb{N}_0^{(\mathbb{N})},\,|\alpha|\leq m}\textbf{a}^{\alpha}\cdot\int_{O}|D^{\alpha}f|^2\,\mathrm{d}P<\infty,
\end{eqnarray*}
where $\textbf{a}^{\alpha}=\prod\limits_{i=1}^{\infty} a_i^{2\alpha_i}$ for each $\alpha\in \mathbb{N}_0^{(\mathbb{N})}$. There is a natural norm on $W^{m,2}(O)$ defined by
\begin{eqnarray}\label{20240201for9}
||f||_{W^{m,2}(O)}\triangleq\bigg(\sum_{\alpha\in \mathbb{N}_0^{(\mathbb{N})},\,|\alpha|\leq m}\textbf{a}^{\alpha}\cdot\int_{O}|D^{\alpha}f|^2\,\mathrm{d}P\bigg)^{\frac{1}{2}},
\end{eqnarray}
for any $f\in W^{m,2}(O)$. We abbreviate $W^{m,2}(O)$ to $H^{m}(O)$, and define a scalar product on it by
\begin{eqnarray}\label{20240201for10}
(u,v)_{H^{m}(O)}=\sum_{\alpha\in \mathbb{N}_0^{(\mathbb{N})},\,|\alpha|\leq m}\textbf{a}^{\alpha}\cdot\int_{O} D^{\alpha}u\overline{D^{\alpha}v}\,\mathrm{d}P,\quad \forall\, u,v\in H^{m}(O).
\end{eqnarray}
\end{definition}
\begin{remark}
It is easy to see that $H^{m}(O)$, with the scalar product defined by $(\ref{sobolev product})$ is a Hilbert space.
\end{remark}
\begin{definition}\label{20231123def1}
Assume that $n\in\mathbb{N}$, $\Omega$ is a non-empty open subset of $\mathbb{R}^n$, $f\in L^2(\Omega,\mathcal{N}^n)$ and $\alpha=(\alpha_1,\cdots,\alpha_n)\in \mathbb{N}_0^{n}$. We define $D^{\alpha}f$ to be a function on $C_c^{\infty}(\Omega)$ by
\begin{eqnarray*}
(D^{\alpha}f)(\varphi)\triangleq \int_{\Omega}f D^{*}_{\alpha}\varphi\,\mathrm{d}\mathcal{N}^n,\quad\forall\,\varphi\in C_c^{\infty}(\Omega).
\end{eqnarray*}
If there exists $g\in  L^2(\Omega,\mathcal{N}^n)$ such that
\begin{eqnarray*}
\int_{\Omega}g\varphi\,\mathrm{d}\mathcal{N}^n= \int_{\Omega}f D^{*}_{\alpha}\varphi\,\mathrm{d}\mathcal{N}^n,\quad\forall\,\varphi\in C_c^{\infty}(\Omega).
\end{eqnarray*}
We say that $D^{\alpha}f\in L^2(\Omega,\mathcal{N}^n)$ and $D^{\alpha}f=g$.
\end{definition}
\begin{definition}
Suppose that $m\in\mathbb{N}$ and we adapt the same notations as in Definition \ref{20231123def1}. Denote by $W^{m,2}(\Omega,\mathcal{N}^n)$ the set of all $f\in L^2(\Omega,\mathcal{N}^n)$ such that $D^{\alpha}f\in L^2(\Omega,\mathcal{N}^n)$ for any multi-index $\alpha=(\alpha_1,\cdots,\alpha_n)\in \mathbb{N}_0^{n}$ with $|\alpha|\triangleq\sum\limits_{i=1}^{n}\alpha_i\leq m$ and
\begin{eqnarray*}
\sum_{\alpha\in \mathbb{N}_0^{n},\,|\alpha|\leq m}\textbf{a}^{\alpha}\cdot\int_{\Omega}|D^{\alpha}f|^2\,\mathrm{d}\mathcal{N}^n<\infty,
\end{eqnarray*}
where $\textbf{a}^{\alpha}=\prod\limits_{i=1}^{n} a_i^{2\alpha_i}$ for each $\alpha\in \mathbb{N}_0^{n}$. There is a natural norm on\\ $W^{m,2}(\Omega, \mathcal{N}^n)$ defined by
\begin{eqnarray}
||f||_{W^{m,2}(\Omega,\mathcal{N}^n)}\triangleq\bigg(\sum_{\alpha\in \mathbb{N}_0^{n},\,|\alpha|\leq m}\textbf{a}^{\alpha}\cdot\int_{\Omega}|D^{\alpha}f|^2\,\mathrm{d}\mathcal{N}^n\bigg)^{\frac{1}{2}},
\end{eqnarray}
for any $f\in W^{m,2}(\Omega, \mathcal{N}^n)$. We abbreviate $W^{m,2}(\Omega,\mathcal{N}^n)$ to $H^{m}(\Omega, \mathcal{N}^n)$, and define a scalar product on it by
\begin{eqnarray}\label{sobolev product}
(u,v)_{H^{m}(\Omega,\mathcal{N}^n)}=\sum_{\alpha\in \mathbb{N}_0^{n},\,|\alpha|\leq m}\textbf{a}^{\alpha}\cdot\int_{\Omega} D^{\alpha}u\overline{D^{\alpha}v}\,\mathrm{d}\mathcal{N}^n,\quad \forall\, u,v\in H^{m}(\Omega,\mathcal{N}^n).
\end{eqnarray}
\end{definition}
\begin{proposition}\label{20231123prop2}
For each $n\in\mathbb{N}$, $C_c^{\infty}(\mathbb{R}^n)$ is dense in $W^{1,2}(\mathbb{R}^n,\mathcal{N}^n)$.
\end{proposition}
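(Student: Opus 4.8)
The plan is to establish density by the classical two-step scheme, first truncating an arbitrary $f\in W^{1,2}(\mathbb{R}^n,\mathcal{N}^n)$ to one with compact support, and then mollifying the truncation to produce $C_c^\infty$ approximants. The one extra ingredient that makes the second step work is the observation that the Gaussian weak derivative of Definition \ref{20231123def1} agrees with the ordinary Lebesgue weak derivative. Concretely, writing $d\mathcal{N}^n=\rho_n\,dx$ with $\rho_n(x)=\prod_{i=1}^n(2\pi a_i^2)^{-1/2}e^{-x_i^2/(2a_i^2)}$ and using $D_i\rho_n=-\tfrac{x_i}{a_i^2}\rho_n$, a direct computation shows that for every $\varphi\in C_c^\infty(\mathbb{R}^n)$, with $\psi\triangleq\varphi\rho_n\in C_c^\infty(\mathbb{R}^n)$,
\[
\int_{\mathbb{R}^n} f\,D_i^*\varphi\,\mathrm{d}\mathcal{N}^n=-\int_{\mathbb{R}^n} f\,D_i\psi\,\mathrm{d}x .
\]
Hence the defining identity $\int g_i\varphi\,\mathrm{d}\mathcal{N}^n=\int f\,D_i^*\varphi\,\mathrm{d}\mathcal{N}^n$ reads $\int g_i\psi\,\mathrm{d}x=-\int f\,D_i\psi\,\mathrm{d}x$; since $\rho_n$ is smooth and nowhere vanishing, $\psi$ ranges over all of $C_c^\infty(\mathbb{R}^n)$ as $\varphi$ does, so $g_i$ is exactly the Lebesgue weak $x_i$-derivative of $f$. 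I would record this equivalence first, as it is what lets me import the classical mollification theory. I would also note at the outset that $C_c^\infty(\mathbb{R}^n)\subset W^{1,2}(\mathbb{R}^n,\mathcal{N}^n)$, since such functions and their derivatives are bounded and $\mathcal{N}^n$-integrable.

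For the truncation step, fix $\eta\in C_c^\infty(\mathbb{R}^n)$ with $0\le\eta\le1$, $\eta\equiv1$ on the unit ball and $\eta\equiv0$ outside the ball of radius $2$, and set $\chi_R(x)\triangleq\eta(x/R)$, so $|D_i\chi_R|\le C/R$ with $C$ independent of $R$. A computation with the adjoint identity $D_i^*(\chi_R\varphi)=\chi_R D_i^*\varphi-(D_i\chi_R)\varphi$ shows that $\chi_R f$ has Gaussian weak derivative $D_i(\chi_R f)=\chi_R D_i f+(D_i\chi_R)f\in L^2(\mathcal{N}^n)$. Then $\|\chi_R f-f\|_{L^2(\mathcal{N}^n)}\to0$ and $\|\chi_R D_if-D_if\|_{L^2(\mathcal{N}^n)}\to0$ by dominated convergence, while $\|(D_i\chi_R)f\|_{L^2(\mathcal{N}^n)}^2\le \tfrac{C^2}{R^2}\|f\|_{L^2(\mathcal{N}^n)}^2\to0$. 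Because there are only $n$ derivative directions, carrying the fixed weights $a_i^2$, componentwise convergence yields $\chi_R f\to f$ in the $W^{1,2}(\mathbb{R}^n,\mathcal{N}^n)$-norm defined in \eqref{sobolev product}. This reduces the problem to approximating a function supported in a fixed ball $B_{2R}$.

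For the mollification step, let $f\in W^{1,2}(\mathbb{R}^n,\mathcal{N}^n)$ with $\mathrm{supp}\,f\subset B_{2R}$. By the derivative identification above, $f$ lies in the ordinary $W^{1,2}(\mathbb{R}^n,\mathrm{d}x)$ (its Gaussian and Lebesgue weak derivatives coincide), and standard mollification $f_\delta\triangleq f*\chi_\delta\in C_c^\infty(\mathbb{R}^n)$ satisfies $\mathrm{supp}\,f_\delta\subset B_{2R+1}$, $f_\delta\to f$ and $D_if_\delta=(D_if)*\chi_\delta\to D_if$ in $L^2(\mathrm{d}x)$ as $\delta\to0$. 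Since $\rho_n$ is continuous and bounded above and below by positive constants on the compact set $\overline{B_{2R+1}}$, the norms $\|\cdot\|_{L^2(\mathrm{d}x)}$ and $\|\cdot\|_{L^2(\mathcal{N}^n)}$ are equivalent on functions supported there; hence $f_\delta\to f$ and $D_if_\delta\to D_if$ in $L^2(\mathcal{N}^n)$ as well, giving $f_\delta\to f$ in $W^{1,2}(\mathbb{R}^n,\mathcal{N}^n)$. Composing the two steps yields the claimed density.

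I expect the main obstacle to be precisely the identification of the two notions of weak derivative and the justification of the product rule $D_i(\chi_R f)=\chi_R D_if+(D_i\chi_R)f$ in the Gaussian-weighted sense, since these are what bridge the Gaussian framework (where mollification does not interact cleanly with the measure) and the Lebesgue framework (where it does). Once that bridge is in place, the truncation and mollification estimates are routine; the only care needed is to confirm that the weighted $W^{1,2}$-norm, being a finite sum over the $n$ coordinate directions, is controlled by convergence in each $L^2(\mathcal{N}^n)$ component.
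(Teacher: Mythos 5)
Your proposal is correct and follows essentially the same route as the paper's own proof: truncation by smooth radial cut-offs (with the identical product rule $D_i(\chi_R f)=\chi_R D_if+(D_i\chi_R)f$ justified through the adjoint identity), followed by mollification, using that the Gaussian density is bounded above and below by positive constants on compact sets to pass between $L^2(\mathrm{d}x)$ and $L^2(\mathcal{N}^n)$. The only organizational difference is that you isolate the identification of Gaussian and Lebesgue weak derivatives as a standalone lemma (via the bijection $\varphi\mapsto\varphi\rho_n$ of test functions), whereas the paper carries out the same conversion inline by inserting the test functions $\chi_{n,\delta,\textbf{x}_n}/G_n$ into Definition \ref{20231123def1} to obtain $D_i\big(( \psi_k f)*\chi_{n,\delta}\big)=\big(D_i(\psi_k f)\big)*\chi_{n,\delta}$.
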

\begin{proof}
Suppose that $f\in W^{1,2}(\mathbb{R}^n,\mathcal{N}^n)$. Choosing $\psi\in C^{\infty}(\mathbb{R};[0,1])$ such that $\psi(x)=1$ for all $x\leq 1$ and $\psi(x)=0$ for all $x\geq 2$. For each $k\in\mathbb{N}$, let $\psi_k(\textbf{x}_n)\triangleq \psi\left(\frac{x_1^2+\cdots+x_n^2}{k}\right),\,\forall\,\textbf{x}_n=(x_1,\cdots,x_n)\in\mathbb{R}^n$.
Obviously,
\begin{eqnarray*}
\int_{\mathbb{R}^n}|\psi_k\cdot f|^2\,\mathrm{d}\mathcal{N}^n\leq \int_{\mathbb{R}^n}| f|^2\,\mathrm{d}\mathcal{N}^n<\infty,\qquad\forall\,k\in\mathbb{N}.
\end{eqnarray*}
Note that for each $\varphi\in C_c^{\infty}(\mathbb{R}^n),\,k\in\mathbb{N}$ and $i=1,\cdots,n$, we have
\begin{eqnarray*}
\int_{\mathbb{R}^n}\psi_k\cdot f\cdot D_i^*\varphi\,\mathrm{d}\mathcal{N}^n
&=&\int_{\mathbb{R}^n}\psi_k\cdot f\cdot \left(-D_i\varphi+\frac{x_i}{a_i^2}\cdot \varphi\right)\,\mathrm{d}\mathcal{N}^n\\
&=&\int_{\mathbb{R}^n} f\cdot \left((D_i\psi_k)\cdot\varphi-D_i(\varphi\cdot\psi_k)+\frac{x_i}{a_i^2}\cdot \varphi\cdot\psi_k\right)\,\mathrm{d}\mathcal{N}^n\\
&=&\int_{\mathbb{R}^n} f\cdot \left((D_i\psi_k)\cdot\varphi+D_i^*(\varphi\cdot\psi_k) \right)\,\mathrm{d}\mathcal{N}^n\\
&=&\int_{\mathbb{R}^n} \left(f\cdot(D_i\psi_k)+(D_if) \cdot\psi_k \right)\cdot\varphi\,\mathrm{d}\mathcal{N}^n,
\end{eqnarray*}
where the last equality follows from the fact that $\varphi\cdot\psi_k\in C_c^{\infty}(\mathbb{R}^n)$ and Definition \ref{20231123def1}. Since
\begin{eqnarray*}
&&\int_{\mathbb{R}^n} |f\cdot(D_i\psi_k)+(D_if) \cdot\psi_k|^2\,\mathrm{d}\mathcal{N}^n\\
&\leq &2\int_{\mathbb{R}^n} (|f\cdot(D_i\psi_k)|^2+|(D_if) \cdot\psi_k|^2)\,\mathrm{d}\mathcal{N}^n\\
&\leq&2\cdot \sup|D_i\psi_k|^2\cdot\int_{\mathbb{R}^n} |f|^2\,\mathrm{d}\mathcal{N}^n+2\int_{\mathbb{R}^n}|D_if|^2\,\mathrm{d}\mathcal{N}^n<\infty.
\end{eqnarray*}
These implies that $\{\psi_k\cdot f\}_{k=1}^{\infty}\subset W^{1,2}(\mathbb{R}^n,\mathcal{N}^n)$ and $D_i(\psi_k\cdot f)=f\cdot(D_i\psi_k)+(D_if) \cdot\psi_k$ for any $k\in\mathbb{N}$ and $i=1,\cdots,n.$ Then we have
\begin{eqnarray*}
&&||\psi_k\cdot f-f||_{W^{1,2}(\mathbb{R}^n,\mathcal{N}^n)}^2\\
&=&\int_{\mathbb{R}^n}|( \psi_k-1)f|^2\,\mathrm{d}\mathcal{N}^n+\sum_{i=1}^{n}a_i^2\cdot\int_{\mathbb{R}^n}\bigg|(\psi_k-1)\cdot (D_i f)+f \cdot (D_i \psi_k)\bigg|^2\,\mathrm{d}\mathcal{N}^n\\
&\leq&\int_{\mathbb{R}^n}|(\psi_k-1)f|^2\,\mathrm{d}\mathcal{N}^n\\
&&+2\sum_{i=1}^{n}a_i^2\cdot\int_{\mathbb{R}^n}\bigg(|(\psi_k-1)\cdot (D_i f)|^2+|f \cdot(D_i \psi_k)|^2\bigg)\,\mathrm{d}\mathcal{N}^n\\
&=&\int_{\mathbb{R}^n}|(\psi_k-1)f|^2\,\mathrm{d}\mathcal{N}^n+2\int_{\mathbb{R}^n}|\psi_k-1|^2\cdot\left( \sum_{i=1}^{n}a_i^2\cdot|D_i f|^2\right)\,\mathrm{d}\mathcal{N}^n\\
&&+2\cdot\int_{\mathbb{R}^n}\left(\sum_{i=1}^{n}a_i^2\cdot|D_i \psi_k|^2\right)\cdot |f|^2 \,\mathrm{d}\mathcal{N}^n,\qquad\forall\,k\in\mathbb{N}.
\end{eqnarray*}
Therefore, $\lim\limits_{k\to\infty}||\psi_k\cdot f-f||_{W^{1,2}(\mathbb{R}^n,\mathcal{N}^n)}^2=0$.
Now we fix a $k\in\mathbb{N}$. Then there exists $r\in(0,+\infty)$ such that
\begin{eqnarray}\label{20231124for2}
\int_{||\textbf{x}_n||_{\mathbb{R}^n}>r}|(\psi_k\cdot f)(\textbf{x}_n)|^2\,\mathrm{d}\mathcal{N}^n(\textbf{x}_n)=0.
\end{eqnarray}
Let
\begin{eqnarray}\label{20231124for1}
G_n(\textbf{x}_n)\triangleq \prod_{i=1}^{n}\frac{1}{\sqrt{2\pi a_i^2}}e^{-\frac{x_i^2}{2a_i^2}},\qquad\forall\,\textbf{x}_n=(x_1,\cdots,x_n)\in\mathbb{R}^n,
\end{eqnarray}
which is the Gaussian weight function of the measure $\mathcal{N}^n$, i.e., $\mathrm{d}\mathcal{N}^n(\textbf{x}_n)=G_n(\textbf{x}_n)\,\mathrm{d}\textbf{x}_n$ where $\mathrm{d}\textbf{x}_n$ is the Lebesgue measure on $\mathbb{R}^n$. Choosing $\chi_n\in C_c^{\infty}(\mathbb{R}^n;[0,1])$ such that $\int_{\mathbb{R}^n}\chi_n(\textbf{x}_n)\mathrm{d}\textbf{x}_n=1$ and $\chi_n(\textbf{x}_n)=0$ for all $||\textbf{x}_n||_{\mathbb{R}^n}>1$. Write $\chi_{n,\delta}(\textbf{x}_n)\triangleq \frac{1}{\delta^n}\cdot\chi_{n}(\frac{1}{\delta}\cdot\textbf{x}_n),\,\,\forall\,\textbf{x}_n\in\mathbb{R}^n$ and $\delta\in(0,+\infty)$. By \eqref{20231124for2}, we have
\begin{eqnarray*}
&&\int_{ \mathbb{R}^n }|(\psi_k\cdot f)(\textbf{x}_n)|^2\,\mathrm{d}\mathcal{N}^n(\textbf{x}_n)\\
&=&\int_{||\textbf{x}_n||_{\mathbb{R}^n}>r}|(\psi_k\cdot f)(\textbf{x}_n)|^2\,\mathrm{d}\mathcal{N}^n(\textbf{x}_n)\\
&=&\int_{||\textbf{x}_n||_{\mathbb{R}^n}>r}|(\psi_k\cdot f)(\textbf{x}_n)|^2\cdot G_n(\textbf{x}_n)\,\mathrm{d}\textbf{x}_n\\
&\geq&\left(\prod_{i=1}^{n}\frac{1}{\sqrt{2\pi a_i^2}}e^{-\frac{r^2}{2a_i^2}}\right)\cdot\int_{||\textbf{x}_n||_{\mathbb{R}^n}>r}|(\psi_k\cdot f)(\textbf{x}_n)|^2  \,\mathrm{d}\textbf{x}_n\\
&=&\left(\prod_{i=1}^{n}\frac{1}{\sqrt{2\pi a_i^2}}e^{-\frac{r^2}{2a_i^2}}\right)\cdot\int_{\mathbb{R}^n}|(\psi_k\cdot f)(\textbf{x}_n)|^2  \,\mathrm{d}\textbf{x}_n\\
\end{eqnarray*}
which implies that $\psi_k\cdot f\in L^2(\mathbb{R}^n,\mathrm{d}\textbf{x}_n)\cap  L^1(\mathbb{R}^n,\mathrm{d}\textbf{x}_n)$. Then let
\begin{eqnarray*}
f_{k,n,\delta}(\textbf{x}_n)\triangleq  \int_{\mathbb{R}^n}(\psi_k\cdot f)(\textbf{y}_n)\chi_{n,\delta}(\textbf{x}_n-\textbf{y}_n)\,\mathrm{d}\textbf{y}_n,\quad
\forall\,\,\textbf{x}_n\in\mathbb{R}^n,\,\delta\in(0,+\infty).
\end{eqnarray*}
Then $f_{k,n,\delta}\in C_c^{\infty}(\mathbb{R}^n)$, supp$f_{k,n,\delta}\subset \{\textbf{x}_n\in\mathbb{R}^n:||\textbf{x}_n||_{\mathbb{R}^n}\leq r+\delta\}$ and
\begin{eqnarray*}
&&\int_{\mathbb{R}^n}|f_{k,n,\delta}(\textbf{x}_n)- (\psi_k\cdot f)(\textbf{x}_n)|^2\,\mathrm{d}\mathcal{N}^n(\textbf{x}_n)\\
&=&\int_{\mathbb{R}^n}|f_{k,n,\delta}(\textbf{x}_n)- (\psi_k\cdot f)(\textbf{x}_n)|^2G_n(\textbf{x}_n)\,\mathrm{d}\textbf{x}_n\\
&\leq&\left(\prod_{i=1}^{n}\frac{1}{\sqrt{2\pi a_i^2}} \right)\cdot\int_{\mathbb{R}^n}|f_{k,n,\delta}(\textbf{x}_n)- (\psi_k\cdot f)(\textbf{x}_n)|^2 \,\mathrm{d}\textbf{x}_n,
\end{eqnarray*}
which implies that $\lim\limits_{\delta\to 0}\int_{\mathbb{R}^n}|f_{k,n,\delta}(\textbf{x}_n)- (\psi_k\cdot f)(\textbf{x}_n)|^2\,\mathrm{d}\mathcal{N}^n(\textbf{x}_n)=0.$

Let $\chi_{n,\delta,\textbf{x}_n}(\textbf{y}_n)\triangleq\chi_{n,\delta}(\textbf{x}_n-\textbf{y}_n),\,\forall\,\textbf{x}_n,\textbf{y}_n\in\mathbb{R}^n$. For each $i=1,\cdots,n$, note that
\begin{eqnarray*}
&&D_if_{k,n,\delta}(\textbf{x}_n)\\
&=&\int_{\mathbb{R}^n}(\psi_k\cdot f)(\textbf{y}_n)D_i\chi_{n,\delta}(\textbf{x}_n-\textbf{y}_n)\,\mathrm{d}\textbf{y}_n\\
&=&-\int_{\mathbb{R}^n}(\psi_k\cdot f)(\textbf{y}_n)\cdot(D_i\chi_{n,\delta,\textbf{x}_n})(\textbf{y}_n)\,\mathrm{d}\textbf{y}_n\\
&=&-\int_{\mathbb{R}^n}(\psi_k\cdot f)(\textbf{y}_n)\cdot(D_i\chi_{n,\delta,\textbf{x}_n})(\textbf{y}_n)\cdot\frac{1}{G_n(\textbf{y}_n)}\,\mathrm{d}\mathcal{N}^n(\textbf{y}_n)\\
&=&\int_{\mathbb{R}^n}(\psi_k\cdot f)(\textbf{y}_n)\left(- \left(D_i\frac{\chi_{n,\delta,\textbf{x}_n} }{G_n}\right)(\textbf{y}_n)+\frac{y_i}{a_i^2}\cdot\frac{\chi_{n,\delta,\textbf{x}_n} }{G_n}(\textbf{y}_n)\right)\,\mathrm{d}\mathcal{N}^n(\textbf{y}_n)\\
&=&\int_{\mathbb{R}^n}(\psi_k\cdot f)\cdot \left(D_i^*\frac{\chi_{n,\delta,\textbf{x}_n}}{G_n} \right)\,\mathrm{d}\mathcal{N}^n\\
&=&\int_{\mathbb{R}^n}D_i(\psi_k\cdot f)\cdot \frac{\chi_{n,\delta,\textbf{x}_n}}{G_n} \,\mathrm{d}\mathcal{N}^n\\
&=&\int_{\mathbb{R}^n}D_i(\psi_k\cdot f)(\textbf{y}_n)\cdot  \chi_{n,\delta,\textbf{x}_n}(\textbf{y}_n)  \,\mathrm{d}\textbf{y}_n \\
&=&\int_{\mathbb{R}^n}D_i(\psi_k\cdot f)(\textbf{y}_n)\cdot  \chi_{n,\delta}(\textbf{x}_n-\textbf{y}_n)  \,\mathrm{d}\textbf{y}_n,
\end{eqnarray*}
where the sixth equality follows from the fact that $\frac{\chi_{n,\delta,\textbf{x}_n}}{G_n}\in C_c^{\infty}(\mathbb{R}^n)$ and Definition \ref{20231123def1}. It is easy to see that $\int_{||\textbf{x}_n||_{\mathbb{R}^n}>r}|D_i(\psi_k\cdot f)|^2\,\mathrm{d}\mathcal{N}^n=0$. These imply that
\begin{eqnarray*}
\lim_{\delta\to 0}\int_{\mathbb{R}^n}|D_if_{k,n,\delta} - D_i(\psi_k\cdot f)|^2\,\mathrm{d}\mathcal{N}^n =0.
\end{eqnarray*}
Hence, $\lim\limits_{\delta\to 0}||f_{k,n,\delta}-\psi_k\cdot f||_{W^{1,2}(\mathbb{R}^n,\mathcal{N}^n)}^2=0$.  By diagonal process, we complete the proof of Proposition \ref{20231123prop2}.
\end{proof}
For each $t\in\mathbb{R}$, set $\tau_t(\textbf{x} )\triangleq(x_1+1,(x_i)_{i\in\mathbb{N}\setminus\{1\}}),\quad\forall\,\textbf{x}=(x_i)_{i\in\mathbb{N}}\in\ell^2$. Obviously, $\tau_t$ is a bijection from $\ell^2$ into itself. A natural problem is that:
\begin{problem}\label{20131123pro1}
For each $f\in H^{1}(\ell^2)$, does $f(\tau_t)\in H^{1}(\ell^2)$?
\end{problem}
To answer this question, we need the following two lemmas.
\begin{lemma}\label{231013th1}
There is no positive number $C$ such that
\begin{eqnarray*}
C^{-1}||u||_{H^{1}(\ell^2)}\leq ||u(\tau_1)||_{H^{1}(\ell^2)} \leq C||u||_{H^{1}(\ell^2)},\quad \forall\,u\in \mathscr {C}_c^{\infty}.
\end{eqnarray*}
\end{lemma}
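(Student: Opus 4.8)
The plan is to exploit the fact that $\tau_1$ is translation by $\textbf{e}_1$ and that the reference measure $P=P_1$ is only quasi-invariant (not invariant) under such a translation, with Radon--Nikod\'ym density given by \eqref{230702e1} in Lemma \ref{cha2prop1}. First I would note that translation commutes with each partial derivative: since $(u\circ\tau_1)(\textbf{x})=u(\textbf{x}+\textbf{e}_1)$, one has $D_i(u\circ\tau_1)=(D_iu)\circ\tau_1$ for every $i$. Consequently the full $H^1$-integrand transforms as a single function: writing $g\triangleq |u|^2+\sum_{i=1}^{\infty}a_i^2|D_iu|^2\geqslant 0$, we get $\,|u\circ\tau_1|^2+\sum_i a_i^2|D_i(u\circ\tau_1)|^2=g\circ\tau_1$, so that $\|u\circ\tau_1\|_{H^1(\ell^2)}^2=\int_{\ell^2}g(\textbf{x}+\textbf{e}_1)\,\mathrm{d}P(\textbf{x})$ while $\|u\|_{H^1(\ell^2)}^2=\int_{\ell^2}g\,\mathrm{d}P$.

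Next I would apply the quasi-invariance identity. Using \eqref{230702e1} with $r=1$, $i=1$, $s_1=-1$ (equivalently $\mathrm{d}P(\textbf{y}-\textbf{e}_1)=e^{\frac{2y_1-1}{2a_1^2}}\,\mathrm{d}P(\textbf{y})$), the change of variable $\textbf{y}=\textbf{x}+\textbf{e}_1$ yields
$$
\|u\circ\tau_1\|_{H^1(\ell^2)}^2=\int_{\ell^2}g(\textbf{y})\,e^{\frac{2y_1-1}{2a_1^2}}\,\mathrm{d}P(\textbf{y}).
$$
Thus the conjectured two-sided bound would force $C^{-2}\int g\,\mathrm{d}P\leqslant \int g\,e^{(2y_1-1)/(2a_1^2)}\,\mathrm{d}P\leqslant C^{2}\int g\,\mathrm{d}P$ uniformly over all admissible densities $g$. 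The point is that the weight $e^{(2y_1-1)/(2a_1^2)}$ is unbounded both above (as $y_1\to+\infty$) and below (as $y_1\to-\infty$), so it cannot be squeezed between two constants once $g$ is allowed to concentrate far out along the first axis.

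To turn this into a contradiction it suffices to defeat the upper inequality for each fixed $C$. I would fix a nonzero $\phi\in C_c^\infty(\mathbb{R})$ supported in $[-1,1]$ and set $u_N(\textbf{x})\triangleq\phi(x_1-N)$, a cylinder function of the first coordinate, hence $u_N\in\mathscr{C}_c^\infty$. Then $g_N(\textbf{y})=|\phi(y_1-N)|^2+a_1^2|\phi'(y_1-N)|^2$ is supported in $\{\,y_1\in[N-1,N+1]\,\}$, so on its support $e^{\frac{2y_1-1}{2a_1^2}}\geqslant e^{\frac{2N-3}{2a_1^2}}$, giving $\|u_N\circ\tau_1\|_{H^1}^2\geqslant e^{\frac{2N-3}{2a_1^2}}\|u_N\|_{H^1}^2$. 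Since $P$ is strictly positive (Proposition \ref{230410prop1}) and $\phi\not\equiv0$, we have $\|u_N\|_{H^1}>0$, whence $\|u_N\circ\tau_1\|_{H^1}/\|u_N\|_{H^1}\geqslant e^{(2N-3)/(4a_1^2)}\to\infty$ as $N\to\infty$, contradicting $\|u\circ\tau_1\|_{H^1}\leqslant C\|u\|_{H^1}$. The only real care needed — and the step I would double-check most — is getting the sign and form of the exponential weight exactly right when transporting the measure under $\tau_1$; once \eqref{230702e1} is correctly invoked, the remainder is an elementary concentration argument, and no delicate estimate is required.
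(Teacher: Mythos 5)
Your proof is correct and takes essentially the same approach as the paper: both arguments use bump functions of the first coordinate translated far along the $x_1$-axis, so that the ratio of Gaussian weights between a function and its translate is unbounded. The only cosmetic differences are that you push the bumps toward $+\infty$ and defeat the upper inequality, whereas the paper pushes them toward $-\infty$ and defeats the lower inequality, and that you route the weight computation through the quasi-invariance formula \eqref{230702e1} while the paper computes the one-dimensional Gaussian integrals directly.
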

\begin{proof}
Choose $f\in C_c^{\infty}(\mathbb{R})$ such that $f(x)=0$ for any $x\notin (0,1)$ and $f(1/2)>0$. For $n\in\mathbb{N}$, set $f_n( \textbf{x} )\triangleq f(x_1+n),\quad\forall\,\textbf{x}=(x_i)_{i\in\mathbb{N}}\in\ell^2.$ Note that, for each $n\in\mathbb{N}$,
\begin{eqnarray*}
&&||f_{n}(\tau_1)||_{H^{1}(\ell^2)}\\
 &=&\bigg(\int_{\mathbb{R}}  |f(x+n+1)|^2\,e^{-\frac{x^2}{2a_1^2}}\mathrm{d}x+ a_1^{2}\cdot \int_{\mathbb{R}}  |f^{'}(x+n+1)|^2\,e^{-\frac{x^2}{2a_1^2}}\mathrm{d}x\bigg)^{\frac{1}{2}}\\
 &=&\bigg(\int_{0}^1 |f(x)|^2\,e^{-\frac{(x-n-1)^2}{2a_1^2}}\mathrm{d}x+ a_1^{2}\cdot \int_{0}^{1} |f^{'}(x)|^2\,e^{-\frac{(x-n-1)^2}{2a_1^2}}\mathrm{d}x\bigg)^{\frac{1}{2}}\\
 &=&e^{-\frac{1}{4a_1^2}}\bigg( \int_{0}^{1}\left(|f(x)|^2+ a_1^{2}\cdot|f^{'}(x)|^2\right)\,e^{-\frac{(x-n)^2}{2a_1^2}}e^{\frac{x-n }{ a_1^2}}\mathrm{d}x\bigg)^{\frac{1}{2}},
\end{eqnarray*}
and hence
\begin{eqnarray*}
e^{ -\frac{1}{4a_1^2}-\frac{n}{ 2a_1^2}} \leq \frac{||f_n(\tau_1)||_{H^{1}(\ell^2)}}{||f_n||_{H^{1}(\ell^2)}}\leq e^{ \frac{1}{4a_1^2}-\frac{n }{ 2a_1^2}} .
\end{eqnarray*}
This completes the proof of Lemma \ref{231013th1}.
\end{proof}

\begin{lemma}\label{20231123lem1}
For any Borel measurable function $f$ on $\ell^2$, it holds that
\begin{eqnarray*}
\int_{\ell^2}|f(\tau_t)|^2\,\mathrm{d}P
&=& e^{-\frac{t^2}{2a_1^2}} \int_{\ell^2}|f|^2e^{\frac{t x_1}{a_1^2}}\,\mathrm{d}P.
\end{eqnarray*}
\end{lemma}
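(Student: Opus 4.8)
The plan is to identify $\tau_t$ as the translation $\textbf{x}\mapsto \textbf{x}+t\textbf{e}_1$ on $\ell^2$, so that $f(\tau_t)(\textbf{x})=f(\textbf{x}+t\textbf{e}_1)$, and then to reduce the claimed identity to the Cameron--Martin type quasi-invariance formula \eqref{230702e1} of Lemma \ref{cha2prop1} (with $r=1$ and $i=1$, recalling that $P=P_1$). Writing $g\triangleq |f|^2$, which is a nonnegative $\mathscr{B}(\ell^2)$-measurable function (note $f(\tau_t)$ is Borel measurable because $\tau_t$ is a homeomorphism of $\ell^2$), it suffices to establish the change-of-variables identity
\[
\int_{\ell^2} g(\textbf{x}+t\textbf{e}_1)\,\mathrm{d}P(\textbf{x})=\int_{\ell^2} g(\textbf{x})\, e^{\frac{2tx_1-t^2}{2a_1^2}}\,\mathrm{d}P(\textbf{x})
\]
in $[0,+\infty]$, after which the desired formula follows by factoring $e^{\frac{2tx_1-t^2}{2a_1^2}}=e^{-\frac{t^2}{2a_1^2}}e^{\frac{tx_1}{a_1^2}}$.

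First I would prove this identity for indicator functions $g=\chi_E$ with $E\in\mathscr{B}(\ell^2)$. This is essentially the content of \eqref{230702e3} in Lemma \ref{cha2prop1}: taking $s_1=-t$ and $i=1$ there gives
\[
\int_{\ell^2}\chi_E(\textbf{x}+t\textbf{e}_1)\,\mathrm{d}P(\textbf{x})=P(E,-t\textbf{e}_1)=\int_E e^{\frac{2tx_1-t^2}{2a_1^2}}\,\mathrm{d}P(\textbf{x}),
\]
which is exactly the identity with $g=\chi_E$. Equivalently, one may read off from \eqref{230702e1} that the pushforward of $P$ under $\textbf{x}\mapsto\textbf{x}+t\textbf{e}_1$ has Radon--Nikod\'ym derivative $e^{\frac{2tx_1-t^2}{2a_1^2}}$ with respect to $P$, and then invoke Theorem \ref{1c1t4}. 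By linearity the identity holds for all nonnegative simple functions, and by the Monotone Convergence Theorem it extends to every nonnegative $\mathscr{B}(\ell^2)$-measurable $g$, since any such $g$ is the increasing pointwise limit of nonnegative simple functions and both sides are monotone under such limits.

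Applying this with $g=|f|^2$ and using the factorization of the exponential yields
\[
\int_{\ell^2}|f(\tau_t)|^2\,\mathrm{d}P=\int_{\ell^2}|f|^2\,e^{\frac{2tx_1-t^2}{2a_1^2}}\,\mathrm{d}P=e^{-\frac{t^2}{2a_1^2}}\int_{\ell^2}|f|^2\,e^{\frac{tx_1}{a_1^2}}\,\mathrm{d}P,
\]
which is the assertion; no integrability hypothesis on $f$ is needed because the whole argument is carried out in $[0,+\infty]$. The only point requiring a little care — and the single genuine obstacle — is the passage from the set-level statement \eqref{230702e3} to arbitrary nonnegative Borel $g$, but this is routine measure theory (simple-function approximation plus monotone convergence). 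Thus the lemma is, in the end, an almost immediate corollary of the quasi-invariance formula already established in Lemma \ref{cha2prop1}.
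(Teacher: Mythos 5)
Your proof is correct, and it reaches the identity by a genuinely different, more modular route than the paper. The paper's own proof is a direct computation: it factors $P=\bn_{a_1}\times P^{\widehat{1}}$ as the product of the one-dimensional Gaussian in the first coordinate with the product measure over the remaining coordinates, applies Tonelli to the nonnegative integrand $|f(\tau_t)|^2$, performs the substitution $x_1\mapsto x_1-t$ in the inner one-dimensional Lebesgue integral, and then expands $e^{-\frac{(x_1-t)^2}{2a_1^2}}=e^{-\frac{t^2}{2a_1^2}}\,e^{-\frac{x_1^2}{2a_1^2}}\,e^{\frac{tx_1}{a_1^2}}$ to reassemble the measure $P$. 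You instead quote the set-level quasi-invariance formula \eqref{230702e3} of Lemma \ref{cha2prop1} (with $r=1$, $i=1$, $s_1=-t$), which gives exactly the indicator case, and then run the standard bootstrap (linearity for nonnegative simple functions, then the Monotone Convergence Theorem) to reach every nonnegative Borel function, finally specializing to $g=|f|^2$. Both arguments rest on the same one-dimensional Gaussian translation identity --- the proof of \eqref{230702e3} in Lemma \ref{cha2prop1} is essentially the computation the paper repeats inside this lemma --- so your version buys reuse of an established result and makes explicit that the identity holds in $[0,+\infty]$ with no integrability hypothesis, at the cost of a routine approximation step; the paper's version buys a short, self-contained chain of equalities in which the change of variables is applied directly under the integral, so no approximation is needed. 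One small caveat: your parenthetical alternative via Theorem \ref{1c1t4} is not a perfect citation, since that theorem is phrased for $L^1$ functions rather than for nonnegative measurable functions with values in $[0,+\infty]$; but this is an aside, and your main argument does not rely on it.
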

\begin{proof}
Note that
\begin{eqnarray*}
\int_{\ell^2}|f(\tau_t)|^2\,\mathrm{d}P
&=&\int_{\mathbb{R}\times\ell^2(\mathbb{N}\setminus\{1\})}|f(\tau_t(x_1,\textbf{x}^1))|^2\,\mathrm{d}\mathcal{N}_{a_1^2}(x_1)\mathrm{d}P^{\widehat{1}}(\textbf{x}^1)\\
&=&\int_{\mathbb{R}\times\ell^2(\mathbb{N}\setminus\{1\})}|f(x_1+t,\textbf{x}^1))|^2\,\mathrm{d}\mathcal{N}_{a_1^2}(x_1)\mathrm{d}P^{\widehat{1}}(\textbf{x}^1)\\
&=&\frac{1}{\sqrt{2\pi a_1^2}}\int_{\mathbb{R}}\int_{\ell^2(\mathbb{N}\setminus\{1\})}|f(x_1+t,\textbf{x}^1))|^2e^{-\frac{x_1^2}{2a_1^2}}\,\mathrm{d}x_1\mathrm{d}P^{\widehat{1}}(\textbf{x}^1)\\
&=&\frac{1}{\sqrt{2\pi a_1^2}}\int_{\mathbb{R}}\int_{\ell^2(\mathbb{N}\setminus\{1\})}|f(x_1,\textbf{x}^1))|^2e^{-\frac{(x_1-t)^2}{2a_1^2}}\,\mathrm{d}x_1\mathrm{d}P^{\widehat{1}}(\textbf{x}^1)\\
&=&e^{-\frac{t^2}{2a_1^2}}\int_{\mathbb{R}}\int_{\ell^2(\mathbb{N}\setminus\{1\})}|f(x_1,\textbf{x}^1))|^2e^{-\frac{x_1^2}{2a_1^2}}e^{\frac{t x_1}{a_1^2}}\,\mathrm{d}x_1\mathrm{d}P^{\widehat{1}}(\textbf{x}^1)\\
&=&e^{-\frac{t^2}{2a_1^2}}\int_{\ell^2}|f(\textbf{x}))|^2e^{\frac{t x_1}{a_1^2}}\,\mathrm{d}P(\textbf{x}),
\end{eqnarray*}
which completes the proof of Lemma \ref{20231123lem1}.
\end{proof}
The following proposition gives a negative answer for Problem \ref{20131123pro1}.
\begin{proposition}\label{20231123prop1}
There exists $f\in H^{1}(\ell^2)$ such that $f(\tau_1)\notin H^{1}(\ell^2)$.
\end{proposition}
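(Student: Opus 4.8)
The plan is to exploit the inclusion $H^{1}(\ell^2)\subset L^2(\ell^2,P)$ and thereby reduce the claim to producing an $f\in H^{1}(\ell^2)$ whose translate $f(\tau_1)$ already fails to be square-integrable; since every element of $H^{1}(\ell^2)$ is in particular in $L^2(\ell^2,P)$, this is enough. By Lemma \ref{20231123lem1} (with $t=1$),
\[
\int_{\ell^2}|f(\tau_1)|^2\,\mathrm{d}P=e^{-\frac{1}{2a_1^2}}\int_{\ell^2}|f|^2 e^{\frac{x_1}{a_1^2}}\,\mathrm{d}P ,
\]
so the task becomes: find $f\in H^{1}(\ell^2)$ with $\int_{\ell^2}|f|^2\,\mathrm{d}P<\infty$ and first weak derivative square-integrable, but $\int_{\ell^2}|f|^2 e^{x_1/a_1^2}\,\mathrm{d}P=\infty$. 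The mechanism is that translating the first coordinate inserts the exponentially growing Radon–Nikodym factor $e^{x_1/a_1^2}$, whereas the only genuine constraint for membership in $H^{1}(\ell^2)$ is square-integrability of $f$ and its first partial derivative against the Gaussian weight $e^{-x_1^2/(2a_1^2)}$.

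First I would take $f$ to be a cylinder function of the first variable alone, $f(\textbf{x})=g(x_1)$, so that $D_i f=0$ for all $i\geq 2$ and, by Fubini together with the product structure $P=\bn_{a_1}\times P^{\widehat{1}}$, membership $f\in H^{1}(\ell^2)$ reduces to the two one-dimensional conditions $\int_{\mathbb{R}}g^2\,\mathrm{d}\bn_{a_1}<\infty$ and $a_1^2\int_{\mathbb{R}}(g')^2\,\mathrm{d}\bn_{a_1}<\infty$, while the failure condition reads $\int_{\mathbb{R}}g^2 e^{x_1/a_1^2}\,\mathrm{d}\bn_{a_1}=\infty$. Next I would build $g\in C^{\infty}(\mathbb{R})$ as a sum of disjointly supported smooth bumps $g=\sum_{n\geq 1}h_n\,\psi\bigl(2(x_1-n)\bigr)$, where $\psi\in C_c^{\infty}\bigl((-\tfrac12,\tfrac12)\bigr)$ is a fixed profile and the heights $h_n$ are chosen so that $h_n^2 e^{-n^2/(2a_1^2)}$ is comparable to $A_n\triangleq e^{-n/a_1^2}/n$. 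Because the bumps have fixed width and disjoint supports, all three integrals decouple into series: the $n$-th bump contributes an amount comparable to $A_n$ to $\int g^2\,\mathrm{d}\bn_{a_1}$, an amount again comparable to $A_n$ to $\int (g')^2\,\mathrm{d}\bn_{a_1}$ (the fixed width keeps the derivative cost proportional to the $L^2$ mass), but an amount comparable to $A_n e^{n/a_1^2}\asymp 1/n$ to the weighted integral.

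With these choices $\sum_n A_n=\sum_n e^{-n/a_1^2}/n<\infty$ yields $f,\,D_1 f\in L^2$, hence $f\in H^{1}(\ell^2)$, while $\sum_n A_n e^{n/a_1^2}=\sum_n \tfrac{1}{n}=\infty$ forces $\int_{\ell^2}|f|^2 e^{x_1/a_1^2}\,\mathrm{d}P=\infty$, so $f(\tau_1)\notin L^2(\ell^2,P)$ and a fortiori $f(\tau_1)\notin H^{1}(\ell^2)$. The one non-mechanical point, and the main obstacle, is that the growth of $g$ needed to defeat the translated measure tends to force $g'$ to grow even faster: the naive oscillation-free choice $g\sim e^{x_1^2/(4a_1^2)}$ lies in $L^2(\bn_{a_1})$ yet has $g'\notin L^2(\bn_{a_1})$. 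The bump construction is designed precisely to decouple these competing demands, letting the heights $h_n$ carry the large growth that breaks translated square-integrability while the fixed bump width keeps $\int(g')^2\,\mathrm{d}\bn_{a_1}$ comparable to $\int g^2\,\mathrm{d}\bn_{a_1}$, both summable thanks to the geometric factor $e^{-n/a_1^2}$. A final routine step I would record is that $g'$ is genuinely the weak derivative $D_1 f$ in the sense of Definition \ref{231013def1}: integrating by parts in $x_1$ against a test function $\varphi\in C_F^{\infty}(\ell^2)$ produces boundary terms $\bigl[g\varphi\,e^{-x_1^2/(2a_1^2)}\bigr]$ that vanish along a sequence $R_k\to\infty$ chosen in the gaps between consecutive bumps, where $g\equiv 0$.
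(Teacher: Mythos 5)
Your proposal is correct in substance and takes a genuinely different route from the paper's. The paper argues softly and by contradiction: assuming $f(\tau_1)\in H^{1}(\ell^2)$ for \emph{every} $f\in H^{1}(\ell^2)$, it shows that the linear map $f\mapsto f(\tau_1)$ has closed graph (this is where Lemma \ref{20231123lem1} enters), hence is bounded by the closed graph theorem, and then plays boundedness against Lemma \ref{231013th1}, which records that translation rescales the $H^{1}$-norms of the cylinder bumps $f(x_1+n)$ by factors of order $e^{-n/(2a_1^2)}$. You instead produce an explicit witness: a cylinder function $f(\textbf{x})=g(x_1)$, with Lemma \ref{20231123lem1} used only as a change-of-measure identity, and you prove the strictly stronger statement $f(\tau_1)\notin L^{2}(\ell^2,P)$, which suffices because $H^{1}(\ell^2)\subset L^{2}(\ell^2,P)$ by definition. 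Besides being constructive, your route sidesteps a soft spot in the paper's deduction: boundedness of $f\mapsto f(\tau_1)$ contradicts only the \emph{upper} inequality in Lemma \ref{231013th1}, while the sequence $f(x_1+n)$, $n\in\mathbb{N}$, used in its proof violates only the \emph{lower} one; closing that argument requires translating the bumps in the opposite direction, $f(x_1-n)$, for which the ratio $\|f_n(\tau_1)\|_{H^{1}(\ell^2)}/\|f_n\|_{H^{1}(\ell^2)}$ blows up. Your construction has no such issue.

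One quantitative point in your bookkeeping needs repair, though it does not change the outcome. Across a bump of fixed width centered at $x_1=n$, the Gaussian density varies by the unbounded factor $e^{n/(2a_1^2)}$, so the $n$-th contributions to $\int g^2\,\mathrm{d}\bn_{a_1}$ and $\int (g')^2\,\mathrm{d}\bn_{a_1}$ are \emph{not} comparable to $A_n$ with constants uniform in $n$; with your choice of $h_n$ they are only bounded above by $C\,A_n e^{n/(4a_1^2)}=C\,e^{-3n/(4a_1^2)}/n$, and the weighted contribution is only bounded below by $c\,A_n e^{n/a_1^2}=c/n$ (restrict to the left half $[\,n-\tfrac14,\,n\,]$ of the bump, where $-x_1^2/(2a_1^2)+x_1/a_1^2$ attains its minimum at $x_1=n$, and take $\psi$ with $\int_{-1/2}^{0}\psi^2>0$). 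These one-sided bounds are exactly what your argument needs: the geometric factor $e^{-n/a_1^2}$ inside $A_n$ absorbs the loss $e^{n/(4a_1^2)}$, so both series controlling $\|f\|_{H^{1}(\ell^2)}$ still converge, while $\sum_n 1/n=\infty$ still forces $\int_{\ell^2}|f|^2e^{x_1/a_1^2}\,\mathrm{d}P=\infty$. A second, inessential slip: $e^{x_1^2/(4a_1^2)}$ is \emph{not} in $L^2(\bn_{a_1})$ (its square exactly cancels the Gaussian density); the motivating non-example should be, say, $e^{x_1^2/(4a_1^2)}/(1+|x_1|)$. Finally, the verification that $g'(x_1)$ is the weak derivative in the sense of Definition \ref{231013def1} is even easier than you suggest: test functions in $C_F^{\infty}(\ell^2)$ have support uniformly included in a ball $B(\textbf{0};R)$, so the integration by parts in $x_1$ takes place on a bounded interval and no boundary terms at infinity arise.
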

\begin{proof}
Suppose that for each $f\in H^{1}(\ell^2)$, it holds that $f(\tau_t)\in H^{1}(\ell^2)$. Then the following mapping is a linear mapping from $H^{1}(\ell^2)$ into itself:
\begin{eqnarray}\label{20231123for1}
f\mapsto f(\tau_1),\quad\forall\, f\in H^{1}(\ell^2).
\end{eqnarray}
If $\{f_n\}_{n=1}^{\infty}\subset  H^{1}(\ell^2)$ and $f,g\in  H^{1}(\ell^2)$ such that $\lim\limits_{n\to\infty}f_n=f$ and $\lim\limits_{n\to\infty}f_n(\tau_1)=g$ in $H^{1}(\ell^2)$. For each $r\in(0,+\infty)$ and $n\in\mathbb{N}$, by Lemma \ref{20231123lem1}, we have
\begin{eqnarray*}
\int_{B_r}|f_n(\tau_1)-f(\tau_1)|^2\mathrm{d}P
&=&\int_{\ell^2}|(\chi_{\tau_{1}(B_r)}\cdot f_n)(\tau_1)-(\chi_{\tau_{1}(B_r)}\cdot f)(\tau_1)|^2\mathrm{d}P\\
&=&e^{-\frac{1}{2a_1^2}} \int_{\ell^2}|\chi_{\tau_{1}(B_r)}\cdot f_n-\chi_{\tau_{1}(B_r)}\cdot f|^2e^{\frac{ x_1}{a_1^2}}\,\mathrm{d}P\\
&=&e^{-\frac{1}{2a_1^2}} \int_{\tau_{1}(B_r)}| f_n- f|^2e^{\frac{ x_1}{a_1^2}}\,\mathrm{d}P\\
&\leq &e^{-\frac{1}{2a_1^2}} \cdot\left(\sup_{\tau_{t}(B_r)}e^{\frac{ x_1}{a_1^2}}\right)\int_{\ell^2}| f_n- f|^2\,\mathrm{d}P,
\end{eqnarray*}
which implies that
\begin{eqnarray*}
\lim_{n\to\infty}\int_{B_r}|f_n(\tau_1)-f(\tau_1)|^2\mathrm{d}P=0.
\end{eqnarray*}
Since
\begin{eqnarray*}
\int_{B_r}|f_n(\tau_1)-g|^2\mathrm{d}P\leq \int_{\ell^2}|f_n(\tau_1)-g|^2\mathrm{d}P,
\end{eqnarray*}
\begin{eqnarray*}
\lim_{n\to\infty}\int_{B_r}|f_n(\tau_1)-g|^2\mathrm{d}P=0.
\end{eqnarray*}
Therefore,
\begin{eqnarray*}
\int_{B_r}|f(\tau_1)-g|^2\mathrm{d}P=0.
\end{eqnarray*}
Letting $r\to\infty$ and we obtain
\begin{eqnarray*}
\int_{\ell^2}|f(\tau_1)-g|^2\mathrm{d}P=0,
\end{eqnarray*}
which implies that $f(\tau_1)=g$. By the closed graph theorem, the mapping defined by \eqref{20231123for1} is a bounded linear operator which contradicts to Lemma \ref{20231123lem1}. This completes the proof of Proposition \ref{20231123prop1}.
\end{proof}

\begin{lemma}\label{20231111lem1}
Suppose that $f\in W^{1,2}(\ell^2)$. Then for each $\varphi\in \mathscr {C}_c^{\infty}$, it holds that
\begin{eqnarray}\label{20231111for1}
\int_{\ell^2} f D^{*}_i\varphi\,\mathrm{d}P
=\int_{\ell^2} (D_if)\varphi\,\mathrm{d}P.
\end{eqnarray}
\end{lemma}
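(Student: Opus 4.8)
The plan is to reduce the identity to the defining property of the weak derivative (Definition \ref{231013def1}), which a priori only tests against functions in $C_F^{\infty}(\ell^2)$. The difficulty is that a cylinder function $\varphi\in\mathscr{C}_c^{\infty}$, say $\varphi\in C_c^{\infty}(\mathbb{R}^n)$ depending only on $x_1,\dots,x_n$, has support $\{\textbf{x}\in\ell^2:(x_1,\dots,x_n)\in\mathrm{supp}_{\mathbb{R}^n}\varphi\}$, which is unbounded in $\ell^2$ (the tail coordinates are free); hence $\mathrm{supp}\,\varphi\stackrel{\circ}{\subset}\ell^2$ fails and $\varphi\notin C_F^{\infty}(\ell^2)$. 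So first I would localize $\varphi$ by a sequence of genuine admissible test functions and then pass to the limit.

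Concretely, fix $\zeta\in C^{\infty}(\mathbb{R};[0,1])$ with $\zeta\equiv 1$ on $(-\infty,1]$ and $\zeta\equiv 0$ on $[2,+\infty)$, and set $\eta_k(\textbf{x})\triangleq\zeta(||\textbf{x}||_{\ell^2}^2/k^2)$. Since $||\cdot||_{\ell^2}^2$ is norm-continuous with $D_i(||\textbf{x}||_{\ell^2}^2)=2x_i$, each $\eta_k$ is $F$-continuous and $\mathscr{B}(\ell^2)$-measurable, all its partials exist and are bounded $F$-continuous functions, $\mathrm{supp}\,\eta_k\subset\{||\textbf{x}||_{\ell^2}^2\le 2k^2\}$ is bounded (so $\stackrel{\circ}{\subset}\ell^2$), and
\[
\sum_{i=1}^{\infty}|D_i\eta_k(\textbf{x})|^2=\Big|\zeta'\big(||\textbf{x}||_{\ell^2}^2/k^2\big)\Big|^2\frac{4||\textbf{x}||_{\ell^2}^2}{k^4}\le \frac{8\,(\sup_{\mathbb{R}}|\zeta'|)^2}{k^2},
\]
because $D_i\eta_k$ is supported in $\{||\textbf{x}||_{\ell^2}^2\le 2k^2\}$. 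Thus $\eta_k\in C_F^{\infty}(\ell^2)$, and the same verification (together with $D_i(\eta_k\varphi)=(D_i\eta_k)\varphi+\eta_k D_i\varphi$ and the boundedness of $\varphi$ and $D_i\varphi$) shows $\eta_k\varphi\in C_F^{\infty}(\ell^2)$. Applying Definition \ref{231013def1} to the admissible test function $\eta_k\varphi$ and using $D_i^*(\eta_k\varphi)=\eta_k D_i^*\varphi-(D_i\eta_k)\varphi$, I obtain
\[
\int_{\ell^2}(D_if)\,\eta_k\varphi\,\mathrm{d}P=\int_{\ell^2}f\,\eta_k D_i^*\varphi\,\mathrm{d}P-\int_{\ell^2}f\,(D_i\eta_k)\varphi\,\mathrm{d}P.
\]

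It remains to let $k\to\infty$. For every $\textbf{x}\in\ell^2$ one has $\eta_k(\textbf{x})=1$ once $k^2\ge ||\textbf{x}||_{\ell^2}^2$, so $\eta_k\to 1$ pointwise with $0\le\eta_k\le 1$. Since $f\in L^2$, $\varphi$ is bounded, and $D_i^*\varphi=-D_i\varphi+\frac{x_i}{a_i^2}\varphi\in L^2(\ell^2,P)$ (the term $\frac{x_i}{a_i^2}\varphi$ lies in $L^2$ because $\int x_i^2|\varphi|^2\,\mathrm{d}P=a_i^2\int|\varphi|^2\,\mathrm{d}P<\infty$ by independence of the coordinates), both $f\,D_i^*\varphi$ and $(D_if)\varphi$ lie in $L^1$; dominated convergence sends the left-hand side to $\int_{\ell^2}(D_if)\varphi\,\mathrm{d}P$ and the first right-hand term to $\int_{\ell^2}f\,D_i^*\varphi\,\mathrm{d}P$. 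For the cross term, $|D_i\eta_k|\le 2(\sup_{\mathbb{R}}|\zeta'|)|x_i|/k^2$, so by Cauchy--Schwarz
\[
\Big|\int_{\ell^2}f\,(D_i\eta_k)\varphi\,\mathrm{d}P\Big|\le\frac{2\sup_{\mathbb{R}}|\zeta'|}{k^2}\,||f\varphi||_{L^2(\ell^2,P)}\Big(\int_{\ell^2}x_i^2\,\mathrm{d}P\Big)^{1/2}=\frac{2\,(\sup_{\mathbb{R}}|\zeta'|)\,a_i}{k^2}\,||f\varphi||_{L^2(\ell^2,P)}\to 0,
\]
which yields the claimed equality \eqref{20231111for1}.

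The main obstacle I anticipate is guaranteeing that the localized functions $\eta_k\varphi$ genuinely lie in $C_F^{\infty}(\ell^2)$ --- in particular the \emph{unweighted} uniform bound $\sup_{\ell^2}\sum_i|D_i(\eta_k\varphi)|^2<\infty$ demanded by the definition. This is precisely why I would use the radial cutoffs $\eta_k=\zeta(||\cdot||_{\ell^2}^2/k^2)$, whose gradients obey $\sum_i|D_i\eta_k|^2\lesssim k^{-2}$, rather than the compactly supported cutoffs $\{X_k\}$ of Theorem \ref{230215Th1}: the latter only satisfy the \emph{weighted} estimate $\sum_i a_i^2|D_iX_k|^2\le C^2$, and $\sum_i|D_iX_k|^2$ need not be finite, so $X_k\varphi$ would not obviously be an admissible test function. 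Everything else is routine dominated-convergence bookkeeping.
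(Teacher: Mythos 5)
Your proof is correct and takes essentially the same route as the paper's: the paper likewise multiplies $\varphi$ by a radial cutoff $\varphi_k(\textbf{x})=\psi\big(||\textbf{x}||_{\ell^2}^2/k\big)$ to obtain an admissible test function $\varphi_k\cdot\varphi\in C_F^{\infty}(\ell^2)$, applies Definition \ref{231013def1}, and lets $k\to\infty$. Your write-up merely supplies the limiting details (dominated convergence and the Cauchy--Schwarz estimate showing the cross term $\int_{\ell^2} f\,(D_i\eta_k)\varphi\,\mathrm{d}P$ vanishes) that the paper leaves implicit.
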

\begin{proof}
Choosing $\psi\in C^{\infty}(\mathbb{R};[0,1])$ such that $\psi(x)=1$ for all $x\leq 1$ and $\psi(x)=0$ for all $x\geq 2$.
 Let $\varphi_k(\textbf{x})\triangleq \psi \left(\frac{||\textbf{x}||^2_{\ell^2}}{k}\right),\,\forall\,k\in\mathbb{N},\,\textbf{x}\in\ell^2$. Then for each $k\in\mathbb{N}$, $\varphi_k\cdot \varphi\in C_F^{\infty}(\ell^2)$ and
 $$
 D^{*}_i(\varphi_k\cdot\varphi)=-(D_i\varphi)\cdot \varphi_k-(D_i\varphi_k)\cdot \varphi +\frac{x_i}{a_i^2}\cdot \varphi_k\cdot \varphi.
 $$
By Definition \ref{231013def1}, we have
\begin{eqnarray*}
\int_{\ell^2} f \cdot \left(-(D_i\varphi)\cdot \varphi_k-(D_i\varphi_k)\cdot \varphi +\frac{x_i}{a_i^2}\cdot \varphi_k\cdot \varphi\right)\,\mathrm{d}P
=\int_{\ell^2} (D_if)\cdot\varphi_k\cdot \varphi\,\mathrm{d}P.
\end{eqnarray*}
Letting $k\to\infty$ and we arrive at \eqref{20231111for1} which completes the proof of Lemma \ref{20231111lem1}.
\end{proof}

Recall the sequence $\{X_n\}_{n=1}^{\infty}$ as in Theorem \ref{230215Th1}.
\begin{lemma}\label{lemma21gc}
Suppose that $f\in H^{1}(O)$. Then $\{X_n\cdot f\}_{n=1}^{\infty}\subset H^{1}(O)$ and
\begin{eqnarray*}
 \lim_{n\to\infty}||X_n\cdot f-f||_{H^{1}(O)}=0.
\end{eqnarray*}
\end{lemma}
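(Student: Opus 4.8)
The plan is to prove the Leibniz (product) rule $D_i(X_n\cdot f)=X_n\cdot D_if+(D_iX_n)\cdot f$ in the weak sense of Definition \ref{231013def1}, and then deduce the $H^1$-convergence by dominated convergence. First I would fix $n$ and set $g_i\triangleq X_n\cdot D_if+(D_iX_n)\cdot f$ as the candidate for the $i$-th weak partial derivative of $X_n\cdot f$. Since $0\leqslant X_n\leqslant 1$ we have $X_n\cdot f\in L^2(O,P)$, and using the key bound $\sum_{i=1}^{\infty}a_i^2|D_iX_n|^2\leqslant C^2$ from part 2) of Theorem \ref{230215Th1} together with $f\in H^1(O)$, I would check
\begin{equation*}
\sum_{i=1}^{\infty}a_i^2\int_O|g_i|^2\,\mathrm{d}P\leqslant 2\sum_{i=1}^{\infty}a_i^2\int_O|D_if|^2\,\mathrm{d}P+2C^2\int_O|f|^2\,\mathrm{d}P<\infty,
\end{equation*}
so that once the $g_i$ are identified as the weak derivatives, $X_n\cdot f$ automatically carries the summability required for membership in $H^1(O)$.

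The heart of the argument, and the step I expect to be the main obstacle, is to verify that $g_i$ really is the weak derivative, i.e. that $\int_O g_i\,\varphi\,\mathrm{d}P=\int_O (X_n\cdot f)\,D_i^*\varphi\,\mathrm{d}P$ for every $\varphi\in C_F^{\infty}(O)$. A direct computation gives $D_i^*(X_n\varphi)=X_n\,D_i^*\varphi-(D_iX_n)\varphi$, so the desired identity is equivalent to
\begin{equation*}
\int_O f\,D_i^*(X_n\varphi)\,\mathrm{d}P=\int_O (D_if)(X_n\varphi)\,\mathrm{d}P,
\end{equation*}
which is exactly the weak-derivative relation for $f$ tested against $\psi\triangleq X_n\varphi$. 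The difficulty is that $\psi$ need not lie in $C_F^{\infty}(O)$: although $\mathrm{supp}\,\psi$ is compact (contained in $\mathrm{supp}\,X_n\subset K_{n+2N_1}$ by \eqref{230409eq1} and Lemma \ref{basic propeties of Knm}), $\mathrm{supp}\,\psi\subset\mathrm{supp}\,\varphi\stackrel{\circ}{\subset}O$, and $\sum_i a_i^2|D_i\psi|^2$ is bounded, the \emph{unweighted} sum $\sum_i|D_iX_n|^2$ may diverge, so $\psi$ fails the defining requirement $\sup_O\sum_i|D_i\psi|^2<\infty$ of $C_F^{\infty}(O)$ and cannot be inserted into Definition \ref{231013def1} as it stands. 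To overcome this I would broaden the admissible test class exactly as in the complex setting: this is the real analogue of the multiplier statement Proposition \ref{general multiplitier}, whose proof rests on extending the weak identity to functions that are only $F$-continuously differentiable with compactly supported, weighted-square-summable gradient (Proposition \ref{weak equality for more test functions}) via the approximation scheme of Lemma \ref{F}. Concretely, I would produce, by reduction of dimension and mollification, functions $\psi_k$ with $\psi_k\to\psi$ and $D_i\psi_k\to D_i\psi$ almost everywhere and with the uniform domination $\sum_i a_i^2|D_i\psi_k|^2\leqslant\sup_O\sum_i a_i^2|D_i\psi|^2<\infty$; applying the weak identity (Lemma \ref{20231111lem1}) to each $\psi_k$ and passing to the limit by dominated convergence—controlled by $f,\,D_if\in L^2(O,P)$ and the weighted gradient bound—yields the identity for $\psi$, hence the product rule and $X_n\cdot f\in H^1(O)$.

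Finally, the convergence is routine once the product rule is in hand. I would write $X_n\cdot f-f=(X_n-1)f$ and $D_i(X_n\cdot f)-D_if=(X_n-1)D_if+(D_iX_n)f$, so that
\begin{equation*}
\|X_n\cdot f-f\|_{H^1(O)}^2\leqslant \int_O|X_n-1|^2|f|^2\,\mathrm{d}P+2\int_O|X_n-1|^2\sum_{i=1}^{\infty}a_i^2|D_if|^2\,\mathrm{d}P+2\int_O|f|^2\sum_{i=1}^{\infty}a_i^2|D_iX_n|^2\,\mathrm{d}P.
\end{equation*}
By \eqref{230409eq1} we have $X_n\equiv1$ and $D_iX_n\equiv0$ on $K_n$, and since $K_n\subset K_{n+1}$ with $P(\bigcup_n K_n)=1$ (Lemma \ref{basic propeties of Knm}), it follows that $X_n\to1$ and $\sum_i a_i^2|D_iX_n|^2\to0$ $P$-almost everywhere. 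Each of the three integrands is dominated ($P$-a.e.) by an integrable majorant—respectively $|f|^2$, $\sum_i a_i^2|D_if|^2$, and $C^2|f|^2$—so the dominated convergence theorem forces every term to zero, giving $\lim_{n\to\infty}\|X_n\cdot f-f\|_{H^1(O)}=0$.
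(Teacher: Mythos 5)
Your overall architecture is exactly the paper's: reduce everything to the weak Leibniz rule $D_i(X_n\cdot f)=X_n\cdot D_if+(D_iX_n)\cdot f$, get membership in $H^1(O)$ from the weighted bound $\sum_i a_i^2|D_iX_n|^2\leqslant C^2$ of Theorem \ref{230215Th1}, and finish by dominated convergence using \eqref{230409eq1} and Lemma \ref{basic propeties of Knm}. Your algebraic reduction (via $D_i^*(X_n\varphi)=X_nD_i^*\varphi-(D_iX_n)\varphi$), your summability estimate, and your final limit argument are all correct and coincide with the paper's. Where you diverge is at the key identity $\int_O f\,D_i^*(X_n\varphi)\,\mathrm{d}P=\int_O (D_if)\,X_n\varphi\,\mathrm{d}P$: the paper does \emph{no} approximation here at all; it simply asserts that $\varphi\cdot X_n\in C_F^{\infty}(O)$ and applies Definition \ref{231013def1} with $\varphi\cdot X_n$ as the test function. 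Your objection to that assertion — Theorem \ref{230215Th1} only yields $|D_iX_n|\leqslant C/a_i$, so the \emph{unweighted} sum $\sum_i|D_i(X_n\varphi)|^2$ demanded by the stated definition of $C_F^{\infty}(O)$ (and by \eqref{20231110for6}) is not controlled — is a fair reading and flags a genuine imprecision in the paper: the real-variable Sobolev section states unweighted gradient conditions while every bound actually available for $X_n$ is weighted, as in the complex sections.

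However, your repair has a genuine gap: it is built from whole-space tools and does not survive on a general open set $O$. First, Lemma \ref{20231111lem1} is a statement about $f\in W^{1,2}(\ell^2)$; for $f\in H^1(O)$ the weak derivative is \emph{defined} only through test functions in $C_F^{\infty}(O)$, so that lemma cannot be invoked. Second, and more fatally, the approximants $\psi_k$ you produce by reduction of dimension and mollification of $\psi=X_n\varphi$ are cylinder functions: their supports, viewed in $\ell^2$, are sets of the form $E\times\prod_{i>k}\mathbb{R}$, which are never uniformly included in $O$ unless $O=\ell^2$. Hence the $\psi_k$ are not admissible test functions in Definition \ref{231013def1}, and the identity you want to pass to the limit is not available for any $k$. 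The repair that does work is to approximate $X_n$ rather than $\psi$: let $X_n^{(k)}$ be the reduction of dimension of $X_n$ to the first $k$ coordinates (as in Definition \ref{Integration reduce dimension}, no mollification needed). Then $\varphi\cdot X_n^{(k)}$ has support inside $\supp\varphi\stackrel{\circ}{\subset}O$, and only its first $k$ partial derivatives are nonzero, each bounded since $|D_iX_n^{(k)}|\leqslant\sup_{\ell^2}|D_iX_n|\leqslant C/a_i$ by Jensen's inequality; consequently
\begin{equation*}
\sup_O\sum_{i=1}^{\infty}\bigl|D_i(\varphi\cdot X_n^{(k)})\bigr|^2\leqslant 2\sup_O\sum_{i=1}^{\infty}|D_i\varphi|^2+2\Bigl(\sup_O|\varphi|^2\Bigr)\,C^2\sum_{i=1}^{k}a_i^{-2}<\infty,
\end{equation*}
so $\varphi\cdot X_n^{(k)}\in C_F^{\infty}(O)$ even under the unweighted definition, and Definition \ref{231013def1} applies directly. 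Letting $k\to\infty$, Proposition \ref{Reduce diemension} (martingale convergence of $X_n^{(k)}$ and of $D_iX_n^{(k)}=(D_iX_n)^{(k)}$, along a subsequence $P$-a.e.) together with the dominations $|X_n^{(k)}|\leqslant 1$, $|D_iX_n^{(k)}|\leqslant C/a_i$ and the integrability of $|f|$, $|x_if|$, $|D_if|$ on $O$ yields the identity for $\varphi\cdot X_n$ by dominated convergence. With this substitution your argument is complete for every open $O\subset\ell^2$; as written, it proves the lemma only for $O=\ell^2$.
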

\begin{proof}
Note that for any $\varphi\in C_F^{\infty}(O)$ and $i\in\mathbb{N}$, it holds that
\begin{eqnarray*}
&& \int_{O}X_n\cdot f\cdot( D_i^*\varphi)\,\mathrm{d}P\\
 &=&-\int_{O}X_n\cdot f\cdot\bigg(D_i \varphi-\frac{x_i}{a_i^2}\cdot \varphi\bigg)\,\mathrm{d}P\\
 &=&-\int_{O}\bigg(X_n\cdot f\cdot(D_i \varphi)-X_n\cdot f\cdot\frac{x_i}{a_i^2}\cdot \varphi\bigg)\,\mathrm{d}P\\
 &=&-\int_{O}\bigg(f\cdot D_i(\varphi\cdot X_n) -f\cdot\varphi \cdot(D_i X_n)-X_n\cdot f\cdot\frac{x_i}{a_i^2}\cdot \varphi\bigg)\,\mathrm{d}P.
\end{eqnarray*}
Note that $\varphi\cdot X_n\in C_F^{\infty}(O)$ and by Definition \ref{231013def1}, we have
\begin{eqnarray*}
&&-\int_{O}\bigg(f\cdot D_i(\varphi\cdot X_n) -f\cdot\varphi \cdot(D_i X_n)-X_n\cdot f\cdot\frac{x_i}{a_i^2}\cdot \varphi\bigg)\,\mathrm{d}P\\
&=&\int_{O}\bigg(f\cdot D_i^*(\varphi\cdot X_n) +f\cdot\varphi \cdot(D_i X_n)\bigg)\,\mathrm{d}P\\
&=&\int_{O}\bigg( (D_i f)\cdot\varphi \cdot X_n+f\cdot\varphi \cdot (D_iX_n)\bigg)\,\mathrm{d}P\\
&=&\int_{O}\bigg( X_n\cdot (D_i f)+ (D_i X_n)\cdot f \bigg)\cdot\varphi \,\mathrm{d}P.
\end{eqnarray*}
Therefore, we arrive at
\begin{eqnarray*}
 \int_{O}X_n\cdot f\cdot( D_i^*\varphi)\,\mathrm{d}P
&=&\int_{O}\bigg( X_n\cdot (D_i f)+ (D_i X_n)\cdot f \bigg)\cdot\varphi \,\mathrm{d}P.
\end{eqnarray*}
Observe that
\begin{eqnarray*}
&&\sum_{i=1}^{\infty}a_i^2\cdot\int_{O}\left| X_n\cdot (D_i f)+ (D_i X_n)\cdot f \right|^2\,\mathrm{d}P\\
&\leq&2\sum_{i=1}^{\infty}a_i^2\cdot\int_{O} \left(| X_n\cdot (D_i f)|^2+ |(D_i X_n)\cdot f  |^2\right)\,\mathrm{d}P\\
&\leq&2\sum_{i=1}^{\infty}a_i^2\cdot\int_{O} |D_i f|^2 \,\mathrm{d}P+
2\cdot\sup_{\ell^2}\left(\sum_{i=1}^{\infty}a_i^2\cdot|D_i X_n|^2\right)\cdot\int_{O}  |f  |^2 \,\mathrm{d}P<\infty.
\end{eqnarray*}
Thus $X_n\cdot f\in H^{1}(O)$,\,$D_i(X_n\cdot f)=X_n\cdot (D_i f)+ (D_i X_n)\cdot f $ for each $i\in\mathbb{N}$ and
\begin{eqnarray*}
&&||X_n\cdot f-f||_{H^{1}(O)}^2\\
&=&\int_{O}|( X_n-1)f|^2\,\mathrm{d}P+\sum_{i=1}^{\infty}a_i^2\cdot\int_{O}\bigg|(X_n-1)\cdot (D_i f)+f \cdot (D_i X_n)\bigg|^2\,\mathrm{d}P\\
&\leq&\int_{O}|(X_n-1)f|^2\,\mathrm{d}P\\
&&\qquad\qquad\qquad+2\sum_{i=1}^{\infty}a_i^2\cdot\int_{O}\bigg(|(X_n-1)\cdot (D_i f)|^2+|f \cdot(D_i X_n)|^2\bigg)\,\mathrm{d}P\\
&=&\int_{O}|(X_n-1)f|^2\,\mathrm{d}P+2\int_{O}|X_n-1|^2\cdot\left( \sum_{i=1}^{\infty}a_i^2\cdot|D_i f|^2\right)\,\mathrm{d}P\\
&&+2\cdot\int_{O}\left(\sum_{i=1}^{\infty}a_i^2\cdot|D_i X_n|^2\right)\cdot |f|^2 \,\mathrm{d}P.
\end{eqnarray*}
Combining Theorem \ref{230215Th1}, we have $\lim\limits_{n\to\infty}||X_n\cdot f-f||_{H^{1}(O)}=0$ which completes the proof of Lemma \ref{lemma21gc}.
\end{proof}

Suppose $f\in L^1(\ell^2, P)$ and $n\in\mathbb{N}$. Recall the definition of $f_n$ as in Definition \ref{Integration reduce dimension}.
\begin{lemma}\label{20231120lem3}
Suppose that $f\in C_F^{\infty}(\ell^2)$. Then $f_{n}\in C_c^{\infty}(\mathbb{R}^n)$.
\end{lemma}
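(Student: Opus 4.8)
The plan is to show two things: that $f_n$, regarded as a function of the $n$ real variables $\textbf{x}_n=(x_1,\dots,x_n)$, is of class $C^\infty$, and that it has compact support in $\mathbb{R}^n$. The smoothness I would obtain by differentiating under the integral sign, while the support statement is an immediate consequence of the boundedness of $\supp f$ built into the hypothesis $f\in C_F^\infty(\ell^2)$. First I would record the consequences of that hypothesis: every partial derivative $D^\alpha f$ exists, is $F$-continuous, $\mathscr{B}(\ell^2)$-measurable, and is uniformly bounded, say $\sup_{\ell^2}|D^\alpha f|\le M_\alpha<\infty$; moreover $\supp f\stackrel{\circ}{\subset}\ell^2$, so by Definition \ref{def of bounded contained} there is $R\in(0,\infty)$ with $\supp f\subset B(\textbf{0};R)$. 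Writing $\textbf{x}=(\textbf{x}_n,\textbf{x}^n)$ with $\textbf{x}^n\in\ell^2(\mathbb{N}\setminus\{1,\dots,n\})$ and recalling that $P_n$ is a probability measure on the tail factor, the integral $f_n(\textbf{x}_n)=\int f(\textbf{x}_n,\textbf{x}^n)\,\mathrm{d}P_n(\textbf{x}^n)$ is well defined and finite by measurability of $f$ and Fubini's theorem.

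For the compact support, I would observe that if $\|\textbf{x}_n\|_{\mathbb{R}^n}>R$ then $\|(\textbf{x}_n,\textbf{x}^n)\|_{\ell^2}\ge\|\textbf{x}_n\|_{\mathbb{R}^n}>R$ for every $\textbf{x}^n$, hence $f(\textbf{x}_n,\textbf{x}^n)=0$ and therefore $f_n(\textbf{x}_n)=0$. Thus $\supp f_n\subset\{\textbf{x}_n:\|\textbf{x}_n\|_{\mathbb{R}^n}\le R\}$, which is compact in $\mathbb{R}^n$.

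For smoothness, the key point is that by Definition \ref{230407def1}, for each fixed $\textbf{x}^n$ the section $\textbf{x}_n\mapsto f(\textbf{x}_n,\textbf{x}^n)$ is continuous on an open subset of $\mathbb{R}^n$, and since all its partials $D^\alpha f$ exist and are likewise $F$-continuous, their restrictions to the same sections are continuous in $(x_1,\dots,x_n)$ jointly; the classical criterion then yields that each such section is genuinely of class $C^\infty$. I would then prove by induction on $|\alpha|$ that $D^\alpha f_n=(D^\alpha f)_n$ for every multi-index $\alpha$ supported in $\{1,\dots,n\}$. The inductive step differentiates under the integral sign: forming the difference quotient of $(D^\beta f)_n$ in the direction $\textbf{e}_j$ and applying the mean value theorem on the smooth section, the integrand is dominated uniformly by the constant $M_{\beta+\textbf{e}_j}$, which is $P_n$-integrable because $P_n$ is a probability measure; the Lebesgue dominated convergence theorem then passes the limit inside and identifies the derivative as $(D^{\beta+\textbf{e}_j}f)_n$. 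Continuity of each $(D^\alpha f)_n$ follows by the same dominated convergence argument combined with the $F$-continuity of $D^\alpha f$. Combining the two parts gives $f_n\in C_c^\infty(\mathbb{R}^n)$.

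I expect the only genuinely delicate point to be the differentiation-under-the-integral step, namely producing the uniform dominating function and verifying the pointwise convergence of the difference quotients via the mean value theorem applied to the finite-dimensional sections; the remaining ingredients (measurability and well-definedness of $f_n$, the support bound, and the inductive bookkeeping in the multi-indices) are routine once the defining properties of $C_F^\infty(\ell^2)$ are unpacked.
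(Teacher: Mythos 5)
Your proof is correct and follows essentially the same route as the paper's: the paper likewise obtains continuity of $f_n$ from the boundedness and $F$-continuity of $f$ via the bounded convergence theorem, and obtains compact support from the bound on $\supp f$ coming from $\supp f\stackrel{\circ}{\subset}\ell^2$. You in fact go further than the paper, which disposes of the smoothness with the phrase ``it is easy to see that $f_n\in C^{\infty}(\mathbb{R}^n)$''; your differentiation-under-the-integral argument (mean value theorem on the finite-dimensional sections, the uniform bounds $M_\alpha$ on $D^\alpha f$, dominated convergence, and the identity $D^\alpha f_n=(D^\alpha f)_n$) is exactly the detail the paper omits, and it is sound.
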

\begin{proof}
Suppose that $\{\textbf{x}_n^k=(x_1^k,\cdots,x_n^k )\}_{k=0}^{\infty}\subset \mathbb{R}^n$, and $\lim\limits_{k\to\infty}\textbf{x}_n^k=\textbf{x}_n^0$ in $\mathbb{R}^n$. Since $f\in C_F^{\infty}(\ell^2)$,
 \begin{itemize}
\item[(1)]there exists $r\in (0,+\infty)$ such that $f(\textbf{x})=0$ for all $\textbf{x}\in\ell^2$ satisfying  $||\textbf{x}||_{\ell^2}>r$;
\item[(2)]
$\lim\limits_{k\to\infty}f(\textbf{x}_n^k,\textbf{x}^n)=f(\textbf{x}_n^0,\textbf{x}^n),\qquad \forall\,\,\textbf{x}^n\in \ell^{2}(\mathbb{N}\setminus\{1,\cdots,n\});$
\item[(3)] there exists $M\in(0,+\infty)$ such that $|f(\textbf{x})|\leq M$ for all $\textbf{x}\in\ell^2$.
\end{itemize}
By the Bounded Convergence Theorem, we have
\begin{eqnarray*}
&&\lim_{k\to\infty}\int_{\ell^2(\mathbb{N}\setminus\{1,\cdots,n\})} f(\textbf{x}_n^k,\textbf{x}^n)\,\mathrm{d}P^{\widehat{1,\cdots,n}}(\textbf{x}^n)\\
&=&\int_{\ell^2(\mathbb{N}\setminus\{1,\cdots,n\})} f(\textbf{x}_n^0,\textbf{x}^n)\,\mathrm{d}P^{\widehat{1,\cdots,n}}(\textbf{x}^n),
\end{eqnarray*}
which implies that $f_n$ is a continuous function on $\mathbb{R}^n$.

For each $\textbf{x}_n\in\mathbb{R}^n$ such that $||\textbf{x}_n||_{\mathbb{R}^n}>r$. Then for any $\textbf{x}^n\in \ell^{2}(\mathbb{N}\setminus\{1,\cdots,n\})$, we have $f(\textbf{x}_n,\textbf{x}^n)=0$ and hence
\begin{eqnarray*}
f_n(\textbf{x}_n)=\int_{\ell^2(\mathbb{N}\setminus\{1,\cdots,n\})} f(\textbf{x}_n,\textbf{x}^n)\,\,\mathrm{d}P^{\widehat{1,\cdots,n}}(\textbf{x}^n)=0,
\end{eqnarray*}
which implies that supp$f_n\subset\{\textbf{x}_n\in\mathbb{R}^n:||\textbf{x}_n||_{\mathbb{R}^n}\leq r\}$. At last, it is easy to see that $f_n\in C^{\infty}(\mathbb{R}^n)$ which completes the proof Lemma \ref{20231120lem3}.
\end{proof}

\begin{lemma}
\label{20231123lem2}
For each $n\in\mathbb{N}$, it holds that $W^{1,2}(\mathbb{R}^n,\mathcal{N}^n)\subset W^{1,2}(\ell^2)$. Precisely, for each $f\in W^{1,2}(\mathbb{R}^n,\mathcal{N}^n)$,
\begin{itemize}
\item[(1)] $f\in W^{1,2}(\ell^2)$;
\item[(2)] $D_if=0$ in the sense of $W^{1,2}(\ell^2)$,\,\,$\forall\, i>n$;
\item[(3)]  $D_if=D_if$, where the left ``$D_i$'' is in the sense of $W^{1,2}(\mathbb{R}^n,\mathcal{N}^n)$ and the right ``$D_i$'' is in the sense of $W^{1,2}(\ell^2)$,\,\,$\forall\,\, 1\leq i\leq n$.
\end{itemize}
\end{lemma}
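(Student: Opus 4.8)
The plan is to exploit the product structure of the Gaussian measure, $P=\mathcal{N}^n\times P^{\widehat{1,\cdots,n}}$ (the restriction to $\ell^2$ of $\prod_{i=1}^n\bn_{a_i}$ times the tail product $\prod_{i>n}\bn_{a_i}$), together with the reduction-of-dimension device underlying Lemma \ref{20231120lem3}. Since $f$ depends only on the first $n$ coordinates and $P^{\widehat{1,\cdots,n}}$ is a probability measure, Fubini immediately gives $\int_{\ell^2}|f|^2\,\mathrm{d}P=\int_{\mathbb{R}^n}|f|^2\,\mathrm{d}\mathcal{N}^n<\infty$, so $f\in L^2(\ell^2,P)$. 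The real content is to identify the weak partial derivatives of $f$ in the sense of Definition \ref{231013def1}, for which I would test against an arbitrary $\varphi\in C_F^\infty(\ell^2)$ and reduce each integral over $\ell^2$ to one over $\mathbb{R}^n$. Recall that such a $\varphi$ is bounded, $F$-continuous, has bounded partial derivatives, and has bounded support, so every integral appearing below is absolutely convergent by the Cauchy--Schwarz inequality and the finiteness of the Gaussian moments of $x_i$.

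For $i>n$ I would show $D_if=0$, i.e. $\int_{\ell^2}fD_i^*\varphi\,\mathrm{d}P=0$. Writing $D_i^*\varphi=-D_i\varphi+\frac{x_i}{a_i^2}\varphi$ and integrating first in the variable $x_i$ (which $f$ does not involve), Fubini reduces the claim to the one-dimensional identity $\int_{\mathbb{R}}D_i^*\varphi\,\mathrm{d}\bn_{a_i}(x_i)=0$. This is exactly the statement that $D_i^*$ is the formal adjoint of $D_i$ for the Gaussian weight: integrating $-D_i\varphi$ by parts against $\frac{1}{\sqrt{2\pi a_i^2}}e^{-x_i^2/2a_i^2}$ produces precisely $-\int\frac{x_i}{a_i^2}\varphi\,\mathrm{d}\bn_{a_i}$, while the boundary terms vanish because $\varphi$ has bounded support. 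This yields claim (2).

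For $1\le i\le n$, let $g\triangleq D_if\in L^2(\mathbb{R}^n,\mathcal{N}^n)$ denote the finite-dimensional weak derivative, which exists by hypothesis; I must verify $\int_{\ell^2}fD_i^*\varphi\,\mathrm{d}P=\int_{\ell^2}g\varphi\,\mathrm{d}P$. Here I would first reduce the test function: by Lemma \ref{20231120lem3}, $\varphi_n\triangleq\int_{\ell^2(\mathbb{N}\setminus\{1,\cdots,n\})}\varphi\,\mathrm{d}P^{\widehat{1,\cdots,n}}$ lies in $C_c^\infty(\mathbb{R}^n)$, and since $D_i^*$ (for $i\le n$) acts only on the first $n$ coordinates, it commutes with integration over the tail, giving $\int D_i^*\varphi\,\mathrm{d}P^{\widehat{1,\cdots,n}}=D_i^*\varphi_n$ (differentiation under the integral sign being legitimate because the partials of $\varphi$ are bounded). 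Applying Fubini and then the finite-dimensional Definition \ref{20231123def1} with the admissible test function $\varphi_n$, I obtain
\[
\int_{\ell^2}fD_i^*\varphi\,\mathrm{d}P=\int_{\mathbb{R}^n}fD_i^*\varphi_n\,\mathrm{d}\mathcal{N}^n=\int_{\mathbb{R}^n}g\varphi_n\,\mathrm{d}\mathcal{N}^n=\int_{\ell^2}g\varphi\,\mathrm{d}P,
\]
the last equality again by Fubini since $g$ is a cylinder function. This proves $D_if=g$, which is claim (3). Claim (1) then follows by assembling the pieces: $\sum_{|\alpha|\le1}\textbf{a}^\alpha\int_{\ell^2}|D^\alpha f|^2\,\mathrm{d}P=\int_{\mathbb{R}^n}|f|^2\,\mathrm{d}\mathcal{N}^n+\sum_{i=1}^n a_i^2\int_{\mathbb{R}^n}|D_if|^2\,\mathrm{d}\mathcal{N}^n\le\|f\|_{W^{1,2}(\mathbb{R}^n,\mathcal{N}^n)}^2<\infty$.

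The main obstacle I anticipate is not a single hard estimate but the careful justification of the two interchanges---Fubini and, above all, the differentiation under the integral sign yielding $\int D_i^*\varphi\,\mathrm{d}P^{\widehat{1,\cdots,n}}=D_i^*\varphi_n$---together with confirming at each step that the integrands are genuinely $P$-integrable when $f$ is merely $L^2$. These are controlled by the boundedness of $\varphi$ and of its partials and by the finiteness of all Gaussian moments, but they must be stated with care. Should the direct manipulation prove delicate, an alternative is to first establish the identities for $f\in C_c^\infty(\mathbb{R}^n)$, where weak and classical derivatives coincide, and then pass to the limit using the density of $C_c^\infty(\mathbb{R}^n)$ in $W^{1,2}(\mathbb{R}^n,\mathcal{N}^n)$ from Proposition \ref{20231123prop2} together with the continuity of the linear maps $f\mapsto\int_{\ell^2}fD_i^*\varphi\,\mathrm{d}P$.
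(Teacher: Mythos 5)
Your proposal is correct. For parts (1) and (3) it coincides with the paper's own argument: reduce the test function $\varphi\in C_F^{\infty}(\ell^2)$ to $\varphi_n\in C_c^{\infty}(\mathbb{R}^n)$ by integrating out the tail (Lemma \ref{20231120lem3}), commute $D_i^*$ with the tail integration, invoke the finite-dimensional Definition \ref{20231123def1}, and undo Fubini; you are in fact more explicit than the paper about the one delicate step, namely the differentiation under the integral sign $\int D_i\varphi\,\mathrm{d}P^{\widehat{1,\cdots,n}}=D_i\varphi_n$, which you justify by dominated convergence using the uniform bound on $D_i\varphi$, and which the paper performs silently in its third displayed equality. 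Where you genuinely diverge is part (2): the paper proves $D_if=0$ for $i>n$ by approximation, taking $g_k\in C_c^{\infty}(\mathbb{R}^n)$ with $g_k\to f$ in $W^{1,2}(\mathbb{R}^n,\mathcal{N}^n)$ (Proposition \ref{20231123prop2}), passing to the limit in $\int_{\ell^2}fD_i^*\varphi\,\mathrm{d}P$, and using $D_ig_k=0$; you instead factor $P=\bn_{a_i}\times P^{\widehat{i}}$ across the $i$-th coordinate and observe that $\int_{\mathbb{R}}D_i^*\varphi\,\mathrm{d}\bn_{a_i}(x_i)=0$ pointwise in the remaining variables, by one-dimensional Gaussian integration by parts (the boundary terms vanish since the bounded support of $\varphi$ forces $\varphi$ to vanish for large $|x_i|$ along each line). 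Your route is more elementary, requiring no density theorem for this part and working coordinate-wise; the paper's route reuses Proposition \ref{20231123prop2}, which it needs elsewhere anyway, and your stated fallback (density plus continuity of $f\mapsto\int_{\ell^2}fD_i^*\varphi\,\mathrm{d}P$) is precisely the paper's argument. Your integrability checks, bounding $|fD_i^*\varphi|$ by Cauchy--Schwarz against the Gaussian moment of $x_i$, are exactly what is needed to legitimize the Fubini steps in either version.
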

\begin{proof}
For any $f\in W^{1,2}(\mathbb{R}^n,\mathcal{N}^n)$, we view $f$ as a cylinder function on $\ell^2$ that only depends the first $n$ variables. Firstly, we have
\begin{eqnarray*}
\int_{\ell^2}|f|^2\,\mathrm{d}P=\int_{\mathbb{R}^n}|f|^2\,\mathrm{d}\mathcal{N}^n.
\end{eqnarray*}
By finite dimensional result, there exists $\{g_k\}_{k=1}^{\infty}\subset C_c^{\infty}(\mathbb{R}^n)$ such that $\lim\limits_{k\to\infty}||g_k-f||_{W^{1,2}(\mathbb{R}^n,\mathcal{N}^n)}=0.$
For each $i>n$ and $\varphi\in C_F^{\infty}(\ell^2)$, it holds that
\begin{eqnarray*}
(D_if)(\varphi)
&=& \int_{\ell^2}fD_i^*\varphi\,\mathrm{d}P
=\lim_{k\to\infty}\left( \int_{\ell^2}g_k D_i^*\varphi\,\mathrm{d}P\right)\\
&=&\lim_{k\to\infty}\left(\int_{\ell^2}(D_ig_k)\varphi\,\mathrm{d}P\right)=0,
\end{eqnarray*}
which implies that $D_if=0$.

For $i=1,\cdots, n$ and $\varphi\in C_F^{\infty}(\ell^2)$, we have
\begin{eqnarray*}
(D_if)(\varphi)
&=&\int_{\ell^2}fD_i^*\varphi\,\mathrm{d}P\\
&=&\int_{\mathbb{R}^n\times \ell^2(\mathbb{N}\setminus\{1,2,\cdots,n\})}f(\textbf{x}_n)
\bigg(-D_i\varphi(\textbf{x}_n,\textbf{x}^n)\\
&&+\frac{x_i}{a_i^2}\cdot\varphi(\textbf{x}_n,\textbf{x}^n)\bigg)\,\mathrm{d}\mathcal{N}^n(\textbf{x}_n)\,\mathrm{d}P^{\widehat{1,\cdots,n}}(\textbf{x}^n)\\
&=&\int_{\mathbb{R}^n}f(\textbf{x}_n)
\left(-D_i\varphi_n(\textbf{x}_n)+\frac{x_i}{a_i^2}\cdot\varphi_n(\textbf{x}_n)\right)\,\mathrm{d}\mathcal{N}^n(\textbf{x}_n) \\
&=&\int_{\mathbb{R}^n}f
\left( D_i^*\varphi_n\right)\,\mathrm{d}\mathcal{N}^n,
\end{eqnarray*}
where $\textbf{x}_n=(x_1,\cdots,x_n)\in \mathbb{R}^n,\textbf{x}^n=(x_i)_{i\in\mathbb{N}\setminus\{1,\cdots,n\}}\in \ell^2(\mathbb{N}\setminus\{1,2,\cdots,n\})$, $P^{\widehat{1,\cdots,n}}$ is the product measure without the $1,2,\cdots,n$-th components, i.e., it is the restriction of the product measure $\Pi_{j\in\mathbb{N}\setminus\{1,\cdots,n\}}\bn_{a_j}$ on \\ $\left(\ell^{2}(\mathbb{N}\setminus\{1,\cdots,n\}),\mathscr{B}\big(\ell^{2}(\mathbb{N}\setminus\{1,\cdots,n\})\big)\right)$ and
\begin{eqnarray*}
\varphi_n(\textbf{x}_n) &=&\int_{\ell^2(\mathbb{N}\setminus\{1,2,\cdots,n\})}\varphi(\textbf{x}_n,\textbf{x}^n) \,\mathrm{d}P^{\widehat{1,\cdots,n}}(\textbf{x}^n).
\end{eqnarray*}
By Lemma \ref{20231120lem3}, we see that $\varphi_{n}\in C_c^{\infty}(\mathbb{R}^n)$ and hence
\begin{eqnarray*}
(D_if)(\varphi)=\int_{\mathbb{R}^n}f
\left( D_i^*\varphi_n\right)\,\mathrm{d}\mathcal{N}^n
=\int_{\mathbb{R}^n}(D_if)
 \varphi_n \,\mathrm{d}\mathcal{N}^n
=\int_{\ell^2(\mathbb{N})}(D_if)
 \varphi \,\mathrm{d}P,
\end{eqnarray*}
which implies that $D_if$ exists in $W^{1,2}(\ell^2)$ and equals $D_if$ in $W^{1,2}(\mathbb{R}^n,\mathcal{N}^n)$. This completes the proof of Lemma \ref{20231120lem2}.
\end{proof}
\begin{lemma}\label{global approximation}
Suppose that $f\in W^{1,2}(\ell^2)$,  then there exists $\{\varphi_n\}_{n=1}^{\infty}\subset \mathscr{C}_c^{\infty}$ such that
\begin{eqnarray*}
 \lim_{n\to\infty}||\varphi_n-f||_{W^{1,2}(\ell^2)}=0.
\end{eqnarray*}
\end{lemma}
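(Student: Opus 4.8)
The plan is to prove the density of $\mathscr{C}_c^{\infty}$ in $W^{1,2}(\ell^2)$ by a two-stage approximation, paralleling the reduction-of-dimension and mollification strategy already used for $(s,t+1)$-forms (Theorem \ref{main approximation theoremhs}) and in $L^2$ (Proposition \ref{Reduce diemension}). Given $f\in W^{1,2}(\ell^2)$, I would first approximate $f$ by its cylindrical averages $f_m$ (obtained by integrating out the tail variables $x_{m+1},x_{m+2},\dots$ as in Definition \ref{Integration reduce dimension}), showing $f_m\to f$ in $W^{1,2}(\ell^2)$; then, for each fixed $m$, I would approximate the finite-dimensional cylinder function $f_m\in W^{1,2}(\mathbb{R}^m,\mathcal{N}^m)$ by elements of $C_c^{\infty}(\mathbb{R}^m)\subset\mathscr{C}_c^{\infty}$ via the finite-dimensional density result, Proposition \ref{20231123prop2}. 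A diagonal choice over $m$ then produces the desired sequence. (If preferred, Lemma \ref{lemma21gc} lets one first replace $f$ by the compactly supported $X_n\cdot f$, but this is not strictly needed since Proposition \ref{20231123prop2} already incorporates a cut-off.)

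The heart of the first stage is to identify the weak derivatives of $f_m$. I would show $f_m\in W^{1,2}(\mathbb{R}^m,\mathcal{N}^m)$ with $D_if_m=(D_if)_m$ for $1\le i\le m$ and $D_if_m=0$ for $i>m$. The second identity is immediate, since $f_m$ depends only on the first $m$ coordinates. For the first, I would test against $\varphi\in C_c^{\infty}(\mathbb{R}^m)$: using $P=\mathcal{N}^m\times P^{\widehat{1,\cdots,m}}$ together with Fubini, then Lemma \ref{20231111lem1} (the weak-derivative identity for $f\in W^{1,2}(\ell^2)$), and reducing the dimension of the product $(D_if)\varphi$ (legitimate since $\varphi$ is cylindrical), one obtains $\int_{\mathbb{R}^m}f_m\,D_i^{*}\varphi\,d\mathcal{N}^m=\int_{\mathbb{R}^m}(D_if)_m\,\varphi\,d\mathcal{N}^m$. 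Since $\|(D_if)_m\|_{L^2}\le\|D_if\|_{L^2}$ by Proposition \ref{Reduce diemension}(1) and $\sum_{i}a_i^2\|D_if\|_{L^2}^2<\infty$ by definition of $W^{1,2}(\ell^2)$, this yields $f_m\in W^{1,2}(\mathbb{R}^m,\mathcal{N}^m)$, and Lemma \ref{20231123lem2} embeds it into $W^{1,2}(\ell^2)$ with the norms agreeing.

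Next I would estimate
$$\|f_m-f\|_{H^{1}(\ell^2)}^2=\int_{\ell^2}|f_m-f|^2\,dP+\sum_{i=1}^{m}a_i^2\int_{\ell^2}|(D_if)_m-D_if|^2\,dP+\sum_{i>m}a_i^2\int_{\ell^2}|D_if|^2\,dP.$$
The first term tends to $0$ by Proposition \ref{Reduce diemension}(2), and the third is the tail of the convergent series $\sum_i a_i^2\|D_if\|_{L^2}^2$, so it also tends to $0$. The middle term is the main obstacle, because the number of summands grows with $m$; I expect this to be the only genuinely delicate point. The remedy is the same $\varepsilon/2$ splitting as in Theorem \ref{main approximation theoremhs}: given $\varepsilon>0$, the uniform domination $\|(D_if)_m-D_if\|_{L^2}^2\le 4\|D_if\|_{L^2}^2$ lets me pick $N$ with $\sum_{i>N}4a_i^2\|D_if\|_{L^2}^2<\varepsilon/2$, controlling the high-index part uniformly in $m$; for the finitely many indices $i\le N$, each term tends to $0$ as $m\to\infty$ by Proposition \ref{Reduce diemension}(2) applied to $D_if$, so their sum is $<\varepsilon/2$ for $m$ large. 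Hence $f_m\to f$ in $W^{1,2}(\ell^2)$.

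Finally, for the second stage I fix $m$ and apply Proposition \ref{20231123prop2} to obtain $g\in C_c^{\infty}(\mathbb{R}^m)$ with $\|g-f_m\|_{W^{1,2}(\mathbb{R}^m,\mathcal{N}^m)}$ arbitrarily small; viewing $g$ as a cylinder function gives $g\in\mathscr{C}_c^{\infty}$, and by Lemma \ref{20231123lem2} its $W^{1,2}(\ell^2)$-norm coincides with its $W^{1,2}(\mathbb{R}^m,\mathcal{N}^m)$-norm, so the approximation remains valid in $W^{1,2}(\ell^2)$. Combining the two stages through a diagonal argument then produces a sequence $\{\varphi_n\}\subset\mathscr{C}_c^{\infty}$ with $\|\varphi_n-f\|_{W^{1,2}(\ell^2)}\to 0$, which completes the proof.
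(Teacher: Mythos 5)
Your proposal is correct and follows essentially the same route as the paper's own proof: cylindrical averaging $f_m$ with the identification $D_i(f_m)=(D_if)_m$ via Lemma \ref{20231111lem1} and Fubini, the same three-part norm decomposition with the $\varepsilon$-splitting to control the growing middle sum uniformly, then Proposition \ref{20231123prop2} together with Lemma \ref{20231123lem2} and a diagonal argument. No gaps; the point you flag as delicate (the middle term) is handled exactly as in the paper.
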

\begin{proof}
For each $f\in W^{1,2}(\ell^2)$, $n\in\mathbb{N}$, $\varphi\in C_c^{\infty}(\mathbb{R}^n)$ and $i=1,\cdots,n$, by  Lemma \ref{20231111lem1}, we have
\begin{eqnarray*}
 \int_{\mathbb{R}^n}f_n D_i^*\varphi\,\mathrm{d}\mathcal{N}^n= \int_{\ell^2} f D_i^*\varphi\,\mathrm{d}P
=\int_{\ell^2} (D_if)\varphi\,\mathrm{d}P
= \int_{\mathbb{R}^n}(D_if)_n\varphi\,\mathrm{d}\mathcal{N}^n
\end{eqnarray*}
and hence $ D_i(f_n)=(D_if)_n$. Thus $f_n\in  W^{1,2}(\mathbb{R}^n,\mathcal{N}^n)$. By Lemma \ref{20231123lem2}, $f_n\in W^{1,2}(\ell^2)$ and
\begin{eqnarray*}
&&||f_n-f||_{W^{1,2}(\ell^2)}^2\\
&=&\int_{\ell^2}|f_n-f|^2\,\mathrm{d}P
+\sum_{i=1}^{n}a_i^2\int_{\ell^2}|(D_if)_n-D_if|^2\,\mathrm{d}P
+\sum_{i=n+1}^{\infty}a_i^2\int_{\ell^2}|D_if|^2\,\mathrm{d}P.
\end{eqnarray*}
For any given $\epsilon>0$, by the definition of $W^{1,2}(\ell^2)$ and Proposition \ref{Reduce diemension}, there exists $n_1\in\mathbb{N}$ such that for any $n\geq n_1$, we have
\begin{eqnarray*}
\int_{\ell^2}|f_n-f|^2\,\mathrm{d}P&<&\frac{\epsilon}{3},\\
4\sum_{i=n_1+1}^{\infty}a_i^2\int_{\ell^2}|D_if|^2\,\mathrm{d}P&<&\frac{\epsilon}{3}.
\end{eqnarray*}
By Proposition \ref{Reduce diemension} again, there exists $n_2>n_1$ such that for any $n\geq n_2$, we have
\begin{eqnarray*}
\sum_{i=1}^{n_1}a_i^2\int_{\ell^2}|(D_if)_n-D_if|^2\,\mathrm{d}P&<&\frac{\epsilon}{3}.
\end{eqnarray*}
Then for any $n\geq n_2$, we have
\begin{eqnarray*}
&&||f_n-f||_{W^{1,2}(\ell^2)}^2\\
&=&\int_{\ell^2}|f_n-f|^2\,\mathrm{d}P
+\sum_{i=1}^{n_1}a_i^2\int_{\ell^2}|(D_if)_n-D_if|^2\,\mathrm{d}P\\
&&+\sum_{i=n_1+1}^{n}a_i^2\int_{\ell^2}|(D_if)_n-D_if|^2\,\mathrm{d}P
+\sum_{i=n+1}^{\infty}a_i^2\int_{\ell^2}|D_if|^2\,\mathrm{d}P\\
&\leq&\int_{\ell^2}|f_n-f|^2\,\mathrm{d}P
+\sum_{i=1}^{n_1}a_i^2\int_{\ell^2}|(D_if)_n-D_if|^2\,\mathrm{d}P\\
&&+2\sum_{i=n_1+1}^{n}a_i^2\int_{\ell^2}(|(D_if)_n|^2+|D_if|^2)\,\mathrm{d}P
+\sum_{i=n+1}^{\infty}a_i^2\int_{\ell^2}|D_if|^2\,\mathrm{d}P\\
&\leq&\int_{\ell^2}|f_n-f|^2\,\mathrm{d}P
+\sum_{i=1}^{n_1}a_i^2\int_{\ell^2}|(D_if)_n-D_if|^2\,\mathrm{d}P\\
&&+4\sum_{i=n_1+1}^{\infty}a_i^2\int_{\ell^2}|D_if|^2\,\mathrm{d}P<\epsilon,
\end{eqnarray*}
where the second inequality follows from the conclusion (1) of Proposition \ref{Reduce diemension}.
 Thus $\lim\limits_{n\to\infty}||f_n-f||_{W^{1,2}(\ell^2)}=0$. By Proposition \ref{20231123prop2}, for each $n\in\mathbb{N}$, there exists $\{\varphi_m\}_{m=1}^{\infty}\subset C_c^{\infty}(\mathbb{R}^n)$ such that
\begin{eqnarray*}
 \lim_{m\to\infty}||\varphi_m-f_n||_{W^{1,2}(\mathbb{R}^n,\mathcal{N}^n)}=0.
\end{eqnarray*}
Diagonal process completes the proof of Lemma \ref{global approximation}.
\end{proof}
\begin{definition}
Let $\mathbb{H}\triangleq\{\textbf{x}=(x_i)_{i\in\mathbb{N}}\in\ell^2:x_1>0\}$. We call $\mathbb{H}$ the half space of $\ell^2$. For each $n\in\mathbb{N}$, let $\mathbb{H}_n\triangleq\{\textbf{x}_n=(x_1,\cdots,x_n)\in \mathbb{R}^n:x_1>0\}$. We call $\mathbb{H}_n$ the half space of $\mathbb{R}^n$. Then it holds that $\mathbb{H}=\mathbb{H}_n\times \ell^2(\mathbb{N}\setminus\{1,2,\cdots,n\})$
\end{definition}
\begin{proposition}\label{20231123prop21}
For each $n\in\mathbb{N}$, $C_c^{\infty}(\mathbb{R}^n)$ is dense in $W^{1,2}(\mathbb{H}_n,\mathcal{N}^n)$.
\end{proposition}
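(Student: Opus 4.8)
The plan is to mimic the whole-space argument of Proposition \ref{20231123prop2}, inserting one extra device to cope with the boundary $\{x_1=0\}$ of $\mathbb{H}_n$: since we must approximate by restrictions of functions in $C_c^\infty(\mathbb{R}^n)$, which need \emph{not} vanish on $\partial\mathbb{H}_n$, a naive interior mollification is inadmissible near the boundary. I will therefore combine a translation into the domain with mollification, after first reducing to boundedly supported functions where the Gaussian weight is harmless.

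First I would reduce to functions with bounded support. Given $f\in W^{1,2}(\mathbb{H}_n,\mathcal{N}^n)$, choose $\psi\in C^\infty(\mathbb{R};[0,1])$ with $\psi\equiv1$ on $(-\infty,1]$ and $\psi\equiv0$ on $[2,\infty)$, and set $\psi_k(\textbf{x}_n)\triangleq\psi(\|\textbf{x}_n\|_{\mathbb{R}^n}^2/k)$. Exactly as in the proof of Proposition \ref{20231123prop2} one checks that $\psi_kf\in W^{1,2}(\mathbb{H}_n,\mathcal{N}^n)$, with $D_i(\psi_kf)=\psi_k D_if+f D_i\psi_k$, and that $\|\psi_kf-f\|_{W^{1,2}(\mathbb{H}_n,\mathcal{N}^n)}\to0$; the only change is that all test functions lie in $C_c^\infty(\mathbb{H}_n)$ and all integrals are over $\mathbb{H}_n$. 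Hence it suffices to approximate a fixed $f$ whose support lies in $\overline{\mathbb{H}_n}\cap\{\|\textbf{x}_n\|_{\mathbb{R}^n}\le r\}$ for some $r$.

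Second, I would trade the Gaussian measure for Lebesgue measure on a bounded region. Writing $G_n$ for the Gaussian density of $\mathcal{N}^n$ (see \eqref{20231124for1}) and using $D_iG_n=-\tfrac{x_i}{a_i^2}G_n$, one gets for $\varphi\in C_c^\infty(\mathbb{H}_n)$ the identity $(D_i^*\varphi)\,G_n=-D_i(\varphi G_n)$, so that $D_if=g$ in the weighted sense of the definition of $W^{1,2}(\mathbb{H}_n,\mathcal{N}^n)$ if and only if $g$ is the ordinary (Lebesgue) weak derivative of $f$ on $\mathbb{H}_n$; indeed $\varphi\mapsto\varphi G_n$ is a bijection of $C_c^\infty(\mathbb{H}_n)$ onto itself since $G_n$ is smooth and strictly positive. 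As $G_n$ is bounded above and below by positive constants on the bounded set $\{x_1>-1,\ \|\textbf{x}_n\|_{\mathbb{R}^n}\le r+1\}$, the weighted norm and the ordinary $W^{1,2}$-norm are equivalent for functions supported there. Thus it is enough to approximate the boundedly supported $f$ by $C_c^\infty(\mathbb{R}^n)$-restrictions in the ordinary $W^{1,2}(\mathbb{H}_n)$-norm. Third, I would run the classical boundary approximation: for small $\epsilon>0$ set $f_\epsilon(\textbf{x}_n)\triangleq f(x_1+\epsilon,x_2,\dots,x_n)$, a $W^{1,2}$ function on $\{x_1>-\epsilon\}\supset\overline{\mathbb{H}_n}$ with $D_if_\epsilon=(D_if)_\epsilon$, so continuity of translation in $L^2$ gives $\|f_\epsilon-f\|_{W^{1,2}(\mathbb{H}_n)}\to0$; mollifying $f_\epsilon$ with the mollifier $\chi_{n,\eta}$ from the proof of Proposition \ref{20231123prop2} for $\eta<\epsilon$ yields $f_\epsilon*\chi_{n,\eta}\in C^\infty$ on a neighborhood of $\overline{\mathbb{H}_n}$ with $\|f_\epsilon*\chi_{n,\eta}-f_\epsilon\|_{W^{1,2}(\mathbb{H}_n)}\to0$ as $\eta\to0$, and multiplying by a cutoff $\zeta\in C_c^\infty(\mathbb{R}^n)$ equal to $1$ on the bounded support region produces a genuine element of $C_c^\infty(\mathbb{R}^n)$ whose restriction to $\mathbb{H}_n$ agrees with $f_\epsilon*\chi_{n,\eta}$. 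Choosing $\eta=\eta(\epsilon)$ suitably, letting $\epsilon\to0$, and diagonalizing over $k$ finishes the proof.

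The hard part is the boundary handling in the third step, and in particular the interplay between translation and the Gaussian weight: a direct translation of $f$ multiplies the weight by an exponential factor $e^{\epsilon x_1/a_1^2}$ (cf. Lemma \ref{20231123lem1}), which in general destroys norm equivalence, as Lemma \ref{231013th1} shows. The reduction to bounded support in the first two steps is precisely what neutralizes this difficulty, confining the whole construction to a fixed bounded region on which $\mathcal{N}^n$ and Lebesgue measure are comparable; the one genuinely delicate point that remains is verifying carefully that the translated-and-mollified functions, after cutoff, converge in the weighted $W^{1,2}(\mathbb{H}_n,\mathcal{N}^n)$-norm.
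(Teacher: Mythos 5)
Your proposal is correct and follows essentially the same route as the paper's proof: cut off to bounded support, use that the Gaussian density is bounded above and below on bounded sets so that $\mathcal{N}^n$ and Lebesgue measure are interchangeable there, and then smooth in a one-sided way so that the mollification never samples values across $\partial\mathbb{H}_n$. The only difference is organizational: the paper builds your translation step into the mollifier itself, choosing $\chi_{n,\delta}$ supported in $\{-2\delta<x_1<-\delta\}$ so that the convolution at $\textbf{x}_n\in\mathbb{H}_n$ only samples $f$ at points with $y_1\geq x_1+\delta$, whereas you first translate by $\epsilon$ and then apply a symmetric mollifier at scale $\eta<\epsilon$ --- these are the same device.
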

\begin{proof}
Suppose that $f\in W^{1,2}(\mathbb{R}^n,\mathcal{N}^n)$.

\textbf{Step 1.}
Choosing $\psi\in C^{\infty}(\mathbb{R};[0,1])$ such that $\psi(x)=1$ for all $x\leq 1$ and $\psi(x)=0$ for all $x\geq 2$. For each $k\in\mathbb{N}$, let $\psi_k(\textbf{x}_n)\triangleq \psi\left(\frac{x_1^2+\cdots+x_n^2}{k}\right),\,\forall\,\textbf{x}_n=(x_1,\cdots,x_n)\in\mathbb{R}^n$.
Obviously,
\begin{eqnarray*}
\int_{\mathbb{H}_n}|\psi_k\cdot f|^2\,\mathrm{d}\mathcal{N}^n\leq \int_{\mathbb{H}_n}| f|^2\,\mathrm{d}\mathcal{N}^n<\infty,\qquad\forall\,k\in\mathbb{N}.
\end{eqnarray*}
Note that for each $\varphi\in C_c^{\infty}(\mathbb{R}^n),\,k\in\mathbb{N}$ and $i=1,\cdots,n$, we have
\begin{eqnarray*}
\int_{\mathbb{H}_n}\psi_k\cdot f\cdot D_i^*\varphi\,\mathrm{d}\mathcal{N}^n
&=&\int_{\mathbb{H}_n}\psi_k\cdot f\cdot \left(-D_i\varphi+\frac{x_i}{a_i^2}\cdot \varphi\right)\,\mathrm{d}\mathcal{N}^n\\
&=&\int_{\mathbb{H}_n} f\cdot \left((D_i\psi_k)\cdot\varphi-D_i(\varphi\cdot\psi_k)+\frac{x_i}{a_i^2}\cdot \varphi\cdot\psi_k\right)\,\mathrm{d}\mathcal{N}^n\\
&=&\int_{\mathbb{H}_n} f\cdot \left((D_i\psi_k)\cdot\varphi+D_i^*(\varphi\cdot\psi_k) \right)\,\mathrm{d}\mathcal{N}^n\\
&=&\int_{\mathbb{H}_n} \left(f\cdot(D_i\psi_k)+(D_if) \cdot\psi_k \right)\cdot\varphi\,\mathrm{d}\mathcal{N}^n,
\end{eqnarray*}
where the last equality follows from the fact that $\varphi\cdot\psi_k\in C_c^{\infty}(\mathbb{H}_n)$ and Definition \ref{20231123def1}. Since
\begin{eqnarray*}
&&\int_{\mathbb{H}_n} |f\cdot(D_i\psi_k)+(D_if) \cdot\psi_k|^2\,\mathrm{d}\mathcal{N}^n\\
&\leq &2\int_{\mathbb{H}_n} (|f\cdot(D_i\psi_k)|^2+|(D_if) \cdot\psi_k|^2)\,\mathrm{d}\mathcal{N}^n\\
&\leq&2\cdot \sup_{\mathbb{H}_n}|D_i\psi_k|^2\cdot\int_{\mathbb{H}_n} |f|^2\,\mathrm{d}\mathcal{N}^n+2\int_{\mathbb{H}_n}|D_if|^2\,\mathrm{d}\mathcal{N}^n<\infty.
\end{eqnarray*}
These implies that $\{\psi_k\cdot f\}_{k=1}^{\infty}\subset W^{1,2}(\mathbb{H}_n,\mathcal{N}^n)$ and $D_i(\psi_k\cdot f)=f\cdot(D_i\psi_k)+(D_if) \cdot\psi_k$ for any $k\in\mathbb{N}$ and $i=1,\cdots,n.$ Then we have
\begin{eqnarray*}
&&||\psi_k\cdot f-f||_{W^{1,2}(\mathbb{H}_n,\mathcal{N}^n)}^2\\
&=&\int_{\mathbb{H}_n}|( \psi_k-1)f|^2\,\mathrm{d}\mathcal{N}^n+\sum_{i=1}^{n}a_i^2\cdot\int_{\mathbb{H}_n}\bigg|(\psi_k-1)\cdot (D_i f)+f \cdot (D_i \psi_k)\bigg|^2\,\mathrm{d}\mathcal{N}^n\\
&\leq&\int_{\mathbb{H}_n}|(\psi_k-1)f|^2\,\mathrm{d}\mathcal{N}^n\\
&&+2\sum_{i=1}^{n}a_i^2\cdot\int_{\mathbb{H}_n}\bigg(|(\psi_k-1)\cdot (D_i f)|^2+|f \cdot(D_i \psi_k)|^2\bigg)\,\mathrm{d}\mathcal{N}^n\\
&=&\int_{\mathbb{H}_n}|(\psi_k-1)f|^2\,\mathrm{d}\mathcal{N}^n+2\int_{\mathbb{H}_n}|\psi_k-1|^2\cdot\left( \sum_{i=1}^{n}a_i^2\cdot|D_i f|^2\right)\,\mathrm{d}\mathcal{N}^n\\
&&+2\cdot\int_{\mathbb{H}_n}\left(\sum_{i=1}^{n}a_i^2\cdot|D_i \psi_k|^2\right)\cdot |f|^2 \,\mathrm{d}\mathcal{N}^n,\qquad\forall\,k\in\mathbb{N}.
\end{eqnarray*}
Therefore, $\lim\limits_{k\to\infty}||\psi_k\cdot f-f||_{W^{1,2}(\mathbb{H}_n,\mathcal{N}^n)}^2=0$.

\textbf{Step 2.} Now we fix a $k\in\mathbb{N}$. Then there exists $r\in(0,+\infty)$ such that
\begin{eqnarray}\label{20231124for21}
\int_{||\textbf{x}_n||_{\mathbb{R}^n}>r,\textbf{x}_n\in\mathbb{H}_n}|(\psi_k\cdot f)(\textbf{x}_n)|^2\,\mathrm{d}\mathcal{N}^n(\textbf{x}_n)=0.
\end{eqnarray}
Choosing $\chi_n\in C_c^{\infty}(\mathbb{R}^n;[0,1])$ such that $\int_{\mathbb{R}^n}\chi_n(\textbf{x}_n)\mathrm{d}\textbf{x}_n=1$ and supp$\chi_n\subset\{\textbf{x}_n=(x_1,\cdots,x_n)\in\mathbb{R}^n:1<x_1<2\,\,\text{and}\,\,||\textbf{x}_n||_{\mathbb{R}^n}<2\}$. Write $\chi_{n,\delta}(\textbf{x}_n)\triangleq \frac{1}{\delta^n}\cdot\chi_{n}(\frac{1}{\delta}\cdot\textbf{x}_n),\,\,\forall\,\textbf{x}_n\in\mathbb{R}^n$ and $\delta\in(0,+\infty)$. Then $x_{n,\delta}\in C_c^{\infty}(\mathbb{R}^n)$ and supp$\chi_{n,\delta}\subset\{\textbf{x}_n=(x_1,\cdots,x_n)\in\mathbb{R}^n:-2\delta<x_1<-\delta\,\,\text{and}\,\,||\textbf{x}_n||_{\mathbb{R}^n}<2\delta\}$. By \eqref{20231124for21}, we have
\begin{eqnarray*}
&&\int_{\mathbb{H}_n}|(\psi_k\cdot f)(\textbf{x}_n)|^2\,\mathrm{d}\mathcal{N}^n(\textbf{x}_n)\\
&=&\int_{||\textbf{x}_n||_{\mathbb{R}^n}>r,\textbf{x}_n\in\mathbb{H}_n}|(\psi_k\cdot f)(\textbf{x}_n)|^2\,\mathrm{d}\mathcal{N}^n(\textbf{x}_n)\\
&=&\int_{||\textbf{x}_n||_{\mathbb{R}^n}>r,\textbf{x}_n\in\mathbb{H}_n}|(\psi_k\cdot f)(\textbf{x}_n)|^2\cdot G_n(\textbf{x}_n)\,\mathrm{d}\textbf{x}_n\\
&\geq&\left(\prod_{i=1}^{n}\frac{1}{\sqrt{2\pi a_i^2}}e^{-\frac{r^2}{2a_i^2}}\right)\cdot\int_{||\textbf{x}_n||_{\mathbb{R}^n}>r,\textbf{x}_n\in\mathbb{H}_n}|(\psi_k\cdot f)(\textbf{x}_n)|^2  \,\mathrm{d}\textbf{x}_n\\
&=&\left(\prod_{i=1}^{n}\frac{1}{\sqrt{2\pi a_i^2}}e^{-\frac{r^2}{2a_i^2}}\right)\cdot\int_{ \mathbb{H}_n}|(\psi_k\cdot f)(\textbf{x}_n)|^2  \,\mathrm{d}\textbf{x}_n,
\end{eqnarray*}
where $G_n$ is defined at \eqref{20231124for1} and this implies that $\psi_k\cdot f\in L^2(\mathbb{H}_n,\mathrm{d}\textbf{x}_n)\cap  L^1(\mathbb{H}_n,\mathrm{d}\textbf{x}_n)$. We can extend $f$ to a function on $\mathbb{R}^n$ by zero extension to $\mathbb{R}^n\setminus\mathbb{H}_n$. Then let
\begin{eqnarray*}
f_{k,n,\delta}(\textbf{x}_n)\triangleq  \int_{\mathbb{R}^n}(\psi_k\cdot f)(\textbf{y}_n)\chi_{n,\delta}(\textbf{x}_n-\textbf{y}_n)\,\mathrm{d}\textbf{y}_n,
\end{eqnarray*}
for any $\textbf{x}_n\in\mathbb{R}^n\,\,\text{and}\,\,\delta\in(0,+\infty)$. Then $f_{k,n,\delta}\in C_c^{\infty}(\mathbb{R}^n)$, supp$f_{k,n,\delta}\subset \{\textbf{x}_n\in\mathbb{R}^n:||\textbf{x}_n||_{\mathbb{R}^n}\leq r+2\delta\}$ and
\begin{eqnarray*}
&&\int_{\mathbb{H}_n}|f_{k,n,\delta}(\textbf{x}_n)- (\psi_k\cdot f)(\textbf{x}_n)|^2\,\mathrm{d}\mathcal{N}^n(\textbf{x}_n)\\
&=&\int_{\mathbb{H}_n}|f_{k,n,\delta}(\textbf{x}_n)- (\psi_k\cdot f)(\textbf{x}_n)|^2G_n(\textbf{x}_n)\,\mathrm{d}\textbf{x}_n\\
&\leq&\left(\prod_{i=1}^{n}\frac{1}{\sqrt{2\pi a_i^2}} \right)\cdot\int_{\mathbb{R}^n}|f_{k,n,\delta}(\textbf{x}_n)- (\psi_k\cdot f)(\textbf{x}_n)|^2 \,\mathrm{d}\textbf{x}_n,
\end{eqnarray*}
which implies that $\lim\limits_{\delta\to 0}\int_{\mathbb{H}_n}|f_{k,n,\delta}(\textbf{x}_n)- (\psi_k\cdot f)(\textbf{x}_n)|^2\,\mathrm{d}\mathcal{N}^n(\textbf{x}_n)=0.$

Let $\chi_{n,\delta,\textbf{x}_n}(\textbf{y}_n)\triangleq\chi_{n,\delta}(\textbf{x}_n-\textbf{y}_n),\,\forall\,\textbf{x}_n,\textbf{y}_n\in\mathbb{R}^n$. If $\textbf{x}_n=(x_1,\cdots,x_n)\in\mathbb{H}_n$, then
$$
\text{supp}\chi_{n,\delta,\textbf{x}_n}\subset \{\textbf{y}_n=(y_1,\cdots,y_n)\in\mathbb{R}^n:||\textbf{y}_n||_{\mathbb{R}^n}\leq||\textbf{x}_n||_{\mathbb{R}^n}+2\delta,\,y_1\geq  x_1+\delta\},
$$
which is a compact subset of $\mathbb{H}_n$ which implies that $\chi_{n,\delta,\textbf{x}_n}\in C_c^{\infty}(\mathbb{H}_n)$. For each $i=1,\cdots,n$ and $\textbf{x}_n=(x_1,\cdots,x_n)\in\mathbb{H}_n$, note that
\begin{eqnarray*}
&&D_if_{k,n,\delta}(\textbf{x}_n)\\
&=&\int_{\mathbb{R}^n}(\psi_k\cdot f)(\textbf{y}_n)D_i\chi_{n,\delta}(\textbf{x}_n-\textbf{y}_n)\,\mathrm{d}\textbf{y}_n\\
&=&\int_{\mathbb{H}_n}(\psi_k\cdot f)(\textbf{y}_n)D_i\chi_{n,\delta}(\textbf{x}_n-\textbf{y}_n)\,\mathrm{d}\textbf{y}_n\\
&=&-\int_{\mathbb{H}_n}(\psi_k\cdot f)(\textbf{y}_n)\cdot(D_i\chi_{n,\delta,\textbf{x}_n})(\textbf{y}_n)\,\mathrm{d}\textbf{y}_n\\
&=&-\int_{\mathbb{H}_n}(\psi_k\cdot f)(\textbf{y}_n)\cdot(D_i\chi_{n,\delta,\textbf{x}_n})(\textbf{y}_n)\cdot\frac{1}{G_n(\textbf{y}_n)}\,\mathrm{d}\mathcal{N}^n(\textbf{y}_n)\\
&=&\int_{\mathbb{H}_n}(\psi_k\cdot f)(\textbf{y}_n)\left(- \left(D_i\frac{\chi_{n,\delta,\textbf{x}_n} }{G_n}\right)(\textbf{y}_n)+\frac{y_i}{a_i^2}\cdot\frac{\chi_{n,\delta,\textbf{x}_n} }{G_n}(\textbf{y}_n)\right)\,\mathrm{d}\mathcal{N}^n(\textbf{y}_n)\\
&=&\int_{\mathbb{H}_n}(\psi_k\cdot f)\cdot \left(D_i^*\frac{\chi_{n,\delta,\textbf{x}_n}}{G_n} \right)\,\mathrm{d}\mathcal{N}^n\\
&=&\int_{\mathbb{H}_n}D_i(\psi_k\cdot f)\cdot \frac{\chi_{n,\delta,\textbf{x}_n}}{G_n} \,\mathrm{d}\mathcal{N}^n\\
&=&\int_{\mathbb{H}_n}D_i(\psi_k\cdot f)(\textbf{y}_n)\cdot  \chi_{n,\delta,\textbf{x}_n}(\textbf{y}_n)  \,\mathrm{d}\textbf{y}_n \\
&=&\int_{\mathbb{H}_n}D_i(\psi_k\cdot f)(\textbf{y}_n)\cdot  \chi_{n,\delta}(\textbf{x}_n-\textbf{y}_n)  \,\mathrm{d}\textbf{y}_n,
\end{eqnarray*}
where the seventh equality follows from the fact that $\frac{\chi_{n,\delta,\textbf{x}_n}}{G_n}\in C_c^{\infty}(\mathbb{H}_n)$ and Definition \ref{20231123def1}. It is easy to see that $\int_{||\textbf{x}_n||_{\mathbb{R}^n}>r}|D_i(\psi_k\cdot f)|^2\,\mathrm{d}\mathcal{N}^n=0$. These imply that
\begin{eqnarray*}
\lim_{\delta\to 0}\int_{\mathbb{H}_n}|D_if_{k,n,\delta} - D_i(\psi_k\cdot f)|^2\,\mathrm{d}\mathcal{N}^n =0.
\end{eqnarray*}
Hence, $\lim\limits_{\delta\to 0}||f_{k,n,\delta}-\psi_k\cdot f||_{W^{1,2}(\mathbb{H}_n,\mathcal{N}^n)}^2=0$.  By diagonal process, we complete the proof of Proposition \ref{20231123prop21}.
\end{proof}
\begin{lemma}
\label{20231120lem2}
For each $n\in\mathbb{N}$, it holds that $W^{1,2}(\mathbb{H}_n,\mathcal{N}^n)\subset W^{1,2}(\mathbb{H})$. Precisely, for each $f\in W^{1,2}(\mathbb{H}_n,\mathcal{N}^n)$,
\begin{itemize}
\item[(1)] $f\in W^{1,2}(\mathbb{H})$;
\item[(2)] $D_if=0$ in the sense of $W^{1,2}(\mathbb{H})$,\,\,$\forall\, i>n$;
\item[(3)]  $D_if=D_if$, where the left ``$D_i$'' is in the sense of $W^{1,2}(\mathbb{H}_n,\mathcal{N}^n)$ and the right ``$D_i$'' is in the sense of $W^{1,2}(\mathbb{H})$,\,\,$\forall\,\, 1\leq i\leq n$.
\end{itemize}
\end{lemma}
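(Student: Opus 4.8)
The plan is to follow the proof of its full-space counterpart, Lemma \ref{20231123lem2}, making the modifications forced by the presence of the boundary hyperplane $\{\textbf{x}\in\ell^2:\,x_1=0\}$. Throughout I would regard $f\in W^{1,2}(\mathbb{H}_n,\mathcal{N}^n)$ as a cylinder function on $\mathbb{H}$ depending only on the first $n$ coordinates, and I would use the product decomposition $\mathbb{H}=\mathbb{H}_n\times\ell^2(\mathbb{N}\setminus\{1,\cdots,n\})$ together with the corresponding factorization of $P$ into its restriction to $\mathbb{H}_n$ times $P^{\widehat{1,\cdots,n}}$. By Fubini's theorem this already gives $\int_{\mathbb{H}}|f|^2\,\mathrm{d}P=\int_{\mathbb{H}_n}|f|^2\,\mathrm{d}\mathcal{N}^n<\infty$, so $f\in L^2(\mathbb{H},P)$; the remaining task is to identify its weak partial derivatives. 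The key structural input I would exploit is that every test function $\varphi\in C_F^{\infty}(\mathbb{H})$ satisfies $\supp\varphi\stackrel{\circ}{\subset}\mathbb{H}$, so $\varphi$ vanishes in a neighbourhood of $\{x_1=0\}$ and has support contained in some ball $B(\textbf{0};R)$. These two facts are exactly what will make the boundary terms drop out and will keep factors such as $\tfrac{x_i}{a_i^2}\varphi$ bounded and compactly supported.

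For assertion (2), I would fix $i>n$ and $\varphi\in C_F^{\infty}(\mathbb{H})$. Since $f$ does not depend on $x_i$, I would integrate out the $i$-th Gaussian factor first: a one-dimensional integration by parts against the weight $\tfrac{1}{\sqrt{2\pi a_i^2}}e^{-x_i^2/(2a_i^2)}$ shows that the $x_i$-integral of $D_i^{*}\varphi$ vanishes for each fixed value of the remaining variables (here $i\neq 1$, so no boundary hyperplane is crossed and the Gaussian decay kills the endpoints). Applying Fubini then yields $\int_{\mathbb{H}}f\,D_i^{*}\varphi\,\mathrm{d}P=0$, i.e. $D_if=0$ in the sense of $W^{1,2}(\mathbb{H})$. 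Alternatively, one may approximate $f$ by the sequence $\{g_k\}\subset C_c^{\infty}(\mathbb{R}^n)$ furnished by Proposition \ref{20231123prop21} and pass to the limit in $\int_{\mathbb{H}}g_k\,D_i^{*}\varphi\,\mathrm{d}P=\int_{\mathbb{H}}(D_ig_k)\varphi\,\mathrm{d}P=0$; the limit is legitimate because $D_i^{*}\varphi$ is bounded with bounded support, hence lies in $L^2(\mathbb{H},P)$, and the integration-by-parts identity over $\mathbb{H}$ holds without a boundary contribution precisely because $\varphi$ vanishes near $\{x_1=0\}$.

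For assertion (3), I would fix $1\leqslant i\leqslant n$ and $\varphi\in C_F^{\infty}(\mathbb{H})$ and set $\varphi_n(\textbf{x}_n)\triangleq\int_{\ell^2(\mathbb{N}\setminus\{1,\cdots,n\})}\varphi(\textbf{x}_n,\textbf{x}^n)\,\mathrm{d}P^{\widehat{1,\cdots,n}}(\textbf{x}^n)$. Differentiating under the integral sign in the variable $x_i$ (permissible since $i\leqslant n$) gives $\int_{\ell^2(\mathbb{N}\setminus\{1,\cdots,n\})}D_i^{*}\varphi\,\mathrm{d}P^{\widehat{1,\cdots,n}}=D_i^{*}\varphi_n$, so Fubini reduces the pairing to $\int_{\mathbb{H}}f\,D_i^{*}\varphi\,\mathrm{d}P=\int_{\mathbb{H}_n}f\,D_i^{*}\varphi_n\,\mathrm{d}\mathcal{N}^n$. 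At this point I would invoke the half-space analogue of Lemma \ref{20231120lem3}, namely that $\varphi_n\in C_c^{\infty}(\mathbb{H}_n)$: smoothness and boundedness are as before, while $\supp\varphi\stackrel{\circ}{\subset}\mathbb{H}$ forces $\supp\varphi_n$ to be a compact subset of $\mathbb{H}_n$ bounded away from $\{x_1=0\}$. Since $\varphi_n$ is then a legitimate test function for $W^{1,2}(\mathbb{H}_n,\mathcal{N}^n)$ and $f$ has weak derivative $D_if$ there, I obtain $\int_{\mathbb{H}_n}f\,D_i^{*}\varphi_n\,\mathrm{d}\mathcal{N}^n=\int_{\mathbb{H}_n}(D_if)\varphi_n\,\mathrm{d}\mathcal{N}^n=\int_{\mathbb{H}}(D_if)\varphi\,\mathrm{d}P$, the last step again by Fubini. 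This establishes both that $D_if$ exists in $W^{1,2}(\mathbb{H})$ and that it coincides with the $W^{1,2}(\mathbb{H}_n,\mathcal{N}^n)$-derivative. Assertion (1) then follows by combining (2) and (3), since $||f||_{W^{1,2}(\mathbb{H})}^2=\int_{\mathbb{H}_n}|f|^2\,\mathrm{d}\mathcal{N}^n+\sum_{i=1}^{n}a_i^2\int_{\mathbb{H}_n}|D_if|^2\,\mathrm{d}\mathcal{N}^n=||f||_{W^{1,2}(\mathbb{H}_n,\mathcal{N}^n)}^2<\infty$.

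The Fubini manipulations and the dimension-reduction of the test function are routine, being essentially identical to the full-space case. The genuinely new point, and the step I expect to require the most care, is the control of the boundary: verifying that no boundary integral over $\{x_1=0\}$ survives in the integration by parts, and that the reduced test function $\varphi_n$ remains compactly supported strictly inside $\mathbb{H}_n$. Both of these reduce to the uniform-inclusion property $\supp\varphi\stackrel{\circ}{\subset}\mathbb{H}$ enjoyed by every $\varphi\in C_F^{\infty}(\mathbb{H})$, which guarantees that test functions are supported in the interior, at a positive distance from the boundary hyperplane; I would therefore isolate and record this property at the outset and lean on it at each of the two delicate steps.
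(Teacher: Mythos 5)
Your proposal is correct and follows essentially the same route as the paper's proof: view $f$ as a cylinder function, use the factorization $P=\mathcal{N}^n\times P^{\widehat{1,\cdots,n}}$ and Fubini, reduce each test function $\varphi\in C_F^{\infty}(\mathbb{H})$ to $\varphi_n\in C_c^{\infty}(\mathbb{H}_n)$ (the half-space analogue of Lemma \ref{20231120lem3}, resting on $\supp\varphi\stackrel{\circ}{\subset}\mathbb{H}$), and invoke the weak-derivative definition on $\mathbb{H}_n$; for assertion (2) the paper uses exactly your alternative argument via the approximating sequence from Proposition \ref{20231123prop21}, while your primary argument (integrating out the $i$-th Gaussian factor directly) is a valid, slightly more elementary substitute.
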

\begin{proof}
For any $f\in W^{1,2}(\mathbb{H}_n,\mathcal{N}^n)$, we view $f$ as a cylinder function on $\mathbb{H}$ that only depends the first $n$ variables. Firstly, we have
\begin{eqnarray*}
\int_{\mathbb{H}}|f|^2\,\mathrm{d}P=\int_{\mathbb{H}_n}|f|^2\,\mathrm{d}\mathcal{N}_n.
\end{eqnarray*}
By Proposition \ref{20231123prop21}, there exists $\{g_k\}_{k=1}^{\infty}\subset C_c^{\infty}(\mathbb{R}^n)$ such that $\lim\limits_{k\to\infty}||g_k-f||_{W^{1,2}(\mathbb{H}_n)}=0.$
For each $i>n$ and $\varphi\in C_F^{\infty}(\mathbb{H})$, it holds that
\begin{eqnarray*}
(D_if)(\varphi)
&=& \int_{\mathbb{H}}fD_i^*\varphi\,\mathrm{d}P
=\lim_{k\to\infty}\left( \int_{\mathbb{H}}g_k D_i^*\varphi\,\mathrm{d}P\right)\\
&=&\lim_{k\to\infty}\left(\int_{\mathbb{H}}(D_ig_k)\varphi\,\mathrm{d}P\right)=0,
\end{eqnarray*}
which implies that $D_if=0$.

For $1\leq i\leq n$ and $\varphi\in C_F^{\infty}(\mathbb{H})$, we have
\begin{eqnarray*}
(D_if)(\varphi)
&=&\int_{\mathbb{H}}fD_i^*\varphi\,\mathrm{d}P\\
&=&\int_{\mathbb{H}_n\times \ell^2(\mathbb{N}\setminus\{1,2,\cdots,n\})}f(\textbf{x}_n)
\bigg(-D_i\varphi(\textbf{x}_n,\textbf{x}^n)\\
&&+\frac{x_i}{a_i^2}\cdot\varphi(\textbf{x}_n,\textbf{x}^n)\bigg)\,\mathrm{d}\mathcal{N}^n(\textbf{x}_n)\,\mathrm{d}P^{\widehat{1,\cdots,n}}(\textbf{x}^n)\\
&=&\int_{\mathbb{H}_n}f(\textbf{x}_n)
\left(-D_i\varphi_n(\textbf{x}_n)+\frac{x_i}{a_i^2}\cdot\varphi_n(\textbf{x}_n)\right)\,\mathrm{d}\mathcal{N}^n(\textbf{x}_n) \\
&=&\int_{\mathbb{H}_n}f
\left( D_i^*\varphi_n\right)\,\mathrm{d}\mathcal{N}^n,
\end{eqnarray*}
where $\textbf{x}_n=(x_1,\cdots,x_n)\in \mathbb{H}_n,\textbf{x}^n=(x_i)_{i\in\mathbb{N}\setminus\{1,\cdots,n\}}\in \ell^2(\mathbb{N}\setminus\{1,2,\cdots,n\})$, $P^{\widehat{1,\cdots,n}}$ is the product measure without the $1,2,\cdots,n$-th components, i.e., it is the restriction of the product measure $\prod_{j\in\mathbb{N}\setminus\{1,\cdots,n\}}\bn_{a_j}$ on \\ $\left(\ell^{2}(\mathbb{N}\setminus\{1,\cdots,n\}),\mathscr{B}\big(\ell^{2}(\mathbb{N}\setminus\{1,\cdots,n\})\big)\right)$ and
\begin{eqnarray*}
\varphi_n(\textbf{x}_n) &=&\int_{\ell^2(\mathbb{N}\setminus\{1,2,\cdots,n\})}\varphi(\textbf{x}_n,\textbf{x}^n) \,\mathrm{d}P^{\widehat{1,\cdots,n}}(\textbf{x}^n).
\end{eqnarray*}
Similar to the proof of Lemma \ref{20231120lem3}, we see that $\varphi_{n}\in C_c^{\infty}(\mathbb{H}_n)$ and hence
\begin{eqnarray*}
(D_if)(\varphi)=\int_{\mathbb{H}_n}f
\left( D_i^*\varphi_n\right)\,\mathrm{d}\mathcal{N}^n
=\int_{\mathbb{H}_n}(D_if)
 \varphi_n \,\mathrm{d}\mathcal{N}^n
=\int_{\mathbb{H}}(D_if)
 \varphi \,\mathrm{d}P,
\end{eqnarray*}
which implies that $D_if$ exists in $W^{1,2}(\mathbb{H})$ and equals $D_if$ in $W^{1,2}(\mathbb{H}_n)$. This completes the proof of Lemma \ref{20231120lem2}.
\end{proof}
\begin{lemma}
\label{20231120lem1}
Suppose that $f\in W^{1,2}(\mathbb{H})$,  then there exists $\{\varphi_n\}_{n=1}^{\infty}\subset  \mathscr{C}_c^{\infty}$ such that
\begin{eqnarray*}
 \lim_{n\to\infty}||\varphi_n-f||_{W^{1,2}(\mathbb{H})}=0.
\end{eqnarray*}
\end{lemma}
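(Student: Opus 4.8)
The plan is to prove Lemma \ref{20231120lem1} by the same dimension-reduction-then-mollification strategy that was used for Lemma \ref{global approximation}, only now over the half space $\mathbb{H}$ rather than all of $\ell^2$. The key point is that every ingredient needed on $\mathbb{H}$ has already been prepared: Proposition \ref{20231123prop21} gives density of $C_c^\infty(\mathbb{R}^n)$ in $W^{1,2}(\mathbb{H}_n,\mathcal{N}^n)$, Lemma \ref{20231120lem2} gives the embedding $W^{1,2}(\mathbb{H}_n,\mathcal{N}^n)\subset W^{1,2}(\mathbb{H})$ together with the compatibility of the weak derivatives $D_i$, and Proposition \ref{Reduce diemension} gives the $L^2$-convergence of the cylindrical conditional expectations $f_n$.

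First I would, for $f\in W^{1,2}(\mathbb{H})$ and each $n\in\mathbb{N}$, form the conditional expectation $f_n$ as in Definition \ref{Integration reduce dimension} (restricted to $\mathbb{H}$, using $\mathbb{H}=\mathbb{H}_n\times\ell^2(\mathbb{N}\setminus\{1,\cdots,n\})$). Exactly as in the proof of Lemma \ref{global approximation}, for each $\varphi\in C_c^\infty(\mathbb{R}^n)$ and $1\le i\le n$ the identity $\int_{\mathbb{H}_n}f_n D_i^*\varphi\,\mathrm{d}\mathcal{N}^n=\int_{\mathbb{H}}fD_i^*\varphi\,\mathrm{d}P=\int_{\mathbb{H}}(D_if)\varphi\,\mathrm{d}P=\int_{\mathbb{H}_n}(D_if)_n\varphi\,\mathrm{d}\mathcal{N}^n$ shows $D_i(f_n)=(D_if)_n$, so $f_n\in W^{1,2}(\mathbb{H}_n,\mathcal{N}^n)$, and by Lemma \ref{20231120lem2} $f_n\in W^{1,2}(\mathbb{H})$ with $D_if_n=0$ for $i>n$. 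Then I would split
$$
\|f_n-f\|_{W^{1,2}(\mathbb{H})}^2=\int_{\mathbb{H}}|f_n-f|^2\,\mathrm{d}P+\sum_{i=1}^{n}a_i^2\int_{\mathbb{H}}|(D_if)_n-D_if|^2\,\mathrm{d}P+\sum_{i=n+1}^{\infty}a_i^2\int_{\mathbb{H}}|D_if|^2\,\mathrm{d}P,
$$
and control each piece: the first and the middle blocks tend to $0$ by Proposition \ref{Reduce diemension}(2), the tail is the remainder of a convergent series, and the finitely many middle terms beyond a fixed cutoff are bounded using the norm non-expansiveness in Proposition \ref{Reduce diemension}(1) together with $|a+b|^2\le 2|a|^2+2|b|^2$. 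Given $\epsilon>0$, choosing first a cutoff $n_1$ to make the tail and the high-index middle terms small, then $n_2>n_1$ to make the remaining finitely many terms and the $L^2$ term small, yields $\|f_n-f\|_{W^{1,2}(\mathbb{H})}<\epsilon$ for $n\ge n_2$, hence $\lim_{n\to\infty}\|f_n-f\|_{W^{1,2}(\mathbb{H})}=0$.

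Finally, for each fixed $n$, Proposition \ref{20231123prop21} supplies a sequence in $C_c^\infty(\mathbb{R}^n)\subset\mathscr{C}_c^\infty$ converging to $f_n$ in $W^{1,2}(\mathbb{H}_n,\mathcal{N}^n)$, which by Lemma \ref{20231120lem2} is the same as convergence in $W^{1,2}(\mathbb{H})$. A diagonal argument then extracts a single sequence $\{\varphi_n\}_{n=1}^\infty\subset\mathscr{C}_c^\infty$ with $\|\varphi_n-f\|_{W^{1,2}(\mathbb{H})}\to0$, completing the proof.

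The step I expect to be the main obstacle — or at least the one requiring the most care — is not any single estimate but the bookkeeping in the three-part split: one must handle the tail $\sum_{i>n}$, the ``middle'' indices $n_1<i\le n$ (where $(D_if)_n$ and $D_if$ are both present and only the non-expansiveness bound is available), and the low indices $i\le n_1$ (where genuine $L^2$-convergence of $(D_if)_n\to D_if$ is used) in the correct order so that the two cutoffs $n_1,n_2$ are chosen consistently. This is precisely the argument already executed for the whole-space case in Lemma \ref{global approximation}, so the essential work is to verify that restricting all integrals to $\mathbb{H}$ and invoking the half-space analogues (Proposition \ref{20231123prop21} and Lemma \ref{20231120lem2}) in place of their full-space counterparts (Proposition \ref{20231123prop2} and Lemma \ref{20231123lem2}) causes no difficulty; since $\mathbb{H}=\mathbb{H}_n\times\ell^2(\mathbb{N}\setminus\{1,\cdots,n\})$ factors just like $\ell^2$ does, the conditional-expectation machinery of Proposition \ref{Reduce diemension} applies verbatim.
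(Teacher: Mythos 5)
Your proposal is correct and follows essentially the same route as the paper's own proof: dimension reduction via the conditional expectations $f_n$, the identity $D_i(f_n)=(D_if)_n$ obtained by testing against cylinder functions, the three-block split of $\|f_n-f\|_{W^{1,2}(\mathbb{H})}^2$ handled with the same two-cutoff $\epsilon/3$ bookkeeping (non-expansiveness for the middle indices, tail of a convergent series for high indices), and finally Proposition \ref{20231123prop21} together with Lemma \ref{20231120lem2} and a diagonal argument. No gaps beyond those already present in the paper's own treatment.
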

\begin{proof}
Note that for any $\varphi\in C_c^{\infty}(\mathbb{H}_n)$, $n\in\mathbb{N}$ and $1\leq i\leq n$, we have
\begin{eqnarray*}
\int_{\mathbb{H}_n}(D_if)_n\varphi\,\mathrm{d}\mathcal{N}^n
=\int_{\mathbb{H} }(D_if)\varphi\,\mathrm{d}P
=\int_{\mathbb{H} }fD_i^*\varphi\,\mathrm{d}P
=\int_{\mathbb{H}_n }f_nD_i^*\varphi\,\mathrm{d}\mathcal{N}^n,
\end{eqnarray*}
where the second equality follows from the fact that $\varphi\in C_F^{\infty}(\mathbb{H})$. This implies that $(D_if)_n=D_i(f_n)$ in $W^{1,2}(\mathbb{H}_n,\mathcal{N}^n)$ and hence $f_n\in W^{1,2}(\mathbb{H}_n,\mathcal{N}^n)$. By Lemma \ref{20231120lem2}, $f_n\in W^{1,2}(\mathbb{H})$ and
\begin{eqnarray*}
 ||f_n-f||_{W^{1,2}(\mathbb{H})}^2&=&\int_{\mathbb{H}}|f_n-f|^2\,\mathrm{d}P
 +\sum_{i=1}^{n}a_i^2\int_{\mathbb{H}}|(D_if)_n-D_if|^2\,\mathrm{d}P\\
&&+\sum_{i=n+1}^{\infty}a_i^2\int_{\mathbb{H}}|D_if|^2\,\mathrm{d}P.
\end{eqnarray*}
By Proposition \ref{Reduce diemension},
\begin{eqnarray*}
\lim_{n\to\infty}\int_{\mathbb{H}}|f_n-f|^2\,\mathrm{d}P=0,\qquad\lim_{n\to\infty}\int_{\mathbb{H}}|(D_if)_n-D_if|^2\,\mathrm{d}P,\quad\,\forall\,i\in\mathbb{N},
\end{eqnarray*}
and $\sum\limits_{i=1}^{\infty}a_i^2\int_{\mathbb{H}}|D_if|^2\,\mathrm{d}P<\infty$. For any given $\epsilon>0$, there exists $n_1\in\mathbb{N}$ such that for any $n\geq n_1$, we have
\begin{eqnarray*}
 \int_{\mathbb{H}}|f_n-f|^2\,\mathrm{d}P<\frac{\epsilon}{3},\qquad \sum\limits_{i=n_1+1}^{\infty}4a_i^2\int_{\mathbb{H}}|D_if|^2\,\mathrm{d}P<\frac{\epsilon}{3}.
\end{eqnarray*}
Note that
\begin{eqnarray*}
 \lim_{n\to\infty}\left( \sum\limits_{i=1}^{ n_1}4a_i^2\int_{\mathbb{H}}|(D_if)_n-D_if|^2\,\mathrm{d}P\right)=0,
\end{eqnarray*}
and hence there exists $n_2>n_1$ such that for any $n\geq n_2$, it holds that
\begin{eqnarray*}
  \sum\limits_{i=1}^{ n_1}4a_i^2\int_{\mathbb{H}}|(D_if)_n-D_if|^2\,\mathrm{d}P<\frac{\epsilon}{3}.
\end{eqnarray*}
Then for any any $n\geq n_2$, we have
\begin{eqnarray*}
&&||f_n-f||_{W^{1,2}(\mathbb{H})}^2\\
&=&\int_{\mathbb{H}}|f_n-f|^2\,\mathrm{d}P
+\sum_{i=1}^{n_1}a_i^2\int_{\mathbb{H}}|(D_if)_n-D_if|^2\,\mathrm{d}P\\
&&+\sum_{i=n_1+1}^{n}a_i^2\int_{\mathbb{H}}|(D_if)_n-D_if|^2\,\mathrm{d}P
+\sum_{i=n+1}^{\infty}a_i^2\int_{\mathbb{H}}|D_if|^2\,\mathrm{d}P\\
&\leq&\int_{\mathbb{H}}|f_n-f|^2\,\mathrm{d}P
+\sum_{i=1}^{n_1}a_i^2\int_{\mathbb{H}}|(D_if)_n-D_if|^2\,\mathrm{d}P\\
&&+\sum_{i=n_1+1}^{\infty}4a_i^2\int_{\mathbb{H}}|D_if|^2\,\mathrm{d}P
<\frac{\epsilon}{3}+\frac{\epsilon}{3}+\frac{\epsilon}{3}=\epsilon,
\end{eqnarray*}
which implies that $\lim\limits_{n\to\infty}||f_n-f||_{W^{1,2}(\mathbb{H})}=0$. We also note that for each $n\in\mathbb{N}$, $C_c^{\infty}(\mathbb{R}^n)$ is dense in $W^{1,2}(\mathbb{H}_n,\mathcal{N}^n)$. Since $C_c^{\infty}(\mathbb{R}^n)\subset  \mathscr{C}_c^{\infty}$, combining  Lemma \ref{20231120lem2}, we see that
$C_c^{\infty}(\mathbb{R}^n)$ is dense in $W^{1,2}(\mathbb{H})$. The proof of Lemma \ref{20231120lem1} is completed.
\end{proof}
\begin{lemma}
\label{20231120lem4}
Suppose that $O$ is an open subset of $\ell^2$ and $\textbf{x}_0\in(\partial \mathbb{H})\cap O$. If $f\in W^{1,2}(O\cap\mathbb{H})$ and there exists $r_1,r_2\in(0,+\infty)$ such that $r_1<r_2$, $\chi_{\ell^2(\mathbb{N})\setminus B_{r_1}(\textbf{x}_0)}\cdot f=0$ and $B_{r_2}(\textbf{x}_0)\cap\mathbb{H}\subset O$. Then it holds that
\begin{itemize}
\item[(1)] $f\in W^{1,2}(\mathbb{H})$;
\item[(2)] $D_if=D_if$, where the left ``$D_i$'' is in the sense of $W^{1,2}(O\cap\mathbb{H})$ and the right ``$D_i$'' is in the sense of $W^{1,2}(\mathbb{H})$,\,\,$\forall\,\, i\in\mathbb{N}$;
\item[(3)]  there exists $\{\varphi_n\}_{n=1}^{\infty}\subset \mathscr{C}_c^{\infty}$ such that
\begin{eqnarray*}
 \lim_{n\to\infty}||\varphi_n-f||_{W^{1,2}(\mathbb{H}\cap O)}=0.
\end{eqnarray*}
\end{itemize}
\end{lemma}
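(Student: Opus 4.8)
The plan is to realize $f$ as (the restriction of) a function on the whole half-space $\mathbb{H}$ by zero extension, and to transfer the weak-derivative structure from $O\cap\mathbb{H}$ to $\mathbb{H}$ by a cut-off/localization argument; once this is done, assertion (3) follows immediately from the half-space approximation result Lemma \ref{20231120lem1}. Since $\chi_{\ell^2\setminus B_{r_1}(\textbf{x}_0)}\cdot f=0$ and $B_{r_1}(\textbf{x}_0)\cap\mathbb{H}\subset B_{r_2}(\textbf{x}_0)\cap\mathbb{H}\subset O$, the (essential) support of $f$ is contained in $B_{r_1}(\textbf{x}_0)\cap\mathbb{H}$, which lies strictly inside $O\cap\mathbb{H}$. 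First I would define $\tilde f$ on $\mathbb{H}$ by $\tilde f=f$ on $O\cap\mathbb{H}$ and $\tilde f=0$ on $\mathbb{H}\setminus(O\cap\mathbb{H})$; this is consistent because $B_{r_1}(\textbf{x}_0)\cap\mathbb{H}$ is an open subset of the open set $O\cap\mathbb{H}$ and hence cannot meet $\partial O$, so $f$ vanishes (a.e.) near the part of $\partial(O\cap\mathbb{H})$ lying in $\mathbb{H}$. Likewise the weak derivatives $D_if$ (taken on $O\cap\mathbb{H}$) vanish outside $B_{r_1}(\textbf{x}_0)$, so each extends by zero to a function $g_i$ on $\mathbb{H}$.

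Next I would choose $r_2'\in(r_1,r_2)$ and a cut-off $\eta(\textbf{y})\triangleq h(||\textbf{y}-\textbf{x}_0||_{\ell^2}^2)$, where $h\in C^\infty(\mathbb{R};[0,1])$ equals $1$ on $(-\infty,r_1^2]$ and $0$ on $[(r_2')^2,\infty)$; as in Lemma \ref{230708lem1} this $\eta$ is $F$-smooth with all partial derivatives bounded on its bounded support, $\eta\equiv1$ on $B_{r_1}(\textbf{x}_0)$ (so $D_i\eta=0$ there), and $\operatorname{supp}\eta\subset\overline{B_{r_2'}(\textbf{x}_0)}$. For any test function $\varphi\in C_F^\infty(\mathbb{H})$ the product $\eta\varphi$ is again $F$-smooth, and its support lies in $\operatorname{supp}\varphi\cap\overline{B_{r_2'}(\textbf{x}_0)}$; taking $r\triangleq\min\{r_\varphi,\,r_2-r_2'\}$, where $r_\varphi$ realizes $\operatorname{supp}\varphi\stackrel{\circ}{\subset}\mathbb{H}$, one checks that $B(\textbf{x};r)\subset B_{r_2}(\textbf{x}_0)\cap\mathbb{H}\subset O\cap\mathbb{H}$ for every $\textbf{x}\in\operatorname{supp}(\eta\varphi)$, so $\operatorname{supp}(\eta\varphi)\stackrel{\circ}{\subset}O\cap\mathbb{H}$ and hence $\eta\varphi\in C_F^\infty(O\cap\mathbb{H})$. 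The core identity is then a chain of swaps: on $\operatorname{supp}\tilde f$ one has $\eta\equiv1$ and $D_i\eta=0$, so $D_i^*(\eta\varphi)=D_i^*\varphi$ there and $\int_{\mathbb{H}}\tilde f\,D_i^*\varphi\,\mathrm{d}P=\int_{O\cap\mathbb{H}}f\,D_i^*(\eta\varphi)\,\mathrm{d}P$; applying Definition \ref{231013def1} with the admissible test function $\eta\varphi$ gives $\int_{O\cap\mathbb{H}}f\,D_i^*(\eta\varphi)\,\mathrm{d}P=\int_{O\cap\mathbb{H}}(D_if)(\eta\varphi)\,\mathrm{d}P$; and since $\eta\equiv1$ on $\operatorname{supp}(D_if)$, the right-hand side equals $\int_{\mathbb{H}}g_i\varphi\,\mathrm{d}P$. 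This shows $D_i\tilde f=g_i$ weakly on $\mathbb{H}$, which is assertion (2); combined with $\int_{\mathbb{H}}|\tilde f|^2\,\mathrm{d}P=\int_{O\cap\mathbb{H}}|f|^2\,\mathrm{d}P$ and $\sum_i a_i^2\int_{\mathbb{H}}|g_i|^2\,\mathrm{d}P=\sum_i a_i^2\int_{O\cap\mathbb{H}}|D_if|^2\,\mathrm{d}P\leq ||f||_{W^{1,2}(O\cap\mathbb{H})}^2<\infty$, it yields $\tilde f\in W^{1,2}(\mathbb{H})$, which is (1).

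Finally, for (3) I would invoke Lemma \ref{20231120lem1} applied to $\tilde f\in W^{1,2}(\mathbb{H})$ to obtain $\{\varphi_n\}_{n=1}^\infty\subset\mathscr{C}_c^\infty$ with $||\varphi_n-\tilde f||_{W^{1,2}(\mathbb{H})}\to0$; since $\mathbb{H}\cap O\subset\mathbb{H}$ while $f=\tilde f$ and $D_if=g_i=D_i\tilde f$ on $\mathbb{H}\cap O$ by (2), monotonicity of the integrals gives $||\varphi_n-f||_{W^{1,2}(\mathbb{H}\cap O)}\leq ||\varphi_n-\tilde f||_{W^{1,2}(\mathbb{H})}\to0$.

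I expect the main obstacle to be the careful bookkeeping around the localization, namely verifying that $\eta\varphi$ is a genuine element of $C_F^\infty(O\cap\mathbb{H})$: this requires both that $\eta$ inherits enough $F$-smoothness and bounded derivatives from the one-dimensional profile $h$ (so that $\eta\varphi$ has the bounded first derivatives demanded by the test-function class), and, more delicately, that the buffer $r_2-r_2'$ together with the uniform inclusion of $\operatorname{supp}\varphi$ in $\mathbb{H}$ produces a single positive radius witnessing $\operatorname{supp}(\eta\varphi)\stackrel{\circ}{\subset}O\cap\mathbb{H}$. The remaining computations are routine once one is careful that the supports of $f$ and of $D_if$ both sit inside the region $\{\eta\equiv1\}$, which is exactly what makes the swap $D_i^*(\eta\varphi)\leftrightarrow D_i^*\varphi$ legitimate.
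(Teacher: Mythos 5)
Your proposal is correct and follows essentially the same route as the paper's own proof: zero extension of $f$ and its weak derivatives to $\mathbb{H}$, a radial cut-off built from a one-dimensional smooth profile (the paper's $\Psi(\textbf{x})=\psi(||\textbf{x}-\textbf{x}_0||_{\ell^2}^2)$ plays the role of your $\eta$), the same swap of the cut-off in and out of the pairing $\int f\,D_i^*(\cdot)\,\mathrm{d}P$ to identify the weak derivatives, and then Lemma \ref{20231120lem1} for assertion (3). If anything, your insertion of the intermediate radius $r_2'\in(r_1,r_2)$ to secure the uniform inclusion $\operatorname{supp}(\eta\varphi)\stackrel{\circ}{\subset}O\cap\mathbb{H}$ is slightly more careful than the paper's choice of cut-off supported up to $r_2$.
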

\begin{proof}
We view $f,\,D_if,\,i\in\mathbb{N}$ as functions on $\mathbb{H}$ by zero extension to $\mathbb{H}\setminus (O\cap\mathbb{H})$. It is easy to see that $\chi_{\ell^2\setminus B_{r_1}(\textbf{x}_0)}\cdot (D_if)=0$ holds for each $i\in\mathbb{N}$. Firstly, we have
\begin{eqnarray*}
\int_{\mathbb{H}}|f|^2\,\mathrm{d}P=\int_{O\cap\mathbb{H}}|f|^2\,\mathrm{d}P.
\end{eqnarray*}
Choosing $\psi\in C^{\infty}(\mathbb{R};[0,1])$ such that $\psi(x)=1$ if $x\leq r_1^2$ and  $\psi(x)=0$ if $x>r_2^2$. Let $\Psi(\textbf{x})\triangleq \psi(||\textbf{x}||_{\ell^2}^2),\,\,\forall\,\textbf{x}\in \ell^2$. For any $i\in\mathbb{N}$ and $\varphi\in C_F^{\infty}(\mathbb{H})$, we have
\begin{eqnarray*}
(D_if)(\varphi)
&=&\int_{\mathbb{H}}f(D_i^*\varphi)\,\mathrm{d}P\\
&=&\int_{\mathbb{H}\cap O}f(D_i^*\varphi)\,\mathrm{d}P\\
&=&\int_{\mathbb{H}\cap O}f\cdot \Psi\cdot(D_i^*\varphi)\,\mathrm{d}P\\
&=&\int_{\mathbb{H}\cap O}f\cdot D_i^*(\Psi\cdot\varphi)\,\mathrm{d}P
+\int_{\mathbb{H}\cap O}(f\cdot D_i(\Psi))\cdot\varphi\,\mathrm{d}P\\
&=&\int_{\mathbb{H}\cap O}f\cdot D_i^*(\Psi\cdot\varphi)\,\mathrm{d}P\\
&=&\int_{\mathbb{H}\cap O}(D_if)\cdot \Psi\cdot\varphi\,\mathrm{d}P\\
&=&\int_{\mathbb{H}\cap O}(D_if)\cdot \varphi\,\mathrm{d}P\\
&=&\int_{\mathbb{H}}(D_if) \cdot\varphi\,\mathrm{d}P,
\end{eqnarray*}
where the third equality follows from the fact that $f\cdot \Psi=f$, the fourth equality follows from the fact $f\cdot D_i(\Psi)=0$, the sixth equality follows from the fact that $\Psi\cdot\varphi\in C_F^{\infty}(\mathbb{H}\cap O)$, the seventh equality follows from the fact that $(D_if)\cdot \Psi=f$. Finally, the conclusion (3) follows from Lemma \ref{20231120lem1}. This completes the proof of Lemma \ref{20231120lem4}.
\end{proof}

\subsection{Support Contained in a Neighborhood}
\begin{lemma}\label{lemma21hs}
Suppose $f\in W^{1,2}(O)$ and
$$
\text{supp}f\subset B_r(\textbf{x}_0)\subset B_{r'}(\textbf{x}_0)\subset O
$$
for some $r,r'\in(0,+\infty)$ satisfying $r'>r$. Then there exists $\{\varphi_n\}_{n=1}^{\infty}\subset \mathscr{C}_c^{\infty}$ such that
\begin{eqnarray*}
 \lim_{n\to\infty}||\varphi_n-f||_{W^{1,2}(O)}=0.
\end{eqnarray*}
\end{lemma}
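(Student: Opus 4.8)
The plan is to reduce the assertion to the global approximation result on the whole space (Lemma \ref{global approximation}) by extending $f$ across $\partial O$ by zero. First I would fix radii $r<\rho<r'$, let $\tilde f$ denote the extension of $f$ to all of $\ell^2$ that vanishes on $\ell^2\setminus O$, and let $\widetilde{D_i f}$ denote the zero extension of the weak derivative $D_i f$. Since $f\in W^{1,2}(O)$ and $P$ is a probability measure, each $\widetilde{D_i f}\in L^2(\ell^2,P)$ and $\sum_{i=1}^{\infty}a_i^2\int_{\ell^2}|\widetilde{D_i f}|^2\,\mathrm{d}P=\sum_{i=1}^{\infty}a_i^2\int_{O}|D_i f|^2\,\mathrm{d}P<\infty$, so the only genuine content is to show that $\tilde f\in W^{1,2}(\ell^2)$ with $D_i\tilde f=\widetilde{D_i f}$ in the sense of Definition \ref{231013def1}.

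This extension claim is the heart of the argument and the step I expect to be the main obstacle; it is the interior analogue of the half-space Lemma \ref{20231120lem4}. I would choose $\psi\in C^{\infty}(\mathbb{R};[0,1])$ with $\psi\equiv 1$ on $(-\infty,r^2]$ and $\psi\equiv 0$ on $[\rho^2,+\infty)$, and set $\Psi(\textbf{x})\triangleq\psi(\|\textbf{x}-\textbf{x}_0\|_{\ell^2}^2)$. Then $\Psi\equiv 1$ on $B_r(\textbf{x}_0)$ and $\supp\Psi\subset\overline{B_\rho(\textbf{x}_0)}\stackrel{\circ}{\subset}O$, the uniform inclusion holding because $\rho<r'$ and $B_{r'}(\textbf{x}_0)\subset O$; a direct estimate (using $\sum_{i=1}^{\infty}a_i^2<\infty$ and the fact that $\psi'$ is supported where $\|\textbf{x}-\textbf{x}_0\|_{\ell^2}\le\rho$, forcing each $|x_i-x_{0,i}|\le\rho$) shows that all partial derivatives of $\Psi$, hence of $\Psi\varphi$, are uniformly bounded, so $\Psi\varphi\in C_F^{\infty}(O)$ for every test function $\varphi\in C_F^{\infty}(\ell^2)$. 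For such $\varphi$ I would exploit the elementary identity $\Psi\,D_i^{*}\varphi=D_i^{*}(\Psi\varphi)+(D_i\Psi)\varphi$ together with $\tilde f=f=f\Psi$ to compute
$$(D_i\tilde f)(\varphi)=\int_{O} f\,\Psi\,D_i^{*}\varphi\,\mathrm{d}P=\int_{O} f\,D_i^{*}(\Psi\varphi)\,\mathrm{d}P=\int_{O}(D_i f)\,\Psi\,\varphi\,\mathrm{d}P=\int_{\ell^2}\widetilde{D_i f}\,\varphi\,\mathrm{d}P.$$
Here the second equality uses $f\,(D_i\Psi)=0$ (since $\Psi\equiv 1$ on $\supp f\subset B_r(\textbf{x}_0)$), the third uses the definition of $D_i f$ on $O$ applied to the admissible test function $\Psi\varphi$, and the last uses $(\Psi-1)\,D_i f=0$, which follows from the support-locality of the weak derivative (that is, $\chi_{O\setminus B_r(\textbf{x}_0)}\,D_i f=0$, exactly the fact asserted in Lemma \ref{20231120lem4}). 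This establishes $\tilde f\in W^{1,2}(\ell^2)$.

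With the extension in hand, I would apply Lemma \ref{global approximation} to $\tilde f\in W^{1,2}(\ell^2)$ to produce $\{\varphi_n\}_{n=1}^{\infty}\subset\mathscr{C}_c^{\infty}$ with $\|\varphi_n-\tilde f\|_{W^{1,2}(\ell^2)}\to 0$. Finally, since each $\varphi_n$ is a cylinder function whose classical partial derivatives serve as its weak derivatives on both $\ell^2$ and $O$, and since $\tilde f=f$ and $D_i\tilde f=D_i f$ on $O$, restricting all integrals from $\ell^2$ to $O$ can only decrease the norm, giving $\|\varphi_n-f\|_{W^{1,2}(O)}\le\|\varphi_n-\tilde f\|_{W^{1,2}(\ell^2)}\to 0$, which is the desired conclusion. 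The only points demanding real care are the verification that $\Psi\varphi$ truly lies in $C_F^{\infty}(O)$ (uniform inclusion of its support and uniform boundedness of all its partial derivatives) and the support-locality of the weak derivative; the remainder is routine bookkeeping.
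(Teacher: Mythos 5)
Your proof is correct and takes essentially the same route as the paper's own: both arguments extend $f$ and its weak derivatives by zero, test against $\Psi\varphi$ with the same radial cutoff $\Psi$ built from a one-variable smooth function of $\|\textbf{x}-\textbf{x}_0\|_{\ell^2}^2$, exploit the identity $\Psi\,D_i^*\varphi=D_i^*(\Psi\varphi)+(D_i\Psi)\varphi$ together with $\supp f\subset B_r(\textbf{x}_0)$ and the support-locality of $D_if$ (a fact the paper also needs but invokes implicitly in the first equality of its display), and then conclude via Lemma \ref{global approximation} followed by restriction of the norm to $O$. The only differences are cosmetic: you insert an intermediate radius $\rho\in(r,r')$ and run the chain of equalities from $\int_O f\,D_i^*\varphi\,\mathrm{d}P$ to $\int_O(D_if)\varphi\,\mathrm{d}P$, whereas the paper runs the same identity in the opposite direction.
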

\begin{proof}
We extend the values of $f, D_if,\,i\in\mathbb{N}$ to $\ell^2\setminus O$ to be zero. For any $\varphi\in C_F^{\infty}(\ell^2)$, choosing $\psi\in C^{\infty}(\mathbb{R};[0,1])$ such that $\psi(x)=1$ if $x\leqslant r^2$ and $\psi(x)=0$ if $x\geqslant (r')^2$. Let $\Psi(\textbf{x})\triangleq \psi(||\textbf{x}||_{\ell^2}^2),\,\forall\,\textbf{x}\in\ell^2$. Then $\Psi,\Psi\cdot \varphi\in  C_F^{\infty}(\ell^2)$ and
\begin{eqnarray*}
\int_{\ell^2(\mathbb{N})}(D_if)\cdot\varphi\,\mathrm{d}P=\int_{\ell^2}(D_if)\cdot\varphi\cdot \Psi\,\mathrm{d}P=\int_{O}(D_if)\cdot\varphi\cdot \Psi\,\mathrm{d}P.
\end{eqnarray*}
Since supp$(\varphi\cdot \Psi)\stackrel{\circ}{\subset} O$, we can view $\varphi\cdot \Psi$ as a member of $C_F^{\infty}(O)$. By Definition \ref{231013def1},
\begin{eqnarray*}
\int_{O}(D_if)\cdot\varphi\cdot \Psi\,\mathrm{d}P&=&\int_{O}f\cdot D_i^*(\varphi\cdot \Psi)\,\mathrm{d}P\\
&=&\int_{O}f\cdot (D_i^*\varphi)\,\mathrm{d}P\\
&=&\int_{\ell^2}f\cdot (D_i^*\varphi)\,\mathrm{d}P,
\end{eqnarray*}
where the second equality follows from the fact that supp$f\subset \{\textbf{x}\in\ell^2:||\textbf{x}-\textbf{x}_0||_{\ell^2}<r\}$. These imply that $f\in W^{1,2}(\ell^2)$ and by Lemma \ref{global approximation}, there exists $\{\varphi_n\}_{n=1}^{\infty}\subset \mathscr{C}_c^{\infty}$ such that $\lim\limits_{n\to\infty}||\varphi_n-f||_{W^{1,2}(\ell^2)}=0$ and hence $\lim\limits_{n\to\infty}||\varphi_n-f||_{W^{1,2}(O)}=0$. This completes the proof of Lemma \ref{lemma21hs}.
\end{proof}

Suppose that $O$ is an open subset of $\ell^2$, $g$ is a real-valued $C^1$ Fr\'{e}chet differentiable function on $\ell^2$ such that
\begin{itemize}
\item[] $\partial O=\{ \textbf{x} :g(\textbf{x})=0\}$ and $O=\{ \textbf{x} :g(\textbf{x})>0\}$;
\item[] $Dg(\textbf{x})\neq 0$ for each $\textbf{x}\in \partial O$.
\end{itemize}
For $\textbf{x}_0\in \partial O$, without lose of generality we assume that $D_1g(\textbf{x}_0) >0,$ and there exists $r\in(0,+\infty)$ such that $\inf\limits_{\textbf{x}\in B_r(\textbf{x}_0)}D_1g(\textbf{x})>0$ and  $\sup\limits_{\textbf{x}\in B_r(\textbf{x}_0)}D_1g(\textbf{x})<+\infty$. Let $U_0\triangleq B_r(\textbf{x}_0)$. Define
\begin{eqnarray*}
\Psi(\textbf{x})&\triangleq &(g(\textbf{x}),(x_i)_{i\in\mathbb{N}\setminus\{1\}}), \quad\forall\,\textbf{x}=(x_i)_{i\in\mathbb{N}}\in \ell^2(\mathbb{N}),\\
\psi(\textbf{x})&\triangleq & \Psi(\textbf{x}),\qquad \,\forall \textbf{x}\in U_0.
\end{eqnarray*}
Then $\psi(U_0)$ is an open subset of $\ell^2$, $\psi(U_0\cap O)=\psi(U_0)\cap \mathbb{H}$ and $\psi$ is a homeomorphism. For each $ \widehat{\textbf{x}} \in \psi(U_0)$ there exists a unique $\textbf{x}=(x_i)_{i\in\mathbb{N}}\in U_0$ such that $\widehat{\textbf{x}} =\psi(\textbf{x})$, let $h(\widehat{\textbf{x}})\triangleq x_1$. Suppose that $h$ is a real-valued $C^1$ Fr\'{e}chet differentiable function on $\psi(U_0)$. Define
\begin{eqnarray*}
\tau(\widehat{\textbf{x}})\triangleq (h(\widehat{\textbf{x}}),(\widehat{x_i})_{i\in\mathbb{N}\setminus\{1\}} ),\quad\forall\,\widehat{\textbf{x}}=(\widehat{x_i})_{i\in\mathbb{N}}\in \psi(U_0).
\end{eqnarray*}
Then $\tau=\psi^{-1}$.
\begin{condition}\label{20231118cond1}
 Suppose that $h\in C_F^{\infty}(\psi(U_0)),\,g\in C_F^{\infty}(U_0)$ and $\widehat{\textbf{x}}\in \eta(U_0)$, and there exists $\delta \in(0,+\infty)$ such that $-\delta< \widehat{x_1 }<\delta,\,-\delta<  x_1  <\delta$ and
\begin{eqnarray*}
&&|h(\widehat{\textbf{x}})|<\delta,\,\,\,|g( \textbf{x})|<\delta,\,\frac{1}{\delta}<D_1h(\widehat{\textbf{x}})<\delta,\\
&&\frac{1}{\delta}<D_1 g( \textbf{x} )<\delta,\,\,\,
\sum\limits_{i=1}^{\infty}a_i^2\left|D_i h(\widehat{\textbf{x}}) \right|^2<\delta,
\end{eqnarray*}
hold for all $\widehat{\textbf{x}}=(\widehat{x_i})_{i\in\mathbb{N}}\in \psi (U_0)$ and $\textbf{x}=(x_i)_{i\in\mathbb{N}}\in U_0$. For any $E\stackrel{\circ}{\subset}U_0\cap O$ and $E'\stackrel{\circ}{\subset}\psi(U_0)\cap \mathbb{H}$, it holds that $\psi(E)\stackrel{\circ}{\subset}\psi(U_0)\cap \mathbb{H}$ and $\tau(E')\stackrel{\circ}{\subset} U_0\cap O$.
\end{condition}
\begin{example}{}
If $O=\mathbb{H}$, then $g(\textbf{x})=x_1$ for any $\textbf{x}=(x_i)_{i\in\mathbb{N}}\in\ell^2$. For any $\textbf{x}_0=(x_i^0)_{i\in\mathbb{N}}\in\partial\mathbb{H}$ and $U_0=B_r(\textbf{x}_0)$ where $r\in(0,+\infty),\,\psi(\textbf{x})=\textbf{x}$ and $\tau(\widehat{\textbf{x}})=\widehat{\textbf{x}}$ for any $\textbf{x}\in U_0\cap\mathbb{H}$ and $\widehat{\textbf{x}}\in  U_0\cap\mathbb{H}$. It holds that
 $-r< \widehat{x_1 }<r,\,-r<  x_1  <r$ and
$$
D_1h(\widehat{\textbf{x}})=1,\quad \sum\limits_{i=1}^{\infty}a_i^2\left|D_i h(\widehat{\textbf{x}}) \right|^2=a_1^2,\quad\forall\,\widehat{\textbf{x}}\in  U_0\cap\mathbb{H}.
$$
Let $\delta\triangleq \max\{1,r,a_1^2\}$. Then Condition \ref{20231118cond1} holds.
\end{example}
\begin{example}{}
If $O=B_1$, then
$$
g(x)=\sqrt{\sum_{i=1}^{\infty}x_i^2}-1,\qquad\forall\,\textbf{x}=(x_i)_{i\in\mathbb{N}}\in \ell^2.
$$
Let $\textbf{x}_0=(\delta_{1,i})_{i\in\mathbb{N}}\in \partial(B_1)$ and $B_{\frac{1}{3}}(\textbf{x}_0)$, which is a convex open bounded neighborhood of $\textbf{x}_0$. Let
\begin{eqnarray*}
\psi(\textbf{x})&\triangleq &\left(\sqrt{\sum_{i=1}^{\infty}x_i^2}-1,(x_i)_{i\in\mathbb{N}\setminus\{1\}}\right),\qquad\forall\,\textbf{x}=(x_i)_{i\in\mathbb{N}}\in U_0,\\
h(\widehat{\textbf{x}})&\triangleq & \sqrt{(\widehat{x_1}+1)^2- \sum_{i=2}^{\infty}(\widehat{x_i})^2},\qquad\forall\,\widehat{\textbf{x}}=(\widehat{x_i})_{i\in\mathbb{N}}\in \psi(U_0),\\
\tau(\textbf{x})&\triangleq& \left( \sqrt{(\widehat{x_1}+1)^2- \sum_{i=2}^{\infty}(\widehat{x_i})^2},(\widehat{x_i})_{i\in\mathbb{N}\setminus\{1\}}\right),\qquad\forall\,\widehat{\textbf{x}}=(\widehat{x_i})_{i\in\mathbb{N}}\in \psi(U_0).
\end{eqnarray*}
Note that for any $\widehat{\textbf{x}}=(\widehat{x_i})_{i\in\mathbb{N}}\in \psi(U_0)$ and $\textbf{x}=(x_i)_{i\in\mathbb{N}}\in U_0$, we have
\begin{eqnarray*}
D_1g(\textbf{x})&\triangleq &\frac{x_1}{\sum_{i=1}^{\infty}x_i^2},\\
D_1h(\widehat{\textbf{x}})&\triangleq &\frac{2(\widehat{x_1}+1)}{\sqrt{(\widehat{x_1}+1)^2- \sum_{i=2}^{\infty}(\widehat{x_i})^2}} ,\\
\sum\limits_{i=1}^{\infty}a_i^2\left|D_i h(\widehat{\textbf{x}}) \right|^2&\triangleq& \frac{4a_1^2(\widehat{x_1}+1)^2+\sum_{i=2}^{\infty}a_i^2(\widehat{x_i})^2}{ (\widehat{x_1}+1)^2- \sum_{i=2}^{\infty}(\widehat{x_i})^2 },
\end{eqnarray*}
and
\begin{eqnarray*}
\frac{2}{3}\leq x_1\leq \frac{4}{3},&&\frac{2}{3}\leq \sqrt{\sum_{i=1}^{\infty}x_i^2}\leq \frac{4}{3},\\
0\leq |\widehat{x_1}|\leq \frac{1}{3},&&\frac{2}{3}\leq  \widehat{x_1}+1\leq \frac{4}{3},\\
0\leq \sum_{i=2}^{\infty}|\widehat{x_i}|^2\leq \frac{1}{9},&&\frac{16a_1^2}{9}\leq  4a_1^2(\widehat{x_1}+1)^2+\sum_{i=2}^{\infty}a_i^2(\widehat{x_i})^2\leq \frac{64a_1^2}{9}+\frac{\sum_{i=2}^{\infty}a_i^2}{9}.
\end{eqnarray*}
Hence $\frac{2}{3}\leq x_1\leq \frac{4}{3},\,-\frac{1}{3}\leq g(\textbf{x})\leq \frac{1}{3}$ and
\begin{eqnarray*}
\frac{1}{2}\leq D_1g(\textbf{x})\leq 2, \quad 1\leq D_1h(\widehat{\textbf{x}})\leq 8, \quad \sum\limits_{i=1}^{\infty}a_i^2\left|D_i h(\widehat{\textbf{x}}) \right|^2\leq \frac{64a_1^2}{9}+\frac{\sum_{i=2}^{\infty}a_i^2}{9}.
\end{eqnarray*}
Let
\begin{eqnarray*}
\delta\triangleq \max\left\{\frac{1}{3},\frac{4}{3},1,2,8,\frac{64a_1^2}{9}+\frac{\sum_{i=2}^{\infty}a_i^2}{9}\right\}.
\end{eqnarray*}
Then Condition \ref{20231118cond1} holds.

For any $\textbf{x}=(x_{i})_{i\in\mathbb{N}}\in \partial(B_1)$, there exists $i_0\in\mathbb{N}$ such that $x_{i_0}\neq 0$. We assume that $x_{i_0}>0 $.

Let $U_0\triangleq B_{\frac{x_{i_0}}{3}}(\textbf{x}_0)$, which is a convex open bounded neighborhood of $\textbf{x}_0$. Let
\begin{eqnarray*}
\psi(\textbf{x})&\triangleq &\left(\sqrt{\sum_{i=1}^{\infty}x_i^2}-1,(x_i)_{i\in\mathbb{N}\setminus\{1\}}\right),\qquad\forall\,\textbf{x}=(x_i)_{i\in\mathbb{N}}\in U_0,\\
h(\widehat{\textbf{x}})&\triangleq & \sqrt{(\widehat{x_1}+1)^2- \sum_{i=2}^{\infty}(\widehat{x_i})^2},\qquad\forall\,\widehat{\textbf{x}}=(\widehat{x_i})_{i\in\mathbb{N}}\in \psi(U_0),\\
\tau(\textbf{x})&\triangleq& \left( \sqrt{(\widehat{x_1}+1)^2- \sum_{i=2}^{\infty}(\widehat{x_i})^2},(\widehat{x_i})_{i\in\mathbb{N}\setminus\{1\}}\right),\qquad\forall\,\widehat{\textbf{x}}=(\widehat{x_i})_{i\in\mathbb{N}}\in \psi(U_0).
\end{eqnarray*}
Note that
\begin{eqnarray*}
D_{i_0}g(\textbf{x})&\triangleq &\frac{x_{i_0}}{\sum_{i=1}^{\infty}x_i^2},\\
D_1h(\widehat{\textbf{x}})&\triangleq &\frac{2(\widehat{x_1}+1)}{\sqrt{(\widehat{x_1}+1)^2- \sum_{i=2}^{\infty}(\widehat{x_i})^2}} ,\\
\sum\limits_{i=1}^{\infty}a_i^2\left|D_i h(\widehat{\textbf{x}}) \right|^2&\triangleq& \frac{4a_1^2(\widehat{x_1}+1)^2+\sum_{i=2}^{\infty}a_i^2(\widehat{x_i})^2}{ (\widehat{x_1}+1)^2- \sum_{i=2}^{\infty}(\widehat{x_i})^2 },
\end{eqnarray*}
\end{example}

 \begin{lemma}\label{20231121lem1}
Suppose that Condition \ref{20231118cond1} holds. Then, it holds that
\begin{eqnarray*}
\{f(\psi):f\in C_F^{\infty}(\psi(U_0)\cap\mathbb{H})\}&=&C_F^{\infty}( U_0\cap O),\\
\{g(\tau):g\in C_F^{\infty}(U_0\cap O)\}&=&C_F^{\infty}(\psi(U_0)\cap\mathbb{H}).
\end{eqnarray*}
\end{lemma}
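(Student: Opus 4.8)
The plan is to exploit that $\psi$ and $\tau$ are mutually inverse homeomorphisms between $U_0\cap O$ and $\psi(U_0)\cap\mathbb{H}$, with $\psi(\textbf{x})=(g(\textbf{x}),(x_i)_{i\ge 2})$ and $\tau(\widehat{\textbf{x}})=(h(\widehat{\textbf{x}}),(\widehat{x}_i)_{i\ge 2})$, so that each map alters only the first coordinate. The two displayed equalities are equivalent: once one shows that $f\in C_F^\infty(\psi(U_0)\cap\mathbb{H})$ implies $f\circ\psi\in C_F^\infty(U_0\cap O)$, the reverse inclusion $C_F^\infty(U_0\cap O)\subset\{f(\psi)\}$ follows by writing any $w\in C_F^\infty(U_0\cap O)$ as $w=(w\circ\tau)\circ\psi$ and applying the symmetric statement, since Condition \ref{20231118cond1} imposes the \emph{same} bounds on $(\psi,g)$ and on $(\tau,h)$. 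Thus I would prove the single assertion $f\circ\psi\in C_F^\infty(U_0\cap O)$ for $f\in C_F^\infty(\psi(U_0)\cap\mathbb{H})$, and invoke this symmetry for everything else.

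First I would verify $F$-continuity. Fix $\textbf{x}_0$ and $n$; along the slice $(s_1,\dots,s_n)\mapsto \textbf{x}_0+\sum_{k=1}^n s_k\textbf{e}_k$ one has $\psi\big(\textbf{x}_0+\sum_k s_k\textbf{e}_k\big)=\psi(\textbf{x}_0)+\big(g(\textbf{x}_0+\sum_k s_k\textbf{e}_k)-g(\textbf{x}_0)\big)\textbf{e}_1+\sum_{k=2}^n s_k\textbf{e}_k$, which remains in the finite-dimensional slice $\psi(\textbf{x}_0)+\mathrm{span}\{\textbf{e}_1,\dots,\textbf{e}_n\}$ and depends continuously on $(s_1,\dots,s_n)$ because $g\in C_F^\infty(U_0)$. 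Composing with $f$ restricted to that slice (which is $F$-continuous) gives continuity of $f\circ\psi$ along the slice; the same argument on $2$-dimensional slices together with the finite-dimensional chain rule yields the formulas $D_1(f\circ\psi)=(D_1f)(\psi)\,D_1g$ and, for $j\ge 2$, $D_j(f\circ\psi)=(D_1f)(\psi)\,D_jg+(D_jf)(\psi)$; iterating shows all higher $D^\alpha(f\circ\psi)$ exist and are $F$-continuous and $\mathscr{B}$-measurable, the latter since $\psi$ is a homeomorphism.

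The core estimate is the uniform bound on $|f\circ\psi|+\sum_j a_j^2|D_j(f\circ\psi)|^2$. The bound $|f\circ\psi|\le\sup|f|$ is clear. For the derivative sum I would differentiate the identity $g(\tau(\widehat{\textbf{x}}))=\widehat{x}_1$ to get $D_1h=1/(D_1g)(\tau)$ and $(D_jg)(\tau)=-(D_1g)(\tau)\,D_jh$ for $j\ge 2$, i.e. $D_jg(\textbf{x})=-D_1g(\textbf{x})\,D_jh(\psi(\textbf{x}))$ at $\textbf{x}=\tau(\widehat{\textbf{x}})$. Using $(a+b)^2\le 2a^2+2b^2$ together with the Condition \ref{20231118cond1} bounds $\tfrac1\delta<D_1g<\delta$ and $\sum_i a_i^2|D_ih|^2<\delta$, one controls $\sum_{j\ge 2}a_j^2|D_jg(\textbf{x})|^2=|D_1g(\textbf{x})|^2\sum_{j\ge 2}a_j^2|D_jh(\psi(\textbf{x}))|^2\le\delta^3$, and then
$$\sum_{j\ge 1}a_j^2|D_j(f\circ\psi)|^2\le a_1^2|(D_1f)(\psi)|^2|D_1g|^2+2|(D_1f)(\psi)|^2\sum_{j\ge 2}a_j^2|D_jg|^2+2\sum_{j\ge 2}a_j^2|(D_jf)(\psi)|^2$$
is bounded by the data for $f$ (finite since $f\in C_F^\infty$ and $a_1>0$ is fixed, so $|D_1f|$ is bounded). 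I expect \emph{this infinite sum to be the main obstacle}: the chain rule couples the single derivative $D_1f$ to the entire gradient of $g$, so one cannot estimate it term by term and must instead trade the gradient of $g$ for that of $h$ through the inverse-function identities, which is precisely where the weighted bound $\sum_i a_i^2|D_ih|^2<\delta$ of Condition \ref{20231118cond1} is used.

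Finally I would treat the support condition. Since $\psi$ is a homeomorphism from $U_0\cap O$ onto $\psi(U_0)\cap\mathbb{H}$ with inverse $\tau$, one has $\supp(f\circ\psi)=\tau(\supp f)$; as $f\in C_F^\infty(\psi(U_0)\cap\mathbb{H})$ forces $\supp f\stackrel{\circ}{\subset}\psi(U_0)\cap\mathbb{H}$, the last assertion of Condition \ref{20241115def1} as recorded in Condition \ref{20231118cond1} gives $\tau(\supp f)\stackrel{\circ}{\subset}U_0\cap O$. This establishes $f\circ\psi\in C_F^\infty(U_0\cap O)$, hence $\{f(\psi):f\in C_F^\infty(\psi(U_0)\cap\mathbb{H})\}\subset C_F^\infty(U_0\cap O)$; running the symmetric argument with $(\tau,h)$ and the inclusion $\psi(E)\stackrel{\circ}{\subset}\psi(U_0)\cap\mathbb{H}$ in place of $(\psi,g)$ and $\tau(E')\stackrel{\circ}{\subset}U_0\cap O$ yields both the reverse inclusion and the second displayed equality, completing the proof.
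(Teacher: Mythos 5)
Your proposal is correct and follows essentially the same route as the paper's proof: the same chain-rule formulas for $D_1(f\circ\psi)=(D_1f)(\psi)\cdot D_1g$ and $D_j(f\circ\psi)=(D_1f)(\psi)\cdot D_jg+(D_jf)(\psi)$, the same $(a+b)^2\le 2a^2+2b^2$ splitting of the weighted gradient sum, the same support argument via the uniform-inclusion clause of Condition \ref{20231118cond1}, and the same symmetry trick $w=(w\circ\tau)\circ\psi$ for the reverse inclusions. The only real difference is that you bound $\sum_{j\ge 2}a_j^2|D_jg|^2$ by $\delta^3$ through the inverse-function identities $D_jg=-D_1g\cdot(D_jh)(\psi)$, using only the inequalities listed explicitly in Condition \ref{20231118cond1}, whereas the paper reads the finiteness of $\sup_E\sum_{j}a_j^2|D_jg|^2$ directly off the requirement $g\in C_F^{\infty}(U_0)$; both are adequate.
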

\begin{proof}
Note that for any $f\in C_F^{\infty}(\psi(U_0)\cap\mathbb{H})$ and $E\stackrel{\circ}{\subset} U_0\cap O$, we have $\psi(E)\stackrel{\circ}{\subset} \psi(U_0)\cap \mathbb{H}$ and
\begin{eqnarray*}
&& \sup_E\left(|f(\psi)|+\sum_{i=1}^{\infty}a_i^2|D_i(f(\psi)|^2\right)\\
&=& \sup_E\Bigg(|f(\psi)|+a_1^2|(D_1f)(\psi)\cdot D_1g|^2\\
&&\qquad\qquad\qquad\qquad+\sum_{i=2}^{\infty}a_i^2|(D_1f)(\psi)\cdot D_ig+(D_if)(\psi)|^2\Bigg)\\
&\leq& \sup_E\bigg(|f(\psi)|+a_1^2|(D_1f)(\psi)\cdot D_1g|^2+2\sum_{i=2}^{\infty}a_i^2|(D_1f)(\psi)\cdot D_ig|^2\\
&&+|(D_if)(\psi) |^2)\bigg)\\
&\leq&\sup_E|f(\psi)|+2\left(\sup_E|(D_1f)(\psi)|^2\right)\cdot\sup_E\left(\sum_{i=1}^{\infty}a_i^2\cdot |D_ig|^2\right)\\
&&+2\sup_E\left(\sum_{i=2}^{\infty}a_i^2|(D_if)(\psi)|^2\right)\\
&\leq&\sup_{\psi(E)}|f|+2\left(\sup_{\psi(E)}|(D_1f)|^2\right)\cdot\sup_{E}\left(\sum_{i=1}^{\infty}a_i^2\cdot |D_ig|^2\right)\\
&&+2\sup_{\psi(E)}\left(\sum_{i=2}^{\infty}a_i^2|(D_if)|^2\right)<\infty.
\end{eqnarray*}
Since supp$f\stackrel{\circ}{\subset}\psi(U_0)\cap \mathbb{H}$, supp$f(\psi)=\tau(\text{supp}f)\stackrel{\circ}{\subset}(U_0\cap O)$. Obviously, $f\in C_S^{\infty}(U_0\cap O)$. Therefore, $f(\psi)\in C_F^{\infty}(U_0\cap O)$ and hence $\{f(\psi):f\in C_F^{\infty}(\psi(U_0)\cap \mathbb{H})\}\subset C_F^{\infty}(U_0\cap O)$. Similarly, we can prove that $\{g(\tau):g\in C_F^{\infty}( U_0 \cap O)\}\subset C_F^{\infty}(\psi(U_0)\cap \mathbb{H})$. Then for any $f\in C_F^{\infty}(U_0\cap O)$, we have $f=f(\tau(\psi))$ and $f(\tau)\in C_F^{\infty}(\psi(U_0)\cap \mathbb{H})$ which implies that $\{f(\psi):f\in C_F^{\infty}(\psi(U_0)\cap \mathbb{H})\}= C_F^{\infty}(U_0\cap O)$. Similarly, we can prove that $\{g(\tau):g\in C_F^{\infty}(U_0\cap O)\}=C_F^{\infty}(\psi(U_0)\cap\mathbb{H}).$ This completes the proof of Lemma \ref{20231121lem1}.
\end{proof}
\begin{lemma}\label{02231118lem1}
Suppose that Condition \ref{20231118cond1} holds. Then for each \\ $f\in L^1(U_0,P)$, it holds that
\begin{eqnarray*}
\int_{U_0} f\,\mathrm{d}P=\int_{\eta(U_0)} f(\tau(\widehat{\textbf{x}}) )\cdot\left|D_1 h(\widehat{\textbf{x}}) \right| \cdot e^{\frac{\widehat{x_1}^2-|h(\widehat{\textbf{x}})|^2}{2a_1^2}}\,\mathrm{d}P(\widehat{\textbf{x}}).
\end{eqnarray*}
For each $F\in L^1(\eta(U_0),P)$, it holds that
\begin{eqnarray*}
\int_{\psi(U_0)} F\,\mathrm{d}P=\int_{ U_0 } F(\psi( \textbf{x} )) \cdot\left|D_1 g( \textbf{x} ) \right| \cdot e^{\frac{ x_1^2-|g( \textbf{x} )|^2}{2a_1^2}}\,\mathrm{d}P( \textbf{x} ) .
\end{eqnarray*}
\end{lemma}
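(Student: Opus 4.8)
The plan is to exploit the fact that both $\psi$ and $\tau$ act nontrivially only on the first coordinate, so that the asserted identity is, fiberwise, an ordinary one-dimensional change of variables for a Gaussian integral, glued together by Fubini's theorem. First I would use the product decomposition $P=\bn_{a_1}\times P^{\widehat{1}}$ (exactly as in the proof of Lemma \ref{cha2prop1} with $r=1$, viewing $\ell^2$ as $\mathbb{R}\times\ell^2(\mathbb{N}\setminus\{1\})$), where $P^{\widehat{1}}$ denotes the restriction of $\prod_{j\geqslant 2}\bn_{a_j}$ to $\ell^2(\mathbb{N}\setminus\{1\})$, and write $\textbf{x}=(x_1,\textbf{x}^1)$ with $\textbf{x}^1=(x_j)_{j\geqslant 2}$. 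By Fubini's theorem,
$$
\int_{U_0} f\,\mathrm{d}P=\int_{\ell^2(\mathbb{N}\setminus\{1\})}\left(\int_{\mathbb{R}}(f\chi_{U_0})(x_1,\textbf{x}^1)\,\mathrm{d}\bn_{a_1}(x_1)\right)\mathrm{d}P^{\widehat{1}}(\textbf{x}^1).
$$

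For each fixed $\textbf{x}^1$, the map $x_1\mapsto y\triangleq g(x_1,\textbf{x}^1)$ is a $C^1$ diffeomorphism from the section $\{x_1:(x_1,\textbf{x}^1)\in U_0\}$ onto the section $\{y:(y,\textbf{x}^1)\in\psi(U_0)\}$, with inverse $y\mapsto x_1=h(y,\textbf{x}^1)$; this is guaranteed by Condition \ref{20231118cond1}, which forces $\frac{1}{\delta}<D_1g<\delta$ and $\frac{1}{\delta}<D_1h<\delta$, so both maps are strictly increasing with derivatives bounded away from $0$ and $\infty$. Performing the one-dimensional substitution $x_1=h(y,\textbf{x}^1)$, $\mathrm{d}x_1=D_1h(y,\textbf{x}^1)\,\mathrm{d}y$, the Gaussian density transforms as
$$
\frac{1}{\sqrt{2\pi a_1^2}}e^{-\frac{x_1^2}{2a_1^2}}\,\mathrm{d}x_1=|D_1h(y,\textbf{x}^1)|\cdot e^{\frac{y^2-|h(y,\textbf{x}^1)|^2}{2a_1^2}}\cdot\frac{1}{\sqrt{2\pi a_1^2}}e^{-\frac{y^2}{2a_1^2}}\,\mathrm{d}y,
$$
that is, $\mathrm{d}\bn_{a_1}(x_1)=|D_1h|\,e^{(y^2-|h|^2)/(2a_1^2)}\,\mathrm{d}\bn_{a_1}(y)$. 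Since $f(x_1,\textbf{x}^1)=f(h(y,\textbf{x}^1),\textbf{x}^1)=f(\tau(\widehat{\textbf{x}}))$ with $\widehat{\textbf{x}}=(y,\textbf{x}^1)$, substituting into the inner integral and reassembling via Fubini (with $\widehat{x_1}$ now playing the role of $y$) yields exactly the first claimed identity, the inner domain having become the $\textbf{x}^1$-section of $\psi(U_0)$.

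The second identity follows from the identical argument with the roles of $\psi$ and $\tau$ (equivalently, of $g$ and $h$) interchanged, since $\tau$ also changes only the first coordinate and $D_1g$ is likewise bounded away from $0$ and $\infty$ by Condition \ref{20231118cond1}; alternatively one applies the first formula to $F$ in place of $f$ and uses $\tau=\psi^{-1}$ together with $(D_1h)(D_1g)=1$ along matching points. The main obstacle I anticipate is not the formal computation but the bookkeeping required to justify the fiberwise change of variables: I must verify that $f\chi_{U_0}$ is jointly measurable so that Fubini applies, that for $P^{\widehat{1}}$-almost every $\textbf{x}^1$ the section of $U_0$ is a Borel set on which $x_1\mapsto g(x_1,\textbf{x}^1)$ is a diffeomorphism onto the corresponding section of $\psi(U_0)$, and that the transformed integrand $f(\tau)\,|D_1h|\,e^{(\widehat{x_1}^2-|h|^2)/(2a_1^2)}$ is $P$-integrable on $\psi(U_0)$. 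Here the boundedness estimates in Condition \ref{20231118cond1} (in particular $|h|<\delta$, $|\widehat{x_1}|<\delta$, and $|D_1h|<\delta$ on the bounded set $U_0$) keep the Jacobian weight $|D_1h|\,e^{(\widehat{x_1}^2-|h|^2)/(2a_1^2)}$ uniformly bounded, which is precisely what is needed to transfer integrability between the two sides and to legitimize the application of Fubini's theorem in both directions.
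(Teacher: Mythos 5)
Your proposal is correct and follows essentially the same route as the paper: both exploit the product decomposition $P=\bn_{a_1}\times P^{\widehat{1}}$ (valid because $\psi$ and $\tau$ alter only the first coordinate) and then apply the one-dimensional Gaussian change-of-variables formula in that coordinate, the paper phrasing this as the measure identity $\mathrm{d}P(\tau(\widehat{\textbf{x}}))=|D_1 h(\widehat{\textbf{x}})|\,e^{(\widehat{x_1}^2-|h(\widehat{\textbf{x}})|^2)/(2a_1^2)}\,\mathrm{d}P(\widehat{\textbf{x}})$ and you rendering it as Fubini plus a fiberwise substitution. Your additional attention to measurability of sections and integrability transfer is a more careful write-up of steps the paper leaves implicit, not a different argument.
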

\begin{proof}
Note that
\begin{eqnarray*}
\int_{U_0} f\,\mathrm{d}P&=&\int_{\psi(U_0)} f(\tau(\widehat{\textbf{x}}))\,\mathrm{d}P(\tau(\widehat{\textbf{x}})),\\
 \int_{\psi(U_0)} F\,\mathrm{d}P&=&\int_{ U_0 }F(\psi(\widehat{\textbf{x}}))\,\mathrm{d}P(\psi(\widehat{\textbf{x}})).
\end{eqnarray*}
We only need to prove that
\begin{eqnarray}\label{20231118for1}
\mathrm{d}P(\tau(\widehat{\textbf{x}}))=\left|D_1 h(\widehat{\textbf{x}})\right| \cdot e^{\frac{\widehat{x_1}^2-|h(\widehat{\textbf{x}})|^2}{2a_1^2}}\,\mathrm{d}P(\widehat{\textbf{x}}),
\end{eqnarray}
and
\begin{eqnarray}\label{20231118for2}
\mathrm{d}P(\psi(\textbf{x}))= |D_1 g( \textbf{x} ) | \cdot e^{\frac{ x_1^2-|g( \textbf{x})|^2}{2a_1^2}}\,\mathrm{d}P(\textbf{x}).
\end{eqnarray}
Note that
\begin{eqnarray*}
\mathrm{d}P(\tau(\widehat{\textbf{x}}))=\mathrm{d}\mathcal{N}_{a_1^2}(h(\widehat{\textbf{x}}))\times\mathrm{d}P^{\widehat{1}}( \widehat{\textbf{x}}^1),
\end{eqnarray*}
where $ \widehat{\textbf{x}}^1=(\widehat{x_i})_{i\in\mathbb{N}\setminus\{1\}}$, $\mathrm{d}\mathcal{N}_{a_1^2}(x)=\frac{1}{\sqrt{2\pi a_1^2}}e^{-\frac{x^2}{2a_1^2}}\mathrm{d}x$ and $P^{\widehat{1}}$ is the product measure without the $1$-th component, i.e., it is the restriction of the product measure $\prod\limits_{i\not= 1}\bn_{a_i}$ on $\left(\ell^{2}(\mathbb{N}\setminus\{1\}),\mathscr{B}\left(\ell^{2}(\mathbb{N}\setminus\{1\})\right)\right)$. Therefore,
\begin{eqnarray*}
\mathrm{d}P(\tau(\widehat{\textbf{x}}))
&=&\mathrm{d}\mathcal{N}_{a_1^2}(h(\widehat{\textbf{x}}))\times\mathrm{d}P^{\widehat{1}}( \widehat{\textbf{x}}^1)\\
&=&\frac{1}{\sqrt{2\pi a_1^2}}\cdot |D_1 h(\widehat{\textbf{x}})| \cdot e^{\frac{-|h(\widehat{\textbf{x}})|^2}{2a_1^2}}\mathrm{d}\widehat{x_1}\times\mathrm{d}P^{\widehat{1}}( \widehat{\textbf{x}}^1)\\
&=&|D_1 h(\widehat{\textbf{x}})| \cdot e^{\frac{|\widehat{x_1}|^2-|h(\widehat{\textbf{x}})|^2}{2a_1^2}}
\cdot \mathrm{d}\mathcal{N}_{a_1^2}(\widehat{x_1})\times\mathrm{d}P^{\widehat{1}}( \widehat{\textbf{x}}^1)\\
&=&|D_1 h(\widehat{\textbf{x}})| \cdot e^{\frac{|\widehat{x_1}|^2-|h(\widehat{\textbf{x}})|^2}{2a_1^2}}
 \mathrm{d}P( \widehat{\textbf{x}}),
\end{eqnarray*}
where the second equality follows from the one dimensional Jacobi Transformation Formula and this proves \eqref{20231118for1}. The proof of \eqref{20231118for2} is similar which completes the proof of Lemma \ref{02231118lem1}.
\end{proof}
\begin{corollary}\label{20231118cor1}
For each $\textbf{x}\in U_0$, we have $ |D_1 h  (\psi( \textbf{x} ))\big|\cdot |D_1 g( \textbf{x} ) | =1.$
\end{corollary}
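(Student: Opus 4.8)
The plan is to reduce the claim to a one–dimensional chain rule applied to the single identity that encodes $\tau=\psi^{-1}$. First I would record that identity: for $\textbf{x}\in U_0$ put $\widehat{\textbf{x}}=\psi(\textbf{x})$, so that $\tau(\widehat{\textbf{x}})=\tau(\psi(\textbf{x}))=\textbf{x}$. Reading off the first coordinate of $\tau(\widehat{\textbf{x}})=(h(\widehat{\textbf{x}}),(\widehat{x_i})_{i\in\mathbb{N}\setminus\{1\}})$, this yields
$$
h(\psi(\textbf{x}))=x_1,\qquad \forall\,\textbf{x}=(x_i)_{i\in\mathbb{N}}\in U_0 .
$$
The whole proof then hinges on differentiating this equality in the $\textbf{e}_1$-direction.

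Next I would compute the partial derivative $D_1$ of both sides. The key structural observation is that, since $\psi(\textbf{x})=(g(\textbf{x}),(x_i)_{i\in\mathbb{N}\setminus\{1\}})$, the point $\psi(\textbf{x}+t\textbf{e}_1)$ differs from $\psi(\textbf{x})$ only in its first coordinate, which moves from $g(\textbf{x})$ to $g(\textbf{x}+t\textbf{e}_1)$; all coordinates $x_i$ with $i\geq 2$ are frozen. Hence the increment of $h\circ\psi$ along $\textbf{e}_1$ is a one-variable increment in the first slot of $h$. Using the Fr\'echet (hence partial) differentiability of $h$ to expand that increment, together with $g(\textbf{x}+t\textbf{e}_1)-g(\textbf{x})=t\,D_1g(\textbf{x})+o(t)$ from $g\in C^1$, I obtain
$$
D_1\big[h(\psi(\textbf{x}))\big]=(D_1h)(\psi(\textbf{x}))\cdot D_1g(\textbf{x}),
$$
while differentiating the right-hand side $x_1$ gives $D_1[x_1]=1$. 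Equating and taking absolute values yields $\big|D_1h(\psi(\textbf{x}))\big|\cdot\big|D_1g(\textbf{x})\big|=1$, which is exactly the assertion.

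The step requiring the most care is the justification of this chain rule in the present $F$-differentiable, infinite-dimensional setting, i.e.\ controlling the $o(\cdot)$ terms; but because we perturb only in the direction $\textbf{e}_1$ and only the first coordinate of $\psi$ is affected, everything genuinely collapses to the classical one-dimensional chain rule once the Fr\'echet expansion of $h$ is inserted, and Condition \ref{20231118cond1} guarantees the relevant derivatives are finite and bounded away from $0$. As an independent cross-check (and an alternative route in case one prefers to stay measure-theoretic), one can compose the two change-of-variables formulas of Lemma \ref{02231118lem1}: substituting $\widehat{\textbf{x}}=\psi(\textbf{x})$, so that $h(\widehat{\textbf{x}})=x_1$ and $\widehat{x_1}=g(\textbf{x})$, the Gaussian exponential factors $e^{(g(\textbf{x})^2-x_1^2)/2a_1^2}$ and $e^{(x_1^2-g(\textbf{x})^2)/2a_1^2}$ cancel exactly, forcing $\big|D_1h(\psi(\textbf{x}))\big|\cdot\big|D_1g(\textbf{x})\big|\,\mathrm{d}P(\textbf{x})=\mathrm{d}P(\textbf{x})$; the strict positivity of $P$ on open sets (Proposition \ref{230410prop1}) then gives the identity almost everywhere, and continuity of $D_1h\circ\psi$ and $D_1g$ upgrades it to every point of $U_0$.
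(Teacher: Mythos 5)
Your proposal is correct, and your primary argument takes a genuinely different route from the paper's. The paper's own proof is exactly your ``cross-check'': it applies the change-of-variables formula of Lemma \ref{02231118lem1} twice (once via $\tau$, once via $\psi$), notes that since $h(\psi(\textbf{x}))=x_1$ the two Gaussian exponential factors cancel, concludes that $\int_{U_0}f\,\mathrm{d}P=\int_{U_0}f\cdot|D_1h(\psi)|\cdot|D_1g|\,\mathrm{d}P$ for every $f\in L^1(U_0,P)$, hence $|D_1h(\psi)|\cdot|D_1g|=1$ holds $P$-a.e., and finally passes to every point of $U_0$ by continuity. (One small placement remark: the a.e.\ identity follows from the integral identity alone; strict positivity of $P$ on open sets, Proposition \ref{230410prop1}, is what justifies the \emph{continuity upgrade} --- a continuous function equal to $1$ a.e.\ must equal $1$ everywhere precisely because nonempty open sets have positive measure. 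You cite it one step early, but both ingredients are present.) Your main route --- differentiating the defining identity $h(\psi(\textbf{x}))=x_1$ in the $\textbf{e}_1$-direction --- is more elementary and is sound: since $\psi$ freezes all coordinates $i\geq 2$, the composition genuinely reduces to a one-dimensional chain rule, which is legitimate because the paper's setup assumes $g$ and $h$ are $C^1$ Fr\'echet differentiable. This buys you strictly more than the corollary asserts: the signed pointwise identity $D_1h(\psi(\textbf{x}))\cdot D_1g(\textbf{x})=1$, with no measure theory, no a.e.-to-everywhere step, and no use of the quantitative bounds in Condition \ref{20231118cond1}. What the paper's measure-theoretic route buys in exchange is economy of means: it reuses Lemma \ref{02231118lem1}, which is needed anyway for Theorem \ref{equivalence of norm}, and it avoids having to justify any differentiation of a composition in the infinite-dimensional setting; but for this particular statement your direct argument is shorter and yields a stronger conclusion.
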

\begin{proof}
For each $f\in L^1(U_0,P)$, by  Lemma \ref{02231118lem1}, it holds that
\begin{eqnarray*}
&&\int_{U_0} f\,\mathrm{d}P\\
&=&\int_{\psi(U_0)} f(\tau(\widehat{\textbf{x}} )) \cdot |D_1 h(\widehat{\textbf{x}})| \cdot e^{\frac{\widehat{x_1}^2-|h(\widehat{\textbf{x}})|^2}{2a_1^2}}\,\mathrm{d}P(\widehat{\textbf{x}})\\
&=&\int_{ U_0 } f (\tau(\psi(\textbf{x}) )) \cdot|D_1 h(\psi(\textbf{x}))| \cdot e^{\frac{|g(\textbf{x})|^2-|h(\psi( \textbf{x} ))|^2}{2a_1^2}}\\
&&\qquad\qquad\qquad\qquad\qquad\times |D_1 g( \textbf{x} ) |\cdot e^{\frac{ x_1^2-|g( \textbf{x} )|^2}{2a_1^2}}\,\mathrm{d}P(\textbf{x})\\
&=&\int_{ U_0 } f (\textbf{x}) \cdot|D_1 h(\psi(\textbf{x}))|\cdot e^{\frac{|g(\textbf{x})|^2-x_1^2}{2a_1^2}}\cdot |D_1 g( \textbf{x} ) | \cdot e^{\frac{ x_1^2-|g( \textbf{x} )|^2}{2a_1^2}}\,\mathrm{d}P(\textbf{x})\\
&=&\int_{ U_0 } f (\textbf{x})\cdot|D_1 h(\psi(\textbf{x}))| \cdot|D_1 g( \textbf{x} ) | \,\mathrm{d}P(\textbf{x}),
\end{eqnarray*}
which implies the $ |D_1h (\psi( \textbf{x} )) |\cdot |D_1 g( \textbf{x} )| =1$ almost everywhere respect to $P$ on $U_0$. By the continuity of $ |  h (\psi) |\cdot |D_1 g|$, we see that $ |D_1 h (\psi( \textbf{x} )) |\cdot |D_1g( \textbf{x} )| =1$ for each $\textbf{x}\in U_0$, which completes the proof of Corollary \ref{20231118cor1}.
\end{proof}
Motivated by Lemma \ref{02231118lem1}, we have the following notations.
\begin{definition}
Let
\begin{eqnarray*}
\mathcal {J}(\widehat{\textbf{x}})&\triangleq  & |D_1 h(\widehat{\textbf{x}})| \cdot e^{\frac{\widehat{x_1}^2-|h(\widehat{\textbf{x}})|^2}{a_1^2}},\qquad\forall\,\widehat{\textbf{x}}=(\widehat{x_i})_{i\in\mathbb{N}}\in \psi(U_0),\\
\mathcal {J}_1(\textbf{x})&\triangleq  & |D_1 g( \textbf{x} )| \cdot e^{\frac{ x_1^2-|g( \textbf{x} )|^2}{a_1^2}},\qquad\forall\, \textbf{x} =(x_i)_{i\in\mathbb{N}}\in U_0,\\
\widehat{f}(\widehat{\textbf{x}} )
&\triangleq & f( \tau(\widehat{\textbf{x}})),\qquad\forall\,\widehat{\textbf{x}}\in \psi(U_0),\,f\in L^1(U_0,P).
\end{eqnarray*}
\end{definition}
Obviously, we have the following inequalities for $\mathcal {J}$ and $\mathcal {J}_1$.
\begin{lemma}\label{20231208lem1}
Suppose the Condition \ref{20231118cond1} holds. Let
\begin{eqnarray*}
C_1\triangleq \frac{1}{\delta}\cdot e^{-\frac{\delta^2}{a_1^2}},\qquad C_2\triangleq \delta\cdot e^{\delta^2}.
\end{eqnarray*}
Then it holds that
\begin{eqnarray*}
C_1\leq \mathcal {J}(\widehat{\textbf{x}})\leq C_2,\qquad C_1\leq\mathcal {J}_1(\textbf{x})\leq   C_2,\qquad\forall\,\,\textbf{x}\in U_0,\,\,\widehat{\textbf{x}}\in \psi(U_0).
\end{eqnarray*}
\end{lemma}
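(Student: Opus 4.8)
The plan is to derive both chains of inequalities by substituting, term by term, the pointwise estimates granted by Condition \ref{20231118cond1} into the closed-form expressions defining $\mathcal{J}$ and $\mathcal{J}_1$, and then invoking monotonicity of $t\mapsto e^t$. Nothing beyond elementary estimates is required: the lemma is a quantitative book-keeping statement asserting that the Jacobian-type weights appearing in Lemma \ref{02231118lem1} stay trapped between two positive constants, which is precisely what is needed in order to compare integrals (and, later, Sobolev norms) across the coordinate change $\psi$, $\tau$.

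First I would treat $\mathcal{J}(\widehat{\textbf{x}}) = |D_1 h(\widehat{\textbf{x}})|\, e^{(\widehat{x_1}^2 - |h(\widehat{\textbf{x}})|^2)/a_1^2}$, estimating its two factors separately. The first factor is immediate: Condition \ref{20231118cond1} gives $\frac{1}{\delta} < D_1 h(\widehat{\textbf{x}}) < \delta$, so in particular $D_1 h(\widehat{\textbf{x}})>0$ and $|D_1 h(\widehat{\textbf{x}})| = D_1 h(\widehat{\textbf{x}}) \in (\frac{1}{\delta}, \delta)$. For the exponential factor I would bound the exponent on both sides: from $\widehat{x_1}^2 < \delta^2$ together with $|h(\widehat{\textbf{x}})|^2 \geq 0$ one gets $\widehat{x_1}^2 - |h(\widehat{\textbf{x}})|^2 < \delta^2$, while from $\widehat{x_1}^2 \geq 0$ together with $|h(\widehat{\textbf{x}})|^2 < \delta^2$ one gets $\widehat{x_1}^2 - |h(\widehat{\textbf{x}})|^2 > -\delta^2$. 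Dividing by $a_1^2>0$ and applying the exponential yields $e^{-\delta^2/a_1^2} < e^{(\widehat{x_1}^2 - |h(\widehat{\textbf{x}})|^2)/a_1^2} < e^{\delta^2/a_1^2}$, and multiplying the two factor estimates gives the desired two-sided bound for $\mathcal{J}$.

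The estimate for $\mathcal{J}_1(\textbf{x}) = |D_1 g(\textbf{x})|\, e^{(x_1^2 - |g(\textbf{x})|^2)/a_1^2}$ is then verbatim the same argument, now using the symmetric hypotheses $\frac{1}{\delta} < D_1 g(\textbf{x}) < \delta$, $x_1^2 < \delta^2$, and $0 \leq |g(\textbf{x})|^2 < \delta^2$ furnished by Condition \ref{20231118cond1}. I would also remark that the hypothesis $\sum_{i} a_i^2 |D_i h(\widehat{\textbf{x}})|^2 < \delta$ of Condition \ref{20231118cond1} is not needed for this lemma at all; it is reserved for the $C_F^\infty$-invariance proved in Lemma \ref{20231121lem1}.

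The one point deserving care -- and the only thing I would flag as a genuine obstacle -- is the exact shape of the constants. The computation above reproduces the lower bound $C_1 = \frac{1}{\delta}\, e^{-\delta^2/a_1^2}$ exactly, but its natural upper bound is $\delta\, e^{\delta^2/a_1^2}$ rather than the stated $C_2 = \delta\, e^{\delta^2}$; the two agree literally only when $a_1 \geq 1$, so the asymmetry between $C_1$ and $C_2$ should be reconciled. Since every subsequent use of this lemma requires only that $\mathcal{J}$ and $\mathcal{J}_1$ be bounded above and below by strictly positive finite constants, the cleanest resolution is either to record the bound in the form $\frac{1}{\delta}e^{-\delta^2/a_1^2} \leq \mathcal{J},\mathcal{J}_1 \leq \delta e^{\delta^2/a_1^2}$, or simply to enlarge the upper constant (which is harmless). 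I would verify in passing that the constants so obtained are indeed those invoked in the later Sobolev-norm comparison before committing to a particular normalization.
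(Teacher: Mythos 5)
Your proof is correct and is exactly the term-by-term estimate the paper has in mind: the paper in fact offers no proof at all, introducing the lemma with ``Obviously'', and the only possible argument is the one you give (bound $|D_1h|$, resp.\ $|D_1g|$, by Condition \ref{20231118cond1}, bound the exponent $\widehat{x_1}^2-|h(\widehat{\textbf{x}})|^2$, resp.\ $x_1^2-|g(\textbf{x})|^2$, between $-\delta^2$ and $\delta^2$, and multiply); your side remark that the hypothesis $\sum_i a_i^2|D_ih|^2<\delta$ is not used here is also accurate. Your flag concerning the upper constant is well taken and identifies a genuine typo in the paper: the argument yields $\mathcal{J},\mathcal{J}_1\leq \delta e^{\delta^2/a_1^2}$, which is dominated by the stated $C_2=\delta e^{\delta^2}$ only when $a_1\geq 1$ (not guaranteed, since the paper assumes only $\sum_{i=1}^{\infty}a_i<\infty$), so $C_2$ should read $\delta e^{\delta^2/a_1^2}$, symmetrically to $C_1=\frac{1}{\delta}e^{-\delta^2/a_1^2}$; this correction is harmless, since the subsequent applications (e.g.\ Theorem \ref{equivalence of norm}) use only that $\mathcal{J}$ and $\mathcal{J}_1$ are pinched between two positive finite constants.
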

\begin{proposition}\label{weakly change of variable formula}
Suppose that $1\leq p\leq \infty$ and Condition \ref{20231118cond1} holds. If $f,\,D_1 f \in L^p(U_0\cap O,P)$. Then $\widehat{f},\,D_1\widehat{f} \in L^p(\psi(U_0)\cap\mathbb{H},P)$  and
\begin{eqnarray*}
D_1 \widehat{f}
= (  D_1  f  )(\tau) \cdot(D_1h ).
\end{eqnarray*}
For $i\geq 2$, if $f,\,D_1 f  ,\,D_i f \in L^p(U_0\cap O,P)$. Then $\widehat{f},\,D_i\widehat{f} \in L^p(\psi(U_0)\cap\mathbb{H},P)$  and
\begin{eqnarray*}
D_i\widehat{f}
= (D_i  f)(\tau) + (D_1 f)(\tau) \cdot(D_i h).
\end{eqnarray*}
Conversely, if $g,D_1g\in L^p(\psi(U_0)\cap\mathbb{H},P)$. Then $g(\psi),D_1(g(\psi))\in L^p(U_0\cap O,P)$ and
\begin{eqnarray*}
D_1(g(\psi))=(D_1g)(\psi)\cdot \frac{1}{(D_1h)(\psi)}.
\end{eqnarray*}
For $i\geq 2$, if $g,\,D_1 g  ,\,D_ig \in L^p(\psi(U_0)\cap \mathbb{H},P)$. Then $g(\psi),\,D_i(g(\psi)) \in L^p( U_0 \cap O,P)$  and
\begin{eqnarray*}
D_i(g(\psi))
= (D_i  g)(\psi) -(D_1 g)(\psi) \cdot\frac{(D_i h)(\psi)}{(D_1 h)(\psi)}.
\end{eqnarray*}
\end{proposition}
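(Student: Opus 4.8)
The plan is to deduce every assertion directly from the defining weak integration-by-parts identity (Definition \ref{231013def1}) and to verify that identity by transporting it between $\psi(U_0)\cap\mathbb{H}$ and $U_0\cap O$ via the two change-of-variables formulas of Lemma \ref{02231118lem1}. Concretely, to prove $D_1\widehat{f}=(D_1f)(\tau)\cdot(D_1h)$ on $\psi(U_0)\cap\mathbb{H}$ I must check that $\int_{\psi(U_0)\cap\mathbb{H}}\widehat{f}\,D_1^*\varphi\,dP=\int_{\psi(U_0)\cap\mathbb{H}}(D_1f)(\tau)(D_1h)\varphi\,dP$ for every test function $\varphi\in C_F^{\infty}(\psi(U_0)\cap\mathbb{H})$. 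The strategy is to pull the left-hand integral back to $U_0\cap O$, recognise the transported integrand $\widehat{f}\,D_1^*\varphi$ as $f\cdot D_1^*\eta$ for a suitable $\eta\in C_F^{\infty}(U_0\cap O)$, apply the weak-derivative identity that $f$ itself satisfies, and transport the outcome forward again, whereupon the Gaussian weights generated by the two changes of variables cancel.

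The heart of the matter is the choice of $\eta$. First I would use Lemma \ref{02231118lem1} and $\widehat{f}(\psi)=f$ to rewrite $\int\widehat{f}\,D_1^*\varphi\,dP$ as $\int_{U_0\cap O}f\cdot(D_1^*\varphi)(\psi)\,w\,dP$, where $w(\textbf{x})=|D_1g(\textbf{x})|\,e^{(x_1^2-g(\textbf{x})^2)/(2a_1^2)}$ is the weight furnished by that lemma. Since $\psi$ alters only the first coordinate ($\psi_1=g$), the chain rule gives $D_1(\varphi\circ\psi)=(D_1\varphi)(\psi)\,D_1g$, and a direct computation with the weight $v(\textbf{x})\triangleq e^{(x_1^2-g(\textbf{x})^2)/(2a_1^2)}$ shows $(D_1^*\varphi)(\psi)\,w=D_1^*\big((\varphi\circ\psi)\,v\big)$; the matching of the zeroth-order terms is exactly the cancellation $-D_1v+\tfrac{x_1}{a_1^2}v=v\,\tfrac{g\,D_1g}{a_1^2}$. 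Because $\varphi\circ\psi\in C_F^{\infty}(U_0\cap O)$ by Lemma \ref{20231121lem1} and $v$ is bounded and smooth on $U_0$ by Condition \ref{20231118cond1}, the product $\eta\triangleq(\varphi\circ\psi)\,v$ is an admissible test function, so $\int_{U_0\cap O}f\,D_1^*\eta\,dP=\int_{U_0\cap O}(D_1f)\,\eta\,dP$. Transporting this back by the other formula of Lemma \ref{02231118lem1}, the weight $v$ together with the two exponential Jacobian factors collapses to $1$ (using $g(\tau(\widehat{\textbf{x}}))=\widehat{x}_1$ and that the first coordinate of $\tau$ is $h$), leaving $\int(D_1f)(\tau)\,(D_1h)\,\varphi\,dP$, which is the desired right-hand side since $D_1h>0$ on $\psi(U_0)$.

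The case $i\geq2$ runs identically with the same weight $v$, the only new feature being the chain rule $D_i(\varphi\circ\psi)=(D_i\varphi)(\psi)+(D_1\varphi)(\psi)\,D_ig$, which leaves a surviving cross term; this term is converted into the second summand $(D_1f)(\tau)\,D_ih$ of the claimed formula by the relation $D_ih(\psi)=-D_ig/D_1g$ (from differentiating $h(\psi(\textbf{x}))=x_1$ in $x_i$) together with Corollary \ref{20231118cor1}. The $L^p$-membership claims are bookkeeping: applying Lemma \ref{02231118lem1} to $|f|^p$ for $p<\infty$ and invoking the two-sided bounds of Lemma \ref{20231208lem1} makes $\|\widehat{f}\|_{L^p}$ and $\|f\|_{L^p}$ comparable (for $p=\infty$ this follows from $\tau$ being a bijection carrying $P$-null sets to $P$-null sets), after which the explicit formulas place $D_1\widehat{f}$ and $D_i\widehat{f}$ in $L^p$, since $D_1h$ and each $D_ih$ are bounded while $(D_1f)(\tau),(D_if)(\tau)\in L^p$. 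The converse assertions are the same statement with the roles of $\psi$ and $\tau$ interchanged, so they follow by applying the forward argument to the map $\tau$ in place of $\psi$. I expect the main obstacle to be the weight identity of the second paragraph — verifying that the non-linear, weight-distorting change of variables turns the Gaussian adjoint $D_1^*$ into a genuine $D_1^*$ on the other side, which is exactly where the Gaussian density, the Jacobian of Lemma \ref{02231118lem1}, and the normal-derivative factor $D_1g$ must conspire; once that algebra is settled, the remaining cases and the converse are essentially formal.
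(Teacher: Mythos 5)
Your proposal is correct, and its skeleton coincides with the paper's: transport the pairing $\int_{\psi(U_0)\cap\mathbb{H}}\widehat{f}\,D_1^*\varphi\,\mathrm{d}P$ to $U_0\cap O$ via Lemma \ref{02231118lem1}, recognize an admissible test function, apply the weak derivative of $f$, and transport back. In fact your $\eta=(\varphi\circ\psi)\,v$ is exactly the paper's test function $(\varphi(\psi))\cdot((D_1h)(\psi))\cdot\mathcal{J}_1$, because $\mathcal{J}_1=(D_1g)\,v$ and $(D_1h)(\psi)\cdot(D_1g)=1$ by Corollary \ref{20231118cor1}. Where you genuinely diverge is the key verification. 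The paper splits the transported integrand into $f\cdot D_1^*\eta$ plus $f\cdot\varphi(\psi)$ times the correction $-D_1^*\big((D_1h)(\psi)\,\mathcal{J}_1\big)+\frac{g}{a_1^2}\,\mathcal{J}_1$, and proves this correction vanishes by duality: it pairs the correction against arbitrary $\eta'\in C_F^{\infty}(U_0)$, transports to the half-space, and invokes the infinite-dimensional Gauss--Green theorem. You instead prove the single pointwise identity $(D_1^*\varphi)(\psi)\cdot\mathcal{J}_1=D_1^*\big((\varphi\circ\psi)\,v\big)$ by explicit differentiation of $v=e^{(x_1^2-g^2)/(2a_1^2)}$ and the one-variable chain rule, which reduces to the cancellation $-D_1v+\frac{x_1}{a_1^2}v=\frac{g\,D_1g}{a_1^2}\,v$; this is equivalent to the paper's vanishing claim (both are the statement that the Gaussian adjoint intertwines correctly with the weighted substitution) but needs no integration by parts at all. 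What your route buys: it is more elementary and self-contained, avoiding the Stokes/Gauss--Green machinery and the attendant domain subtleties in the paper's duality step, where integration by parts is carried out over $U_0\cap O$ against test functions that do not vanish near $\partial O$. What it costs: nothing substantive, but two details should be written out in a full proof. First, checking $\eta\in C_F^{\infty}(U_0\cap O)$ uses not only boundedness of $v$ but also $\sup_{U_0}\sum_i|D_ig|^2<\infty$ (from $g\in C_F^{\infty}(U_0)$) to control $\sum_i|D_i\eta|^2$, together with the support condition from Lemma \ref{20231121lem1}. Second, for $i\geq2$ the pointwise identity you need is $(D_i^*\varphi)(\psi)\,\mathcal{J}_1=D_i^*\big(\varphi(\psi)\,\mathcal{J}_1\big)+D_1^*\big((D_ih)(\psi)\,\varphi(\psi)\,\mathcal{J}_1\big)$, and its verification uses the symmetry of mixed partials $D_1D_ig=D_iD_1g$ (available since $g\in C_F^{\infty}(U_0)$ has $F$-continuous derivatives of all orders), a point your sketch glosses over. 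Note finally that the paper itself writes out only the $D_1$ formula and leaves the $i\geq2$ cases and the converse to symmetry, so your level of coverage matches, and your converse-by-symmetry remark is legitimate because Condition \ref{20231118cond1} is symmetric in the roles of $(g,\psi)$ and $(h,\tau)$.
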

\begin{proof}
For simplicity, we only prove that
\begin{eqnarray*}
D_1 \widehat{f}
= (  D_1 f)(\tau) \cdot(D_1 h).
\end{eqnarray*}
For each $\varphi\in C_F^{\infty}(\psi(U_0))$, recall Definition \ref{231013def1}, we have
\begin{eqnarray*}
(D_1\widehat{f})(\varphi)
&=&-\int_{\psi(U_0)}\widehat{f}\cdot\bigg(D_1 \varphi -\frac{\widehat{x_1}}{a_1^2}\cdot \varphi\bigg)\,\mathrm{d}P\\
&=&-\int_{ U_0\cap O } \widehat{f}(\psi) \cdot\bigg((D_1 \varphi)(\psi) -\frac{ g }{a_1^2}\cdot (\varphi(\psi))\bigg) \cdot \mathcal {J}_1\,\mathrm{d}P\\
&=&-\int_{ U_0\cap O } f \cdot\bigg(D_1 (\varphi(\psi))\cdot\bigg((D_1h)(\psi )\bigg) -\frac{  g }{a_1^2}\cdot (\varphi(\psi))\bigg)\cdot \mathcal {J}_1\,\mathrm{d}P\\
&=& \int_{ U_0\cap O } f \cdot D_1^*\bigg(  (\varphi(\psi)) \cdot\bigg((D_1 h)(\psi) \bigg) \cdot \mathcal {J}_1\bigg)\,\mathrm{d}P\\
&&+\int_{ U_0\cap O } f \cdot (\varphi(\psi)) \cdot\bigg(-D_1^*\bigg( \bigg((D_1h)  (\psi) \bigg) \cdot \mathcal {J}_1\bigg)+\frac{  g }{a_1^2}\bigg)\,\mathrm{d}P,
\end{eqnarray*}
where the second equality follows from Lemma \ref{02231118lem1}, the third equality follows from the facts that $\varphi=(\varphi(\psi))(\tau)$ and $D_1\varphi=\big(D_1(\varphi(\psi))\big)(\tau)\cdot (D_1h)$. Note that for any $\eta\in C_F^{\infty}(U_0)$, it holds that
\begin{eqnarray*}
&&\int_{ U_0\cap O } \eta  \cdot\bigg(-D_1^*\bigg( \bigg((D_1h)(\psi)\bigg) \cdot \mathcal {J}_1\bigg)+\frac{  g }{a_1^2}\cdot \mathcal {J}_1\bigg)\,\mathrm{d}P\\
&=&-\int_{ U_0\cap O  } (D_1 \eta ) \cdot \bigg((D_1 h)(\psi) \bigg) \cdot \mathcal {J}_1 \,\mathrm{d}P
+\int_{ U_0\cap O  } \eta \cdot\frac{g }{a_1^2} \cdot \mathcal {J}_1 \,\mathrm{d}P\\
&=&-\int_{ \psi(U_0)\cap \mathbb{H} } (D_1 \eta)(\tau)  \cdot (D_1 h) \,\mathrm{d}P
+\int_{ \psi(U_0) }(\eta(\tau))\cdot \frac{\widehat{x_1}}{a_1^2}  \,\mathrm{d}P\\
&=&-\int_{ \psi(U_0)\cap \mathbb{H} } D_1 (\eta(\tau))    \,\mathrm{d}P
+\int_{\psi(U_0)}(\eta(\tau))\cdot\frac{\widehat{x_1}}{a_1^2}  \,\mathrm{d}P\\
&=& \int_{ \psi(U_0)\cap \mathbb{H} }D_1^*(\eta(\tau))\,\mathrm{d}P\\
&=& \int_{ \psi(U_0)  }D_1^*(\eta(\tau))\,\mathrm{d}P=0,
\end{eqnarray*}
where the first and the last equality follow from Gauss-Green type theorem \cite{WYZ} and the second equality follows from Lemma \ref{02231118lem1}.
Therefore,
\begin{eqnarray*}
-D_1^*\bigg(  (D_1 h )(\psi) \bigg) \cdot \mathcal {J}_1\bigg)+\frac{  g }{a_1^2}\cdot \mathcal {J}_1=0,\quad\text{on}\,\,U_0\cap O,
\end{eqnarray*}
and hence
\begin{eqnarray*}
(D_1\widehat{f})(\varphi)
&=& \int_{ U_0\cap O } f \cdot D_1^*\bigg(  (\varphi(\psi)) \cdot\bigg((D_1 h)(\psi) \bigg) \cdot \mathcal {J}_1\bigg)\,\mathrm{d}P\\
&=&\int_{ U_0\cap O }(D_1 f)\cdot  (\varphi(\psi)) \cdot\bigg((D_1 h)(\psi) \bigg) \cdot \mathcal {J}_1\,\mathrm{d}P\\
&=&\int_{ \psi(U_0)\cap \mathbb{H} }\bigg((D_1 f)(\tau)\bigg)\cdot\varphi \cdot (D_1 h)  \,\mathrm{d}P,
\end{eqnarray*}
where the second equality follows from the assumption that $D_1 f\in L^p(U_0\cap O,P)$, Lemma \ref{20231121lem1} and the fact that the function $(\varphi(\psi)) \cdot\left((D_1 h )(\psi) \right) \cdot \mathcal {J}_1\in  C_F^{\infty}(U_0)$ and the third equality follows from Lemma \ref{02231118lem1}. This implies that $D_1\widehat{f}
=\left(  (D_1 f)(\tau)\right)\cdot (D_1 h)$. Combining Condition \ref{20231118cond1}, Lemma \ref{02231118lem1} and Lemma \ref{20231208lem1}, we see that  $\widehat{f},\,D_1\widehat{f} \in L^p(\psi(U_0)\cap\mathbb{H},P)$.
This completes the proof of Proposition \ref{weakly change of variable formula}.
\end{proof}

\begin{theorem}\label{equivalence of norm}
Suppose that Conditions \ref{20231118cond1} holds. Then there exists constant $C\in(0,+\infty)$ such that: If $f\in W^{1,2}(U_0\cap O)$. Then $f(\tau)\in W^{1,2}(\psi(U_0)\cap \mathbb{H})$ and
\begin{eqnarray}\label{20231119for1}
C^{-1}||f||_{W^{1,2}(U_0\cap O)}\leq ||f(\tau)||_{W^{1,2}(\psi(U_0)\cap \mathbb{H})} \leq C||f||_{W^{1,2}(U_0\cap O)}.
\end{eqnarray}
Conversely, if $g\in W^{1,2}(\psi(U_0)\cap \mathbb{H})$. Then $g(\psi)\in W^{1,2}(U_0\cap O)$ and
\begin{eqnarray}\label{20231208for1}
C^{-1}||g||_{W^{1,2}(\psi(U_0)\cap \mathbb{H})}\leq ||g(\psi)||_{W^{1,2}(U_0\cap O)} \leq C||g||_{W^{1,2}(\psi(U_0)\cap \mathbb{H})}.
\end{eqnarray}
\end{theorem}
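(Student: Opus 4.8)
The plan is to reduce everything to the pointwise change-of-variable formulas for weak derivatives in Proposition \ref{weakly change of variable formula}, the integral change-of-variable identities in Lemma \ref{02231118lem1}, and the uniform two-sided bounds on the relevant Jacobian weights furnished by Condition \ref{20231118cond1} (and recorded in Lemma \ref{20231208lem1}). A preliminary observation streamlines the work: the two statements \eqref{20231119for1} and \eqref{20231208for1} are symmetric under interchanging $(\psi,O)$ with $(\tau,\mathbb{H})$, because $\psi\circ\tau=\mathrm{id}$ and $\tau\circ\psi=\mathrm{id}$, so that $(\widehat{f})(\psi)=f(\tau(\psi))=f$ and $\widehat{(g(\psi))}=(g(\psi))(\tau)=g$. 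Hence it suffices to establish the single one-directional bound
\[
\|f(\tau)\|_{W^{1,2}(\psi(U_0)\cap\mathbb{H})}\leqslant C\,\|f\|_{W^{1,2}(U_0\cap O)}
\]
together with its mirror $\|g(\psi)\|_{W^{1,2}(U_0\cap O)}\leqslant C\,\|g\|_{W^{1,2}(\psi(U_0)\cap\mathbb{H})}$; applying the mirror bound to $g=f(\tau)$ yields the left inequality of \eqref{20231119for1}, and applying the first bound to $f=g(\psi)$ yields the left inequality of \eqref{20231208for1}.

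For the zeroth-order ($L^2$) part I would apply the second identity of Lemma \ref{02231118lem1} to $F=|\widehat{f}|^2\chi_{\psi(U_0)\cap\mathbb{H}}$, using $F(\psi)=|f|^2\chi_{U_0\cap O}$, to obtain
\[
\int_{\psi(U_0)\cap\mathbb{H}}|\widehat{f}|^2\,\mathrm{d}P=\int_{U_0\cap O}|f|^2\cdot|D_1 g|\,e^{\frac{x_1^2-|g|^2}{2a_1^2}}\,\mathrm{d}P.
\]
By Condition \ref{20231118cond1} ($\tfrac1\delta<D_1 g<\delta$, $|x_1|<\delta$, $|g|<\delta$) the weight $|D_1 g|e^{(x_1^2-|g|^2)/(2a_1^2)}$ is bounded above and below by positive constants uniformly on $U_0$, exactly as in Lemma \ref{20241013thm1}... (the same reasoning that produced Lemma \ref{20231208lem1}), so this $L^2$ norm is comparable to $\|f\|_{L^2(U_0\cap O)}$.

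The first-order part is the crux. Here I would invoke Proposition \ref{weakly change of variable formula} (with $p=2$), which already guarantees $\widehat{f}\in W^{1,2}(\psi(U_0)\cap\mathbb{H})$ and gives $D_1\widehat{f}=(D_1 f)(\tau)(D_1 h)$ and $D_i\widehat{f}=(D_i f)(\tau)+(D_1 f)(\tau)(D_i h)$ for $i\geqslant2$. Using $|D_1 h|<\delta$ and $(a+b)^2\leqslant 2a^2+2b^2$,
\[
\sum_{i=1}^{\infty}a_i^2|D_i\widehat{f}|^2\leqslant \delta^2 a_1^2|(D_1 f)(\tau)|^2+2\sum_{i\geqslant2}a_i^2|(D_i f)(\tau)|^2+2|(D_1 f)(\tau)|^2\sum_{i\geqslant2}a_i^2|D_i h|^2 .
\]
The decisive input is the uniform summability hypothesis $\sum_{i=1}^{\infty}a_i^2|D_i h|^2<\delta$ in Condition \ref{20231118cond1}: it collapses the entire cross contribution into $2\delta|(D_1 f)(\tau)|^2\leqslant 2\delta a_1^{-2}\,a_1^2|(D_1 f)(\tau)|^2$ (with $a_1>0$ fixed), giving a pointwise bound $\sum_i a_i^2|D_i\widehat{f}|^2\leqslant C\sum_i a_i^2|(D_i f)(\tau)|^2$ on $\psi(U_0)\cap\mathbb{H}$. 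I would then integrate, exchange sum and integral by Tonelli (all integrands nonnegative), convert each term via $\int_{\psi(U_0)\cap\mathbb{H}}|(D_i f)(\tau)|^2\,\mathrm{d}P=\int_{U_0\cap O}|D_i f|^2\,|D_1 g|e^{(x_1^2-|g|^2)/(2a_1^2)}\,\mathrm{d}P$ (Lemma \ref{02231118lem1}), and use the uniform upper bound on the weight, obtaining the desired gradient estimate. Adding the zeroth-order bound gives the one-directional inequality, and symmetry yields \eqref{20231119for1} and \eqref{20231208for1}.

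The main obstacle I anticipate is precisely this control of the infinite first-order series, and within it the off-diagonal cross term $(D_1 f)(\tau)\,D_i h$ that the coordinate change injects into every component $i\geqslant2$. Without the uniform smallness/summability $\sup_{U_0}\sum_i a_i^2|D_i h|^2<\delta$ the constant $C$ would depend on $f$ or blow up under truncation; this hypothesis is what keeps $C$ absolute. For the mirror estimate the analogous role is played by the uniform positive lower bound on $(D_1 h)(\psi)$ (again from Condition \ref{20231118cond1}), which tames the factor $1/(D_1 h)(\psi)$ appearing in the converse formulas of Proposition \ref{weakly change of variable formula}.
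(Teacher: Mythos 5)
Your proposal is correct and follows essentially the same route as the paper's own proof: both rest on the weak-derivative transformation formulas of Proposition \ref{weakly change of variable formula}, the change-of-variable identities of Lemma \ref{02231118lem1}, the uniform Jacobian bounds of Lemma \ref{20231208lem1}, and the hypothesis $\sum_i a_i^2|D_ih|^2<\delta$ (together with $\tfrac1\delta<D_1h<\delta$) from Condition \ref{20231118cond1} to control the cross terms, with the two one-directional upper bounds combined by the symmetry $\psi\circ\tau=\tau\circ\psi=\mathrm{id}$ to yield the two-sided estimates. (Only a cosmetic slip: your passing citation of Lemma \ref{20241013thm1} for the weight bounds is spurious, but your parenthetical appeal to Lemma \ref{20231208lem1} is the right reference.)
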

\begin{proof}
Suppose that $f\in W^{1,2}(U_0\cap O)$, by Proposition \ref{weakly change of variable formula},  $\widehat{f}, D_i\widehat{f}\in L^2(\psi(U_0)\cap \mathbb{H},P)$ for any $i\in\mathbb{N}$ and
\begin{eqnarray*}
&&\left|\left|\widehat{f}\right|\right|_{W^{1,2}(\psi(U_0)\cap\mathbb{H})}^2\\
&=&\int_{\psi(U_0)\cap\mathbb{H}} \left|\widehat{f} \right|^2\,\mathrm{d}P+\sum_{i=1}^{\infty}a_i^{2}\cdot\int_{\psi(U_0)\cap\mathbb{H}} \left|D_i\widehat{f} \right|^2\,\mathrm{d}P\\
&=&\int_{\psi(U_0)\cap\mathbb{H}} \left|\widehat{f} \right|^2\,\mathrm{d}P+\sum_{i=2}^{\infty}a_i^{2}\cdot\int_{\psi(U_0)\cap\mathbb{H}}\left|D_i\widehat{f} \right|^2\,\mathrm{d}P+
a_1^{2}\cdot\int_{\psi(U_0)\cap \mathbb{H}}\left|D_1\widehat{f} \right|^2\,\mathrm{d}P\\
&=&\int_{\psi(U_0)\cap\mathbb{H}} \left|\widehat{f} \right|^2\,\mathrm{d}P+\sum_{i=2}^{\infty}a_i^{2}\cdot\int_{\psi(U_0)\cap\mathbb{H}}|(D_i f )(\tau) +(D_1 f)( \tau)\cdot(D_i h)|^2\,\mathrm{d}P\\
&&+a_1^{2}\cdot\int_{\psi(U_0)\cap\mathbb{H}} | (D_1 f)(\tau) \cdot (D_1 h) |^2\,\mathrm{d}P\\
&=&\int_{ U_0 \cap O} \left|f \right|^2\cdot  \mathcal {J}_1\,\mathrm{d}P+\sum_{i=2}^{\infty}a_i^{2}\cdot\int_{ U_0 \cap O}|(D_i f )  +(D_1 f) \cdot(D_i h)(\psi)|^2\cdot  \mathcal {J}_1\,\mathrm{d}P \\
&&+a_1^{2}\cdot\int_{ U_0\cap O } | (D_1 f)  \cdot (D_1 h)(\psi) |^2\cdot  \mathcal {J}_1\,\mathrm{d}P\\
&\leq&C_2\cdot\int_{ U_0 \cap O} \left|f \right|^2 \,\mathrm{d}P+2C_2\cdot\sum_{i=2}^{\infty}a_i^{2}\cdot\int_{ U_0 \cap O}|D_i f |^2  \,\mathrm{d}P\\
&&+C_2\cdot \int_{ U_0\cap O } | D_1 f|^2  \cdot \left(\sum_{i=1}^{\infty}a_i^{2}\cdot|(D_i h)(\psi) |^2\right) \,\mathrm{d}P\\
&\leq&2C_2\left(1+\frac{\delta }{a_1^2}\right)\cdot||f||_{W^{1,2}(U_0)}^2,
\end{eqnarray*}
where the first equality follows from Lemma \ref{20231208lem1} and the second inequality follows from Condition \ref{20231118cond1}. On the other side, suppose that $g\in W^{1,2}(\psi(U_0)\cap \mathbb{H})$, by Proposition \ref{weakly change of variable formula},  $g(\psi), D_i(g(\psi))\in L^2( U_0 \cap O,P)$ for any $i\in\mathbb{N}$ and
\begin{eqnarray*}
&&||g(\psi)||_{W^{1,2}(U_0\cap O)}^2\\
&=&\int_{U_0\cap O} |g(\psi) |^2\,\mathrm{d}P+\sum_{i=1}^{\infty}a_i^{2}\cdot\int_{U_0\cap O} |D_i (g(\psi)) |^2\,\mathrm{d}P\\
&=&\int_{U_0\cap O} |g(\psi) |^2\,\mathrm{d}P+\sum_{i=2}^{\infty}a_i^{2}\cdot\int_{U_0\cap O}|D_i (g(\psi)) |^2\,\mathrm{d}P\\
&&+a_1^{2}\cdot\int_{U_0\cap O} |D_1 (g(\psi)) |^2\,\mathrm{d}P\\
&=&\int_{\psi(U_0)\cap\mathbb{H}} |g |^2\cdot\mathcal {J}\,\mathrm{d}P+\sum_{i=2}^{\infty}a_i^{2}\cdot\int_{\psi(U_0)\cap\mathbb{H}} \left|D_ig-(D_1g)\cdot \frac{D_ih}{D_1h} \right|^2\cdot\mathcal {J}\,\mathrm{d}P\\
&&+a_1^{2}\cdot\int_{\psi(U_0)\cap\mathbb{H}}\bigg|(D_1g)\cdot\frac{1}{D_1h}\bigg|^2\cdot\mathcal {J}\,\mathrm{d}P\\
&\leq&C_2\int_{\psi(U_0)\cap\mathbb{H}} |g |^2 \,\mathrm{d}P\\
&&+2C_2\sum_{i=2}^{\infty}a_i^{2}\cdot\int_{\psi(U_0)\cap\mathbb{H}} \left(|D_ig|^2+\left|(D_1g)\cdot \frac{D_ih}{D_1h}\right|^2 \right)  \,\mathrm{d}P\\
&&+C_2\cdot a_1^{2}\cdot\int_{\psi(U_0)\cap\mathbb{H}}\bigg|(D_1g)\cdot\frac{1}{D_1h}\bigg|^2\,\mathrm{d}P\\
&\leq&C_2\int_{\psi(U_0)\cap\mathbb{H}} |g |^2 \,\mathrm{d}P\\
&&+2C_2\sum_{i=2}^{\infty}a_i^{2}\cdot\int_{\psi(U_0)\cap\mathbb{H}} \left(|D_ig|^2+|(D_1g)\cdot (D_ih)|^2\cdot {\delta}^2 \right)  \,\mathrm{d}P\\
&&+C_2\cdot a_1^{2}\cdot {\delta}^2\cdot\int_{\psi(U_0)\cap\mathbb{H}}|D_1g|^2\,\mathrm{d}P\\
&\leq&C_2\int_{\psi(U_0)\cap\mathbb{H}} |g |^2 \,\mathrm{d}P+2C_2(1+\delta^2)\sum_{i=2}^{\infty}a_i^{2}\cdot\int_{\psi(U_0)\cap\mathbb{H}}|D_ig|^2\,\mathrm{d}P\\
&&+2C_2\cdot  {\delta}^2\cdot\int_{\psi(U_0)\cap\mathbb{H}}|D_1g|^2\cdot\left(\sum_{i=2}^{\infty}a_i^{2}\cdot|D_ig|^2\right)\,\mathrm{d}P\\
&\leq&C_2\int_{\psi(U_0)\cap\mathbb{H}} |g |^2 \,\mathrm{d}P+2C_2(1+\delta^2)\sum_{i=2}^{\infty}a_i^{2}\cdot\int_{\psi(U_0)\cap\mathbb{H}}|D_ig|^2\,\mathrm{d}P\\
&&+2C_2\cdot  {\delta}^3\cdot\int_{\psi(U_0)\cap\mathbb{H}}|D_1g|^2\,\mathrm{d}P\\
&\leq &2C_2\bigg(1+ \delta^2+ \frac{\delta^3}{a_1^2}\bigg)\cdot ||g||_{W^{1,2}(\psi(U_0)\cap \mathbb{H})}^2.
\end{eqnarray*}
Let
\begin{eqnarray*}
C\triangleq \max\left\{\sqrt{2C_2\bigg(1+ \delta^2+ \frac{\delta^3}{a_1^2}\bigg)},\sqrt{2C_2\left(1+\frac{\delta }{a_1^2}\right)}\right\}.
\end{eqnarray*}
Obviously, \eqref{20231119for1} holds and this completes the proof of Theorem \ref{equivalence of norm}.
\end{proof}

\begin{proposition}\label{lemma21boudrary}
Suppose $f\in W^{1,2}(O)$. If there exists $x_0\in\partial O$ and $\{r_1,r_2,r_3,r_4\}\subset(0,+\infty)$ such that
\begin{itemize}
\item[(1)] Condition \ref{20231118cond1} holds for $U_0=B_r(\textbf{x}_0)$;
\item[(2)]  $r_1<r_2$ and $r_3<r_4$;
\item[(3)] $\psi(B_{r_1}(\textbf{x}_0))\subset B_{r_3}(\psi(\textbf{x}_0))\subset B_{r_4}(\psi(\textbf{x}_0))\subset \psi(B_{r_2}(\textbf{x}_0))$;
\item[(4)] supp$f\subset B_{r_1}(\textbf{x}_0)$.
\end{itemize}
 Then there exists $\{\Phi_n\}_{n=1}^{\infty}\subset C_{S,b}^{\infty}(\ell^2)\cap C_{F}^{\infty}(\ell^2)$ such that
\begin{eqnarray*}
 \lim_{n\to\infty}||\Phi_n-f||_{W^{1,2}(O)}=0.
\end{eqnarray*}
\end{proposition}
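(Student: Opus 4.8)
The plan is to flatten the boundary near $\textbf{x}_0$ by the chart $\psi$ from Condition \ref{20231118cond1}, perform the approximation on the half space $\mathbb{H}$ where Lemma \ref{20231120lem4} already supplies $\mathscr{C}_c^\infty$-approximants, and then transport everything back by the change-of-variables isomorphism, keeping all $W^{1,2}$ norms under control via the equivalence in Theorem \ref{equivalence of norm}. The workhorse is that $\psi$ and $\tau=\psi^{-1}$ induce two-sided bounded maps between $W^{1,2}(U_0\cap O)$ and $W^{1,2}(\psi(U_0)\cap\mathbb{H})$. Since $\operatorname{supp}f\subset B_{r_1}(\textbf{x}_0)\subset U_0$, the restriction of $f$ lies in $W^{1,2}(U_0\cap O)$, so by Theorem \ref{equivalence of norm} its push-forward $\widehat{f}=f(\tau)$ lies in $W^{1,2}(\psi(U_0)\cap\mathbb{H})$. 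Because $\psi$ is a homeomorphism with two-sided bounded Jacobian (Lemma \ref{02231118lem1} and Lemma \ref{20231208lem1}), I would record that $\operatorname{supp}\widehat{f}=\psi(\operatorname{supp}f)\subset\psi(B_{r_1}(\textbf{x}_0))\subset B_{r_3}(\psi(\textbf{x}_0))$ by hypothesis (3), and that $\psi(\textbf{x}_0)\in(\partial\mathbb{H})\cap\psi(U_0)$ since $\textbf{x}_0\in\partial O$.

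Next I would apply Lemma \ref{20231120lem4} with the open set $\psi(U_0)$, the boundary point $\psi(\textbf{x}_0)$, the radii $r_3<r_4$ and the function $\widehat{f}$. Its three hypotheses hold: $r_3<r_4$ is given; $\chi_{\ell^2\setminus B_{r_3}(\psi(\textbf{x}_0))}\cdot\widehat{f}=0$ because $\operatorname{supp}\widehat{f}\subset B_{r_3}(\psi(\textbf{x}_0))$; and $B_{r_4}(\psi(\textbf{x}_0))\cap\mathbb{H}\subset\psi(U_0)$ because $B_{r_4}(\psi(\textbf{x}_0))\subset\psi(B_{r_2}(\textbf{x}_0))\subset\psi(U_0)$ by hypothesis (3). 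Conclusion (3) of Lemma \ref{20231120lem4} then produces $\{\varphi_n\}_{n=1}^{\infty}\subset\mathscr{C}_c^\infty$ with $\|\varphi_n-\widehat{f}\|_{W^{1,2}(\psi(U_0)\cap\mathbb{H})}\to 0$.

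Then I would pull back and cut off. Applying the converse inequality \eqref{20231208for1} of Theorem \ref{equivalence of norm} to $g=\varphi_n-\widehat{f}$ and using $\widehat{f}(\psi)=f$ on $U_0\cap O$ gives $\|\varphi_n(\psi)-f\|_{W^{1,2}(U_0\cap O)}\le C\|\varphi_n-\widehat{f}\|_{W^{1,2}(\psi(U_0)\cap\mathbb{H})}\to 0$. Since $\varphi_n(\psi)$ is defined only on $U_0$, I globalise by choosing, via Lemma \ref{230708lem1}, a cutoff $\theta(\textbf{x})=h(\|\textbf{x}-\textbf{x}_0\|_{\ell^2}^2)$ with $\theta\equiv 1$ on $B_{r_1}(\textbf{x}_0)$ and $\operatorname{supp}\theta\subset B_\rho(\textbf{x}_0)$ for some $r_1<\rho<r$ (where $U_0=B_r(\textbf{x}_0)$ and $r_2\le r$), and set $\Phi_n\triangleq\theta\cdot\varphi_n(\psi)$, extended by zero outside $U_0$. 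As $\operatorname{supp}\theta\stackrel{\circ}{\subset}U_0$ and $\varphi_n\circ\psi$ is $F$-smooth (because $\psi=(g,(x_i)_{i\ge2})$ with $g\in C_F^\infty(U_0)$ and $\varphi_n\in\mathscr{C}_c^\infty$, so chain rule and the bounds in Condition \ref{20231118cond1} apply), each $\Phi_n\in C_{S,b}^\infty(\ell^2)\cap C_F^\infty(\ell^2)$. For convergence, note $\theta\equiv 1$ on $\operatorname{supp}f$ and everything is supported in $\operatorname{supp}\theta\subset U_0$, so $\Phi_n-f=\theta(\varphi_n(\psi)-f)$ and $\|\Phi_n-f\|_{W^{1,2}(O)}=\|\theta(\varphi_n(\psi)-f)\|_{W^{1,2}(U_0\cap O)}$. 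Writing $g_n=\varphi_n(\psi)-f$ and expanding $D_i(\theta g_n)=(D_i\theta)g_n+\theta(D_ig_n)$, the $\theta(D_ig_n)$ term is dominated by $\|g_n\|_{W^{1,2}(U_0\cap O)}^2\to0$, and the $(D_i\theta)g_n$ term is dominated by $\big(\sup_{\ell^2}\sum_i a_i^2|D_i\theta|^2\big)\int|g_n|^2\,\mathrm{d}P\to0$, the supremum being finite since $\sum_i a_i^2|D_i\theta|^2=4|h'|^2\sum_i a_i^2(x_i-x_{0,i})^2$ is bounded on the bounded set $\operatorname{supp}\theta$.

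The step I expect to be the main obstacle is the bookkeeping of supports through the nonlinear chart $\psi$: justifying $\operatorname{supp}\widehat{f}\subset B_{r_3}(\psi(\textbf{x}_0))$ at the level of $L^2$-supports requires that $\psi$, being a homeomorphism with two-sided bounded Jacobian, preserves null sets and hence supports, and this must be matched precisely against hypothesis (3) so that Lemma \ref{20231120lem4} applies with the correct radii. The secondary technical point is ensuring the cutoff does not destroy convergence, which ultimately rests on the uniform bound $\sup_{\ell^2}\sum_i a_i^2|D_i\theta|^2<\infty$ that is characteristic of this weighted infinite-dimensional setting.
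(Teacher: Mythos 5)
Your proposal is correct and follows essentially the same route as the paper's proof: flatten the boundary with $\psi$, transfer $f$ to $\widehat{f}=f(\tau)$ via Theorem \ref{equivalence of norm}, approximate on the half space using Lemma \ref{20231120lem4} with the radii $r_3<r_4$ from hypothesis (3), pull back via Theorem \ref{equivalence of norm} again, and globalise with a radial cutoff equal to $1$ on $B_{r_1}(\textbf{x}_0)$. The only differences are cosmetic — the paper cuts off between $r_1$ and $r_2$ and composes with the globally defined $\Psi$ rather than extending $\theta\cdot\varphi_n(\psi)$ by zero — and your explicit product-rule estimate for why the cutoff does not destroy $W^{1,2}$-convergence actually supplies a detail the paper leaves implicit.
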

\begin{proof}
By assumption, $\psi(U_0\cap O)=\psi(U_0)\cap \mathbb{H}$ is an open subset of the half space $\mathbb{H}$. Since supp$f\subset B_{r_1}(\textbf{x}_0)\subset U_0,\,f\in W^{1,2}(U\cap O)$ and by Theorem \ref{equivalence of norm}, $f( \tau)\in W^{1,2}(\psi(U_0)\cap \mathbb{H})$. Note that supp$f(\tau)\subset B_{r_3}(\psi(\textbf{x}_0))\subset B_{r_4}(\psi(\textbf{x}_0))\subset \psi(U_0)$. By Lemma \ref{20231120lem4}, there exists $\{\varphi_n\}_{n=1}^{\infty}\subset \mathscr{C}_c^{\infty}$ such that
\begin{eqnarray*}
 \lim_{n\to\infty}||\varphi_n-f( \tau)||_{W^{1,2}(\psi(U_0)\cap \mathbb{H})}=0.
\end{eqnarray*}
Apply Theorem \ref{equivalence of norm} again we have
\begin{eqnarray*}
 \lim_{n\to\infty}||\varphi_n(\psi)-f||_{W^{1,2}( U_0\cap O)}=0.
\end{eqnarray*}
Choosing $\phi\in C^{\infty}(\mathbb{R};[0,1])$ such that $\phi(x)=1$ for any $x\leq r_1^2$ and $\phi(x)=0$ for any $x\geq r_2^2$. Let $\Phi(\textbf{x})\triangleq \phi(||\textbf{x}-\textbf{x}_0||^2_{\ell^2}),\,\forall \,\textbf{x}\in\ell^2$. Then we have $\Phi\cdot f=f$ and
\begin{eqnarray*}
 \lim_{n\to\infty}||\varphi_n(\psi)-f||_{W^{1,2}( U_0\cap O)}= \lim_{n\to\infty}||(\varphi_n(\psi))\cdot \Phi-f||_{W^{1,2}( U_0\cap O)}=0.
\end{eqnarray*}
Note that
\begin{eqnarray*}
||(\varphi_n(\psi))\cdot \Phi-f||_{W^{1,2}( U_0\cap O)}=||(\varphi_n(\Psi))\cdot \Phi-f||_{W^{1,2}( O)},\qquad \forall\,\,n\in\mathbb{N},
\end{eqnarray*}
which implies that
\begin{eqnarray*}
 \lim_{n\to\infty}||(\varphi_n(\Psi))\cdot \Phi-f||_{W^{1,2}( O)}=0.
\end{eqnarray*}
Write $\Phi_n\triangleq (\varphi_n(\Psi))\cdot \Phi,  \,\,\forall\,\,n\in\mathbb{N}$. Obviously, $\{\Phi_n\}_{n=1}^{\infty}\subset C_{S,b}^{\infty}(\ell^2)\cap C_{F}^{\infty}(\ell^2)$ and this completes the proof of Proposition \ref{lemma21boudrary}.
\end{proof}
\subsection{General Case}
\begin{lemma}\label{20231209lem1}
Suppose that $f\in W^{1,2}(O)$, $\theta$ is a real-valued Fr\'{e}chet differentiable on $\ell^2$ and $\theta\in C_{S,b}^{\infty}(\ell^2)\cap C_{F}^{\infty}(\ell^2)$. Then $\theta\cdot f\in W^{1,2}(O)$.
\end{lemma}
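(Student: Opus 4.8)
The plan is to prove the weak product rule
$$D_i(\theta f)=\theta\, D_i f+(D_i\theta)\, f,\qquad i\in\mathbb{N},$$
in the sense of Definition \ref{231013def1}; this is the exact analogue of the computation already carried out for the cut-off functions $X_n$ in Lemma \ref{lemma21gc}, and the present statement is simply that lemma with $X_n$ replaced by the more general multiplier $\theta$. First I would record that $\theta\in C_{S,b}^{\infty}(\ell^2)$ gives $\sup_{\ell^2}|\theta|<\infty$, whence $\theta f\in L^2(O,P)$ because $f\in L^2(O,P)$; this settles the membership of $\theta f$ in $L^2(O,P)$ and the finiteness of $\int_O|\theta f|^2\,\mathrm{d}P$.

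Next, I would verify the weak identity. Fix $i\in\mathbb{N}$ and a test function $\varphi\in C_F^{\infty}(O)$. Using $D_i^*\varphi=-D_i\varphi+\frac{x_i}{a_i^2}\varphi$ together with the one–dimensional product rule $D_i(\varphi\theta)=\theta\, D_i\varphi+\varphi\, D_i\theta$ (so that $\theta\,D_i\varphi=D_i(\varphi\theta)-\varphi\,D_i\theta$), the two $\frac{x_i}{a_i^2}$-terms reorganize and cancel, leaving $D_i^*(\varphi\theta)$ inside the integral; one obtains
\begin{align*}
\int_O \theta f\, D_i^*\varphi \,\mathrm{d}P
&= \int_O f\, D_i^*(\varphi\theta)\,\mathrm{d}P + \int_O f\varphi\, D_i\theta\,\mathrm{d}P \\
&= \int_O \big(\theta\, D_i f+(D_i\theta) f\big)\,\varphi\,\mathrm{d}P,
\end{align*}
where the second equality applies Definition \ref{231013def1} to $f$ with test function $\varphi\theta$, namely $\int_O f\,D_i^*(\varphi\theta)\,\mathrm{d}P=\int_O (D_i f)\,\varphi\theta\,\mathrm{d}P$. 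The point that must be checked here — and the main (if mild) obstacle — is that $\varphi\theta$ is itself a legitimate element of $C_F^{\infty}(O)$, so that it may serve as a test function: its support satisfies $\supp(\varphi\theta)\subset\supp\varphi\stackrel{\circ}{\subset}O$, hence $\supp(\varphi\theta)\stackrel{\circ}{\subset}O$; it is bounded, $F$-continuous and $\mathscr{B}(O)$-measurable; and $\sup_O\sum_{j=1}^\infty|D_j(\varphi\theta)|^2<\infty$ follows from $\sum_j|D_j(\varphi\theta)|^2\le 2\sup_{\ell^2}|\theta|^2\sum_j|D_j\varphi|^2+2\sup_O|\varphi|^2\sum_j|D_j\theta|^2$, both terms being finite by $\varphi\in C_F^{\infty}(O)$ and $\theta\in C_F^{\infty}(\ell^2)$.

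Finally, I would confirm that the candidate $g_i:=\theta\, D_i f+(D_i\theta) f$ lies in $L^2(O,P)$ and is square-summable in the $W^{1,2}$-sense. From
$$\sum_{i=1}^\infty a_i^2\int_O|g_i|^2\,\mathrm{d}P\le 2\sup_{\ell^2}|\theta|^2\sum_{i=1}^\infty a_i^2\int_O|D_i f|^2\,\mathrm{d}P+2\Big(\sup_{\ell^2}\sum_{i=1}^\infty a_i^2|D_i\theta|^2\Big)\int_O|f|^2\,\mathrm{d}P,$$
the first term is finite because $f\in W^{1,2}(O)$, while for the second I would use $\sum_i a_i^2|D_i\theta|^2\le(\sup_k a_k^2)\sum_i|D_i\theta|^2$, which is uniformly bounded on $\ell^2$ since $\theta\in C_F^{\infty}(\ell^2)$ gives $\sup_{\ell^2}\sum_i|D_i\theta|^2<\infty$ and \eqref{20241023e3} gives $\sum_i a_i^2<\infty$. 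Combining the three steps, $D_i(\theta f)=g_i\in L^2(O,P)$ for every $i$ with finite total $W^{1,2}$-norm, so $\theta f\in W^{1,2}(O)$, as claimed.
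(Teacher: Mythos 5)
Your proposal is correct and follows essentially the same route as the paper's proof: establish the weak product rule $D_i(\theta f)=\theta\,D_if+(D_i\theta)f$ by testing against $\varphi\theta$ and invoking Definition \ref{231013def1}, then verify the two $L^2$ bounds with the same splitting into $2\sup|\theta|^2\sum_i a_i^2\int|D_if|^2$ and $2\bigl(\sup\sum_i a_i^2|D_i\theta|^2\bigr)\int|f|^2$. The only difference is cosmetic: you explicitly check that $\varphi\theta$ is a legitimate element of $C_F^{\infty}(O)$ (support, boundedness, the square-sum condition), a point the paper's proof uses implicitly.
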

\begin{proof}
Note that for any $\varphi\in  C_F^{\infty}(O)$ and $i\in\mathbb{N}$, we have
\begin{eqnarray*}
(D_i(\theta\cdot f))(\varphi)&=&\int_O(D_i^*\varphi)\cdot\theta\cdot f\\
&=&\int_O(D_i^*(\varphi \cdot\theta))\cdot f+\int_O\varphi\cdot (D_i\theta)\cdot f\,\mathrm{d}P\\
&=&\int_O \varphi \cdot\theta \cdot(D_i f)+\int_O\varphi\cdot (D_i\theta)\cdot f\,\mathrm{d}P\\
&=&\int_O (\theta \cdot(D_i f)+(D_i\theta)\cdot f)\cdot \varphi \,\mathrm{d}P,
\end{eqnarray*}
which implies that $D_i(\theta\cdot f)=\theta \cdot(D_i f)+(D_i\theta)\cdot f$. We also note that
\begin{eqnarray*}
&&\sum_{i=1}^{\infty}a_i^2\int_O |\theta \cdot(D_i f)+(D_i\theta)\cdot f|^2\,\mathrm{d}P\\
&\leq &2\sum_{i=1}^{\infty}a_i^2\int_O( |\theta \cdot(D_i f)|^2+|(D_i\theta)\cdot f|^2)\,\mathrm{d}P\\
&\leq &2\left(\sup_O|\theta|^2 \right)\cdot\sum_{i=1}^{\infty}a_i^2\int_O |D_i f|^2 \,\mathrm{d}P\\
&&\qquad\qquad\qquad+2\left(\sup_O\sum_{i=1}^{\infty}a_i^2|D_i\theta|^2\right)\cdot \int_O|f|^2\,\mathrm{d}P<\infty
\end{eqnarray*}
and
\begin{eqnarray*}
 \int_O|\theta\cdot f|^2\,\mathrm{d}P\leq  \left(\sup_O|\theta|^2 \right)\cdot\int_O|f|^2\,\mathrm{d}P<\infty.
\end{eqnarray*}
These imply that $\theta\cdot f\in W^{1,2}(O)$ and this completes the proof of Lemma \ref{20231209lem1}.
\end{proof}
\begin{theorem}\label{general approximation}
Suppose that $f\in W^{1,2}(O)$. Then there exists $\{\varphi_n\}_{n=1}^{\infty}\subset C_{S,b}^{\infty}(\ell^2)\cap C_{F}^{\infty}(\ell^2)$ such that
\begin{eqnarray*}
 \lim_{n\to\infty}||\varphi_n-f||_{W^{1,2}(O)}=0.
\end{eqnarray*}
\end{theorem}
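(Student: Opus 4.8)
The plan is to follow the classical Meyers--Serrin ``$H=W$'' strategy, adapted to the Gaussian setting on $\ell^2$: first truncate $f$ so that its support becomes relatively compact, then localize by a partition of unity and invoke the local approximation results already established, and finally reassemble a \emph{finite} sum that automatically lies in the target class $C_{S,b}^\infty(\ell^2)\cap C_F^\infty(\ell^2)$.

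First I would reduce to the case of relatively compact support. By Lemma~\ref{lemma21gc} the truncations $X_n\cdot f$ (with $\{X_n\}$ the cut-off sequence of Theorem~\ref{230215Th1}) converge to $f$ in $W^{1,2}(O)=H^1(O)$, and each $X_n$ is supported in a set of the form $K_m$, which is compact by Lemma~\ref{basic propeties of Knm}. Hence, given $\varepsilon>0$, it suffices to approximate a single function $g:=X_n\cdot f\in W^{1,2}(O)$ with $\mathrm{supp}\,g\subset K_m$ and $K_m$ compact, achieving error $<\varepsilon/2$. The purpose of this step is that all subsequent localization is carried out over a compact set, which is what ultimately forces finiteness of the partition.

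Next I would cover $K_m$ by balls adapted to the position relative to $O$: every point of $K_m\cap O$ is interior, hence has a ball $B_{r}(\textbf{x})$ with $B_{2r}(\textbf{x})\stackrel{\circ}{\subset}O$; every point of $K_m\cap\partial O$ is covered by a boundary-flattening neighbourhood on which Condition~\ref{20231118cond1} holds; and every point of $K_m\setminus\overline{O}$ lies in an open set on which $g\equiv0$. By compactness of $K_m$ we extract a finite subcover; adjoining an exterior open set that avoids $\mathrm{supp}\,g$ yields a finite open cover of $\ell^2$, and Proposition~\ref{H-C1 parition of unity l^2} furnishes a subordinate $C^1$-partition of unity which, after grouping over the finitely many cover elements, may be taken finite, say $\{\gamma_k\}_{k=1}^N$. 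Then $g=\sum_{k=1}^N\gamma_k g$ on $O$, and by Lemma~\ref{20231209lem1} each $\gamma_k g\in W^{1,2}(O)$ with support contained in one chosen ball. For an interior piece I would approximate $\gamma_k g$ within $\varepsilon/(2N)$ by a function in $\mathscr{C}_c^\infty$ via Lemma~\ref{lemma21hs}; for a boundary piece I would use Proposition~\ref{lemma21boudrary} (which rests on the change-of-variable/norm-equivalence Theorem~\ref{equivalence of norm}) to produce an approximant in $C_{S,b}^\infty(\ell^2)\cap C_F^\infty(\ell^2)$. Summing these finitely many approximants and applying the triangle inequality, together with $\|g-f\|_{W^{1,2}(O)}<\varepsilon/2$, gives the desired $\varphi$, which lies in $C_{S,b}^\infty(\ell^2)\cap C_F^\infty(\ell^2)$ precisely because it is a \emph{finite} sum of such functions.

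The main obstacle, and the reason the compactness of $K_m$ is indispensable, is exactly the requirement that the approximant lie in the \emph{global} class $C_{S,b}^\infty(\ell^2)\cap C_F^\infty(\ell^2)$ rather than merely be smooth on $O$: an infinite, only locally finite, partition-of-unity sum would in general violate the uniform bound on $|\varphi|+\sum_i|D_i\varphi|^2$, with possible blow-up where balls accumulate along $\partial O$. Reducing first to a compactly supported $g$, and hence to a finite cover, removes this difficulty. The remaining delicate point is the treatment of pieces whose support meets $\partial O$: one must flatten the boundary through the diffeomorphism of Condition~\ref{20231118cond1}, transfer the Sobolev structure via Theorem~\ref{equivalence of norm}, approximate on the model half-space $\mathbb{H}$, and pull back, which is the content of Proposition~\ref{lemma21boudrary}. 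The real work therefore lies in verifying that the finitely many boundary neighbourhoods produced by the cover genuinely satisfy Condition~\ref{20231118cond1}.
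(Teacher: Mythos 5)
Your proposal is correct and follows essentially the same route as the paper's proof: both reduce to compactly supported $f$ via Lemma \ref{lemma21gc}, cover $\ell^2$ by interior neighborhoods handled by Lemma \ref{lemma21hs}, boundary neighborhoods handled by Proposition \ref{lemma21boudrary}, and exterior sets where $f$ vanishes, then use a partition of unity together with compactness of the support to write $f$ as a \emph{finite} sum of pieces, each approximated in $C_{S,b}^{\infty}(\ell^2)\cap C_{F}^{\infty}(\ell^2)$, and conclude by the triangle inequality. The only cosmetic difference is that you extract a finite subcover of the compact support before forming the partition of unity, whereas the paper takes a locally finite partition of unity subordinate to the full cover and lets compactness of $\mathrm{supp}\,f$ force all but finitely many terms $f_n\cdot f$ to vanish.
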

\begin{proof}
By lemma \ref{lemma21gc}, we can assume that the support of $f$ is compact. For $\textbf{x}\in O$, there exists a neighborhood $U_{\textbf{x}}$ of $\textbf{x}$ such that $U_{\textbf{x}}$ satisfies the conditions in lemma \ref{lemma21hs}($B_r(\textbf{x}_0)$ in Lemma \ref{lemma21hs} ).

For $\textbf{x}\in \partial O$, there exists a neighborhood $U_{\textbf{x}}$ of $\textbf{x}$ such that $U_{\textbf{x}}$ satisfies the conditions in Proposition \ref{lemma21boudrary}($B_{r_1}(\textbf{x}_0)$ in Proposition \ref{lemma21boudrary}).   For $\textbf{x}\notin \partial O$, there exists a neighborhood $U_{\textbf{x}}$ of $\textbf{x}$ such that $U_{\textbf{x}}\cap  O=\emptyset.$ Then $\{U_{\textbf{x}}:\textbf{x}\in\ell^2\}$ is an open covering of $\ell^2$. By the partition of unity for $C^{\infty}$ function on $\ell^2.$ There exists a sequence of functions $\{f_n\}_{n=1}^{\infty}$ such that:
\begin{itemize}
\item[$\bullet$] $f_n$ is Fr\'{e}chet differentiable on $\ell^2$ for each $n\in\mathbb{N}$;
\item[$\bullet$] $0\leq f_n(\textbf{x})\leq 1$ for each $n\in\mathbb{N}$ and $\textbf{x}\in\ell^2;$
\item[$\bullet$] supp$f_n\subset U_{\textbf{x}}$ for some $\textbf{x}\in \ell^2$ and $\{f_n\}_{n=1}^{\infty}\subset C_{S,b}^{\infty}(\ell^2)\cap C_{F}^{\infty}(\ell^2)$;
\item[$\bullet$] $\sum\limits_{n=1}^{\infty}f_n(\textbf{x})\equiv 1$,\,$\forall\,\textbf{x}\in \ell^2$;
\item[$\bullet$] For each $\textbf{x}_0\in\ell^2$, there exists a neighborhood $U$ of $\textbf{x}_0$ such that $f_n(\textbf{x})\equiv 0$,\,$\forall\,\textbf{x}\in U$ except finite many $n\in\mathbb{N}$.
\end{itemize}
Then
\begin{eqnarray*}
 f=\sum_{n=1}^{\infty}f_n\cdot f.
\end{eqnarray*}
Since supp$f$ is compact, there exists $n_0\in\mathbb{N}$ such that
\begin{eqnarray*}
 f=\sum_{n=1}^{n_0}f_n\cdot f.
\end{eqnarray*}
For each $1\leq n\leq n_0$, by Lemma \ref{lemma21hs} and Proposition \ref{lemma21boudrary}, there exists $\{\varphi_n^k\}_{k=1}^{\infty}\subset C_{S,b}^{\infty}(\ell^2)\cap C_{F}^{\infty}(\ell^2)$ such that
\begin{eqnarray*}
 \lim_{k\to\infty}||\varphi_n^k-f\cdot f_n||_{W^{1,2}(O)}=0.
\end{eqnarray*}
Therefore,
\begin{eqnarray*}
 \lim_{k\to\infty}\bigg|\bigg|\sum_{n=1}^{n_0}\varphi_n^k-f\bigg|\bigg|_{W^{1,2}(O)}=0.
\end{eqnarray*}
Let $\varphi_n\triangleq\sum\limits_{n=1}^{n_0}\varphi_n^k$ for each $n\in\mathbb{N}$. This completes the proof of Theorem \ref{general approximation}.
\end{proof}



\end{document}